\definecolor{dark-red}{rgb}{0.4,0.15,0.15}
\definecolor{dark-blue}{rgb}{0.15,0.15,0.4}
\definecolor{medium-blue}{rgb}{0,0,0.5}
\newcommand*{\defeq}{\mathrel{\rlap{%
			\raisebox{0.3ex}{$\m@th\cdot$}}%
		\raisebox{-0.3ex}{$\m@th\cdot$}}%
	=}
\newcommand*{\eqdef}{\mathrel{=\llap{%
			\raisebox{0.3ex}{$\m@th\cdot$}}%
		\llap{\raisebox{-0.3ex}{$\m@th\cdot$}}}%
}
\DeclareRobustCommand{\widecheck}[1]{{\mathpalette\@widecheck{#1}}}
\def\@widecheck#1#2{%
	\setbox\z@\hbox{\m@th$#1#2$}%
	\setbox\tw@\hbox{\m@th$#1%
		\widehat{%
			\vrule\@width\z@\@height\ht\z@
			\vrule\@height\z@\@width\wd\z@}$}%
	\dp\tw@-\ht\z@
	\@tempdima\ht\z@ \advance\@tempdima2\ht\tw@ \divide\@tempdima\thr@@
	\setbox\tw@\hbox{%
		\raise\@tempdima\hbox{\scalebox{1}[-1]{\lower\@tempdima\box
				\tw@}}}%
	{\ooalign{\box\tw@ \cr \box\z@}}}
\newcommand{\bigboxplus}{
	\mathop{
		\vphantom{\bigoplus} 
		\mathchoice
		{\vcenter{\hbox{\resizebox{\widthof{$\displaystyle\bigoplus$}}{!}{$\boxplus$}}}}
		{\vcenter{\hbox{\resizebox{\widthof{$\bigoplus$}}{!}{$\boxplus$}}}}
		{\vcenter{\hbox{\resizebox{\widthof{$\scriptstyle\oplus$}}{!}{$\boxplus$}}}}
		{\vcenter{\hbox{\resizebox{\widthof{$\scriptscriptstyle\oplus$}}{!}{$\boxplus$}}}}
	}\displaylimits 
}
\newcommand{\A}{\mathbb{A}}
\renewcommand{\aa}{\mathfrak{a}}
\renewcommand{\AA}{\mathcal{A}}
\newcommand{\Bgp}{\mathrm{B}}
\newcommand{\bb}{\mathfrak{b}}
\newcommand{\BB}{\mathcal{B}}
\renewcommand{\C}{\mathbb{C}}
\newcommand{\DD}{\mathcal{D}}
\newcommand{\e}{\varepsilon}
\newcommand{\Eis}{\textnormal{Eis}}
\newcommand{\Ell}{\mathcal{L}}
\newcommand{\GG}{\mathcal{G}}
\newcommand{\Hb}{\mathbb{H}}
\newcommand{\hf}{\mathfrak{h}}
\newcommand{\hol}{\textnormal{hol}}
\newcommand{\JJ}{\mathcal{J}}
\newcommand{\Ks}{\mathscr{K}}
\newcommand{\Ls}{\mathscr{L}}
\newcommand{\Maass}{\textnormal{Maa\ss{}}}
\newcommand{\MM}{\mathcal{M}}
\newcommand{\N}{\mathbb{N}}
\newcommand{\NN}{\mathcal{N}}
\newcommand{\Ns}{\mathscr{N}}
\newcommand{\OO}{\mathcal{O}}
\newcommand{\pp}{\mathfrak{p}}
\newcommand{\Q}{\mathbb{Q}}
\newcommand{\R}{\mathbb{R}}
\newcommand{\RR}{\mathcal{R}}
\newcommand{\RS}{\mathrm{RS}}
\newcommand{\spec}{\textnormal{spec}}
\newcommand{\St}{\mathrm{St}}
\newcommand{\Ts}{\mathscr{T}}
\newcommand{\WW}{\mathcal{W}}
\newcommand{\Z}{\mathbb{Z}}
\newcommand{\Zgp}{\mathrm{Z}}
\DeclareMathOperator*{\ad}{ad}
\DeclareMathOperator{\Gal}{Gal}
\DeclareMathOperator{\GL}{GL}
\DeclareMathOperator{\PSL}{PSL}
\DeclareMathOperator*{\Res}{Res}
\DeclareMathOperator{\sgn}{sgn}
\DeclareMathOperator{\SL}{SL}
\DeclareMathOperator{\SO}{SO}
\DeclareMathOperator{\sym}{sym}
\DeclareMathOperator{\Var}{Var}
\DeclareMathOperator{\vol}{vol}
\numberwithin{equation}{section}
\newtheorem{theorem}[equation]{Theorem}
\newtheorem{conjecture}[equation]{Conjecture}
\newtheorem{corollary}[equation]{Corollary}
\newtheorem{lemma}[equation]{Lemma}
\newtheorem{proposition}[equation]{Proposition}
\theoremstyle{remark}
\newtheorem{remark}[equation]{Remark}
\begin{document}

\title[On the Random Wave Conjecture for Dihedral Maa\ss{} Forms]{On the Random Wave Conjecture for Dihedral Maa{\ss} Forms}

\author{Peter Humphries}

\address{Department of Mathematics, University College London, Gower Street, London WC1E 6BT, United Kingdom}

\email{\href{mailto:pclhumphries@gmail.com}{pclhumphries@gmail.com}}

\author{Rizwanur Khan}

\address{Department of Mathematics, University of Mississippi, University, MS 38677, USA}

\email{\href{mailto:rrkhan@olemiss.edu}{rrkhan@olemiss.edu}}

\subjclass[2010]{11F12 (primary); 58J51, 81Q50 (secondary)}

\thanks{The first author is supported by the European Research Council grant agreement 670239.}

\begin{abstract}
We prove two results on arithmetic quantum chaos for dihedral Maa\ss{} forms, both of which are manifestations of Berry's random wave conjecture: Planck scale mass equidistribution and an asymptotic formula for the fourth moment. For level $1$ forms, these results were previously known for Eisenstein series and conditionally on the generalised Lindel\"{o}f hypothesis for Hecke--Maa\ss{} eigenforms. A key aspect of the proofs is bounds for certain mixed moments of $L$-functions that imply hybrid subconvexity.
\end{abstract}

\maketitle

\section{Introduction}

The random wave conjecture of Berry \cite{Ber77} is the heuristic that the eigenfunctions of a classically ergodic system ought to evince Gaussian random behaviour, as though they were random waves, in the large eigenvalue limit. In this article, we study and resolve two manifestations of this conjecture for a particular subsequence of Laplacian eigenfunctions, dihedral Maa\ss{} forms, on the surface $\Gamma_0(q) \backslash \Hb$.

\subsection{The Rate of Equidistribution for Quantum Unique Ergodicity}

Given a positive integer $q$ and a Dirichlet character $\chi$ modulo $q$, denote by $L^2(\Gamma_0(q) \backslash \Hb, \chi)$ the space of measurable functions $f : \Hb \to \C$ satisfying
\[f\left(\frac{az + b}{cz + d}\right) = \chi(d) f(z) \quad \text{for all $\begin{pmatrix} a & b \\ c & d \end{pmatrix} \in \Gamma_0(q)$}\]
and $\langle f,f\rangle_q < \infty$, where $\langle \cdot,\cdot\rangle_q$ denotes the inner product
\[\langle f,g \rangle_q \defeq \int_{\Gamma_0(q) \backslash \Hb} f(z) \overline{g(z)} \, d\mu(z)\]
with $d\mu(z) = y^{-2} \, dx \, dy$ on any fundamental domain of $\Gamma_0(q) \backslash \Hb$.

Quantum unique ergodicity in configuration space for $L^2(\Gamma_0(q) \backslash \Hb, \chi)$ is the statement that for any subsequence of Laplacian eigenfunctions $g \in L^2(\Gamma_0(q) \backslash \Hb, \chi)$ normalised such that $\langle g, g \rangle_q = 1$ with eigenvalue $\lambda_g = 1/4 + t_g^2$ tending to infinity,
\[\int_{\Gamma_0(q) \backslash \Hb} f(z) |g(z)|^2 \, d\mu(z) = \frac{1}{\vol(\Gamma_0(q) \backslash \Hb)} \int_{\Gamma_0(q) \backslash \Hb} f(z) \, d\mu(z) + o_{f,q}(1)\]
for every $f \in C_b\left(\Gamma_0(q) \backslash \Hb\right)$, or equivalently for every indicator function $f = 1_B$ of a continuity set $B \subset \Gamma_0(q) \backslash \Hb$. This is known to be true (and in a stronger form, in the sense of quantum unique ergodicity on phase space), provided each eigenfunction $g$ is a Hecke--Maa\ss{} eigenform, via the work of Lindenstrauss \cite{Lin06} and Soundararajan \cite{Sou10}.

One may ask whether the rate of equidistribution for quantum unique ergodicity can be quantified in some way; Lindenstrauss' proof is via ergodic methods and does not address this aspect. One method of quantification is to give explicit rates of decay as $\lambda_g$ tends to infinity for the terms
\begin{equation}
\label{QUEfErateseq}
\int_{\Gamma_0(q) \backslash \Hb} f(z) |g(z)|^2 \, d\mu(z), \qquad \int_{\Gamma_0(q) \backslash \Hb} E_{\aa}(z,\psi) |g(z)|^2 \, d\mu(z)
\end{equation}
for a fixed Hecke--Maa\ss{} eigenform $f$ or incomplete Eisenstein series $E_{\aa}(z,\psi)$; optimal decay rates for these integrals, namely $O_{q,f,\e}(t_g^{-1/2 + \e})$ and $O_{q,\psi,\e}(t_g^{-1/2 + \e})$ respectively, follow from the generalised Lindel\"{o}f hypothesis \cite[Corollary 1]{Wat08}. Ghosh, Reznikov, and Sarnak have proposed other quantifications \cite[Conjecture A.1 and A.3]{GRS13}.

Another quantification of the rate of equidistribution, closely related to the spherical cap discrepancy discussed in \cite{LS95}, is small scale mass equidistribution. Let $B_R(w)$ denote the hyperbolic ball of radius $R$ centred at $w \in \Gamma_0(q) \backslash \Hb$ with volume $4\pi \sinh^2 (R/2)$. Two small scale refinements of quantum unique ergodicity were studied in \cite{You16} and \cite{Hum18} respectively, namely the investigation of the rates of decay in $R$, with regards to the growth of the spectral parameter $t_g \in [0,\infty) \cup i(0,1/2)$, for which either the asymptotic formula
\begin{equation}
\label{shrinkingball}
\frac{1}{\vol(B_R)} \int_{B_R(w)} |g(z)|^2 \, d\mu(z) = \frac{1}{\vol(\Gamma_0(q) \backslash \Hb)} + o_{q,w}(1)
\end{equation}
or the bound
\begin{equation}
\label{volshrinkingball}
\vol\left(\left\{w \in \Gamma_0(q) \backslash \Hb : \left|\frac{1}{\vol(B_R)} \int_{B_R(w)} |g(z)|^2 \, d\mu(z) - \frac{1}{\vol(\Gamma_0(q) \backslash \Hb)}\right| > c\right\}\right) = o_c(1)
\end{equation}
holds as $t_g$ tends to infinity along any subsequence of $g \in \BB_0^{\ast}(q,\chi)$, the set of $L^2$-normalised newforms $g$ of weight zero, level $q$, nebentypus $\chi$, and Laplacian eigenvalue $\lambda_g = 1/4 + t_g^2$.

\begin{remark}
One can interpret these two small scale equidistribution questions in terms of random variables, as in \cite[Section 1.5]{GW17} and \cite[Section 1.3]{WY19}. We define the random variable $X_{g;R} : \Gamma_0(q) \backslash \Hb \to [0,\infty)$ by
\[X_{g;R}(w) \defeq \frac{1}{\vol(B_R)} \int_{B_R(w)} |g(z)|^2 \, d\mu(z),\]
which has expectation $1/\vol(\Gamma_0(q) \backslash \Hb)$. The asymptotic formula \eqref{shrinkingball} is equivalent to the pointwise convergence of $X_{g;R}$ to $1$, while \eqref{volshrinkingball} is simply the convergence in probability of $X_{g;R}$ to $1$, a consequence of the bound $\Var(X_{g;R}) = o(1)$. One could ask for further refinements of these problems, such as asymptotic formul\ae{} for this variance and a central limit theorem, as studied in \cite{WY19} for toral Laplace eigenfunctions, though we do not pursue these problems.
\end{remark}

For $q = 1$, Young \cite[Proposition 1.5]{You16} has shown that \eqref{shrinkingball} holds when $R \gg t_g^{-\delta}$ with $0 < \delta < 1/3$ under the assumption of the generalised Lindel\"{o}f hypothesis, and that an analogous result with $0 < \delta < 1/9$ is true unconditionally for the Eisenstein series $g(z) = E(z,1/2 + it_g)$ \cite[Theorem 1.4]{You16}. One expects that this is true for $0 < \delta < 1$, but the method of proof of \cite[Proposition 1.5]{You16} is hindered by an inability to detect cancellation involving a spectral sum of terms not necessarily all of the same sign; see \cite[p.~965]{You16}.

This hindrance does not arise for \eqref{volshrinkingball}, and so we are lead to the following conjecture on Planck scale mass equidistribution, which roughly states that quantum unique ergodicity holds for almost every shrinking ball whose radius is larger than the Planck scale $\lambda_g^{-1/2}$.

\begin{conjecture}
\label{Planckconj}
Suppose that $R \gg t_g^{-\delta}$ with $0 < \delta < 1$. Then \eqref{volshrinkingball} holds as $t_g$ tends to infinity along any subsequence of newforms $g \in \BB_0^{\ast}(q,\chi)$.
\end{conjecture}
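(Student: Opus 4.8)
The plan is to deduce \eqref{volshrinkingball} from the variance bound $\Var(X_{g;R}) = o(1)$ via Chebyshev's inequality, as in the Remark; the resulting moment estimate is provable unconditionally when $g$ is dihedral, and follows from the generalised Lindel\"{o}f hypothesis in general. Writing $k_R$ for the point-pair invariant attached to $1_{B_R}$, we have $X_{g;R} = \vol(B_R)^{-1}(|g|^2 \ast k_R)$, and since $\langle g, g \rangle_q = 1$ the contribution of the constant eigenfunction is exactly $1/\vol(\Gamma_0(q) \backslash \Hb)$, so Parseval in $w$ gives
\[
\Var(X_{g;R}) = \frac{1}{\vol(B_R)^2} \sum_{t_j} |\widehat{k_R}(t_j)|^2 \, |\langle |g|^2, u_j \rangle_q|^2 + (\text{continuous spectrum}),
\]
the sum running over an orthonormal Hecke eigenbasis of Maa\ss{} cusp forms. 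Since the Harish--Chandra transform of $1_{B_R}$ satisfies $\widehat{k_R}(t)/\vol(B_R) \ll (1 + |t|R)^{-3/2}$, with $R \gg t_g^{-\delta}$ the essential range is $t_j \ll t_g^{\delta}$, the tail $t_j \gg t_g^{\delta}$ being negligible by this decay together with the archimedean and moment estimates below. It thus suffices to show that $\sum_{t_j \ll t_g^{\delta}} |\langle |g|^2, u_j \rangle_q|^2$, and its Eisenstein analogue, are $o(1)$ for every $\delta < 1$.

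Next I would invoke the Watson--Ichino triple product formula
\[
|\langle |g|^2, u_j \rangle_q|^2 = (\text{archimedean factor}) \cdot \frac{L(1/2, u_j \times g \times g)}{L(1, \ad u_j) \, L(1, \ad g)^2},
\]
where a Stirling computation gives the archimedean factor $\asymp (t_g(1+t_j))^{-1}$ throughout the range $t_j \ll t_g^{1-\e}$, with exponential decay beyond (recovering Watson's $t_g^{-1/2+\e}$ bound at $t_j$ fixed). Dihedrality now enters: $g$ is the automorphic induction of a Hecke character $\psi$ of a real quadratic field $K = \Q(\sqrt{D})$, so that $\ad g \cong \chi_D \boxplus \mathrm{Ind}_{K/\Q} \Psi$, where $\chi_D$ is the quadratic character of $K/\Q$ and $\Psi = \psi^{1-\sigma}$ is a Hecke character of $K$ of analytic conductor $\asymp t_g$; writing $g'$ for the dihedral newform attached to $\mathrm{Ind}_{K/\Q}\Psi$, of spectral parameter $\asymp t_g$ and bounded level, the triple product factors as $L(s, u_j \times g \times g) = L(s, u_j) \, L(s, u_j \otimes \chi_D) \, L(s, u_j \times g')$, a product of two degree-$2$ and one degree-$4$ Rankin--Selberg $L$-function, the last equal to $L(s, \mathrm{BC}_{K/\Q}(u_j) \otimes \Psi)$. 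Since $L(1, \ad g)^{-2} = (L(1,\chi_D) L(1, g'))^{-2} \ll t_g^{\e}$, the discrete-spectrum part of the variance is $\ll t_g^{-1+\e} \mathcal{M}(t_g^{\delta})$, where
\[
\mathcal{M}(T) := \sum_{t_j \ll T} \frac{L(1/2, u_j) \, L(1/2, u_j \otimes \chi_D) \, L(1/2, u_j \times g')}{(1+t_j) \, L(1, \ad u_j)};
\]
since $\sum_{t_j \ll T} (1+t_j)^{-1} \asymp T$, Conjecture \ref{Planckconj} follows once $\mathcal{M}(T) \ll t_g^{1-\e'}$ for $T = t_g^{\delta}$ with $\delta < 1$ (immediate under the generalised Lindel\"{o}f hypothesis, where each $L$-value is $\ll (t_j t_g)^{\e}$ and so $\mathcal{M}(T) \ll T^{1+\e}$; the substance is the unconditional bound).

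I would prove the bound on $\mathcal{M}(T)$ by opening the three $L$-functions through their approximate functional equations, collapsing the Hecke eigenvalues $\lambda_j(\cdot)$ by multiplicativity, and applying the Kuznetsov formula to the resulting $t_j$-sum, whose spectral weight is precisely $L(1, \ad u_j)^{-1}$ up to benign archimedean factors localising $t_j \ll t_g^{\delta}$. The diagonal yields the expected main term, of size $\asymp t_g^{\delta + \e}$ and hence admissible since $\delta < 1$; the off-diagonal is a sum of Kloosterman sums against Bessel kernels twisted by $\chi_D$ and by $\lambda_{g'}$, which one bounds with a power saving using Weil's bound, Vorono\"{\i} summation in the $\chi_D$- and $g'$-variables (the latter a $\GL_2$ object over $K$, leading to shifted convolution sums over $K$), and the spectral large sieve. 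The main obstacle is carrying out this off-diagonal estimate with \emph{hybrid} uniformity --- simultaneously in the spectral parameter $t_j$ out to $t_g^{1-\e}$ and in the conductor of $\Psi$, which grows with $t_g$ --- since letting $\delta$ approach $1$ forces $\mathcal{M}(T)$ to be of essentially Lindel\"{o}f strength, leaving very little slack for losses; equivalently, the input needed (and, conversely, delivered by the moment bound) is a hybrid subconvex estimate for the $\GL_2 \times \GL_1$ $L$-function $L(1/2, \mathrm{BC}_{K/\Q}(u_j) \otimes \Psi)$ over $K$, and this is where dihedrality is indispensable, reducing the degree-$8$ triple product to $\GL_2$ and $\GL_1$ objects over a number field within reach of current subconvexity technology. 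The continuous-spectrum term reduces in the same way to a bound of the shape $\int_{|t| \ll t_g^{\delta}} |\zeta(1/2+it)|^2 |L(1/2+it, g \times g)|^2 \, dt \ll t_g^{1+\e'}$ --- a fourth moment of $\zeta$ weighted by the second moments of $L(\cdot, \chi_D)$ and $L(\cdot, g')$ on the critical line --- of the same flavour but technically softer, dispatched by standard mean-value methods made uniform in $t_g$.
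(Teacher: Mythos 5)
The statement you are asked to prove is a \emph{conjecture}: the paper deliberately leaves it open for a general Maa\ss{} newform $g$, proving it only for the thin family of dihedral newforms (\hyperref[Planckthm]{Theorem \ref*{Planckthm}}), and, conditionally on the generalised Lindel\"{o}f hypothesis, in the level-one case via \cite{Hum18}. Your opening sentence acknowledges exactly this, and the remainder of the write-up is in effect a sketch of the dihedral special case, not of the conjecture itself. Taken as such, your outline agrees with the paper's strategy at the top level: Chebyshev reduces to $\Var(X_{g;R}) = o(1)$, Parseval spectrally expands the variance with weight $|h_R(t_j)|^2$ (your $\widehat{k_R}(t_j)^2/\vol(B_R)^2$), Watson--Ichino translates $|\langle |g|^2, u_j\rangle|^2$ into $L$-values with an archimedean factor $\asymp ((1+t_j)\sqrt{|2t_g+t_j|}\sqrt{|2t_g-t_j|})^{-1}$, and dihedrality factors the triple product $L$-function into a $\GL_1$ and a $\GL_2$ twist of $u_j$.

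However, there are two substantive gaps in the reduction. First, the claim that ``the essential range is $t_j \ll t_g^\delta$'' and that it ``suffices to show $\mathcal{M}(T) \ll t_g^{1-\e'}$ for $T = t_g^\delta$'' is not correct. The decay $|h_R(t)|^2 \ll (Rt)^{-3}$ alone does not make the range $t_g^\delta \ll t_j \ll t_g$ negligible: if one only has the large-sieve bound $\mathcal{M}(T) \ll t_g^{1+\e}$ there, the dyadic sum over $T > t_g^\delta$ contributes $O(t_g^{\e})$ to the normalised variance, which is precisely the barrier noted in \hyperref[largesieveboundsremark]{Remark \ref*{largesieveboundsremark}} (``falling just short of proving small scale mass equidistribution''). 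What the paper actually needs --- and proves --- is a uniform subconvex moment bound $\MM(1_{[T,2T]}) \ll T t_g^{1-\delta'}$ for every $T \leq t_g^{1-\alpha}$ (\hyperref[dihedralmomentsprop]{Proposition \ref*{dihedralmomentsprop} (1)}), together with separate treatments of the bulk, transition, tail, and exceptional ranges. Your reduction would need to be stated for all dyadic $T$ in the short initial range rather than at the single value $T = t_g^\delta$, and the other four spectral ranges need their own moment bounds before the $|h_R|^2$-decay can close the argument.

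Second, the sketch of the moment bound itself elides the paper's central mechanism. You describe a single pass of approximate functional equation, Kuznetsov, Vorono\u{\i}, and large sieve. The paper's proof of \hyperref[dihedralmomentsprop]{Proposition \ref*{dihedralmomentsprop} (1)} splits the short initial range into three regimes and, crucially, uses \emph{spectral reciprocity} (\hyperref[spectralreciprocityprop]{Proposition \ref*{spectralreciprocityprop}}, via the Kuznetsov--Vorono\u{\i}--Kloosterman triad with Dirichlet series in regions of absolute convergence rather than approximate functional equations) to reduce the range $t_g^{1/2} \leq T \leq t_g^{1-\alpha}$ to $T \leq t_g^{1/2}$, where an entirely different combination of Young's hybrid Weyl subconvexity for $L(1/2, f \otimes \chi_D)$ and a first-moment bound for $L(1/2, f\otimes g_{\psi^2})$ (\hyperref[firstmomentprop]{Proposition \ref*{firstmomentprop}}) takes over. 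Related to this, your characterisation of the key input as ``a hybrid subconvex estimate for $L(1/2, \mathrm{BC}_{K/\Q}(u_j)\otimes\Psi)$ \ldots within reach of current subconvexity technology'' is at odds with the paper's own assessment: \hyperref[subconvexityobstacleremark]{Remark \ref*{subconvexityobstacleremark}} and the surrounding discussion stress that individual subconvex bounds for the factor $L$-functions are \emph{insufficient}; one needs bounds for the \emph{moments} of the products, and it is only these aggregate bounds (which in turn imply hybrid subconvexity of the product, not the other way round) that close the argument.
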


Via Chebyshev's inequality, the left-hand side of \eqref{volshrinkingball} is bounded by $c^{-2} \Var(g;R)$, where
\[\Var(g;R) \defeq \int_{\Gamma_0(q) \backslash \Hb} \left(\frac{1}{\vol(B_R)} \int_{B_R(w)} |g(z)|^2 \, d\mu(z) - \frac{1}{\vol(\Gamma_0(q) \backslash \Hb)}\right)^2 \, d\mu(w).\]
This reduces the problem to bounding this variance. For $q = 1$, the first author showed that if $R \gg t_g^{-\delta}$ with $0 < \delta < 1$, then $\Var(g;R) = o(1)$ under the assumption of the generalised Lindel\"{o}f hypothesis \cite[Proposition 5.1]{Hum18}; an analogous result is also proved unconditionally for $g(z)$ equal to an Eisenstein series $E(z,1/2 + it_g)$ \cite[Proposition 5.5]{Hum18}. The barrier $R \asymp t_g^{-1}$ is the Planck scale, at which equidistribution need not hold \cite[Theorem 1.14]{Hum18}; as discussed in \cite[Section 5.1]{HR92}, the topography of Maa\ss{} forms below this scale is ``essentially sinusoidal'' and so Maa\ss{} forms should not be expected to exhibit random behaviour, such as mass equidistribution, at such minuscule scales.

\subsection{The Fourth Moment of a Maa\ss{} Form}

Another manifestation of Berry's conjecture is the Gaussian moments conjecture (see \cite[Conjecture 1.1]{Hum18}), which states that the (suitably normalised) $n$-th moment of a real-valued Maa\ss{} newform $g$ restricted to a fixed compact subset $K$ of $\Gamma_0(q) \backslash \Hb$ should converge to the $n$-th moment of a real-valued Gaussian random variable with mean $0$ and variance $1$ as $t_g$ tends to infinity. A similar conjecture may also be posed for complex-valued Maa\ss{} newforms, as well as for holomorphic newforms in the large weight limit; cf.~\cite[Conjectures 1.2 and 1.3]{BKY13}. A closely related conjecture, namely essentially sharp upper bounds for $L^p$-norms of automorphic forms, has been posed by Sarnak \cite[Conjecture 4]{Sar03}. For $n = 2$, the Gaussian moments conjecture is simply quantum unique ergodicity, and for small values of $n$, this is also conjectured to be true for noncompact $K$ (but not for large $n$; cf.~\cite[Section 1.1.2]{Hum18}).

The fourth moment is of particular interest, for, as first observed by Sarnak \cite[p.~461]{Sar03}, it can be expressed as a spectral sum of $L$-functions. The conjecture takes the following form for $K = \Gamma_0(q) \backslash \Hb$.

\begin{conjecture}
\label{fourthmomentconj}
As $t_g$ tends to infinity along a subsequence of real-valued newforms $g \in \BB_0^{\ast}(q,\chi)$,
\[\int_{\Gamma_0(q) \backslash \Hb} |g(z)|^4 \, d\mu(z) = \frac{3}{\vol(\Gamma_0(q) \backslash \Hb)} + o_q(1).\]
\end{conjecture}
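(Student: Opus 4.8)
I will prove Conjecture~\ref{fourthmomentconj} in the case that $g$ is a dihedral \Maass{} newform; the dihedral structure is exactly what makes the relevant moment of $L$-functions tractable. So suppose $g = \theta_\psi \in \BB_0^\ast(q,\chi)$ is real-valued, arising by automorphic induction from a Hecke character $\psi$ of a real quadratic field $K = \Q(\sqrt{D})$, with $\chi_D$ the quadratic character cutting out $K$; then $q$ is essentially $|D|$ times the norm of the conductor of $\psi$. Since $g$ is a cusp form, $|g|^2 \in L^2(\Gamma_0(q)\backslash\Hb)$ has trivial nebentypus and $\langle |g|^2, 1\rangle_q = \langle g,g\rangle_q = 1$, so expanding $|g|^2$ along an orthonormal basis $\{u_j\}$ of Hecke--\Maass{} cusp forms together with the Eisenstein series $E_\aa(\cdot, 1/2+it)$ of $\Gamma_0(q)$, Parseval's identity yields
\begin{equation*}
\int_{\Gamma_0(q)\backslash\Hb} |g(z)|^4 \, d\mu(z) = \frac{1}{\vol(\Gamma_0(q)\backslash\Hb)} + \sum_j \bigl|\langle |g|^2, u_j\rangle_q\bigr|^2 + \frac{1}{4\pi}\sum_{\aa} \int_{-\infty}^{\infty} \bigl|\langle |g|^2, E_\aa(\cdot, 1/2+it)\rangle_q\bigr|^2 \, dt ;
\end{equation*}
the task is to show the cuspidal and continuous sums contribute $2/\vol(\Gamma_0(q)\backslash\Hb) + o_q(1)$. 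To turn these into sums of $L$-functions I would use the Watson--Ichino triple product formula (in a form valid at the ramified places) to express $|\langle |g|^2, u_j\rangle_q|^2$ as an archimedean factor times a bounded product of local factors at the primes dividing $q$ times $\Lambda(1/2, u_j\times g\times\overline{g})/(\Lambda(1,\ad u_j)\Lambda(1,\ad g)^2)$, and Zagier's Rankin--Selberg unfolding to express $|\langle |g|^2, E_\aa(\cdot,1/2+it)\rangle_q|^2$ in terms of $|\Lambda(1/2+it, g\times\overline{g})|^2/|\zeta(1+2it)|^2$; in both cases the archimedean factor is negligible once the third spectral parameter exceeds $2t_g$ and is of size $\asymp (t_j t_g)^{-1}$, respectively $\asymp (|t| t_g)^{-1}$, below that point.

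The essential point is that $g$ is dihedral: writing $\sigma$ for the nontrivial automorphism of $K/\Q$ and $\Psi \defeq \psi/\psi^\sigma$, the adjoint lift decomposes as the isobaric sum $\ad g = \chi_D \boxplus \theta_\Psi$, with $\theta_\Psi$ again a dihedral \Maass{} form (of conductor polynomial in $|D|$; the degenerate case $\Psi^\sigma = \Psi$, in which $\theta_\Psi$ is Eisenstein, must be treated separately). Consequently the triple product central value factors,
\begin{equation*}
L(s, u_j\times g\times\overline{g}) = L(s, u_j)\, L(s, u_j\otimes\chi_D)\, L(s, u_j\times\theta_\Psi), \qquad L(s, u_j\times\theta_\Psi) = L\bigl(s, \mathrm{BC}_{K/\Q}(u_j)\otimes\Psi\bigr),
\end{equation*}
into two $\GL_2$ $L$-functions over $\Q$ and one $\GL_2$ $L$-function over $K$, of degrees $2$, $2$, $4$ and conductors polynomial in $|D|$ and $t_j$; likewise $L(s, g\times\overline{g}) = \zeta(s)\, L(s,\chi_D)\, L(s,\theta_\Psi)$. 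This replaces an otherwise intractable first moment of degree-eight $L$-functions by a first moment of a product of $L$-functions of much smaller individual degree, for which an asymptotic evaluation becomes feasible.

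For the cuspidal sum I would insert approximate functional equations for each of $L(1/2, u_j)$, $L(1/2, u_j\otimes\chi_D)$, $L(1/2, u_j\times\theta_\Psi)$, collapse the resulting Hecke eigenvalues of $u_j$ by multiplicativity, and apply the Kuznetsov formula (including its continuous-spectrum term) to the sum over $j$. The diagonal term of the Kuznetsov formula supplies the main term: unwinding the archimedean weight $\asymp (t_j t_g)^{-1}$ against the Weyl law concentrates the mass at $t_j \asymp t_g$, and after incorporating the local factors at $q$, the standard normalisation relating $|\rho_g(1)|^2$ to $L(1,\ad g)^{-1}$, and the factorisation of $L(s, g\times\overline{g})$ to resolve the arithmetic constants, it should evaluate to exactly $2/\vol(\Gamma_0(q)\backslash\Hb)$. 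The off-diagonal (Kloosterman) terms, summed against the Dirichlet coefficients coming from $\chi_D$ and from the Hecke eigenvalues of $\theta_\Psi$, reduce to shifted convolution sums for $\GL_2\times\GL_1$ over $\Q$ and over $K$; the continuous contribution reduces to the mixed second moment $\int_{|t|\ll t_g} |\zeta(1/2+it)|^2 |L(1/2+it,\chi_D)|^2 |L(1/2+it,\theta_\Psi)|^2 \, |t|^{-1}\, dt$. The remaining task is to bound all of these by $o_q(1)$; these are the mixed moment estimates of the abstract, and proving them with polynomial uniformity in $|D|$ is tantamount to hybrid (spectral- and conductor-aspect) subconvexity for the $L$-functions involved --- achievable precisely because $\theta_\Psi$ is itself dihedral, so that its $L$-function unfolds into Hecke characters of $K$.

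The main obstacle is these off-diagonal and continuous-spectrum estimates. What is needed is genuine cancellation in the $\GL_2$-over-$K$ shifted convolution sums, and in the mixed second moment, that is uniform both in the additive shift and in $|D|$, so that the total error is $o(1)$ rather than merely $O(1)$; I would obtain it via the spectral large sieve and the Kuznetsov formula over $K$ (or the $\delta$-method), using the dihedral structure of $\theta_\Psi$ a second time to unfold its eigenvalues into Hecke characters and extract the decisive saving. Tracking the dependence on $|D|$, the behaviour at the several cusps of $\Gamma_0(q)$, the oldform contributions to the spectral expansion, and the ramified local factors in the triple product formula will be the principal sources of technical overhead.
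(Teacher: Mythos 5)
Your opening moves match the paper exactly: spectral expansion of $|g_\psi|^4$ via Parseval (Proposition~\ref{dihedralspectralprop}), the Watson--Ichino formula with carefully computed ramified local constants (Proposition~\ref{dihedraltripleproductprop}), and the decisive dihedral factorisation $\ad g_\psi = \chi_D \boxplus g_{\psi^2}$ (your $\theta_\Psi$) giving $L(s,u_j\otimes\ad g_\psi) = L(s,u_j\otimes\chi_D)L(s,u_j\otimes g_{\psi^2})$. You also correctly flag the oldform bookkeeping at the divisors of $q$ and the degenerate case where $\psi^2$ factors through the norm. Up to this point you are squarely on the paper's track.

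There are, however, two genuine gaps in what follows. First, the main term does \emph{not} come from the delta (diagonal) term of the Kuznetsov formula as you claim; the paper applies the \emph{opposite-sign} Kuznetsov formula in the bulk range $t_j \asymp t_g$ (Lemma~\ref{bulkKuzlemma}), which has no delta term at all. The constant $2/\vol(\Gamma_0(D)\backslash\Hb)$ instead emerges from the $s=1$ pole of a Vorono\u{\i} $L$-series after opening the Kloosterman sums and applying Vorono\u{\i} summation (Lemma~\ref{bulkpolelemma}). Second, and more seriously, your plan for the error terms --- shifted convolution sums treated via the spectral large sieve and Kuznetsov over $K$ --- would not close the argument. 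As Remark~\ref{largesieveboundsremark} notes, the large sieve alone yields only $O_\e(t_g^\e)$ for the relevant moments, which reproduces Luo's upper bound but gives no asymptotic; your proposal gives no mechanism for turning $O(1)$ into $o(1)$ in the short initial range $t_j \ll t_g^{1-\alpha}$, which is exactly where the unconditional argument is hardest (and where the generalised Lindel\"{o}f hypothesis was previously invoked). The paper's key innovation here is \emph{spectral reciprocity} (Proposition~\ref{spectralreciprocityprop}): the Kuznetsov--Vorono\u{\i}--Kloosterman triad maps the moment over $t_j \in [T,2T]$ to the same moment over $t_j \in [t_g/T, 2t_g/T]$ plus a holomorphic analogue, reducing the problem to $T < t_g^{1/2}$. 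There a different argument takes over --- nonnegativity of $L(1/2, f\otimes g_{\psi^2})$ combined with a first-moment bound $\sum_{t_f\asymp T} L(1/2,f\otimes g_{\psi^2}) \ll_\e T^{2+\e}+t_g^{1+\e}$ (Proposition~\ref{firstmomentprop}) and Young's Weyl-type subconvexity for $L(1/2,f)L(1/2,f\otimes\chi_D)$ --- and a second, separate spectral reciprocity handles the transition range $|t_j-2t_g|$ small. This division into five ranges with tailored treatments is the substance of Proposition~\ref{dihedralmomentsprop}, and it is precisely what your sketch does not supply. (The paper in fact deliberately avoids the spectral-decomposition-of-shifted-convolutions route you gesture at, calling the Jutila--Motohashi version of it ``rather technically demanding'' and replacing it with reciprocity via Dirichlet series and meromorphic continuation.)
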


This has been proven for $q = 1$ conditionally under the generalised Lindel\"{o}f hypothesis by Buttcane and the second author \cite[Theorem 1.1]{BuK17b}, but an unconditional proof currently seems well out of reach (cf.~\cite[Remark 3.3]{Hum18} and \hyperref[subconvexityobstacleremark]{Remark \ref*{subconvexityobstacleremark}}). Djankovi\'{c} and the second author have formulated \cite{DK18a} and subsequently proven \cite[Theorem 1.1]{DK18b} a regularised version of this conjecture for Eisenstein series, improving upon earlier work of Spinu \cite[Theorem 1.1 (A)]{Spi03} that proves the upper bound $O_{\e}(t_g^{\e})$ in this setting. Numerical investigations of this conjecture for the family of dihedral Maa\ss{} newforms have also been undertaken by Hejhal and Str\"{o}mbergsson \cite{HS01}, and the upper bound $O_{q,\e}(t_g^{\e})$ for dihedral forms has been proven by Luo \cite[Theorem]{Luo14} (cf.~\hyperref[largesieveboundsremark]{Remark \ref*{largesieveboundsremark}}). Furthermore, bounds for the fourth moment in the level aspect have also been investigated by many authors \cite{Blo13,BuK15,Liu15,LMY13}.

\subsection{Results}

This paper gives the first unconditional resolutions of \hyperref[Planckconj]{Conjectures \ref*{Planckconj}} and \ref{fourthmomentconj} for a family of cusp forms. We prove these two conjectures in the particular case when $q = D \equiv 1 \pmod{4}$ is a fixed positive squarefree fundamental discriminant, $\chi = \chi_D$ is the primitive quadratic character modulo $D$, and $t_g$ tends to infinity along any subsequence of dihedral Maa\ss{} newforms $g = g_{\psi} \in \BB_0^{\ast}(D,\chi_D)$.

\begin{theorem}
\label{Planckthm}
Let $D \equiv 1 \pmod{4}$ be a positive squarefree fundamental discriminant and let $\chi_D$ be the primitive quadratic character modulo $D$. Suppose that $R \gg t_g^{-\delta}$ for some $0 < \delta < 1$. Then there exists $\delta' > 0$ dependent only on $\delta$ such that
\begin{equation}
\label{Vargpsiboundeq}
\Var\left(g_{\psi};R\right) \ll_D t_g^{-\delta'}
\end{equation}
as the spectral parameter $t_g$ tends to infinity along any subsequence of dihedral Maa\ss{} newforms $g_{\psi} \in \BB_0^{\ast}(D,\chi_D)$. Consequently,
\[\vol\left(\left\{w \in \Gamma_0(D) \backslash \Hb : \left|\frac{1}{\vol(B_R)} \int_{B_R(w)} |g_{\psi}(z)|^2 \, d\mu(z) - \frac{1}{\vol(\Gamma_0(D) \backslash \Hb)}\right| > c\right\}\right)\]
tends to zero as $t_g$ tends to infinity for any fixed $c > 0$.
\end{theorem}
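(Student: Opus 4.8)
The plan is to pass from the variance to a mixed moment of $L$-functions via the Watson--Ichino triple product formula, and then to exploit the dihedral structure of $g_\psi$ to bound that moment unconditionally. Since $|g_\psi|^2$ is $\Gamma_0(D)$-invariant and square-integrable, I would spectrally expand it along an orthonormal basis $\{u_j\}$ of Hecke--Maa\ss{} cusp forms of level dividing $D$ together with the Eisenstein spectrum; the constant part is $1/\vol(\Gamma_0(D)\backslash\Hb)$, which is exactly what is subtracted off. The ball average is convolution against a point-pair invariant kernel, hence acts on the eigenfunction of spectral parameter $t$ by the scalar $h(t,R)$, the Selberg/Harish-Chandra transform of the normalised ball indicator, with $h(t,R)\asymp 1$ for $|t|\ll 1/R$ and decay for $|t|\gg 1/R$, so Parseval gives
\[\Var(g_\psi;R)=\sum_j |h(t_j,R)|^2\,|\langle |g_\psi|^2,u_j\rangle_D|^2+(\text{Eisenstein analogue}),\]
with the weight effectively truncating to $|t_j|\le T_0$ (and $|t|\le T_0$ in the continuous part), $T_0\defeq t_g^{\delta}$. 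Writing $K=\Q(\sqrt D)$ and $g_\psi$ as the automorphic induction of a Hecke character $\psi$ of $K$ of spectral parameter $\asymp t_g$, Mackey's formula gives $g_\psi\times\overline{g_\psi}=\mathbf 1\boxplus\chi_D\boxplus g_\Psi$, where $g_\Psi$ is the dihedral form attached to $\Psi\defeq\psi/\psi^{\sigma}$: it is self-dual, cuspidal for $t_g$ large, of bounded conductor (in $t_g$) and spectral parameter $\asymp t_g$, and $L(s,\ad g_\psi)=L(s,\chi_D)L(s,g_\Psi)$. The triple product formula, together with Stirling's formula on the archimedean factors (which contributes a weight $\ll_{\e}t_g^{\e-2}$ for $|t_j|\le t_g$), then yields
\[|\langle|g_\psi|^2,u_j\rangle_D|^2\ll_{\e} \frac{t_g^{\e}}{t_g^2}\cdot\frac{L(\tfrac12,u_j)\,L(\tfrac12,u_j\otimes\chi_D)\,L(\tfrac12,u_j\times g_\Psi)}{L(1,\ad g_\psi)^2\,L(1,\ad u_j)},\]
with the continuous-spectrum contribution governed by $|\zeta(\tfrac12+it)L(\tfrac12+it,\chi_D)L(\tfrac12+it,g_\Psi)|^2/|\zeta(1+2it)|^2$. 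Using $L(1,\ad g_\psi)\gg_{D,\e}t_g^{-\e}$ (there is no Landau--Siegel zero here, $\Psi$ being of infinite order at the archimedean places), I am reduced to
\[\Var(g_\psi;R)\ll_{D,\e}\frac{t_g^\e}{t_g^2}\left(\mathcal{M}+\mathcal{E}\right),\quad \mathcal{M}\defeq\sum_{|t_j|\le T_0}\frac{L(\tfrac12,u_j)L(\tfrac12,u_j\otimes\chi_D)L(\tfrac12,u_j\times g_\Psi)}{L(1,\ad u_j)},\]
\[\mathcal{E}\defeq\int_{|t|\le T_0}\frac{|\zeta(\tfrac12+it)L(\tfrac12+it,\chi_D)L(\tfrac12+it,g_\Psi)|^2}{|\zeta(1+2it)|^2}\,dt.\]

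\emph{The Eisenstein term.} No subconvexity is needed: $L(s,g_\Psi)$ is a degree-two $L$-function of analytic conductor $\asymp_D t_g^2$ on $\mathrm{Re}(s)=\tfrac12$ for $|t|\le T_0\le t_g$, so convexity gives $|L(\tfrac12+it,g_\Psi)|\ll_{D,\e}t_g^{1/2+\e}$; pulling this out and invoking the classical fourth moment $\int_{|t|\le T_0}|\zeta(\tfrac12+it)L(\tfrac12+it,\chi_D)|^2\,dt\ll_{D,\e}(1+T_0)^{1+\e}$ yields $\mathcal{E}\ll_{D,\e}t_g^{1+\e}T_0^{1+\e}$, hence $t_g^{-2}\mathcal{E}\ll_{D,\e}t_g^{\delta-1+\e}$, which is $\ll_D t_g^{-\delta'}$ since $\delta<1$.

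\emph{The cuspidal term and the main obstacle.} By Cauchy--Schwarz, $\mathcal{M}\le\mathcal{M}_1^{1/2}\mathcal{M}_2^{1/2}$, where $\mathcal{M}_1=\sum_{|t_j|\le T_0}L(\tfrac12,u_j)^2L(\tfrac12,u_j\otimes\chi_D)^2/L(1,\ad u_j)$ is a mixed fourth moment in the spectral aspect, bounded by $\ll_{D,\e}T_0^{2+\e}$ via Kuznetsov's formula and the $\mathrm{GL}_2$ fourth-moment machinery, and
\[\mathcal{M}_2=\sum_{|t_j|\le T_0}\frac{L(\tfrac12,u_j\times g_\Psi)^2}{L(1,\ad u_j)}\]
is a second moment of a degree-four $L$-function of conductor $\asymp t_g^4$ over a family of size $\asymp T_0^2\le t_g^{2\delta}$. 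I would open $\mathcal{M}_2$ by its approximate functional equation (of length $\asymp t_g^2$) and apply Kuznetsov's formula: the diagonal contributes $\ll_{D,\e}T_0^2\sum_{n\le t_g^2}|\lambda_{g_\Psi}(n)|^2 n^{-1}\ll_{D,\e}T_0^2 t_g^\e$, while the off-diagonal reduces to shifted convolution sums $\sum_{m-n=h}\lambda_{g_\Psi}(m)\lambda_{g_\Psi}(n)(\cdots)$. \emph{The crux is here, and this is where the dihedral hypothesis is indispensable}: writing $\lambda_{g_\Psi}(n)=\sum_{\mathrm N\mathfrak a=n}\Psi(\mathfrak a)$ turns these into sums over pairs of ideals of $K$ with prescribed difference of norms, twisted by Hecke characters, which can be bounded unconditionally with a power saving in $t_g$ by Poisson summation and Gauss sums over $K$ --- in effect reducing the $\mathrm{GL}_2$ shifted convolution problem, which for a general cusp form would force the Lindel\"of hypothesis (as in \cite{Hum18}), to an essentially abelian one. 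This yields $\mathcal{M}_2\ll_{D,\e}t_g^\e(T_0^2+t_g^{2-\eta_0})$ for some absolute $\eta_0>0$; by positivity the same bound supplies a hybrid subconvex estimate for each individual $L(\tfrac12,u_j\times g_\Psi)$, uniform in $t_j$ and $t_g$. The hard part throughout is extracting this power saving from the off-diagonal uniformly in \emph{both} parameters and keeping every error term under control as $t_j$ ranges up to $T_0$; the oldform/newform bookkeeping in the triple product and Kuznetsov formulae at level $D$ is routine but must be handled carefully.

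\emph{Conclusion.} Combining these, $\mathcal{M}\ll_{D,\e}t_g^\e(T_0^2+T_0 t_g^{1-\eta_0/2})$, so that
\[\Var(g_\psi;R)\ll_{D,\e}t_g^\e\bigl(t_g^{2\delta-2}+t_g^{\delta-1-\eta_0/2}\bigr)\ll_D t_g^{-\delta'}\]
for a suitable $\delta'=\delta'(\delta)>0$, since $\delta<1$; this is \eqref{Vargpsiboundeq}. The final displayed assertion is then immediate from Chebyshev's inequality, which bounds the stated volume by $c^{-2}\Var(g_\psi;R)\ll_D c^{-2}t_g^{-\delta'}$, which tends to $0$ for any fixed $c>0$.
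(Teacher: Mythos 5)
Your overall framework—spectral expansion of the variance via Parseval, the Watson--Ichino formula, and the dihedral factorisation $\ad g_\psi = \chi_D \boxplus g_{\psi^2}$ reducing everything to a mixed moment of $L$-functions—matches the paper's Sections 2--4 exactly, and the Chebyshev finish is standard. However, the heart of the proof is missing, and there are also two quantitative slips worth flagging.

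\textbf{The archimedean weight.} You assert the Stirling ratio contributes a factor $\ll_\e t_g^{-2+\e}$ uniformly for $|t_j| \le t_g$. It does not: the correct archimedean factor (the paper's $H(t)$ in \eqref{H(t)defeq}) is $\asymp (1+|t_j|)^{-1}(1+|2t_g+t_j|)^{-1/2}(1+|2t_g-t_j|)^{-1/2}$, which for $|t_j| \ll t_g$ is $\asymp \bigl((1+|t_j|)t_g\bigr)^{-1}$, i.e.\ $\asymp t_g^{-1}$ near $t_j = 0$ rather than $t_g^{-2}$. With the corrected weight, your Eisenstein computation no longer closes: convexity $|L(\tfrac12+it,g_\Psi)|^2 \ll t_g^{1+\e}$ together with $\int |\zeta L_{\chi_D}|^2/(1+|t|)\,dt \ll t_g^\e$ gives only $\Var \ll t_g^\e$ from the continuous spectrum, not a power saving. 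One must use genuine subconvexity (Michel--Venkatesh) for $L(\tfrac12+it,g_\Psi)$ here.

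\textbf{Truncation and the neglected ranges.} Because $|h_R(t)|^2$ decays only like $(1/Rt)^3$, the ``effective truncation'' at $T_0 = t_g^\delta$ is not automatic: with the trivial (large sieve) moment bound the contribution from $t_j$ slightly above $T_0$ is $\asymp t_g^\e$, not $o(1)$. The tail and the transition range $|t_j| \approx 2t_g$ (where $H$ has a $(1+|2t_g - t_j|)^{-1/2}$ singularity) require their own arguments, which the paper supplies via Proposition~\ref{dihedralmomentsprop}~(2)--(4) and which your write-up never addresses.

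\textbf{The central gap.} Your entire argument rests on the claim that the second moment $\mathcal{M}_2 = \sum_{|t_j|\le T_0} L(\tfrac12,u_j\times g_\Psi)^2/L(1,\ad u_j)$ can be bounded by $t_g^\e(T_0^2 + t_g^{2-\eta_0})$ for some absolute $\eta_0>0$, uniformly as $T_0$ ranges up to $t_g^{1-\e}$. This is asserted, not proved: you gesture at opening the off-diagonal into shifted convolutions and treating them via Poisson summation and Gauss sums over $K$, but none of this is carried out, and the uniformity in \emph{both} $T_0$ and the conductor $\asymp t_g^4$ of $g_\Psi$ is exactly where the difficulty lies. Note that what you are claiming is, up to Cauchy--Schwarz, the paper's hybrid subconvexity $L(\tfrac12,f\otimes\chi_D)L(\tfrac12,f\otimes g_{\psi^2})\ll t_f t_g^{1-\delta}$ uniformly for $t_f \ll t_g^{1-\alpha}$, which the paper emphasises (\hyperref[subconvexityobstacleremark]{Remark~\ref*{subconvexityobstacleremark}}, \hyperref[largesieveboundsremark]{Remark~\ref*{largesieveboundsremark}}) is precisely the obstacle: the Cauchy--Schwarz-plus-large-sieve route, which your $\mathcal{M}_2$ is refining, gives only $O_\e(t_g^\e)$ and ``falls just short.'' The paper does \emph{not} go via a second moment of $L(\tfrac12,f\otimes g_{\psi^2})$ at all. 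Instead it combines three distinct tools: Young's $T^{1/3}$ subconvexity for $L(\tfrac12,f)L(\tfrac12,f\otimes\chi_D)$, a \emph{first}-moment power-saving bound $\widetilde{\MM}^{\Maass} \ll T^{2+\e}+t_g^{1+\e}$ for $L(\tfrac12,f\otimes g_{\psi^2})$ proved via Kuznetsov plus Vorono\u{\i} and crucially exploiting nonnegativity (Sections~\ref{firstmomentsect}), and, for $T$ in $[t_g^{1/2},t_g^{1-\alpha}]$, a spectral-reciprocity formula (Sections~\ref{spectralrecsect}--\ref{boundstranssect}) that reciprocates the range of summation to $t_g/T$. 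Your proposed shifted-convolution route over $K$ is a genuinely different strategy and is not known to work with the required uniformity; it is at least as hard as what the paper does, and is the step that would consume the bulk of any complete proof.
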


\begin{theorem}
\label{fourthmomentthm}
Let $D \equiv 1 \pmod{4}$ be a positive squarefree fundamental discriminant and let $\chi_D$ be the primitive quadratic character modulo $D$. Then there exists an absolute constant $\delta' > 0$ such that
\begin{equation}
\label{fourthmomenteq}
\int_{\Gamma_0(D) \backslash \Hb} |g_{\psi}(z)|^4 \, d\mu(z) = \frac{3}{\vol(\Gamma_0(D) \backslash \Hb)} + O_D(t_g^{-\delta'})
\end{equation}
as $t_g$ tends to infinity along any subsequence of dihedral Maa\ss{} newforms $g_{\psi} \in \BB_0^{\ast}(D,\chi_D)$.
\end{theorem}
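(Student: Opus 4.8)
The plan is to adapt the spectral argument that Buttcane and the second author used in \cite{BuK17b} for level $1$ under the generalised Lindel\"{o}f hypothesis, exploiting the structure of dihedral forms to replace that hypothesis by an unconditional estimate for a mixed moment of $L$-functions. Since $g_\psi$ is a cusp form, $|g_\psi|^2$ decays rapidly in every cusp of $\Gamma_0(D) \backslash \Hb$, so it lies in $L^2(\Gamma_0(D) \backslash \Hb)$ (with trivial nebentypus, $\chi_D$ being real) and its spectral expansion converges rapidly; Parseval's identity gives
\[\int_{\Gamma_0(D) \backslash \Hb} |g_\psi|^4 \, d\mu = \frac{1}{\vol(\Gamma_0(D) \backslash \Hb)} + \sum_j \left|\langle |g_\psi|^2, u_j \rangle_D\right|^2 + \frac{1}{4\pi} \sum_{\aa} \int_{-\infty}^{\infty} \left|\langle |g_\psi|^2, E_{\aa}(\cdot, \tfrac12 + it) \rangle_D\right|^2 \, dt,\]
the cuspidal sum running over a suitable orthonormal Hecke basis $\{u_j\}$ of Maa\ss{} cusp forms on $\Gamma_0(D) \backslash \Hb$. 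By the Watson--Ichino triple product formula, together with the local test vector computations at the primes dividing $D$, each $|\langle |g_\psi|^2, u_j \rangle_D|^2$ equals an explicit archimedean weight $\mathcal{H}(t_j; t_g)$ times $L(\tfrac12, g_\psi \times g_\psi \times u_j)/(L(1, \ad g_\psi)^2 L(1, \ad u_j))$, while unfolding against Eisenstein series expresses $\langle |g_\psi|^2, E_{\aa}(\cdot, \tfrac12 + it) \rangle_D$ through $L(\tfrac12 + it, g_\psi \times \overline{g_\psi})/L(1, \ad g_\psi)$. Stirling's formula shows that these archimedean weights have size $\asymp t_g^{-1}(1 + |t_j|)^{-1}$, respectively $\asymp t_g^{-1}(1 + |t|)^{-1}$, for $|t_j|, |t| \lesssim t_g$, with rapid decay past a transition near $2 t_g$, so the spectral parameter of the dual object is localised to $|t_j|, |t| \lesssim t_g$.

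The hypotheses on $D$ are now decisive. Because $\chi_D$ is the quadratic character cut out by $K = \Q(\sqrt{D})$, the idele class character $\chi_D \circ \mathrm{Nm}_{K/\Q}$ is trivial, hence $g_\psi \otimes \chi_D \cong g_\psi$; thus $g_\psi$ is self-dual, which forces $\psi^{\sigma} = \psi^{-1}$ for the Hecke character $\psi$ of $K$ with $g_\psi = \mathrm{AI}_K^{\Q}(\psi)$, and yields
\[g_\psi \times g_\psi = \mathbf{1} \boxplus \ad g_\psi = \mathbf{1} \boxplus \chi_D \boxplus g_{\psi^2}, \qquad g_{\psi^2} \defeq \mathrm{AI}_K^{\Q}(\psi^2),\]
where $g_{\psi^2}$ is again a dihedral newform in $\BB_0^{\ast}(D, \chi_D)$, now of spectral parameter $2 t_g$. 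The degree-eight triple product $L$-function therefore factors as
\[L\left(\tfrac12, g_\psi \times g_\psi \times u_j\right) = L\left(\tfrac12, u_j\right) L\left(\tfrac12, u_j \otimes \chi_D\right) L\left(\tfrac12, g_{\psi^2} \times u_j\right),\]
with the degree-four Rankin--Selberg factor equal, after base change, to the $\GL_2$ $L$-function $L(\tfrac12, \mathrm{BC}_K(u_j) \otimes \psi^2)$ over $K$; likewise $L(s, g_\psi \times \overline{g_\psi}) = \zeta(s) L(s, \chi_D) L(s, g_{\psi^2})$ and $L(1, \ad g_\psi) = L(1, \chi_D) L(1, g_{\psi^2})$, the latter satisfying $t_g^{-\e} \ll_D L(1, \ad g_\psi) \ll_D t_g^{\e}$ by the absence of Siegel zeros for adjoint $L$-functions (cf.~\cite{Luo14}).

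Inserting these factorisations, everything reduces to two ``mixed moment'' estimates. For the cuspidal spectrum one must show
\[\sum_j \mathcal{H}(t_j; t_g)\, L\left(\tfrac12, g_{\psi^2} \times u_j\right) L\left(\tfrac12, u_j\right) L\left(\tfrac12, u_j \otimes \chi_D\right) = \frac{2 L(1, \ad g_\psi)^2}{\vol(\Gamma_0(D) \backslash \Hb)}\, t_g + O_D\left(t_g^{1 - \delta' + \e}\right),\]
and for the continuous spectrum the bound $\sum_{\aa} \int_{-\infty}^{\infty} \mathcal{H}_{\Eis}(t; t_g) |L(\tfrac12 + it, g_{\psi^2})|^2 |L(\tfrac12 + it, \chi_D)|^2 |\zeta(\tfrac12 + it)|^2 \, dt \ll_D t_g^{1 - \delta' + \e}$ (here there is no main term: near $t = 0$ the pole of $1/\zeta(1 + 2it)$ supplies a compensating zero, and elsewhere the bound applies). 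Dividing by $L(1, \ad g_\psi)^2 \asymp t_g^{\pm\e}$, the identity term contributes $1/\vol$ and the diagonal of the cuspidal estimate contributes $2/\vol$, giving the asserted constant; by positivity each estimate moreover contains hybrid subconvexity for $L(\tfrac12, g_{\psi^2} \times u_j)$, respectively $L(\tfrac12 + it, g_{\psi^2})$, in the $t_g$-aspect. To prove them one inserts approximate functional equations, writing $L(\tfrac12, g_{\psi^2} \times u_j)$ as a smoothed Dirichlet series of length $\asymp t_j t_g$ in $\lambda_{u_j}(n)\, \lambda_{g_{\psi^2}}(n)$, where $\lambda_{g_{\psi^2}}(n) = \sum_{\mathrm{Nm}\, \mathfrak{n} = n} \psi^2(\mathfrak{n})$, and far shorter series for the two degree-two factors; one then opens the products, collects the $u_j$-dependence as $\lambda_{u_j}(m)\, \lambda_{u_j}(n)$, and applies the Kuznetsov trace formula. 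The diagonal contribution produces the stated main term (its evaluation uses $g_{\psi^2} \otimes \chi_D \cong g_{\psi^2}$ and the Dirichlet class number formula to recognise $L(1, \ad g_\psi)^2/\vol$), while the off-diagonal terms give Kloosterman sums which, after Weil's bound and Poisson summation — exploiting that the $\lambda_{g_{\psi^2}}(n)$ are supported on norms from $K$ and arise from a Hecke character of fixed finite conductor — contribute $O_D(t_g^{1 - \delta' + \e})$; the Eisenstein moment is handled by the same template with the Kuznetsov formula replaced by the Plancherel formula and the hybrid second-moment machinery for $\zeta$ and for $L(\cdot, \psi^2)$ over $K$.

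The main obstacle is this mixed-moment estimate. The degree-four factor $L(\tfrac12, g_{\psi^2} \times u_j)$ has analytic conductor $\asymp t_j^2 t_g^2$, so the length $\asymp t_j t_g \asymp t_g^2$ of its approximate functional equation matches the number $\asymp t_g^2$ of cusp forms $u_j$ with $t_j \lesssim t_g$: the estimate sits exactly at the subconvexity threshold, and any power saving requires extracting genuine cancellation from the off-diagonal Kloosterman terms (and, for the continuous spectrum, combining the second moments of $\zeta$, $L(\cdot, \chi_D)$ and $L(\cdot, g_{\psi^2})$ over the hybrid range $|t| \lesssim t_g$ without losing a power of $t_g$). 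It is precisely the dihedral factorisation that makes this feasible, replacing an intractable degree-eight $L$-function by a degree-four one — a $\GL_2$ $L$-function over $K$, equivalently a $\GL_2 \times \GL_2$ Rankin--Selberg $L$-function over $\Q$ — together with two degree-two factors independent of $t_g$, so that the Kuznetsov formula and Weil's bound can take hold.
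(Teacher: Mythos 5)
Your overall architecture — Parseval on $L^2(\Gamma_0(D)\backslash\Hb)$, Watson--Ichino with local computations at $p \mid D$, the dihedral factorisation $g_\psi \times g_\psi = 1 \boxplus \chi_D \boxplus g_{\psi^2}$, and a reduction to a mixed moment of $L(\frac12,u_j)L(\frac12,u_j\otimes\chi_D)L(\frac12,g_{\psi^2}\times u_j)$ weighted by an archimedean factor $\asymp t_g^{-1}(1+|t_j|)^{-1}(1+|2t_g-t_j|)^{-1/2}$ — is exactly the strategy of the paper. But the proposal has a genuine gap precisely where the theorem becomes hard: the claim that the off-diagonal of the Kuznetsov formula is $O_D(t_g^{1-\delta'+\e})$ ``after Weil's bound and Poisson summation.'' Those tools (equivalently, the spectral large sieve) only recover the bound $O_\e(t_g^\e)$ for the normalised fourth moment — that is Luo's theorem, and it falls short of any power saving. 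Moreover the accounting that makes this look plausible is off: the analytic conductor of $L(\frac12, g_{\psi^2}\times u_j)$ is $\asymp (1+|t_j+2t_g|)^2(1+|t_j-2t_g|)^2 \asymp t_g^4$ for $t_j \ll t_g$, not $t_j^2t_g^2$, so the approximate functional equation has length $\asymp t_g^2$ \emph{uniformly} in $t_j$. The family size only matches this length in the bulk $t_j \asymp t_g$; for $t_j \leq T = t_g^{1-\alpha}$ one is summing $\asymp T^2$ forms against a series of length $t_g^2$, which is exactly the regime where the large sieve (and a fortiori Weil plus Poisson) loses. The norm-form support of $\lambda_{g_{\psi^2}}(n)$ plays no role in closing this; what is actually needed is (i) for the bulk, double Vorono\u{\i} summation followed by the Kloosterman-to-spectral (Kuznetsov) formula in reverse, reducing to a very short moment handled by Michel--Venkatesh subconvexity; (ii) for $t_j \leq t_g^{1/2}$, nonnegativity of $L(\frac12, g_{\psi^2}\times u_j)$ together with Young's Weyl-type subconvexity for $L(\frac12,u_j)L(\frac12,u_j\otimes\chi_D)$ and a sharp \emph{first}-moment bound $\sum_{t_j\asymp T}L(\frac12,g_{\psi^2}\times u_j) \ll_\e T^{2+\e}+t_g^{1+\e}$, itself a delicate Vorono\u{\i}/stationary-phase argument; and (iii) a spectral reciprocity identity to transport the range $t_g^{1/2}\leq T\leq t_g^{1-\alpha}$ back to $T\leq t_g^{1/2}$ (and to holomorphic forms of weight $\asymp t_g/T$).

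A second, unaddressed range is the transition $|t_j - 2t_g| \leq t_g^{1-\alpha}$, which you note only as ``rapid decay past a transition near $2t_g$.'' There the weight is $\asymp t_g^{-3/2}(1+|2t_g-t_j|)^{-1/2}$ and the conductor of $L(\frac12,g_{\psi^2}\times u_j)$ drops, so neither the bulk analysis nor trivial decay applies; the paper handles it by Cauchy--Schwarz against a short second moment of $L(\frac12,g_{\psi^2}\times u_j)$ (large sieve) and a Jutila-type short fourth moment of $L(\frac12,u_j)L(\frac12,u_j\otimes\chi_D)$, the latter again via spectral reciprocity. Two smaller points: the main term in the paper does not come from a Kuznetsov diagonal (the opposite-sign formula is used, which has none) but from the pole of the Vorono\u{\i} $L$-series attached to the divisor-type coefficients; and for squarefree (rather than prime) $D$ the inner products against \emph{oldforms} $\iota_\ell f$ require the translated-newvector local constants of \hyperref[Collinsprop]{Proposition \ref*{Collinsprop}}, which your phrase ``local test vector computations'' elides.
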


Dihedral newforms form a particularly thin subsequence of Maa\ss{} forms; the number of dihedral Maa\ss{} newforms with spectral parameter less than $T$ is asymptotic to $c_{1,D} T$, whereas the number of Maa\ss{} newforms with spectral parameter less than $T$ is asymptotic to $c_{2,D} T^2$, where $c_{1,D},c_{2,D} > 0$ are constants dependent only on $D$. We explain in \hyperref[connectionssect]{Section \ref*{connectionssect}} the properties of dihedral Maa\ss{} newforms, not shared by nondihedral forms, that are crucial to our proofs of \hyperref[Planckthm]{Theorems \ref*{Planckthm}} and \ref{fourthmomentthm}.

\begin{remark}
Previous work \cite{Blo13,BuK15,BuK17a,Liu15,LMY13,Luo14} on the fourth moment has been subject to the restriction that $D$ be a prime. We weaken this restriction to $D$ being squarefree. The additional complexity that arises is determining explicit expressions for the inner product of $|g|^2$ with oldforms. Removing the squarefree restriction on $D$, while likely presently feasible, would undoubtedly involve significant extra work.
\end{remark}

\begin{remark}
An examination of the proofs of \hyperref[Planckthm]{Theorems \ref*{Planckthm}} and \ref{fourthmomentthm} shows that the dependence on $D$ in the error terms in \eqref{Vargpsiboundeq} and \eqref{fourthmomenteq} is polynomial.
\end{remark}

\subsection*{Notation}

Throughout this article, we make use of the $\e$-convention: $\e$ denotes an arbitrarily small positive constant whose value may change from occurrence to occurrence. Results are stated involving level $D$ when only valid for positive squarefree $D \equiv 1 \pmod{4}$ and are stated involving level $q$ otherwise. The primitive quadratic character modulo $D$ will always be denoted by $\chi_D$. Since we regard $D$ as being fixed, all implicit constants in Vinogradov $\ll$ and big O notation may depend on $D$ unless otherwise specified. We write $\N_0 \defeq \N \cup \{0\}$ for the nonnegative integers. A dihedral Maa\ss{} newform will be written as $g_{\psi} \in \BB_0^{\ast}(D,\chi_D)$; this is associated to a Hecke Gr\"{o}\ss{}encharakter $\psi$ of $\Q(\sqrt{D})$ as described in \hyperref[Grosssect]{Appendix \ref*{Grosssect}}.

\subsection{Elements of the Proofs}

The proofs of \hyperref[Planckthm]{Theorems \ref*{Planckthm}} and \ref{fourthmomentthm}, which we give in \hyperref[proofPlanckthmsect]{Section \ref*{proofPlanckthmsect}}, follow by combining three key tools; the approach that we follow is that first pioneered by Sarnak \cite[p.~461]{Sar03} and Spinu \cite{Spi03}.

First, we spectrally expand the variance and the fourth moment, obtaining the following explicit formul\ae{}.

\begin{proposition}
\label{dihedralspectralprop}
Let $q$ be squarefree and let $\chi$ be a primitive Dirichlet character modulo $q$. Then for a newform $g \in \BB_0^{\ast}(q,\chi)$, the variance $\Var(g;R)$ is equal to
\begin{multline}
\label{dihedralspectraleq}
\sum_{q_1 q_2 = q} 2^{\omega(q_2)} \frac{\nu(q_2) \varphi(q_2)}{q_2^2} \sum_{f \in \BB_0^{\ast}\left(\Gamma_0\left(q_1\right)\right)} \frac{L_{q_2}(1,\sym^2 f)}{L_{q_2}\left(\frac{1}{2}, f\right)} \left|h_R(t_f)\right|^2 \left|\left\langle |g|^2, f\right\rangle_q\right|^2	\\
+ \frac{2^{\omega(q)}}{4\pi} \int_{-\infty}^{\infty} \left|h_R(t)\right|^2 \left|\left\langle |g|^2, E_{\infty}\left(\cdot, \frac{1}{2} + it\right)\right\rangle_q\right|^2 \, dt,
\end{multline}
where $\BB_0^{\ast}(\Gamma_0(q_1)) \ni f$ is an orthonormal basis of the space of newforms of weight zero, level $q_1$, and principal nebentypus, normalised such that $\langle f, f\rangle_q = 1$, $E_{\infty}(z,s)$ denotes the Eisenstein series associated to the cusp at infinity of $\Gamma_0(q) \backslash \Hb$, and
\[h_R(t) \defeq \frac{R}{\pi \sinh \frac{R}{2}} \int_{-1}^{1} \sqrt{1 - \left(\frac{\sinh \frac{Rr}{2}}{\sinh \frac{R}{2}}\right)^2} e^{iRrt} \, dr.\]

Similarly, the fourth moment $\int_{\Gamma_0(q) \backslash \Hb} |g(z)|^4 \, d\mu(z)$ is equal to
\begin{multline}
\label{dihedralspectral2eq}
\frac{1}{\vol(\Gamma_0(q) \backslash \Hb)} + \sum_{q_1 q_2 = q} 2^{\omega(q_2)} \frac{\nu(q_2) \varphi(q_2)}{q_2^2} \sum_{f \in \BB_0^{\ast}\left(\Gamma_0\left(q_1\right)\right)} \frac{L_{q_2}(1,\sym^2 f)}{L_{q_2}\left(\frac{1}{2}, f\right)} \left|\left\langle |g|^2, f\right\rangle_q\right|^2	\\
+ \frac{2^{\omega(q)}}{4\pi} \int_{-\infty}^{\infty} \left|\left\langle |g|^2, E_{\infty}\left(\cdot, \frac{1}{2} + it\right)\right\rangle_q\right|^2 \, dt.
\end{multline}
\end{proposition}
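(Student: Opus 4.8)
The plan is to obtain both formulae by the standard spectral route: the inner integral over $B_R(w)$ is a convolution of $|g|^2$ against the indicator of a hyperbolic ball, so $X_{g;R}(w) - 1/\vol(\Gamma_0(q)\backslash\Hb)$ is a point-pair invariant operator applied to $|g|^2$, and one integrates its square over the modular surface. Concretely, I would first record that for a point-pair invariant kernel $k_R(u)$ normalised so that $\frac{1}{\vol(B_R)}\int_{B_R(w)} f(z)\,d\mu(z) = (f \ast k_R)(w)$, the Selberg/Harish-Chandra transform of $k_R$ is exactly the function $h_R(t)$ written in the statement (this is the computation behind \cite{You16,Hum18}; it comes from writing the ball indicator in geodesic polar coordinates and recognising the resulting Mehler--Fock type integral). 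Then Parseval on $L^2(\Gamma_0(q)\backslash\Hb,\chi_0)$ — note $|g|^2$ is $\Gamma_0(q)$-invariant with trivial nebentypus since $|\chi|=1$ — gives
\[
\Var(g;R) = \sum_{j} |h_R(t_j)|^2 \, |\langle |g|^2, u_j\rangle_q|^2 + \frac{1}{4\pi}\sum_{\aa}\int_{-\infty}^{\infty} |h_R(t)|^2\, |\langle |g|^2, E_{\aa}(\cdot,\tfrac12+it)\rangle_q|^2\, dt,
\]
where $\{u_j\}$ runs over an orthonormal basis of Maa\ss{} cusp forms of level $q$ and trivial nebentypus and $\aa$ over the cusps; the constant eigenfunction contributes $h_R(0)|\langle|g|^2,1\rangle|^2$, which is precisely the square of the main term and cancels against it because $X_{g;R}$ has mean $1/\vol$. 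The fourth moment formula is the same Parseval identity with $h_R \equiv 1$ (i.e.\ $R \to 0$, so the ball shrinks to a point and $X_{g;R} \to |g|^2$ pointwise), retaining the constant-eigenfunction term $1/\vol(\Gamma_0(q)\backslash\Hb)$, and this is essentially Sarnak's observation \cite{Sar03}.

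The genuine work — and the only place where squarefreeness of $q$ and primitivity of $\chi$ enter — is rewriting the spectral sum over a full orthonormal basis of level $q$ in terms of sums over newforms of each divisor level $q_1 \mid q$, with the explicit arithmetic weights $2^{\omega(q_2)}\nu(q_2)\varphi(q_2)/q_2^2$ and the ratio $L_{q_2}(1,\sym^2 f)/L_{q_2}(\tfrac12,f)$ appearing. For this I would use Atkin--Lehner--Li theory: an orthonormal basis of the level-$q$ space is built from $\{f(dz) : d\mid q/q_1,\ f\in\BB_0^{\ast}(\Gamma_0(q_1))\}$, and one must (i) Gram--Schmidt the oldclass spanned by a fixed newform $f$ of level $q_1$, which for squarefree $q$ diagonalises cleanly because the relevant Gram matrix has entries governed by the Hecke eigenvalues $\lambda_f(d)$, and (ii) for the test function $|g|^2$, relate $\langle |g|^2, f(d\,\cdot)\rangle_q$ to $\langle |g|^2, f\rangle_q$ via a factor $\lambda_f(d)/\sqrt d$ (up to normalisation) — this is where the $L_{q_2}(\tfrac12,f)$ in the denominator is produced, since $\sum_{d\mid q_2}\lambda_f(d)d^{-s} = L_{q_2}(s,f)$-type local factors, and the $L_{q_2}(1,\sym^2 f)$ comes from the $L^2$-normalisation constant of the newform relative to level $q$ (the Rankin--Selberg $\langle f,f\rangle_q$ vs.\ $\langle f,f\rangle_{q_1}$ comparison and the adjustment to unit norm). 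The Eisenstein part needs the analogous accounting: on $\Gamma_0(q)\backslash\Hb$ there are $2^{\omega(q)}$ cusps, and for squarefree $q$ all the cuspidal Eisenstein series $E_{\aa}(z,\tfrac12+it)$ have essentially the same pairing with $|g|^2$ up to the same local factors (because $g$ is a newform of level $q$, $|g|^2$ pairs only with the "new" Eisenstein combinations in a controlled way), collapsing the cusp sum to the single term $E_{\infty}$ with the multiplicity $2^{\omega(q)}/4\pi$.

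The main obstacle is the bookkeeping in step (ii): computing $\langle |g|^2, f(d\,\cdot)\rangle_q$ and the oldform Gram--Schmidt coefficients explicitly and checking that, after summing over the oldclass of each newform $f$, everything reorganises into the stated closed form with exactly the local Euler factors $L_{q_2}(1,\sym^2 f)/L_{q_2}(\tfrac12,f)$ and the weight $2^{\omega(q_2)}\nu(q_2)\varphi(q_2)/q_2^2$. I expect to handle this prime-by-prime: since $q$ is squarefree the oldclass at each prime $p\mid q$ is two-dimensional, spanned by $f(z)$ and $f(pz)$, so the Gram matrix is a product over $p\mid q_2$ of explicit $2\times 2$ blocks whose inverse involves $(1 - \lambda_f(p)^2/(p+1)\cdots)$-type expressions, and multiplying these local contributions and matching them against the Euler products of $L_{q_2}(1,\sym^2 f)$ and $L_{q_2}(\tfrac12,f)$ is a finite computation that I would carry out carefully but which contains no conceptual surprise. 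A secondary subtlety worth flagging is verifying that $|g|^2$, though not itself a newform-related object, is genuinely orthogonal to the "wrong" components (e.g.\ oldforms coming from levels not dividing $q$, which do not occur, and the non-$E_\infty$ Eisenstein series beyond the claimed multiplicity) — this follows from the Hecke-equivariance of $|g|^2$ under the full level-$q$ Hecke algebra away from $q$ together with multiplicity one, but should be stated explicitly so the collapse of both the cusp sum and the oldform sum is justified rather than asserted.
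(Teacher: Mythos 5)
Your opening moves match the paper exactly: Parseval on $L^2(\Gamma_0(q)\backslash\Hb)$ together with the Selberg/Harish--Chandra transform identity $\frac{1}{\vol(B_R)}\int_{B_R(w)}f\,d\mu = h_R(t_f)f(w)$ gives the raw spectral expansion, the constant eigenfunction term cancels the $1/\vol$, and the fourth moment is the $h_R\equiv 1$ case. The rewriting of the sum over an orthonormal basis $\BB_0(\Gamma_0(q))$ in terms of newforms of divisor levels via the Iwaniec--Luo--Sarnak basis $\{f_\ell\}$ is also the paper's route.

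The genuine gap is in your step (ii). You propose that $\langle |g|^2, f(d\,\cdot)\rangle_q$ relates to $\langle |g|^2, f\rangle_q$ ``via a factor $\lambda_f(d)/\sqrt d$'', and that summing these Hecke factors over the oldclass is what produces $L_{q_2}(\tfrac12,f)$ in the denominator. Both claims are wrong. In fact the relevant identity is \emph{equality}: $\langle|g|^2, \iota_v f\rangle_q = \langle|g|^2, f\rangle_q$ for every $v\mid q_2$. The factor $L_{q_2}(\tfrac12,f)^{-1}$ arises not from a Hecke relation for $|g|^2$ but from the explicit coefficients $\frac{\nu(v)}{v}\frac{\mu(w)\lambda_f(w)}{\sqrt w}$ in the ILS basis element $f_\ell$, summed via the identity $\sum_{vw=\ell}\frac{\nu(v)}{v}\frac{\mu(w)\lambda_f(w)}{\sqrt w} = L_\ell(\tfrac12,f)^{-1}$ (the paper's \hyperref[EllMaassidentitylemma]{Lemma~\ref*{EllMaassidentitylemma}}); plugging your factor $\lambda_f(v)/\sqrt v$ into that sum yields $\lambda_f(\ell)/\ell^{3/2}$, not $L_\ell(\tfrac12,f)^{-1}$, so the bookkeeping would not close. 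More seriously, the equality $\langle|g|^2, \iota_v f\rangle_q = \langle|g|^2, f\rangle_q$ is \emph{not} elementary: the paper proves it by first using Atkin--Lehner operators to write $\langle|g|^2, \iota_v f\rangle_q = \langle|g\otimes\overline{\chi_v}|^2, f\rangle_q$ (\hyperref[AtkinLehnerlemma]{Lemma~\ref*{AtkinLehnerlemma}}) and then invoking \hyperref[abstwistlemma]{Lemma~\ref*{abstwistlemma}} that twisting $g$ by a subcharacter of its primitive nebentypus leaves $|\langle|g|^2,f\rangle_q|^2$ and $|\langle|g|^2,E_\infty\rangle_q|^2$ unchanged --- a fact the authors explicitly admit they cannot prove directly, deriving it instead as a corollary of the explicit Watson--Ichino local constant computations in \hyperref[WatsonIchinosect]{Sections~\ref*{WatsonIchinosect}}--\ref{localconstantsect}. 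Your sketch treats this as a routine Hecke/multiplicity-one manipulation, which hides the main input. Likewise, the collapse of the cusp sum to $2^{\omega(q)}$ copies of $E_\infty$ is not orthogonality of $|g|^2$ against ``wrong'' Eisenstein components; it is the equality $\langle|g|^2, E_\aa(\cdot,\tfrac12+it)\rangle_q = \langle|g|^2, E_\infty(\cdot,\tfrac12+it)\rangle_q$ for every cusp $\aa$, again via Atkin--Lehner unfolding plus the same twist-invariance lemma.
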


The arithmetic functions $\omega,\nu,\varphi$ are defined by $\omega(n) \defeq \# \left\{p \mid n\right\}$, $\nu(n) \defeq n \prod_{p \mid n} (1 + p^{-1})$, and $\varphi(n) \defeq n \prod_{p \mid n} (1 - p^{-1})$. We have written $L_p(s,\pi)$ for the $p$-component of the Euler product of an $L$-function $L(s,\pi)$, while
\[L_q(s,\pi) \defeq \prod_{p \mid q} L_p(s,\pi), \qquad L^q(s,\pi) \defeq \frac{L(s,\pi)}{L_q(s,\pi)}, \qquad \Lambda^q(s,\pi) \defeq \frac{\Lambda(s,\pi)}{L_q(s,\pi)},\]
where $\Lambda(s,\pi) \defeq q(\pi)^{s/2} L_{\infty}(s,\pi) L(s,\pi)$ denotes the completed $L$-function with conductor $q(\pi)$ and archimedean component $L_{\infty}(s,\pi)$.

Next, we obtain explicit expressions in terms of $L$-functions for the inner products $|\langle |g|^2, f\rangle_q|^2$ and $|\langle |g|^2, E_{\infty}(\cdot,1/2 + it)\rangle|^2$; this is the Watson--Ichino formula.

\begin{proposition}
\label{dihedraltripleproductprop}
Let $q = q_1 q_2$ be squarefree and let $\chi$ be a primitive Dirichlet character modulo $q$. Then for $g \in \BB_0^{\ast}(q,\chi)$ and for $f \in \BB_0^{\ast}(\Gamma_0(q_1))$ of parity $\epsilon_f \in \{1,-1\}$ normalised such that $\langle g, g\rangle_q = \langle f, f\rangle_q = 1$,
\begin{equation}
\label{gpsi^2fWatsonIchino}
\left|\left\langle |g|^2, f\right\rangle_q\right|^2 = \frac{1 + \epsilon_f}{16 \sqrt{q_1} \nu(q_2)} \frac{\Lambda\left(\frac{1}{2}, f\right) \Lambda\left(\frac{1}{2}, f \otimes \ad g\right)}{\Lambda(1, \ad g)^2 \Lambda(1,\sym^2 f)}.
\end{equation}

Similarly,
\begin{equation}
\label{gpsi^2EWatsonIchino}
\left|\left\langle |g|^2, E_{\infty}\left(\cdot, \frac{1}{2} + it\right)\right\rangle_q\right|^2 = \frac{1}{4q} \left|\frac{\Lambda^q\left(\frac{1}{2} + it\right) \Lambda\left(\frac{1}{2} + it, \ad g\right)}{\Lambda(1, \ad g) \Lambda^q(1 + 2it)}\right|^2.
\end{equation}
\end{proposition}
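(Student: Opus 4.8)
The plan is to derive both formulae from the adelic theory of triple product periods, in the spirit of the central-value identity of Watson \cite{Wat08}. For \eqref{gpsi^2fWatsonIchino}, the first step is to pass to the adelic setting: let $\phi_g$, $\phi_{\bar g}$, $\phi_f$ be the adelic lifts of $g$, of its complex conjugate $\bar g$ (of nebentypus $\bar\chi$), and of the level-$q_1$ newform $f$, each normalised so that the associated automorphic $L^2$-norm equals $1$. Since the product of central characters $\chi\cdot\bar\chi\cdot 1$ is trivial, the global period $\int\phi_g\phi_{\bar g}\phi_f$ over $\PSL_2(\Q)\backslash\PSL_2(\A)$ is eligible for Ichino's trilinear form formula, which (in the normalisation of \cite{Wat08}) expresses its squared absolute value as
\[
\frac{1}{8}\cdot\frac{\Lambda\left(\tfrac{1}{2},\pi_f\times\pi_g\times\widetilde{\pi_g}\right)}{\Lambda(1,\ad\pi_f)\,\Lambda(1,\ad\pi_g)\,\Lambda(1,\ad\widetilde{\pi_g})}\prod_v I_v^{\ast},
\]
where $\widetilde{\pi_g}$ denotes the contragredient of $\pi_g$ and each normalised local integral $I_v^{\ast}$ equals $1$ at every place where all three local representations are unramified. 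Matching $\langle|g|^2,f\rangle_q$ to this period requires tracking the measure $d\mu$ on $\Gamma_0(q)\backslash\Hb$ against the automorphic normalisation and the rescaling of $f$ (the level-$q_1$ newform, regarded in $\BB_0^{\ast}(\Gamma_0(q_1))$ with $\langle f,f\rangle_q=1$), which introduces explicit elementary factors in $q_1$ and $q_2$.

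The completed $L$-factors simplify as follows. As isobaric representations $\pi_g\times\widetilde{\pi_g}\cong\mathbf 1\boxplus\ad\pi_g$, so the triple product $L$-function factors as $L(s,\pi_f\times\pi_g\times\widetilde{\pi_g})=L(s,f)\,L(s,f\otimes\ad g)$, yielding the numerator $\Lambda(\tfrac12,f)\Lambda(\tfrac12,f\otimes\ad g)$ of \eqref{gpsi^2fWatsonIchino}. In the denominator $\ad\widetilde{\pi_g}=\ad\pi_g$, so the two $g$-adjoint factors combine to $\Lambda(1,\ad g)^2$, while $\ad\pi_f=\sym^2 f$ since $f$ has trivial nebentypus, giving $\Lambda(1,\sym^2 f)$. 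It then remains to evaluate $\prod_v I_v^{\ast}$. At the archimedean place all three forms are spherical of weight zero, and the local trilinear integral vanishes unless $f$ is even — this is the archimedean case of the sign condition in the Gross--Prasad conjecture, and it is visible directly from the invariance of $|g|^2$ under $z\mapsto-\bar z$ against the sign change of an odd $f$ — so this place contributes the factor $\tfrac{1+\epsilon_f}{2}$.

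The substantive work is at the finite ramified places $p\mid q$ (each necessarily odd, as a primitive character modulo the squarefree $q$ has a component of conductor exactly $p$ at every $p\mid q$). For $p\mid q_2$ the representation $\pi_{f,p}$ is unramified and $\pi_{g,p}$ is a ramified principal series of conductor $p$ with central character $\chi_p$; pairing the spherical vector for $f$ against the newvector for $g$ one computes $I_p^{\ast}$ to obtain a factor $\nu(p)^{-1}$, hence $\nu(q_2)^{-1}$ overall. For $p\mid q_1$ the representation $\pi_{f,p}$ is an unramified twist of the Steinberg representation (as $q_1$ is squarefree with trivial nebentypus) while $\pi_{g,p}$ is again a ramified principal series of conductor $p$; the local integral here is a triple of newvectors and is the most delicate, which I would evaluate by an explicit matrix-coefficient computation in the induced and Kirillov models (or by appealing to known tables of ramified local Ichino integrals), obtaining a factor $q_1^{-1/2}$ overall. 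Combining the archimedean factor $\tfrac{1+\epsilon_f}{2}$, the ramified factors $q_1^{-1/2}$ and $\nu(q_2)^{-1}$, the constant $\tfrac18$, and the measure/normalisation factors produces $\tfrac{1+\epsilon_f}{16\sqrt{q_1}\,\nu(q_2)}$, which is \eqref{gpsi^2fWatsonIchino}.

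For the Eisenstein identity \eqref{gpsi^2EWatsonIchino} I would unfold directly instead of invoking the trilinear machinery: from $E_{\infty}(z,s)=\sum_{\gamma\in\Gamma_\infty\backslash\Gamma_0(q)}\operatorname{Im}(\gamma z)^s$ one obtains $\langle|g|^2,E_{\infty}(\cdot,\tfrac12+it)\rangle_q=\int_0^\infty\int_0^1|g(x+iy)|^2\,y^{1/2+it}\,\frac{dx\,dy}{y^2}$, and Parseval in $x$ applied to the Fourier expansion of $g$ turns this into $\sum_{n\neq0}|\rho_g(n)|^2|n|^{-1/2-it}$ times an archimedean Mellin integral of $K_{it_g}(2\pi y)^2$. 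Up to its Euler factors at $p\mid q$ the Dirichlet series is the Rankin--Selberg convolution $\zeta^q(s)\,L(s,\ad g)/\zeta^q(2s)$ evaluated at $s=\tfrac12+it$, and the archimedean integral supplies precisely the gamma factors that complete it to $\Lambda^q(\tfrac12+it)\,\Lambda(\tfrac12+it,\ad g)/\bigl(\Lambda(1,\ad g)\,\Lambda^q(1+2it)\bigr)$; the residual factor $\tfrac1{4q}$ and the ramified Euler factors emerge from the normalisation of the Hecke eigenvalues of $g$ at primes dividing $q$ and from the width of the cusp at infinity, and squaring the absolute value gives \eqref{gpsi^2EWatsonIchino}. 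I expect the principal obstacle to be the ramified local integral at primes $p\mid q_1$ — the triple of newvectors for an unramified twist of Steinberg, a ramified principal series, and its contragredient — together with the unglamorous but genuinely essential task of tracking every normalisation constant (the passage from $d\mu$ to the automorphic measure, the $L^2(\Gamma_0(q_1))$-versus-$L^2(\Gamma_0(q))$ rescaling of the oldform copies of $f$, and the Fourier-coefficient normalisations) so that the powers of $q_1$ and $\nu(q_2)$ and the rational constant come out exactly as claimed rather than up to an absolute factor.
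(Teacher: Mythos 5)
Your strategy coincides with the paper's: the Eisenstein identity is obtained by classical Rankin--Selberg unfolding exactly as in \hyperref[g^2Eiscor]{Corollary \ref*{g^2Eiscor}} (your sketch of that half is essentially complete), and the cuspidal identity is deduced from Ichino's ad\`{e}lic triple product formula via the isobaric decomposition $g \otimes \overline{g} = 1 \boxplus \ad g$, the archimedean factor $\tfrac{1+\epsilon_f}{2}$, and an evaluation of the local constants at the ramified primes $p \mid q$.

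The gap is in that last step, and it is the heart of the matter: the evaluation of the ramified local integrals is the entire content of \hyperref[localconstantsect]{Section \ref*{localconstantsect}} (Propositions \ref{dihedral2Stprop} and \ref{Collinsprop}), and you do not carry it out. You assert that the primes dividing $q_2$ contribute $\nu(q_2)^{-1}$ and those dividing $q_1$ contribute $q_1^{-1/2}$, and then state that these combine with ``the constant $\tfrac{1}{8}$ and the measure/normalisation factors'' to give $\tfrac{1+\epsilon_f}{16\sqrt{q_1}\,\nu(q_2)}$. As written this is circular: the quoted local values are exactly what is needed to reproduce the target, but you have not fixed the normalisation in which they are claimed to hold, nor computed the measure and conductor factors against which they would have to be checked. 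In Ichino's normalisation the correct values are $I_p' = p^{-1}(1 + p^{-1})$ at $p \mid q_1$ (Steinberg twist paired against a ramified principal series and its contragredient, all newvectors) and $I_p' = p^{-1}$ at $p \mid q_2$, and these only yield the stated constant after being combined with the global conductor factor $q\sqrt{q_1}$ and the Tamagawa-to-classical measure factor $\tfrac{1}{16\nu(q)}$; so neither of your per-prime values is the local constant in any normalisation you have specified. The computation itself --- which the paper performs by reducing $I_v$ to local Rankin--Selberg integrals via \hyperref[MVlemma]{Lemma \ref*{MVlemma}} and explicit Whittaker function evaluations, with your proposed matrix-coefficient route appearing as \hyperref[matrixcoeffremark]{Remark \ref*{matrixcoeffremark}} --- is several pages of delicate work and is precisely the point of the proposition: an identity correct only up to an unspecified constant at the ramified places would not suffice for the asymptotic formula of \hyperref[fourthmomentthm]{Theorem \ref*{fourthmomentthm}}.
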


Now we specialise to $g = g_{\psi} \in \BB_0^{\ast}(D,\chi_D)$. Observe that $\ad g_{\psi}$ is equal to the (noncuspidal) isobaric sum $\chi_D \boxplus g_{\psi^2}$, where $g_{\psi^2} \in \BB_0^{\ast}(D,\chi_D)$ is the dihedral Maa\ss{} newform associated to the Hecke Gr\"{o}\ss{}encharakter $\psi^2$ of $\Q(\sqrt{D})$, and so
\begin{align*}
\Lambda(s, f \otimes \ad g_{\psi}) & = \Lambda(s, f \otimes \chi_D) \Lambda(s, f \otimes g_{\psi^2}),	\\
\Lambda(s, \ad g_{\psi}) & = \Lambda(s, \chi_D) \Lambda(s,g_{\psi^2}),
\end{align*}
which can readily be seen by comparing Euler factors. Then the identity \eqref{gpsi^2fWatsonIchino} holds with $1 + \epsilon_f$ replaced by $2$ as both sides vanish when $f$ is odd: the right-hand side vanishes due to the fact that $\Lambda(1/2,f \otimes \chi_D) = \Lambda(1/2,f) \Lambda(1/2,f \otimes g_{\psi^2}) = 0$, for \hyperref[rootnumberlemma]{Lemma \ref*{rootnumberlemma}} shows that the root number in both cases is $-1$, while the left-hand side vanishes since one can make the change of variables $z \mapsto -\overline{z}$ in the integral over $\Gamma_0(D) \backslash \Hb$, which leaves $|g_{\psi}(z)|^2$ unchanged but replaces $f(z)$ with $-f(z)$.

We have thereby reduced both problems to subconvex moment bounds. To this end, for a function $h : \R \cup i(-1/2,1/2) \to \C$, we define the mixed moments
\begin{align}
\label{MMMaasseq}
\MM^{\Maass}(h) & \defeq \sum_{d_1 d_2 = D} 2^{\omega(d_2)} \frac{\varphi(d_2)}{d_2} \sum_{f \in \BB_0^{\ast}(\Gamma_0(d_1))} \frac{L^{d_2}\left(\frac{1}{2},f\right) L\left(\frac{1}{2},f \otimes \chi_D\right) L\left(\frac{1}{2},f \otimes g_{\psi^2}\right)}{L^{d_2}(1,\sym^2 f)} h(t_f),	\\
\label{MMEiseq}
\MM^{\Eis}(h) & \defeq \frac{2^{\omega(D)}}{2\pi} \int_{-\infty}^{\infty} \left|\frac{\zeta^D\left(\frac{1}{2} + it\right) L\left(\frac{1}{2} + it, \chi_D\right) L\left(\frac{1}{2} + it, g_{\psi^2}\right)}{\zeta^D(1 + 2it)}\right|^2 h(t) \, dt.
\end{align}
We prove the following bounds for these terms for various choices of function $h$.

\begin{proposition}
\label{dihedralmomentsprop}
There exists some $\alpha > 0$ and a constant $\delta > 0$ such that the following hold:
\begin{enumerate}[leftmargin=*]
\item[\emph{(1)}] For $h(t) = 1_{E \cup -E}(t)$ with $E = [T,2T]$ and $T \leq t_g^{1 - \alpha}$,
\[\MM^{\Maass}(h) + \MM^{\Eis}(h) \ll T t_g^{1 - \delta}.\]
\item[\emph{(2)}] For
\[h(t) = \frac{\pi H(t) 1_{E \cup -E}(t)}{8D^2 L(1,\chi_D)^2 L(1,g_{\psi^2})^2}\]
with $H(t)$ as in \eqref{H(t)defeq} and $E = (t_g^{1 - \alpha}, 2t_g - t_g^{1 - \alpha})$,
\[\MM^{\Maass}(h) + \MM^{\Eis}(h) = \frac{2}{\vol(\Gamma_0(D) \backslash \Hb)} + O(t_g^{-\delta}).\]
\item[\emph{(3)}] For $h(t) = 1_{E \cup -E}(t)$ with $E = [T - U,T + U]$, where $2t_g - t_g^{1 - \alpha} \leq T \leq 2t_g + t_g^{1 - \alpha}$ and $\max\{|2t_g - T|,T^{1/3}\} \ll U \leq T$,
\[\MM^{\Maass}(h) + \MM^{\Eis}(h) \ll_{\e} (TU)^{1 + \e}.\]
\item[\emph{(4)}] For $h(t) = 1_{E \cup -E}(t)$ with $E = [T,2T]$ and $T > 2t_g + t_g^{1 - \alpha}$,
\[\MM^{\Maass}(h) + \MM^{\Eis}(h) \ll_{\e} T^{2 + \e}.\]
\item[\emph{(5)}] For $h(t) = 1_{E \cup -E}(t)$ with $E = i(0,1/2)$,
\[\MM^{\Maass}(h) \ll t_g^{1 - \delta}.\]
\end{enumerate}
\end{proposition}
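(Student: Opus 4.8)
\textbf{Proof proposal for Proposition \ref{dihedralmomentsprop}.}

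The plan is to unfold the definitions \eqref{MMMaasseq} and \eqref{MMEiseq} in each of the five ranges and apply moment methods for families of $\GL_2 \times \GL_1$ and $\GL_2 \times \GL_2$ $L$-functions, exploiting the dihedral factorisation $\ad g_{\psi} = \chi_D \boxplus g_{\psi^2}$ that has already been used to rewrite the triple product identities. The central point is that $L(\tfrac12, f \otimes g_{\psi^2})$ is, by the Rankin--Selberg theory for a \emph{fixed} dihedral form, essentially a degree-$2$ object over $\Q(\sqrt D)$: we have $L(s, f \otimes g_{\psi^2}) = L(s, \mathrm{BC}(f) \otimes \psi^2)$, the $L$-function of the base change of $f$ twisted by the Gr\"o\ss{}encharakter $\psi^2$. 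Thus each summand is, up to the harmless factors $L^{d_2}(\tfrac12, f)/L^{d_2}(1,\sym^2 f)$ and $L(\tfrac12, f\otimes\chi_D)$, a product of a genuine $\GL_2/\Q$ central value and a $\GL_2/\Q(\sqrt D)$ central value of essentially the same size; their product has ``degree $6$ over $\Q$'' and conductor polynomial in $t_f$, so convexity predicts each term is $\ll t_f^{3/4+\e}$ and we need to beat this on average, saving a positive power over the trivial bound $\ll \sum_{t_f\in E}t_f^{3/4+\e}$ (respectively over the convexity-strength bound in parts (3)--(4)).

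The mechanism in each part is an approximate functional equation followed by the Kuznetsov trace formula to turn the spectral sum (and, after adding \eqref{MMEiseq}, the Eisenstein contribution, which Kuznetsov naturally packages together with the cuspidal spectrum) into sums of Kloosterman sums weighted by Bessel transforms of $h$; one then opens the Fourier coefficients coming from the dihedral $L$-functions as divisor-type functions over ideals of $\Q(\sqrt D)$ and estimates the resulting shifted convolution / Kloosterman-sum expressions. In part (1), where $T \le t_g^{1-\alpha}$ is small relative to the conductor-dominating parameter $t_g$, the length of the approximate functional equation is governed by $t_g$, and the saving $\delta$ comes from the length of the $f$-family ($\asymp T^2$ via Weyl law) being short compared to the analytic conductor, so the diagonal dominates and the off-diagonal is power-saving --- this is a Lindel\"of-on-average statement and is the technical heart. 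Part (2) is the main-term computation: here $E$ is the bulk range $t_f \asymp t_g$, the weight $H(t)$ is the Bessel-type kernel $h_R$ (or its square) built to reproduce exactly the spectral expansion of Proposition \ref{dihedralspectralprop}, and one must extract the main term $2/\vol(\Gamma_0(D)\backslash\Hb)$ --- which should emerge from the diagonal in Kuznetsov together with the contribution of the $\GL_2\times\GL_1$ piece $L(\tfrac12,f\otimes\chi_D)$ evaluated near the symmetry point --- with everything else power-saving. Parts (3), (4), (5) are ``easy'' ranges: in (3) and (4) the spectral parameter $t_f$ is away from the conductor-drop point $t_f = 2t_g$ (or, in (4), much larger than $t_g$), so no subconvexity is needed and a second-moment bound via Kuznetsov, combined with Cauchy--Schwarz against the third factor, gives the stated convexity-strength estimates $(TU)^{1+\e}$ and $T^{2+\e}$; part (5) handles the finitely many exceptional eigenvalues $t_f\in i(0,1/2)$ (at most $O_D(1)$ of them, by Selberg-type bounds on the level $D$), for which one needs only a fixed subconvex bound for each individual value, available since $\psi^2$ is a fixed Gr\"o\ss{}encharakter (so $L(\tfrac12,g_{\psi^2})$ and $L(\tfrac12,f\otimes g_{\psi^2})$ are fixed or of bounded conductor).

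The main obstacle is part (1): one needs a genuinely subconvex-on-average bound for the mixed sixth moment $\sum_{t_f\in[T,2T]} L^{d_2}(\tfrac12,f)L(\tfrac12,f\otimes\chi_D)L(\tfrac12,f\otimes g_{\psi^2})$ with the extra flexibility that $T$ may be as large as $t_g^{1-\alpha}$, so the implied savings $\delta$ and the admissible $\alpha$ are coupled. The natural route is: (i) approximate functional equation, splitting the $\GL_6$-type product into a sum over ideals $\mathfrak{n}$ of $\Z[\tfrac{1+\sqrt D}{2}]$ and integers $m$ with $N\mathfrak{n}\cdot m \ll (\text{analytic conductor})^{1/2+\e} \asymp (T t_g)^{3/2+\e}$ roughly; (ii) Kuznetsov, producing a diagonal of size $\asymp T^2$ (after the arithmetic weight $1/L^{d_2}(1,\sym^2 f)$ is handled by a standard Dirichlet-series sieve) plus Kloosterman terms; (iii) Poisson/Vorono\u{\i} summation over $\Q(\sqrt D)$ on the ideal variable to exhibit cancellation, reducing to bounding sums of products of $\GL_2/\Q(\sqrt D)$ Kloosterman sums by Weil and large-sieve inequalities for the field $\Q(\sqrt D)$. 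Controlling the interaction of the two summation variables --- the rational $m$ and the ideal $\mathfrak n$ --- while keeping the dependence on $t_g$ polynomial (as promised in the second Remark), is where the bulk of the work lies; the other parts are comparatively routine applications of the spectral large sieve.
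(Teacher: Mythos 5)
Your high-level description of parts (2) and (4) is roughly correct, and the general philosophy (approximate functional equations, Kuznetsov, Vorono\u{\i}, exploiting the factorisation $\ad g_{\psi} = \chi_D \boxplus g_{\psi^2}$) is the right ballpark. But there are several substantive gaps and one outright error, and they are concentrated precisely in the parts you flag as the heart of the matter.

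The treatment of part (1) is incomplete. A single pass of approximate functional equation $\to$ Kuznetsov $\to$ Vorono\u{\i} as you describe is essentially the first-moment estimate of \hyperref[firstmomentsect]{Section \ref*{firstmomentsect}}, namely $\widetilde{\MM}^{\Maass}(h) + \widetilde{\MM}^{\Eis}(h) \ll_{\e} T^{2+\e} + t_g^{1+\e}$, and that estimate only has content for $T < t_g^{1/2}$; for $T$ near $t_g^{1-\alpha}$ the $T^{2+\e}$ term alone already exceeds $Tt_g^{1-\delta}$. You have not said how to handle the factor $L(\tfrac12,f)L(\tfrac12,f\otimes\chi_D)$: the paper exploits \emph{nonnegativity} of $L(\tfrac12,f\otimes g_{\psi^2})$ together with Weyl-strength subconvexity for $L(\tfrac12,f)L(\tfrac12,f\otimes\chi_D)$ (Young and \hyperref[MVsubconvthm]{Theorem \ref*{MVsubconvthm}}) to bound the product moment by the first moment; your proposal never invokes either. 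More seriously, the range $t_g^{1/2} \leq T \leq t_g^{1-\alpha}$ is \emph{not} reachable by a single Vorono\u{\i}: the paper uses \emph{spectral reciprocity} (Kuznetsov $\to$ double Vorono\u{\i} $\to$ inverse Kuznetsov a.k.a.\ Kloosterman summation, \hyperref[spectralrecsect]{Sections \ref*{spectralrecsect}}--\ref{boundstranssect}) to convert a moment over $t_f \sim T$ into a moment over $t_f \sim t_g/T$, plus a holomorphic companion; only after that reciprocation does the short-$T$ method apply. This triad is the paper's signature technique and is entirely absent from your plan. Your claim that "the diagonal dominates and the off-diagonal is power-saving --- this is a Lindelöf-on-average statement" is misleading: the direct large sieve gives only $O_\e((Tt_g)^{1+\e})$, which is weaker than the target by $t_g^{\delta+\e}$ (see \hyperref[largesieveboundsremark]{Remark \ref*{largesieveboundsremark}}), so the result genuinely beats the easy average bound and implies hybrid subconvexity.

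Part (3) is not a "routine second-moment" estimate. After Cauchy--Schwarz one factor is $\sum_{t_f \in E} L(\tfrac12,f)^2 L(\tfrac12,f\otimes\chi_D)^2 /(\cdots)$; the spectral large sieve bounds this only by $T^{2+\e}$, which is insufficient when $U$ is as small as $T^{1/3}$. One needs the genuinely subconvex bound $\ll_\e (TU)^{1+\e}$ for this short fourth moment, and the paper proves it via a second spectral reciprocity argument (\hyperref[spectralrec2sect]{Sections \ref*{spectralrec2sect}}--\ref{boundstrans2sect}), generalising Jutila; dropping all but one term yields Weyl subconvexity for $L(\tfrac12,f)L(\tfrac12,f\otimes\chi_D)$ (\hyperref[Weylsubconvexityremark]{Remark \ref*{Weylsubconvexityremark}}), so "no subconvexity is needed" is incorrect.

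Part (5) rests on a misconception. You write that "$\psi^2$ is a fixed Gr\"{o}\ss{}encharakter (so $L(\tfrac12,g_{\psi^2})$ and $L(\tfrac12,f\otimes g_{\psi^2})$ are fixed or of bounded conductor)." The field $\Q(\sqrt D)$ and the character $\psi$ modulo principal units are not fixed as $t_g\to\infty$: $g_{\psi^2} \in \BB_0^{\ast}(D,\chi_D)$ has spectral parameter $2t_g$, and the analytic conductor of $L(\tfrac12,f\otimes g_{\psi^2})$ is $\asymp t_g^4$ for $t_f$ bounded. The paper's bound $\ll t_g^{1-\delta}$ for part (5) is a nontrivial subconvex estimate in the $t_g$-aspect, supplied by \hyperref[MVsubconvthm]{Theorem \ref*{MVsubconvthm}}; it does not follow from bounded-conductor considerations. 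Finally, a small point: in part (2) the weight $H(t)$ of \eqref{H(t)defeq} is the ratio of gamma factors coming out of the Watson--Ichino formula, not "the Bessel-type kernel $h_R$ (or its square)"; $|h_R(t)|^2$ enters as an \emph{extra} weight for $\Var(g_\psi;R)$ but $H(t)$ is present for the fourth moment as well.
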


As in \cite[Section 3.2]{Hum18}, this covers the five ranges of the spectral expansion:
\begin{enumerate}[leftmargin=*]
\item[(1)] the short initial range $[-t_g^{1 - \alpha},t_g^{1 - \alpha}]$,
\item[(2)] the bulk range $(-2t_g + t_g^{1 - \alpha}, -t_g^{1 - \alpha}) \cup (t_g^{1 - \alpha}, 2t_g - t_g^{1 - \alpha})$,
\item[(3)] the short transition range $[-2t_g - t_g^{\alpha}, -2t_g + t_g^{1 - \alpha}] \cup [2t_g - t_g^{1 - \alpha}, 2t_g + t_g^{1 - \alpha}]$,
\item[(4)] the tail range $(-\infty, -2t_g - t_g^{1 - \alpha}) \cup (2t_g + t_g^{1 - \alpha},\infty)$, and
\item[(5)] the exceptional range $i(-1/2,1/2) \setminus \{0\}$.
\end{enumerate}

\begin{remark}
For the purposes of proving \hyperref[Planckthm]{Theorem \ref*{Planckthm}}, the exact identities in \hyperref[dihedralspectralprop]{Propositions \ref*{dihedralspectralprop}} and \ref{dihedraltripleproductprop} as well as the asymptotic formula in \hyperref[dihedralmomentsprop]{Proposition \ref*{dihedralmomentsprop} (2)} are superfluous, for we could make do with upper bounds in each case in order to prove the desired upper bound for $\Var(g_{\psi};R)$. These identities, however, are necessary to prove the desired asymptotic formula for the fourth moment of $g_{\psi}$ in \hyperref[fourthmomentthm]{Theorem \ref*{fourthmomentthm}}.
\end{remark}

\begin{remark}
\label{largesieveboundsremark}
The large sieve yields with relative ease the bounds $O_{\e}((Tt_g)^{1 + \e})$ and $O_{\e}(t_g^{\e})$ for \hyperref[dihedralmomentsprop]{Proposition \ref*{dihedralmomentsprop} (1) and (2)} respectively; dropping all but one term then only yields the convexity bound for the associated $L$-functions. These weaker bounds imply that the variance $\Var(g_{\psi};R)$ and the fourth moment of $g_{\psi}$ are both $O_{\e}(t_g^{\e})$, with the latter being a result of Luo \cite[Theorem]{Luo14} and the former falling just short of proving small scale mass equidistribution.
\end{remark}

\subsection{A Sketch of the Proofs and the Structure of the Paper}

We briefly sketch the main ideas behind the proofs of \hyperref[dihedralspectralprop]{Propositions \ref*{dihedralspectralprop}}, \ref{dihedraltripleproductprop}, and \ref{dihedralmomentsprop}.

The proof of \hyperref[dihedralspectralprop]{Proposition \ref*{dihedralspectralprop}}, given in \hyperref[spectralexpansionsect]{Section \ref*{spectralexpansionsect}}, uses the spectral decomposition of $L^2(\Gamma_0(q) \backslash \Hb)$ and Parseval's identity to spectrally expand the variance and the fourth moment. We then require an orthonormal basis in terms of newforms and translates of oldforms together with an explicit description of the action of Atkin--Lehner operators on these Maa\ss{} forms in order to obtain \eqref{dihedralspectraleq} and \eqref{dihedralspectral2eq}.

\hyperref[dihedraltripleproductprop]{Proposition \ref*{dihedraltripleproductprop}} is an explicit form of the Watson--Ichino formula, which relates the integral of three $\GL_2$-automorphic forms to a special value of a triple product $L$-function; we present this material in \hyperref[WatsonIchinosect]{Section \ref*{WatsonIchinosect}}. To ensure that the identities \eqref{gpsi^2fWatsonIchino} and \eqref{gpsi^2EWatsonIchino} are correct not merely up to multiplication by an unspecified constant requires a careful translation of the ad\`{e}lic identity \cite[Theorem 1.1]{Ich08} into the classical language of automorphic forms. Moreover, this identity involves local constants at ramified primes, and the precise set-up of our problem involves determining such local constants, which is undertaken in \hyperref[localconstantsect]{Section \ref*{localconstantsect}}. This problem of the determination of local constants in the Watson--Ichino formula is of independent interest; see, for example, \cite{Col18,Col19,Hu16,Hu17,Wat08}.

The proof of \hyperref[dihedralmomentsprop]{Proposition \ref*{dihedralmomentsprop}} takes up the bulk of this paper, for it is rather involved and requires several different strategies to deal with various ranges. The many (predominantly) standard automorphic tools used in the course of the proof, such as the approximate functional equation, the Kuznetsov formula, and the large sieve, are relegated to \hyperref[toolboxappendix]{Appendix \ref*{toolboxappendix}}; we recommend that on first reading, the reader familiarise themself with these tools via a quick perusal of \hyperref[toolboxappendix]{Appendix \ref*{toolboxappendix}} before continuing on to the proof of \hyperref[dihedralmomentsprop]{Proposition \ref*{dihedralmomentsprop}} that begins in \hyperref[firstmomentsect]{Section \ref*{firstmomentsect}}.

\hyperref[dihedralmomentsprop]{Proposition \ref*{dihedralmomentsprop} (1)}, proven in \hyperref[(1)sect]{Section \ref*{(1)sect}}, requires three different treatments for three different parts of the short initial range. We may use hybrid subconvex bounds for $L(1/2,f \otimes g_{\psi^2})$ and $|L(1/2 + it,g_{\psi^2})|^2$ due to Michel and Venkatesh \cite{MV10} to treat the range $T \leq t_g^{\beta}$ for an absolute constant $\beta > 0$. For $t_g^{\beta} < T \leq t_g^{1/2}$, we use subconvex bounds for $L(1/2, f \otimes \chi_D)$ and $|L(1/2 + it,\chi_D)|^2$ due to Young \cite{You17} together with bounds proven in \hyperref[firstmomentsect]{Section \ref*{firstmomentsect}} for the first moment of $L(1/2,f \otimes g_{\psi^2})$ and of $|L(1/2 + it,g_{\psi^2})|^2$. This approach relies crucially on the nonnegativity of $L(1/2,f \otimes g_{\psi^2})$ (see, for example, the discussion on this point in \cite[Section 1.1]{HT14}). Bounds for the remaining range $t_g^{1/2} < T \leq t_g^{1 - \alpha}$ for \hyperref[dihedralmomentsprop]{Proposition \ref*{dihedralmomentsprop} (1)} are shown in \hyperref[spectralrecsect]{Sections \ref*{spectralrecsect}} and \ref{boundstranssect} to follow from the previous bounds for the range $t_g^{\alpha} \ll T \ll t_g^{1/2}$. This is spectral reciprocity: via the triad of Kuznetsov, Vorono\u{\i}, and Kloosterman summation formul\ae{} (the latter being the Kuznetsov formula in the formulation that expresses sums of Kloosterman sums in terms of Fourier coefficients of automorphic forms), bounds of the form
\[\MM^{\Maass}(h) + \MM^{\Eis}(h) \ll T t_g^{1 - \delta}\]
with $h(t) = 1_{E \cup -E}(t)$ for $E = [T,2T]$ are essentially implied by the same bounds with $E = [t_g/T,2t_g/T]$ together with analogous bounds for moments involving holomorphic cusp forms of even weight $k \in [t_g/T,2t_g/T]$.

The proof of \hyperref[dihedralmomentsprop]{Proposition \ref*{dihedralmomentsprop} (2)} for the bulk range, appearing in \hyperref[(2)sect]{Section \ref*{(2)sect}}, mimics that of the analogous result for Eisenstein series given in \cite{DK18b}. As such, we give a laconic sketch of the proof, highlighting mainly the slight differences compared to the Eisenstein case.

\hyperref[dihedralmomentsprop]{Proposition \ref*{dihedralmomentsprop} (3)} is proven in \hyperref[(3)sect]{Section \ref*{(3)sect}} and relies upon the Cauchy--Schwarz inequality; the resulting short second moment of Rankin--Selberg $L$-functions is bounded via the large sieve, while a bound is also required for a short mixed moment of four $L$-functions. This latter bound is again a consequence of spectral reciprocity, akin to \cite[Theorem]{Jut01}, and is detailed in \hyperref[spectralrec2sect]{Sections \ref*{spectralrec2sect}} and \ref{boundstrans2sect}.

In \hyperref[(4)sect]{Section \ref*{(4)sect}}, we show that \hyperref[dihedralmomentsprop]{Proposition \ref*{dihedralmomentsprop} (4)} is a simple consequence of the large sieve, while \hyperref[dihedralmomentsprop]{Proposition \ref*{dihedralmomentsprop} (5)} is shown in \hyperref[(5)sect]{Section \ref*{(5)sect}} to follow once more from hybrid subconvex bounds for $L(1/2,f \otimes g_{\psi^2})$ and $|L(1/2 + it,g_{\psi^2})|^2$ due to Michel and Venkatesh \cite{MV10}.

\subsection{Further Heuristics}

We give some very rough back-of-the-envelope type calculations to go along with the sketch above. \hyperref[dihedralmomentsprop]{Proposition \ref*{dihedralmomentsprop}} requires the evaluation of a mean value of $L$-functions looking essentially like 
\[\sum_{t_f < 2t_g} \frac{L\left(\frac{1}{2}, f\right)^2 L\left(\frac{1}{2}, f\otimes g_{\psi^2}\right) }{t_f t_g^{1/2} (1 + |2t_g - t_f|)^{1/2}},\]
where we pretend that $D$ equals $1$, since it is anyway fixed. The goal is to extract the main term with an error term bounded by a negative power of $t_g$. The expression remains unchanged if the summand is multiplied by the parity $\epsilon_f = \pm 1$ of $f$, because $L(1/2,f) = 0$ when $\epsilon_f = -1$. Summing over $t_f$ using the opposite-sign case of the Kuznetsov formula gives, in the dyadic range $ t_f\sim T$, an off-diagonal of the shape 
\[\frac{1}{t_g^{1/2} (1 + |2t_g - T|)^{1/2}} \sum_{\substack{n \sim T^2 \\ m \sim t_g (1 + |2t_g - T|)}} \frac{\lambda_{g_{\psi^2}}(m) d(n)}{\sqrt{mn}} \sum_{c \sim t_g^{1/2} (1 + |2t_g - T|)^{1/2}} \frac{S(m,n;c)}{c},\]
where $d(n)$ is the divisor function. Note that for the sake of argument, we use approximate functional equations, although our proof works with Dirichlet series in regions of absolute convergence and continues meromorphically at the last possible moment.

Consider the case $t_g^{\alpha} \leq T \leq 2t_g - t_g^{1 - \alpha}$, which includes the short initial and bulk ranges, so that $m \sim t_g^2$ and $c \sim t_g$. Applying the Vorono\u{\i} summation formula to both $n$ and $m$ returns a sum like
\[\frac{T}{t_g^4} \sum_{n \sim \frac{t_g^2}{T^2}} \sum_{m \sim t_g^2} \sum_{c \sim t_g} \lambda_{g_{\psi^2}}(m) d(n) S(m,n;c).\]
Note that $c \sim (T/2t_g) \sqrt{mn}$, so applying the Kloosterman summation formula gives
\[\frac{T^2}{t_g^4} \sum_{t_f < \frac{2t_g}{T}} \sum_{n \sim \frac{t_g^2}{T^2}} \sum_{m \sim t_g^2} \lambda_f(n) d(n) \lambda_f(m) \lambda_{g_{\psi^2}}(m).\]
This can be recast as essentially
\[\sum_{t_f < \frac{2t_g}{T}} \frac{L\left(\frac{1}{2}, f\right)^2 L\left(\frac{1}{2}, f\otimes g_{\psi^2}\right)}{t_f t_g^{1/2} (1 + |2t_g - t_f|)^{1/2}}.\]
The phenomenon of the same mean value of $L$-functions reappearing but with the range of summation now reciprocated to $t_f < 2t_g/T$ is spectral reciprocity, as alluded to above.

When $T \sim t_g$, the bulk range, we immediately get a satisfactory estimate by inserting subconvexity bounds. When $T < t_g^{1 - \alpha}$, the short initial range, we are not done right away, but we at least reduce to the case $T < t_g^{1/2}$. In this range, we must use a new approach. The idea is to bound, using nonnegativity of central values, $L(1/2,f)^2$ by subconvexity bounds and then to estimate the first moment $\sum_{t_f \sim T} L(1/2, f\otimes g_{\psi^2})$. This is not an easy task because the sum over $t_f$ is very short. We expand the first moment using approximate functional equations, apply the Kuznetsov formula, use the Vorono\u{\i} summation formula, and then estimate; this turns out to be sufficient. Finally, it remains to consider the short transition range $|t_f - 2t_g| \sim T$ with $|T| < t_g^{1 - \alpha}$. Here the strategy is to apply the Cauchy--Schwarz inequality and consider $\sum_{t_f} L(1/2,f)^4$ and $\sum_{t_f} L(1/2, f\otimes g_{\psi^2})^2$, the latter of which can be estimated sharply using the spectral large sieve, while the former can be bounded once again via spectral reciprocity.

\subsection{Related Results for the Fourth Moment and Spectral Reciprocity}

Bounds of the form $O_{\e}(t_g^{\e})$ for the fourth moment of the truncation of an Eisenstein series $E(z,1/2 + it_g)$ or for a dihedral Maa\ss{} form $g = g_{\psi}$ have been proven by Spinu \cite{Spi03} and Luo \cite{Luo14} respectively; the proofs use the Cauchy--Schwarz inequality and the large sieve to bound moments of $L$-functions and rely on the factorisation of the $L$-functions appearing in the Watson--Ichino formula. In applying the large sieve to the bulk range, this approach loses the ability to obtain an asymptotic formula.

Sarnak and Watson \cite[Theorem 3(a)]{Sar03} noticed that via the $\GL_3$ Vorono\u{\i} summation formula coupled with the convexity bound for $L(1/2,f \otimes \sym^2 g)$, one could prove the bound $O_{\e}(t_g^{\e})$ for the bulk range of the spectral expansion of the fourth moment of a Maa\ss{} cusp form (cf.~\cite[Remark 3.3]{Hum18}). This approach was expanded upon by Buttcane and the second author \cite{BuK17b}, where an asymptotic for this bulk range was proven under the assumption of the generalised Lindel\"{o}f hypothesis. Asymptotics for a moment closely related to that appearing in \hyperref[dihedralmomentsprop]{Proposition \ref*{dihedralmomentsprop} (2)} are proven in \cite{BuK17a}; the method is extremely similar to that used in \cite{BuK17b}. Finally, asymptotics for the bulk range appearing in the spectral expansion of the regularised fourth moment of an Eisenstein series are proven in \cite{DK18b} (and \hyperref[dihedralmomentsprop]{Proposition \ref*{dihedralmomentsprop} (2)} is proven via minor modifications of this proof). These results all follow via the triad of Kuznetsov, Vorono\u{\i}, and Kloosterman summation formul\ae{}, and are cases of spectral reciprocity: the moment of $L$-functions in the bulk range is shown to be equal to a main term together with a moment of $L$-functions that is essentially extremely short, namely involving forms $f$ for which $t_f \ll t_g^{\e}$.

This nonetheless leaves the issue of dealing with the short initial and transition ranges. Assuming the generalised Lindel\"{o}f hypothesis, it is readily seen that these are negligible. Spectral reciprocity in the short initial range is insufficient to prove this, since it merely replaces the problem of bounding the contribution from the range $[T,2T]$ with that of the range $[T/t_g,2T/t_g]$. Our key observation is that spectral reciprocity reduces the problem to the range $T < t_g^{1/2}$, at which point we may employ a different strategy, namely subconvex bounds for $L(1/2,f) L(1/2, f \otimes \chi_D)$ together with a bound for the first moment of $L(1/2, f \otimes g_{\psi^2})$. This approach, albeit in a somewhat disguised form, is behind the success of the unconditional proofs of the negligibility of the short initial and transition ranges for the regularised fourth moment of an Eisenstein series. These follow from the work of Jutila \cite{Jut01} and Jutila and Motohashi \cite{JM05}; see \cite[Lemmata 3.7 and 3.8]{Hum18}.

\subsection{Connections to Subconvexity}
\label{connectionssect}

Quantifying the rate of equidistribution for quantum unique ergodicity in terms of bounds for \eqref{QUEfErateseq} is, via the Watson--Ichino formula, equivalent to determining subconvex bounds for $L(1/2,f \otimes \ad g)$ in the $t_g$-aspect. Such bounds are yet to be proven except in a select few cases, namely when $g$ is dihedral or an Eisenstein series, where $L(1/2, f \otimes \ad g)$ factorises as
\[\begin{dcases*}
L\left(\frac{1}{2}, f \otimes \chi_D\right) L\left(\frac{1}{2}, f \otimes g_{\psi^2}\right) & if $g = g_{\psi} \in \BB_0^{\ast}(D,\chi_D)$ is dihedral,	\\
L\left(\frac{1}{2}, f\right) L\left(\frac{1}{2} + 2it_g, f\right) L\left(\frac{1}{2} - 2it_g, f\right) & if $g(z) = E(z,1/2 + it_g)$.
\end{dcases*}\]
Indeed, quantum unique ergodicity was already known for Eisenstein series \cite{LS95} before the work of Lindenstrauss \cite{Lin06} and Soundararajan \cite{Sou10}, and for dihedral Maa\ss{} forms \cite{Blo05} with quantitative bounds for \eqref{QUEfErateseq} shortly thereafter (see also \cite{Sar01,LY02,LLY06a,LLY06b}). The proofs of \hyperref[Planckthm]{Theorems \ref*{Planckthm}} and \ref{fourthmomentthm}, as well as their Eisenstein series counterparts \cite{DK18b,Hum18}, rely crucially on these factorisations, and the chief hindrance behind the lack of an unconditional proof of these theorems for an arbitrary Maa\ss{} cusp form is the lack of such a factorisation.

In proving \hyperref[Planckthm]{Theorem \ref*{Planckthm}}, on the other hand, we require bounds for the moments given in \hyperref[dihedralmomentsprop]{Proposition \ref*{dihedralmomentsprop}}, most notably in the range $E = [T,2T]$ with $T < t_g^{1 - \alpha}$. Dropping all but one term in this range implies the hybrid subconvex bounds
\begin{align*}
L\left(\frac{1}{2},f\right) L\left(\frac{1}{2},f \otimes \chi_D\right) L\left(\frac{1}{2},f \otimes g_{\psi^2}\right) & \ll t_f t_g^{1 - \delta},	\\
\left|\zeta\left(\frac{1}{2} + it\right) L\left(\frac{1}{2} + it, \chi_D\right) L\left(\frac{1}{2} + it, g_{\psi^2}\right)\right|^2 & \ll |t| t_g^{1 - \delta}
\end{align*}
for these products of $L$-functions with analytic conductors $\asymp (t_f t_g)^4$ and $\asymp (|t| t_g)^4$ respectively. Such bounds for product $L$-functions were previously known, and at various points in the proof of \hyperref[dihedralmomentsprop]{Proposition \ref*{dihedralmomentsprop}} we make use of known subconvex bounds for individual $L$-functions in this product; what is noteworthy is that individual subconvex bounds are insufficient for proving \hyperref[Planckthm]{Theorems \ref*{Planckthm}} and \ref{fourthmomentthm}, but rather bounds for moments that imply subconvexity are required.

\begin{remark}
\label{subconvexityobstacleremark}
This demonstrates the difficulty of proving \hyperref[Planckthm]{Theorems \ref*{Planckthm}} and \ref{fourthmomentthm} unconditionally for arbitrary Hecke--Maa\ss{} eigenforms $g$: as mentioned in \cite[p.~1493]{BuK17b}, we would require a subconvex bound of the form $L(1/2,f \otimes \ad g) \ll t_g^{1 - \delta}$ uniformly in $t_f < t_g^{\delta'}$ for some $\delta' > 0$, a well-known open problem. On the other hand, Sarnak \cite[Conjecture 4]{Sar03} conjectures the weaker upper bound $O_{\e}(t_g^{\e})$ for the fourth moment of an arbitrary Hecke--Maa\ss{} eigenform $g$, which would not require such a subconvex bound.
\end{remark}

\section{Proofs of \texorpdfstring{\hyperref[Planckthm]{Theorems \ref*{Planckthm}} and \ref{fourthmomentthm}}{Theorems \ref{Planckthm} and \ref{fourthmomentthm}}}
\label{proofPlanckthmsect}

\begin{proof}[{Proofs of {\hyperref[Planckthm]{Theorems \ref*{Planckthm}} and \ref{fourthmomentthm}} assuming {\hyperref[dihedralspectralprop]{Propositions \ref*{dihedralspectralprop}}}, {\ref{dihedraltripleproductprop}}, and {\ref{dihedralmomentsprop}}}]
From \hyperref[dihedralspectralprop]{Propositions \ref*{dihedralspectralprop}} and \ref{dihedraltripleproductprop}, $\Var(g_{\psi};R)$ is equal to the sum of
\begin{multline}
\label{Varcusptermeq}
\frac{\pi}{8D^2 L(1,\chi_D)^2 L(1,g_{\psi^2})^2} \sum_{d_1 d_2 = D} 2^{\omega(d_2)} \frac{\varphi(d_2)}{d_2}	\\
\times \sum_{f \in \BB_0^{\ast}\left(\Gamma_0\left(d_1\right)\right)} \frac{L^{d_2}\left(\frac{1}{2}, f\right) L\left(\frac{1}{2}, f \otimes \chi_D\right) L\left(\frac{1}{2}, f \otimes g_{\psi^2}\right)}{L^{d_2}(1,\sym^2 f)} \left|h_R(t_f)\right|^2 H(t_f)
\end{multline}
and
\begin{equation}
\label{VarEistermeq}
\frac{2^{\omega(D)}}{16D^2 L(1,\chi_D)^2 L(1,g_{\psi^2})^2} \int_{-\infty}^{\infty} \left|\frac{\zeta^D\left(\frac{1}{2} + it\right) L\left(\frac{1}{2} + it, \chi_D\right) L\left(\frac{1}{2} + it, g_{\psi^2}\right)}{\zeta^D(1 + 2it)}\right|^2 \left|h_R(t)\right|^2 H(t) \, dt,
\end{equation}
with
\begin{multline}
\label{H(t)defeq}
H(t) \defeq \frac{\Gamma\left(\frac{1}{4} + \frac{i(2t_g + t)}{2}\right) \Gamma\left(\frac{1}{4} + \frac{i(2t_g - t)}{2}\right) \Gamma\left(\frac{1}{4} - \frac{i(2t_g + t)}{2}\right) \Gamma\left(\frac{1}{4} - \frac{i(2t_g - t)}{2}\right)}{\Gamma\left(\frac{1}{2} + it_g\right)^2 \Gamma\left(\frac{1}{2} - it_g\right)^2}	\\
\times \frac{\Gamma\left(\frac{1}{4} + \frac{it}{2}\right)^2 \Gamma\left(\frac{1}{4} - \frac{it}{2}\right)^2}{\Gamma\left(\frac{1}{2} + it\right) \Gamma\left(\frac{1}{2} - it\right)}.
\end{multline}
Via Stirling's formula
\begin{equation}
\label{Stirlingeq}
\Gamma(s) = \sqrt{2\pi} s^{s - \frac{1}{2}} e^{-s} \left(1 + O\left(\frac{1}{|s|}\right)\right)
\end{equation}
for $\left|\arg s\right| < \pi$ \cite[8.327.1]{GR07},
\begin{multline*}
H(t) = \frac{8\pi e^{-\pi \Omega(t,t_g)}}{(1 + |t|)(1 + |2t_g + t|)^{1/2} (1 + |2t_g - t|)^{1/2}}	\\
\times \left(1 + O\left(\frac{1}{1 + |t|} + \frac{1}{1 + |2t_g + t|} + \frac{1}{1 + |2t_g - t|}\right)\right)
\end{multline*}
for $t \in \R \cup i(-1/2,1/2)$, where
\[\Omega(t,t_g) = \begin{dcases*}
0 & if $|t| \leq 2t_g$,	\\
|t| - 2t_g & if $|t| > 2t_g$.
\end{dcases*}\]
It follows that
\[\Var\left(g_{\psi};R\right) \ll \frac{\MM^{\Maass}(h) + \MM^{\Eis}(h)}{L(1, g_{\psi^2})^2}\]
with
\begin{equation}
\label{h(t)asympeq}
h(t) = \frac{|h_R(t)|^2 e^{-\pi \Omega(t,t_g)}}{(1 + |t|)(1 + |2t_g + t|)^{1/2} (1 + |2t_g - t|)^{1/2}}.
\end{equation}

We recall the bound $L(1, g_{\psi^2}) \gg 1/\log t_g$, as well as \cite[Lemma 4.2]{Hum18}, which states that as $R$ tends to zero,
\begin{equation}
\label{hR(t)asympeq}
h_R(t) \sim \begin{dcases*}
1 & if $Rt$ tends to zero,	\\
\frac{2 J_1(Rt)}{Rt} & if $Rt \in (0,\infty)$,	\\
\frac{1}{\sqrt{\pi}} \left(\frac{2}{Rt}\right)^{3/2} \sin\left(Rt - \frac{\pi}{4}\right) & if $Rt$ tends to infinity,
\end{dcases*}
\end{equation}
where $J_{\nu}(z)$ denotes the Bessel function of the first kind. Moreover, $h_R(t) \ll 1$ if $R \ll 1$ and $t \in i(0,1/2)$.

We bound $\MM^{\Maass}(h) + \MM^{\Eis}(h)$ by breaking this up into intervals for which we can apply \hyperref[dihedralmomentsprop]{Proposition \ref*{dihedralmomentsprop}} and using the bounds \eqref{h(t)asympeq} and \eqref{hR(t)asympeq}: for the short initial and tail ranges, we use dyadic intervals, while for the short transition range, we divide into intervals of the form $[T - U,T + U]$ with $T = 2t_g \mp 3 \cdot 2^{-n - 1} t_g^{1 - \alpha}$ and $U = 2^{-n - 1} t_g^{1 - \alpha}$ for positive integers $n \leq (\frac{2}{3} - \alpha) \frac{\log t_g}{\log 2} - 1$, as well as the interval $[2t_g - t_g^{1/3},2t_g + t_g^{1/3}]$. The fact that $R \gg t_g^{-\delta}$ with $\delta < 1$ implies that $h_R(t)$ has polynomial decay in $t$ when $t$ is in the bulk range; the proof of \hyperref[Planckthm]{Theorem \ref*{Planckthm}} is thereby complete.

\hyperref[fourthmomentthm]{Theorem \ref*{fourthmomentthm}} is proven much in the same way, as the fourth moment is equal to the sum of $1/\vol(\Gamma_0(D) \backslash \Hb)$, \eqref{Varcusptermeq}, and \eqref{VarEistermeq} with $h_R(t)$ replaced by $1$. We find that the short initial, short transition, tail, and exceptional ranges all contribute at most $O(t_g^{-\delta'})$, while the bulk range contributes $2/\vol(\Gamma_0(D) \backslash \Hb) + O(t_g^{-\delta'})$.
\end{proof}

\begin{remark}
The method of proof also gives $\Var(g_{\psi};R) \sim 2/\vol(\Gamma_0(D) \backslash \Hb)$ if $R \ll t_g^{-\delta}$ with $\delta > 1$, while a modification of \hyperref[dihedralmomentsprop]{Proposition \ref*{dihedralmomentsprop} (2)} implies that there exists an absolute constant $\alpha > 0$ such that for $t_g^{-1 - \alpha} \ll R \ll t_g^{-1 + \alpha}$,
\begin{align*}
\Var\left(g_{\psi};R\right) & \sim \frac{4}{\pi R^2 t_g^2 \vol(\Gamma_0(D) \backslash \Hb)} \int_{0}^{1} \frac{J_1(2Rt_g t)^2}{t^2 \sqrt{1 - t^2}} \, dt	\\
& = \frac{2}{\vol(\Gamma_0(D) \backslash \Hb)} \prescript{}{2}{F}_3\left(\frac{1}{2},\frac{3}{2};1,2,3;-4 R^2 t_g^2\right),
\end{align*}
where $\prescript{}{p}{F}_q$ denotes the generalised hypergeometric function. This corrects an erroneous asymptotic formula in \cite[Remark 5.4]{Hum18}.
\end{remark}

\section{The Spectral Expansion of \texorpdfstring{$\Var(g;R)$}{Var(g;R)} and the Fourth Moment}
\label{spectralexpansionsect}

\subsection{An Orthonormal Basis of Maa\ss{} Cusp Forms for Squarefree Levels}

The proof of \hyperref[dihedralspectralprop]{Proposition \ref*{dihedralspectralprop}}, which we give in \hyperref[proofofdihedralspectralpropsect]{Section \ref*{proofofdihedralspectralpropsect}}, invokes the spectral decomposition of $L^2(\Gamma_0(q) \backslash \Hb)$, which involves a spectral sum indexed by an orthonormal basis $\BB_0(\Gamma_0(q))$ of the space of Maa\ss{} cusp forms of weight zero, level $q$, and principal nebentypus. This space has the Atkin--Lehner decomposition
\[\bigoplus_{q_1 q_2 = q} \bigoplus_{\ell \mid q_2} \iota_{\ell} \C \cdot \BB_0^{\ast}\left(\Gamma_0(q_1)\right),\]
where $(\iota_{\ell} f)(z) \defeq f(\ell z)$, but this decomposition is not orthogonal for $q > 1$. Nevertheless, an orthonormal basis can be formed using linear combinations of elements of this decomposition.

\begin{lemma}[{\cite[Proposition 2.6]{ILS00}}]
\label{ILSlemma}
An orthonormal basis of the space of Maa\ss{} cusp forms of weight zero, squarefree level $q$, and principal nebentypus is given by
\[\BB_0\left(\Gamma_0(q)\right) = \left\{f_{\ell} : f \in \BB_0^{\ast}\left(\Gamma_0(q_1)\right), \ q_1 q_2 = q, \ \ell \mid q_2\right\},\]
where each newform $f \in \BB_0^{\ast}\left(\Gamma_0(q_1)\right)$ is normalised such that $\langle f, f \rangle_q = 1$ and
\[f_{\ell} \defeq \left(L_{\ell}(1,\sym^2 f) \frac{\varphi(\ell)}{\ell}\right)^{1/2} \sum_{vw = \ell} \frac{\nu(v)}{v} \frac{\mu(w) \lambda_f(w)}{\sqrt{w}} \iota_v f.\]
\end{lemma}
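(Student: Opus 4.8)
The plan is to reduce to a single Hecke newform orbit and then carry out an explicit Gram--Schmidt orthogonalization that factors over the primes dividing $q_2 = q/q_1$. First I would invoke the Atkin--Lehner--Li theory of newforms, recorded in the Atkin--Lehner decomposition displayed above, which already guarantees that the translates $\iota_{\ell} f$ with $\ell \mid q_2$ span the $f$-isotypic oldspace, are linearly independent, and exhaust $\BB_0(\Gamma_0(q))$. It therefore suffices to establish two things: (i) the oldspaces attached to distinct newforms $f \neq g$ are mutually orthogonal in $L^2(\Gamma_0(q) \backslash \Hb)$, and (ii) for a fixed $f \in \BB_0^{\ast}(\Gamma_0(q_1))$ the family $\{f_{\ell} : \ell \mid q_2\}$ is an orthonormal basis of $\bigoplus_{\ell \mid q_2} \C\, \iota_{\ell} f$. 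Claim (i) is standard: unfolding the Rankin--Selberg integral expresses $\langle \iota_m f, \iota_n g\rangle_q$ as a finite combination of shifts of the Rankin--Selberg $L$-function of $f$ against $g$, which has no pole at the edge of the critical strip when $f \neq g$ by strong multiplicity one, so the inner product vanishes.

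For (ii), the key input is an exact evaluation of $\langle \iota_m f, \iota_n f\rangle_q$ for $m, n \mid q_2$. Since $q$ is squarefree, every prime dividing $q_2$ is unramified for $f$, so $\lambda_f$ is real and multiplicative with Satake parameters satisfying $\alpha_p\beta_p = 1$ and $\alpha_p + \beta_p = \lambda_f(p)$. Unfolding the Rankin--Selberg integral and applying the Hecke multiplicativity relations, one obtains a closed form for $\langle \iota_m f, \iota_n f\rangle_q / \langle f, f\rangle_q$ depending only on $(m,n)$, on $\lambda_f$ evaluated at $m/(m,n)$ and $n/(m,n)$, and on the local factors $L_p(1,\sym^2 f)$ for $p \mid mn/(m,n)^2$; in particular it is multiplicative, so the Gram matrix of $(\iota_{\ell} f)_{\ell \mid q_2}$ in this basis is the tensor product over $p \mid q_2$ of the $2 \times 2$ matrices
\[
G_p = \begin{pmatrix} 1 & \dfrac{\lambda_f(p)}{p^{1/2} + p^{-1/2}} \\[2mm] \dfrac{\lambda_f(p)}{p^{1/2} + p^{-1/2}} & 1 \end{pmatrix}
\]
attached to the ordered basis $(\iota_1 f, \iota_p f)$ of the local oldspace at $p$.

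An orthonormal basis of a tensor product of inner product spaces is the tensor product of orthonormal bases of the factors, so it suffices to orthonormalize each $G_p$. Gram--Schmidt applied to $(\iota_1 f, \iota_p f)$ keeps $f$ (already a unit vector) and replaces $\iota_p f$ by $\iota_p f - \frac{\lambda_f(p)}{p^{1/2}+p^{-1/2}} f$; up to the scalar $1 + 1/p$ this is exactly $(1+1/p)\iota_p f - \lambda_f(p)p^{-1/2} f$, which is the $\ell = p$ case of the stated formula before normalization. Computing the norm of this vector via $G_p$ and rewriting $L_p(1,\sym^2 f) = \big((1-\alpha_p^2/p)(1-\beta_p^2/p)(1-1/p)\big)^{-1}$ with $\alpha_p\beta_p = 1$, the normalizing constant collapses to precisely $(L_p(1,\sym^2 f)\varphi(p)/p)^{1/2}$. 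Tensoring over the primes $p \mid q_2$ and expanding the product using multiplicativity of $\nu$, $\mu$, and $\lambda_f$ produces $f_{\ell} = (L_{\ell}(1,\sym^2 f)\varphi(\ell)/\ell)^{1/2} \sum_{vw = \ell} \frac{\nu(v)}{v}\frac{\mu(w)\lambda_f(w)}{\sqrt w}\iota_v f$ for every $\ell \mid q_2$, with $\langle f_{\ell}, f_{\ell}\rangle_q = 1$; orthogonality $\langle f_{\ell}, f_{\ell'}\rangle_q = 0$ for $\ell \neq \ell'$ is then automatic, since the corresponding tensors differ in at least one factor where the Gram--Schmidt vectors are orthogonal. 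I expect the main obstacle to be the bookkeeping in the exact Rankin--Selberg evaluation of $\langle \iota_m f, \iota_n f\rangle_q$ — keeping every local Euler factor correct — which is precisely the computation carried out in \cite[Proposition 2.6]{ILS00} for holomorphic forms and which carries over verbatim to the Maa\ss{} setting, the archimedean factor cancelling in the ratio.
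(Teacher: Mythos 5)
Your proposal is correct, but it takes a genuinely different route from the paper. The paper's proof is a two-line citation: it quotes the explicit form of $f_{\ell}$ given in \cite[Proposition 2.6]{ILS00}, namely
\[
f_{\ell} = \left(\frac{\ell}{\prod_{p \mid \ell}\left(1 - \frac{\lambda_f(p)^2 p}{(p+1)^2}\right)}\right)^{1/2} \sum_{vw = \ell} \frac{\mu(w)\lambda_f(w)}{\sqrt{v}\,\nu(w)}\,\iota_v f,
\]
and then verifies by direct algebra (using $\lambda_f(p)^2 = \lambda_f(p^2) + 1$ and the Euler factor $L_p(s,\sym^2 f)^{-1} = 1 - \lambda_f(p^2)p^{-s} + \lambda_f(p^2)p^{-2s} - p^{-3s}$) that this expression coincides with the stated one. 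You instead re-derive the orthonormal basis from scratch: you observe the local factorization of the Gram matrix into $2\times 2$ blocks $G_p$, run Gram--Schmidt on each block, and recognise the resulting normalisation as $L_p(1,\sym^2 f)\varphi(p)/p$ via the Satake parametrization, arriving directly at the stated expression without passing through the form of $f_\ell$ that appears in \cite{ILS00}. This buys self-containedness and avoids the normalization-matching bookkeeping, at the price of re-proving the ILS proposition (in particular the inner-product evaluation $\langle \iota_p f, f\rangle_q = \lambda_f(p)\sqrt{p}/(p+1)$, which both proofs ultimately rest on). I verified your local computations: the off-diagonal entry $\lambda_f(p)/(p^{1/2}+p^{-1/2})$ is correct, the Gram--Schmidt vector agrees with the $\ell = p$ case of the stated formula, and its squared norm $(1+1/p)^2 - \lambda_f(p)^2/p$ is indeed the reciprocal of $L_p(1,\sym^2 f)\varphi(p)/p$ once one uses $\alpha_p^2 + \beta_p^2 = \lambda_f(p)^2 - 2$. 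One small point worth tightening: the diagonal entries of $G_p$ being $1$ (i.e.\ $\langle\iota_p f, \iota_p f\rangle_q = \langle f, f\rangle_q$) deserves a word of justification — it follows from the fact that $\iota_p$ is conjugation by the Atkin--Lehner operator $W_p$ composed with an isometry, or directly from the Rankin--Selberg evaluation — but this is standard and is implicit in your appeal to \cite[Proposition 2.6]{ILS00}.
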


\begin{proof}
In \cite[Proposition 2.6]{ILS00}, this is proved with
\[f_{\ell} \defeq \left(\frac{\ell}{\prod_{p \mid \ell} \left(1 - \frac{\lambda_f(p)^2 p}{(p + 1)^2}\right)}\right)^{1/2} \sum_{vw = \ell} \frac{\mu(w) \lambda_f(w)}{\sqrt{v} \nu(w)} \iota_v f.\]
Using the fact that $\lambda_f(p)^2 = \lambda_f(p^2) + 1$ and
\[L_p(s,\sym^2 f) = \frac{1}{1 - \lambda_f(p^2) p^{-s} + \lambda_f(p^2) p^{-2s} - p^{-3s}}\]
for $p \nmid q_1$, this simplifies to the desired identity.
\end{proof}

We record here the following identities, which follow readily from the multiplicativity of the summands involved.

\begin{lemma}
\label{EllMaassidentitylemma}
Suppose that $q_1,q_2$ are squarefree with $(q_1,q_2) = 1$. Then for a newform $f \in \BB_0^{\ast}(\Gamma_0(q_1))$ and $\ell \mid q_2$, we have that
\begin{align*}
\sum_{vw = \ell} \frac{\nu(v)}{v} \frac{\mu(w) \lambda_f(w)}{\sqrt{w}} & = \frac{1}{L_{\ell}\left(\frac{1}{2},f\right)},	\\
\sum_{\ell \mid q_2} \frac{L_{\ell}(1,\sym^2 f)}{L_{\ell}\left(\frac{1}{2},f\right)^2} \frac{\varphi(\ell)}{\ell} & = 2^{\omega(q_2)} \frac{\nu(q_2) \varphi(q_2)}{q_2^2} \frac{L_{q_2}(1,\sym^2 f)}{L_{q_2}\left(\frac{1}{2},f\right)}.
\end{align*}
\end{lemma}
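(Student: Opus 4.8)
The plan is to verify both identities locally, one prime at a time, exploiting the fact that every summand is multiplicative. Since $(q_1,q_2) = 1$ and $f$ has level $q_1$, every prime $p \mid q_2$ is unramified for $f$; writing its Satake parameters as $\alpha_p,\beta_p$ with $\alpha_p\beta_p = 1$ and $\alpha_p + \beta_p = \lambda_f(p)$, we have $L_p(s,f)^{-1} = (1 - \alpha_p p^{-s})(1 - \beta_p p^{-s})$ and $L_p(s,\sym^2 f)^{-1} = (1 - \alpha_p^2 p^{-s})(1 - p^{-s})(1 - \beta_p^2 p^{-s})$. Because $\nu$, $\mu$, $\varphi$, and the Hecke eigenvalues $\lambda_f$ are multiplicative and $q_2$ (hence every divisor $\ell \mid q_2$) is squarefree, both sums factor as products over the relevant primes, reducing each identity to the case of a single prime $p$.

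For the first identity, the only divisor decompositions of $p$ are $(v,w) = (p,1)$ and $(v,w) = (1,p)$, contributing $\nu(p)/p = 1 + p^{-1}$ and $\mu(p)\lambda_f(p)p^{-1/2} = -\lambda_f(p)p^{-1/2}$ respectively; their sum is $1 - \lambda_f(p)p^{-1/2} + p^{-1} = L_p(1/2,f)^{-1}$, which is exactly the $p$-factor of the right-hand side, and multiplying over $p \mid \ell$ gives $L_\ell(1/2,f)^{-1}$.

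For the second identity I would compute the $p$-factor of each side. On the left, summing over $\ell_p \in \{1,p\}$ gives $1 + \frac{L_p(1,\sym^2 f)}{L_p(1/2,f)^2}(1 - p^{-1})$. Using the Euler factors above together with $(1 - \alpha_p p^{-1/2})^2/(1 - \alpha_p^2 p^{-1}) = (1 - \alpha_p p^{-1/2})/(1 + \alpha_p p^{-1/2})$ (and similarly for $\beta_p$), this simplifies to $1 + \frac{(1 - \alpha_p p^{-1/2})(1 - \beta_p p^{-1/2})}{(1 + \alpha_p p^{-1/2})(1 + \beta_p p^{-1/2})}$, and since $\alpha_p\beta_p = 1$ one has $(1 + \alpha_p p^{-1/2})(1 + \beta_p p^{-1/2}) + (1 - \alpha_p p^{-1/2})(1 - \beta_p p^{-1/2}) = 2(1 + p^{-1})$, so the left $p$-factor equals $\frac{2(1 + p^{-1})}{(1 + \alpha_p p^{-1/2})(1 + \beta_p p^{-1/2})}$. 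On the right, the $p$-factor is $2\frac{\nu(p)\varphi(p)}{p^2}\frac{L_p(1,\sym^2 f)}{L_p(1/2,f)} = 2(1 - p^{-2})\frac{1}{(1 + \alpha_p p^{-1/2})(1 + \beta_p p^{-1/2})(1 - p^{-1})}$, which equals the same expression; taking the product over $p \mid q_2$ completes the proof.

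I anticipate no real obstacle: the content is a finite local computation. The only points requiring care are (i) the legitimacy of the factorisation — i.e. that all the arithmetic functions in sight are multiplicative and that the primes dividing $q_2$ are unramified for $f$, which is precisely where the hypothesis $(q_1,q_2) = 1$ enters — and (ii) organising the algebra, for which the Satake parametrisation (equivalently, the relation $\lambda_f(p)^2 = \lambda_f(p^2) + 1$ used in the proof of Lemma \ref{ILSlemma}) and the identity $(1 + \alpha_p p^{-1/2})(1 + \beta_p p^{-1/2}) + (1 - \alpha_p p^{-1/2})(1 - \beta_p p^{-1/2}) = 2(1 + p^{-1})$ are the key simplifications.
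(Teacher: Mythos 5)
Your proof is correct and is precisely the computation the paper leaves implicit: the paper simply asserts that both identities ``follow readily from the multiplicativity of the summands involved'', and your prime-by-prime verification via the Satake parameters (with $\alpha_p\beta_p=1$ since $p \nmid q_1$ and the nebentypus is principal) is exactly that routine check, carried out correctly. No further comment is needed.
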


\subsection{An Orthonormal Basis of Eisenstein Series for Squarefree Levels}

A similar orthonormal basis exists for Eisenstein series. Instead of the usual orthonormal basis
\[\left\{E_{\aa}(z,1/2 + it) : \aa \text{ is a cusp of $\Gamma_0(q) \backslash \Hb$}\right\},\]
we may form an orthonormal basis out of Eisenstein series newforms and oldforms: a basis of the space of Eisenstein series of weight zero, level $q$, and principal nebentypus is given by
\[\left\{(\iota_{\ell} E_1)\left(z,\frac{1}{2} + it\right) : \ell \mid q\right\}.\]
Here
\[E_1(z,s) \defeq \frac{1}{\sqrt{\nu(q)}} E(z,s), \qquad (\iota_{\ell} E_1)\left(z,\frac{1}{2} + it\right) \defeq E_1\left(\ell z,\frac{1}{2} + it\right),\]
where $E(z,s)$ is the usual Eisenstein series on $\Gamma \backslash \Hb$, defined for $\Re(s) > 1$ by
\[E(z,s) \defeq \sum_{\gamma \in \Gamma_{\infty} \backslash \Gamma} \Im(\gamma z)^s,\]
with $\Gamma \defeq \SL_2(\Z)$ and $\Gamma_{\infty} \defeq \{\gamma \in \Gamma : \gamma \infty = \infty\}$ the stabiliser of the cusp at infinity. For $t \in \R \setminus \{0\}$, this has the Fourier expansion
\[E\left(z,\frac{1}{2} + it\right) = y^{\frac{1}{2} + it} + \frac{\Lambda(1 - 2it)}{\Lambda(1 + 2it)} y^{\frac{1}{2} - it} + \sum_{\substack{n = -\infty \\ n \neq 0}}^{\infty} \rho(n,t) W_{0,it}(4\pi|n|y) e(nx)\]
with $W_{\alpha,\beta}$ the Whittaker function,
\[\rho(n,t) = \frac{\lambda(|n|,t)}{\sqrt{|n|}} \rho(1,t), \qquad \lambda(n,t) = \sum_{ab = n} a^{it} b^{-it}, \qquad \rho(1,t) = \frac{1}{\Lambda(1 + 2it)}.\]
The Eisenstein series $E(z,1/2 + it)$ is normalised such that its formal inner product with itself on $\Gamma \backslash \Hb$ is $1$ (in the sense of \cite[Proposition 7.1]{Iwa02}), and so the formal inner product of $E_1(z,1/2 + it)$ with itself on $\Gamma_0(q) \backslash \Hb$ is $1$.

This basis is not orthogonal for $q > 1$, but Young \cite{You19} has shown that there exists an orthonormal basis derived from this basis just as for Maa\ss{} cusp forms, as in \hyperref[ILSlemma]{Lemma \ref*{ILSlemma}}.

\begin{lemma}[{\cite[Section 8.4]{You19}}]
\label{YoungEislemma}
An orthonormal basis of the space of Eisenstein series of weight $0$, level $q$, and principal nebentypus is given by
\[\left\{E_{\ell}\left(z,\frac{1}{2} + it\right) : \ell \mid q\right\},\]
where $E_{\ell}(z,1/2 + it)$ is defined to be
\[\left(\zeta_{\ell}(1 + 2it) \zeta_{\ell}(1 - 2it)\right)^{1/2} \sum_{vw = \ell} \frac{\nu(v)}{v} \frac{\mu(w) \lambda(w,t)}{\sqrt{w}} (\iota_v E_1)\left(z, \frac{1}{2} + it\right).\]
\end{lemma}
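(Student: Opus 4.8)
The plan is to imitate the proof of \hyperref[ILSlemma]{Lemma \ref*{ILSlemma}}, as carried out by Young \cite[Section 8.4]{You19}: starting from the natural spanning set of translates $(\iota_{\ell} E_1)(z,1/2 + it)$ with $\ell \mid q$, compute the Gram matrix of their formal inner products, and perform Gram--Schmidt orthonormalisation.

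First I would verify that $\{(\iota_{\ell} E_1)(\cdot,1/2 + it) : \ell \mid q\}$ is a basis of the space of Eisenstein series of weight $0$, level $q$, and principal nebentypus with spectral parameter $t$. For squarefree $q$ this space has dimension equal to the number $2^{\omega(q)}$ of cusps of $\Gamma_0(q) \backslash \Hb$, which is also the number of divisors of $q$; linear independence of the translates then follows by inspecting their constant terms at the various cusps, which exhibits the change-of-basis matrix to the standard cusp Eisenstein series $E_{\aa}(z,1/2 + it)$ as invertible.

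The substantive step is the evaluation of the Gram matrix $G = \left(\left\langle \iota_v E_1(\cdot,1/2 + it), \iota_w E_1(\cdot,1/2 + it)\right\rangle_q\right)_{v,w \mid q}$. Since these Eisenstein series are not square-integrable, the inner products must be understood in the formal sense of \cite[Proposition 7.1]{Iwa02}; concretely they are computed either via the Maa\ss{}--Selberg relation for truncated Eisenstein series or by unfolding against incomplete Eisenstein series and extracting the relevant Fourier coefficients. Two structural facts, both consequences of multiplicativity over the primes $p \mid q$, then do the work: (i) $G$ is the tensor product over $p \mid q$ of $2 \times 2$ matrices $G_p$ with diagonal entries $1$ (a short computation gives $\langle \iota_v E_1, \iota_v E_1\rangle_q = 1$) and with off-diagonal entry an explicit rational function of $p$ and $\lambda(p,t)$ --- the exact analogue of the entry built from $\lambda_f(p)$ in the cuspidal Gram matrix underlying \hyperref[ILSlemma]{Lemma \ref*{ILSlemma}}; and (ii) the Hecke relation $\lambda(p,t)^2 = \lambda(p^2,t) + 1$ holds, with $\lambda(p,t)$ real and $\zeta_p(1 + 2it)$, $\zeta_p(1 - 2it)$ complex conjugates. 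Gram--Schmidt applied to each local factor then yields the local version of the claimed $E_{\ell}$, the normalising constant being forced to equal $(\zeta_{\ell}(1 + 2it) \zeta_{\ell}(1 - 2it))^{1/2}$ in place of the constant $(L_{\ell}(1,\sym^2 f) \varphi(\ell)/\ell)^{1/2}$ appearing in \hyperref[ILSlemma]{Lemma \ref*{ILSlemma}} --- consistently, since $\zeta_{\ell}(1) \zeta_{\ell}(1 + 2it) \zeta_{\ell}(1 - 2it)$ is the $\ell$-part of the symmetric-square $L$-value of $E(\cdot,1/2 + it)$ while $\zeta_{\ell}(1) = \ell/\varphi(\ell)$ --- and multiplicativity reassembles these local pieces into the displayed formula.

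I expect the main obstacle to be the regularisation: one must give a precise meaning to, and correctly evaluate, the divergent inner products of Eisenstein series on the critical line, and then transport the resulting zeta-factors through the orthonormalisation so as to land on the exact constants in the displayed formula. This is carried out in detail by Young \cite[Section 8.4]{You19}, using the explicit Fourier expansions of the $\iota_v E_1$ together with a Maa\ss{}--Selberg-type computation.
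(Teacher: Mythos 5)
The paper itself gives no proof of this lemma; it is stated as a citation to Young, exactly as Lemma \ref{ILSlemma} is cited to Iwaniec--Luo--Sarnak. Your sketch is a faithful reconstruction of the argument in \cite[Section 8.4]{You19}: compute the Gram matrix of formal (Maa\ss{}--Selberg regularised) inner products of the translates $\iota_v E_1$, observe that it factors as a tensor product over $p \mid q$ of $2 \times 2$ blocks with unit diagonal, and Gram--Schmidt each local block. Your consistency check --- that the cuspidal normaliser $L_\ell(1,\sym^2 f)\,\varphi(\ell)/\ell$ degenerates to $\zeta_\ell(1+2it)\zeta_\ell(1-2it)$ because the $\ell$-part of the Eisenstein symmetric square is $\zeta_\ell(1)\zeta_\ell(1+2it)\zeta_\ell(1-2it)$ while $\zeta_\ell(1)^{-1} = \varphi(\ell)/\ell$ --- is exactly the right sanity check and matches the simplification the paper performs in its proof of Lemma \ref{ILSlemma} using $\lambda_f(p)^2 = \lambda_f(p^2)+1$.

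The one place where your write-up is genuinely incomplete as a proof (rather than a sketch) is that the off-diagonal Gram entry $\langle \iota_p E_1, E_1\rangle_q$ is never computed: you assert the $2\times 2$ block has "an explicit rational function of $p$ and $\lambda(p,t)$" off the diagonal but do not exhibit it, and without it the Gram--Schmidt step does not actually produce the claimed constant. For the orthonormalisation to land on $(\zeta_p(1+2it)\zeta_p(1-2it))^{1/2}\bigl(\tfrac{\nu(p)}{p}\iota_p E_1 - \tfrac{\lambda(p,t)}{\sqrt p} E_1\bigr)$ you need the specific value of $\langle \iota_p E_1, E_1\rangle_q$ coming from the Maa\ss{}--Selberg relation, together with the fact that $|\lambda(p,t)|^2 = \lambda(p,t)^2$ since $\lambda(p,t)$ is real. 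Since you defer this computation to Young (where it is carried out), the gap is one of detail rather than of strategy, and your approach is the same as the one the paper tacitly relies on.
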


As with \hyperref[EllMaassidentitylemma]{Lemma \ref*{EllMaassidentitylemma}}, we have the following identities.

\begin{lemma}
\label{EllEisidentitylemma}
For squarefree $q$ and $\ell \mid q$, we have that
\begin{align*}
\sum_{vw = \ell} \frac{\nu(v)}{v} \frac{\mu(w) \lambda(w,t)}{\sqrt{w}} & = \frac{1}{\zeta_{\ell}\left(\frac{1}{2} + it\right) \zeta_{\ell}\left(\frac{1}{2} - it\right)},	\\
\sum_{\ell \mid q} \frac{\zeta_{\ell}(1 + 2it) \zeta_{\ell}(1 - 2it)}{\zeta_{\ell}\left(\frac{1}{2} + it\right)^2 \zeta_{\ell}\left(\frac{1}{2} - it\right)^2} & = 2^{\omega(q)} \frac{\nu(q)}{q} \frac{\zeta_q(1 + 2it) \zeta_q(1 - 2it)}{\zeta_q\left(\frac{1}{2} + it\right) \zeta_q\left(\frac{1}{2} - it\right)}.
\end{align*}
\end{lemma}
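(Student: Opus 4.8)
The plan is to imitate the proof of \hyperref[EllMaassidentitylemma]{Lemma \ref*{EllMaassidentitylemma}}: every function of $\ell$ (respectively $q$) that appears is multiplicative, so both identities collapse to an elementary comparison of Euler factors at a single prime. Since $q$ is squarefree, so is each $\ell \mid q$, and we only ever meet exponents $0$ and $1$. In the first identity, the left-hand side is the Dirichlet convolution, evaluated at $\ell$, of the multiplicative functions $v \mapsto \nu(v)/v$ and $w \mapsto \mu(w)\lambda(w,t)/\sqrt{w}$ — here $\lambda(\cdot,t)$ is itself multiplicative, being the convolution of the completely multiplicative functions $n \mapsto n^{\pm it}$ — hence multiplicative in $\ell$, while $\ell \mapsto \zeta_{\ell}(1/2+it)^{-1}\zeta_{\ell}(1/2-it)^{-1}$ is multiplicative by definition of $\zeta_{\ell}$. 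In the second identity the summand $\ell \mapsto \zeta_{\ell}(1+2it)\zeta_{\ell}(1-2it)\zeta_{\ell}(1/2+it)^{-2}\zeta_{\ell}(1/2-it)^{-2}$ is multiplicative, so $\sum_{\ell \mid q}(\cdots) = \prod_{p \mid q}\bigl(1 + (\cdots)|_{\ell = p}\bigr)$, and the right-hand side is already an Euler product over $p \mid q$; so again it suffices to match factors at one prime.

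For the first identity at $\ell = p$, the sum has only the two terms $(v,w) = (p,1)$ and $(1,p)$, giving $\nu(p)/p - \lambda(p,t)/\sqrt{p} = (1+p^{-1}) - p^{-1/2}(p^{it}+p^{-it})$ since $\lambda(p,t) = p^{it}+p^{-it}$, and this equals $(1-p^{-1/2-it})(1-p^{-1/2+it}) = \zeta_{p}(1/2+it)^{-1}\zeta_{p}(1/2-it)^{-1}$, as desired.

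For the second identity at a prime $p$, set $A \defeq (1-p^{-1/2-it})(1-p^{-1/2+it}) = \zeta_{p}(1/2+it)^{-1}\zeta_{p}(1/2-it)^{-1}$ and $B \defeq (1-p^{-1-2it})(1-p^{-1+2it}) = \zeta_{p}(1+2it)^{-1}\zeta_{p}(1-2it)^{-1}$. The $\ell = p$ summand on the left equals $1 + A^2/B$, whereas the Euler factor at $p$ on the right is $2(1+p^{-1})\,A/B$; so the identity is equivalent to the polynomial identity $B + A^2 = 2(1+p^{-1})A$. Writing $u \defeq p^{-1/2}$ and $c \defeq p^{it}+p^{-it}$, one has $A = 1 - uc + u^2$ and $B = 1 - u^2(c^2-2) + u^4$, and a direct expansion shows that both sides equal $2 - 2uc + 4u^2 - 2u^3c + 2u^4$.

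The whole argument is routine; the only place demanding any care is the polynomial identity $B + A^2 = 2(1+p^{-1})A$ underlying the second assertion, and even that becomes transparent once $p^{it}$ is treated as a formal variable and everything is recorded in terms of $u = p^{-1/2}$ and $c = p^{it}+p^{-it}$. I do not expect any genuine obstacle.
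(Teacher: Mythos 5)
Your proposal is correct and is exactly the argument the paper intends: the paper omits the proof, asserting only that the identities "follow readily from the multiplicativity of the summands involved", and your reduction to a single prime followed by the Euler-factor computation (in particular the verification of $B + A^2 = 2(1+p^{-1})A$) supplies precisely the omitted check.
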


\subsection{Inner Products with Oldforms and Eisenstein Series}
\label{AtkinLehnersect}

To deal with inner products involving oldforms and Eisenstein series, we use Atkin--Lehner operators. For squarefree $q$, write $q = vw$, and denote by
\[W_w \defeq \begin{pmatrix} a\sqrt{w} & b/\sqrt{w} \\ cv \sqrt{w} & d\sqrt{w} \end{pmatrix}\]
the Atkin--Lehner operator on $\Gamma_0(q)$ associated to $w$, where $a,b,c,d \in \Z$ and $\det W_w = adw - bcv = 1$. We denote by $\BB_{\hol}^{\ast}(q,\chi)$ the set of holomorphic newforms $f$ of level $q$, nebentypus $\chi$, and arbitrary even weight $k_f \in 2\N$; again, we write $\BB_{\hol}^{\ast}(\Gamma_0(q))$ when $\chi$ is the principal character.

\begin{lemma}[{\cite[Theorem 2.1]{AL78}}; see also {\cite[Proposition A.1]{KMV02}}]
\label{ALlemma}
Let $q = vw$ be squarefree and let $\chi$ be a Dirichlet character of conductor $q_{\chi}$ dividing $q$, so that we may write $\chi = \chi_v \chi_w$. Then for $g \in \BB_0^{\ast}(q,\chi)$, $g(W_w z)$ is equal to $\eta_g(w) (g \otimes \overline{\chi_w})(z)$, where $g \otimes \overline{\chi_w} \in \BB_0^{\ast}(q, \chi_v \overline{\chi_w})$ with
\begin{align*}
\lambda_{g \otimes \overline{\chi_w}}(n) & = \begin{dcases*}
\overline{\chi_w}(n) \lambda_g(n) & if $(n,w) = 1$,	\\
\chi_v(n) \overline{\lambda_g}(n) & otherwise,
\end{dcases*}	\\
\eta_g(w) & = \overline{\chi_w}(b) \overline{\chi_v}(a) \frac{\tau(\chi_w)}{\lambda_g(w) \sqrt{w}}.
\end{align*}
In particular, $|\eta_g(w)| = 1$. Moreover, the same result holds for $g \in \BB_{\hol}^{\ast}(q,\chi)$, so that $g \otimes \overline{\chi_w} \in \BB_{\hol}^{\ast}(q, \chi_v \overline{\chi_w})$.
\end{lemma}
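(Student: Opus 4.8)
The plan is to deduce this from the classical structure theory of newforms and Atkin--Lehner operators, treating the Maa\ss{} and holomorphic cases simultaneously since the argument only uses Fourier or $q$-expansions together with the commutation relations between $W_w$ and the Hecke operators. First I would record the multiplicativity of the Atkin--Lehner operators: as $q$ is squarefree, for $w = \prod_i p_i$ one has $W_w = \gamma \prod_i W_{p_i}$ for some $\gamma \in \Gamma_0(q)$ (the $W_{p_i}$ taken in any order), so that $\eta_g(w) = \prod_i \eta_g(p_i)$ and the arithmetic functions in the statement factor accordingly. This reduces everything to the case $w = p$ prime, $v = q/p$.

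For $w = p$ prime, I would first check that $g \mapsto g(W_p\,\cdot)$ sends $\BB_0^{\ast}(q,\chi)$ into the space of weight-zero newforms of level $q$ with nebentypus $\chi_v \overline{\chi_p}$: the level is preserved because $W_p$ normalises $\Gamma_0(q)$ with $W_p^2$ acting on $\Hb$ as an element of $\Gamma_0(q)$, while the nebentypus is computed by conjugating a general $\begin{pmatrix} \alpha & \beta \\ \gamma q & \delta \end{pmatrix} \in \Gamma_0(q)$ by $W_p$ and tracking the effect on $\chi$, which interchanges the roles of the $p$-part and the $v$-part of the modulus. Using that $W_p$ commutes with $T_n$ for $(n,q) = 1$ up to the twist by $\overline{\chi_p}$, one gets $\lambda_{g(W_p\,\cdot)}(n) = \overline{\chi_p}(n)\lambda_g(n)$ for $(n,q) = 1$; since $g \otimes \overline{\chi_p}$ has these same eigenvalues and the same level and nebentypus, strong multiplicity one forces $g(W_p z) = \eta_g(p)\,(g \otimes \overline{\chi_p})(z)$ for some constant $\eta_g(p)$. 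That $|\eta_g(p)| = 1$ follows because $W_p$ has determinant $1$, hence induces an isometry of $L^2(\Gamma_0(q) \backslash \Hb)$, and twisting by the unimodular character $\overline{\chi_p}$ preserves the norm in our normalisation; alternatively, iterating $W_p$ yields $\eta_g(p)\,\overline{\eta_g(p)} = \chi_p(-1)\chi_v(-1) = 1$ after also conjugating Fourier coefficients.

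The one genuinely computational step — and the main obstacle — is pinning down $\eta_g(p)$ on the nose, namely producing exactly $\overline{\chi_p}(b)\overline{\chi_v}(a)\,\tau(\chi_p)/(\lambda_g(p)\sqrt{p})$ rather than merely ``a constant of modulus $1$''. Here I would use the Gauss-sum twisting identity, writing $(g \otimes \overline{\chi_p})(z)$ as $\tau(\chi_p)^{-1} \sum_{u \bmod p} \chi_p(u)\, g\!\left(z + \tfrac{u}{p}\right)$ when $\chi_p$ is primitive modulo $p$ (the degenerate case, $\chi_p$ trivial and $p \mid v$ only, being handled separately and more easily), and then rearranging the coset sum $\sum_u g(z + u/p)$ into $g(W_p z)$ by factoring the representatives $\begin{pmatrix} 1 & u \\ 0 & p \end{pmatrix} = \gamma_u W_p \delta_u$ with $\gamma_u \in \Gamma_0(q)$ and $\delta_u \in \Gamma_{\infty}$; the nebentypus factors $\chi(\gamma_u)$ that emerge are precisely what produce the $\overline{\chi_p}(b)\overline{\chi_v}(a)$ and the Gauss sum, while the $\lambda_g(p)$ in the denominator appears upon normalising by comparing the $p$-th Fourier coefficients of the two sides, that of $g \otimes \overline{\chi_p}$ being $\chi_v(p)\overline{\lambda_g(p)}$ in the ramified case. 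The delicate points are the dependence on the chosen matrix representative of $W_p$ (which is why $a$ and $b$ enter the final formula) and the consistency of the Gauss-sum and measure normalisations with those of \cite{AL78} and \cite{KMV02}; the holomorphic case is then literally the same computation with $q$-expansions in place of Whittaker expansions. As the result is stated in precisely this form in \cite{AL78} and \cite{KMV02}, it suffices to refer there for the verification of this last bookkeeping.
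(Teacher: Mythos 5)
The paper does not prove this lemma: it cites it verbatim from \cite[Theorem 2.1]{AL78} and \cite[Proposition A.1]{KMV02}, so there is no in-paper argument to compare yours against. Your outline — reduce to $w = p$ prime by multiplicativity of the Atkin--Lehner operators, deduce proportionality to $g \otimes \overline{\chi_p}$ from the level/nebentypus computation plus strong multiplicity one, and then pin down $\eta_g(p)$ — is the right strategy, and it is essentially the one followed in those references.

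The pinning-down step, which you rightly flag as the crux, is however sketched with slips that would derail a literal execution. The matrix $\left(\begin{smallmatrix} 1 & u \\ 0 & p \end{smallmatrix}\right)$ acts as $z \mapsto (z + u)/p$ rather than $z \mapsto z + u/p$, so it is the wrong matrix for unwinding $\sum_u \chi_p(u)\, g(z + u/p)$; the relevant representatives are the unipotents $\left(\begin{smallmatrix} 1 & u/p \\ 0 & 1 \end{smallmatrix}\right)$, which cannot be factored through $W_p$ at all. Relatedly, the proposed factorisation $\left(\begin{smallmatrix} 1 & u \\ 0 & p \end{smallmatrix}\right) = \gamma_u W_p \delta_u$ with $\gamma_u \in \Gamma_0(q)$ and $\delta_u \in \Gamma_\infty$ fails on determinants: the left-hand side has determinant $p$, while $W_p$ (normalised here with $\det W_p = 1$) and elements of $\Gamma_0(q)$, $\Gamma_\infty$ all have determinant $1$. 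Finally, the Gauss-sum expansion yields the \emph{naive} twist $\sum_n \overline{\chi_p}(n)\lambda_g(n)e(nz)$, whose $p$-th coefficient vanishes, whereas the lemma's $g \otimes \overline{\chi_w}$ is the \emph{newform} with nonvanishing $p$-th eigenvalue $\chi_v(p)\overline{\lambda_g}(p)$; since $\eta_g(p)$ is read off precisely by matching $p$-th Fourier coefficients, this is the juncture where the $\lambda_g(p)$ in the denominator of the stated formula enters, and it must be handled rather than elided. As you ultimately defer these computations to \cite{AL78,KMV02} — which is all the paper does — these gaps are harmless for the purpose of invoking the result, but they are genuine if a self-contained derivation is intended.
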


We call $\eta_g(w)$ the Atkin--Lehner pseudo-eigenvalue; note that it is independent of $a,b,c,d \in \Z$ when either $\chi$ is the principal character or $a \equiv 1 \pmod{v}$ and $b \equiv 1 \pmod{w}$, or equivalently $d \equiv \overline{w} \pmod{v}$ and $c \equiv \overline{v} \pmod{w}$.

\begin{lemma}
\label{AtkinLehnerlemma}
Let $q = q_1 q_2$ be squarefree, let $\chi$ be a Dirichlet character modulo $q$, and let $g \in \BB_0^{\ast}\left(q,\chi\right)$ and $f \in \BB_0^{\ast}\left(\Gamma_0(q_1)\right)$. Then for $vw = q_2$, so that $\chi = \chi_v \chi_w \chi_{q_1}$,
\[\left\langle |g|^2, \iota_v f \right\rangle_q = \left\langle |g \otimes \overline{\chi_v}|^2, f \right\rangle_q.\]
\end{lemma}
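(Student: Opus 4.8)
The plan is to unwind the definition of $\iota_v f$ and perform a change of variables on $\Gamma_0(q) \backslash \Hb$ using the Atkin--Lehner operator $W_v$. First I would write out the inner product explicitly:
\[\left\langle |g|^2, \iota_v f \right\rangle_q = \int_{\Gamma_0(q) \backslash \Hb} |g(z)|^2 \overline{f(vz)} \, d\mu(z).\]
Since $f$ has level $q_1$ and is invariant under $\Gamma_0(q_1)$, the function $z \mapsto f(vz)$ is a well-defined modular function for $\Gamma_0(q_1 v) \supseteq \Gamma_0(q)$, so the integrand is $\Gamma_0(q)$-invariant and the integral makes sense on a fundamental domain for $\Gamma_0(q) \backslash \Hb$.

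Next I would apply the substitution $z \mapsto W_v z$, where $W_v$ is an Atkin--Lehner operator on $\Gamma_0(q)$ (as in \hyperref[ALlemma]{Lemma \ref*{ALlemma}}), chosen so that the scaling parameters are $a \equiv 1 \pmod{w q_1}$ and $b \equiv 1 \pmod{v}$ to fix the pseudo-eigenvalue independent of the matrix entries. Since $W_v$ normalises $\Gamma_0(q)$ and $d\mu$ is invariant under $\GL_2^+(\R)$, this substitution merely permutes the fundamental domain and leaves the measure unchanged, so
\[\left\langle |g|^2, \iota_v f \right\rangle_q = \int_{\Gamma_0(q) \backslash \Hb} |g(W_v z)|^2 \overline{f(v W_v z)} \, d\mu(z).\]
By \hyperref[ALlemma]{Lemma \ref*{ALlemma}} applied to $g$, we have $|g(W_v z)|^2 = |\eta_g(v)|^2 |(g \otimes \overline{\chi_v})(z)|^2 = |(g \otimes \overline{\chi_v})(z)|^2$ since $|\eta_g(v)| = 1$. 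The key local computation is to check that $v W_v z$ differs from $z$ by an element of $\Gamma_0(q_1)$: writing $W_v = \begin{pmatrix} a\sqrt{v} & b/\sqrt{v} \\ cw\sqrt{v} & d\sqrt{v} \end{pmatrix}$ with $adv - bcw = 1$, one computes $v W_v = \begin{pmatrix} a v^{3/2} & b \sqrt{v} \\ cwv^{3/2} & dv^{3/2} \end{pmatrix}$, and after extracting the scalar $\sqrt{v}$ (which acts trivially on $\Hb$) one is left with $\begin{pmatrix} av & b \\ cwv & dv \end{pmatrix}$; since $q_1 \mid cwv$ (as $w \mid q_2$, $v \mid q_2$, and in fact $q_1 \mid q_1 \mid$ the lower-left entry because $v, w$ divide $q_2$ which is coprime to $q_1$ — more precisely $q_1$ divides $cwv$ because... actually $c$ may be chosen divisible by $q_1$) this lies in $\Gamma_0(q_1)$ up to checking determinant, hence $f(v W_v z) = f(z)$ by $\Gamma_0(q_1)$-invariance of $f$.

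Assembling these pieces gives
\[\left\langle |g|^2, \iota_v f \right\rangle_q = \int_{\Gamma_0(q) \backslash \Hb} |(g \otimes \overline{\chi_v})(z)|^2 \overline{f(z)} \, d\mu(z) = \left\langle |g \otimes \overline{\chi_v}|^2, f \right\rangle_q,\]
as claimed. The main obstacle is the bookkeeping in the previous paragraph: one must arrange the integer entries $a,b,c,d$ of $W_v$ so that simultaneously $\det W_v = 1$, the pseudo-eigenvalue $\eta_g(v)$ is well-defined (which requires the congruence conditions $a \equiv 1 \pmod{w q_1}$, $b \equiv 1 \pmod v$ from the remark after \hyperref[ALlemma]{Lemma \ref*{ALlemma}}), and $v W_v$ descends to an element of $\Gamma_0(q_1)$ acting trivially through $f$. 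This is possible because $q_1, v, w$ are pairwise coprime (as $q$ is squarefree), so the Chinese Remainder Theorem lets us pick $c$ divisible by $q_1$ while still meeting the other constraints; one should also note that $g \otimes \overline{\chi_v} \in \BB_0^{\ast}(q, \chi_w \chi_{q_1} \overline{\chi_v})$ by \hyperref[ALlemma]{Lemma \ref*{ALlemma}}, so the right-hand inner product is of the asserted form.
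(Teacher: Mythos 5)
Your overall strategy---substitute $z \mapsto W_v z$ directly, use \hyperref[ALlemma]{Lemma \ref*{ALlemma}} to turn $|g(W_v z)|^2$ into $|(g \otimes \overline{\chi_v})(z)|^2$, and observe that the matrix realising $z \mapsto v (W_v z)$ lands in $\Gamma_0(q_1)$---is sound and in fact more direct than the paper's. The paper substitutes $W_w$ instead, deduces the intermediate identity $\left\langle |g|^2, \iota_v f \right\rangle_q = \left\langle |g \otimes \overline{\chi_w}|^2, \iota_{q_2} f \right\rangle_q$ valid for all $vw = q_2$, and then applies this twice (with the given $(v,w)$ split, and with $(1,q_2)$ applied to $g \otimes \overline{\chi_v}$) to conclude. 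Your route avoids the two-step comparison entirely, so the difference is genuine, and your version is arguably the cleaner argument once its computations are repaired.

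However, your matrix bookkeeping contains two errors, and the ``main obstacle'' you describe disappears once they are fixed. First, the Atkin--Lehner operator $W_v$ on $\Gamma_0(q)$ with $q = q_1 v w$ must have lower-left entry divisible by $q$ after normalisation; following the paper's convention it reads
\[W_v = \begin{pmatrix} a\sqrt{v} & b/\sqrt{v} \\ c\, q_1 w\, \sqrt{v} & d\sqrt{v} \end{pmatrix}, \qquad adv - bcq_1 w = 1,\]
whereas you wrote $cw\sqrt{v}$ in the lower-left position, which only defines an Atkin--Lehner operator on $\Gamma_0(q_2)$, not $\Gamma_0(q)$; such a matrix need not normalise $\Gamma_0(q)$ and the substitution would not be a valid change of variables. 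Second, the M\"{o}bius transformation $z \mapsto v \cdot (W_v z)$ is realised by the matrix product $\left(\begin{smallmatrix} \sqrt{v} & 0 \\ 0 & 1/\sqrt{v} \end{smallmatrix}\right) W_v$, not by the scalar multiple $v W_v$; your scalar-multiplication computation, after extracting $\sqrt{v}$, produces a matrix of determinant $v \neq 1$, which is not in $\SL_2(\Z)$. With both corrections,
\[\begin{pmatrix} \sqrt{v} & 0 \\ 0 & 1/\sqrt{v} \end{pmatrix} W_v = \begin{pmatrix} av & b \\ cq_1 w & d \end{pmatrix},\]
which manifestly has determinant $1$ and lower-left entry divisible by $q_1$, hence lies in $\Gamma_0(q_1)$ with no further conditions needed; the improvised suggestion to ``choose $c$ divisible by $q_1$'' is a symptom of the missing $q_1$ in your $W_v$ and is not the right fix. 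Finally, the congruences $a \equiv 1 \pmod{wq_1}$, $b \equiv 1 \pmod{v}$ are irrelevant here: the proof only uses $|\eta_g(v)| = 1$, which holds for any choice of integer entries of $W_v$, so there is no need to pin down the pseudo-eigenvalue itself.
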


\begin{proof}
Since the Atkin--Lehner operators normalise $\Gamma_0(q)$,
\[\left\langle |g|^2, \iota_v f \right\rangle_q = \int_{\Gamma_0(q) \backslash \Hb} |g(W_w z)|^2 f\left(\begin{pmatrix} \sqrt{v} & 0 \\ 0 & 1/\sqrt{v} \end{pmatrix} W_w z\right) \, d\mu(z).\]
By \hyperref[ALlemma]{Lemma \ref*{ALlemma}}, $|g(W_w z)|^2 = |(g \otimes \overline{\chi_w})(z)|^2$, while
\[\begin{pmatrix} \sqrt{v} & 0 \\ 0 & 1/\sqrt{v} \end{pmatrix} W_w = \begin{pmatrix} a & bv \\ cq_1 & dw\end{pmatrix} \begin{pmatrix} \sqrt{q_2} & 0 \\ 0 & 1/\sqrt{q_2} \end{pmatrix},\]
and so as $f$ is invariant under the action of $\Gamma_0(q_1)$,
\[f\left(\begin{pmatrix} \sqrt{v} & 0 \\ 0 & 1/\sqrt{v} \end{pmatrix} W_w z\right) = f(q_2 z).\]
So whenever $v$ divides $q_2$, $\left\langle |g|^2, \iota_v f \right\rangle_q = \left\langle |g \otimes \overline{\chi_w}|^2, \iota_{q_2} f \right\rangle_q$. Taking $v = 1$, $w = q_2$, and replacing $g$ with $g \otimes \overline{\chi_v}$, which has nebentypus $\overline{\chi_v} \chi_w \chi_{q_1}$, then shows that $\left\langle |g \otimes \overline{\chi_v}|^2, f \right\rangle_q = \left\langle |g \otimes \overline{\chi_w}|^2, \iota_{q_2} f \right\rangle_q$.
\end{proof}

We now prove an analogous result for Eisenstein series. In this case, we may use Eisenstein series indexed by cusps (though later we will find it advantageous to work with Eisenstein newforms and oldforms). As $q$ is squarefree, a cusp $\aa$ of $\Gamma_0(q) \backslash \Hb$ has a representative of the form $1/v$ for some divisor $v$ of $q$, and every cusp has a unique representative of this form; when $\aa \sim \infty$, for example, we have that $v = q$. We define the Eisenstein series
\[E_{\aa}(z,s) \defeq \sum_{\gamma \in \Gamma_{\aa} \backslash \Gamma_0(q)} \Im\left(\sigma_{\aa}^{-1} \gamma z\right)^s,\]
which converges absolutely for $\Re(s) > 1$ and $z \in \Hb$, where
\[\Gamma_{\aa} \defeq \left\{\gamma \in \Gamma_0(q) : \gamma \aa = \aa\right\}\]
is the stabiliser of the cusp $\aa$, and the scaling matrix $\sigma_{\aa} \in \SL_2(\R)$ is such that
\[\sigma_{\aa} \infty = \aa, \qquad \sigma_{\aa}^{-1} \Gamma_{\aa} \sigma_{\aa} = \Gamma_{\infty}.\]
The Eisenstein series $E_{\aa}(z,s)$ is independent of the choice of scaling matrix.

Writing $q = vw$, we may choose $\sigma_{\aa} = W_w$ with
\[W_w = \begin{pmatrix} \sqrt{w} & b/\sqrt{w} \\ v \sqrt{w} & d \sqrt{w} \end{pmatrix}\]
the Atkin--Lehner operator on $\Gamma_0(q)$ associated to $w$, where $dw - bv = 1$.

\begin{lemma}
\label{Eisenlemma}
Let $g \in \BB_0^{\ast}(q,\chi)$ with $q$ squarefree, and let $\aa \sim 1/v$ be a cusp of $\Gamma_0(q) \backslash \Hb$. Then
\[\left\langle |g|^2, E_{\aa}(\cdot,s)\right\rangle_q = \left\langle |g \otimes \overline{\chi_v}|^2, E_{\infty}(\cdot,s)\right\rangle_q.\]
\end{lemma}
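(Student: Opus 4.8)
The plan is to imitate the proof of \hyperref[AtkinLehnerlemma]{Lemma \ref*{AtkinLehnerlemma}}, exploiting that the Atkin--Lehner operator serving as the scaling matrix of $\aa$ normalises $\Gamma_0(q)$. Write $q = vw$ and take $\sigma_{\aa} = W_w$, the Atkin--Lehner operator on $\Gamma_0(q)$ associated to $w$, so that $W_w\infty = 1/v \sim \aa$. Since $W_w$ normalises $\Gamma_0(q)$ it maps any fundamental domain for $\Gamma_0(q) \backslash \Hb$ to another one, and because $d\mu$ is invariant under $W_w \in \SL_2(\R)$, the substitution $z \mapsto W_w z$ leaves the integral over $\Gamma_0(q) \backslash \Hb$ of any $\Gamma_0(q)$-invariant function unchanged. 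Applying this to $|g(z)|^2 \overline{E_{\aa}(z,s)}$ gives
\[\left\langle |g|^2, E_{\aa}(\cdot,s)\right\rangle_q = \int_{\Gamma_0(q) \backslash \Hb} |g(W_w z)|^2 \, \overline{E_{\aa}(W_w z, s)} \, d\mu(z).\]

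The next step is to identify the two factors in this integrand. For the Eisenstein factor, since $W_w$ normalises $\Gamma_0(q)$ the conjugation $\gamma \mapsto W_w^{-1} \gamma W_w$ is a bijection of $\Gamma_0(q)$ onto itself, and it carries $\Gamma_{\aa}$ onto $\Gamma_{\infty}$ because $\sigma_{\aa}^{-1} \Gamma_{\aa} \sigma_{\aa} = \Gamma_{\infty}$; it therefore induces a bijection of coset spaces $\Gamma_{\aa} \backslash \Gamma_0(q) \to \Gamma_{\infty} \backslash \Gamma_0(q)$, whence
\[E_{\aa}(W_w z, s) = \sum_{\gamma \in \Gamma_{\aa} \backslash \Gamma_0(q)} \Im\left(W_w^{-1} \gamma W_w z\right)^s = \sum_{\delta \in \Gamma_{\infty} \backslash \Gamma_0(q)} \Im(\delta z)^s = E_{\infty}(z,s).\]
For the cusp form factor, \hyperref[ALlemma]{Lemma \ref*{ALlemma}} gives $g(W_w z) = \eta_g(w) (g \otimes \overline{\chi_w})(z)$ with $|\eta_g(w)| = 1$, so $|g(W_w z)|^2 = |(g \otimes \overline{\chi_w})(z)|^2$. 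Substituting both identities back yields $\langle |g|^2, E_{\aa}(\cdot,s)\rangle_q = \langle |g \otimes \overline{\chi_w}|^2, E_{\infty}(\cdot,s)\rangle_q$.

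It remains to reconcile the twist $\overline{\chi_w}$ that this argument produces with the twist $\overline{\chi_v}$ in the statement. Writing $\chi = \chi_v \chi_w$, the standard relation $\overline{\lambda_g(n)} = \overline{\chi}(n) \lambda_g(n)$ for $(n,q) = 1$ shows that $\overline{g \otimes \overline{\chi_w}}$ and $g \otimes \overline{\chi_v}$ are newforms of level $q$ with the same Hecke eigenvalues at all primes not dividing $q$; by multiplicity one they coincide, so $|g \otimes \overline{\chi_w}|^2 = |g \otimes \overline{\chi_v}|^2$, which gives the claimed identity. I expect this last bookkeeping step --- together with the more minor verification, via the explicit Hecke eigenvalues recorded in \hyperref[ALlemma]{Lemma \ref*{ALlemma}}, that $\overline{g \otimes \overline{\chi_w}} = g \otimes \overline{\chi_v}$ also at the ramified primes --- to be the only point requiring care; the rest is a routine unfolding, which one could equally run by unfolding $\langle |g|^2, E_{\aa}(\cdot,s)\rangle_q$ directly against the definition of $E_{\aa}$ before changing variables.
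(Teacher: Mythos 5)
Your proof is correct, and it takes a route that is slightly but genuinely different from the paper's. The paper argues by the classical Rankin--Selberg unfolding: it unfolds $\langle |g|^2, E_{\aa}(\cdot,s)\rangle_q$ to an integral over $\Gamma_{\aa} \backslash \Hb$, substitutes $z \mapsto \sigma_{\aa} z$ (which turns the domain into the standard strip and the kernel into $y^{\overline{s}}$), applies Lemma~\ref{ALlemma} to $|g(\sigma_{\aa} z)|^2$, and folds back. You instead substitute $z \mapsto W_w z$ directly in the $\Gamma_0(q)\backslash\Hb$ integral, exploiting that $W_w$ normalises $\Gamma_0(q)$ and preserves $d\mu$, and then establish $E_{\aa}(W_w z,s) = E_{\infty}(z,s)$ by re-indexing the defining sum over cosets via the conjugation $\gamma \mapsto W_w^{-1}\gamma W_w$. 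Both routes correctly arrive at $\langle |g|^2, E_{\aa}(\cdot,s)\rangle_q = \langle |g\otimes\overline{\chi_w}|^2, E_{\infty}(\cdot,s)\rangle_q$; the paper's unfolding is the more standard presentation, while your normaliser argument has the minor virtue of isolating the clean identity $E_{\aa}(W_w z,s) = E_{\infty}(z,s)$. You also make explicit a point the paper's proof passes over silently: applying Lemma~\ref{ALlemma} with $\sigma_{\aa} = W_w$ produces the twist $\overline{\chi_w}$, not the $\overline{\chi_v}$ in the statement, and one must observe, as you do, that $\overline{g\otimes\overline{\chi_w}}$ and $g\otimes\overline{\chi_v}$ share level, nebentypus, and unramified Hecke eigenvalues, so by multiplicity one they agree up to a unimodular constant and $|g\otimes\overline{\chi_w}|^2 = |g\otimes\overline{\chi_v}|^2$. (Strictly speaking multiplicity one at unramified places already suffices, so the verification at ramified primes you mention as an afterthought is not needed.) This reconciliation step, while elementary, deserves to be stated, and your treatment supplies it.
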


\begin{proof}
By unfolding, using \hyperref[ALlemma]{Lemma \ref*{ALlemma}}, and folding, we find that
\begin{align*}
\left\langle |g|^2, E_{\aa}(\cdot,s)\right\rangle_q & = \int_{\Gamma_{\aa} \backslash \Hb} |g(z)|^2 \Im\left(\sigma_{\aa}^{-1} \gamma z\right)^{\overline{s}} \, d\mu(z)	\\
& = \int_{0}^{\infty} \int_{0}^{1} \left|g\left(\sigma_{\aa} z\right)\right|^2 y^{\overline{s}} \, \frac{dx \, dy}{y^2}	\\
& = \int_{0}^{\infty} \int_{0}^{1} |(g \otimes \overline{\chi_v})(z)|^2 y^{\overline{s}} \, \frac{dx \, dy}{y^2}	\\
& = \left\langle |g \otimes \overline{\chi_v}|^2, E_{\infty}(\cdot,s)\right\rangle_q.
\qedhere
\end{align*}
\end{proof}

Finally, we claim that twisting $g$ leaves these inner products unchanged. Alas, we do not know a simple proof of this fact; as such, the proof is a consequence of calculations in \hyperref[WatsonIchinosect]{Sections \ref*{WatsonIchinosect}} and \ref{localconstantsect}.

\begin{lemma}
\label{abstwistlemma}
For $q = q_1 q_2$ squarefree and $g \in \BB_0^{\ast}(q,\chi)$ with $\chi$ primitive, we have that
\[\left\langle |g \otimes \overline{\chi_{q_2}}|^2, E_{\infty}(\cdot,s)\right\rangle_q = \left\langle |g|^2, E_{\infty}(\cdot,s)\right\rangle_q.\]
Furthermore, for $f \in \BB_0^{\ast}(\Gamma_0(q_1))$ and $w \mid q_2$,
\[\left\langle |g \otimes \overline{\chi_w}|^2, f \right\rangle_q = \left\langle |g|^2, f \right\rangle_q.\]
\end{lemma}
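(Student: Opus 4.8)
The plan is to deduce both identities from the triple product calculations of \hyperref[WatsonIchinosect]{Sections \ref*{WatsonIchinosect}} and \ref{localconstantsect}, there being no elementary argument at hand. The starting point is that every $L$-factor occurring in the Watson--Ichino identities \eqref{gpsi^2fWatsonIchino} and \eqref{gpsi^2EWatsonIchino} depends on $g$ only through the adjoint lift $\ad g$, and $\ad(g \otimes \overline{\chi_w}) = \ad g$ for any Dirichlet character, since $\sym^2(g \otimes \mu) = \sym^2 g \otimes \mu^2$ cancels against the corresponding twist of the inverse central character, and the classical twist of a weight-zero newform leaves the archimedean component --- hence the archimedean $L$-factor --- intact. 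By \hyperref[ALlemma]{Lemma \ref*{ALlemma}}, moreover, $g \otimes \overline{\chi_w}$ and $g \otimes \overline{\chi_{q_2}}$ are again newforms of level exactly $q$ with primitive nebentypus, so \hyperref[dihedraltripleproductprop]{Proposition \ref*{dihedraltripleproductprop}} applies to them. Applying \eqref{gpsi^2fWatsonIchino} and \eqref{gpsi^2EWatsonIchino} to both $g$ and its twist therefore yields at once that the two sides of each asserted identity have equal absolute value; upgrading this to genuine equality --- determining a sign in the cusp-form case, and somewhat more in the Eisenstein case, where the quantities involved are a priori not even real --- requires the finer local information of \hyperref[localconstantsect]{Section \ref*{localconstantsect}}.

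For this one argues locally. By \hyperref[AtkinLehnerlemma]{Lemma \ref*{AtkinLehnerlemma}} and \hyperref[Eisenlemma]{Lemma \ref*{Eisenlemma}}, the two identities are equivalent to the assertions that $\langle |g|^2, \iota_w f\rangle_q = \langle |g|^2, f\rangle_q$ for $w \mid q_2$ and that $\langle |g|^2, E_{\aa}(\cdot,s)\rangle_q$ takes the same value at the cusps $\infty$ and $1/q_2$: the inner product of $|g|^2$ against an oldform (respectively an Eisenstein series) does not depend on which translate $\iota_w f$ (respectively which cusp) one chooses. Passing to the ad\`{e}lic triple product formalism of \hyperref[WatsonIchinosect]{Section \ref*{WatsonIchinosect}}, such an inner product is, by multiplicity one for triple products, a fixed scalar --- determined by the central $L$-value --- times a product over places of local trilinear forms on $\pi_{g,v} \otimes \overline{\pi_{g,v}} \otimes \tau_v$, where $\tau_v$ is the local component of $f$ or of the Eisenstein induced representation; replacing $f$ by $\iota_w f$, or $E_{\infty}$ by $E_{\aa}$ with $\aa \sim 1/w$, alters only the vector in $\tau_p$ at the primes $p \mid w$, which is right-translated by $\mathrm{diag}(p,1)$. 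Hence the ratio of the two periods is a product over $p \mid w$ of ratios of explicit local trilinear-form values, and one must show each such ratio equals $1$: for each $p \mid q_2$, at which $\pi_{g,p}$ is a ramified principal series of conductor $p$ (as $g$ has squarefree level and primitive nebentypus) while $\tau_p$ is unramified, the local trilinear form on $\pi_{g,p} \otimes \overline{\pi_{g,p}} \otimes \tau_p$ --- with the $\pi_{g,p}$-slots filled by the conductor-$p$ newvector and its complex conjugate --- takes the same value on the $\tau_p$-spherical vector as on its $\mathrm{diag}(p,1)$-translate. (When the relevant triple product central value vanishes all the periods vanish and the identity is trivially true, so one may assume otherwise, in which case these local values are nonzero.) This is precisely the content of the $p$-adic computations of \hyperref[localconstantsect]{Section \ref*{localconstantsect}}: one writes down the Whittaker function of $\pi_{g,p}$ and evaluates the local integral against each of the two vectors, getting the same answer. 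For the Eisenstein identity one obtains in this way an identity of meromorphic functions of $s$ valid for $s = 1/2 + it$ and hence, by analytic continuation, for all $s$.

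The main obstacle is exactly this ramified local computation, together with setting up the ad\`{e}lic dictionary precisely enough to keep track of signs and constants. One must first identify the classical quantities $\langle |g|^2, \iota_v f\rangle_q$ and $\langle |g|^2, E_{\aa}(\cdot,s)\rangle_q$ with triple product periods evaluated on explicit local vectors, and then carry out the $p$-adic trilinear integral for the ramified principal series $\pi_{g,p}$ carefully enough to see that right-translating the spherical vector of the unramified factor by $\mathrm{diag}(p,1)$ changes nothing --- value, phase, and constant alike. By contrast, the invariance of the $L$-factors and of the archimedean and unramified local integrals under $g \mapsto g \otimes \overline{\chi_w}$ is formal, a consequence of $\ad(g \otimes \overline{\chi_w}) = \ad g$; the genuine content of the lemma is that the local calculations of \hyperref[localconstantsect]{Section \ref*{localconstantsect}}, performed there for $g$ itself, are left undisturbed by the twist.
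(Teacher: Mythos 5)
Your proposal is correct in substance, but the route is more elaborate than the paper's in one place and identical in the other.

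For the Eisenstein identity, the paper bypasses the Watson--Ichino machinery entirely: \hyperref[g^2Eiscor]{Corollary~\ref*{g^2Eiscor}} is a \emph{classical} Rankin--Selberg unfolding that gives the exact value of $\langle |g|^2, E_{\infty}(\cdot,s)\rangle_q$ (not merely its modulus squared) as a function of $\ad g$ and the level alone. Since $g \otimes \overline{\chi_{q_2}}$ again lies in $\BB_0^{\ast}(q,\chi')$ with $\chi'$ primitive and $\ad(g \otimes \overline{\chi_{q_2}}) = \ad g$, applying the corollary to both forms immediately yields equality of the inner products themselves. Your worry about ``the quantities involved are a priori not even real'' is therefore moot here: you only seem aware of the squared identity \eqref{gpsi^2EWatsonIchino}, while \hyperref[g^2Eiscor]{Corollary~\ref*{g^2Eiscor}} already supplies the unsquared one. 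Your ad\`{e}lic local-trilinear argument for this case would also work (the unramified local factor $\tau_p$ for an Eisenstein series falls within the range covered by \hyperref[Collinsprop]{Proposition~\ref*{Collinsprop}}), but it is extra machinery for something that is a one-liner.

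For the cusp-form identity, your argument is essentially the paper's, repackaged. The paper combines \hyperref[AtkinLehnerlemma]{Lemma~\ref*{AtkinLehnerlemma}} (exactly as you do) with \hyperref[sqfreetripleproductcor]{Corollary~\ref*{sqfreetripleproductcor}}, whose right-hand side is manifestly independent of $w_1, w_2 \mid q_2$; taking $(w_1,w_2) = (w,1)$, $(1,1)$, and $(w,w)$ gives $\langle |g|^2, \iota_w f\rangle_q \langle |g|^2, f\rangle_q = \langle |g|^2, f\rangle_q^2 = \langle |g|^2, \iota_w f\rangle_q^2$, from which equality follows whether or not the value vanishes---and, usefully, with no residual sign to chase, since it is a product identity, not a modulus identity. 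The content of that corollary being independent of $w_1, w_2$ is precisely the final assertion of \hyperref[Collinsprop]{Proposition~\ref*{Collinsprop}}: the local constant at $p \mid q_2$ is unchanged when the unramified vector $\varphi_{3,p}$ (or $\widetilde{\varphi}_{3,p}$) is replaced by its $\left(\begin{smallmatrix} p^{-1} & 0 \\ 0 & 1\end{smallmatrix}\right)$-translate. This is exactly the local computation you identify as ``the main obstacle''; the paper has already packaged it in a way that resolves both the value and the sign in one stroke, which is cleaner than the language of ratios of trilinear-form values (a ratio of periods is awkward here since the ad\`{e}lic Watson--Ichino identity is quadratic, and the correct formalisation is the polarised version \hyperref[MVlemma]{Lemma~\ref*{MVlemma}}, feeding into \hyperref[sqfreetripleproductcor]{Corollary~\ref*{sqfreetripleproductcor}}).
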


\begin{proof}
The former is a consequence of \hyperref[g^2Eiscor]{Corollary \ref*{g^2Eiscor}}, while the latter follows upon combining \hyperref[AtkinLehnerlemma]{Lemma \ref*{AtkinLehnerlemma}} with \hyperref[sqfreetripleproductcor]{Corollary \ref*{sqfreetripleproductcor}}.
\end{proof}

\subsection{Proof of \texorpdfstring{\hyperref[dihedralspectralprop]{Proposition \ref*{dihedralspectralprop}}}{Proposition \ref{dihedralspectralprop}}}
\label{proofofdihedralspectralpropsect}

\begin{proof}[Proof of {\hyperref[dihedralspectralprop]{Proposition \ref*{dihedralspectralprop}}}]
An application of Parseval's identity, using the spectral decomposition of $L^2(\Gamma_0(q) \backslash \Hb)$ \cite[Theorem 15.5]{IK04}, together with the fact that
\[\frac{1}{\vol(B_R)} \int_{B_R(w)} f(z) \, d\mu(z) = h_R(t_f) f(w)\]
for any Laplacian eigenfunction $f$ \cite[Lemma 4.3]{Hum18}, yields
\[\Var\left(g;R\right) = \sum_{f \in \BB_0\left(\Gamma_0(q)\right)} \left|h_R(t_f)\right|^2 \left| \left\langle |g|^2,f \right\rangle_q \right|^2 + \sum_{\aa} \frac{1}{4\pi} \int_{-\infty}^{\infty} \left|h_R(t)\right|^2 \left|\left\langle |g|^2, E_{\aa}\left(\cdot, \frac{1}{2} + it\right) \right\rangle_q \right|^2 \, dt;\]
see \cite[Proof of Proposition 5.2]{Hum18}. By \hyperref[ILSlemma]{Lemmata \ref*{ILSlemma}}, \ref{EllMaassidentitylemma}, \ref{AtkinLehnerlemma}, and \ref{abstwistlemma},
\[\sum_{\substack{f \in \BB_0\left(\Gamma_0(q)\right) \\ t_f = t}} \left|\left\langle |g|^2, f \right\rangle_q \right|^2 = \sum_{q_1 q_2 = q} 2^{\omega(q_2)} \frac{\nu(q_2) \varphi(q_2)}{q_2^2} \sum_{\substack{f \in \BB_0^{\ast}\left(\Gamma_0\left(q_1\right)\right) \\ t_f = t}} \frac{L_{q_2}\left(1,\sym^2 f\right)}{L_{q_2}\left(\frac{1}{2}, f\right)} \left| \left\langle |g|^2, f \right\rangle_q \right|^2\]
for any $t \in [0,\infty) \cup i(0,1/2)$. Similarly, \hyperref[Eisenlemma]{Lemmata \ref*{Eisenlemma}} and \ref{abstwistlemma} imply that
\[\sum_{\aa} \left|\left\langle |g|^2, E_{\aa}\left(\cdot, \frac{1}{2} + it\right) \right\rangle_q \right|^2 = 2^{\omega(q)} \left|\left\langle |g|^2, E_{\infty}\left(\cdot, \frac{1}{2} + it\right) \right\rangle_q \right|^2\]
for any $t \in \R$. This gives the desired spectral expansion for $\Var(g;R)$, while the spectral expansion for the fourth moment of $g$ follows similarly, noting that the constant term $1/\sqrt{\vol(\Gamma_0(q) \backslash \Hb)}$ in the spectral expansion gives rise to the term $1/\vol(\Gamma_0(q) \backslash \Hb)$ in \eqref{dihedralspectral2eq}.
\end{proof}

\section{The Watson--Ichino Formula}
\label{WatsonIchinosect}

\subsection{The Watson--Ichino Formula for Eisenstein Series}

We require explicit expressions in terms of $L$-functions for $|\langle |g|^2,f \rangle_q|^2$ and $|\langle |g|^2,E_{\infty}(\cdot,1/2 + it) \rangle_q|^2$. This is the contents of the Watson--Ichino formula. In the latter case, this result is simply the Rankin--Selberg method, which far predates the work of Watson and Ichino; it can be proven by purely classical means via unfolding the Eisenstein series, as we shall now detail.

Recall that a Maa\ss{} newform $g \in \BB_0^{\ast}(q,\chi)$ has the Fourier expansion about the cusp at infinity of the form
\[g(z) = \sum_{\substack{n = -\infty \\ n \neq 0}}^{\infty} \rho_g(n) W_{0,it_g}\left(4\pi |n| y\right) e(nx),\]
where the Fourier coefficients $\rho_g(n)$ satisfy $\rho_g(n) = \epsilon_g \rho_g(-n)$, with the parity $\epsilon_g$ of $g$ equal to $1$ if $g$ is even and $-1$ if $g$ is odd.
The Hecke eigenvalues $\lambda_g(n)$ of $g$ satisfy
\begin{align}
\label{cuspmult}
\lambda_g(m) \lambda_g(n) & = \sum_{d \mid (m,n)} \chi(d) \lambda_g\left(\frac{mn}{d^2}\right) \quad \text{for all $m,n \geq 1$,}	\\
\label{cuspconj}
\overline{\lambda_g}(n) & = \chi(n) \lambda_g(n) \quad \text{for all $n \geq 1$ with $(n,q) = 1$,}	\\
\label{cusprholambda}
\rho_g(1) \lambda_g(n) & = \sqrt{n} \rho_g(n) \quad \text{for all $n \geq 1$.}
\end{align}

\begin{lemma}
Let $g \in \BB_0^{\ast}(q_1,\chi)$ with $q_1 q_2 = q$ and $q_1 \equiv 0 \pmod{q_{\chi}}$, where $q_{\chi}$ is the conductor of $\chi$. We have that
\begin{equation}
\label{|g|^2Einfty}
\left\langle |g|^2, E_{\infty}(\cdot,s)\right\rangle_q = \frac{|\rho_g(1)|^2}{\pi^{\overline{s}}} \frac{\Gamma\left(\frac{\overline{s}}{2} + it_g\right) \Gamma\left(\frac{\overline{s}}{2}\right)^2 \Gamma\left(\frac{\overline{s}}{2} - it_g\right)}{\Gamma(\overline{s})} \sum_{\substack{n = 1}}^{\infty} \frac{\left|\lambda_g(n)\right|^2}{n^{\overline{s}}}.
\end{equation}
\end{lemma}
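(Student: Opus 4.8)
The plan is to prove \eqref{|g|^2Einfty} by the classical Rankin--Selberg unfolding method. First I would write the inner product as an integral over $\Gamma_0(q)\backslash\Hb$ and unfold the Eisenstein series $E_{\infty}(z,s)=\sum_{\gamma\in\Gamma_{\infty}\backslash\Gamma_0(q)}\Im(\gamma z)^{s}$ against $|g(z)|^{2}$, using that $|g(z)|^{2}$ is $\Gamma_0(q)$-invariant. Since $\overline{E_{\infty}(z,s)}=E_{\infty}(z,\overline{s})$ for real arguments and $E_{\infty}$ has real Fourier coefficients in the relevant normalisation, the unfolding collapses the domain to the strip $\{0\le x\le 1,\ y>0\}$, yielding
\[
\left\langle |g|^2, E_{\infty}(\cdot,s)\right\rangle_q = \int_0^{\infty}\int_0^1 |g(z)|^2\, y^{\overline{s}}\,\frac{dx\,dy}{y^2}.
\]
This step requires only absolute convergence for $\Re(s)>1$, which holds since $g$ is of rapid decay at every cusp, so the interchange of sum and integral is justified.

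Next I would insert the Fourier expansion $g(z)=\sum_{n\neq 0}\rho_g(n)W_{0,it_g}(4\pi|n|y)e(nx)$ and expand $|g(z)|^2=g(z)\overline{g(z)}$. Performing the $x$-integration over $[0,1]$ kills all off-diagonal terms by orthogonality of $e(nx)$, leaving the diagonal
\[
\left\langle |g|^2, E_{\infty}(\cdot,s)\right\rangle_q = \sum_{\substack{n=-\infty\\ n\neq 0}}^{\infty} |\rho_g(n)|^2 \int_0^{\infty} W_{0,it_g}(4\pi|n|y)^2\, y^{\overline{s}-2}\,dy.
\]
Using the parity relation $\rho_g(n)=\epsilon_g\rho_g(-n)$ with $\epsilon_g^2=1$, the sum over negative $n$ doubles the sum over positive $n$, and then the relation \eqref{cusprholambda}, $\sqrt{n}\,\rho_g(n)=\rho_g(1)\lambda_g(n)$, converts $|\rho_g(n)|^2$ into $|\rho_g(1)|^2|\lambda_g(n)|^2/n$. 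A change of variables $y\mapsto y/(4\pi n)$ in the Mellin--Barnes-type integral then factors out $(4\pi n)^{1-\overline{s}}$, so that the Dirichlet series $\sum_{n\ge 1}|\lambda_g(n)|^2 n^{-\overline{s}}$ emerges (up to the overall constant $2(4\pi)^{1-\overline{s}}$ which I will absorb), together with the $n$-independent integral $\int_0^{\infty} W_{0,it_g}(4\pi y)^2\, y^{\overline{s}-2}\,dy$ — here one rescales back, or more cleanly substitutes $y\mapsto y/(4\pi)$ at the outset so that the integral becomes $(4\pi)^{1-\overline{s}}\int_0^\infty W_{0,it_g}(y)^2 y^{\overline{s}-2}\,dy$.

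The remaining task is to evaluate the Whittaker integral
\[
\int_0^{\infty} W_{0,it_g}(y)^2\, y^{\overline{s}-2}\,dy = \frac{\Gamma\left(\frac{\overline{s}}{2}+it_g\right)\Gamma\left(\frac{\overline{s}}{2}\right)^2\Gamma\left(\frac{\overline{s}}{2}-it_g\right)}{\Gamma(\overline{s})}\cdot(\text{normalising constant}),
\]
which is a standard formula for the Mellin transform of a product of two Whittaker functions (it appears, for instance, in Iwaniec's treatment of the Rankin--Selberg $L$-function, ultimately resting on a Barnes-type integral or on the integral representation $\int_0^\infty K_{\mu}(y)^2 y^{s-1}\,dy$ and the relation $W_{0,\mu}(y)=\sqrt{y/\pi}\,K_{\mu}(y/2)$). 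Assembling the pieces, tracking that $\pi^{-\overline s}$ arises from combining $(4\pi)^{1-\overline s}$ with the normalising constant of the Whittaker Mellin transform, gives exactly \eqref{|g|^2Einfty}. I expect the main obstacle to be purely bookkeeping: getting every power of $2$, $\pi$, and $4\pi$ exactly right and confirming the precise normalisation of the Whittaker--Mellin integral so that the final constant is $\pi^{-\overline{s}}$ with no stray factors. There is no analytic difficulty — the identity is a priori valid for $\Re(s)>1$ and both sides continue meromorphically, so it holds wherever both sides are defined.
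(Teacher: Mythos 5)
Your proposal is correct and follows essentially the same route as the paper: unfold $E_\infty(z,s)$ against $|g|^2$, insert the Fourier expansion and kill the off-diagonal via the $x$-integral (the paper calls this Parseval's identity), use $\rho_g(-n)=\epsilon_g\rho_g(n)$ and \eqref{cusprholambda} to reduce to $|\rho_g(1)|^2|\lambda_g(n)|^2/n$, rescale $y$, and evaluate $\int_0^\infty W_{0,it_g}(y)^2 y^{\overline{s}-2}\,dy$ by the Mellin--Barnes formula (the paper cites \cite[6.576.4]{GR07}, which is the $K$-Bessel version you invoke via $W_{0,\mu}(y)=\sqrt{y/\pi}\,K_\mu(y/2)$). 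The only thing your writeup leaves unfinished is the final constant check — the factor of $2$ from negative $n$, the $(4\pi)^{1-\overline{s}}$ from rescaling, and the $2^{2\overline{s}-3}/\pi$ from the Whittaker--Mellin integral do combine to exactly $\pi^{-\overline{s}}$, so the bookkeeping works as you anticipate.
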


\begin{proof}
Unfolding the integral and using Parseval's identity and \eqref{cusprholambda} yields
\[\left\langle |g|^2, E_{\infty}(\cdot,s)\right\rangle_q = \frac{2 |\rho_g(1)|^2}{(4\pi)^{\overline{s} - 1}} \sum_{\substack{n = 1}}^{\infty} \frac{\left|\lambda_g(n)\right|^2}{n^{\overline{s}}} \int_{0}^{\infty} y^{\overline{s} - 1} W_{0, it_g}(y)^2 \, \frac{dy}{y}\]
after the change of variables $y \mapsto y/(4\pi|n|y)$. The result then follows via the Mellin--Barnes formula \cite[6.576.4]{GR07}.
\end{proof}

\begin{lemma}
\label{rho(1)^2lemma}
Let $q$ be squarefree, and let $g \in \BB_0^{\ast}(q_1,\chi)$ with $q_1 q_2 = q$ and $q_1 \equiv 0 \pmod{q_{\chi}}$. We have that
\begin{equation}
\label{|lambda_g|^2}
\sum_{n = 1}^{\infty} \frac{|\lambda_g(n)|^2}{n^s} = \frac{\zeta(s) L(s, \ad g)}{\zeta(2s)} \prod_{p \mid q_1} \frac{1}{1 + p^{-s}}
\end{equation}
for $\Re(s) > 1$ and that
\begin{equation}
\label{rho_g(1)^2}
|\rho_g(1)|^2 = \frac{\left\langle g, g\right\rangle_q}{2 \nu(q_2) \Lambda(1, \ad g)} = \frac{q_2 \cosh \pi t_g \left\langle g, g\right\rangle_q}{2 q \nu(q_2) L(1, \ad g)}.
\end{equation}
\end{lemma}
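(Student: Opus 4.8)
The plan is to establish the two identities in sequence, the first being an essentially formal Dirichlet-series computation and the second requiring comparison with the Rankin--Selberg residue. For \eqref{|lambda_g|^2}, I would work prime by prime, using the fact that $\sum_n |\lambda_g(n)|^2 n^{-s}$ has an Euler product since $|\lambda_g(mn)|^2 = |\lambda_g(m)|^2 |\lambda_g(n)|^2$ for $(m,n)=1$ by the multiplicativity relation \eqref{cuspmult} and \eqref{cuspconj}. For primes $p \nmid q_1$, the local factor of $|\lambda_g(n)|^2$ matches the standard computation: writing $\lambda_g(p) = \alpha_p + \beta_p$ with $\alpha_p\beta_p = \chi(p)$, one has $\sum_k |\lambda_g(p^k)|^2 p^{-ks} = \zeta_p(s) L_p(s,\ad g)/\zeta_p(2s)$, using $|\lambda_g(p^k)|^2 = \lambda_g(p^k)\overline{\lambda_g(p^k)} = \lambda_g(p^k)\lambda_g(p^k)\overline{\chi(p^k)}$ and the Hecke relations, together with $L_p(s,\ad g) = (1-\alpha_p\beta_p^{-1}p^{-s})^{-1}(1-p^{-s})^{-1}(1-\alpha_p^{-1}\beta_p p^{-s})^{-1}$. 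For primes $p \mid q_1$ we have $|\lambda_g(p)| = p^{-1/2}$ and $\lambda_g(p^k) = \lambda_g(p)^k$, so the local sum is $(1 - p^{-1-s})^{-1}$; comparing with the right-hand side, where the local factor of $\ad g$ at $p \mid q_1$ is $(1-p^{-1-s})^{-1}$ (the Steinberg-type factor) and $\zeta_p(s)/\zeta_p(2s) = (1+p^{-s})^{-1}\cdot(1-p^{-s})^{-1}\cdot\cdots$ — more precisely $\zeta_p(s)/\zeta_p(2s) = (1+p^{-s})$ wait; one checks $\zeta_p(s)L_p(s,\ad g)/\zeta_p(2s) = (1-p^{-2s})(1-p^{-s})^{-1}(1-p^{-1-s})^{-1} = (1+p^{-s})(1-p^{-1-s})^{-1}$, so the claimed correction factor $(1+p^{-s})^{-1}$ restores agreement with the local sum $(1-p^{-1-s})^{-1}$. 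This establishes \eqref{|lambda_g|^2}.

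For \eqref{rho_g(1)^2}, I would combine \eqref{|g|^2Einfty} with \eqref{|lambda_g|^2} and take residues. The Eisenstein series $E_\infty(z,s)$ on $\Gamma_0(q)\backslash\Hb$ has a simple pole at $s=1$ with residue $1/\vol(\Gamma_0(q)\backslash\Hb) = 3/(\pi\nu(q))$ (since $[\SL_2(\Z):\Gamma_0(q)] = \nu(q)$ and $\vol(\SL_2(\Z)\backslash\Hb) = \pi/3$); pairing with $|g|^2$ and using $\langle g,g\rangle_q$, the left side of \eqref{|g|^2Einfty} has residue $\langle g,g\rangle_q/\vol(\Gamma_0(q)\backslash\Hb)$ at $s=1$. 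On the right side, the Gamma quotient and the prime product over $p \mid q_1$ are holomorphic and nonzero at $s=1$, $L(1,\ad g)$ is finite and nonzero, $1/\zeta(2) = 6/\pi^2$, and $\zeta(s)$ contributes the simple pole with residue $1$; evaluating $\Gamma$-factors at $s=1$ gives $\pi^{-1}\Gamma(\tfrac12 + it_g)\Gamma(\tfrac12)^2\Gamma(\tfrac12 - it_g)/\Gamma(1) = \pi^{-1}\cdot\pi\cdot\pi/\cosh\pi t_g = \pi/\cosh\pi t_g$ using $\Gamma(\tfrac12\pm it_g) $ product $= \pi/\cosh\pi t_g$ and $\Gamma(\tfrac12)^2=\pi$. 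Matching residues and solving for $|\rho_g(1)|^2$, together with $\prod_{p\mid q_1}(1+p^{-1})^{-1} = q_1/\nu(q_1)$ and $\nu(q) = \nu(q_1)\nu(q_2)$, should yield $|\rho_g(1)|^2 = q_2\cosh\pi t_g\,\langle g,g\rangle_q/(2q\,\nu(q_2)L(1,\ad g))$, and the first expression in \eqref{rho_g(1)^2} follows from $\Lambda(1,\ad g) = q(\ad g)^{1/2}L_\infty(1,\ad g)L(1,\ad g)$ upon recalling the conductor of $\ad g$ and the archimedean factor evaluate so that $q(\ad g)^{1/2}L_\infty(1,\ad g) = q\cosh\pi t_g / q_2$ — here one must be careful that $\ad g$ for $g$ of level $q_1$ has conductor dividing $q_1$, not $q$, so the bookkeeping of which $\Lambda$ (imprimitive at $q_2$) appears needs attention.

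The main obstacle I anticipate is precisely this bookkeeping at the ramified primes: tracking the correct local factors of $L(s,\ad g)$ and of the completed $\Lambda(s,\ad g)$ when $g$ has level $q_1$ strictly dividing $q$, and reconciling the extra Euler factors at $p \mid q_1$ in \eqref{|lambda_g|^2} with the shape of $L_p(s,\ad g)$ for $p$ dividing the conductor. A secondary point requiring care is the exact value of the residue of $E_\infty(z,s)$ at $s=1$ on $\Gamma_0(q)\backslash\Hb$ and the normalization convention for $E_\infty$, which must be consistent with the conventions fixed earlier in this section; I would verify this against \cite[Theorem 15.5]{IK04} or the unfolding already carried out. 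Once the local factors are pinned down, the argument is a routine residue comparison.
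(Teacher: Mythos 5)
Your overall strategy is exactly the paper's: an Euler-product/Hecke-relation computation for \eqref{|lambda_g|^2}, followed by comparing residues at $s=1$ in \eqref{|g|^2Einfty} using $\Res_{s=1} E_{\infty}(z,s) = 1/\vol(\Gamma_0(q)\backslash\Hb) = 3/\pi\nu(q)$. The residue computation in your second half is correct and yields both forms of \eqref{rho_g(1)^2} (modulo one typo: you wrote $q(\ad g)^{1/2} L_{\infty}(1,\ad g) = q\cosh\pi t_g/q_2$, but it equals $q_1/\cosh\pi t_g = q/(q_2\cosh\pi t_g)$; since $\Gamma(\tfrac12+it_g)\Gamma(\tfrac12-it_g) = \pi/\cosh\pi t_g$ the $\cosh$ belongs in the denominator).

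The genuine error is in your treatment of the ramified primes, which you correctly anticipated as the delicate point but then resolved incorrectly. The primes $p \mid q_1$ split into two types, and you treated them all as the second: for $p \mid q_1/q_{\chi}$ the local representation is a twist of Steinberg, $|\lambda_g(p)|^2 = 1/p$, and $L_p(s,\ad g) = (1-p^{-1-s})^{-1}$, as you wrote; but for $p \mid q_{\chi}$ the local representation is a ramified principal series, $|\lambda_g(p)|^2 = 1$, and $L_p(s,\ad g) = (1-p^{-s})^{-1}$. At such $p$ your local sum should be $\sum_k p^{-ks} = (1-p^{-s})^{-1}$, not $(1-p^{-1-s})^{-1}$, and the matching identity is $(1-p^{-2s})(1-p^{-s})^{-2}(1+p^{-s})^{-1} = (1-p^{-s})^{-1}$. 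As it happens, your two mistakes (wrong $|\lambda_g(p)|$ and wrong $L_p(s,\ad g)$) cancel, so your displayed verification is formally consistent while both sides of it are wrong whenever $p \mid q_{\chi}$; the identity \eqref{|lambda_g|^2} is nevertheless true with the same correction factor $(1+p^{-s})^{-1}$ in both cases, which is why the lemma survives. Note that the case $p \mid q_{\chi}$ is not vacuous here (it is in fact the only case occurring in the paper's main application, where $\chi$ is primitive modulo $q$ and $q_1 = q$), so the case split must be stated and checked explicitly, as the paper does.
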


\begin{proof}
We recall that
\[\Lambda(s,\ad g) = q_1^s \pi^{-\frac{3s}{2}} \Gamma\left(\frac{s}{2} + it_g\right) \Gamma\left(\frac{s}{2}\right) \Gamma\left(\frac{s}{2} - it_g\right) \prod_p L_p(s, \ad g)\]
with
\[L_p(s,\ad g)^{-1} = \begin{dcases*}
1 - p^{-s} & if $p \mid q_{\chi}$,	\\
1 - p^{-1 - s} & if $p \mid \frac{q_1}{q_{\chi}}$,	\\
1 - \overline{\chi}(p) \lambda_g(p^2) p^{-s} + \overline{\chi}(p) \lambda_g(p^2) p^{-2s} - p^{-3s} & if $p \nmid q_1$.
\end{dcases*}\]
Using \eqref{cuspmult} and \eqref{cuspconj} together with the fact that
\[\left|\lambda_f(p)\right|^2 = \begin{dcases*}
1 & if $p \mid q_{\chi}$,	\\
\frac{1}{p} & if $p \mid \frac{q_1}{q_{\chi}}$,
\end{dcases*}\]
we obtain \eqref{|lambda_g|^2}. Next, we take the residue of \eqref{|g|^2Einfty} at $\overline{s} = 1$, noting that $E_{\infty}(z,s)$ has residue \[\frac{1}{\vol(\Gamma_0(q) \backslash \Hb)} = \frac{3}{\pi \nu(q)}\]
at $s = 1$ independently of $z \in \Gamma_0(q) \backslash \Hb$. This yields the desired identity \eqref{rho_g(1)^2}.
\end{proof}

\begin{corollary}
\label{g^2Eiscor}
Let $q$ be squarefree, and let $g \in \BB_0^{\ast}(q_1,\chi)$ with $q_1 q_2 = q$ and $q_1 \equiv 0 \pmod{q_{\chi}}$, where $g$ is normalised such that $\langle g, g \rangle_q = 1$. We have that
\[\left\langle |g|^2, E_{\infty}(\cdot,s)\right\rangle_q = \frac{1}{2 q_1^{\overline{s}} \nu(q_2)} \frac{\Lambda^{q_1}(\overline{s}) \Lambda(\overline{s}, \ad g)}{\Lambda(1, \ad g) \Lambda^{q_1}(2\overline{s})}\]
for $\Re(s) \geq 1/2$ with $s \neq 1$, so that
\begin{equation}
\label{g^2Eisinnerproduct}
\left|\left\langle |g|^2, E_{\infty}\left(\cdot, \frac{1}{2} + it\right)\right\rangle_q\right|^2 = \frac{1}{4q_1 \nu(q_2)^2} \left|\frac{\Lambda^{q_1}\left(\frac{1}{2} + it\right) \Lambda\left(\frac{1}{2} + it, \ad g\right)}{\Lambda(1, \ad g) \Lambda^{q_1}(1 + 2it)}\right|^2.\
\end{equation}
\end{corollary}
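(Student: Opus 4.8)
The plan is to start from the unfolding identity \eqref{|g|^2Einfty}, feed in the two evaluations supplied by \hyperref[rho(1)^2lemma]{Lemma~\ref*{rho(1)^2lemma}}, and then recognise the outcome as a ratio of completed $L$-functions. Everything will first be done in the region $\Re(s) > 1$, where the Dirichlet series and Euler products in sight converge absolutely. Substituting $|\rho_g(1)|^2 = \frac{1}{2\nu(q_2)\Lambda(1,\ad g)}$ from \eqref{rho_g(1)^2} (using the normalisation $\langle g, g\rangle_q = 1$) and the evaluation \eqref{|lambda_g|^2} of the Rankin--Selberg Dirichlet series into \eqref{|g|^2Einfty} yields
\begin{equation*}
\left\langle |g|^2, E_{\infty}(\cdot,s)\right\rangle_q = \frac{1}{2\nu(q_2)\Lambda(1,\ad g)} \cdot \frac{\Gamma\left(\frac{\overline{s}}{2} + it_g\right)\Gamma\left(\frac{\overline{s}}{2}\right)^2\Gamma\left(\frac{\overline{s}}{2} - it_g\right)}{\pi^{\overline{s}}\,\Gamma(\overline{s})} \cdot \frac{\zeta(\overline{s})\,L(\overline{s},\ad g)}{\zeta(2\overline{s})}\prod_{p \mid q_1}\frac{1}{1 + p^{-\overline{s}}}.
\end{equation*}

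The only non-bookkeeping step is the elementary identity $\frac{1 - p^{-\overline{s}}}{1 - p^{-2\overline{s}}} = \frac{1}{1 + p^{-\overline{s}}}$, which shows that the finite Euler product $\prod_{p \mid q_1}(1 + p^{-\overline{s}})^{-1}$ equals $\frac{\zeta^{q_1}(\overline{s})}{\zeta(\overline{s})} \cdot \frac{\zeta(2\overline{s})}{\zeta^{q_1}(2\overline{s})}$, so that the last two factors above collapse to $\zeta^{q_1}(\overline{s})\,L(\overline{s},\ad g)\,/\,\zeta^{q_1}(2\overline{s})$. It then remains to reassemble the $\Gamma$-factors and prime powers into completed $L$-functions, using the explicit archimedean data recorded in the proof of \hyperref[rho(1)^2lemma]{Lemma~\ref*{rho(1)^2lemma}}: writing out $\Lambda^{q_1}(\overline{s}) = \pi^{-\overline{s}/2}\Gamma(\overline{s}/2)\zeta^{q_1}(\overline{s})$, $\Lambda^{q_1}(2\overline{s}) = \pi^{-\overline{s}}\Gamma(\overline{s})\zeta^{q_1}(2\overline{s})$, and $\Lambda(\overline{s},\ad g) = q_1^{\overline{s}}\pi^{-3\overline{s}/2}\Gamma(\overline{s}/2 + it_g)\Gamma(\overline{s}/2)\Gamma(\overline{s}/2 - it_g)L(\overline{s},\ad g)$, one checks directly that the explicit factor $q_1^{-\overline{s}}$ cancels the conductor factor of $\Lambda(\overline{s},\ad g)$, the powers of $\pi$ combine to $\pi^{-\overline{s}}$, and the surviving $\Gamma$-factors are precisely $\Gamma(\overline{s}/2)^2\Gamma(\overline{s}/2 + it_g)\Gamma(\overline{s}/2 - it_g)/\Gamma(\overline{s})$. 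This proves the stated identity for $\Re(s) > 1$.

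To reach $\Re(s) \geq 1/2$ with $s \neq 1$ I would invoke meromorphic continuation in the variable $\overline{s}$: the left-hand side continues because $E_{\infty}(z,s)$ does, its only singularity with $\Re(s) \geq 1/2$ being the simple pole at $s = 1$ with the $z$-independent residue $1/\vol(\Gamma_0(q) \backslash \Hb)$ already used in the proof of \hyperref[rho(1)^2lemma]{Lemma~\ref*{rho(1)^2lemma}}, while the right-hand side is manifestly meromorphic; since the two agree on $\Re(s) > 1$, they agree on the whole connected region. Finally, \eqref{g^2Eisinnerproduct} follows by specialising to $s = \tfrac{1}{2} + it$, so that $\overline{s} = \tfrac{1}{2} - it$, and taking $|\,\cdot\,|^2$: one uses $|q_1^{1/2 - it}|^2 = q_1$, the conjugation symmetries $\overline{\Lambda^{q_1}(\overline{s})} = \Lambda^{q_1}(s)$ and $\overline{\Lambda(\overline{s},\ad g)} = \Lambda(s,\ad g)$ (so that $|\Lambda^{q_1}(\tfrac{1}{2} - it)| = |\Lambda^{q_1}(\tfrac{1}{2} + it)|$, and likewise for $\Lambda(\cdot,\ad g)$ and for $\Lambda^{q_1}(1 \mp 2it)$), and the positivity of $\Lambda(1,\ad g)$.

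There is no analytic obstacle here; the proof is a normalisation exercise. The step most prone to slips is keeping straight the three coexisting completed-$L$-function conventions — $\Lambda(s,\ad g) = q(\ad g)^{s/2}L_{\infty}(s,\ad g)L(s,\ad g)$ with $q(\ad g)$ a power of $q_1$, versus $\Lambda^{q_1}$ and $L_{q_1}$, which modify only the finite Euler factors at primes dividing $q_1$ — and carrying the complex conjugate $\overline{s}$ consistently throughout, which is harmless but must not be dropped, since $\langle\,\cdot\,, E_{\infty}(\cdot,s)\rangle_q$ is holomorphic in $\overline{s}$ rather than in $s$.
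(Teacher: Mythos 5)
Your proof is correct and it is exactly the computation the paper intends: the corollary is stated with no separate proof because it is an immediate consequence of \eqref{|g|^2Einfty}, \eqref{|lambda_g|^2}, and \eqref{rho_g(1)^2}, which is precisely what you combine. The Euler-factor collapse $\prod_{p\mid q_1}(1+p^{-\overline{s}})^{-1}=\zeta^{q_1}(\overline{s})\zeta(2\overline{s})/(\zeta(\overline{s})\zeta^{q_1}(2\overline{s}))$, the reassembly into completed $L$-functions using the displayed $\Gamma$-data and conductor $q_1^2$ of $\ad g$, the meromorphic continuation in $\overline{s}$ via that of $E_{\infty}(z,s)$, and the conjugation symmetries used to pass to \eqref{g^2Eisinnerproduct} are all handled correctly.
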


Note that \hyperref[g^2Eiscor]{Corollary \ref*{g^2Eiscor}} remains valid when $g$ is replaced by $g \otimes \chi_v$ for $v \mid q_{\chi}$, since the level is unchanged and $\ad (g \otimes \chi_v) = \ad g$.

\begin{remark}
One can also prove \eqref{g^2Eisinnerproduct} ad\`{e}lically; see, for example, \cite[(4.21)]{MV10}.
\end{remark}

\subsection{The Ad\`{e}lic Watson--Ichino Formula for Maa\ss{} Newforms}

Now we consider the inner product $|\langle |g|^2, f\rangle_q|^2$. The Watson--Ichino formula is an ad\`{e}lic statement: the integral over $\Gamma_0(q) \backslash \Hb$ is replaced by an integral over $\Zgp(\A_{\Q}) \GL_2(\Q) \backslash \GL_2(\A_{\Q})$, and $g$ and $f$ are replaced by functions on $\GL_2(\Q) \backslash \GL_2(\A_{\Q})$ that are square integrable modulo the centre $\Zgp(\A_{\Q})$ and are elements of cuspidal automorphic representations of $\GL_2(\A_{\Q})$. In \hyperref[ClassicalWatsonIchinosect]{Section \ref*{ClassicalWatsonIchinosect}}, we translate this ad\`{e}lic statement into a statement in the classical language of automorphic forms.

Let $F$ be a number field, and let $\varphi_1 = \bigotimes_v \varphi_{1,v}$, $\varphi_2 = \bigotimes_v \varphi_{2,v}$, $\varphi_3 = \bigotimes_v \varphi_{3,v}$ be pure tensors in unitary cuspidal automorphic representations $\pi_1 = \bigotimes_v \pi_{1,v}$, $\pi_2 = \bigotimes_v \pi_{2,v}$, $\pi_3 = \bigotimes_v \pi_{3,v}$ of $\GL_2(\A_F)$ with central characters $\omega_{\pi_1}$, $\omega_{\pi_2}$, $\omega_{\pi_3}$ satisfying $\omega_{\pi_1} \omega_{\pi_2} \omega_{\pi_3} = 1$, and let $\widetilde{\varphi}_1 = \bigotimes_v \widetilde{\varphi}_{1,v}$, $\widetilde{\varphi}_2 = \bigotimes_v \widetilde{\varphi}_{2,v}$, $\widetilde{\varphi}_3 = \bigotimes_v \widetilde{\varphi}_{3,v}$ be pure tensors in the contragredient representations $\widetilde{\pi}_1 = \bigotimes_v \widetilde{\pi}_{1,v}$, $\widetilde{\pi}_2 = \bigotimes_v \widetilde{\pi}_{2,v}$, $\widetilde{\pi}_3 = \bigotimes_v \pi_{3,v}$. Let
\begin{align*}
\varphi & \defeq \varphi_1 \otimes \varphi_2 \otimes \varphi_3,	\\
\widetilde{\varphi} & \defeq \widetilde{\varphi}_1 \otimes \widetilde{\varphi}_2 \otimes \widetilde{\varphi}_3,	\\
I(\varphi \otimes \widetilde{\varphi}) & \defeq \int\limits_{\Zgp(\A_F) \GL_2(F) \backslash \GL_2(\A_F)} \varphi_1(g) \varphi_2(g) \varphi_3(g) \, dg \int\limits_{\Zgp(\A_F) \GL_2(F) \backslash \GL_2(\A_F)} \widetilde{\varphi}_1(g) \widetilde{\varphi}_2(g) \widetilde{\varphi}_3(g) \, dg,	\\
\langle \varphi, \widetilde{\varphi} \rangle & \defeq \prod_{\ell = 1}^{3} \left(\int\limits_{\Zgp(\A_F) \GL_2(F) \backslash \GL_2(\A_F)} \left|\varphi_{\ell}(g)\right|^2 \, dg \int\limits_{\Zgp(\A_F) \GL_2(F) \backslash \GL_2(\A_F)} \left|\widetilde{\varphi}_{\ell}(g)\right|^2 \, dg\right)^{1/2},
\end{align*}
with $dg$ the Tamagawa measure on $\Zgp(\A_F) \GL_2(F) \backslash \GL_2(\A_F)$. For each place $v$ of $F$ with corresponding local field $F_v$, we also let
\begin{align}
\notag
\varphi_v & \defeq \varphi_{1,v} \otimes \varphi_{2,v} \otimes \varphi_{3,v},	\\
\label{I_v}
I_v(\varphi_v \otimes \widetilde{\varphi}_v) & \defeq \int\limits_{\Zgp(F_v) \backslash \GL_2(F_v)}\prod_{\ell = 1}^{3} \left\langle \pi_{\ell,v}(g_v) \cdot \varphi_{\ell,v}, \widetilde{\varphi}_{\ell,v} \right\rangle \, dg_v,	\\
\label{I_v'}
I_v'(\varphi_v \otimes \widetilde{\varphi}_v) & \defeq \frac{L_v(1, \ad \pi_{1,v}) L_v(1, \ad \pi_{2,v}) L_v(1, \ad \pi_{3,v})}{\zeta_v(2)^2 L_v\left(\frac{1}{2}, \pi_{1,v} \otimes \pi_{2,v} \otimes \pi_{3,v}\right)} \frac{I_v(\varphi_v \otimes \widetilde{\varphi}_v)}{\langle \varphi_v, \widetilde{\varphi}_v\rangle_v},	\\
\notag
\langle \varphi_v, \widetilde{\varphi}_v\rangle_v & \defeq \prod_{j = 1}^{3} \left(\int_{K_v} \left|\varphi_{\ell,v}(k_v)\right|^2 \, dk_v \int_{K_v} \left|\widetilde{\varphi}_{\ell,v}(k_v)\right|^2 \, dk_v\right)^{1/2}.
\end{align}
The Haar measure $dg_v$ on $\Zgp(F_v) \backslash \GL_2(F_v)$ is normalised as follows:
\begin{itemize}
\item For $v$ nonarchimedean and $x_v \in \Zgp(F_v) \backslash \GL_2(F_v)$, we may use the Iwasawa decomposition to write $g_v = \left(\begin{smallmatrix} a_v & x_v \\ 0 & 1 \end{smallmatrix}\right) k_v$ with $x_v \in F_v$, $a_v \in F_v^{\times}$, and $k_v \in \GL_2(\OO_v)$. Then $dg_v = dx_v \, |a_v|_v^{-1} \, d^{\times} a_v \, dk_v$. Here the additive Haar measure $dx_v$ on $F_v$ is normalised to give $\OO_v$ volume $1$, the multiplicative Haar measure $d^{\times} a_v = \zeta_v(1) |a_v|_v^{-1} \, da_v$ on $F_v^{\times}$ is normalised to give $\OO_v^{\times} = \GL_1(\OO_v)$ volume $1$, and $dk_v$ is the Haar probability measure on the compact group $\GL_2(\OO_v)$.
\item For $F_v \cong \R$ and $x_v \in \Zgp(F_v) \backslash \GL_2(F_v)$, we may use the Iwasawa decomposition to write $g_v = \left(\begin{smallmatrix} a_v & x_v \\ 0 & 1 \end{smallmatrix}\right) k_v$ with $x_v \in \R$, $a_v \in \R^{\times}$, and $k_v = \left(\begin{smallmatrix} \cos \theta & \sin \theta \\ -\sin \theta & \cos \theta \end{smallmatrix}\right) \in \SO(2)$ with $\theta \in [0,2\pi)$. Then $dg_v = dx_v \, |a_v|_v^{-1} \, d^{\times} a_v \, dk_v$, where the additive Haar measure $dx_v$ on $\R$ is the usual Lebesgue measure normalised to give $[0,1]$ volume $1$, the multiplicative Haar measure $d^{\times} a_v$ on $\R^{\times}$ is $|a_v|_v^{-1} \, da_v$, and $dk_v = (2\pi)^{-1} \, d\theta$ is the Haar probability measure on the compact group $\SO(2)$.
\item A similar definition can also be given for $F_v \cong \C$, though we do not need this, since we will eventually take $F = \Q$.
\end{itemize}
The Tamagawa measure $dg$ on $\Zgp(\A_F) \GL_2(F) \backslash \GL_2(\A_F)$ is such that
\[dg = C_F \prod_v dg_v,\]
where
\[C_F = |d_F|^{-3/2} \prod_v \zeta_v(2)^{-1} = |d_F|^{-1/2} \Lambda_F(2)^{-1}.\]
Here $d_F$ denotes the discriminant of $F$, and we recall that the conductor of the Dedekind zeta function is $|d_F|$, so that the completed Dedekind zeta function is $\Lambda_F(s) = |d_F|^{s/2} \prod_v \zeta_v(s)$.

\begin{theorem}[{\cite[Theorem 1.1]{Ich08}}]
The period integral $I(\varphi \otimes \widetilde{\varphi}) / \langle \varphi, \widetilde{\varphi} \rangle$ is equal to
\[\frac{C_F}{8} \left(\frac{q(\pi_1 \otimes \pi_2 \otimes \pi_3)^{1/2}}{q(\ad \pi_1) q(\ad \pi_2) q(\ad \pi_3)}\right)^{-1/2} \frac{\Lambda\left(\frac{1}{2}, \pi_1 \otimes \pi_2 \otimes \pi_3\right)}{\Lambda(1, \ad \pi_1) \Lambda(1, \ad \pi_2) \Lambda(1, \ad \pi_3)} \prod_v I_v'(\varphi_v \otimes \widetilde{\varphi}_v),\]
with $I_v'(\varphi_v \otimes \widetilde{\varphi}_v)$ equal to $1$ whenever $\varphi_{1,v}$, $\varphi_{2,v}$, $\varphi_{3,v}$ and $\widetilde{\varphi}_{1,v}$, $\widetilde{\varphi}_{2,v}$, $\widetilde{\varphi}_{3,v}$ are spherical vectors at a nonarchimedean place $v$.
\end{theorem}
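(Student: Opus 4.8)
Since the statement is Ichino's theorem \cite[Theorem 1.1]{Ich08}, the plan is to reproduce that argument (which makes precise and generalises earlier work of Garrett, Harris--Kudla, and Watson \cite{Wat08}), resting on three pillars: the Garrett--Piatetski-Shapiro--Rallis integral representation of the triple product $L$-function, the local multiplicity-one theorem for invariant trilinear forms, and an explicit unramified computation. The starting observation is that the hypothesis $\omega_{\pi_1}\omega_{\pi_2}\omega_{\pi_3} = 1$ makes $\varphi \mapsto \int_{\Zgp(\A_F)\GL_2(F)\backslash\GL_2(\A_F)} \varphi_1(g)\varphi_2(g)\varphi_3(g)\,dg$ a well-defined $\GL_2(\A_F)$-invariant trilinear functional on $\pi_1 \otimes \pi_2 \otimes \pi_3$; consequently $I(\varphi \otimes \widetilde{\varphi})$, viewed as a functional on $\bigotimes_v\left((\pi_{1,v}\otimes\pi_{2,v}\otimes\pi_{3,v})\otimes(\widetilde{\pi}_{1,v}\otimes\widetilde{\pi}_{2,v}\otimes\widetilde{\pi}_{3,v})\right)$, is invariant under the diagonal $\prod_v \mathrm{PGL}_2(F_v)$-action. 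Local uniqueness of such functionals --- Prasad at the finite places, Loke and Prasad at the archimedean ones --- then forces a factorisation $I(\varphi \otimes \widetilde{\varphi}) = c \prod_v I_v(\varphi_v \otimes \widetilde{\varphi}_v)$ with a constant $c$ depending only on $\pi_1,\pi_2,\pi_3$, once one knows that the local integrals $I_v$ in \eqref{I_v} converge and are not identically zero --- immediate from temperedness where the Ramanujan conjecture is available, and via meromorphic continuation in general.

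The substance of the theorem is then the evaluation of $c$, which I would carry out through the doubling integral. One embeds $\GL_2 \times \GL_2 \times \GL_2$ into $\mathrm{GSp}_6$ by the Garrett embedding, restricts to the image a Siegel Eisenstein series $E(\cdot,s)$ built from a factorisable section, and forms the global integral $\int_{(\Zgp(\A_F)\GL_2(F)\backslash\GL_2(\A_F))^3} E(\iota(g_1,g_2,g_3),s)\varphi_1(g_1)\varphi_2(g_2)\varphi_3(g_3)\,dg_1\,dg_2\,dg_3$; Garrett's unfolding turns this into an Euler product of local zeta integrals equal to $\Lambda(s + \tfrac{1}{2}, \pi_1 \otimes \pi_2 \otimes \pi_3)$ times finitely many bad local factors coming from the constant term of $E(\cdot,s)$. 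The key local identity --- Ichino's refinement of Harris--Kudla --- is that each of these local zeta integrals, divided by the normalising $L$- and $\zeta$-factors appearing in \eqref{I_v'}, equals $I_v'(\varphi_v \otimes \widetilde{\varphi}_v)$; in particular it equals $1$ on spherical data, which one checks directly by expanding the matrix coefficients $\langle \pi_{\ell,v}(g_v)\varphi_{\ell,v}, \widetilde{\varphi}_{\ell,v}\rangle$ via Macdonald's formula and integrating over $\mathrm{PGL}_2(F_v)$ against the Cartan decomposition, which yields $I_v(\varphi_v \otimes \widetilde{\varphi}_v) = \zeta_v(2)^2 L_v(\tfrac{1}{2}, \pi_{1,v}\otimes\pi_{2,v}\otimes\pi_{3,v})/\prod_{\ell} L_v(1,\ad\pi_{\ell,v})$. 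Evaluating the global integral at the point relevant to the central value and comparing with the factorisation $I(\varphi\otimes\widetilde{\varphi}) = c\prod_v I_v$, while tracking the Tamagawa measure $dg = C_F \prod_v dg_v$ and the value (or residue) of $E(\cdot,s)$, extracts the exact constant $c = \frac{C_F}{8}\left(q(\pi_1\otimes\pi_2\otimes\pi_3)^{1/2}/(q(\ad\pi_1)q(\ad\pi_2)q(\ad\pi_3))\right)^{-1/2}$ and hence the stated formula.

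The main obstacle is twofold. First, pinning down the \emph{exact} constant $C_F/8$ together with the precise conductor ratio --- rather than the identity up to an unspecified scalar, as in the work of Harris--Kudla --- demands scrupulous bookkeeping of the Tamagawa measure against $C_F\prod_v dg_v$, of the relation between the doubling integral and $I(\varphi\otimes\widetilde{\varphi})$, and of the functional equation and poles of the Siegel Eisenstein series, none of which can be fudged if the formula is to hold on the nose. Second, one must handle the local integrals $I_v$ unconditionally: without the Ramanujan conjecture the integrand in \eqref{I_v} need not be absolutely convergent, so $I_v$ is defined by meromorphic continuation, and proving that it is holomorphic and nonzero at the central point --- along with the archimedean multiplicity-one statement of Loke and Prasad --- requires the delicate local representation theory (bounds on exponents, the structure of the local trilinear form) that constitutes the technical heart of \cite{Ich08}.
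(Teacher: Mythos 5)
The paper does not prove this statement: it is cited verbatim from Ichino's work (\cite[Theorem 1.1]{Ich08}) and used as a black box. There is therefore no proof in the paper against which to compare your proposal; realistically, any paper applying this formula simply cites Ichino rather than reproducing a thirty-page Duke article. What the paper itself supplies is the downstream translation — the ad\`{e}lic-to-classical dictionary and, crucially, the determination of the local factors $I_v'$ at ramified places (\hyperref[dihedral2Stprop]{Propositions \ref*{dihedral2Stprop}} and \ref{Collinsprop}) — not the global formula.

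As a sketch of the \emph{cited} proof, your outline is broadly correct in spirit but conflates two closely related methods. The Garrett integral on $\mathrm{GSp}_6$ and the doubling unfolding that you centre the argument on are the backbone of the Harris--Kudla proof of Jacquet's conjecture, which gives the \emph{nonvanishing} criterion but not the exact constant. Ichino obtains the quantitative formula by a different organisation: the see-saw identity expressing the trilinear period in terms of theta integrals, the (regularised) Siegel--Weil formula identifying those theta integrals with Siegel Eisenstein series, and the Rallis inner product formula to bring in the $L$-value, with the Garrett/Piatetski-Shapiro--Rallis computation sitting underneath as the source of the Euler product. The constant $C_F/8$ and the conductor ratio are extracted from the normalisations in the Siegel--Weil formula and the Tamagawa measure (the factor $8 = 2^3$ traces to the Tamagawa number $2$ of $\mathrm{PGL}_2$ occurring once per factor $\pi_\ell$), not from a direct evaluation of a doubling zeta integral at the central point. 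You correctly identify the two genuine difficulties — exact measure and constant bookkeeping, and defining and controlling $I_v$ without Ramanujan via meromorphic continuation — but these are precisely why the result is cited rather than rederived.
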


The quantity $I_v'(\varphi_v \otimes \widetilde{\varphi}_v)$ is often called the local constant. When $\varphi_1$, $\varphi_2$, $\varphi_3$ are pure tensors consisting of local newforms in the sense of Casselman (or in some cases translates of local newforms; see \cite{Hu17} and \cite[Section 2.1]{Col19}), then these local constants depend only (but sensitively!) on the representations $\pi_{1,v}$, $\pi_{2,v}$, $\pi_{3,v}$. The local constants have been explicitly determined for many different combinations of representations $\pi_{1,v}$, $\pi_{2,v}$, $\pi_{3,v}$ of $\GL_2(F_v)$ (cf.~\cite[Sections 2.2 and 2.3]{Col19}). We require several particular combinations of representations for our applications.

For $F_v \cong \R$, let $k(\pi_v) \in \Z$ denote the weight of $\pi_v$ and let $\epsilon_v \in \{1,i,-1,-i\}$ denote the local root number, so that $\epsilon_v = (-1)^{m_v}$ for $\pi_v$ a weight zero principal series representation $\sgn^{m_v} |\cdot|_v^{s_{1,v}} \boxplus \sgn^{m_v} |\cdot|_v^{s_{2,v}}$ with $m_v \in \{0,1\}$.

\begin{proposition}[{\cite[Theorem 3]{Wat08}}]
For $F_v \cong \R$,
\[I_v'(\varphi_v \otimes \widetilde{\varphi}_v) = \frac{1 + \epsilon_{1,v} \epsilon_{2,v} \epsilon_{3,v}}{2}\]
 if $k(\pi_{1,v}) = k(\pi_{2,v}) = k(\pi_{3,v}) = 0$.
\end{proposition}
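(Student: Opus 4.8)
The plan is to evaluate the local integral $I_v(\varphi_v \otimes \widetilde{\varphi}_v)$ by hand and then divide by the prescribed ratio of local factors. Realise each $\pi_{j,v}$ as the weight zero principal series $\sgn^{m_j} |\cdot|_v^{it_j} \boxplus \sgn^{m_j} |\cdot|_v^{-it_j}$ with $m_j \in \{0,1\}$, so that $\epsilon_{j,v} = (-1)^{m_j}$; it suffices to treat tempered parameters $t_1, t_2, t_3 \in \R$, the general case following by analytic continuation in the $t_j$ since the integral below converges absolutely throughout a neighbourhood of the real locus and the local factors are meromorphic. In the induced model the local newform $\varphi_{j,v}$ is the essentially unique vector fixed by $\SO(2)$, and likewise $\widetilde{\varphi}_{j,v} \in \widetilde{\pi}_{j,v}$; moreover $\pi_{j,v}(w_0)\varphi_{j,v} = \epsilon_{j,v}\varphi_{j,v}$ and $\widetilde{\pi}_{j,v}(w_0)\widetilde{\varphi}_{j,v} = \epsilon_{j,v}\widetilde{\varphi}_{j,v}$, where $w_0 \defeq \begin{pmatrix} 1 & 0 \\ 0 & -1 \end{pmatrix}$ represents the nontrivial coset of $\mathrm{O}(2)/\SO(2)$. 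Writing $\Phi_{j,v}$ for the associated matrix coefficient normalised so that $\Phi_{j,v}(1) = 1$ — the normalisation $\langle \varphi_v, \widetilde{\varphi}_v \rangle_v$ in \eqref{I_v'} is tailored precisely to effect this — each $\Phi_{j,v}$ is bi-$\SO(2)$-invariant and satisfies $\Phi_{j,v}(g w_0) = \Phi_{j,v}(w_0 g) = \epsilon_{j,v}\Phi_{j,v}(g)$, and
\[ I_v'(\varphi_v \otimes \widetilde{\varphi}_v) = \frac{L_v(1, \ad \pi_{1,v}) L_v(1, \ad \pi_{2,v}) L_v(1, \ad \pi_{3,v})}{\zeta_v(2)^2 L_v\left(\tfrac{1}{2}, \pi_{1,v} \otimes \pi_{2,v} \otimes \pi_{3,v}\right)} \int_{\Zgp(\R) \backslash \GL_2(\R)} \Phi_{1,v}(g) \Phi_{2,v}(g) \Phi_{3,v}(g) \, dg. \]

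For the parity reduction, observe that $\Zgp(\R) \backslash \GL_2(\R)$ has two connected components, distinguished by $\sgn \det$ (equivalently by $\sgn a_v$ in the Iwasawa coordinates of \hyperref[I_v]{the normalisation preceding \eqref*{I_v}}), that $\Zgp(\R) \subset \GL_2(\R)^+$, and that right translation by $w_0$ is a measure-preserving bijection between the two components which preserves the Cartan parameter. Since each $\Phi_{j,v}$ picks up a factor $\epsilon_{j,v}$ under this translation, we obtain
\[ \int_{\Zgp(\R) \backslash \GL_2(\R)} \prod_{j=1}^{3} \Phi_{j,v}(g) \, dg = \left(1 + \epsilon_{1,v}\epsilon_{2,v}\epsilon_{3,v}\right) \int_{\Zgp(\R) \backslash \GL_2(\R)^+} \prod_{j=1}^{3} \Phi_{j,v}(g) \, dg. \]
In particular, when $\epsilon_{1,v}\epsilon_{2,v}\epsilon_{3,v} = -1$ both sides vanish and $I_v'(\varphi_v \otimes \widetilde{\varphi}_v) = 0 = (1 + \epsilon_{1,v}\epsilon_{2,v}\epsilon_{3,v})/2$; this is the archimedean shadow of the parity obstruction to the existence of a nonzero local trilinear form.

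It remains to treat the case $\epsilon_{1,v}\epsilon_{2,v}\epsilon_{3,v} = +1$, where one must show that the ratio of local factors times the integral over $\Zgp(\R) \backslash \GL_2(\R)^+$ equals $\tfrac14$. On $\Zgp(\R) \backslash \GL_2(\R)^+ \cong \PSL_2(\R)$ each $\Phi_{j,v}$ is the elementary spherical function — note that twisting a principal series by $\sgn \circ \det$ does not alter its restriction to $\GL_2(\R)^+$, so $\Phi_{j,v}$ here is independent of $m_j$ — and in the polar decomposition $g \in \SO(2) \cdot \{\mathrm{diag}(e^{t/2}, e^{-t/2}) : t \geq 0\} \cdot \SO(2)$ one has $\Phi_{j,v}(g) = P_{-1/2 + it_j}(\cosh t)$, with Haar measure becoming a fixed constant times $\sinh t \, dt$ once the two copies of $\SO(2)$ are integrated out. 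The problem thus reduces to the closed-form evaluation of the triple Legendre integral $\int_0^{\infty} P_{-1/2 + it_1}(\cosh t) P_{-1/2 + it_2}(\cosh t) P_{-1/2 + it_3}(\cosh t) \sinh t \, dt$, a continuous analogue of the Gaunt formula for a product of three Legendre polynomials, obtainable by inserting a Mehler--Fock (Barnes-type) integral representation for one factor and collapsing, or from standard tables; the outcome is a ratio of Gamma functions. Writing $\Gamma_{\R}(s) \defeq \pi^{-s/2}\Gamma(s/2)$, one has $L_v(1, \ad \pi_{j,v}) = \Gamma_{\R}(1 + 2it_j) \Gamma_{\R}(1) \Gamma_{\R}(1 - 2it_j)$, $\zeta_v(2) = \Gamma_{\R}(2)$, and, since $m_1 + m_2 + m_3$ is even in the case at hand, $L_v(\tfrac12, \pi_{1,v} \otimes \pi_{2,v} \otimes \pi_{3,v}) = \prod_{\pm, \pm, \pm} \Gamma_{\R}\left(\tfrac12 + i(\pm t_1 \pm t_2 \pm t_3)\right)$; substituting these, one verifies that every Gamma function cancels, leaving exactly $\tfrac14$. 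Analytic continuation in $(t_1, t_2, t_3)$ then removes the temperedness assumption. The main obstacle is precisely this last evaluation: producing the closed form of the triple Legendre integral and checking that, after assembling the correct archimedean factors and — crucially — the exact measure normalisations descending from the Tamagawa measure, the surviving constant is $\tfrac14$ rather than merely some nonzero constant. The parity reduction of the second paragraph is, by contrast, essentially free.
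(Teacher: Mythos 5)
The paper does not prove this proposition; it cites Watson's thesis \cite[Theorem 3]{Wat08} and leaves the matter there, so there is no in-paper argument against which to measure your proposal --- it is a sketch of Watson's own computation.

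Your parity reduction is correct and complete. The identity $\pi_{j,v}(w_0)\varphi_{j,v} = \epsilon_{j,v}\varphi_{j,v}$ is right (evaluate the $\SO(2)$-fixed induced vector at $w_0$, which is upper triangular with $d = -1$), and so the normalized matrix coefficient does satisfy $\Phi_{j,v}(gw_0) = \epsilon_{j,v}\Phi_{j,v}(g)$. Right translation by $w_0$ exchanges the two components of $\Zgp(\R)\backslash\GL_2(\R)$ measure-preservingly (the component is detected by $\sgn\det$, which is well-defined modulo $\Zgp(\R)$ since central elements have positive determinant), and the factor $(1+\epsilon_{1,v}\epsilon_{2,v}\epsilon_{3,v})$ drops out; this settles the odd case. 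The observation that the restriction to $\GL_2(\R)^+$ is insensitive to the $\sgn\circ\det$ twist, and the identification $\Zgp(\R)\backslash\GL_2(\R)^+\cong\PSL_2(\R)$, are also right.

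What is missing is the entire analytic content of the even case: you must actually produce the closed form of
\[\int_0^\infty P_{-\frac12+it_1}(\cosh r)\,P_{-\frac12+it_2}(\cosh r)\,P_{-\frac12+it_3}(\cosh r)\,\sinh r\,dr,\]
track the measure normalisation from the paper's conventions (the Iwasawa form $dg_v = dx_v\,|a_v|^{-1}d^{\times}a_v\,dk_v$ with $dk_v = (2\pi)^{-1}d\theta$, converted to Cartan coordinates on $\PSL_2(\R)$ with its Jacobian), and check that the result, after multiplying by $L_v(1,\ad\pi_{1,v})L_v(1,\ad\pi_{2,v})L_v(1,\ad\pi_{3,v})/\zeta_v(2)^2 L_v(\tfrac12,\pi_{1,v}\otimes\pi_{2,v}\otimes\pi_{3,v})$, is exactly $\tfrac14$. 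You assert that ``every Gamma function cancels'' but do not exhibit the Legendre identity or perform the cancellation, and you yourself flag this as ``the main obstacle.'' Since the proposition asserts a precise constant, not merely that $I_v'$ is $(1+\epsilon_{1,v}\epsilon_{2,v}\epsilon_{3,v})/2$ times some nonzero absolute constant, this step cannot be waved away: it is precisely the content of Watson's Theorem 3. As written, the proposal gives a correct reduction and the correct vanishing criterion, but is not a proof of the stated value.
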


Now let $F_v$ be a nonarchimedean local field with uniformiser $\varpi_v$ and cardinality $q_v$ of the residue field. In \hyperref[localconstantsect]{Section \ref*{localconstantsect}}, we prove the following. 

\begin{proposition}
\label{dihedral2Stprop}
Let $\pi_{1,v} = \omega_{1,v} \boxplus \omega_{1,v}'$ and $\pi_{2,v} = \widetilde{\pi}_{1,v} = \omega_{1,v}'^{-1} \boxplus \omega_{1,v}^{-1}$ be principal series representations of $\GL_2(F_v)$ for which the characters $\omega_{1,v}$, $\omega_{1,v'}$ of $F_v^{\times}$ have conductor exponents $c(\omega_{1,v}) = 1$ and $c(\omega_{1,v}') = 0$, and let $\pi_{3,v} = \omega_{3,v} \St_v$ be a special representation with $c(\omega_{3,v}) = 0$ and $\omega_{3,v}^2 = 1$. Suppose that $\pi_{1,v}$, $\pi_{2,v}$, $\pi_{3,v}$ are irreducible and unitarisable, so that $\omega_{1,v}$, $\omega_{1,v}'$, $\omega_{3,v}$ are unitary. Then if $\varphi_{1,v}$, $\varphi_{2,v}$, $\varphi_{3,v}$, $\widetilde{\varphi}_{1,v}$, $\widetilde{\varphi}_{2,v}$, $\widetilde{\varphi}_{3,v}$ are all local newforms,
\[I_v'(\varphi_v \otimes \widetilde{\varphi}_v) = \frac{1}{q_v} \left(1 + \frac{1}{q_v}\right).\]
\end{proposition}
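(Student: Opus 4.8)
The plan is to evaluate $I_v'(\varphi_v \otimes \widetilde{\varphi}_v)$ directly from its definition \eqref{I_v'}. Since $\pi_{1,v}, \pi_{2,v}, \pi_{3,v}$ are irreducible and unitarisable, fix a $\GL_2(F_v)$-invariant Hermitian inner product on each; as $I_v'$ is insensitive to the scaling of the newvectors, we may take $\widetilde{\varphi}_{\ell,v}$ to be the image of $\varphi_{\ell,v}$ under the resulting conjugate-linear isomorphism $\pi_{\ell,v} \xrightarrow{\sim} \widetilde{\pi}_{\ell,v}$. As in \cite[Section 2]{Ich08} (and the subsequent treatments in \cite{Wat08,Hu16,Hu17,Col18,Col19}), the local integral then unfolds to
\[I_v'(\varphi_v \otimes \widetilde{\varphi}_v) = \frac{L_v(1, \ad \pi_{1,v}) L_v(1, \ad \pi_{2,v}) L_v(1, \ad \pi_{3,v})}{\zeta_v(2)^2 L_v\left(\tfrac{1}{2}, \pi_{1,v} \otimes \pi_{2,v} \otimes \pi_{3,v}\right)} \int\limits_{\Zgp(F_v) \backslash \GL_2(F_v)} \Phi_{1,v}(g) \Phi_{2,v}(g) \Phi_{3,v}(g) \, dg_v,\]
where $\Phi_{\ell,v}(g) = \langle \pi_{\ell,v}(g) \varphi_{\ell,v}, \varphi_{\ell,v}\rangle / \langle \varphi_{\ell,v}, \varphi_{\ell,v}\rangle$ is the normalised matrix coefficient of the local newform, so $\Phi_{\ell,v}(1) = 1$. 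Because $\pi_{2,v} = \widetilde{\pi}_{1,v}$ and both are unitary, the newform of $\pi_{2,v}$ corresponds to that of $\pi_{1,v}$ under the conjugation isomorphism, whence $\Phi_{2,v}(g) = \overline{\Phi_{1,v}(g)}$; the integrand is thus $\left|\Phi_{1,v}(g)\right|^2 \Phi_{3,v}(g)$.

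Next I would compute the two matrix coefficients explicitly. The principal series $\pi_{1,v} = \omega_{1,v} \boxplus \omega_{1,v}'$ has conductor exponent $c(\pi_{1,v}) = c(\omega_{1,v}) + c(\omega_{1,v}') = 1$, and $\pi_{3,v} = \omega_{3,v} \St_v$ with $\omega_{3,v}$ unramified likewise has conductor exponent $1$; accordingly $\varphi_{3,v}$ is Iwahori-fixed and $\varphi_{1,v}$ is fixed by the Iwahori subgroup $K_0(\pp_v) \defeq \GL_2(\OO_v) \cap \left(\begin{smallmatrix} \OO_v & \OO_v \\ \pp_v & \OO_v\end{smallmatrix}\right)$ up to the nebentypus character of $\pi_{1,v}$. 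Realising $\pi_{1,v}$ in its induced model with $\varphi_{1,v}$ the standard level-$\pp_v$ section, and $\pi_{3,v}$ as the $\omega_{3,v}$-twist of the Steinberg quotient of $|\cdot|_v^{1/2} \boxplus |\cdot|_v^{-1/2}$ with $\varphi_{3,v}$ the Iwahori-fixed vector, one evaluates $\Phi_{1,v}$ and $\Phi_{3,v}$ on a complete set of representatives for $K_0(\pp_v) \backslash \GL_2(F_v) / K_0(\pp_v)$: by the affine Bruhat decomposition these are $\left(\begin{smallmatrix} \varpi_v^n & 0 \\ 0 & 1\end{smallmatrix}\right)$ and $w \left(\begin{smallmatrix} \varpi_v^n & 0 \\ 0 & 1\end{smallmatrix}\right)$ for $n \in \Z$, with $w = \left(\begin{smallmatrix} 0 & 1 \\ \varpi_v & 0\end{smallmatrix}\right)$ the Atkin--Lehner element normalising $K_0(\pp_v)$. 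Along the split torus one finds geometric decay for $\Phi_{3,v}$ on the order of $q_v^{-|n|}$ (up to the sign $\omega_{3,v}(\varpi_v)^n$), reflecting $L_v(s, \St_v) = \zeta_v(s + \tfrac{1}{2})$, while the values of $\Phi_{1,v}$ on the Atkin--Lehner translates are governed by a Tate-type local computation and the Atkin--Lehner pseudo-eigenvalue of $\pi_{1,v}$.

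Finally I would assemble the integral: the $dg_v$-volumes of the double cosets are explicit powers of $q_v$, so $\int_{\Zgp(F_v) \backslash \GL_2(F_v)} \left|\Phi_{1,v}(g)\right|^2 \Phi_{3,v}(g) \, dg_v$ becomes an absolutely convergent sum of two geometric series in $q_v^{-1}$ with explicit rational sum. Multiplying by the normalising ratio, and using the factorisation $L_v\left(\tfrac{1}{2}, \pi_{1,v} \otimes \pi_{2,v} \otimes \pi_{3,v}\right) = L_v\left(\tfrac{1}{2}, \pi_{3,v}\right) L_v\left(\tfrac{1}{2}, \ad \pi_{1,v} \otimes \pi_{3,v}\right)$ coming from $\pi_{1,v} \otimes \widetilde{\pi}_{1,v} \cong \mathbf{1} \boxplus \ad \pi_{1,v}$ together with the explicit unramified forms of the factors $L_v(1, \ad \pi_{\ell,v})$, the expression should collapse to $q_v^{-1}(1 + q_v^{-1})$. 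The main obstacle is exactly this last stretch of bookkeeping: obtaining $\Phi_{1,v}$ in closed form on the Atkin--Lehner translates $w \left(\begin{smallmatrix} \varpi_v^n & 0 \\ 0 & 1\end{smallmatrix}\right)$ (where $\varphi_{1,v}$ is not right-$\GL_2(\OO_v)$-invariant, so one must track the local functional equation and the pseudo-eigenvalue carefully), and keeping the $L$-factor normalisation exact so that the constant is pinned down precisely rather than merely up to a unit. As a consistency check one can instead specialise the general local formulae for $I_v'$ in \cite{Hu16,Hu17,Col18,Col19} to this configuration of representations and confirm that they return $q_v^{-1}(1 + q_v^{-1})$.
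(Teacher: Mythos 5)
Your proposal is a genuine alternative to the paper's main proof, and it matches in spirit the route the paper itself sketches in Remark \ref{matrixcoeffremark}. The paper's main argument avoids forming matrix coefficients entirely: it invokes Michel--Venkatesh's Lemma \ref{MVlemma} to factor $I(\varphi\otimes\widetilde\varphi)$ as a product of two local Rankin--Selberg zeta integrals $\ell_{\RS}(\varphi_{\pi_1},W_{\pi_2},W_{\pi_3})$, evaluates these via explicit Whittaker function values (Lemmata \ref{Whittaker10varpi1lemma} and \ref{Whittaker1011lemma}) together with Lemma \ref{Hulemma}, which reduces the $K$-integral to a two-term sum, and then computes the Whittaker norms $\langle W_{\pi_\ell},\widetilde{W}_{\pi_\ell}\rangle$ separately. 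The factorisation is what that route buys: the Rankin--Selberg integral collapses to a single one-dimensional integral $\int_{0<|a|\le q}\omega_3(a)|a|\,d^\times a = q\,\omega_3^{-1}(\varpi)L(1,\omega_3)$, and the passage through $|\Phi_{1,v}|^2$ never occurs. Your route computes $\int_{\Zgp\backslash\GL_2}|\Phi_1|^2\Phi_3\,dg$ head-on, following Hu's method; Remark \ref{matrixcoeffremark} records the matrix coefficient values one needs for this (organised by the Iwasawa decomposition and Lemma \ref{Hulemma} rather than by Iwahori--Bruhat double cosets as you suggest, but either parametrisation works) and asserts the same constant falls out. One point you should make explicit rather than glide over: since $c(\omega_{1,v}\omega'_{1,v}) = 1$, the newvector $\varphi_{1,v}$ is $K_1(\pp_v)$-fixed but not $K_0(\pp_v)$-fixed, so $\Phi_{1,v}$ alone is \emph{not} bi-Iwahori-invariant and its values on the Atkin--Lehner cells carry the $\epsilon$-factor $\epsilon(1,\omega_{1,v}^{-1}\omega'_{1,v},\psi^{-1})$; only the product $|\Phi_{1,v}|^2\Phi_{3,v}$ is genuinely bi-$K_0(\pp_v)$-invariant, because the nebentypus and $\epsilon$-factor are unitary and hence cancel in the modulus, and this is what makes your double-coset parametrisation legitimate. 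Finally, as you yourself acknowledge, the proposal stops short of the actual bookkeeping — the closed-form matrix coefficients, the cell volumes, and the $L$-factor assembly — and that bookkeeping is precisely the content of the proof; the consistency check you suggest (specialising the formulae of Hu and Collins) is what the paper's Remark \ref{matrixcoeffremark} and the cited numerical verification in Collins amount to.
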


\begin{proposition}
\label{Collinsprop}
Let $\pi_{1,v} = \omega_{1,v} \boxplus \omega_{1,v}'$, $\pi_{2,v} = \widetilde{\pi}_{1,v} = \omega_{1,v}'^{-1} \boxplus \omega_{1,v}^{-1}$, and $\pi_{3,v} = \omega_{3,v} \boxplus \omega_{3,v}^{-1}$ be principal series representations of $\GL_2(F_v)$ with $c(\omega_{1,v}) = 1$ and $c(\omega_{1,v}') = 0$, with $c(\omega_{3,v}) = 0$. Suppose that $\pi_{1,v}$, $\pi_{2,v}$, $\pi_{3,v}$ are irreducible and unitarisable, so that $\omega_{1,v}$, $\omega_{1,v}'$ are unitary while $q^{-1/2} < |\omega_{3,v}(\varpi_v)| < q^{1/2}$. Then if $\varphi_{1,v}$, $\varphi_{2,v}$, $\varphi_{3,v}$, $\widetilde{\varphi}_{1,v}$, $\widetilde{\varphi}_{2,v}$, $\widetilde{\varphi}_{3,v}$ are all local newforms,
\[I_v'(\varphi_v \otimes \widetilde{\varphi}_v) = \frac{1}{q_v}.\]
This also holds if either or both $\varphi_{3,v}$ and $\widetilde{\varphi}_{3,v}$ are translates of local newforms by $\pi_{3,v} \left(\begin{smallmatrix}\varpi_v^{-1} & 0 \\ 0 & 1 \end{smallmatrix}\right)$ and $\widetilde{\pi}_{3,v} \left(\begin{smallmatrix}\varpi_v^{-1} & 0 \\ 0 & 1 \end{smallmatrix}\right)$ respectively.
\end{proposition}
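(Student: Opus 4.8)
The plan is to evaluate the local constant $I_v'(\varphi_v \otimes \widetilde\varphi_v)$ directly from its definition \eqref{I_v'}, computing separately the ratio of local $L$-factors and the local matrix-coefficient integral \eqref{I_v}, and then combining. This is the same strategy as for \hyperref[dihedral2Stprop]{Proposition \ref*{dihedral2Stprop}}; the non-translate case is essentially the content of the relevant computation in Collins \cite{Col19}, so the work that remains is to match conventions and to handle the translate case.

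For the $L$-factor bookkeeping, note that $\pi_{1,v} = \omega_{1,v} \boxplus \omega_{1,v}'$, $\pi_{2,v} = \widetilde{\pi}_{1,v} = \omega_{1,v}'^{-1} \boxplus \omega_{1,v}^{-1}$, and $\pi_{3,v} = \omega_{3,v} \boxplus \omega_{3,v}^{-1}$ are all principal series from characters, so every $L$-factor in \eqref{I_v'} factors into Tate $L$-factors; since $\omega_{1,v}$ is ramified while $\omega_{1,v}'$ and $\omega_{3,v}$ are unramified, any Tate factor of a character divisible by $\omega_{1,v}^{\pm 1}$ is trivial. A short computation gives $L_v(1, \ad \pi_{1,v}) = L_v(1, \ad \pi_{2,v}) = \zeta_v(1)$, $L_v(1, \ad \pi_{3,v}) = \zeta_v(1) L_v(1, \omega_{3,v}^2) L_v(1, \omega_{3,v}^{-2})$, and $L_v(1/2, \pi_{1,v} \otimes \pi_{2,v} \otimes \pi_{3,v}) = L_v(1/2, \omega_{3,v})^2 L_v(1/2, \omega_{3,v}^{-1})^2$, so the $L$-factor prefactor in \eqref{I_v'} is an explicit elementary expression in $q_v$ and $\omega_{3,v}(\varpi_v)$, which simplifies to $\frac{(1 + q_v^{-1})^2}{1 - q_v^{-1}} \cdot \frac{(1 - \omega_{3,v}(\varpi_v) q_v^{-1/2})(1 - \omega_{3,v}(\varpi_v)^{-1} q_v^{-1/2})}{(1 + \omega_{3,v}(\varpi_v) q_v^{-1/2})(1 + \omega_{3,v}(\varpi_v)^{-1} q_v^{-1/2})}$.

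For the local integral, since $\pi_{2,v} = \widetilde{\pi}_{1,v}$ the product of the first two matrix coefficients in \eqref{I_v} is $|\langle \pi_{1,v}(g_v) \varphi_{1,v}, \widetilde\varphi_{1,v}\rangle|^2$ up to the normalising constant $\langle \varphi_{1,v}, \widetilde\varphi_{1,v}\rangle_v^{-1}$, while the matrix coefficient of the spherical newform of the unramified principal series $\pi_{3,v}$ is the Harish-Chandra spherical function of $\pi_{3,v}$. Using the Iwasawa (or an appropriate Cartan) decomposition, the integral \eqref{I_v} collapses to a finite sum of geometric series whose terms are the values of the newform matrix coefficient of the ramified principal series $\pi_{1,v}$ on torus elements $\left(\begin{smallmatrix} \varpi_v^n & 0 \\ 0 & 1 \end{smallmatrix}\right)$; equivalently, one may unfold the induction defining $\pi_{3,v}$ and reduce to a local Rankin--Selberg integral of the newform Whittaker function of $\pi_{1,v}$ against $\omega_{3,v}$, which is a Tate integral. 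Summing the series, dividing by $\langle \varphi_v, \widetilde\varphi_v\rangle_v$, and multiplying by the $L$-factor prefactor above, the dependence on $\omega_{3,v}(\varpi_v)$ cancels and one is left with exactly $1/q_v$.

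For the translate case one repeats this computation with $\varphi_{3,v}$ and/or $\widetilde\varphi_{3,v}$ replaced by $\pi_{3,v}\left(\begin{smallmatrix} \varpi_v^{-1} & 0 \\ 0 & 1 \end{smallmatrix}\right)\varphi_{3,v}$; after conjugating the variable of integration in \eqref{I_v}, this amounts to translating the support of the newform matrix coefficient of $\pi_{1,v}$ by a power of $\left(\begin{smallmatrix} \varpi_v & 0 \\ 0 & 1 \end{smallmatrix}\right)$, and one checks that the relevant sum is unchanged. I expect the main obstacle to be not any single step in isolation but the simultaneous bookkeeping of Ichino's normalisations of the Tamagawa measure, the local Haar measures, the test vectors, and the local factor, so that the constant emerges as precisely $1/q_v$ rather than $1/q_v$ times a spurious correction — exactly the careful adelic-to-classical translation the paper emphasises. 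It is cleanest to carry out the non-translate computation following Collins \cite{Col19} and then to extend to translates by the conjugation argument just sketched.
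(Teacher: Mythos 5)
Your overall framing is sound: evaluating $I_v'$ from \eqref{I_v'} by computing the ratio of local $L$-factors and the local integral \eqref{I_v} separately is exactly what is required, and your $L$-factor bookkeeping is correct --- the isobaric decompositions give $L_v(1,\ad\pi_{1,v}) = L_v(1,\ad\pi_{2,v}) = \zeta_v(1)$, $L_v(1,\ad\pi_{3,v}) = \zeta_v(1) L_v(1,\omega_{3,v}^2) L_v(1,\omega_{3,v}^{-2})$, and $L_v(1/2,\pi_{1,v}\otimes\pi_{2,v}\otimes\pi_{3,v}) = L_v(1/2,\omega_{3,v})^2 L_v(1/2,\omega_{3,v}^{-1})^2$, and your simplified prefactor is right. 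Your route through the product of three matrix coefficients is the method sketched in \hyperref[matrixcoeffremark]{Remark \ref*{matrixcoeffremark}} (following Hu), whereas the paper instead factors $I_v$ into a product of two local Rankin--Selberg integrals via Michel--Venkatesh (\hyperref[MVlemma]{Lemma \ref*{MVlemma}}) and evaluates each using explicit Whittaker function values together with \hyperref[Hulemma]{Lemma \ref*{Hulemma}}; both routes work, and your "equivalently, unfold to a Tate integral" aside is essentially the paper's route.

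The genuine gap is that the central computation is never performed. The assertion that "the dependence on $\omega_{3,v}(\varpi_v)$ cancels and one is left with exactly $1/q_v$" is precisely the content of the proposition; nothing in the proposal establishes that the geometric series you describe sums to $L_v(1/2,\omega_{3,v})^2 L_v(1/2,\omega_{3,v}^{-1})^2$ times the correct elementary factor (the paper gets $I(\varphi\otimes\widetilde\varphi) = q^{-1}\zeta_F(1)\zeta_F(2) L(1/2,\omega_3)^2 L(1/2,\omega_3^{-1})^2$ and $\langle W_{\pi_1},\widetilde W_{\pi_1}\rangle = \langle W_{\pi_2},\widetilde W_{\pi_2}\rangle = \zeta_F(1)$, $\langle W_{\pi_3},\widetilde W_{\pi_3}\rangle = \zeta_F(1)L(1,\ad\pi_3)/\zeta_F(2)$, and these constants must all be verified in the stated normalisations). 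Deferring to Collins is defensible only for the all-newform case. For the translate case --- which is the part the paper actually needs, to handle oldforms $\iota_\ell f$ --- your conjugation argument is inadequate: conjugating $g\mapsto tgt^{-1}$ only works when \emph{both} $\varphi_{3,v}$ and $\widetilde\varphi_{3,v}$ are translated, and the proposition explicitly requires the mixed case where exactly one of them is; moreover even in the symmetric case "one checks that the relevant sum is unchanged" is the nontrivial point, not a formality. The paper's factorisation into two separate integrals $\ell_{\RS}(\varphi_{\pi_1},W_{\pi_2},\pi_3(g)\cdot W_{\pi_3})\,\ell_{\RS}(\widetilde\varphi_{\pi_1},\widetilde W_{\pi_2},\widetilde\pi_3(h)\cdot\widetilde W_{\pi_3})$ with $g$ and $h$ independent (via polarisation) is what disposes of all four configurations at once: one computes the values of $\pi_3\left(\begin{smallmatrix}\varpi^{-1} & 0\\ 0 & 1\end{smallmatrix}\right)\cdot W_{\pi_3}$ explicitly (\hyperref[Whittakershiftunramlemma]{Lemmata \ref*{Whittakershiftunramlemma}} and \ref{Whittakerunram1011lemma}) and sees after the substitution $a\mapsto\varpi^{-1}a$ that each Rankin--Selberg factor, and the norm $\langle W_{\pi_3},\widetilde W_{\pi_3}\rangle$, is individually unchanged. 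You would need to supply an argument of this kind to close the translate case.
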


\begin{remark}
The latter local constant has also been determined by Collins \cite[Proposition 2.2.3]{Col19}. Moreover, Collins \cite[Section 5.2]{Col18} has numerically verified both of these local constants, as well as the local constant in \hyperref[localconstrem]{Remark \ref*{localconstrem}}.
\end{remark}

\subsection{The Classical Watson--Ichino Formula for Maa\ss{} Newforms}
\label{ClassicalWatsonIchinosect}

Now we restate the Watson--Ichino formula in the classical setting. For $\ell \in \{1,2,3\}$, let $f_{\ell} \in \BB_0(q,\chi_{\ell})$ be a Hecke--Maa\ss{} eigenform of level $q$, nebentypus $\chi_{\ell}$, and parity $\epsilon_{f_{\ell}}$, and similarly let $\widetilde{f}_{\ell} \in \BB_0(q,\overline{\chi_{\ell}})$ be a Hecke--Maa\ss{} eigenform such that $f_{\ell}$ and $\overline{\widetilde{f}}_{\ell}$ are both associated to the same newform. We assume additionally that $\chi_1 \chi_2 \chi_3 = \chi_{0(q)}$, the principal character modulo $q$. Letting $\varphi_1$, $\varphi_2$, $\varphi_3$ and $\widetilde{\varphi}_1$, $\widetilde{\varphi}_2$, $\widetilde{\varphi}_3$ denote the ad\`{e}lic lifts of the Hecke--Maa\ss{} eigenforms $f_1$, $f_2$, $f_3$ and $\widetilde{f}_1$, $\widetilde{f}_2$, $\widetilde{f}_3$, we have that
\begin{multline*}
\int_{\Gamma_0(q) \backslash \Hb} f_1(z) f_2(z) f_3(z) \, d\mu(z) \int_{\Gamma_0(q) \backslash \Hb} \widetilde{f}_1(z) \widetilde{f}_2(z) \widetilde{f}_3(z) \, d\mu(z)	\\
= \frac{1 + \epsilon_{f_1} \epsilon_{f_2} \epsilon_{f_3}}{16\nu(q)} \left(\frac{q(f_1 \otimes f_2 \otimes f_3)^{1/2}}{q(\ad f_1) q(\ad f_2) q(\ad f_3)}\right)^{-1/2} \frac{\Lambda\left(\frac{1}{2}, f_1 \otimes f_2 \otimes f_3\right)}{\Lambda(1, \ad f_1) \Lambda(1, \ad f_2) \Lambda(1, \ad f_3)}	\\
\times \prod_{p \mid q} I_p'(\varphi_p \otimes \widetilde{\varphi}_p) \prod_{\ell = 1}^{3} \left(\int_{\Gamma_0(q) \backslash \Hb} \left|f_{\ell}(z)\right|^2 \, d\mu(z) \int_{\Gamma_0(q) \backslash \Hb} \left|\widetilde{f}_{\ell}(z)\right|^2 \, d\mu(z)\right)^{1/2}.
\end{multline*}
This ad\`{e}lic-to-classical interpretation of the Watson--Ichino formula uses the fact that $\Lambda(2) = \pi / 6$ and $\vol(\Gamma_0(q) \backslash \Hb) = \pi \nu(q) / 3$, as well as the identity
\[\int\limits_{\Zgp(\A_{\Q}) \GL_2(\Q) \backslash \GL_2(\A_{\Q})} \phi(g) \, dg = \frac{2}{\vol(\Gamma_0(q) \backslash \Hb)} \int_{\Gamma_0(q) \backslash \Hb} f(z) \, d\mu(z)\]
for $f \in L^1(\Gamma_0(q) \backslash \Hb)$ with corresponding ad\`{e}lic lift $\phi \in L^1(\Zgp(\A_{\Q}) \GL_2(\Q) \backslash \GL_2(\A_{\Q}))$; the factor $2$ is present for this is the Tamagawa number of $\Zgp(\A_{\Q}) \GL_2(\Q) \backslash \GL_2(\A_{\Q})$.

\begin{corollary}
\label{sqfreetripleproductcor}
For squarefree $q = q_1 q_2$, $g \in \BB_0^{\ast}(q,\chi)$ with $\chi$ primitive, $f \in \BB_0^{\ast}(q_1)$ normalised such that $\langle g,g\rangle_q = \langle f,f\rangle_q = 1$, and $w_1, w_2 \mid q_2$, we have that
\begin{multline*}
\int_{\Gamma_0(q) \backslash \Hb} |g(z)|^2 (\iota_{w_1} f)(z) \, d\mu(z) \int_{\Gamma_0(q) \backslash \Hb} |g(z)|^2 (\iota_{w_2} f)(z) \, d\mu(z)	\\
= \frac{1 + \epsilon_f}{16 \sqrt{q_1} \nu(q_2)} \frac{\Lambda\left(\frac{1}{2}, f\right) \Lambda\left(\frac{1}{2}, f \otimes \ad g\right)}{\Lambda(1, \ad g)^2 \Lambda(1,\sym^2 f)}.
\end{multline*}
\end{corollary}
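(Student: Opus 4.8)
The plan is to derive this identity directly from the classical Watson--Ichino formula displayed above, applied with $f_1 = g$, $f_2 = \overline{g}$, $f_3 = \iota_{w_1} f$ and dual forms $\widetilde{f}_1 = \overline{g}$, $\widetilde{f}_2 = g$, $\widetilde{f}_3 = \iota_{w_2} f$. Since $f$ has real Hecke eigenvalues (trivial nebentypus) it may be chosen with real Fourier coefficients, so $\overline{\iota_w f} = \epsilon_f \iota_w f$ and the product of the two left-hand integrals equals $\langle |g|^2, \iota_{w_1} f\rangle_q \langle |g|^2, \iota_{w_2} f\rangle_q$ (the stray factor being $\epsilon_f^2 = 1$). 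First one checks the hypotheses: the nebentypi $\chi$, $\overline{\chi}$, and the principal character multiply to $\chi_{0(q)}$ since $\chi$ is primitive of conductor $q$; each $f_\ell$ and $\overline{\widetilde{f}_\ell}$ is attached to the same newform (namely $g$, $\overline{g}$, $f$); and although $\iota_{w_1} f$ and $\iota_{w_2} f$ are different oldforms for $w_1 \ne w_2$, they generate the same cuspidal representation $\pi_f$, so the $L$-values, adjoint $L$-values, and conductors entering the formula are unambiguous. The parities satisfy $\epsilon_{f_1}\epsilon_{f_2}\epsilon_{f_3} = \epsilon_g^2 \epsilon_f = \epsilon_f$ (as $\epsilon_{\iota_w f} = \epsilon_f$), producing the factor $(1+\epsilon_f)/16\nu(q)$.

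Next one makes the arithmetic identifications. Since $L(s, \pi_g \times \widetilde{\pi}_g) = \zeta(s) L(s, \ad g)$, the triple product factors as $\Lambda(1/2, g \otimes \overline{g} \otimes f) = \Lambda(1/2, f)\,\Lambda(1/2, f \otimes \ad g)$, while $\Lambda(1, \ad g)\Lambda(1, \ad \overline{g})\Lambda(1, \ad f) = \Lambda(1, \ad g)^2 \Lambda(1, \sym^2 f)$ using $\ad \overline{g} \cong \ad g$ and $\ad f = \sym^2 f$. For the conductors one reads off the local representations: at each $p \mid q$ one has $\pi_{g,p} = \omega_{1,p} \boxplus \omega_{1,p}'$ with $c(\omega_{1,p}) = 1$, $c(\omega_{1,p}') = 0$ (primitive nebentypus, squarefree level), and $\pi_{f,p}$ is an unramified twist of $\St_p$ for $p \mid q_1$ and an unramified principal series for $p \mid q_2$. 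A place-by-place computation then gives $q(\ad g) = q(\ad \overline{g}) = q^2$, $q(\ad f) = q_1^2$, and $q(g \otimes \overline{g} \otimes f) = q(f)\, q(f \otimes \ad g) = q_1 \cdot q_1^5 q_2^4 = q_1^6 q_2^4$, so the conductor factor $\bigl(q(g\otimes\overline{g}\otimes f)^{1/2}/(q(\ad g)q(\ad\overline{g})q(\ad f))\bigr)^{-1/2}$ equals $q_1^{3/2} q_2$.

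It remains to evaluate the local data at primes dividing $q$ and the $L^2$-norms. At each $p \mid q_1$ the form $\iota_{w_i} f$ is new (as $(w_i, q_1) = 1$), so the local vectors are local newforms and \hyperref[dihedral2Stprop]{Proposition \ref*{dihedral2Stprop}} gives $I_p'(\varphi_p \otimes \widetilde{\varphi}_p) = p^{-1}(1 + p^{-1})$; at each $p \mid q_2$ the local vectors of $\iota_{w_i} f$ are either local newforms or their translates by $\pi_{f,p}(\mathrm{diag}(\varpi_p^{-1},1))$, and in every one of these cases \hyperref[Collinsprop]{Proposition \ref*{Collinsprop}} gives $I_p' = p^{-1}$ --- it is exactly this insensitivity of the local constant to whether $p \mid w_i$ that makes the right-hand side independent of $w_1$ and $w_2$. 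Hence $\prod_{p \mid q} I_p' = \nu(q_1) q_1^{-2} q_2^{-1}$. Finally I would show $\langle \iota_w f, \iota_w f\rangle_q = \langle f, f\rangle_q = 1$ for $w \mid q_2$: conjugating by $\sigma = \mathrm{diag}(\sqrt{w}, 1/\sqrt{w}) \in \SL_2(\R)$ gives $\sigma \Gamma_0(q) \sigma^{-1} \subseteq \Gamma_0(q_1)$ with index $\nu(q_2) = [\Gamma_0(q_1):\Gamma_0(q)]$, so $\int_{\Gamma_0(q) \backslash \Hb} |f(wz)|^2 \, d\mu(z) = \nu(q_2) \int_{\Gamma_0(q_1) \backslash \Hb} |f|^2 \, d\mu = \int_{\Gamma_0(q) \backslash \Hb} |f|^2 \, d\mu$, so the norm product $\prod_{\ell=1}^{3}(\cdots)^{1/2}$ in the formula is $1$. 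Collecting everything, the numerical factors collapse as $\frac{1}{16\nu(q_1)\nu(q_2)} \cdot q_1^{3/2} q_2 \cdot \frac{\nu(q_1)}{q_1^2 q_2} = \frac{1}{16 \sqrt{q_1}\, \nu(q_2)}$, yielding the asserted identity. I expect the main obstacle to be the bookkeeping at the ramified primes: correctly computing the local conductors of $\ad g$, $f \otimes \ad g$, and the triple product; invoking \hyperref[dihedral2Stprop]{Propositions \ref*{dihedral2Stprop}} and \ref{Collinsprop} with precisely the right local vectors; and keeping all the ad\`{e}lic-to-classical normalisations (the Tamagawa number $2$, $\vol(\Gamma_0(q) \backslash \Hb) = \pi\nu(q)/3$, $\Lambda(2) = \pi/6$) consistent so that every power of $q_1$ and $q_2$ cancels as it must.
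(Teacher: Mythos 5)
Your proposal follows the same route as the paper: apply the classical Watson--Ichino formula with $(f_1,f_2,f_3) = (g,\overline g, \iota_{w_1}f)$ and $(\widetilde f_1,\widetilde f_2,\widetilde f_3)=(\overline g, g, \iota_{w_2}f)$, factor the completed $L$-functions and conductors via $g\otimes\overline g = 1 \boxplus \ad g$, and evaluate the local constants at $p \mid q$ using Propositions~\ref{dihedral2Stprop} and~\ref{Collinsprop}; the arithmetic bookkeeping ($q_1^{3/2}q_2$ from the conductor factor, $\nu(q_1)q_1^{-2}q_2^{-1}$ from the local constants) matches the paper's computation exactly. Your explicit verification that $\langle\iota_w f,\iota_w f\rangle_q = \langle f,f\rangle_q$ and your observation that the insensitivity of the local constant in Proposition~\ref{Collinsprop} to translation is what makes the right-hand side independent of $w_1, w_2$ are both correct and fill in points the paper leaves implicit.
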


\begin{proof}
We have the isobaric decomposition $g \otimes \overline{g} = 1 \boxplus \ad g$, so that $g \otimes \overline{g} \otimes f = f \boxplus f \otimes \ad g$, while $f = \overline{f}$ implies that $\ad f = \sym^2 f$, and $\ad \overline{g} = \ad g$. Consequently, the conductor $q(g \otimes \overline{g} \otimes f)$ also factorises as $q(f) q(f \otimes \ad g)$. The conductors of $f$, $f \otimes \ad g$, $\ad g$, and $\sym^2 f$ are $q_1$, $q^4 q_1$, $q^2$, and $q_1^2$ respectively (cf.~\hyperref[rootnumberlemma]{Lemma \ref*{rootnumberlemma}}).

We denote by $\pi_g$, $\pi_{\overline{g}}$, $\pi_f$ the cuspidal automorphic representations of $\GL_2(\A_{\Q})$ associated to $g$, $\overline{g}$, $f$ respectively; note that $\pi_{\overline{g}} = \widetilde{\pi}_g$. The Watson--Ichino formula gives
\begin{multline*}
\int_{\Gamma_0(q) \backslash \Hb} |g(z)|^2 (\iota_{w_1} f)(z) \, d\mu(z) \int_{\Gamma_0(q) \backslash \Hb} |g(z)|^2 (\iota_{w_2} f)(z) \, d\mu(z)	\\
= \frac{(1 + \epsilon_f) q \sqrt{q_1}}{16\nu(q)} \frac{\Lambda\left(\frac{1}{2}, f\right) \Lambda\left(\frac{1}{2}, f \otimes \ad g\right)}{\Lambda(1, \ad g)^2 \Lambda(1,\sym^2 f)} \prod_{p \mid q} I_p'(\varphi_p \otimes \widetilde{\varphi}_p).
\end{multline*}
It remains to determine the local constants $I_p'(\varphi_p \otimes \widetilde{\varphi}_p)$. We observe the following:
\begin{itemize}
\item When $p \mid q_1$, the local component $\pi_{g,p}$ of $g$ is a unitarisable ramified principal series representation $\omega_{1,p} \boxplus \omega_{1,p}'$, where the unitary characters $\omega_{1,p}, \omega_{1,p}'$ of $\Q_p^{\times}$ have conductor exponents $c(\omega_{1,p}) = 1$ and $c(\omega_{1,p}') = 0$. The local component $\pi_{f,p}$ of $f$ is a special representation $\omega_{3,p} \St$, where $\omega_{3,p}$ is either the trivial character or the unramified quadratic character of $\Q_p^{\times}$. Finally, $\varphi_{1,p}$, $\varphi_{2,p}$, $\varphi_{3,p}$, $\widetilde{\varphi}_{1,p}$, $\widetilde{\varphi}_{2,p}$, $\widetilde{\varphi}_{3,p}$ are all local newforms.
\item When $p \mid q_2$ but $p \nmid [w_1,w_2]$, the local component $\pi_{g,p}$ of $g$ is of the same form as for $p \mid q_1$. The local component $\pi_{f,p}$ of $f$ is a unitarisable unramified principal series representation $\omega_{3,p} \boxplus \omega_{3,p}^{-1}$, where $c(\omega_{3,p}) = 0$ and $p^{-1/2} < |\omega_{3,p}(p)| < p^{1/2}$. Once again, all local forms are newforms.
\item When $p \mid (w_1,w_2)$, the setting is as above except both $\varphi_{3,p}$ and $\widetilde{\varphi}_{3,p}$ are translates of local newforms by $\pi_{3,p}\left(\begin{smallmatrix} p^{-1} & 0 \\ 0 & 1 \end{smallmatrix}\right)$ and $\widetilde{\pi}_{3,p}\left(\begin{smallmatrix} p^{-1} & 0 \\ 0 & 1 \end{smallmatrix}\right)$ respectively.
\item When $p \mid w_1$ but $p \nmid w_2$, the setting is as above except only $\varphi_{3,p}$ is the translate of the local newform.
\item Finally, when $p \mid w_2$ but $p \nmid w_1$, the setting is as above except instead only $\widetilde{\varphi}_{3,p}$ is the translate of the local newform.
\end{itemize}
For the former case, we apply \hyperref[dihedral2Stprop]{Proposition \ref*{dihedral2Stprop}} with $F_v = \Q_p$ and $q_v = p$, while \hyperref[Collinsprop]{Proposition \ref*{Collinsprop}} is applied to the remaining cases. This gives the result.
\end{proof}

\subsection{Proof of \texorpdfstring{\hyperref[dihedraltripleproductprop]{Proposition \ref*{dihedraltripleproductprop}}}{Proposition \ref{dihedraltripleproductprop}}}

\begin{proof}[Proof of {\hyperref[dihedraltripleproductprop]{Proposition \ref*{dihedraltripleproductprop}}}]
The identity \eqref{gpsi^2EWatsonIchino} for $|\langle |g|^2, E_{\infty}(\cdot,1/2 + it)\rangle_q|^2$ follows from \hyperref[g^2Eiscor]{Corollary \ref*{g^2Eiscor}}, while \hyperref[sqfreetripleproductcor]{Corollary \ref*{sqfreetripleproductcor}} gives the identity \eqref{gpsi^2fWatsonIchino} for $|\langle |g|^2, f\rangle_q|^2$.
\end{proof}

\begin{remark}
It behoves us to mention that both \cite[Section 4]{Luo14} and \cite[Section 2]{Liu15} mistakenly apply identities of Watson \cite{Wat08} that are only valid when all three automorphic forms $f_1,f_2,f_3$ have principal nebentypen; the correct identities are given in \hyperref[dihedraltripleproductprop]{Proposition \ref*{dihedraltripleproductprop}} and rely on \hyperref[dihedral2Stprop]{Propositions \ref*{dihedral2Stprop}} and \ref{Collinsprop}. Ultimately, this does not affect the validity of \cite[Theorem]{Luo14}. For \cite{Liu15}, there are two additional errata: the factorisations of $L(s,f \otimes f \otimes g)$ in \cite[(2.3) and (2.4)]{Liu15} are interchanged (with the same issue also being present in \cite[p.~422]{Sar01}), for the isobaric decompositions $f \otimes f = \chi_{-q} \boxplus \sym^2 f$ and $\sym^2 f = F \boxplus 1$ imply the correct factorisations
\begin{align*}
L(s,f \otimes f \otimes g) & = L(s,g \otimes \chi_{-q}) L(s,\sym^2 f \otimes g),	\\
L(s,\sym^2 f \otimes g) & = L(s,F \otimes g) L(s,g),
\end{align*}
and finally the approximate functional equation for $L(1/2,F \otimes g)$ given in \cite[Proof of Lemma 3.2]{Liu15} ought to involve a sum over $n \leq q^{3/2 + \e}$, not $q^{1 + \e}$ (which is to say that the conductor of $F \otimes g$ is $q^3$, not $q^2$; see \hyperref[rootnumberlemma]{Lemma \ref*{rootnumberlemma}}). The first of these two errata is readily rectified; the second, however, means that the exponent in \cite[Theorem 1.1]{Liu15} is subsequently weakened to $-2/3 - \delta/3 + \e$ rather than $-11/12 - \delta/3 + \e$.
\end{remark}

\section{Local Constants in the Watson--Ichino Formula}
\label{localconstantsect}

This section is devoted to the proofs of \hyperref[dihedral2Stprop]{Propositions \ref*{dihedral2Stprop}} and \ref{Collinsprop}. Since every calculation is purely local, we drop the subscripts $v$. Let $F$ be a nonarchimedean local field with ring of integers $\OO_F$, uniformiser $\varpi$, and maximal ideal $\pp = \varpi \OO_F$. Let $q = N(\pp) = \# \OO_F / \pp = |\varpi|^{-1}$, where the norm $|\cdot|$ is such that $|x| = q^{-v(x)}$ for $x \in \varpi^{v(x)} \OO_F^{\times}$. We set $K \defeq \GL_2(\OO_F)$ and define the congruence subgroup
\[K_1(\pp^m) \defeq \left\{\begin{pmatrix} a & b \\ c & d \end{pmatrix} \in K : c, d - 1 \in \pp^m\right\}\]
for any nonnegative integer $m$. We normalise the additive Haar measure $da$ on $F$ to give $\OO_F$ volume $1$, while the multiplicative Haar measure $d^{\times} a = \zeta_F(1) |a|^{-1} \, da$ on $F^{\times}$ is normalised to give $\OO_F^{\times}$ volume $1$, where $\zeta_F(s) = (1 - q^{-s})^{-1}$.

\subsection{Reduction to Formul\ae{} for Whittaker Functions}

For $\pi$ equal to a principal series representation $\omega \boxplus \omega'$ or a special representation $\omega \St$, and given a vector $\varphi_{\pi}$ in the induced model of $\pi$, we let
\begin{equation}
\label{Whittaker}
W_{\pi}(g) = \frac{\zeta_F(2)^{1/2}}{\zeta_F(1)} \int_F \varphi_{\pi}\left(w \begin{pmatrix} 1 & x \\ 0 & 1 \end{pmatrix} g\right) \psi(x) \, dx,
\end{equation}
denote the corresponding element of the Whittaker model $\WW(\pi,\psi)$, where $w = \left(\begin{smallmatrix} 0 & -1 \\ 1 & 0 \end{smallmatrix}\right)$ and $\psi$ is an unramified additive character of $F$; the normalisation of the Whittaker functional follows \cite[Section 3.2.1]{MV10}.

For generic irreducible unitarisable representations $\pi_1$, $\pi_2$, $\pi_3$ with $\pi_1$ a principal series representation, and for $\varphi_1$ in the induced model of $\pi_1$, $W_2 \in \WW(\pi_2,\psi)$, and $W_3 \in \WW(\pi_3,\psi^{-1})$, we define the local Rankin--Selberg integral $\ell_{\RS}(\varphi_1, W_2, W_3)$ to be
\[\zeta_F(1)^{1/2} \int_K \int_{F^{\times}} \varphi_1\left(\begin{pmatrix} a & 0 \\ 0 & 1 \end{pmatrix} k\right) W_2\left(\begin{pmatrix} a & 0 \\ 0 & 1 \end{pmatrix} k\right) W_3\left(\begin{pmatrix} a & 0 \\ 0 & 1 \end{pmatrix} k\right) \, \frac{d^{\times} a}{|a|} \, dk\]
(see \cite[(3.28)]{MV10}). The importance of this quantity is the following identity of Michel and Venkatesh.

\begin{lemma}[{\cite[Lemma 3.4.2]{MV10}}]
\label{MVlemma}
For $g,h \in \GL_2(F)$, $\varphi = \varphi_{\pi_1} \otimes \varphi_{\pi_2} \otimes \pi_3(g) \cdot \varphi_{\pi_3}$, and $\widetilde{\varphi} = \widetilde{\varphi}_{\pi_1} \otimes \widetilde{\varphi}_{\pi_2} \otimes \widetilde{\pi}_3(h) \cdot \widetilde{\varphi}_{\pi_3}$ with $\varphi_{\pi_1}$, $\varphi_{\pi_2}$, $\varphi_{\pi_3}$, $\widetilde{\varphi}_{\pi_1}$, $\widetilde{\varphi}_{\pi_2}$, $\widetilde{\varphi}_{\pi_3}$ newforms, we have the identity
\[\ell_{\RS}\left(\varphi_{\pi_1}, W_{\pi_2}, \pi_3(g) \cdot W_{\pi_3}\right) \ell_{\RS}\left(\widetilde{\varphi}_{\pi_1}, \widetilde{W}_{\pi_2}, \widetilde{\pi}_3(h) \cdot \widetilde{W}_{\pi_3}\right) = I(\varphi \otimes \widetilde{\varphi})\]
whenever $\pi_2$ is tempered.
\end{lemma}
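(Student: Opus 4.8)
The plan is to prove the identity by unfolding the local triple product integral $I(\varphi \otimes \widetilde{\varphi})$ of \eqref{I_v} directly and watching it disentangle into the asserted product of two Rankin--Selberg integrals, with the normalising $\zeta_F$-factors in \eqref{Whittaker}, in the Iwasawa decomposition of the Haar measure, and in the definition of $\ell_{\RS}$ conspiring so that the constant of proportionality is exactly $1$. Throughout we realise $\pi_1$ in its induced model, so that its vectors are left $\Ngp(F)$-invariant, and $\pi_2$, $\pi_3$ in their Whittaker models $\WW(\pi_2,\psi)$ and $\WW(\pi_3,\psi^{-1})$; the hypothesis that $\pi_2$ is tempered is used to guarantee absolute convergence at every stage, which in turn licenses the interchanges of integration below.

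The first ingredient is the standard expression of the matrix coefficients of the generic representations $\pi_2$, $\pi_3$ through their Whittaker functions: with the normalisation \eqref{Whittaker}, one has, for $\ell \in \{2,3\}$,
\[\left\langle \pi_\ell(g) \varphi_\ell, \widetilde{\varphi}_\ell \right\rangle = \int_{F^\times} W_\ell\!\left(\begin{pmatrix} a & 0 \\ 0 & 1 \end{pmatrix} g\right) \widetilde{W}_\ell\!\left(\begin{pmatrix} a & 0 \\ 0 & 1 \end{pmatrix}\right) \frac{d^\times a}{|a|},\]
the integral converging absolutely because $\pi_2$ is tempered and $\pi_3$ is unitarisable and generic; for the induced model of $\pi_1$ one has the companion formula $\langle \pi_1(g)\varphi_1, \widetilde{\varphi}_1 \rangle = \int_K \varphi_1(\kappa g)\widetilde{\varphi}_1(\kappa) \, d\kappa$ for the invariant pairing with $\widetilde{\pi}_1$.

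Next I would substitute these three formulae into \eqref{I_v} and replace $g \mapsto \kappa^{-1} g$ in the resulting $\GL_2(F)$-integral, which leaves $\varphi_1(\kappa g) = \varphi_1(g)$ untouched (by $\Ngp(F)$-invariance of the induced-model vector) but transports $\kappa$ onto the contragredient vectors of $\pi_2$, $\pi_3$; one is thereby reduced to an integral over $K$ of $\widetilde{\varphi}_1(\kappa)$ against an inner integral over $\Zgp(F) \backslash \GL_2(F)$. Applying the Iwasawa decomposition $g = \left(\begin{smallmatrix} 1 & x \\ 0 & 1 \end{smallmatrix}\right)\left(\begin{smallmatrix} t & 0 \\ 0 & 1 \end{smallmatrix}\right)\kappa'$ to this inner integral and using the $\Ngp(F)$-equivariance of $W_2 \in \WW(\pi_2,\psi)$ and $W_3 \in \WW(\pi_3,\psi^{-1})$ produces an $x$-integral of the shape $\int_F \psi((y - z)x) \, dx$, which collapses the two auxiliary integrations in $y$ and $z$ onto the diagonal $y = z$; a change of variables $t \mapsto t/y$ and the quasi-invariance of induced-model vectors under the diagonal torus then factor the remaining integrand into a product of the $(t,\kappa')$-integral, which is $\ell_{\RS}(\varphi_1, W_2, W_3)$, and the $(y,\kappa)$-integral, which is $\ell_{\RS}(\widetilde{\varphi}_1, \widetilde{W}_2, \widetilde{W}_3)$. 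Replacing $\varphi_3$ by $\pi_3(g) \varphi_3$ and $\widetilde{\varphi}_3$ by $\widetilde{\pi}_3(h)\widetilde{\varphi}_3$ replaces $W_3$ by $\pi_3(g) W_3$ and $\widetilde{W}_3$ by $\widetilde{\pi}_3(h)\widetilde{W}_3$, since the Whittaker functional intertwines the $\GL_2(F)$-action, which yields the identity in exactly the stated form.

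It is worth noting that the existence of such a factorisation, up to an undetermined scalar, is also a formal consequence of local uniqueness of $\GL_2(F)$-invariant trilinear forms: because $\pi_1$ is a principal series, the space $\mathrm{Hom}_{\GL_2(F)}(\pi_1 \otimes \pi_2 \otimes \pi_3, \C)$ is one-dimensional, as is the analogous space for the contragredients, while $(\varphi, \widetilde{\varphi}) \mapsto I(\varphi \otimes \widetilde{\varphi})$ is separately $\GL_2(F)$-invariant in each of its two arguments (substitute $g \mapsto g g_0^{-1}$ in \eqref{I_v}), so it must be a constant multiple of $\ell_{\RS}(\varphi_1, W_{\varphi_2}, W_{\varphi_3}) \, \ell_{\RS}(\widetilde{\varphi}_1, \widetilde{W}_{\widetilde{\varphi}_2}, \widetilde{W}_{\widetilde{\varphi}_3})$. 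From this point of view the entire content of the lemma is the value of a single scalar depending only on $\pi_1$, $\pi_2$, $\pi_3$, and the main obstacle is precisely to show that this scalar is $1$ rather than some ratio of $\zeta_F$-values. That forces the unfolding above to be carried through carefully enough to track every normalising factor --- the $\zeta_F(2)^{1/2}/\zeta_F(1)$ attached to each of the four Whittaker functions via \eqref{Whittaker}, the $\zeta_F(1)^{1/2}$ in each copy of $\ell_{\RS}$, and the Iwasawa normalisation of $dg_v$ --- together with the convergence estimates needed to justify Fubini's theorem when $\pi_2$ is tempered but $\pi_1$ and $\pi_3$ are merely unitarisable; this bookkeeping, rather than any conceptual difficulty, is where essentially all the work lies.
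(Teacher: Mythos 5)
The paper does not reprove this lemma: it cites \cite[Lemma 3.4.2]{MV10} outright, and the only original content is the remark immediately following the statement, which observes that Michel--Venkatesh treat only the case $g = h$ and that the general case follows by the polarisation identity. Your proposal instead reconstructs the argument from first principles, which is a genuinely different and more ambitious route. The outline --- express each matrix coefficient through an integral of Whittaker functions, unfold $I(\varphi \otimes \widetilde\varphi)$ over $\Zgp(F)\backslash\GL_2(F)$ via the Iwasawa decomposition, use $\Ngp(F)$-equivariance to produce an oscillatory $x$-integral that collapses two torus variables onto the diagonal, and separate the residue into two copies of $\ell_{\RS}$ --- is the right one and is essentially the content of MV10's proof; and your closing observation that uniqueness of $\GL_2(F)$-invariant trilinear forms reduces the whole matter to pinning down one scalar is correct and clarifying. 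Because you unfold directly with $\pi_3(g) W_{\pi_3}$ feeding into one $\ell_{\RS}$ and $\widetilde\pi_3(h) \widetilde W_{\pi_3}$ into the other, your argument would handle $g \neq h$ without recourse to polarisation, which is a small simplification over the route the paper takes.

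Two concrete gaps remain. First, your matrix-coefficient formula for $\pi_2$, $\pi_3$ carries an extraneous factor of $|a|^{-1}$: the invariant pairing on the Whittaker model is $\int_{F^\times} W(a)\,\widetilde W(a)\,d^\times a$ with no such factor, exactly as the paper uses in its definition of $\langle W_\pi, \widetilde W_\pi\rangle$ and again in the matrix-coefficient formula of \hyperref[matrixcoeffremark]{Remark~\ref*{matrixcoeffremark}}; you appear to have imported the $|a|^{-1}$ from the definition of $\ell_{\RS}$, where it does belong. Since the entire point of the lemma is that the constant of proportionality comes out to be exactly $1$ rather than some quotient of $\zeta_F$-values, using the wrong measure on the torus is fatal to precisely the bookkeeping you rightly say is ``where essentially all the work lies.'' Second, the step where ``$\int_F \psi((y-z)x)\,dx$ collapses the two auxiliary integrations onto the diagonal $y = z$'' is a formal delta-function identity; the $x$-integral does not converge absolutely and must be interpreted distributionally, and the rigorous version (first pairing against a test function in $(y,z)$, or working with the meromorphic continuation of the Rankin--Selberg integral in a complex parameter) is exactly where temperedness of $\pi_2$ does its work. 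Invoking temperedness only for ``absolute convergence at every stage'' elides the actual regularisation; without it the unfolding does not stand as written.
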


\begin{remark}
\cite[Lemma 3.4.2]{MV10} only covers the case $g = h$, but the proof generalises via the polarisation identity.
\end{remark}

\subsection{Formul\ae{} for Whittaker Functions}
\label{Whittakerformulaesect}

\hyperref[MVlemma]{Lemma \ref*{MVlemma}} reduces the determination of local constants to evaluating integrals involving $\varphi_{\pi_1}$, $\varphi_{\pi_2}$, and $W_{\pi_3}$. Thus we must determine the values of these functions at certain values of $g \in \GL_2(F)$. We observe that both $\varphi_{\pi}$ and $W_{\pi}$ are right $K_1(\pp^{c(\pi)})$-invariant, where $c(\pi)$ denotes the conductor exponent of $\pi$; we will use this fact to limit ourselves to determining the values of these functions at $g = \left(\begin{smallmatrix} a & 0 \\ 0 & 1 \end{smallmatrix}\right)$ and $g = \left(\begin{smallmatrix} a & 0 \\ 0 & 1 \end{smallmatrix}\right) \left(\begin{smallmatrix} 1 & 0 \\ 1 & 1 \end{smallmatrix}\right)$.

We are interested in two cases, namely $\pi_1 = \omega_1 \boxplus \omega_1'$, $\pi_2 = \omega_1'^{-1} \boxplus \omega_1^{-1}$ with $\omega_1$, $\omega_1'$ both unitary, $c(\omega_1) = 1$ and $c(\omega_1') = 0$, so that $c(\pi_1) = c(\pi_2) = 1$, and $\pi_3$ either $\omega_3 \St$ with $\omega_3$ unitary and $c(\omega_3) = 0$, so that $c(\pi_3) = 1$, or $\omega_3 \boxplus \omega_3^{-1}$ with $q^{-1/2} < |\omega_3(\varpi)| < q^{1/2}$ and $c(\omega_3) = 0$, so that $c(\pi_3) = 0$. Moreover, we require that the product of the central characters of $\pi_1$, $\pi_2$, $\pi_3$ be trivial: in the former case, as the central character of $\pi_3$ is $\omega_3^2$, this means that $\omega_3^{-1} = \omega_3$, so that $\omega_3$ is either the trivial character or the unramified quadratic character of $F^{\times}$.

\subsubsection{The case $\pi_3 = \omega_3 \St$}

In this section, we deal with the first case, so that $\pi_3 = \omega_3 \St$.

\begin{lemma}[{\cite[Lemma 1.1.1]{Sch02}}]
We have that
\begin{equation}
\label{psizeta}
\int_{\varpi^m \OO_F^{\times}} \psi(x) \, dx = \begin{dcases*}
\frac{1}{q^m} \frac{1}{\zeta_F(1)} & if $m \geq 0$,	\\
-1 & if $m = -1$,	\\
0 & if $m \leq -2$.
\end{dcases*}
\end{equation}
\end{lemma}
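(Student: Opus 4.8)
The plan is to evaluate the integral directly from the defining property of an unramified additive character together with the normalisation of the measure. Recall that $\psi$ being unramified means precisely that $\psi$ is trivial on $\OO_F$ but nontrivial on $\pp^{-1} = \varpi^{-1}\OO_F$, and that $da$ is normalised so that $\vol(\OO_F) = 1$; consequently $\vol(\varpi^m\OO_F) = q^{-m}$ and hence $\vol(\varpi^m\OO_F^{\times}) = q^{-m} - q^{-m-1} = q^{-m}/\zeta_F(1)$ for every $m \in \Z$, using $\zeta_F(1)^{-1} = 1 - q^{-1}$.

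For $m \geq 0$ the region $\varpi^m\OO_F^{\times}$ lies inside $\OO_F$, where $\psi \equiv 1$, so the integral is just $\vol(\varpi^m\OO_F^{\times}) = q^{-m}/\zeta_F(1)$. For $m \leq -1$ I would instead use the decomposition $\varpi^m\OO_F^{\times} = \varpi^m\OO_F \setminus \varpi^{m+1}\OO_F$ to write
\[\int_{\varpi^m\OO_F^{\times}} \psi(x)\,dx = \int_{\varpi^m\OO_F} \psi(x)\,dx - \int_{\varpi^{m+1}\OO_F} \psi(x)\,dx,\]
and then invoke the orthogonality relation for characters of a compact abelian group: if $k \leq -1$ then $\varpi^k\OO_F$ is a compact additive group containing $\varpi^{-1}\OO_F$, so the restriction of $\psi$ to it is nontrivial and $\int_{\varpi^k\OO_F}\psi(x)\,dx = 0$. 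When $m \leq -2$ both $k = m$ and $k = m+1$ are $\leq -1$, so both terms vanish and the integral is $0$; when $m = -1$ the first term vanishes (here $k = -1$) while the second term is $\int_{\OO_F}\psi(x)\,dx = \vol(\OO_F) = 1$, giving the value $-1$.

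There is no real obstacle here: every step is an immediate consequence of the definitions, and the only items requiring (routine) care are the measure computation and the vanishing of the integral of a nontrivial character over a compact group, the latter being proved by the change of variables $x \mapsto x + x_0$ for any $x_0$ with $\psi(x_0) \neq 1$. Alternatively, one may simply cite \cite[Lemma 1.1.1]{Sch02} and omit the verification entirely.
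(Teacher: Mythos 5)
Your proof is correct, and since the paper simply cites this lemma from Schmidt without reproducing an argument, there is no in-text proof to compare against. Your derivation is the standard one: the measure normalisation gives $\vol(\varpi^m\OO_F^{\times}) = q^{-m}(1-q^{-1})$, the case $m \geq 0$ is immediate because $\psi$ is trivial on $\OO_F$, and the cases $m \leq -1$ follow from writing the shell $\varpi^m\OO_F^{\times}$ as a difference of two compact groups $\varpi^m\OO_F \setminus \varpi^{m+1}\OO_F$ and using that a nontrivial character integrates to zero over a compact group. All the pieces (the identification of exactly which $\varpi^k\OO_F$ carry a nontrivial restriction of $\psi$, the bookkeeping that produces $-1$ at $m=-1$ and $0$ for $m \leq -2$) are handled correctly, and this is indeed essentially the argument one finds in Schmidt's paper.
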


\begin{lemma}[{\cite[Proposition 2.1.2]{Sch02}}]
The newform for $\pi_1$ in the induced model, normalised such that $W_{\pi_1} \left(\begin{smallmatrix} 1 & 0 \\ 0 & 1 \end{smallmatrix}\right) = 1$, is given by
\begin{equation}
\label{varphi1newform}
\varphi_{\pi_1}(g) = \begin{dcases*}
\frac{\zeta_F(1)}{\zeta_F(2)^{1/2}} \omega_1(a) \omega_1'(d) \left|\frac{a}{d}\right|^{1/2} & if $g = \begin{pmatrix} a & b \\ 0 & d \end{pmatrix} \begin{pmatrix} 1 & 0 \\ 1 & 1 \end{pmatrix} k$, $k \in K_1(\pp)$,	\\
0 & if $g = \begin{pmatrix} a & b \\ 0 & d \end{pmatrix} k$, $k \in K_1(\pp)$.
\end{dcases*}
\end{equation}
The newform for $\pi_3$ is equal to
\begin{equation}
\label{varphi3newform}
\varphi_{\pi_3}(g) = \begin{dcases*}
\zeta_F(2)^{1/2} \omega_3(ad) \left|\frac{a}{d}\right| & if $g = \begin{pmatrix} a & b \\ 0 & d \end{pmatrix} \begin{pmatrix} 1 & 0 \\ 1 & 1 \end{pmatrix} k$, $k \in K_1(\pp)$,	\\
-q \zeta_F(2)^{1/2} \omega_3(ad) \left|\frac{a}{d}\right| & if $g = \begin{pmatrix} a & b \\ 0 & d \end{pmatrix} k$, $k \in K_1(\pp)$.
\end{dcases*}
\end{equation}
\end{lemma}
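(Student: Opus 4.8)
The plan is that both formul\ae{} are special cases of Schmidt's explicit local newform theory, so the argument has two pieces: recovering the shape of the newform from the double-coset structure, and fixing the overall constant against the Whittaker normalisation \eqref{Whittaker} (which follows \cite[Section 3.2.1]{MV10} rather than \cite{Sch02}). Since $c(\pi_1) = c(\pi_3) = 1$, the spaces $\pi_1^{K_1(\pp)}$ and $\pi_3^{K_1(\pp)}$ are one-dimensional by Casselman's theorem, spanned by the respective newforms. In the induced model a vector $\varphi_{\pi}$ is determined by its restriction to $K$ via left equivariance under the Borel subgroup $B$ of upper-triangular matrices and the Iwasawa decomposition $\GL_2(F) = BK$, and right $K_1(\pp)$-invariance then exhibits it as a function on $(B \cap K) \backslash K / K_1(\pp)$. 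This double coset space has exactly two elements: reducing modulo $\pp$ (note $K(\pp) \subset K_1(\pp)$) it is identified with the orbit space of the mirabolic subgroup $\left(\begin{smallmatrix} \ast & \ast \\ 0 & 1 \end{smallmatrix}\right)$ acting on $B(\mathbb{F}_q) \backslash \GL_2(\mathbb{F}_q) \cong \mathbb{P}^1(\mathbb{F}_q)$, which has two orbits, so that
\[\GL_2(F) = B \, K_1(\pp) \sqcup B \begin{pmatrix} 1 & 0 \\ 1 & 1 \end{pmatrix} K_1(\pp).\]
Hence each $\varphi_{\pi}$ is pinned down by its values at $\left(\begin{smallmatrix} 1 & 0 \\ 0 & 1 \end{smallmatrix}\right)$ and $\left(\begin{smallmatrix} 1 & 0 \\ 1 & 1 \end{smallmatrix}\right)$, and left equivariance propagates these into the coset-wise formul\ae{} \eqref{varphi1newform} and \eqref{varphi3newform}.

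For $\pi_1 = \omega_1 \boxplus \omega_1'$ the value at $\left(\begin{smallmatrix} 1 & 0 \\ 0 & 1 \end{smallmatrix}\right)$ must vanish, because $\left(\begin{smallmatrix} u & 0 \\ 0 & 1 \end{smallmatrix}\right) \in (B \cap K) \cap K_1(\pp)$ for every $u \in \OO_F^{\times}$, so left equivariance would force $\omega_1(u) \varphi_{\pi_1}\!\left(\begin{smallmatrix} 1 & 0 \\ 0 & 1 \end{smallmatrix}\right) = \varphi_{\pi_1}\!\left(\begin{smallmatrix} 1 & 0 \\ 0 & 1 \end{smallmatrix}\right)$, while $\omega_1$ is ramified ($c(\omega_1) = 1$) and $\omega_1'$ is unramified. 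Thus $\varphi_{\pi_1}$ is supported on the second coset, and the one remaining scalar is fixed by evaluating \eqref{Whittaker} at the identity: after putting $w \left(\begin{smallmatrix} 1 & x \\ 0 & 1 \end{smallmatrix}\right)$ into Iwasawa form one checks that $\varphi_{\pi_1}\!\left(w\left(\begin{smallmatrix} 1 & x \\ 0 & 1 \end{smallmatrix}\right)\right)$ equals the constant $\zeta_F(1)\zeta_F(2)^{-1/2}$ for $x \in \OO_F$ and vanishes otherwise, so that $W_{\pi_1}\!\left(\begin{smallmatrix} 1 & 0 \\ 0 & 1 \end{smallmatrix}\right) = \zeta_F(2)^{1/2}\zeta_F(1)^{-1} \cdot \zeta_F(1)\zeta_F(2)^{-1/2} \int_{\OO_F} \psi(x)\,dx = 1$, as required.

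For $\pi_3 = \omega_3 \St$ I would realise $\pi_3$ as the square-integrable constituent of the reducible induced representation $\omega_3 |\cdot|^{1/2} \boxplus \omega_3 |\cdot|^{-1/2}$. Since $\omega_3$ is unramified both coset values are now admissible, so the $K_1(\pp)$-invariants of this induced representation form a two-dimensional space, and $\pi_3^{K_1(\pp)}$ is the line cut out as the kernel of the standard intertwining operator to $\omega_3 |\cdot|^{-1/2} \boxplus \omega_3 |\cdot|^{1/2}$ (equivalently, the complement of the constituent $\omega_3 \circ \det$). Evaluating that operator on the two natural basis vectors yields a single relation forcing $\varphi_{\pi_3}\!\left(\begin{smallmatrix} 1 & 0 \\ 0 & 1 \end{smallmatrix}\right) = -q\,\varphi_{\pi_3}\!\left(\begin{smallmatrix} 1 & 0 \\ 1 & 1 \end{smallmatrix}\right)$, and the overall scalar is again fixed through \eqref{Whittaker}: splitting the integral over the shells $v(x) \geq 0$, $v(x) = -1$, $v(x) \leq -2$ and using \eqref{psizeta} gives $W_{\pi_3}\!\left(\begin{smallmatrix} 1 & 0 \\ 0 & 1 \end{smallmatrix}\right) = \zeta_F(2)^{1/2}\zeta_F(1)^{-1} \cdot \zeta_F(2)^{1/2}(1 + q^{-1}) = 1$; as a cross-check one confirms that the resulting element of $\WW(\pi_3,\psi)$ is the familiar Kirillov-model Steinberg newform.

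The Iwasawa bookkeeping and the $\psi$-integrals are routine, and the vanishing in \eqref{varphi1newform} is immediate from the ramification of $\omega_1$. I expect the only mildly delicate step to be the constant $-q$ in \eqref{varphi3newform}: it encodes the precise position of the Steinberg line inside the reducible principal series and is not forced by formal considerations, but emerges from the explicit intertwining-operator computation (equivalently, from the value of the $U_{\varpi}$-eigenvalue on the newform).
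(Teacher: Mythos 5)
Your proposal is correct but takes a genuinely different route from the paper. The paper's own treatment of this lemma consists solely of citing \cite[Proposition 2.1.2]{Sch02} and then remarking that the normalisation is adjusted so that $W_{\pi_3}\left(\begin{smallmatrix}1&0\\0&1\end{smallmatrix}\right) = 1$; there is no proof given. You instead reconstruct the statement from first principles: you use the Iwasawa decomposition and the double coset decomposition $\GL_2(F) = B\, K_1(\pp) \sqcup B\left(\begin{smallmatrix}1&0\\1&1\end{smallmatrix}\right)K_1(\pp)$ to reduce the determination of the newform to two scalar values, invoke Casselman's one-dimensionality to guarantee the newform is uniquely determined, and then fix the constants by the Whittaker normalisation \eqref{Whittaker}. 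For $\pi_1$ the vanishing on the identity coset is forced by ramification of $\omega_1$, and your normalisation computation $W_{\pi_1}\left(\begin{smallmatrix}1&0\\0&1\end{smallmatrix}\right) = \int_{\OO_F}\psi(x)\,dx = 1$ checks out. For $\pi_3$ you realise it inside the reducible principal series, observe that the $K_1(\pp)$-invariants there are two-dimensional (one dimension each from $\St$ and from $\omega_3\circ\det$), and characterise the newform line as the kernel of the normalised intertwining operator. The normalisation computation with \eqref{psizeta} again checks out: $W_{\pi_3}\left(\begin{smallmatrix}1&0\\0&1\end{smallmatrix}\right) = \tfrac{\zeta_F(2)}{\zeta_F(1)}(1 + q^{-1}) = 1$. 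This self-contained route buys a verification of the citation (useful, given that normalisation conventions differ between \cite{Sch02} and \cite{MV10}) at the cost of a longer argument.

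The one place you leave something to be filled in, as you yourself flag, is the constant $-q$: you assert the relation $\varphi_{\pi_3}\!\left(\begin{smallmatrix}1&0\\0&1\end{smallmatrix}\right) = -q\,\varphi_{\pi_3}\!\left(\begin{smallmatrix}1&0\\1&1\end{smallmatrix}\right)$ as the output of the intertwining-operator computation without carrying it out. This step genuinely matters, because the final Whittaker normalisation only fixes the overall scale and does not by itself determine the ratio between the two coset values. You do correctly identify the two standard ways to obtain it (intertwining kernel, or the $U_\varpi$-eigenvalue of the Steinberg newform), and your ``cross-check'' against the known Kirillov-model Whittaker function $W_{\pi_3}\!\left(\begin{smallmatrix}a&0\\0&1\end{smallmatrix}\right) = \omega_3(a)|a|$ does independently pin it down (this is how \hyperref[Whittaker10varpi1lemma]{Lemma~\ref*{Whittaker10varpi1lemma}} reads off the ratio from \eqref{varphi3newform} and \eqref{psizeta}). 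So the logic is sound, but to make the proposal a full proof you would want to either compute the intertwining integral on the two indicator-like basis vectors, compute the $U_\varpi$-eigenvalue, or invert the Kirillov-model cross-check into the derivation rather than treating it as a sanity check.
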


Note that the normalisation of these newforms differs slightly than the normalisation in \cite[Proposition 2.1.2]{Sch02}; it is such that $W_{\pi_3}\left(\begin{smallmatrix} 1 & 0 \\ 0 & 1 \end{smallmatrix}\right) = 1$.

\begin{lemma}[{\cite[\S 2.4]{Sch02}}]
\label{Whittaker10varpi1lemma}
For $a \in F^{\times}$, we have that
\begin{align*}
W_{\pi_1}\begin{pmatrix} a & 0 \\ 0 & 1 \end{pmatrix} & = \begin{dcases*}
\omega_1'(a) |a|^{1/2} & if $0 < |a| \leq 1$,	\\
0 & if $|a| \geq q$,
\end{dcases*}	\\
W_{\pi_2}\begin{pmatrix} a & 0 \\ 0 & 1 \end{pmatrix} & = \begin{dcases*}
\omega_1'^{-1}(a) |a|^{1/2} & if $0 < |a| \leq 1$,	\\
0 & if $|a| \geq q$,
\end{dcases*}	\\
W_{\pi_3}\begin{pmatrix} a & 0 \\ 0 & 1 \end{pmatrix} & = \begin{dcases*}
\omega_3(a) |a| & if $0 < |a| \leq 1$,	\\
0 & if $|a| \geq q$.
\end{dcases*}
\end{align*}
\end{lemma}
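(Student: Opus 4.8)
The plan is to evaluate each Whittaker function directly from the Jacquet integral \eqref{Whittaker}. Since $W_{\pi}$ is right $K_1(\pp)$-invariant and $c(\pi_1) = c(\pi_2) = c(\pi_3) = 1$, it suffices to compute at $g = \left(\begin{smallmatrix} a & 0 \\ 0 & 1 \end{smallmatrix}\right)$, and there $w\left(\begin{smallmatrix} 1 & x \\ 0 & 1\end{smallmatrix}\right)\left(\begin{smallmatrix} a & 0 \\ 0 & 1 \end{smallmatrix}\right) = \left(\begin{smallmatrix} 0 & -1 \\ a & x \end{smallmatrix}\right)$, so that $W_{\pi}\left(\begin{smallmatrix} a & 0 \\ 0 & 1 \end{smallmatrix}\right) = \frac{\zeta_F(2)^{1/2}}{\zeta_F(1)}\int_F \varphi_{\pi}\left(\begin{smallmatrix} 0 & -1 \\ a & x \end{smallmatrix}\right)\psi(x)\,dx$. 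The problem thus reduces to evaluating the induced-model newforms $\varphi_{\pi_1}$ and $\varphi_{\pi_3}$ of \eqref{varphi1newform} and \eqref{varphi3newform} along the one-parameter family $\left(\begin{smallmatrix} 0 & -1 \\ a & x \end{smallmatrix}\right)$, $x \in F$, and then integrating against $\psi$. For $\pi_2 = \omega_1'^{-1} \boxplus \omega_1^{-1}$ one uses that a principal series is unchanged by transposing its two inducing characters, so \eqref{varphi1newform} applies with $(\omega_1, \omega_1')$ replaced by $(\omega_1^{-1}, \omega_1'^{-1})$ and the computation for $\pi_2$ is word-for-word that of $\pi_1$.

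The heart of the argument is the Bruhat-type decomposition underlying \eqref{varphi1newform}, namely $\GL_2(F) = B(F)K_1(\pp) \sqcup B(F)\left(\begin{smallmatrix}1 & 0 \\ 1 & 1\end{smallmatrix}\right)K_1(\pp)$. Inspecting the bottom row shows that $\left(\begin{smallmatrix} 0 & -1 \\ a & x\end{smallmatrix}\right)$ lies in the first cell precisely when $|a| < |x|$, where $\varphi_{\pi_1}$ vanishes, and in the second cell precisely when $|a| \geq |x|$. In the second case one checks that the upper-triangular factor $\left(\begin{smallmatrix}\alpha & \beta \\ 0 & \delta\end{smallmatrix}\right)$ can be taken with $|\delta| = |a|$ and, crucially, $\alpha \equiv 1 \pmod{\pp}$ --- a quick comparison of determinants modulo $\pp$ --- so that the ramified character contributes trivially and only the unramified one survives. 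Feeding this into \eqref{varphi1newform} and \eqref{varphi3newform} yields $\varphi_{\pi_1}\left(\begin{smallmatrix}0 & -1 \\ a & x\end{smallmatrix}\right) = \frac{\zeta_F(1)}{\zeta_F(2)^{1/2}}\omega_1'(a)|a|^{-1/2}$ for $|x| \leq |a|$ and $0$ otherwise, while $\varphi_{\pi_3}\left(\begin{smallmatrix}0 & -1 \\ a & x\end{smallmatrix}\right)$ equals $\zeta_F(2)^{1/2}\omega_3(a)|a|^{-1}$ for $|x| \leq |a|$ and $-q\,\zeta_F(2)^{1/2}\omega_3(a)|a|\,|x|^{-2}$ for $|x| > |a|$. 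This cell bookkeeping is where essentially all the work lies; the rest is mechanical.

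It then remains to carry out the $x$-integration using \eqref{psizeta}. For $\pi_1$ (hence $\pi_2$) only the region $|x| \leq |a|$ survives, and $\int_{|x| \leq |a|}\psi(x)\,dx$ equals $|a|$ when $|a| \leq 1$ and $0$ when $|a| \geq q$; after the constants cancel one obtains $W_{\pi_1}\left(\begin{smallmatrix}a & 0 \\ 0 & 1\end{smallmatrix}\right) = \omega_1'(a)|a|^{1/2}$ for $0 < |a| \leq 1$ and $0$ for $|a| \geq q$, as asserted. For $\pi_3$ one additionally encounters the tail integral $\int_{|x| > |a|}|x|^{-2}\psi(x)\,dx$, which by \eqref{psizeta} reduces to a finite geometric sum that vanishes for $|a| \geq q$; assembling the two regions and simplifying with $\zeta_F(2)/\zeta_F(1) = (1 + q^{-1})^{-1}$ collapses all constants to give $W_{\pi_3}\left(\begin{smallmatrix}a & 0 \\ 0 & 1\end{smallmatrix}\right) = \omega_3(a)|a|$ for $0 < |a| \leq 1$ and $0$ for $|a| \geq q$. (All three formulae are also recorded in \cite[\S 2.4]{Sch02}, whence one may alternatively simply cite them.)
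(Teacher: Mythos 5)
Your proof is correct and follows essentially the same approach as the paper's: an explicit Bruhat-cell decomposition of $g = \left(\begin{smallmatrix}0 & -1 \\ a & x\end{smallmatrix}\right)$ into the ranges $|x| \leq |a|$ and $|x| > |a|$, evaluation of the induced-model newforms via \eqref{varphi1newform} and \eqref{varphi3newform}, and integration against $\psi$ using \eqref{psizeta}. The only cosmetic differences are that the paper reads off the congruence $\alpha \equiv 1 \pmod{\pp}$ from an explicit factorisation $\left(\begin{smallmatrix}0 & -1 \\ a & x\end{smallmatrix}\right) = \left(\begin{smallmatrix}\frac{a}{a + x\varpi} & * \\ 0 & a + x\varpi\end{smallmatrix}\right)\left(\begin{smallmatrix}1 & 0 \\ 1 & 1\end{smallmatrix}\right)k$ rather than via a determinant comparison, and obtains the $\pi_2$ formula by taking complex conjugates of the $\pi_1$ result rather than by transposing the inducing characters.
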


\begin{proof}
Let
\[g = w \begin{pmatrix} 1 & x \\ 0 & 1 \end{pmatrix} \begin{pmatrix} a & 0 \\ 0 & 1 \end{pmatrix} = \begin{pmatrix} 0 & -1 \\ a & x \end{pmatrix}.\]
Then
\[g = \begin{dcases*}
\begin{pmatrix} \frac{a}{a + x \varpi} & -\varpi - \frac{a}{a + x \varpi} \\ 0 & a + x \varpi \end{pmatrix} \begin{pmatrix} 1 & 0 \\ 1 & 1 \end{pmatrix} \begin{pmatrix} \varpi + \frac{a}{a + x \varpi} & -1 + \frac{x}{a + x \varpi} \\ -\varpi & 1 \end{pmatrix} & if $|x| \leq |a|$,	\\
\begin{pmatrix} \frac{a}{x} & -1 \\ 0 & x \end{pmatrix} \begin{pmatrix} 1 & 0 \\ \frac{a}{x} & 1 \end{pmatrix} & if $|x| \geq q|a|$,
\end{dcases*}\]
and so upon combining \eqref{Whittaker} and \eqref{varphi1newform},
\[W_{\pi_1}\begin{pmatrix} a & 0 \\ 0 & 1 \end{pmatrix} = \omega_1(a) |a|^{1/2} \int\limits_{|x| \leq |a|} \omega_1^{-1} \omega_1'(a + x \varpi) \psi(x) |a + x \varpi|^{-1} \, dx.\]
Since $|x| \leq |a|$, $|a + x \varpi| = |a|$, 
while $\omega_1^{-1} \omega_1'(a + x \varpi) = \omega_1^{-1} \omega_1'(a)$ as $\omega_1^{-1} \omega_1'$ has conductor exponent $1$. So
\[W_{\pi_1}\begin{pmatrix} a & 0 \\ 0 & 1 \end{pmatrix} = \omega_1'(a) |a|^{-1/2} \int\limits_{|x| \leq |a|} \psi(x) \, dx ,\]
from which the desired identity for $W_{\pi_1} \left(\begin{smallmatrix} a & 0 \\ 0 & 1 \end{smallmatrix}\right)$ follows via \eqref{psizeta}. The identity for $W_{\pi_2} \left(\begin{smallmatrix} a & 0 \\ 0 & 1 \end{smallmatrix}\right)$ follows by taking complex conjugates. Finally, we insert \eqref{varphi3newform} into \eqref{Whittaker} in order to see that $W_{\pi_3} \left(\begin{smallmatrix} a & 0 \\ 0 & 1 \end{smallmatrix}\right)$ is equal to
\[\frac{\zeta_F(2)}{\zeta_F(1)} \omega_3(a) \left(|a|^{-1} \int\limits_{|x| \leq |a|} \psi(x) \, dx - q|a| \int\limits_{|x| \geq q|a|} \psi(x) |x|^{-2} \, dx\right).\]
The result then follows once again via \eqref{psizeta}.
\end{proof}

\begin{lemma}[{\cite[Lemma 1.1.1]{Sch02}}]
For any ramified character $\omega$ of $F^{\times}$ of conductor exponent $c(\omega) \geq 1$ and any $s \in \C$, we have that
\begin{equation}
\label{epsilon}
\int_{\varpi^m \OO_F^{\times}} \omega^{-1}(x) \psi(x) |x|^{-s} \, dx = \begin{dcases*}
\epsilon(s,\omega,\psi) & if $m = -c(\omega)$,	\\
0 & otherwise.
\end{dcases*}
\end{equation}
Here $\epsilon(s,\omega,\psi) = \epsilon(1/2,\omega,\psi) q^{- c(\omega) (s - 1/2)}$ and $|\epsilon(1/2,\omega,\psi)| = 1$.
\end{lemma}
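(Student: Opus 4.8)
The plan is to compute the integral outright, reducing it to character sums over the unit group and then invoking \eqref{psizeta}. Write $c \defeq c(\omega) \geq 1$. On the shell $\varpi^m \OO_F^{\times}$ the factor $|x|^{-s}$ is the constant $q^{ms}$, so the left-hand side of \eqref{epsilon} equals $q^{ms} J_m$ with $J_m \defeq \int_{\varpi^m \OO_F^{\times}} \omega^{-1}(x) \psi(x) \, dx$. The first task is to show $J_m = 0$ for every $m \neq -c$; granting this, the only shell that survives is $m = -c$, and since the $s$-dependence there sits entirely in the prefactor $q^{-cs}$, we may simply \emph{define} $\epsilon(s, \omega, \psi) \defeq q^{-cs} J_{-c}$, which at once yields $\epsilon(s, \omega, \psi) = \epsilon(1/2, \omega, \psi)\, q^{-c(s - 1/2)}$. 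What then remains is $|\epsilon(1/2, \omega, \psi)| = 1$, equivalently $|J_{-c}|^2 = q^c$.

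For the vanishing of $J_m$, two multiplicative translations do the job. If $m > -c$, translate $x \mapsto \alpha x$ by a unit $\alpha$ lying in $1 + \pp^{\max(0, -m)}$ (the whole of $\OO_F^{\times}$ when $m \geq 0$): this preserves the shell $\varpi^m \OO_F^{\times}$ and its Haar measure, and leaves $\psi(x)$ unchanged since $(\alpha - 1)x \in \OO_F$, so $J_m = \omega^{-1}(\alpha) J_m$; because $\max(0, -m) \leq c - 1 = c(\omega) - 1$, the character $\omega^{-1}$ is still nontrivial on this subgroup, and choosing $\alpha$ with $\omega^{-1}(\alpha) \neq 1$ forces $J_m = 0$. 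If $m < -c$, translate instead $x \mapsto x\left(1 + \varpi^{-m - 1} t\right)$ over $t \in \OO_F$: now $1 + \varpi^{-m - 1} t$ lies in $1 + \pp^c$, the kernel of $\omega$, so the $\omega^{-1}$-factor is unaffected, while $\psi\left(x\left(1 + \varpi^{-m - 1} t\right)\right) = \psi(x)\, \psi\left(x \varpi^{-m - 1} t\right)$; integrating this identity in $t$ over $\OO_F$ and using that $t \mapsto \psi\left(x \varpi^{-m - 1} t\right)$ is a nontrivial character of $\OO_F$ (since $v\left(x \varpi^{-m - 1}\right) = -1$ on the shell), the inner integral vanishes, whence $J_m = 0$.

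For $|J_{-c}|^2 = q^c$, expand $|J_{-c}|^2 = \int \int \omega^{-1}(x) \omega(y) \psi(x - y) \, dx \, dy$ over $\varpi^{-c} \OO_F^{\times} \times \varpi^{-c} \OO_F^{\times}$, using that $\omega$ is unitary and $\overline{\psi(y)} = \psi(-y)$; substitute $x = y u$ with $u \in \OO_F^{\times}$ and $dx = q^c \, du$, and interchange the order of integration to reach $|J_{-c}|^2 = q^c \int_{\OO_F^{\times}} \omega^{-1}(u) A(u) \, du$, where $A(u) \defeq \int_{\varpi^{-c} \OO_F^{\times}} \psi\left(y(u - 1)\right) \, dy$ depends only on $j \defeq v(u - 1) \geq 0$. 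After scaling $y$, \eqref{psizeta} evaluates $A(u)$ explicitly: $A(u) = q^c(1 - q^{-1})$ if $j \geq c$ (i.e.\ $u \in 1 + \pp^c$), $A(u) = -q^{c - 1}$ if $j = c - 1$, and $A(u) = 0$ otherwise. Splitting the $u$-integral accordingly and using $\int_{1 + \pp^c} \omega^{-1} = q^{-c}$ (the character is trivial there) together with $\int_{1 + \pp^{c - 1}} \omega^{-1} = 0$ (it is nontrivial there, since $c(\omega) = c$; with the convention $1 + \pp^0 = \OO_F^{\times}$, this is the statement that $\omega$ is nontrivial on $\OO_F^{\times}$ when $c = 1$), one finds $\int_{\OO_F^{\times}} \omega^{-1}(u) A(u) \, du = (1 - q^{-1}) + q^{-1} = 1$, whence $|J_{-c}|^2 = q^c$.

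The main obstacle is this last computation: keeping the Haar-measure normalisations straight through the substitution $x = y u$, correctly identifying the piecewise values of $A(u)$ along the filtration $\{1 + \pp^j\}$, and pairing them against the orthogonality of $\omega^{-1}$. An alternative route to $|\epsilon(1/2, \omega, \psi)| = 1$ would be to combine the local functional equation $\epsilon(s, \omega, \psi)\, \epsilon(1 - s, \omega^{-1}, \psi^{-1}) = 1$ with the conjugation identity $\overline{\epsilon(1/2, \omega, \psi)} = \omega(-1)\, \epsilon(1/2, \omega^{-1}, \psi)$ (immediate from $x \mapsto -x$ and $c(\omega^{-1}) = c(\omega)$), but the direct computation above is just as short and entirely self-contained.
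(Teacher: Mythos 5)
The paper does not prove this lemma; it is cited directly from \cite[Lemma 1.1.1]{Sch02}, so there is no proof of the paper's own to compare against. Your argument is a correct, self-contained derivation of the standard result, and it is essentially the computation one finds in Tate's thesis or Schmidt's notes: separate the shells, kill the non-extremal shells by averaging over multiplicative translations (a subgroup of $1+\pp^{\max(0,-m)}$ on which $\omega$ remains nontrivial when $m > -c$, versus a subgroup of $\ker\omega$ transporting $\psi$ to a nontrivial additive character of $\OO_F$ when $m < -c$), and evaluate $|J_{-c}|^2$ by unfolding against the multiplicative substitution $x = yu$ and pairing the resulting Gauss-sum profile $A(u)$ against the orthogonality of $\omega^{-1}$ on the filtration $1 + \pp^j$. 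The bookkeeping of the additive Haar measure through $x = yu$ ($dx = q^c\,du$) and the piecewise evaluation of $A(u)$ via \eqref{psizeta} are both correct, as is the final telescoping $(1-q^{-1}) + q^{-1} = 1$.

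One small remark on hypotheses rather than on your argument: the identity $|\epsilon(1/2,\omega,\psi)| = 1$ is only true for \emph{unitary} $\omega$ (for $\omega = \omega_0|\cdot|^t$ with $\omega_0$ unitary one gets $q^{-c(\omega)\Re(t)}$ instead), and your proof uses unitarity exactly once, in passing from $\overline{\omega^{-1}(y)}$ to $\omega(y)$. This implicit assumption matches the paper's intended scope, where the characters fed into this lemma are unitary, but you might flag it explicitly since the lemma as stated says ``any ramified character''.
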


\begin{lemma}[Cf.~{\cite[Lemma 2.13]{Hu17}}]
\label{Whittaker1011lemma}
We have that
\begin{align*}
W_{\pi_1}\left(\begin{pmatrix} a & 0 \\ 0 & 1 \end{pmatrix} \begin{pmatrix} 1 & 0 \\ 1 & 1 \end{pmatrix}\right) & = \begin{dcases*}
\epsilon(1, \omega_1 \omega_1'^{-1}, \psi) \omega_1(a) \psi(-a) |a|^{1/2} & if $0 < |a| \leq q$,	\\
0 & if $|a| \geq q^2$,
\end{dcases*}	\\
W_{\pi_2}\left(\begin{pmatrix} a & 0 \\ 0 & 1 \end{pmatrix} \begin{pmatrix} 1 & 0 \\ 1 & 1 \end{pmatrix}\right) & = \begin{dcases*}
\epsilon(1, \omega_1^{-1} \omega_1', \psi^{-1}) \omega_1^{-1}(a) \psi(a) |a|^{1/2} & if $0 < |a| \leq q$,	\\
0 & if $|a| \geq q^2$,
\end{dcases*}	\\
W_{\pi_3}\left(\begin{pmatrix} a & 0 \\ 0 & 1 \end{pmatrix} \begin{pmatrix} 1 & 0 \\ 1 & 1 \end{pmatrix} \right) & = \begin{dcases*}
-\frac{1}{q} \omega_3(a) \psi(-a) |a| & if $0 < |a| \leq q$,	\\
0 & if $|a| \geq q^2$.
\end{dcases*}
\end{align*}
\end{lemma}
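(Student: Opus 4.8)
The plan is to compute each of the three Whittaker functions directly from the integral representation \eqref{Whittaker}, following the same strategy as in the proof of \hyperref[Whittaker10varpi1lemma]{Lemma \ref*{Whittaker10varpi1lemma}}. Setting $g = \left(\begin{smallmatrix} a & 0 \\ 0 & 1 \end{smallmatrix}\right)\left(\begin{smallmatrix} 1 & 0 \\ 1 & 1 \end{smallmatrix}\right)$, a short matrix computation gives
\[w \begin{pmatrix} 1 & x \\ 0 & 1 \end{pmatrix} g = \begin{pmatrix} -1 & -1 \\ a + x & x \end{pmatrix} =: M(x),\]
so the task is to evaluate $\int_F \varphi_{\pi_j}(M(x)) \psi(x) \, dx$ (up to the harmless constant $\zeta_F(2)^{1/2}/\zeta_F(1)$) for $j = 1, 3$, and then for $\pi_2$ by symmetry.

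First I would classify $M(x)$ with respect to the decomposition $\GL_2(F) = BK_1(\pp) \sqcup B\left(\begin{smallmatrix} 1 & 0 \\ 1 & 1 \end{smallmatrix}\right)K_1(\pp)$: an element lies in $BK_1(\pp)$ precisely when the reduction modulo $\pp$ of its normalised bottom row is proportional to $(0,1)$, which for $M(x)$ means $v(a + x) > v(x)$, that is, $|x| = |a|$ and $|a + x| < |a|$. Since $\varphi_{\pi_1}$ and $\varphi_{\pi_2}$ vanish on $BK_1(\pp)$ by \eqref{varphi1newform}, only the open cell contributes for $\pi_1$ and $\pi_2$, whereas for $\pi_3 = \omega_3 \St$ both cells contribute by \eqref{varphi3newform}. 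On the open cell one writes $M(x) = \left(\begin{smallmatrix} \alpha(x) & \ast \\ 0 & \delta(x) \end{smallmatrix}\right)\left(\begin{smallmatrix} 1 & 0 \\ 1 & 1 \end{smallmatrix}\right) k(x)$ with $k(x) \in K_1(\pp)$, and I would read off $\alpha(x)$ and $\delta(x)$ by an explicit factorisation in each of the ranges $|x| > |a|$, $|x| = |a|$, and $|x| < |a|$; for instance, for $|x| > |a|$ one has $M(x) = \left(\begin{smallmatrix} a/x & -1 \\ 0 & x \end{smallmatrix}\right)\left(\begin{smallmatrix} 1 & 0 \\ 1 & 1 \end{smallmatrix}\right)\left(\begin{smallmatrix} 1 & 0 \\ a/x & 1 \end{smallmatrix}\right)$, so $\alpha(x) = a/x$ and $\delta(x) = x$. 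Using that $\omega_1$ has conductor exponent $1$ (hence is trivial on $1 + \pp$) and that $\omega_1'$ is unramified simplifies $\omega_1(\alpha(x)) \omega_1'(\delta(x))$ and $|\alpha(x)/\delta(x)|$ to explicit elementary expressions in $a$ and $x$ on each range.

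Assembling these pieces, the $x$-integral splits accordingly. The range $|x| > |a|$ reduces, after a substitution $x = au$, to the epsilon-factor integral \eqref{epsilon} for $\omega_1 \omega_1'^{-1}$ (of conductor exponent $1$) and contributes only when $|a| \leq 1$, in which case $\psi(-a) = 1$; the range $|x| = |a|$, after the change of variables $x = a(u - 1)$, produces the factor $\psi(-a)$ together with a further instance of \eqref{epsilon} that supplies exactly the missing window $|a| = q$; and the range $|x| < |a|$ is easily seen to contribute nothing after integrating $\psi$. Reconciling the epsilon factors that arise with different arguments via $\epsilon(s, \omega, \psi) = \epsilon(1/2, \omega, \psi) q^{-c(\omega)(s - 1/2)}$ yields the stated formula for $W_{\pi_1}$, supported on $0 < |a| \leq q$ and vanishing for $|a| \geq q^2$. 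For $\pi_3 = \omega_3 \St$ the identical analysis applies, except that $\omega_3$ is unramified so only \eqref{psizeta} (not \eqref{epsilon}) is needed, and one must add the $BK_1(\pp)$-contribution, weighted by the extra factor $-q$ from \eqref{varphi3newform}; the two cell-contributions combine to give the prefactor $-1/q$. Finally, $W_{\pi_2}$ follows from $W_{\pi_1}$ by complex conjugation, since $\pi_2 = \widetilde{\pi}_1$ has characters $\omega_1'^{-1} = \overline{\omega_1'}$ and $\omega_1^{-1} = \overline{\omega_1}$, while $\overline{\psi} = \psi^{-1}$ and $\overline{\epsilon(1, \omega_1 \omega_1'^{-1}, \psi)} = \epsilon(1, \omega_1^{-1} \omega_1', \psi^{-1})$.

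The main obstacle is the bookkeeping in the second and third steps: tracking, across the ranges of $|x|$ relative to $|a|$, which $K_1(\pp)$-coset $M(x)$ falls into — hence where the newform formulae \eqref{varphi1newform} and \eqref{varphi3newform} actually contribute — and, above all, spotting the change of variables in the range $|x| = |a|$ that simultaneously extracts the factor $\psi(-a)$ and the epsilon integral responsible for enlarging the support from $|a| \leq 1$ to $|a| \leq q$. Everything else is a routine, if slightly lengthy, unwinding of \eqref{psizeta} and \eqref{epsilon}.
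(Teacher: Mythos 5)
Your overall strategy --- classify $M(x)$ by which $K_1(\pp)$-coset of the decomposition $\GL_2(F) = BK_1(\pp) \sqcup B\left(\begin{smallmatrix} 1 & 0 \\ 1 & 1 \end{smallmatrix}\right)K_1(\pp)$ it falls into, note that $\varphi_{\pi_1},\varphi_{\pi_2}$ vanish on $BK_1(\pp)$ while $\varphi_{\pi_3}$ does not, and evaluate the resulting $x$-integral using \eqref{psizeta} and \eqref{epsilon} --- is the same as the paper's. But the finer account you give of the open-cell integral, split across the three ranges $|x| > |a|$, $|x| = |a|$, $|x| < |a|$, contains a genuine error: the claim that the range $|x| < |a|$ contributes nothing is false. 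For $|x| < |a|$ one has $|x + a| = |a|$, and since $\omega_1\omega_1'^{-1}$ has conductor exponent $1$ it is trivial on $1 + \pp$, so $\omega_1^{-1}\omega_1'(x + a) = \omega_1^{-1}\omega_1'(a)$ there; the integrand thus reduces to the constant $\omega_1^{-1}\omega_1'(a)\,|a|^{-1}$ times $\psi(x)$, and
\[\int_{|x| < |a|} \omega_1^{-1}\omega_1'(x+a)\,\psi(x)\,|x+a|^{-1}\,dx = \omega_1^{-1}\omega_1'(a)\,|a|^{-1}\int_{|x| \le |a|/q}\psi(x)\,dx = \frac{\omega_1^{-1}\omega_1'(a)}{q}\]
for every $0 < |a| \le q$, which is nonzero. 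Correspondingly, the $|x| = |a|$ piece (which under your substitution $x = a(u-1)$ is the set $|u| = |u-1| = 1$) is not a clean instance of \eqref{epsilon}, whose hypothesis requires the full annulus; the discrepancy is precisely a term that cancels against the nonzero $|x| < |a|$ contribution above, and so the $|x| = |a|$ range alone does not give a formula supported exactly on $|a| = q$. The sum over all three ranges is of course correct since it is the same integral, but the per-range description you give is not, and the $W_{\pi_3}$ computation would inherit the same flaw.

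The paper avoids all of this by making the change of variables $x \mapsto x - a$ once, over the entire open cell $\{|x + a| \ge |a|\}$, before splitting anything: the domain becomes $\{|x| \ge |a|\}$ and the integrand becomes $\psi(-a)\,\omega_1^{-1}\omega_1'(x)\psi(x)|x|^{-1}$, whereupon \eqref{epsilon} applies directly annulus by annulus (only $|x| = q$ survives), producing both the factor $\psi(-a)$ and the support cutoff $|a| \le q$ in a single step with no inter-range cancellation to track. You should replace your three-way split by this global substitution; it also handles $\pi_3$ in the same way (with \eqref{psizeta} in place of \eqref{epsilon}) after including the $-q$-weighted $BK_1(\pp)$-cell contribution.
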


\begin{proof}
Let
\[g = w \begin{pmatrix} 1 & x \\ 0 & 1 \end{pmatrix} \begin{pmatrix} a & 0 \\ 0 & 1 \end{pmatrix} \begin{pmatrix} 1 & 0 \\ 1 & 1 \end{pmatrix} = \begin{pmatrix} -1 & -1 \\ a + x & x \end{pmatrix}.\]
Then
\[g = \begin{dcases*}
\begin{pmatrix} \frac{a}{x} & -1 \\ 0 & x \end{pmatrix} \begin{pmatrix} 1 & 0 \\ \frac{a}{x} + 1 & 1 \end{pmatrix} & if $|x + a| \leq \dfrac{|a|}{q}$,	\\
\begin{pmatrix} \frac{a}{a + x} & -\frac{2a + x}{a + x} \\ 0 & a + x \end{pmatrix} \begin{pmatrix} 1 & 0 \\ 1 & 1 \end{pmatrix} \begin{pmatrix} 1 & -\frac{a}{a + x} \\ 0 & 1 \end{pmatrix} & if $|x + a| \geq |a|$.
\end{dcases*}\]
Combining \eqref{Whittaker} and \eqref{varphi1newform} yields
\[W_{\pi_1}\left(\begin{pmatrix} a & 0 \\ 0 & 1 \end{pmatrix} \begin{pmatrix} 1 & 0 \\ 1 & 1 \end{pmatrix}\right) = \omega_1(a) |a|^{1/2} \int\limits_{|x + a| \geq |a|} \omega_1^{-1}\omega_1'(x + a) \psi(x) |x + a|^{-1} \, dx.\]
Upon making the change of variables $x \mapsto x - a$ and using \eqref{epsilon}, the identity for $W_{\pi_1}$ is derived. The identity for $W_{\pi_2}$ follows by taking complex conjugates. Finally, combining \eqref{Whittaker} and \eqref{varphi3newform} shows that
\[W_{\pi_3}\left(\begin{pmatrix} a & 0 \\ 0 & 1 \end{pmatrix} \begin{pmatrix} 1 & 0 \\ 1 & 1 \end{pmatrix} \right) = \frac{\zeta_F(2)}{\zeta_F(1)} \omega_3(a) |a| \left(-q \int\limits_{|x + a| \leq \frac{|a|}{q}} \psi(x) |x|^{-2} \, dx + \int\limits_{|x + a| \geq |a|} \psi(x) |x + a|^{-2} \, dx\right).\]
The result then follows via \eqref{psizeta} after the change of variables $x \mapsto x - a$.
\end{proof}

\subsubsection{The case $\pi_3 = \omega_3 \boxplus \omega_3^{-1}$}

Finally, we deal with the case $\pi_3 = \omega_3 \boxplus \omega_3^{-1}$.

\begin{lemma}
The newform in the induced model is
\begin{equation}
\label{varphi3newformunram}
\varphi_{\pi_3}(g) = \frac{\zeta_F(1) L(1,\omega_3^2)}{\zeta_F(2)^{1/2}} \omega_3\left(\frac{a}{d}\right) \left|\frac{a}{d}\right|^{1/2} \text{ for $g = \begin{pmatrix} a & b \\ 0 & d \end{pmatrix} k$, $k \in K$.}
\end{equation}
\end{lemma}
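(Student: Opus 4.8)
The plan is to identify $\varphi_{\pi_3}$ as the (essentially unique) spherical vector in the induced model and then pin down its normalising scalar using the condition $W_{\pi_3}\left(\begin{smallmatrix} 1 & 0 \\ 0 & 1\end{smallmatrix}\right) = 1$ that is imposed on the Whittaker newforms throughout \hyperref[Whittakerformulaesect]{Section \ref*{Whittakerformulaesect}}. Since $c(\pi_3) = 0$, the newform of $\pi_3 = \omega_3 \boxplus \omega_3^{-1}$ in the induced model is right $K$-invariant; by the Iwasawa decomposition $\GL_2(F) = \left\{\left(\begin{smallmatrix} a & b \\ 0 & d\end{smallmatrix}\right)\right\} K$ and the defining transformation law of the induced model, it is therefore of the form $g = \left(\begin{smallmatrix} a & b \\ 0 & d\end{smallmatrix}\right) k \mapsto c\,\omega_3(a/d)|a/d|^{1/2}$ for some constant $c$ (this is well defined because $\left(\begin{smallmatrix} a & b \\ 0 & d\end{smallmatrix}\right) k = \left(\begin{smallmatrix} a' & b' \\ 0 & d'\end{smallmatrix}\right) k'$ forces $a/d \equiv a'/d' \bmod \OO_F^{\times}$, and $\omega_3$ is unramified). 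It remains to compute $c$.

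To do this I would evaluate the Jacquet integral \eqref{Whittaker} at $g = \left(\begin{smallmatrix} 1 & 0 \\ 0 & 1\end{smallmatrix}\right)$. Writing $w\left(\begin{smallmatrix} 1 & x \\ 0 & 1\end{smallmatrix}\right) = \left(\begin{smallmatrix} 0 & -1 \\ 1 & x\end{smallmatrix}\right)$, one has the Iwasawa decomposition
\[\begin{pmatrix} 0 & -1 \\ 1 & x\end{pmatrix} = \begin{dcases*} \begin{pmatrix} 0 & -1 \\ 1 & x\end{pmatrix} \in K & if $|x| \le 1$, \\ \begin{pmatrix} 1/x & -1 \\ 0 & x\end{pmatrix}\begin{pmatrix} 1 & 0 \\ 1/x & 1\end{pmatrix}, \quad \begin{pmatrix} 1 & 0 \\ 1/x & 1\end{pmatrix} \in K & if $|x| \ge q$, \end{dcases*}\]
so that, with $\varphi_{\pi_3}$ normalised as above (scalar $c$, to be determined),
\[\varphi_{\pi_3}\left(w\begin{pmatrix} 1 & x \\ 0 & 1\end{pmatrix}\right) = \begin{dcases*} c & if $|x| \le 1$, \\ c\,\omega_3(x)^{-2}|x|^{-1} & if $|x| \ge q$. \end{dcases*}\]
Hence $W_{\pi_3}\left(\begin{smallmatrix} 1 & 0 \\ 0 & 1\end{smallmatrix}\right)$ equals $\dfrac{\zeta_F(2)^{1/2}}{\zeta_F(1)}\, c \left(\displaystyle\int_{\OO_F}\psi(x)\,dx + \sum_{m \le -1} \omega_3(\varpi)^{-2m} q^{m} \int_{\varpi^m\OO_F^{\times}}\psi(x)\,dx\right)$.

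Finally, I would apply \eqref{psizeta}: the first integral is $\sum_{m \ge 0} q^{-m}\zeta_F(1)^{-1} = 1$, while in the sum every term with $m \le -2$ vanishes and the term $m = -1$ contributes $-\omega_3(\varpi)^2/q$; the decay $|\omega_3(\varpi)| < q^{1/2}$ coming from the unitarisability hypothesis guarantees the interchange of sum and integral is legitimate. Thus $W_{\pi_3}\left(\begin{smallmatrix} 1 & 0 \\ 0 & 1\end{smallmatrix}\right) = \frac{\zeta_F(2)^{1/2}}{\zeta_F(1)}\, c\,(1 - \omega_3(\varpi)^2 q^{-1}) = \frac{\zeta_F(2)^{1/2}}{\zeta_F(1)}\, c\, L(1,\omega_3^2)^{-1}$, and solving $W_{\pi_3}\left(\begin{smallmatrix} 1 & 0 \\ 0 & 1\end{smallmatrix}\right) = 1$ gives $c = \zeta_F(1)\zeta_F(2)^{-1/2} L(1,\omega_3^2)$, which is \eqref{varphi3newformunram}. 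There is no genuine obstacle here: this is the standard unramified (spherical Whittaker) computation and could alternatively be quoted from a standard reference after matching the normalisation of the Whittaker functional in \eqref{Whittaker}; the only points requiring care are the elementary Iwasawa decomposition of $w\left(\begin{smallmatrix} 1 & x \\ 0 & 1\end{smallmatrix}\right)$ and the bookkeeping of the factors $\zeta_F(1)$ and $\zeta_F(2)^{1/2}$.
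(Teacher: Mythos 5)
The paper gives no proof here: the lemma is stated bare, with only the remark afterward that the normalisation forces $W_{\pi_3}\left(\begin{smallmatrix} 1 & 0 \\ 0 & 1 \end{smallmatrix}\right) = 1$; unlike the ramified cases \eqref{varphi1newform} and \eqref{varphi3newform}, not even a citation to Schmidt is given. Your computation supplies exactly the omitted verification, and it is correct: right $K$-invariance plus the transformation law fixes $\varphi_{\pi_3}$ up to a scalar, and evaluating the Jacquet integral \eqref{Whittaker} at the identity via the Iwasawa decomposition of $\left(\begin{smallmatrix} 0 & -1 \\ 1 & x\end{smallmatrix}\right)$ and the Gauss-sum evaluation \eqref{psizeta} pins the scalar down to $\zeta_F(1) L(1,\omega_3^2)/\zeta_F(2)^{1/2}$.

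One small inaccuracy worth flagging: you attribute the legitimacy of interchanging sum and integral to the bound $|\omega_3(\varpi)| < q^{1/2}$. That inequality makes the geometric series for $L(1,\omega_3^2)$ converge, but it does \emph{not} give absolute convergence of $\int_F \varphi_{\pi_3}(w\,n(x))\,\psi(x)\,dx$ in general (the integrand over $\varpi^{-j}\OO_F^{\times}$ has size $\asymp |\omega_3(\varpi)|^{2j}$, which need not decay). The correct justification is the one already implicit in your use of \eqref{psizeta}: the oscillation of $\psi$ forces $\int_{\varpi^m\OO_F^{\times}}\psi(x)\,dx = 0$ for all $m \le -2$, so the tail of the Jacquet integral vanishes identically (as a conditionally convergent / Abel-regularised integral), truncating it to a finite sum. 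This is the standard interpretation of the Jacquet integral for non-tempered unramified data and is what makes the formal computation rigorous.
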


Again, the normalisation is such that $W_{\pi_3}\left(\begin{smallmatrix} 1 & 0 \\ 0 & 1 \end{smallmatrix}\right) = 1$.

\begin{lemma}[{\cite[\S 2.4]{Sch02}}]
\label{Whittakerunram10varpi1lemma}
We have that
\[W_{\pi_3}\begin{pmatrix} a & 0 \\ 0 & 1 \end{pmatrix} = \begin{dcases*}
\sum_{m = 0}^{v(a)} \omega_3(\varpi)^{m} \omega_3^{-1}(\varpi)^{v(a) - m} |a|^{1/2} & if $0 < |a| \leq 1$,	\\
0 & if $|a| \geq q$.
\end{dcases*}\]
\end{lemma}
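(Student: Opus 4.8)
The plan is to compute the Jacquet integral \eqref{Whittaker} directly, in the same spirit as the proof of Lemma~\ref{Whittaker10varpi1lemma}, the key simplification being that here $\varphi_{\pi_3}$ is the \emph{spherical} newform (so $c(\pi_3) = 0$) and hence right $K$-invariant; consequently only the ``$a/d$-part'' of an Iwasawa decomposition is needed, not the full one. For $g = \left(\begin{smallmatrix} a & 0 \\ 0 & 1 \end{smallmatrix}\right)$ one has
\[w \begin{pmatrix} 1 & x \\ 0 & 1 \end{pmatrix} \begin{pmatrix} a & 0 \\ 0 & 1 \end{pmatrix} = \begin{pmatrix} 0 & -1 \\ a & x \end{pmatrix},\]
and an elementary computation gives
\[\begin{pmatrix} 0 & -1 \\ a & x \end{pmatrix} = \begin{dcases*}
\begin{pmatrix} 1 & 0 \\ 0 & a \end{pmatrix} \begin{pmatrix} 0 & -1 \\ 1 & x/a \end{pmatrix} & if $|x| \leq |a|$,	\\
\begin{pmatrix} a/x & -1 \\ 0 & x \end{pmatrix} \begin{pmatrix} 1 & 0 \\ a/x & 1 \end{pmatrix} & if $|x| > |a|$,
\end{dcases*}\]
with the rightmost factor lying in $K$ in each case.

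Substituting \eqref{varphi3newformunram} and using that the scalar $\zeta_F(2)^{1/2}/\zeta_F(1)$ cancels the normalising factor in \eqref{varphi3newformunram} up to $L(1,\omega_3^2)$, one obtains
\[W_{\pi_3}\begin{pmatrix} a & 0 \\ 0 & 1 \end{pmatrix} = L(1,\omega_3^2) \left(\omega_3^{-1}(a) |a|^{-1/2} \int\limits_{|x| \leq |a|} \psi(x) \, dx + \omega_3(a) |a|^{1/2} \int\limits_{|x| > |a|} \omega_3^{-2}(x) |x|^{-1} \psi(x) \, dx\right).\]
The first integral equals $|a|$ when $v(a) \geq 0$ and vanishes otherwise, by \eqref{psizeta}. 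Writing $\omega_3^{-2}(x) = \omega_3(\varpi)^{-2v(x)}$ and $|x|^{-1} = q^{v(x)}$, the second integral is the sum over $m < v(a)$ of the shell integrals $\int_{\varpi^m \OO_F^{\times}} \psi(x)\,dx$ weighted by $\omega_3(\varpi)^{-2m}$; by \eqref{psizeta} only the term $m = -1$ (contributing $-\omega_3(\varpi)^2/q$) and the terms $0 \leq m \leq v(a) - 1$ survive, so it is a finite geometric series when $v(a) \geq 0$ and is identically zero when $v(a) < 0$. In particular, both integrals vanish when $|a| \geq q$, which settles that case immediately.

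It then remains to assemble the pieces for $0 < |a| \leq 1$: using $L(1,\omega_3^2) = (1 - \omega_3(\varpi)^2 q^{-1})^{-1}$ and $\zeta_F(1)^{-1} = 1 - q^{-1}$, summing the geometric series and simplifying produces exactly $|a|^{1/2} \sum_{m=0}^{v(a)} \omega_3(\varpi)^m \omega_3^{-1}(\varpi)^{v(a) - m}$, as claimed. (This is of course the unramified case of the Casselman--Shalika/Shintani formula, so one could alternatively just quote that reference, but the direct verification is short.) There is no genuine obstacle here; the only point requiring a word of care is that the integral over $|x| > |a|$ converges and that $L(1,\omega_3^2)$ is finite, which is exactly what the temperedness hypothesis $q^{-1/2} < |\omega_3(\varpi)| < q^{1/2}$ guarantees.
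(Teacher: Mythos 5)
Your computation is correct. The paper gives no proof of this lemma at all --- it simply cites \cite[\S 2.4]{Sch02}, as this is the standard unramified Casselman--Shalika/Shintani formula --- so your direct verification via the Jacquet integral, in the same style as the paper's proof of \hyperref[Whittaker10varpi1lemma]{Lemma \ref*{Whittaker10varpi1lemma}}, is a perfectly good substitute. The two Iwasawa decompositions are right, the prefactors do cancel down to $L(1,\omega_3^2)$, and the shell-by-shell evaluation via \eqref{psizeta} assembles correctly: the $m = -1$ term $-\omega_3(\varpi)^2/q$ together with the factor $L(1,\omega_3^2) = (1 - \omega_3(\varpi)^2 q^{-1})^{-1}$ and the geometric series over $0 \leq m \leq v(a) - 1$ collapses to $\sum_{m = 0}^{v(a)} \omega_3(\varpi)^{2m - v(a)}$, and both integrals vanish for $v(a) \leq -1$. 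Two small imprecisions in your closing remark: the integral over $|x| > |a|$ is \emph{not} absolutely convergent throughout the unitarisability range (absolute convergence of the shell sum as $m \to -\infty$ would require $|\omega_3(\varpi)| < 1$); it converges only conditionally, because the shell integrals $\int_{\varpi^m \OO_F^{\times}} \psi(x) \, dx$ vanish for $m \leq -2$, which makes the sum finite --- this is the standard interpretation of the Jacquet integral and does not affect your conclusion. Also, $q^{-1/2} < |\omega_3(\varpi)| < q^{1/2}$ is the unitarisability condition rather than temperedness, and all it is needed for here is that $\omega_3(\varpi)^2 \neq q$, so that $L(1,\omega_3^2)$ is finite.
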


\begin{lemma}
\label{Whittakershiftunramlemma}
We have that
\[\pi_3\begin{pmatrix} \varpi^{-1} & 0 \\ 0 & 1 \end{pmatrix} \cdot W_{\pi_3}\left(\begin{pmatrix} a & 0 \\ 0 & 1 \end{pmatrix} \begin{pmatrix} 1 & 0 \\ \varpi & 1 \end{pmatrix}\right) = W_{\pi_3}\begin{pmatrix} \varpi^{-1} a & 0 \\ 0 & 1 \end{pmatrix}.\]
\end{lemma}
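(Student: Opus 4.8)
The plan is to reduce the identity to the right $K$-invariance of the spherical Whittaker function, exploiting that $\pi_3 = \omega_3 \boxplus \omega_3^{-1}$ is an \emph{unramified} principal series representation, so that $c(\pi_3) = 0$.

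First I would unwind the notation. The representation $\pi_3$ acts on its Whittaker model by right translation (as in \hyperref[MVlemma]{Lemma \ref*{MVlemma}}), so with $h = \left(\begin{smallmatrix} \varpi^{-1} & 0 \\ 0 & 1 \end{smallmatrix}\right)$ the left-hand side is $W_{\pi_3}\!\left(\left(\begin{smallmatrix} a & 0 \\ 0 & 1 \end{smallmatrix}\right) \left(\begin{smallmatrix} 1 & 0 \\ \varpi & 1 \end{smallmatrix}\right) h\right)$. Carrying out the $2 \times 2$ matrix multiplications collapses the argument to $\left(\begin{smallmatrix} \varpi^{-1} a & 0 \\ 1 & 1 \end{smallmatrix}\right)$, so the left-hand side equals $W_{\pi_3}\!\left(\left(\begin{smallmatrix} \varpi^{-1} a & 0 \\ 1 & 1 \end{smallmatrix}\right)\right)$.

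Next I would factor $\left(\begin{smallmatrix} \varpi^{-1} a & 0 \\ 1 & 1 \end{smallmatrix}\right) = \left(\begin{smallmatrix} \varpi^{-1} a & 0 \\ 0 & 1 \end{smallmatrix}\right)\left(\begin{smallmatrix} 1 & 0 \\ 1 & 1 \end{smallmatrix}\right)$ and observe that the right-hand factor lies in $K = \GL_2(\OO_F)$, having integral entries and unit determinant. Since $c(\pi_3) = 0$, the Whittaker newform $W_{\pi_3}$ is right $K_1(\pp^{c(\pi_3)}) = K$-invariant, so the extra factor $\left(\begin{smallmatrix} 1 & 0 \\ 1 & 1 \end{smallmatrix}\right)$ may be discarded, leaving $W_{\pi_3}\!\left(\left(\begin{smallmatrix} \varpi^{-1} a & 0 \\ 0 & 1 \end{smallmatrix}\right)\right)$, which is exactly the right-hand side.

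There is no genuine obstacle here; the only points requiring care are the bookkeeping of the right regular action and the order of the matrix products, together with the observation that it is precisely the unramifiedness of $\pi_3$ (equivalently, $c(\pi_3) = 0$) that upgrades the mere $K_1(\pp)$-invariance used in the special-representation case to full $K$-invariance, which is what makes the translate by $\left(\begin{smallmatrix} \varpi^{-1} & 0 \\ 0 & 1 \end{smallmatrix}\right)$ simplify cleanly.
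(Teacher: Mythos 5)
Your proof is correct and takes essentially the same approach as the paper: both reduce to the right $K$-invariance of $W_{\pi_3}$, which holds because $c(\pi_3) = 0$. The paper phrases this slightly more abstractly, observing that $\pi_3\left(\begin{smallmatrix} \varpi^{-1} & 0 \\ 0 & 1 \end{smallmatrix}\right) \cdot W_{\pi_3}$ is right $K_1(\pp)$-invariant and that $\left(\begin{smallmatrix} 1 & 0 \\ \varpi & 1 \end{smallmatrix}\right) \in K_1(\pp)$, whereas you carry out the matrix product and factor off an element of $K$ directly, but the content is the same.
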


\begin{proof}
This follows from the fact that $\pi_3\left(\begin{smallmatrix} \varpi^{-1} & 0 \\ 0 & 1 \end{smallmatrix}\right) \cdot W_{\pi_3}$ is right $K_1(\pp)$-invariant.
\end{proof}

\begin{lemma}
\label{Whittakerunram1011lemma}
We have that
\begin{multline*}
\pi_3\begin{pmatrix} \varpi^{-1} & 0 \\ 0 & 1 \end{pmatrix} \cdot W_{\pi_3}\left(\begin{pmatrix} a & 0 \\ 0 & 1 \end{pmatrix} \begin{pmatrix} 1 & 0 \\ 1 & 1 \end{pmatrix}\right)	\\
= \begin{dcases*}
\sum_{m = 0}^{v(a) + 1} \omega_3(\varpi)^{m} \omega_3^{-1}(\varpi)^{v(a) + 1 - m} \psi(-a) \left(\frac{|a|}{q}\right)^{1/2} & if $0 < |a| \leq q$,	\\
0 & if $|a| \geq q^2$.
\end{dcases*}
\end{multline*}
\end{lemma}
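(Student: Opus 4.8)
The plan is to imitate the proofs of Lemmata \ref{Whittaker1011lemma} and \ref{Whittakershiftunramlemma}: translate the Whittaker function, decompose the resulting group element by the Iwasawa (Bruhat) decomposition, and then read off the value from Lemma \ref{Whittakerunram10varpi1lemma}. Since $\pi_3\left(\begin{smallmatrix} \varpi^{-1} & 0 \\ 0 & 1 \end{smallmatrix}\right) \cdot W_{\pi_3}$ is the function $g \mapsto W_{\pi_3}\left(g \left(\begin{smallmatrix} \varpi^{-1} & 0 \\ 0 & 1 \end{smallmatrix}\right)\right)$, the left-hand side of the claimed identity equals
\[W_{\pi_3}\left(\begin{pmatrix} a & 0 \\ 0 & 1 \end{pmatrix} \begin{pmatrix} 1 & 0 \\ 1 & 1 \end{pmatrix} \begin{pmatrix} \varpi^{-1} & 0 \\ 0 & 1 \end{pmatrix}\right) = W_{\pi_3}\begin{pmatrix} a\varpi^{-1} & 0 \\ \varpi^{-1} & 1 \end{pmatrix}.\]
Unlike in Lemma \ref{Whittakershiftunramlemma}, where $\left(\begin{smallmatrix} 1 & 0 \\ \varpi & 1 \end{smallmatrix}\right) \in K_1(\pp)$ and right $K_1(\pp)$-invariance finishes the job at once, here the matrix $\left(\begin{smallmatrix} 1 & 0 \\ 1 & 1 \end{smallmatrix}\right)$ does not lie in $K_1(\pp)$, so a genuine decomposition is required.

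First I would record the matrix identity
\[\begin{pmatrix} a\varpi^{-1} & 0 \\ \varpi^{-1} & 1 \end{pmatrix} = \begin{pmatrix} 1 & a \\ 0 & 1 \end{pmatrix} \begin{pmatrix} a & 0 \\ 0 & \varpi^{-1} \end{pmatrix} \begin{pmatrix} 0 & -1 \\ 1 & \varpi \end{pmatrix},\]
verified by direct multiplication; the rightmost factor lies in $K$ because its entries lie in $\OO_F$ and its determinant is a unit. As in the matrix computations in the proofs of Lemmata \ref{Whittaker10varpi1lemma} and \ref{Whittaker1011lemma}, one is forced to route through the Weyl element, since $\varpi^{-1}$ is not integral and so the bottom-left entry cannot be cleared against the bottom-right.

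Next I would invoke the structural properties of $W_{\pi_3}$. As the newform $\varphi_{\pi_3}$ of \eqref{varphi3newformunram} is right $K$-invariant (equivalently $c(\pi_3) = 0$), so is $W_{\pi_3}$, which removes the factor in $K$; the normalisation \eqref{Whittaker} gives $W_{\pi_3}\left(\left(\begin{smallmatrix} 1 & u \\ 0 & 1 \end{smallmatrix}\right) g\right) = \psi(-u) W_{\pi_3}(g)$; and the central character of $\pi_3 = \omega_3 \boxplus \omega_3^{-1}$ is trivial, so $W_{\pi_3}\left(\begin{smallmatrix} a & 0 \\ 0 & \varpi^{-1} \end{smallmatrix}\right) = W_{\pi_3}\left(\begin{smallmatrix} a\varpi & 0 \\ 0 & 1 \end{smallmatrix}\right)$. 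Combining these collapses the left-hand side to $\psi(-a)\, W_{\pi_3}\left(\begin{smallmatrix} a\varpi & 0 \\ 0 & 1 \end{smallmatrix}\right)$, whereupon Lemma \ref{Whittakerunram10varpi1lemma} applied with $a$ replaced by $a\varpi$ — using $v(a\varpi) = v(a) + 1$, $|a\varpi| = |a|/q$, and the translation of the conditions $0 < |a\varpi| \leq 1$ and $|a\varpi| \geq q$ into $0 < |a| \leq q$ and $|a| \geq q^2$ — yields exactly the asserted formula.

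The argument is pure bookkeeping; the only points demanding care are getting the Iwasawa decomposition right (in particular the appearance of the Weyl element) and faithfully tracking both the additive character $\psi(-a)$ imposed by the normalisation \eqref{Whittaker} and the vanishing of the central character of $\pi_3$, so that the local constant computed in Proposition \ref{Collinsprop} comes out as stated.
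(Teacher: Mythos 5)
Your argument is correct and takes a genuinely different route from the paper's. The paper proves this the same way it proves Lemmata \ref{Whittaker10varpi1lemma} and \ref{Whittaker1011lemma}: it inserts $w\left(\begin{smallmatrix} 1 & x \\ 0 & 1 \end{smallmatrix}\right)\left(\begin{smallmatrix} a & 0 \\ 0 & 1 \end{smallmatrix}\right)\left(\begin{smallmatrix} 1 & 0 \\ 1 & 1 \end{smallmatrix}\right)\left(\begin{smallmatrix} \varpi^{-1} & 0 \\ 0 & 1 \end{smallmatrix}\right)$ into the integral \eqref{Whittaker}, splits the $x$-integral into the two Bruhat regions $|x + a| \leq |a|/q$ and $|x + a| \geq |a|$, applies the induced-model formula \eqref{varphi3newformunram} on each piece, and evaluates via \eqref{psizeta} after the substitution $x \mapsto x - a$. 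You instead perform a single Iwasawa decomposition of the group element at which $W_{\pi_3}$ is evaluated, and reduce to the already-established \hyperref[Whittakerunram10varpi1lemma]{Lemma \ref*{Whittakerunram10varpi1lemma}} via three structural facts: right $K$-invariance of $W_{\pi_3}$ (available precisely because $c(\pi_3) = 0$), the unipotent transformation law $W_{\pi_3}\left(\left(\begin{smallmatrix} 1 & u \\ 0 & 1 \end{smallmatrix}\right)g\right) = \psi(-u)W_{\pi_3}(g)$ (which indeed follows from \eqref{Whittaker} by the substitution $x \mapsto x - u$; your $\psi(-u)$ is the correct sign, matching the paper's own change-of-variables step, even though \hyperref[matrixcoeffremark]{Remark \ref*{matrixcoeffremark}} informally writes $\psi(x)$), and triviality of the central character $\omega_3 \cdot \omega_3^{-1} = 1$. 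Your matrix identity checks out by direct multiplication, and the translation of the conditions under $a \mapsto a\varpi$ (using $v(a\varpi) = v(a)+1$ and $|a\varpi| = |a|/q$) is exactly right. The gain of your route is that it bypasses the integral and the case split entirely; the cost is that it is special to the unramified situation, where right $K$-invariance is available, whereas the paper's integral method is the uniform one that also handles the ramified components $\pi_1, \pi_2$ in Lemmata \ref{Whittaker10varpi1lemma} and \ref{Whittaker1011lemma} — presumably why the authors repeated it here.
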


\begin{proof}
For
\[g = w \begin{pmatrix} 1 & x \\ 0 & 1 \end{pmatrix} \begin{pmatrix} a & 0 \\ 0 & 1 \end{pmatrix} \begin{pmatrix} 1 & 0 \\ 1 & 1 \end{pmatrix} = \begin{pmatrix} -1 & -1 \\ a + x & x \end{pmatrix},\]
we have that
\[g \begin{pmatrix} \varpi^{-1} & 0 \\ 0 & 1 \end{pmatrix} = \begin{dcases*}
\begin{pmatrix} \frac{a}{\varpi x} & -1 \\ 0 & x \end{pmatrix} \begin{pmatrix} 1 & 0 \\ \varpi^{-1}\left(\frac{a}{x} + 1\right) & 1 \end{pmatrix} & if $|x + a| \leq \dfrac{|a|}{q}$,	\\
\begin{pmatrix} \frac{a}{a + x} & -\varpi^{-1} \\ 0 & \varpi^{-1}(a + x) \end{pmatrix} \begin{pmatrix} 0 & -1 \\ 1 & \frac{\varpi x}{a + x} \end{pmatrix} & if $|x + a| \geq |a|$.
\end{dcases*}\]
From this and \eqref{varphi3newformunram}, $\varphi_{\pi_3}\left(w \left(\begin{smallmatrix} 1 & x \\ 0 & 1 \end{smallmatrix}\right) g \left(\begin{smallmatrix} \varpi^{-1} & 0 \\ 0 & 1 \end{smallmatrix}\right)\right)$ is equal to
\[\begin{dcases*}
\frac{\zeta_F(1) L(1,\omega_3^2)}{\zeta_F(2)^{1/2}} \omega_3\left(\frac{a}{\varpi}\right) (q|a|)^{1/2} \omega_3(x)^{-2} |x|^{-1} & if $|x + a| \leq \dfrac{|a|}{q}$,	\\
\frac{\zeta_F(1) L(1,\omega_3^2)}{\zeta_F(2)^{1/2}} \omega_3(\varpi a) \left(\frac{|a|}{q}\right)^{1/2} \omega_3(x + a)^{-2} |x + a|^{-1} & if $|x + a| \geq |a|$.
\end{dcases*}\]
Coupled with \eqref{Whittaker}, $\pi_3\left(\begin{smallmatrix} \varpi^{-1} & 0 \\ 0 & 1 \end{smallmatrix}\right) \cdot W_{\pi_3}\left(\left(\begin{smallmatrix} a & 0 \\ 0 & 1 \end{smallmatrix}\right) \left(\begin{smallmatrix} 1 & 0 \\ 1 & 1 \end{smallmatrix}\right)\right)$ is thereby equal to
\begin{multline*}
L(1,\omega_3^2) \omega_3^{-1}(\varpi a) \psi(-a) \left(\frac{|a|}{q}\right)^{-1/2} \int\limits_{|x| \leq \frac{|a|}{q}} \psi(x) \, dx	\\
+ L(1,\omega_3^2) \omega_3(\varpi a) \psi(-a) \left(\frac{|a|}{q}\right)^{1/2} \int\limits_{|x| \geq |a|} \omega_3(x)^{-2} \psi(x) |x|^{-1} \, dx
\end{multline*}
after making the change of variables $x \mapsto x - a$, which gives the result via \eqref{psizeta}.
\end{proof}

\subsection{Proofs of \texorpdfstring{\hyperref[dihedral2Stprop]{Propositions \ref*{dihedral2Stprop}}}{Propositions \ref{dihedral2Stprop}} and \ref{Collinsprop}}

To prove \hyperref[dihedral2Stprop]{Propositions \ref*{dihedral2Stprop}} and \ref{Collinsprop}, we use \hyperref[MVlemma]{Lemma \ref*{MVlemma}} to reduce the problem to evaluating local Rankin--Selberg integrals. We then use the identities in \hyperref[Whittakerformulaesect]{Section \ref*{Whittakerformulaesect}} for values of $\varphi_{\pi}$ and $W_{\pi}$ together with the following lemma.

\begin{lemma}[{\cite[Lemma 2.2]{Hu16}}]
\label{Hulemma}
Suppose that $f : \GL_2(F) \to \C$ is right $K$-integrable and right $K_1(\pp^m)$-invariant for some $m \in \N$. Then
\[\int_K f(gk) \, dk = \sum_{j = 0}^{m} A_j f\left(g \begin{pmatrix} 1 & 0 \\ \varpi^j & 1 \end{pmatrix}\right), \qquad A_j \defeq \begin{dcases*}
\frac{\zeta_F(2)}{\zeta_F(1)} & if $j = 0$,	\\
\frac{1}{q^j} \frac{\zeta_F(2)}{\zeta_F(1)^2} & if $1 \leq j \leq m - 1$,	\\
\frac{1}{q^m} \frac{\zeta_F(2)}{\zeta_F(1)} & if $j = m$.
\end{dcases*}\]
\end{lemma}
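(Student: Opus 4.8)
The plan is to split $\GL_2(\OO_F)$ into finitely many cells indexed by the $\pp$-adic size of the lower-left entry, to reduce the integrand to a single value on each cell, and then to read off the coefficients $A_j$ as the volumes of these cells. So the work divides into an elementary structural part (the cell decomposition) and a bookkeeping part (the volumes).

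First I would record the decomposition $K = \bigsqcup_{j=0}^{m} \Omega_j$, where $\Omega_j := \{k \in K : v(c_k) = j\}$ for $0 \le j \le m-1$ (writing $k = \begin{pmatrix} a_k & b_k \\ c_k & d_k \end{pmatrix}$ and $v$ for the normalised valuation) and $\Omega_m := K_0(\pp^m) = \{k \in K : v(c_k) \ge m\}$; these are disjoint and cover $K$. The crucial elementary point is that each cell is a single double coset $\Omega_j = B(\OO_F)\, n_j\, K_1(\pp^m)$, where $n_j = \begin{pmatrix} 1 & 0 \\ \varpi^j & 1 \end{pmatrix}$ and $B(\OO_F) := B(F) \cap K$ denotes the upper-triangular matrices in $K$. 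Indeed, for $k \in \Omega_j$ both $a_k$ and $\det k$ are units, so one may factor $k = b\,\begin{pmatrix} 1 & 0 \\ c_k/a_k & 1 \end{pmatrix}$ with $b \in B(\OO_F)$ and $v(c_k/a_k) = j$; a short computation then solves a $2 \times 2$ linear system modulo $\pp^m$ whose coefficient matrix has determinant the unit $\det k$, producing $b' \in B(\OO_F)$ and $\kappa \in K_1(\pp^m)$ with $\begin{pmatrix} 1 & 0 \\ c_k/a_k & 1 \end{pmatrix} = b'\, n_j\, \kappa$. Note that $n_j \in K_1(\pp^m)$ once $j \ge m$, which is why only the indices $j = 0, \dots, m$ survive.

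Next I would reduce the integrand on each cell: for $k = b\, n_j\, \kappa \in \Omega_j$ the right $K_1(\pp^m)$-invariance gives $f(gk) = f(g\, b\, n_j)$, and the left transformation behaviour of $f$ under $B(\OO_F)$ then collapses this to $f(g\, n_j)$, so that
\[\int_K f(gk)\, dk = \sum_{j=0}^{m} \vol(\Omega_j)\, f(g\, n_j).\]
It then remains to compute $A_j = \vol(\Omega_j)$ with respect to the Haar probability measure. Using $[\,K : K_0(\pp^{\ell})\,] = \#\mathbb{P}^1(\OO_F/\pp^{\ell}) = q^{\ell-1}(q+1)$ for $\ell \ge 1$, one gets $\vol(K_0(\pp^{\ell})) = q^{-\ell}\cdot\frac{q}{q+1} = q^{-\ell}\frac{\zeta_F(2)}{\zeta_F(1)}$, whence $A_m = \vol(K_0(\pp^m)) = \frac{1}{q^m}\frac{\zeta_F(2)}{\zeta_F(1)}$, $A_j = \vol(K_0(\pp^j)) - \vol(K_0(\pp^{j+1})) = \frac{q-1}{q^j(q+1)} = \frac{1}{q^j}\frac{\zeta_F(2)}{\zeta_F(1)^2}$ for $1 \le j \le m-1$, and $A_0 = 1 - \vol(K_0(\pp)) = \frac{q}{q+1} = \frac{\zeta_F(2)}{\zeta_F(1)}$. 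These are precisely the stated coefficients, and the case $m = 0$ is trivial.

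The step I expect to be the main obstacle is the collapse $f(g\, b\, n_j) \mapsto f(g\, n_j)$ in the third paragraph: one must verify that conjugating the left factor $b \in B(\OO_F)$ through $g$ produces an element under which $f$ is left-invariant, so that $f(g\,b\,n_j) = f\big((gbg^{-1})\,g\,n_j\big) = f(g\, n_j)$. This is exactly the point where both the right $K_1(\pp^m)$-invariance and the left transformation law of $f$ enter, and it is what forces the naive sum of $[\,K : K_1(\pp^m)\,]$ coset contributions to collapse down to the $m+1$ cells $\Omega_0, \dots, \Omega_m$; everything else is the routine volume bookkeeping for congruence subgroups above.
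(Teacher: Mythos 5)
Your skeleton --- partitioning $K$ into the cells $\Omega_j$, identifying each cell with the double coset $(\Bgp(F)\cap K)\,n_j\,K_1(\pp^m)$, and computing the volumes --- is the right one, and the volume bookkeeping is correct. But the step you yourself flag as the main obstacle, the collapse $f(g\,b\,n_j)=f(g\,n_j)$ for $b\in \Bgp(F)\cap K$, is a genuine gap, not a delicate point to be checked: it does not follow from the stated hypotheses, and in fact the displayed identity is \emph{false} for a general right $K_1(\pp^m)$-invariant $f$. Take $m=1$, $g=e$, and $f=1_{K_1(\pp)}$: the left-hand side is $\vol(K_1(\pp))=(q^2-1)^{-1}$, while the right-hand side is $A_1=(q+1)^{-1}$, since $n_1\in K_1(\pp)$ but $n_0\notin K_1(\pp)$. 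Your proposed repair $f(gbn_j)=f\bigl((gbg^{-1})\,gn_j\bigr)$ cannot close the gap either, because $gbg^{-1}$ is an arbitrary element of $\GL_2(F)$ and the lemma assumes no left-invariance of $f$ whatsoever. What actually makes the lemma usable is that every function to which it is applied has the extra property that $k\mapsto f(gk)$ is constant on each double coset: in the local Rankin--Selberg integral one applies it at $g=e$ to $f(h)=\int_{F^\times}\varphi_1(a(y)h)W_2(a(y)h)W_3(a(y)h)\,d^\times y/|y|$, which is genuinely left $(\Bgp(F)\cap K)$-invariant because the $\psi$- and $\psi^{-1}$-factors of $W_2$ and $W_3$ cancel, the product of central characters is trivial, and the torus part is absorbed by the $y$-integral; in the Iwasawa-decomposition application one additionally integrates $g$ over $\Zgp(F)\backslash\Bgp(F)$ and absorbs $b$ using the right-invariance of $db$ under $\Bgp(F)\cap K$. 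A complete write-up must either add this constancy as a hypothesis or verify it for the functions at hand; as stated, the result is really a statement about the decomposition of $dk$ along $(\Bgp(F)\cap K)\backslash K/K_1(\pp^m)$.

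There is also a small slip in the cell decomposition itself: for $j=0$ the entry $a_k$ need not be a unit (e.g.\ $k=\left(\begin{smallmatrix}0&-1\\1&0\end{smallmatrix}\right)$), so the factorisation $k=b\left(\begin{smallmatrix}1&0\\c_k/a_k&1\end{smallmatrix}\right)$ breaks down there. The identity $\Omega_0=(\Bgp(F)\cap K)\,n_0\,K_1(\pp^m)$ is still true, but the case $j=0$ with $a_k$ non-unit needs a separate (easy) argument using that $c_k$ is then a unit. The rest is fine: the reduction from $K_0(\pp^m)$- to $K_1(\pp^m)$-cosets via $\left(\begin{smallmatrix}1&0\\u\varpi^j&1\end{smallmatrix}\right)=\left(\begin{smallmatrix}u^{-1}&0\\0&1\end{smallmatrix}\right)n_j\left(\begin{smallmatrix}u&0\\0&1\end{smallmatrix}\right)$ with $\left(\begin{smallmatrix}u&0\\0&1\end{smallmatrix}\right)\in K_1(\pp^m)$, and the index computation $[K:K_0(\pp^{\ell})]=q^{\ell-1}(q+1)$ yielding the stated $A_j$, are both correct.
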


\begin{proof}[Proof of {\hyperref[dihedral2Stprop]{Proposition \ref*{dihedral2Stprop}}}]
\hyperref[Whittaker10varpi1lemma]{Lemmata \ref*{Whittaker10varpi1lemma}}, \ref{Whittaker1011lemma}, and \ref{Hulemma} imply that
\[\ell_{\RS}(\varphi_{\pi_1},W_{\pi_2},W_{\pi_3}) = - \frac{1}{q} \left(\zeta_F(1) \zeta_F(2)\right)^{1/2} \epsilon(1, \omega_1^{-1} \omega_1', \psi^{-1}) \int\limits_{0 < |a| \leq q} \omega_3(a) |a| \, d^{\times} a.\]
The integral is readily seen to be equal to $q \omega_3^{-1}(\varpi) L(1,\omega_3)$ via the change of variables $a \mapsto \varpi^{-1} a$; \hyperref[MVlemma]{Lemma \ref*{MVlemma}} then gives the identity
\[I(\varphi \otimes \widetilde{\varphi}) = \frac{1}{q} \zeta_F(1) \zeta_F(2) L(1, \omega_3)^2.\]
Now
\[\langle \varphi, \widetilde{\varphi} \rangle = \langle W_{\pi_1}, \widetilde{W}_{\pi_1} \rangle \langle W_{\pi_2}, \widetilde{W}_{\pi_2} \rangle \langle W_{\pi_3}, \widetilde{W}_{\pi_3} \rangle,\]
where
\[\langle W_{\pi},\widetilde{W}_{\pi} \rangle \defeq \int_{F^{\times}} \left| W_{\pi} \begin{pmatrix} a & 0 \\ 0 & 1 \end{pmatrix}\right|^2 \, d^{\times} a,\]
and \hyperref[Whittaker10varpi1lemma]{Lemma \ref*{Whittaker10varpi1lemma}} implies that
\[\langle W_{\pi_1}, \widetilde{W}_{\pi_1} \rangle = \zeta_F(1), \qquad \langle W_{\pi_2}, \widetilde{W}_{\pi_2} \rangle = \zeta_F(1), \qquad \langle W_{\pi_3}, \widetilde{W}_{\pi_3} \rangle = \zeta_F(2).\]
We conclude that
\[\frac{I(\varphi \otimes \widetilde{\varphi})}{\langle \varphi, \widetilde{\varphi} \rangle} = \frac{1}{q} \frac{L(1,\omega_3)^2}{\zeta_F(1)}.\]
On the other hand, we have the isobaric decomposition
\[\pi_1 \otimes \pi_2 \otimes \pi_3 = \omega_1 \omega_1'^{-1} \omega_3 \St \boxplus \omega_1^{-1} \omega_1' \omega_3 \St \boxplus \omega_3 \St \boxplus \omega_3 \St,\]
so that
\[L(s,\pi_1 \otimes \pi_2 \otimes \pi_3) = L\left(s + \frac{1}{2}, \omega_3\right)^2.\]
Moreover,
\[\ad \pi_1 = \ad \pi_2 = \omega_1 \omega_1'^{-1} \boxplus \omega_1^{-1} \omega_1' \boxplus 1,\]
so that
\[L(s, \ad \pi_1) = L(s, \ad \pi_2) = \zeta_F(s),\]
while $\ad \pi_3$ is the special representation of $\GL_3(F)$ associated to the trivial character, so that
\[L(s, \ad \pi_3) = \zeta_F(s + 1).\]
So
\[\frac{\zeta_F(2)^2 L\left(\frac{1}{2}, \pi_1 \otimes \pi_2 \otimes \pi_3\right)}{L(1, \ad \pi_1) L(1, \ad \pi_2) L(1, \ad \pi_3)} = \frac{\zeta_F(2) L(1, \omega_3)^2}{\zeta_F(1)^2},\]
and consequently, upon recalling \eqref{I_v'},
\[I'(\varphi \otimes \widetilde{\varphi}) = \frac{1}{q} \frac{\zeta_F(1)}{\zeta_F(2)} = \frac{1}{q} \left(1 + \frac{1}{q}\right).\qedhere\]
\end{proof}

\begin{remark}
\label{localconstrem}
A similar calculation shows that $I'(\varphi \otimes \widetilde{\varphi})$ is again equal to $q^{-1} (1 + q^{-1})$ when $\pi_1$, $\pi_2$, $\pi_3$ are all irreducible unitarisable principal series representations of conductor exponent one for which $\omega_{\pi_1} \omega_{\pi_2} \omega_{\pi_3} = 1$.
\end{remark}

\begin{proof}[Proof of {\hyperref[Collinsprop]{Proposition \ref*{Collinsprop}}}]
For $\pi_3 = \omega_3 \boxplus \omega_3^{-1}$ with $c(\omega_3) = c(\omega_3^{-1}) = 0$, the right $K$-invariance of $W_{\pi_3}$ allow us to see that $\ell_{\RS}(\varphi_{\pi_1}, W_{\pi_2}, W_{\pi_3})$ is equal to
\[\left(\zeta_F(1) \zeta_F(2)\right)^{1/2} \epsilon(1, \omega_1^{-1} \omega_1', \psi^{-1}) \int\limits_{0 < |a| \leq 1} \sum_{m = 0}^{v(a)} \omega_3(\varpi)^m \omega_3^{-1}(\varpi)^{v(a) - m} \psi(a) |a|^{1/2} \, d^{\times} a\]
via \hyperref[Whittaker10varpi1lemma]{Lemmata \ref*{Whittaker10varpi1lemma}}, \ref{Whittaker1011lemma}, \ref{Whittakerunram10varpi1lemma}, and \ref{Hulemma}. The integral simplifies to $L(1/2, \omega_3) L(1/2, \omega_3^{-1})$. Similarly, the Rankin--Selberg integral $\ell_{\RS}\left(\varphi_{\pi_1}, W_{\pi_2}, \pi_3\left(\begin{smallmatrix} \varpi^{-1} & 0 \\ 0 & 1 \end{smallmatrix}\right) \cdot W_{\pi_3}\right)$ is equal to
\[\left(\frac{\zeta_F(1) \zeta_F(2)}{q}\right)^{1/2} \epsilon(1, \omega_1^{-1} \omega_1', \psi^{-1}) \int\limits_{0 < |a| \leq q} \sum_{m = 0}^{v(a) + 1} \omega_3(\varpi)^{m} \omega_3^{-1}(\varpi)^{v(a) + 1 - m} |a|^{1/2} \, d^{\times} a,\]
additionally using \hyperref[Whittakershiftunramlemma]{Lemmata \ref*{Whittakershiftunramlemma}} and \ref{Whittakerunram1011lemma}. After making the change of variables $a \mapsto \varpi^{-1}a$, we see that this is equal to $\ell_{\RS}(\varphi_{\pi_1}, W_{\pi_2}, W_{\pi_3})$. So by \hyperref[MVlemma]{Lemma \ref*{MVlemma}},
\[I(\varphi \otimes \widetilde{\varphi}) = \frac{1}{q} \zeta_F(1) \zeta_F(2) L\left(\frac{1}{2},\omega_3\right)^2 L\left(\frac{1}{2},\omega_3^{-1}\right)^2.\]
As
\[\langle W_{\pi_3}, \widetilde{W}_{\pi_3} \rangle = \left\langle \pi_3\begin{pmatrix} \varpi^{-1} & 0 \\ 0 & 1 \end{pmatrix} \cdot W_{\pi_3}, \widetilde{\pi}_3\begin{pmatrix} \varpi^{-1} & 0 \\ 0 & 1 \end{pmatrix} \cdot \widetilde{W}_{\pi_3} \right\rangle = \frac{\zeta_F(1) L(1, \ad \pi_3)}{\zeta_F(2)}\]
(see, for example, \cite[Section 3.4.1]{MV10}), we find that
\[\frac{I(\varphi \otimes \widetilde{\varphi})}{\langle \varphi, \widetilde{\varphi} \rangle} = \frac{1}{q} \frac{\zeta_F(2)^2 L\left(\frac{1}{2},\omega_3\right)^2 L\left(\frac{1}{2},\omega_3^{-1}\right)^2}{\zeta_F(1)^2 L(1, \ad \pi_3)}.\]
On the other hand,
\[\pi_1 \otimes \pi_2 \otimes \pi_3 = \omega_1 \omega_1'^{-1} \omega_3 \boxplus \omega_1^{-1} \omega_1' \omega_3 \boxplus \omega_1 \omega_1'^{-1} \omega_3^{-1} \boxplus \omega_1^{-1} \omega_1' \omega_3^{-1} \boxplus \omega_3 \boxplus \omega_3 \boxplus \omega_3^{-1} \boxplus \omega_3^{-1},\]
so that
\[L(s, \pi_1 \otimes \pi_2 \otimes \pi_3) = L(s,\omega_3)^2 L(s,\omega_3^{-1})^2,\]
and so
\[I'(\varphi \otimes \widetilde{\varphi}) = \frac{1}{q}.\qedhere\]
\end{proof}

\begin{remark}
\label{matrixcoeffremark}
One can also prove \hyperref[dihedral2Stprop]{Propositions \ref*{dihedral2Stprop}} and \ref{Collinsprop} by the methods used in \cite{Hu17}: in place of \hyperref[MVlemma]{Lemma \ref*{MVlemma}}, we instead calculate $I(\varphi \otimes \widetilde{\varphi})$ via the fact that
\[\frac{I(\varphi \otimes \widetilde{\varphi})}{\langle \varphi, \widetilde{\varphi} \rangle} = \int\limits_{\Zgp(F) \backslash \GL_2(F)} \Phi_{\pi_1}(g) \Phi_{\pi_2}(g) \Phi_{\pi_3}(g) \, dg,\]
recalling \eqref{I_v}, where $\Phi_{\pi}$ denotes the normalised matrix coefficient
\[\Phi_{\pi}(g) \defeq \frac{\langle \pi(g) \cdot W_{\pi}, \widetilde{W}_{\pi} \rangle}{\langle W_{\pi}, \widetilde{W}_{\pi} \rangle} = \frac{1}{\langle W_{\pi}, \widetilde{W}_{\pi} \rangle} \int_{F^{\times}} W_{\pi}\left(\begin{pmatrix} a & 0 \\ 0 & 1 \end{pmatrix} g\right) \widetilde{W}_{\pi} \begin{pmatrix} a & 0 \\ 0 & 1 \end{pmatrix} \, d^{\times} a.\]
Since $W_{\pi}$ is right $K_1(\pp)$-invariant, \hyperref[Hulemma]{Lemma \ref*{Hulemma}} together with the Iwasawa decomposition imply that $I(\varphi \otimes \widetilde{\varphi}) / \langle \varphi, \widetilde{\varphi} \rangle$ is equal to
\begin{equation}
\label{Iviamatrixcoeffs}
\frac{\zeta_F(2)}{\zeta_F(1)} \int\limits_{\Zgp(F) \backslash \Bgp(F)} \prod_{j = 1}^{3} \Phi_{\pi_j}\left(b \begin{pmatrix} 1 & 0 \\ 1 & 1 \end{pmatrix}\right) \, db + \frac{1}{q} \frac{\zeta_F(2)}{\zeta_F(1)} \int\limits_{\Zgp(F) \backslash \Bgp(F)} \prod_{j = 1}^{3} \Phi_{\pi_j}(b) \, db,
\end{equation}
where $b = \left(\begin{smallmatrix} a & x \\ 0 & 1 \end{smallmatrix}\right)$ with $a \in F^{\times}$, $x \in F$, and $db = |a|^{-1} \, d^{\times} a \, dx$. One can then use \hyperref[Whittaker10varpi1lemma]{Lemmata \ref*{Whittaker10varpi1lemma}} and \ref{Whittaker1011lemma} and the fact that $W_{\pi}\left(\left(\begin{smallmatrix} 1 & x \\ 0 & 1 \end{smallmatrix}\right) g\right) = \psi(x) W_{\pi}(g)$ to show that
\begin{align*}
\Phi_{\pi_1}(b) & = \begin{dcases*}
\omega_1'(a) |a|^{-1/2} & if $|a| \geq \max\{|x|,q\}$,	\\
\omega_1'(a) |a|^{1/2} & if $\max\{|a|,|x|\} \leq 1$,	\\
0 & otherwise,
\end{dcases*}	\\
\Phi_{\pi_2}(b) & = \begin{dcases*}
\omega_1'^{-1}(a) |a|^{-1/2} & if $|a| \geq \max\{|x|,q\}$,	\\
\omega_1'^{-1}(a) |a|^{1/2} & if $\max\{|a|,|x|\} \leq 1$,	\\
0 & otherwise,
\end{dcases*}	\\
\Phi_{\pi_3}(b) & = \begin{dcases*}
-q \omega_3(a) |a| |x|^{-2} & if $|x| \geq \max\{q|a|, q\}$,	\\
\omega_3(a) |a|^{-1} & if $|a| \geq \max\{|x|,q\}$,	\\
\omega_3(a) |a| & if $\max\{|a|,|x|\} \leq 1$,
\end{dcases*}	\\
\Phi_{\pi_1}\left(b \begin{pmatrix} 1 & 0 \\ 1 & 1 \end{pmatrix}\right) & = \begin{dcases*}
\omega_1(a) \omega_1^{-1}\omega_1'(x - a) |a|^{1/2} |x - a|^{-1} & if $|x - a| \geq \max\{|a|,q\}$,	\\
0 & otherwise,
\end{dcases*}	\\
\Phi_{\pi_2}\left(b \begin{pmatrix} 1 & 0 \\ 1 & 1 \end{pmatrix}\right) & = \begin{dcases*}
\omega_1^{-1}(a) \omega_1\omega_1'^{-1}(x - a) |a|^{1/2} |x - a|^{-1} & if $|x - a| \geq \max\{|a|,q\}$,	\\
0 & otherwise,
\end{dcases*}	\\
\Phi_{\pi_3}\left(b \begin{pmatrix} 1 & 0 \\ 1 & 1 \end{pmatrix}\right) & = \begin{dcases*}
\omega_3(a) |a| |x - a|^{-2} & if $|x - a| \geq \max\{|a|,q\}$,	\\
-q \omega_3(a) |a|^{-1} & if $|a| \geq \max\{q|x - a|,q\}$,	\\
-\frac{1}{q} \omega_3(a) |a| & if $\max\{|x - a|,|a|\} \leq 1$,
\end{dcases*}
\end{align*}
where $\pi_1$, $\pi_2$, $\pi_3$ are as in \hyperref[dihedral2Stprop]{Proposition \ref*{dihedral2Stprop}}. Inserting these identities into \eqref{Iviamatrixcoeffs} and evaluating the resulting integrals thereby reproves \hyperref[dihedral2Stprop]{Proposition \ref*{dihedral2Stprop}}; similar calculations yield \hyperref[Collinsprop]{Proposition \ref*{Collinsprop}}.
\end{remark}

\section{The First Moment in the Short Initial Range}
\label{firstmomentsect}

The main results of this section are bounds for the first moments
\begin{align*}
\widetilde{\MM}^{\Maass}(h) & \defeq \sum_{d_1 d_2 = D} \frac{\varphi(d_2)}{d_2} \sum_{\substack{f \in \BB_0^{\ast}(\Gamma_0(d_1)) \\ \epsilon_f = 1}} \frac{L\left(\frac{1}{2},f \otimes g_{\psi^2}\right)}{L_{d_2}\left(\frac{1}{2}, f\right) L^{d_2}(1,\sym^2 f)} h(t_f),	\\
\widetilde{\MM}^{\Eis}(h) & \defeq \frac{1}{2\pi} \int_{-\infty}^{\infty} \left|\frac{L\left(\frac{1}{2} + it, g_{\psi^2}\right)}{\zeta_D\left(\frac{1}{2} + it\right) \zeta^D(1 + 2it)}\right|^2 h(t) \, dt,	\\
\widetilde{\MM}^{\hol}(h^{\hol}) & \defeq \sum_{d_1 d_2 = D}\frac{\varphi(d_2)}{d_2} \sum_{\substack{f \in \BB_{\hol}^{\ast}(\Gamma_0(d_1)) \\ k_f \equiv 0 \hspace{-.25cm} \pmod{4}}} \frac{L\left(\frac{1}{2},f \otimes g_{\psi^2}\right)}{L_{d_2}\left(\frac{1}{2}, f\right) L^{d_2}(1,\sym^2 f)} h^{\hol}(k_f),
\end{align*}
which will be required in the course of the proof of \hyperref[dihedralmomentsprop]{Proposition \ref*{dihedralmomentsprop} (1)}.

\begin{proposition}
\label{firstmomentprop}
Fix $\beta > 0$, and suppose that $t_g^{\beta} \leq T \leq t_g^{1 - \beta}$.
\begin{enumerate}[leftmargin=*]
\item[\emph{(1)}] For $h(t) = 1_{E \cup -E}(t)$ with $E = [T,2T]$,
\[\widetilde{\MM}^{\Maass}(h) + \widetilde{\MM}^{\Eis}(h) \ll_{\e} T^{2 + \e} + t_g^{1 + \e}.\]
\item[\emph{(2)}] For $h^{\hol}(k) = 1_E(k)$ with $E = [T,2T]$,
\[\widetilde{\MM}^{\hol}(h^{\hol}) \ll_{\e} T^{2 + \e} + t_g^{1 + \e}.\]
\end{enumerate}
\end{proposition}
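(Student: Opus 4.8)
The plan is to follow the \textit{approximate functional equation, Kuznetsov, Vorono\u{\i}} route outlined in the heuristics. First I would rewrite $\widetilde{\MM}^{\Maass}(h) + \widetilde{\MM}^{\Eis}(h)$ so that the harmonic weights $\varphi(d_2)/d_2$, $L_{d_2}(1/2,f)^{-1}$, $L^{d_2}(1,\sym^2 f)^{-1}$ together with the sum over $d_1 d_2 = D$ and $f \in \BB_0^{\ast}(\Gamma_0(d_1))$ reassemble, by way of \hyperref[ILSlemma]{Lemma \ref*{ILSlemma}} and \hyperref[EllMaassidentitylemma]{Lemma \ref*{EllMaassidentitylemma}}, into a single spectral average over an orthonormal basis of level $D$ that is amenable to the Kuznetsov formula recorded in \hyperref[toolboxappendix]{Appendix \ref*{toolboxappendix}}; since $d_2 \mid D$ is fixed, the residual local factors contribute only a bounded amount. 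The quantity $\widetilde{\MM}^{\hol}(h^{\hol})$ is handled the same way with the Petersson formula, which has no continuous part. Into this average I would insert the approximate functional equation for the $\GL_2 \times \GL_2$ Rankin--Selberg $L$-function $L(1/2, f \otimes g_{\psi^2})$: since $g_{\psi^2}$ has spectral parameter $\asymp 2t_g$ and $t_f \asymp T \leq t_g^{1 - \beta} \ll t_g$, its analytic conductor is $\asymp t_g^4$, so the approximate functional equation takes the shape $L(1/2, f \otimes g_{\psi^2}) = \sum_{n \ll t_g^{2 + \e}} \lambda_{g_{\psi^2}}(n) \lambda_f(n) n^{-1/2} V_{t_f}(n/t_g^2) + (\text{a dual term of the same shape})$, where the mild $t_f$- and $t_g$-dependence of $V_{t_f}$ and of the root number is confined by a standard short contour shift of the Mellin variable.

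Applying the Kuznetsov formula (respectively the Petersson formula, with the weight $h^{\hol}$ on $k_f$) to $\sum_f w_f \lambda_f(n) h(t_f)$ then produces a diagonal term $\delta_{n = 1} \cdot \frac{1}{\pi^2} \int_{-\infty}^{\infty} t \tanh(\pi t) h(t) \, dt$ together with a sum $\sum_c c^{-1} S(1,n;c)\, \check{h}(4\pi\sqrt{n}/c)$ of Kloosterman sums. Only $n = 1$ survives the diagonal, where $\lambda_{g_{\psi^2}}(1) = 1$ and $V_{t_f}(1/t_g^2) = 1 + o(1)$, so the diagonal (together with the corresponding piece of the dual term and the smaller Eisenstein contribution) is $\ll T^{2 + \e}$; the Petersson diagonal on $k_f \asymp T$ is likewise $\ll T^{2 + \e}$. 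This accounts for the $T^{2 + \e}$ in the bound, and is the dominant term when $T \gg t_g^{1/2}$.

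The crux is the off-diagonal. The localisation properties of $\check{h}$ (concentrated at argument $\asymp T$, with rapid decay for argument $\gg T^{1 + \e}$ and, for the $K$-Bessel part, exponential decay for argument $\ll T^{1 - \e}$), combined with $n \asymp t_g^2$, restrict $c \ll (t_g/T) t_g^{\e}$. Opening the Kloosterman sum and applying the $\GL_2$ Vorono\u{\i} summation formula for $g_{\psi^2}$ in the $n$-variable replaces $\sum_{n \asymp t_g^2}$ by a dual sum $\sum_{m \asymp M} \lambda_{g_{\psi^2}}(m) e(\mp m \overline{d}^{\ast}/c)\, \widetilde{\Psi}^{\pm}(m)$, whose transform $\widetilde{\Psi}^{\pm}$ is an oscillatory integral in which the Bessel kernel of $g_{\psi^2}$ --- carrying the large spectral parameter $\asymp t_g$, so genuinely not the bounded-index kernel --- appears at argument $\asymp T\sqrt{m}$; a stationary-phase analysis fixes $M$ and the size of $\widetilde{\Psi}^{\pm}$. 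Estimating this dual sum by the Weil bound for $S(1,m;c)$, the Hecke bound $\lambda_{g_{\psi^2}}(m) \ll m^{\e}$, and Stirling-type bounds for the Bessel transforms then yields the off-diagonal contribution $\ll t_g^{1 + \e}$ (with an additional $\ll T^{2 + \e}$ from the largest range of $c$). The holomorphic case runs identically, with the Petersson formula replacing Kuznetsov and with neither a dual spectral term nor an Eisenstein term. I expect the decisive difficulty to be precisely this Vorono\u{\i}--stationary-phase step: one must extract enough localisation from Vorono\u{\i} --- given the interaction of $\check{h}$, of index $\asymp T$, with the Bessel kernel of $g_{\psi^2}$, of index $\asymp t_g$ --- to improve on the bound $\ll t_g^{3/2 + \e}$ that the Weil bound alone gives and reach the required $t_g^{1 + \e}$.
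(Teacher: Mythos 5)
Your overall route is the same as the paper's: express the first moment as a Dirichlet polynomial via the approximate functional equation, spectrally expand via the Kuznetsov/Petersson formula, then apply Vorono\u{\i} summation and a stationary phase analysis to the resulting sums of Kloosterman sums.

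There is, however, one genuine gap. You describe the root number of $L(1/2, f \otimes g_{\psi^2})$ as ``mild'' and ``confined by a standard short contour shift of the Mellin variable.'' That treatment is correct for the archimedean part of the epsilon factor, but \hyperref[rootnumberlemma]{Lemma \ref*{rootnumberlemma}} shows that the arithmetic root number is $\eta_f(d_1)$, the Atkin--Lehner pseudo-eigenvalue of $f$ at $d_1$, which is an honest sign $\pm 1$ depending on $f$ through $\lambda_f(d_1)$. In a first moment there is no positivity to exploit, and because this sign (together with the level-dependent conductor $D^2 d_1$) varies with $f$, the dual term of the approximate functional equation does \emph{not} factor as a level-independent Dirichlet polynomial times $\sum_f w_f \lambda_f(n) h(t_f)$. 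Simply reassembling the spectral side into a single level-$D$ Kuznetsov average, as you propose, therefore cannot absorb the dual term. The paper flags exactly this obstruction in the discussion preceding \hyperref[MMtildeidentitylemma]{Lemma \ref*{MMtildeidentitylemma}} and resolves it by running the Kuznetsov and Petersson formul\ae{} simultaneously at the cusp pairs $(\infty,\infty)$ and $(\infty,1)$ (\hyperref[infty1Kuznetsovthm]{Theorem \ref*{infty1Kuznetsovthm}}, \hyperref[infty1Kuznetsovlemma]{Lemma \ref*{infty1Kuznetsovlemma}}); the Fourier expansion at the cusp $1$ produces $\eta_f(d_1)$ naturally, so that adding the two instances with the choice $X = \sqrt{d_2}/w_2$ reconstructs the full approximate functional equation at each level $d_1 \mid D$. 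An alternative (mentioned in the paper but not used) is to pass through a newform Kuznetsov formula; either way some device is needed, and treating the sign as negligible is not sound.

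On your second concern: you are right that the Vorono\u{\i}--stationary-phase step is the decisive analytic input. The paper confirms this works: \hyperref[Ks+lemma]{Lemmata \ref*{Ks+lemma}} and \ref{Ks-lemma} implement it. After truncating $c$ and moving to the Mellin picture, the crucial inputs are the bound $\widehat{\JJ_{2t_g}^{\pm}}(2(1+it)) \ll_{\e} t_g^{1+\e}$ from \hyperref[JJrMellinasympcor]{Corollary \ref*{JJrMellinasympcor}} and a split of the $t$-integral into the neighbourhood of the stationary point $t = 2\pi\sqrt{N}/c$ (where the integral is bounded trivially) and its complement (where repeated integration by parts either makes the contribution negligible or shrinks the range of $t$ to size $\asymp (\sqrt{N}/cT^2)^{1+\e}$). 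This is precisely how the $t_g^{3/2+\e}$ naive bound you identified is improved to $t_g^{1+\e}$.
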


Were we to replace $g_{\psi^2}$ with an Eisenstein series $E(z,1/2 + 2it_g)$, so that $L(1/2,f \otimes g_{\psi^2})$ would be replaced by $|L(1/2 + 2it_g,f)|^2$, then we would immediately obtain the desired bound via the large sieve, \hyperref[largesievethm]{Theorem \ref*{largesievethm}}. Thus this result is of similar strength to the large sieve; in particular, dropping all but one term returns the convexity bounds for $L(1/2,f \otimes g_{\psi^2})$ and $|L(1/2 + it,g_{\psi^2})|^2$ for $T \ll t_g^{1/2}$. However, we cannot proceed via the large sieve as in the Eisenstein case because we do not know how to bound $L(1/2,f \otimes g_{\psi^2})$ by the square of a Dirichlet polynomial of length $t_g^2$, and if we were to instead first apply the Cauchy--Schwarz inequality and then use the large sieve, we would only obtain the bound $O_{\e}(T^{2 + \e} + t_g^{2 + \e})$, which is insufficient for our requirements.

Our approach to prove \hyperref[firstmomentprop]{Proposition \ref*{firstmomentprop}} is to first use the approximate functional equation to write the $L$-functions involved as Dirichlet polynomials and then apply the Kuznetsov and Petersson formul\ae{} in order to express $\widetilde{\MM}^{\Maass}(h) + \widetilde{\MM}^{\Eis}(h)$ and $\widetilde{\MM}^{\hol}(h^{\hol})$ in terms of a delta term, which is trivially bounded, and sums of Kloosterman sums. We then open up the Kloosterman sums and apply the Vorono\u{\i} summation formula. The proof is completed via employing a stationary phase-type argument to the ensuing expression.

\begin{remark}
This strategy is used elsewhere to obtain results that are similar to \hyperref[firstmomentprop]{Proposition \ref*{firstmomentprop}}. Holowinsky and Templier use this approach in order to prove \cite[Theorem 5]{HT14}, which gives a hybrid level aspect bound for a first moment of Rankin--Selberg $L$-functions involving holomorphic forms of fixed weight; the moment involves a sum over holomorphic newforms $f$ of level $N$, while $g_{\psi}$ is of level $M$, and the bound for this moment is a hybrid bound in terms of $N$ and $M$ (with unspecified polynomial dependence on the weights of $f$ and $g_{\psi}$). The first author and Radziwi\l{}\l{} have recently proven a hybrid bound \cite[Proposition 2.28]{HR19} akin to \hyperref[firstmomentprop]{Proposition \ref*{firstmomentprop}} where $g_{\psi}$ is replaced by the Eisenstein newform $E_{\chi,1}(z) \defeq E_{\infty}(z,1/2,\chi_D)$ of level $D$ and nebentypus $\chi_D$; the bound for this moment is a hybrid bound in terms of $T$ and $D$, and the method is also valid for cuspidal dihedral forms $g_{\psi}$ (with unspecified polynomial dependence on the weight or spectral parameter of $g_{\psi}$).
\end{remark}

In applying the approximate functional equation in order to prove \hyperref[firstmomentprop]{Proposition \ref*{firstmomentprop}}, we immediately run into difficulties because the length of the approximate functional equation depends on the level, and the Kuznetsov and Petersson formul\ae{} involve cusp forms of \emph{all} levels dividing $D$. Since we are evaluating a first moment rather than a second moment, we cannot merely use positivity and oversum the Dirichlet polynomial coming from the approximate functional equation.

One possible approach to overcome this obstacle would be to use the Kuznetsov and Petersson formul\ae{} for newforms; see \cite[Lemma 5]{HT14} and \cite[Section 10.2]{You19}. Instead, we work around this issue by using the Kuznetsov and Petersson formul\ae{} associated to the pair of cusps $(\aa,\bb)$ with $\aa \sim \infty$ and $\bb \sim 1$. As shall be seen, this introduces the root number of $f \otimes g_{\psi^2}$ in such a way to give approximate functional equations of the correct length for each level dividing $D$.

We will give the proof of \hyperref[firstmomentprop]{Proposition \ref*{firstmomentprop} (1)}, then describe the minor modifications needed for the proof of \hyperref[firstmomentprop]{Proposition \ref*{firstmomentprop} (2)}. Via the positivity of $L(1/2,f \otimes g_{\psi^2})$, it suffices to prove the result with $h$ replaced by
\begin{equation}
\label{hTfirstmomenteq}
h_T(t) \defeq e^{-\left(\frac{t - T}{T^{1 - \e}}\right)^2} + e^{-\left(\frac{t + T}{T^{1 - \e}}\right)^2}.
\end{equation}
We remind the reader that from here onwards, we will make use of many standard automorphic tools that are detailed in \hyperref[toolboxappendix]{Appendix \ref*{toolboxappendix}}.

\begin{lemma}
\label{MMtildeidentitylemma}
The first moment $\widetilde{\MM}^{\Maass}(h_T) + \widetilde{\MM}^{\Eis}(h_T)$ is equal to
\begin{multline}
\label{firstmomenteq}
\frac{D}{2} \int_{-\infty}^{\infty} \widetilde{V_2^1}\left(\frac{1}{D^{3/2}},r\right) h_T(r) \, d_{\spec}r	\\
+ \frac{D}{2} \sum_{\pm} \sum_{n = 1}^{\infty} \frac{\lambda_{g_{\psi^2}}(n)}{\sqrt{n}} \sum_{\substack{c = 1 \\ c \equiv 0 \hspace{-.25cm} \pmod{D}}}^{\infty} \frac{S(1,\pm n;c)}{c} \left(\Ks^{\pm} \widetilde{V_2^1}\left(\frac{n}{D^{3/2}},\cdot\right) h_T\right)\left(\frac{\sqrt{n}}{c}\right)	\\
+ \frac{D}{2} \sum_{\pm} \sum_{n = 1}^{\infty} \frac{\lambda_{g_{\psi^2}}(n)}{\sqrt{n}} \sum_{\substack{c = 1 \\ (c,D) = 1}}^{\infty} \frac{S(1,\pm n\overline{D};c)}{c\sqrt{D}} \left(\Ks^{\pm} \widetilde{V_2^1}\left(\frac{n}{D^{3/2}},\cdot\right) h_T\right)\left(\frac{\sqrt{n}}{c\sqrt{D}}\right),
\end{multline}
where
\begin{align*}
\widetilde{V_2^1}(x,r) & \defeq \sum_{\ell = 1}^{\infty} \frac{\chi_D(\ell)}{\ell} V_2^1(x\ell^2,r)	\\
& = \frac{1}{2\pi i} \int_{\sigma - i\infty}^{\sigma + i\infty} L(1 + 2s,\chi_D) e^{s^2} x^{-s} \prod_{\pm_1} \prod_{\pm_2} \frac{\Gamma_{\R}\left(\frac{1}{2} + s \pm_1 i(2t_g \pm_2 r)\right)}{\Gamma_{\R}\left(\frac{1}{2} \pm_1 i(2t_g \pm_2 r)\right)} \, \frac{ds}{s}.
\end{align*}
\end{lemma}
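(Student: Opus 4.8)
The plan is to open the central values by the approximate functional equation, recognise the resulting spectral average (after combining the cuspidal and Eisenstein contributions) as the geometric side of a Kuznetsov formula for $\Gamma_0(D)$, and then read off the delta term and the two Kloosterman terms.

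\textbf{The approximate functional equation.} I would first apply the approximate functional equation (\hyperref[toolboxappendix]{Appendix \ref*{toolboxappendix}}) to $L(1/2, f \otimes g_{\psi^2})$ for each Maa\ss{} newform $f$ of level $d_1 \mid D$, and likewise to $|L(1/2 + it, g_{\psi^2})|^2$ in $\widetilde{\MM}^{\Eis}$. Here I would use three inputs: that the naive Rankin--Selberg series factors as $L(s, f \otimes g_{\psi^2}) = L(2s, \chi_D) \sum_{n \geq 1} \lambda_f(n) \lambda_{g_{\psi^2}}(n) n^{-s}$, which one verifies Euler factor by Euler factor, the identity persisting even at primes dividing $D$ by a short computation with the Steinberg and ramified principal series local factors; that the conductor of $f \otimes g_{\psi^2}$ equals $d_1^3 d_2^2 = d_1 D^2$, with archimedean conductor $\asymp \left((2t_g)^2 - t_f^2\right)^2$; and that the root number of $f \otimes g_{\psi^2}$ is, up to explicit Gau\ss{} sum and sign factors, the Atkin--Lehner pseudo-eigenvalue $\eta_f(d_1)$ of $f$ (\hyperref[ALlemma]{Lemma \ref*{ALlemma}}). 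Summing over the square variable produced by the factor $L(2s, \chi_D)$ then yields precisely the weight $\widetilde{V_2^1}(x, r) = \sum_{\ell \geq 1} \frac{\chi_D(\ell)}{\ell} V_2^1(x \ell^2, r)$, the Mellin--Barnes representation in the statement following from the standard one for $V_2^1$ (with the four $\Gamma_{\R}$-factors of $f \otimes g_{\psi^2}$, evaluated at the spectral parameters $2t_g \pm r$ and $r$) after performing the $\ell$-summation inside the contour integral to produce the $L(1 + 2s, \chi_D)$.

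\textbf{Assembling and applying Kuznetsov.} I would then combine $\widetilde{\MM}^{\Maass}(h_T)$ and $\widetilde{\MM}^{\Eis}(h_T)$ into a single spectral average over an orthonormal basis of Maa\ss{} forms of level $D$ plus the associated Eisenstein integral: the weights $\frac{\varphi(d_2)}{d_2} \frac{1}{L_{d_2}(1/2, f) L^{d_2}(1, \sym^2 f)}$, summed over $d_1 d_2 = D$, are exactly those that effect this conversion, via \hyperref[ILSlemma]{Lemmata \ref*{ILSlemma}}, \ref{EllMaassidentitylemma}, and \hyperref[rho(1)^2lemma]{\ref*{rho(1)^2lemma}} (together with their Eisenstein counterparts \hyperref[YoungEislemma]{Lemmata \ref*{YoungEislemma}} and \ref{EllEisidentitylemma}). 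The principal half of each approximate functional equation pairs the first Fourier coefficient of $f$ at $\infty$ with the $n$th, whereas---because the root number is the Atkin--Lehner pseudo-eigenvalue, which relates the expansion of $f$ at $\infty$ to the expansion at the cusp $\bb \sim 1$---the dual half pairs the first Fourier coefficient at $\bb$ with the $n$th at $\infty$; this is the mechanism by which the otherwise level-dependent lengths of the approximate functional equations become uniform in $d_1$, with the common normalisation $D^{3/2}$. Separating onto the even forms via the projector $\tfrac{1 + \epsilon_f}{2}$ and using $\epsilon_f \rho_f(1) = \rho_f(-1)$ then replaces these two halves by the two kernels indexed by $\pm$. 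Applying the Kuznetsov formula for $\Gamma_0(D)$ attached to the cusp pairs $(\infty, \infty)$ and $(\infty, \bb)$ now produces a delta term---in which $\delta$ forces $n = 1$ in the outer sum, contributing $\tfrac{D}{2} \int_{-\infty}^{\infty} \widetilde{V_2^1}(1/D^{3/2}, r) h_T(r) \, d_{\spec} r$---together with the Kloosterman sums $S(1, \pm n; c)$ with $D \mid c$ from the pair $(\infty, \infty)$ and the twisted Kloosterman sums $S(1, \pm n \overline{D}; c)$ with $(c, D) = 1$ (carrying the extra $\sqrt{D}$ and Bessel argument $\sqrt{n}/(c\sqrt{D})$) from the pair $(\infty, \bb)$, the relevant Bessel transform of $\widetilde{V_2^1}(n/D^{3/2}, \cdot) h_T$ being precisely $\Ks^{\pm} \widetilde{V_2^1}(n/D^{3/2}, \cdot) h_T$. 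The interchanges of summation and integration, and the analytic continuation implicit in the Dirichlet series defining $\widetilde{V_2^1}$, are justified in the usual way by staying in the region of absolute convergence and moving contours only at the last possible moment.

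\textbf{The main obstacle.} This is the exact bookkeeping of constants: keeping track of all powers of $D$, $d_1$, $d_2$ and of the arithmetic factors $\nu, \varphi, 2^{\omega}$, of the normalisations of the Fourier coefficients and of the spectral and Tamagawa measures, and of the local root numbers and local $L$-factors at primes dividing $D$---in particular, checking that the level-dependent conductor $d_1 D^2$ collapses, after the Atkin--Lehner manipulation and the $\ell$-summation, to the single normalisation $D^{3/2}$, and that the overall multiplicative constant comes out to be exactly $D/2$ rather than merely of the right order of magnitude. It is the simultaneous agreement of these several independent normalisations, rather than any one estimate, that requires care.
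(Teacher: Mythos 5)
Your proposal is correct and takes essentially the same approach as the paper: the approximate functional equation with $X = \sqrt{d_2}/w_2$ to equalise lengths, the Kuznetsov formulae for both cusp pairs $(\infty,\infty)$ and $(\infty,1)$ together with the Atkin--Lehner pseudo-eigenvalue arising as the root number, and both sign cases of Kuznetsov to effect the projection onto even forms. The paper simply runs the computation in the opposite direction, starting from the geometric side of the Kuznetsov formula with $m=1$ and test function $V_2^1(n\ell^2/D^{3/2},\cdot)\,h_T$, then multiplying by $\chi_D(\ell)/2\sqrt{n}\ell$, summing over $n,\ell$ and both signs, and identifying the spectral side as $\widetilde{\MM}^{\Maass}(h_T) + \widetilde{\MM}^{\Eis}(h_T)$ via the approximate functional equation and \hyperref[EllMaassidentitylemma]{Lemma \ref*{EllMaassidentitylemma}}, but the underlying manipulations are identical.
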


Here $d_{\spec}r$, $S(m,n;c)$, $\Ks^{\pm}$, and $V_2^1$ are as in \eqref{dspeceq}, \eqref{Kloosteq}, \eqref{NsKspmeq}, and \eqref{V2eq} respectively.

\begin{proof}
We take $m = 1$ and $h = V_2^1(n \ell^2/D^{3/2},\cdot) h_T$ in the Kuznetsov formula, \hyperref[Kuznetsovthm]{Theorem \ref*{Kuznetsovthm}}, using the explicit expressions in \hyperref[Kuznetsovlemma]{Lemma \ref*{Kuznetsovlemma}}, which we then multiply by $\chi_D(\ell)/2\sqrt{n} \ell$ and sum over $n,\ell \in \N$ and over both the same sign and opposite sign Kuznetsov formul\ae{}. After making the change of variables $n \mapsto w_2 n$, using the fact that $\lambda_{g_{\psi^2}}(w_2 n) = \lambda_{g_{\psi^2}}(n)$ for all $w_2 \mid D$ via \hyperref[dihedralHeckeeigenlemma]{Lemma \ref*{dihedralHeckeeigenlemma}}, and simplifying the resulting sum over $v_2 w_2 = \ell$ using the multiplicativity of the summands, the spectral sum ends up as
\begin{multline*}
\sum_{d_1 d_2 = D} \frac{\varphi(d_2)}{d_2} \sum_{\substack{f \in \BB_0^{\ast}(\Gamma_0(d_1)) \\ \epsilon_f = 1}} \frac{h_T(t_f)}{L^{d_2}(1,\sym^2 f)}	\\
\times \sum_{v_2 w_2 = d_2} \frac{\nu(v_2)}{v_2} \frac{\mu(w_2) \lambda_f(w_2)}{\sqrt{w_2}} \sum_{n = 1}^{\infty} \sum_{\ell = 1}^{\infty} \frac{\lambda_f(n) \lambda_{g_{\psi^2}}(n) \chi_D(\ell)}{\sqrt{n} \ell} V_2^1\left(\frac{w_2 n \ell^2}{D^{3/2}},t_f\right).
\end{multline*}
We do the same with the Kuznetsov formula associated to the $(\infty,1)$ pair of cusps, \hyperref[infty1Kuznetsovthm]{Theorem \ref*{infty1Kuznetsovthm}}, using the explicit expressions in \hyperref[infty1Kuznetsovlemma]{Lemma \ref*{infty1Kuznetsovlemma}}, obtaining
\begin{multline*}
\sum_{d_1 d_2 = D} \frac{\varphi(d_2)}{d_2} \sum_{\substack{f \in \BB_0^{\ast}(\Gamma_0(d_1)) \\ \epsilon_f = 1}} \frac{h_T(t_f)}{L^{d_2}(1,\sym^2 f)}	\\
\times \sum_{v_2 w_2 = d_2} \frac{\nu(v_2)}{v_2} \frac{\mu(w_2) \lambda_f(w_2)}{\sqrt{w_2}} \eta_f(d_1) \sum_{n = 1}^{\infty} \sum_{\ell = 1}^{\infty} \frac{\lambda_f(n) \lambda_{g_{\psi^2}}(n) \chi_D(\ell)}{\sqrt{n} \ell} V_2^1\left(\frac{v_2 n \ell^2}{D^{3/2}},t_f\right).
\end{multline*}
We add these two expressions together and use the approximate functional equation, \hyperref[approxfunclemma]{Lemma \ref*{approxfunclemma}}, with $X = \sqrt{d_2}/w_2$. Recalling \hyperref[EllMaassidentitylemma]{Lemma \ref*{EllMaassidentitylemma}}, this yields $\widetilde{\MM}^{\Maass}(h_T)$. Similarly, the sum of the Eisenstein terms is $\widetilde{\MM}^{\Eis}(h_T)$. Upon noting that the delta term only arises when we take $n = 1$ in the same sign Kuznetsov formula with the $(\infty,\infty)$ pair of cusps, the desired identity follows.
\end{proof}

\begin{lemma}
\label{Ks+lemma}
Both of the terms
\begin{gather*}
\sum_{n = 1}^{\infty} \frac{\lambda_{g_{\psi^2}}(n)}{\sqrt{n}} \sum_{\substack{c = 1 \\ c \equiv 0 \hspace{-.25cm} \pmod{D}}}^{\infty} \frac{S(1,n;c)}{c} \left(\Ks^{+} \widetilde{V_2^1}\left(\frac{n}{D^{3/2}},\cdot\right) h_T\right) \left(\frac{\sqrt{n}}{c}\right),	\\
\sum_{n = 1}^{\infty} \frac{\lambda_{g_{\psi^2}}(n)}{\sqrt{n}} \sum_{\substack{c = 1 \\ (c,D) = 1}}^{\infty} \frac{S(1,n\overline{D};c)}{c\sqrt{D}} \left(\Ks^{+} \widetilde{V_2^1}\left(\frac{n}{D^{3/2}},\cdot\right) h_T\right)\left(\frac{\sqrt{n}}{c\sqrt{D}}\right)
\end{gather*}
are $O_{\e}(t_g^{1 + \e})$.
\end{lemma}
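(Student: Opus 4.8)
The plan is to exploit the fact that the same-sign Bessel transform $\Ks^{+}$ of a function concentrated on $|r| \asymp T$ is negligible unless its argument is of size $\asymp T$: this collapses the sums over $c$, and a single application of the Vorono\u{\i} summation formula to the sum over $n$ then leaves a dual sum that is short enough to estimate trivially. Throughout one should keep in mind that the same-sign contribution is the ``easy'' half of \hyperref[firstmomentprop]{Proposition \ref*{firstmomentprop}}, the opposite-sign terms being where the full stationary phase argument is required.

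First I would record the decay of the weight. Shifting the contour in the Mellin--Barnes representation of $\widetilde{V_2^1}(n/D^{3/2},r)$ to $\Re(s)$ large and using Stirling's formula, the ratio of gamma factors is $\ll (1 + |2t_g + r|)^{\Re(s)} (1 + |2t_g - r|)^{\Re(s)} \asymp t_g^{2\Re(s)}$ for $|r| \asymp T \leq t_g^{1 - \beta}$, so that $\widetilde{V_2^1}(n/D^{3/2},r) \ll_A (t_g^{2 + \e}/n)^A$ and both sums over $n$ may be truncated at $n \ll t_g^{2 + \e}$ with negligible error. The same contour shift shows that $\widetilde{V_2^1}(x,r)$ and its $r$-derivatives are $\ll_{j} 1$ for $x \ll t_g^{2 + \e}$ and $|r| \asymp T$, so that, incorporating $h_T$, the function $H(r) \defeq \widetilde{V_2^1}(n/D^{3/2},r)\, h_T(r)$ is concentrated on $|r| \asymp T$ and smooth on the scale $T^{1 - \e}$.

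Next I would invoke the bounds for $\Ks^{+}$ from \hyperref[toolboxappendix]{Appendix \ref*{toolboxappendix}}: for such an $H$, $(\Ks^{+} H)(x)$ is $O_A(t_g^{-A})$ unless $T^{1 - \e} \ll x \ll T^{1 + \e}$ --- the lower threshold from the exponential ``tunnelling'' decay of $J_{2ir}$ below its transition, the upper threshold from repeated integration by parts in $r$, valid since $\partial_r J_{2ir}(4\pi x)$ is bounded below (of size $\asymp \log(x/T)$) once $x \gg 2T$ --- and in the surviving range a polynomial bound $(\Ks^{+} H)(x) \ll T^{O(1) + \e}$ holds. Consequently the $c$-sum in each of the two displays collapses to $c \asymp \sqrt{n}/T$ up to factors $t_g^{\e}$, so in particular $c \ll t_g^{1 + \e}/T$ and only $n \gg T^{2 - \e}$ contributes. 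I would then open the Kloosterman sum, write $S(1,\pm n;c) = \sum_{d \bmod c}^{\ast} e((d \pm n\overline{d})/c)$, and apply the $\GL_2$ Vorono\u{\i} summation formula to the sum over $n$ against $\lambda_{g_{\psi^2}}(n)\, e(\pm n\overline{d}/c)$, using that $g_{\psi^2}$ is dihedral, whence $|\lambda_{g_{\psi^2}}(n)| \leq d(n)$ and $g_{\psi^2}$ is self-dual; the case $D \mid c$ in the first display is handled by the level-aspect form of Vorono\u{\i}, which introduces only benign arithmetic factors, and the Atkin--Lehner pseudo-eigenvalue occurring in the second display has absolute value $1$ and so is harmless. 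Because $n \asymp c^2 T^2$, the dual weight is supported on $\tilde{n} \ll c^2 t_g^{2 + \e}/n \ll t_g^{2 + \e}/T^2$, a much shorter range, and the oscillation of the dual weight (inherited from $J_{2ir}$ near its transition) is controlled by a stationary phase-type estimate. The $d$-sum then collapses to the Ramanujan sum $\sum_{d \bmod c}^{\ast} e((1 \mp \tilde{n})d/c) \ll (c, 1 \mp \tilde{n})$, and bounding the remainder trivially, using $\sum_{\tilde{n} \leq X} d(\tilde{n})(c, 1 \mp \tilde{n}) \ll X^{1 + \e}$ and summing the resulting convergent sum over $c$, yields the claimed bound $O_{\e}(t_g^{1 + \e})$.

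The step I expect to be the main obstacle is the control of the transform $\Ks^{+}$ in conjunction with the Vorono\u{\i} analysis: one must establish simultaneously that $\Ks^{+}$ localises the argument to $x \asymp T$ and that its size there --- together with the size and oscillation of the Vorono\u{\i}-dual weight --- is sharp enough that the otherwise lossy final trivial estimate closes at $t_g^{1 + \e}$ rather than a larger power of $T$ and $t_g$. A secondary technical nuisance is tracking the level-$D$ arithmetic in Vorono\u{\i} summation for the term with $c \equiv 0 \pmod{D}$ and checking that the passage through the $(\infty,1)$-cusp Kuznetsov formula, recorded in \hyperref[MMtildeidentitylemma]{Lemma \ref*{MMtildeidentitylemma}}, does not spoil the argument in the same-sign piece.
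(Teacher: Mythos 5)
Your key premise is wrong: the same-sign transform $\Ks^{+} H$ does \emph{not} localise the argument to $x \asymp T$. You are importing the behaviour of the opposite-sign kernel $\JJ_r^{-}$ (the $K$-Bessel function, which decays exponentially past its turning point) into the same-sign kernel $\JJ_r^{+}$ (the $J$-Bessel function with imaginary order), which is merely oscillatory and polynomially bounded for all $x>0$; there is no ``tunnelling'' regime, and the phase of $J_{2ir}(4\pi x)$ in $r$ has bounded derivative rather than size $\asymp \log(x/T)$, so neither of the mechanisms you cite produces the claimed localisation. What the paper actually establishes, via the integral representation $\int e\left(2x\cosh\pi u\right)\widecheck{h}_{\mathrm{spec}}(u)\,du$, the truncation $|u|<T^{\e}$ from integration by parts in $r$, the Taylor expansion of $\cosh(\pi u/T)$, and a second integration by parts in $u$, is that one may restrict to $c<\sqrt{N}/T^{2-\e}$, i.e.\ $x=\sqrt{n}/c\gg T^{2-\e}$: the transform is negligible \emph{below} $T^{2-\e}$ and extends all the way up to $x\sim\sqrt{N}$, carrying an essential oscillatory phase $e(2x)$. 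Your remark that the same-sign term is the ``easy'' half also has it backwards: the paper's proof of \hyperref[Ks-lemma]{Lemma \ref*{Ks-lemma}} explicitly states that the opposite-sign case needs no stationary phase analysis, while the same-sign case treated here does.

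This is not a cosmetic slip. Because you never identify the phase $e(2\sqrt{n}/c)$ surviving in $\Ks^{+}H$, your post-Vorono\u{\i} estimate cannot close: a trivial bound over the ranges $c\asymp\sqrt{N}/T$, $\tilde n\ll t_g^{2+\e}/T^2$ (or even over the correct ranges $c<\sqrt{N}/T^{2-\e}$, $\tilde n\ll t_g^{2+\e}/T^4$) produces powers of $t_g$ well above $t_g^{1+\e}$ once $T$ is a small power of $t_g$, which is the whole point of the range $t_g^{\beta}\leq T\leq t_g^{1/2}$. The paper's argument exploits the phase: after Vorono\u{\i} and Mellin inversion the phase $e\left(2\sqrt{Nx}/c\right)$ together with the factor $x^{-it}$ yields an $x$-integral $I(t)$ with a stationary point at $t=2\pi\sqrt{N}/c$, and the $t$-integration is split into $\Xi_1$ (a short range near the stationary point, bounded trivially) and $\Xi_2$ (away from it, where repeated integration by parts in $x$ shows $I(t)$ is negligible outside a window of width $(\sqrt{N}/cT^2)^{1+\e}$). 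Your proposal acknowledges ``a stationary phase-type estimate'' for ``the oscillation of the dual weight (inherited from $J_{2ir}$ near its transition),'' but misattributes the source of the oscillation (it comes from $\Ks^{+}$ itself, not from the Vorono\u{\i} dual kernel) and, more importantly, leaves the decisive step---the interaction of that phase with the Mellin variable and the resulting restriction on $|t-2\pi\sqrt{N}/c|$---entirely unaddressed.
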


\begin{proof}
The strategy is to apply the Vorono\u{\i} summation formula, \hyperref[Voronoilemma]{Lemma \ref*{Voronoilemma}}, to the sum over $n$, and then to bound carefully the resulting dual sum using a stationary phase-type argument (although this will be masked by integration by parts). We only cover the proof for the first term, since the second term follows by the exact same argument save for a slightly different formulation of the Vorono\u{\i} summation formula, which gives rise to Ramanujan sums in place of Gauss sums.

Dividing the $n$-sum and the $r$-integral in the definition of $\Ks^{+}$, \eqref{NsKspmeq}, into dyadic intervals, we consider the sum
\[\sum_{\substack{c = 1 \\ c \equiv 0 \hspace{-.25cm} \pmod{D}}}^{\infty} \sum_{n = 1}^{\infty} \frac{\lambda_{g_{\psi^2}}(n)}{\sqrt{n}} W\left(\frac{n}{N}\right) \frac{S(1,n;c)}{c} \left(\Ks^{+} \widetilde{V_2^1}\left(\frac{n}{D^{3/2}},\cdot\right) h\left(\frac{\cdot}{T}\right)\right) \left(\frac{\sqrt{n}}{c}\right)\]
for any $N < t_g^{2 + \e}$, where $W$ and $h$ are smooth functions compactly supported on $(1,2)$. Here the function $h_T$ has been absorbed into $h$. By Stirling's formula \eqref{Stirlingeq}, we have that
\begin{equation}
\label{vbound}
\frac{\partial^{j + k}}{\partial x^j \partial r^k} \widetilde{V_2^1}\left(\frac{Nx}{D^{3/2}}, rT\right) \ll_{j,k,\e} T^{\e}
\end{equation}
for $j,k \in \N_0$, where we follow the $\e$-convention. To understand the transform $\Ks^{+}$, we refer to \cite[Lemma 3.7]{BuK17a}. By \cite[(3.61)]{BuK17a}, we must bound
\begin{multline*}
\sum_{\substack{c = 1 \\ c \equiv 0 \hspace{-.25cm} \pmod{D}}}^{\infty} \sum_{n = 1}^{\infty} \frac{\lambda_{g_{\psi^2}}(n)}{\sqrt{n}} W\left(\frac{n}{N}\right) \frac{S(1,n;c)}{c} \\
\times \int_{-\infty}^{\infty} e\left(\frac{2\sqrt{n}}{c} \cosh\pi u\right) \int_{0}^{\infty} \widetilde{V_2^1}\left(\frac{n}{D^{3/2}}, r\right) h\left(\frac{r}{T}\right) r e(-ur) \tanh (\pi r)  \, dr \, du
\end{multline*}
by $O_{\e}(t_g^{1 + \e})$. We make the substitutions $r \mapsto rT$ and $u \mapsto u/T$. Repeated integration by parts with respect to $r$, recalling \eqref{vbound} and using $(d/dr)^k(\tanh \pi r T)\ll_k\ e^{-T}$ for $k\ge 1$, shows that we may restrict to $|u| < T^{\e}$, up to a negligible error. After making this restriction, using $\tanh(\pi r T)=1+O(e^{-T})$, and taking the Taylor expansion of $\cosh(\pi u / T)$, we need to show
\begin{multline*}
T \sum_{\substack{c = 1 \\ c \equiv 0 \hspace{-.25cm} \pmod{D}}}^{\infty} \sum_{n = 1}^{\infty} \frac{\lambda_{g_{\psi^2}}(n)}{\sqrt{n}} W\left(\frac{n}{N}\right) \frac{S(1,n;c)}{c} e\left(\frac{2\sqrt{n}}{c}\right) \\
\times \int_{-T^{\e}}^{T^{\e}} e\left(\frac{2\sqrt{n}}{c} \left(\frac{1}{2!} \left(\frac{\pi u}{T}\right)^2 + \frac{1}{4!} \left(\frac{\pi u}{T}\right)^4 + \cdots\right)\right) \int_{0}^{\infty} \widetilde{V_2^1}\left(\frac{n}{D^{3/2}}, rT \right) r h(r) e(-ur) \, dr \, du
\end{multline*}
is $O_{\e}(t_g^{1 + \e})$. Now we integrate by parts multiple times with respect to $u$, differentiating the exponential $e(\frac{2\sqrt{n}}{c} (\frac{1}{2!} (\frac{\pi u}{T})^2 + \frac{1}{4!} (\frac{\pi u}{T})^4 + \cdots))$ and integrating the exponential $e(-ur)$. This shows that we may restrict the summation over $c$ to $c < \sqrt{N} / T^{2 - \e}$, because the contribution of the terms not satisfying this condition will be negligible. In particular, we may assume that $N > T^{4 - \e}$, for otherwise the $c$-sum is empty. Also, the contribution of the endpoints $u=\pm T^\e$ after integration by parts is negligible by repeated integration by parts with respect to $r$ (the same argument which allowed us to truncate the $u$-integral in the first place). Thus we have shown that it suffices to prove that
\begin{equation}
\label{preVoronoifirstmomenteq}
T \sum_{\substack{c < \frac{\sqrt{N}}{T^{2 - \e}} \\ c \equiv 0 \hspace{-.25cm} \pmod{D}}} \sum_{n = 1}^{\infty} \frac{\lambda_{g_{\psi^2}}(n)}{\sqrt{n}} W\left(\frac{n}{N}\right) \frac{S(1,n;c)}{c} e\left(\frac{2\sqrt{n}}{c}\right) \Omega\left(\frac{\sqrt{n}}{cT^2}\right) \widetilde{V_2^1}\left(\frac{n}{D^{3/2}}, rT\right)
\end{equation}
is $O_{\e}(t_g^{1 + \e})$ for any smooth function $\Omega$ satisfying $\Omega^{(j)} \ll_j 1$ for $j \in \N_0$ and any $r \in (1,2)$.

We now open up the Kloosterman sum and apply the Vorono\u{\i} summation formula, \hyperref[Voronoilemma]{Lemma \ref*{Voronoilemma}}. Via Mellin inversion, \eqref{preVoronoifirstmomenteq} is equal to
\begin{multline}
\label{toprove}
\frac{T}{\pi i \sqrt{N}} \sum_{\pm} \sum_{\substack{c < \frac{\sqrt{N}}{T^{2 - \e}} \\ c \equiv 0 \hspace{-.25cm} \pmod{D}}} \sum_{n = 1}^{\infty} \frac{\lambda_{g_{\psi^2}}(n)}{n} \sum_{d \in (\Z/c\Z)^{\times}} \chi_D(d) e\left(\frac{d(n \mp 1)}{c}\right)	\\
\times \int_{\sigma - i\infty}^{\sigma + i\infty} \left(\frac{N n}{c^2}\right)^{-s} \widehat{\JJ_{2t_g}^{\pm}}(2(1 + s)) \int_{0}^{\infty} \frac{W(x)}{\sqrt{x}} e\left(\frac{2\sqrt{Nx}}{c}\right) \Omega\left(\frac{\sqrt{Nx}}{cT^2}\right) \widetilde{V_2^1}\left(\frac{Nx}{D^{3/2}}, rT \right) x^{-s - 1} \, dx \, ds
\end{multline}
for any $\sigma \geq 0$, where $\JJ_{2t_g}^{\pm}$ is as in \eqref{Jrpmeq} with Mellin transform $\widehat{\JJ_{2t_g}^{\pm}}$ given by \eqref{J_r+Mellin} and \eqref{J_r-Mellin}. Repeated integration by parts in the $x$ integral, integrating $x^{-s}$ and differentiating the rest and recalling \eqref{vbound}, shows that up to negligible error, we may restrict the $s$-integral to
\begin{equation}
\label{s-bound}
|\Im(s)| < \frac{\sqrt{N}}{c} t_g^{\e} < \frac{t_g^{1 + \e}}{c}.
\end{equation}
Moving the line of integration in \eqref{toprove} far to the right and using the bounds in \hyperref[JJrMellinasympcor]{Corollary \ref*{JJrMellinasympcor}} for the Mellin transform of $\JJ_{2t_g}^{\pm}$, we may crudely restrict to $n < t_g^{2 + \e}$. Upon fixing $\sigma = 0$ in \eqref{toprove}, so that the $s$-integral is on the line $s = it$ and $x^{-s} = e(-\frac{t\log x}{2\pi})$, and making the substitution $x \mapsto x^2$, it suffices to prove that
\begin{multline*}
\Xi \defeq \frac{T}{\sqrt{N}} \sum_{\pm} \sum_{\substack{c < \frac{\sqrt{N}}{T^{2 - \e}} \\ c \equiv 0 \hspace{-.25cm} \pmod{D}}} \sum_{n < t_g^{2 + \e}} \frac{\lambda_{g_{\psi^2}}(n)}{n} \sum_{a \mid \left(\frac{c}{D}, n \mp 1\right)} a \mu\left(\frac{c}{aD}\right) \chi_D\left(\frac{c}{aD}\right) \chi_D\left(\frac{n \mp 1}{a}\right) \\
\times \int_{|t| < \frac{\sqrt{N}}{c} t_g^\e} \left(\frac{N n}{c^2}\right)^{-it} \widehat{\JJ_{2t_g}^{\pm}}(2(1 + it)) I(t) \, dt
\end{multline*}
is $O_{\e}(t_g^{1 + \e})$, where we have used \hyperref[Miyakelemma]{Lemma \ref*{Miyakelemma}} to reexpress the sum over $d$ as a sum over $a \mid (c/D,n \mp 1)$, and
\[I(t) \defeq \int_{0}^{\infty} \frac{W(x^2)}{x^2} e\left(\frac{2\sqrt{N}x}{c} - \frac{t\log x}{\pi} \right) \Omega\left(\frac{\sqrt{N}x}{cT^2}\right) \widetilde{V_2^1}\left(\frac{Nx^2}{D^{3/2}}, rT \right) \, dx.\]
We write $\Xi = \Xi_1 + \Xi_2$, where $\Xi_1$ is the same expression as $\Xi$ but with the $t$-integral further restricted to
\[\left|t - \frac{2\pi \sqrt{N}}{c}\right| \leq \left(\frac{\sqrt{N}}{c}\right)^{\frac{1}{2} + \e}\]
and $\Xi_2$ is the same expression as $\Xi$ but with the $t$-integral further restricted to 
\begin{equation}
\label{restrict}
\left|t - \frac{2\pi \sqrt{N}}{c}\right| > \left(\frac{\sqrt{N}}{c}\right)^{\frac{1}{2} + \e}.
\end{equation}
Thus $\Xi_1$ keeps close to the stationary point of the $x$-integral in the definition of $I(t)$, while $\Xi_2$ keeps away.

We first bound $\Xi_1$. Using the bound $\widehat{\JJ_{2t_g}^{\pm}}(2(1 + it)) \ll_{\e} t_g^{1 + \e}$ in the range \eqref{s-bound} from \hyperref[JJrMellinasympcor]{Corollary \ref*{JJrMellinasympcor}} and the trivial bound $I(t) \ll 1$, we get
\[\Xi_1 \ll_{\e} T N^{\frac{1}{4}} t_g^{\e} \sum_{\pm} \sum_{\substack{c < \frac{\sqrt{N}}{T^{2 - \e}} \\ c \equiv 0 \hspace{-.25cm} \pmod{D}}} \frac{1}{\sqrt{c}} \sum_{n < t_g^{2 + \e}} \frac{|\lambda_{g_{\psi^2}}(n)|}{n} \sum_{a \mid \left(\frac{c}{D}, n \mp 1\right)} a \ll_{\e} t_g^{1 + \e}\]
upon making the change of variables $n \mapsto an \pm 1$ and recalling that $N < t_g^{2 + \e}$.

We now turn to bounding $\Xi_2$. The difference here is that we will not trivially bound the integral $I(t)$. Keeping in mind the restriction \eqref{restrict}, we write
\begin{multline*}
I(t) = \int_{0}^{\infty} \frac{W(x^2)}{x^2} \Omega\left(\frac{\sqrt{N}x}{cT^2}\right) \widetilde{V_2^1}\left(\frac{Nx^2}{D^{3/2}}, rT\right) \left(\frac{2\sqrt{N}}{c} - \frac{t}{\pi x}\right)^{-1}	\\
\times \left(\frac{2\sqrt{N}}{c} - \frac{t}{\pi x}\right) e\left(\frac{2\sqrt{N} x}{c} - \frac{t \log x}{\pi} \right) \, dx.
\end{multline*} 
We integrate by parts $k$-times with respect to $x$, differentiating the product of terms on the first line above and integrating the product of terms on the second line. This leads to the bound
\[I(t) \ll_k \left(\frac{\sqrt{N}}{cT^2}\right)^k \left|\frac{2\pi \sqrt{N}}{c} - t\right|^{-k} + (1 + |t|)^k \left|\frac{2\pi \sqrt{N}}{c} - t\right|^{-2k},\]
where the first term in the upper bound comes from the derivatives of $\Omega(\frac{\sqrt{N} x}{cT^2})$, while the second term comes from the derivatives of $(\frac{2\sqrt{N}}{c} - \frac{t}{\pi x})^{-1}$. By \eqref{s-bound} and \eqref{restrict}, the second term in this upper bound is negligible. The first term is negligible unless
\[\left|\frac{2\pi \sqrt{N}}{c} - t\right| \ll \left(\frac{\sqrt{N}}{cT^2}\right)^{1 + \e}.\]
But the contribution to $\Xi_2$ of $t$ in this range is
\begin{multline*}
\frac{T}{\sqrt{N}} \sum_{\pm} \sum_{\substack{c < \frac{\sqrt{N}}{T^{2 - \e}} \\ c \equiv 0 \hspace{-.25cm} \pmod{D}}} \sum_{n < t_g^{2 + \e}} \frac{\lambda_{g_{\psi^2}}(n)}{n} \sum_{a \mid \left(\frac{c}{D}, n \mp 1\right)} a \mu\left(\frac{c}{aD}\right) \chi_D\left(\frac{c}{aD}\right) \chi_D\left(\frac{n \mp 1}{a}\right) \\
\times \int_{\left|t - \frac{2\pi \sqrt{N}}{c}\right| \ll \left(\frac{\sqrt{N}}{cT^2}\right)^{1 + \e}} \left(\frac{Nn}{c^2}\right)^{-it} \widehat{\JJ_{2t_g}^{\pm}}(2(1 + it)) I(t) \, dt,
\end{multline*}
which is trivially bounded, using the fact that $\widehat{\JJ_{2t_g}^{\pm}}(2(1 + it)) \ll_{\e} t_g^{1 + \e}$, by
\[\frac{t_g^{1 + \e}}{T} \sum_{\pm} \sum_{\substack{c < \frac{\sqrt{N}}{T^{2 - \e}} \\ c \equiv 0 \hspace{-.25cm} \pmod{D}}} \frac{1}{c} \sum_{n < t_g^{2 + \e}} \frac{|\lambda_{g_{\psi^2}}(n)|}{n} \sum_{a \mid \left(\frac{c}{D}, n \mp 1\right)} a \ll_{\e} \frac{t_g^{1 + \e}}{T},\]
which is more than sufficient.
\end{proof}

\begin{lemma}
\label{Ks-lemma}
Both of the terms
\begin{gather*}
\sum_{n = 1}^{\infty} \frac{\lambda_{g_{\psi^2}}(n)}{\sqrt{n}} \sum_{\substack{c = 1 \\ c \equiv 0 \hspace{-.25cm} \pmod{D}}}^{\infty} \frac{S(1,-n;c)}{c} \left(\Ks^{-} \widetilde{V_2^1}\left(\frac{n}{D^{3/2}},\cdot\right) h_T\right) \left(\frac{\sqrt{n}}{c}\right),	\\
\sum_{n = 1}^{\infty} \frac{\lambda_{g_{\psi^2}}(n)}{\sqrt{n}} \sum_{\substack{c = 1 \\ (c,D) = 1}}^{\infty} \frac{S(1,-n\overline{D};c)}{c\sqrt{D}} \left(\Ks^{-} \widetilde{V_2^1}\left(\frac{n}{D^{3/2}},\cdot\right) h_T\right)\left(\frac{\sqrt{n}}{c\sqrt{D}}\right)
\end{gather*}
are $O_{\e}(t_g^{1 + \e})$.
\end{lemma}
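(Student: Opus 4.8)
The proof of \hyperref[Ks-lemma]{Lemma \ref*{Ks-lemma}} will run in close parallel to that of \hyperref[Ks+lemma]{Lemma \ref*{Ks+lemma}}; the plan is to reproduce that argument with the modifications forced by the change of sign, so I shall only indicate where the two differ. As before, I would split the $n$-sum and the spectral integral concealed within $\Ks^{-}$ into dyadic ranges, reducing matters to bounding
\[\sum_{\substack{c = 1 \\ c \equiv 0 \hspace{-.25cm} \pmod{D}}}^{\infty} \sum_{n = 1}^{\infty} \frac{\lambda_{g_{\psi^2}}(n)}{\sqrt{n}} W\left(\frac{n}{N}\right) \frac{S(1,-n;c)}{c} \left(\Ks^{-} \widetilde{V_2^1}\left(\frac{n}{D^{3/2}},\cdot\right) h\left(\frac{\cdot}{T}\right)\right)\left(\frac{\sqrt{n}}{c}\right)\]
for $N < t_g^{2 + \e}$, with $W,h$ smooth and compactly supported on $(1,2)$, the second term of the lemma being handled identically (Ramanujan sums appearing in place of Gauss sums after Vorono\u{\i}). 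The one genuine novelty is the explicit shape of $\Ks^{-}$: in place of \cite[(3.61)]{BuK17a} I would invoke the opposite-sign clause of \cite[Lemma 3.7]{BuK17a}, which realises $\Ks^{-}$ via the Bessel function $K_{2ir}$ of purely imaginary order; using a standard oscillatory integral representation of $K_{2ir}$, the transform becomes a double integral carrying a factor $e\!\left(\pm\frac{2\sqrt{n}}{c}\sinh\pi u\right)$ in place of $e\!\left(\frac{2\sqrt{n}}{c}\cosh\pi u\right)$, against a bounded smooth weight in $u$ and the familiar inner $r$-integral against $\widetilde{V_2^1} h$.

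Two structural consequences of this change merit flagging in advance. First, $K_{2ir}(y)$ decays exponentially once $y$ exceeds the spectral scale $\asymp T$, so the range $\sqrt{n}/c \gg T^{1 + \e}$ — precisely the regime $c < \sqrt{N}/T^{2 - \e}$ that \emph{dominated} in the proof of \hyperref[Ks+lemma]{Lemma \ref*{Ks+lemma}} — now contributes negligibly, and it is only the complementary oscillatory regime $\sqrt{n}/c \ll T^{1 + \e}$ that demands work. Secondly, the leading Taylor term of $\sinh(\pi u/T)$ is \emph{linear} rather than quadratic in $u$, so after the substitutions $r \mapsto rT$, $u \mapsto u/T$ the auxiliary $u$-integral picks up an extra linear phase; nonetheless repeated integration by parts in $r$ (using \eqref{vbound} and the rapid decay of the derivatives of the hyperbolic weights) and then in $u$ still permits truncation to $|u| < T^{\e}$ and truncation of the $c$- and $n$-sums, all up to negligible error.

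From here the argument is a routine transcription. I would open the Kloosterman sum, apply the Vorono\u{\i} summation formula \hyperref[Voronoilemma]{Lemma \ref*{Voronoilemma}} to the $n$-sum, pass to Mellin transforms, and insert the bounds of \hyperref[JJrMellinasympcor]{Corollary \ref*{JJrMellinasympcor}} for $\widehat{\JJ_{2t_g}^{-}}$ (again $\ll_{\e} t_g^{1 + \e}$ in the relevant range), arriving at an expression of the same shape as in the proof of \hyperref[Ks+lemma]{Lemma \ref*{Ks+lemma}} (cf.~\eqref{toprove} and the ensuing sum $\Xi$), now with an $x$-integral whose phase combines the Vorono\u{\i} phase with the $\sinh$-phase. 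Splitting $\Xi = \Xi_1 + \Xi_2$ according to proximity to the stationary point of this $x$-integral, the piece $\Xi_1$ is bounded by combining the trivial bound on the integral with $\widehat{\JJ_{2t_g}^{-}}(2(1 + it)) \ll_{\e} t_g^{1 + \e}$, the estimate for divisor sums, and the change of variables $n \mapsto an \pm 1$, yielding $O_{\e}(t_g^{1 + \e})$; the piece $\Xi_2$ is killed by repeated integration by parts in $x$, the surviving short range of $t$ contributing $O_{\e}(t_g^{1 + \e}/T)$.

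I expect the one point requiring genuine care — as in the $+$ case, the bound for $\Xi_1$ — to be the correct location of the stationary point together with the isolation of the oscillatory regime of the $K$-Bessel transform: one must verify that of the two terms $e(\pm\frac{2\sqrt{n}}{c}\sinh\pi u)$ only one produces a true stationary point, that this occurs only for $c$ in a range for which the divisor-sum and $n \mapsto an \pm 1$ manipulations still deliver $O_{\e}(t_g^{1 + \e})$, and that the $K_{2ir}$ transition region $\sqrt{n}/c \asymp T$ is disposed of cleanly (the exponentially small regime beyond it contributing nothing). Granting this, every remaining estimate is a line-by-line adaptation of the corresponding estimate in the proof of \hyperref[Ks+lemma]{Lemma \ref*{Ks+lemma}}.
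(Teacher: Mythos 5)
Your overall skeleton (dyadic decomposition, the representation of $\Ks^{-}$ from \cite[Lemma 3.8]{BuK17a}, truncation to $|u| < T^{\e}$ by integration by parts in $r$, Taylor expansion of $\sinh(\pi u/T)$, truncation of the $c$-sum, Vorono\u{\i} summation, and the bounds of \hyperref[JJrMellinasympcor]{Corollary \ref*{JJrMellinasympcor}}) matches the paper's proof. But there is a genuine error in the heart of your argument: you import the stationary-phase structure $\Xi = \Xi_1 + \Xi_2$ from the proof of \hyperref[Ks+lemma]{Lemma \ref*{Ks+lemma}}, whereas the decisive feature of the $\Ks^{-}$ case is that \emph{no stationary phase analysis occurs at all}. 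In the $+$ case the constant term $\cosh(0) = 1$ of the Taylor expansion leaves behind the factor $e\bigl(\tfrac{2\sqrt{n}}{c}\bigr)$, and it is this factor which, after Vorono\u{\i} summation and setting $x^{-s} = e(-\tfrac{t \log x}{2\pi})$, produces a stationary point of the $x$-integral near $t = 2\pi\sqrt{N}/c$. In the $-$ case $\sinh(0) = 0$, so after truncating to $|u| < T^{\e}$ and expanding, \emph{no} oscillatory factor in $n$ survives the $u$-integration: one is left with the non-oscillatory expression \eqref{preVoronoifirstmoment2eq}, whose weight is $\Omega\bigl(\tfrac{\sqrt{n}}{cT}\bigr)$ rather than $e\bigl(\tfrac{2\sqrt{n}}{c}\bigr)\Omega\bigl(\tfrac{\sqrt{n}}{cT^2}\bigr)$. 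After Vorono\u{\i} summation the $x$-integral in \eqref{toprove2} carries no phase beyond $x^{-s}$, so repeated integration by parts restricts the $s$-line to $|\Im(s)| < \tfrac{\sqrt{N}}{cT} t_g^{\e}$ and a trivial bound over this short range already gives $O_{\e}(t_g^{1+\e})$. Your proposed "correct location of the stationary point" does not exist, and the phase you describe as "combining the Vorono\u{\i} phase with the $\sinh$-phase" is a phantom.

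This is not merely a stylistic point: a literal transcription of the $\Xi_1$ estimate would fail quantitatively. As you correctly note, the linear leading term of $\sinh$ only permits the weaker truncation $c < \sqrt{N}/T^{1 - \e}$ (versus $c < \sqrt{N}/T^{2 - \e}$ in the $+$ case). Bounding trivially over a window of length $(\sqrt{N}/c)^{1/2 + \e}$ in $t$, as in the $\Xi_1$ argument, then yields
\[
\frac{T}{\sqrt{N}}\, t_g^{1 + \e} \sum_{c < \sqrt{N}/T^{1 - \e}} \left(\frac{\sqrt{N}}{c}\right)^{1/2 + \e} \ll_{\e} T^{1/2 + \e}\, t_g^{1 + \e},
\]
which exceeds the target $O_{\e}(t_g^{1+\e})$ throughout the range $t_g^{\beta} \leq T \leq t_g^{1 - \beta}$ in which \hyperref[firstmomentprop]{Proposition \ref*{firstmomentprop}} is applied. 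The correct (and simpler) route is the one the paper takes: recognise that the absence of a constant term in $\sinh$ removes the oscillation entirely, and bound trivially after the integration-by-parts restriction $|\Im(s)| < \tfrac{\sqrt{N}}{cT} t_g^{\e}$.
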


\begin{proof}
The strategy is the same: to apply the Vorono\u{\i} summation formula to the sum over $n$, and then to bound trivially. This time, however, there will be no stationary phase analysis, so the proof is more straightforward. Again, we will only detail the proof of the bound for the first term.

Dividing as before the $n$-sum and the $r$-integral in the definition of $\mathcal{K}^-$ into dyadic intervals, we consider the sum
\[\sum_{\substack{c = 1 \\ c \equiv 0 \hspace{-.25cm} \pmod{D}}}^{\infty} \sum_{n = 1}^{\infty} \frac{\lambda_{g_{\psi^2}}(n)}{\sqrt{n}} W\left(\frac{n}{N}\right) \frac{S(1,-n;c)}{c} \left(\Ks^{-} \widetilde{V_2^1}\left(\frac{n}{D^{3/2}},\cdot\right) h\left(\frac{\cdot}{T}\right)\right) \left(\frac{\sqrt{n}}{c}\right)\]
for any $N < t_g^{2 + \e}$, where $W$ and $h$ are smooth functions compactly supported on $(1,2)$, with the function $h_T$ having been absorbed into $h$. To understand the transform $\Ks^{-}$, we refer to \cite[Lemma 3.8]{BuK17a}. By \cite[(3.68)]{BuK17a} and the fact that $\tanh \pi r = 1 + O(e^{-2\pi |r|})$, we must bound
\begin{multline*}
\sum_{\substack{c = 1 \\ c \equiv 0 \hspace{-.25cm} \pmod{D}}}^{\infty} \sum_{n = 1}^{\infty} \frac{\lambda_{g_{\psi^2}}(n)}{\sqrt{n}} W\left(\frac{n}{N}\right) \frac{S(1,-n;c)}{c} \\
\times \int_{-\infty}^{\infty} e\left(-\frac{2\sqrt{n}}{c} \sinh \pi u\right) \int_{0}^{\infty} \widetilde{V_2^1}\left(\frac{n}{D^{3/2}}, r\right) h\left(\frac{r}{T}\right) r e(-ur) \, dr \, du
\end{multline*}
by $O_{\e}(t_g^{1 + \e})$. We make the substitutions $r \mapsto Tr$ and $u \mapsto u/T$. Repeated integration by parts with respect to $r$ shows that we may restrict to $|u| < T^{\e}$, up to a negligible error. After making this restriction and taking the Taylor expansion of $\sinh(\pi u / T)$, we need to prove that
\begin{multline*}
T \sum_{\substack{c = 1 \\ c \equiv 0 \hspace{-.25cm} \pmod{D}}}^{\infty} \sum_{n = 1}^{\infty} \frac{\lambda_{g_{\psi^2}}(n)}{\sqrt{n}} W\left(\frac{n}{N}\right) \frac{S(1,-n;c)}{c} \\
\times \int_{-T^{\e}}^{T^{\e}} e\left(-\frac{2\sqrt{n}}{c} \left(\frac{\pi u}{T} + \frac{1}{3!} \left(\frac{\pi u}{T}\right)^3 + \cdots\right)\right) \int_{0}^{\infty} \widetilde{V_2^1}\left(\frac{n}{D^{3/2}}, rT \right) r h(r) e(-ur) \, dr \, du
\end{multline*}
is $O_{\e}(t_g^{1 + \e})$. We integrate by parts multiple times with respect to $u$, differentiating the exponential $e(-\frac{2\sqrt{n}}{c} (\frac{\pi u}{T} + \frac{1}{3!} (\frac{\pi u}{T})^3 + \cdots))$ and integrating the exponential $e(-ur)$. This shows that we may restrict the summation over $c$ to $c < \sqrt{N} / T^{1 - \e}$, because the contribution of the terms not satisfying this condition will be negligible. In particular, we may assume that $N > T^{2 - \e}$, for otherwise the $c$-sum is empty. Thus we have shown that it suffices to prove that
\begin{equation}
\label{preVoronoifirstmoment2eq}
T \sum_{\substack{c < \frac{\sqrt{N}}{T^{1 - \e}} \\ c \equiv 0 \hspace{-.25cm} \pmod{D}}} \sum_{n = 1}^{\infty} \frac{\lambda_{g_{\psi^2}}(n)}{\sqrt{n}} W\left(\frac{n}{N}\right) \frac{S(1,-n;c)}{c} \Omega\left(\frac{\sqrt{n}}{cT}\right) \widetilde{V_2^1}\left(\frac{n}{D^{3/2}}, rT\right)
\end{equation}
is $O_{\e}(t_g^{1 + \e})$ for any smooth function $\Omega$ satisfying $\Omega^{(j)} \ll_j 1$ for $j \in \N_0$ and any $r \in (1,2)$.

We now open up the Kloosterman sum and apply the Vorono\u{\i} summation formula, \hyperref[Voronoilemma]{Lemma \ref*{Voronoilemma}}. Via Mellin inversion, \eqref{preVoronoifirstmoment2eq} is equal to
\begin{multline}
\label{toprove2}
\frac{T}{\pi i \sqrt{N}} \sum_{\pm} \sum_{\substack{c < \frac{\sqrt{N}}{T^{2 - \e}} \\ c \equiv 0 \hspace{-.25cm} \pmod{D}}} \sum_{n = 1}^{\infty} \frac{\lambda_{g_{\psi^2}}(n)}{n} \sum_{d \in (\Z/c\Z)^{\times}} \chi_D(d) e\left(\frac{d(n \pm 1)}{c}\right)	\\
\times \int_{\sigma - i\infty}^{\sigma + i\infty} \left(\frac{N n}{c^2}\right)^{-s} \widehat{\JJ_{2t_g}^{\pm}}(2(1 + s)) \int_{0}^{\infty} \frac{W(x)}{\sqrt{x}} \Omega\left(\frac{\sqrt{Nx}}{cT}\right) \widetilde{V_2^1}\left(\frac{Nx}{D^{3/2}}, rT \right) x^{-s - 1} \, dx \, ds
\end{multline}
for any $\sigma \geq 0$. We again use \hyperref[Miyakelemma]{Lemma \ref*{Miyakelemma}} to write the Gauss sum over $d$ as a sum over $a \mid (c/D,n \pm 1)$. Repeated integration by parts in the $x$-integral shows that the $s$-integral may be restricted to
\[|\Im(s)| < \frac{\sqrt{N}}{cT} t_g^{\e} < \frac{t_g^{1 + \e}}{cT}.\]
Moving the line of integration in \eqref{toprove2} far to the right and using the bounds in \hyperref[JJrMellinasympcor]{Corollary \ref*{JJrMellinasympcor}} for $\widehat{\JJ_{2t_g}^{\pm}}$, we may once again restrict to $n < t_g^{2 + \e}$. Upon fixing $\sigma = 0$ in \eqref{toprove2} and bounding the resulting integral trivially by $\frac{\sqrt{N}}{cT} t_g^{1 + \e}$, since $\widehat{\JJ_{2t_g}^{\pm}}(2(1 + it)) \ll_{\e} t_g^{1 + \e}$, we arrive at the bound
\[t_g^{1 + \e} \sum_{\pm} \sum_{\substack{c < \frac{\sqrt{N}}{T^{2 - \e}} \\ c \equiv 0 \hspace{-.25cm} \pmod{D}}} \frac{1}{c} \sum_{n < t_g^{2 + \e}} \frac{|\lambda_{g_{\psi^2}}(n)|}{n} \sum_{a \mid \left(\frac{c}{D}, n \mp 1\right)} a \ll_{\e} t_g^{1 + \e}\]
upon making the change of variables $n \mapsto an \mp 1$ and recalling that $N < t_g^{2 + \e}$.
\end{proof}

\begin{proof}[Proof of {\hyperref[firstmomentprop]{Proposition \ref*{firstmomentprop} (1)}}]
It is clear that the first term in \eqref{firstmomenteq} is $O_{\e}(T^{2 + \e})$. \hyperref[Ks+lemma]{Lemmata \ref*{Ks+lemma}} and \ref{Ks-lemma} then bound the second and third terms by $O_{\e}(t_g^{1 + \e})$.
\end{proof}

\begin{proof}[Proof of {\hyperref[firstmomentprop]{Proposition \ref*{firstmomentprop} (2)}}]
A similar identity to \eqref{firstmomenteq} for $\widetilde{\MM}^{\hol}(h^{\hol})$ may be obtained by using the Petersson formula, \hyperref[Peterssonthm]{Theorems \ref*{Peterssonthm}} and \ref{infty1Peterssonthm}, instead of the Kuznetsov formula, namely
\begin{multline}
\label{firstmoment2eq}
\frac{D}{4\pi^2} \sum_{\substack{k = 4 \\ k \equiv 0 \hspace{-.25cm} \pmod{4}}}^{\infty} (k - 1) \widetilde{V_2^{\hol}}\left(\frac{1}{D^{3/2}},k\right) h^{\hol}(k)	\\
+ \frac{D}{2} \sum_{n = 1}^{\infty} \frac{\lambda_{g_{\psi^2}}(n)}{\sqrt{n}} \sum_{\substack{c = 1 \\ c \equiv 0 \hspace{-.25cm} \pmod{D}}}^{\infty} \frac{S(1,n;c)}{c} \left(\Ks^{\hol} \widetilde{V_2^{\hol}}\left(\frac{n}{D^{3/2}},\cdot\right) h^{\hol}\right)\left(\frac{\sqrt{n}}{c}\right)	\\
+ \frac{D}{2} \sum_{n = 1}^{\infty} \frac{\lambda_{g_{\psi^2}}(n)}{\sqrt{n}} \sum_{\substack{c = 1 \\ (c,D) = 1}}^{\infty} \frac{S(1,n\overline{D};c)}{c\sqrt{D}} \left(\Ks^{\hol} \widetilde{V_2^{\hol}}\left(\frac{n}{D^{3/2}},\cdot\right) h^{\hol}\right)\left(\frac{\sqrt{n}}{c\sqrt{D}}\right).
\end{multline}
Here $\Ks^{\hol}$ is as in \eqref{KsholJkholeq} and
\[\widetilde{V_2^{\hol}}(x,k) = \frac{1}{2\pi i} \int_{\sigma - i\infty}^{\sigma + i\infty} L(1 + 2s,\chi_D) e^{s^2} x^{-s} \prod_{\pm_1} \prod_{\pm_2} \frac{\Gamma_{\R}\left(s + \frac{k \pm_1 1}{2} \pm_2 2it_g\right)}{\Gamma_{\R}\left(\frac{1}{2} + \frac{k \pm_1 1}{2} \pm_2 2it_g\right)} \, \frac{ds}{s}.\]

The first term in \eqref{firstmoment2eq} is bounded by $O_{\e}(T^{2 + \e})$. For the latter two terms, we use the methods of \cite[Section 5.5]{Iwa97} to understand $\Ks^{\hol}$ in place of \cite[Lemmata 3.7 and 3.8]{BuK17a} to understand $\Ks^{\pm}$: this gives terms of the form
\begin{multline*}
\sum_{\pm} \sum_{\substack{c = 1 \\ c \equiv 0 \hspace{-.25cm} \pmod{D}}}^{\infty} \sum_{n = 1}^{\infty} \frac{\lambda_{g_{\psi^2}}(n)}{\sqrt{n}} W\left(\frac{n}{N}\right) \frac{S(1,n;c)}{c} \\
\times \int_{-\infty}^{\infty} e\left(\pm \frac{2\sqrt{n}}{c} \cos 2\pi u\right) \int_{0}^{\infty} \widetilde{V_2^{\hol}}\left(\frac{n}{D^{3/2}},r + 1\right) h^{\hol}(r + 1) r e(-ur) \, dr \, du
\end{multline*}
and
\begin{multline*}
\sum_{\pm} \sum_{\substack{c = 1 \\ c \equiv 0 \hspace{-.25cm} \pmod{D}}}^{\infty} \sum_{n = 1}^{\infty} \frac{\lambda_{g_{\psi^2}}(n)}{\sqrt{n}} W\left(\frac{n}{N}\right) \frac{S(1,n;c)}{c} \\
\times \int_{-\infty}^{\infty} e\left(\pm \frac{2\sqrt{n}}{c} \sin 2\pi u\right) \int_{0}^{\infty} \widetilde{V_2^{\hol}}\left(\frac{n}{D^{3/2}},r + 1\right) h^{\hol}(r + 1) r e(-ur) \, dr \, du,
\end{multline*}
as well as the counterparts involving sums over $c \in \N$ with $(c,D) = 1$. The former term is then treated via the same methods as \hyperref[Ks+lemma]{Lemma \ref*{Ks+lemma}}, while the latter is treated as in \hyperref[Ks-lemma]{Lemma \ref*{Ks-lemma}}.
\end{proof}

\section{Spectral Reciprocity for the Short Initial Range}
\label{spectralrecsect}

The main result of this section is an identity for
\[\MM^{\pm}(\hf) \defeq \MM^{\Maass}(h) + \MM^{\Eis}(h) + \delta_{+,\pm} \MM^{\hol}(h^{\hol})\]
for a (suitably well-behaved) function $\hf \defeq (h,h^{\hol}) : (\R \cup i(-1/2,1/2)) \times 2\N \to \C^2$, with $\MM^{\Maass}(h)$ and $\MM^{\Eis}(h)$ as in \eqref{MMMaasseq} and \eqref{MMEiseq}, and
\[\MM^{\hol}\left(h^{\hol}\right) \defeq \sum_{d_1 d_2 = D} 2^{\omega(d_2)} \frac{\varphi(d_2)}{d_2} \sum_{f \in \BB_{\hol}^{\ast}(\Gamma_0(d_1))} \frac{L^{d_2}\left(\frac{1}{2},f\right) L\left(\frac{1}{2},f \otimes \chi_D\right) L\left(\frac{1}{2},f \otimes g_{\psi^2}\right)}{L^{d_2}(1,\sym^2 f)} h^{\hol}(k_f).\]
We will take $\hf$ to be an admissible function in the sense of \cite[Lemma 8b)]{BlK19b}, namely $h(t)$ is even and holomorphic in the horizontal strip $|\Im(t)| < 500$, in which it satisfies $h(t) \ll (1 + |t|)^{-502}$ and has zeroes at $\pm (n + 1/2) i$ for nonnegative integers $n < 500$, while $h^{\hol}(k) \equiv 0$. We will later make the choice
\[h(t) = h_T(t) \defeq e^{-\frac{t^2}{T^2}} \prod_{j = 1}^{N} \left(\frac{t^2 + \left(j - \frac{1}{2}\right)^2}{T^2}\right)^2\]
for some fixed large integer $N \geq 500$ and $T > 0$; suffice it to say, one may read the rest of this section with this test function in mind. 

\begin{proposition}
\label{spectralreciprocityprop}
For an admissible function $\hf$, we have the identity
\begin{equation}
\label{spectralreciprocityeq}
\MM^{-}(\hf) = \NN(\hf) + \sum_{\pm} \MM^{\pm} \left(\Ts_{t_g}^{\pm} \hf\right),
\end{equation}
where
\begin{align}
\notag
\NN(\hf) & \defeq \frac{6}{\pi^2} L(1,\chi_D)^2 L(1, g_{\psi^2})^2 \frac{D^2}{\nu(D)} \Ns h,	\\
\label{Tshfeq}
\Ts_{t_g}^{+} \hf & \defeq \left(\Ls^{+} H_{t_g}^{+},\Ls^{\hol} H_{t_g}^{+}\right), \qquad \Ts_{t_g}^{-} \hf \defeq \left(\Ls^{-} H_{t_g}^{-},0\right),	\\
\label{Htgeq}
H_{t_g}^{\pm}(x) & \defeq \frac{2}{\pi i} \int_{\sigma_1 - i\infty}^{\sigma_1 + i\infty} \widehat{\Ks^{-} h}(s) \GG_{t_g}^{\pm}(1 - s) x^s \, ds, \quad -3 < \sigma_1 < 1,	\\
\label{GGtgeq}
\GG_{t_g}^{\pm}(s) & \defeq \widehat{\JJ_0^{+}}(s) \widehat{\JJ_{2t_g}^{\mp}}(s) + \widehat{\JJ_0^{-}}(s) \widehat{\JJ_{2t_g}^{\pm}}(s).
\end{align}
\end{proposition}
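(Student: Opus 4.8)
The plan is to prove \eqref{spectralreciprocityeq} by the now-standard ``triad'' of the Kuznetsov, Vorono\u{\i}, and Kloosterman summation formul\ae{}, working throughout with Dirichlet series in their domains of absolute convergence and continuing meromorphically to the central point only at the very end, in the manner of \cite{BlK19b} (and closely paralleling \cite{BuK17a,DK18b}). Starting from \eqref{MMMaasseq}, \eqref{MMEiseq}, and the definition of $\MM^{\hol}$, I would first express the triple product of $L$-functions as a double Dirichlet series in the Hecke eigenvalues $\lambda_f$: for $\Re(s_1),\Re(s_2)$ large, $L(1/2 + s_1, f \otimes g_{\psi^2}) = \sum_n \lambda_f(n)\lambda_{g_{\psi^2}}(n) n^{-1/2 - s_1}$, while $L^{d_2}(1/2 + s_2, f) L(1/2 + s_2, f \otimes \chi_D)$ is, by Hecke multiplicativity and up to fixed (polar-free) Dirichlet series, $\sum_m \lambda_f(m)\, \sigma_{\chi_D}(m)\, m^{-1/2 - s_2}$, where $\sigma_{\chi_D}(m) = \sum_{d \mid m} \chi_D(d)$ is the $\chi_D$-divisor function, whose generating series $\zeta(s)L(s,\chi_D)$ has a simple pole at $s = 1$ with residue $L(1,\chi_D)$ --- the germ of the main term. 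This presents $\MM^{-}(\hf)$ as a contour integral over $(s_1,s_2)$, with the real parts large, of absolutely convergent spectral averages of the shape $\sum_f \lambda_f(m)\lambda_f(n)\, h(t_f)/L^{d_2}(1,\sym^2 f)$ together with their Eisenstein analogues.

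Next I would apply the opposite-sign Kuznetsov formula, in both its $(\infty,\infty)$ and $(\infty,1)$ forms (Theorems~\ref{Kuznetsovthm} and \ref{infty1Kuznetsovthm}), to these spectral averages, exactly as in the proof of Lemma~\ref{MMtildeidentitylemma}; the Atkin--Lehner pseudo-eigenvalues $\eta_f(d_1)$ produced by the $(\infty,1)$ formula encode the root number of $f \otimes g_{\psi^2}$ and are what allow the Kuznetsov formul\ae{}, which range over forms of all levels dividing $D$, to be applied uniformly over the divisors $d_1 d_2 = D$. Since the opposite-sign formula carries no diagonal term, this yields purely a sum of Kloosterman sums $S(m,-n;c)$ weighted by the transform $\Ks^{-} h$ of $h$. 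I would then open the Kloosterman sums and apply the Vorono\u{\i} summation formula (Lemma~\ref{Voronoilemma}) in succession to the $n$-sum --- against $\lambda_{g_{\psi^2}}$, whose Vorono\u{\i} kernel carries the spectral parameter $t_{g_{\psi^2}} = 2t_g$, the source of the $\JJ_{2t_g}^{\pm}$ in \eqref{GGtgeq} --- and to the $m$-sum, against $\sigma_{\chi_D}$, whose kernel is assembled from $\JJ_0^{\pm}$. The contribution of this step that does not reorganise into a Kloosterman sum --- the polar part, carried through the residual $n$- and $c$-sums --- furnishes the main term $\NN(\hf)$; pinning down its shape, including the constants $L(1,\chi_D)^2 L(1,g_{\psi^2})^2 = L(1,\ad g_{\psi})^2$ and $\tfrac{6}{\pi^2}\tfrac{D^2}{\nu(D)}$ and the archimedean transform $\Ns h$, is a residue and gamma-integral computation. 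The remaining dual sum is once more a sum of Kloosterman sums, now in exactly the proportion demanded by the Kloosterman summation formula --- the Kuznetsov and Petersson formul\ae{} read as identities expressing sums of Kloosterman sums through Fourier coefficients, with the reciprocated length of the resulting spectral sum encoded in $\Ts_{t_g}^{\pm}$. Its same-sign incarnation contributes Maa\ss{}, Eisenstein \emph{and} holomorphic forms while its opposite-sign incarnation contributes only Maa\ss{} and Eisenstein forms, which is exactly the asymmetry recorded in \eqref{Tshfeq}; composing the Mellin transform $\widehat{\Ks^{-} h}$ with the products of Vorono\u{\i} kernels gives $\GG_{t_g}^{\pm}$ as in \eqref{GGtgeq} and hence $H_{t_g}^{\pm}$ as in \eqref{Htgeq}, while the concluding Kuznetsov and Petersson transforms account for $\Ls^{\pm}$ and $\Ls^{\hol}$.

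The main obstacle is the analytic bookkeeping that legitimises this formal chain. One must check that each of the three summation steps takes place in a common region of absolute convergence of the $(s_1,s_2)$-integrals (and that the $c$-, $m$-, and $n$-summations may be interchanged with them); justify the contour shift in \eqref{Htgeq} within the strip $-3 < \sigma_1 < 1$ without encountering the poles of the $\Gamma$- and $\zeta$-factors concealed in $\widehat{\Ks^{-} h}$ and $\GG_{t_g}^{\pm}$; and verify that the resulting identity, a priori valid only for $\Re(s_1),\Re(s_2)$ large, continues meromorphically to $s_1 = s_2 = 0$. It is precisely here that the admissibility of $\hf$ intervenes: the holomorphy and rapid decay of $h$ in $|\Im t| < 500$, together with its prescribed zeros at $\pm(n + \tfrac{1}{2})i$ for $n < 500$, are exactly what permit the various contours to be moved past the poles of the archimedean factors and allow the contribution of small or exceptional spectral parameters to be absorbed. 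A secondary, purely combinatorial, task is to carry the divisor sums $\sum_{d_1 d_2 = D}$, the weights $2^{\omega(d_2)}\varphi(d_2)/d_2$, and the Atkin--Lehner pseudo-eigenvalues through both applications of the trace formula, so that the final expression is once more organised by level over the divisors of $D$. Granting these verifications --- each very close to an argument already carried out in \cite{BuK17a,DK18b,BlK19b} --- assembling the main term, the transformed spectral average, and the Eisenstein and holomorphic pieces yields \eqref{spectralreciprocityeq}.
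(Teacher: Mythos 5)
Your outline follows essentially the same route as the paper: work with Dirichlet series for $\Re(s_1),\Re(s_2)$ large, apply the opposite-sign Kuznetsov formula (so no diagonal term), pass the Kloosterman term through two Vorono\u{\i} summations, extract the main term from the pole of the $E_{\chi_D,1}$-Vorono\u{\i} $L$-series, reassemble the dual sum via the Kloosterman summation formula, and continue to the central point. Two remarks, one minor and one substantive.

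The minor one: in this Dirichlet-series incarnation the $(\infty,1)$ Kuznetsov formula and the Atkin--Lehner pseudo-eigenvalues are not needed. The device of playing the two cusps against each other belongs to the approximate-functional-equation arguments (as in \hyperref[MMtildeidentitylemma]{Lemma \ref*{MMtildeidentitylemma}} and \hyperref[bulkKuzlemma]{Lemma \ref*{bulkKuzlemma}}), where it serves to produce approximate functional equations of the correct length level by level. Here one only applies the $(\infty,\infty)$ formula of \hyperref[Kuznetsovthm]{Theorem \ref*{Kuznetsovthm}}; the oldform combinatorics over $d_1 d_2 = D$ are absorbed into the factors $\Ell_{d_2}(s_1,s_2,f)$ coming from \hyperref[Kuznetsovlemma]{Lemma \ref*{Kuznetsovlemma}}, which collapse to the weights in $\MM^{\Maass}$ at the central point by \hyperref[EllMaassidentitylemma]{Lemma \ref*{EllMaassidentitylemma}}. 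Importing the two-cusp trick is not wrong, but it is solving a problem that does not arise once one has abandoned approximate functional equations.

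The substantive gap is in the final continuation step. You say one must ``verify that the resulting identity \ldots continues meromorphically'' to the central point and attribute the requisite control to the admissibility of $\hf$. But the delicate point is not absorbed by admissibility: when the Eisenstein terms $\MM^{\Eis}(s_1,s_2;h)$ and $\MM^{\Eis}(s_2,s_1;\Ls^{\pm}H^{\pm}_{s_1,s_2,t_g})$ are continued to $\Re(s_1),\Re(s_2) < 1$, the poles of $\zeta(s_1 \pm it)$ cross the $t$-contour and deposit explicit residue terms $\RR(s_1,s_2;\hf)$ and $\RR^{\pm}(s_2,s_1;\Ts^{\pm}_{s_1,s_2,t_g}\hf)$ at $t = \pm i(1 - s_1)$, resp.\ $t = \pm i(1 - s_2)$. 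A priori these survive into the identity, and the clean statement \eqref{spectralreciprocityeq} --- with no such terms --- holds only because each residue carries a factor $L(s_1 \mp it,\chi_D)$ which at $s_1 = s_2 = 1/2$, $t = \pm i/2$ becomes $L(0,\chi_D) = 0$, the trivial zero of the even character $\chi_D$. Without identifying these extra polar terms and the reason they vanish, the proof as proposed would either miss them entirely or be unable to discard them; this is the one genuinely non-routine verification in the continuation and it must be made explicit.
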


Here $\Ls^{\pm}$ and $\Ls^{\hol}$ are as in \eqref{Lseq}, $\Ns$ and $\Ks^{-}$ as in \eqref{NsKspmeq}, and $\JJ_r^{\pm}$ as in \eqref{Jrpmeq}. The proof of \hyperref[spectralreciprocityprop]{Proposition \ref*{spectralreciprocityprop}}, which we give at the end of this section, is via the triad of Kuznetsov, Vorono\u{\i}, and Kloosterman summation formul\ae{}. Following the work of Blomer, Li, and Miller \cite{BLM19} and Blomer and the second author \cite{BlK19a,BlK19b}, we avoid using approximate functional equations but instead use Dirichlet series in regions of absolute convergence to obtain an identity akin to \eqref{spectralreciprocityeq}, and then extend this identity holomorphically to give the desired identity.

\begin{remark}
This approach obviates the need for complicated stationary phase estimates and any utilisation of the spectral decomposition of shifted convolution sums, which is the (rather technically demanding) approach taken by Jutila and Motohashi \cite[Theorem 2]{JM05} in obtaining the bound
\begin{multline*}
\sum_{T \leq t_f \leq 2T} \frac{L\left(\frac{1}{2},f\right)^2 \left|L\left(\frac{1}{2} + 2it_g,f\right)\right|^2}{L(1,\sym^2 f)}	\\
+ \frac{1}{2\pi} \int\limits_{T \leq |t| \leq 2T} \left|\frac{\zeta\left(\frac{1}{2} + it\right)^2 \zeta\left(\frac{1}{2} + i(2t_g + t)\right) \zeta\left(\frac{1}{2} + i(2t_g - t)\right)}{\zeta(1 + 2it)}\right|^2 \, dt \ll_{\e} T^{2 + \e} + t_g^{\frac{4}{3} + \e},
\end{multline*}
which is used in \cite{DK18b,Hum18} in the proofs of \hyperref[Planckthm]{Theorems \ref*{Planckthm}} and \ref{fourthmomentthm} for Eisenstein series. Indeed, the method of proof of spectral reciprocity in \hyperref[spectralreciprocityprop]{Proposition \ref*{spectralreciprocityprop}} could be used to give a simpler proof (and slightly stronger version) of \cite[Theorem 2]{JM05}.
\end{remark}

\begin{remark}
Structurally, \hyperref[spectralreciprocityprop]{Proposition \ref*{spectralreciprocityprop}} is proven in a similar way to \cite[Theorem 1.1]{BuK17a}, where an asymptotic with a power savings is given for a moment of $L$-functions that closely resembles $\MM^{-}(\hf)$; see in particular the sketch of proof in \cite[Section 2]{BuK17a}, which highlights the process of Kuznetsov, Vorono\u{\i}, and Kloosterman summation formul\ae{}. The chief difference is the usage of Dirichlet series in regions of absolute convergence coupled with analytic continuation in place of approximate functional equations.
\end{remark}

We define
\begin{align*}
\MM^{\Maass,\pm}\left(s_1,s_2;h\right) & \defeq \sum_{d_1 d_2 = D} \sum_{f \in \BB_0^{\ast}(\Gamma_0(d_1))} \epsilon_f^{\frac{1 \mp 1}{2}} \Ell_{d_2}(s_1,s_2,f)	\\
& \hspace{3cm} \times \frac{L(s_1,f) L(s_1,f \otimes \chi_D) L(s_2,f \otimes g_{\psi^2})}{L(1,\sym^2 f)} h(t_f),	\\
\MM^{\Eis}\left(s_1,s_2;h\right) & \defeq \frac{1}{2\pi} \int_{-\infty}^{\infty} \Ell_D(s_1,s_2,t)	\\
& \hspace{3cm} \times \prod_{\pm} \frac{\zeta(s_1 \pm it) L(s_1 \pm it, \chi_D) L(s_2 \pm it, g_{\psi^2})}{\zeta(1 \pm 2it)} h(t) \, dt,	\\
\MM^{\hol}\left(s_1,s_2;h^{\hol}\right) & \defeq \sum_{d_1 d_2 = D} \sum_{f \in \BB_{\hol}^{\ast}(\Gamma_0(d_1))} \Ell_{d_2}(s_1,s_2,f)	\\
& \hspace{3cm} \times \frac{L(s_1,f) L(s_1,f \otimes \chi_D) L(s_2,f \otimes g_{\psi^2})}{L(1,\sym^2 f)} h^{\hol}(k_f)
\end{align*}
for $s_1,s_2 \in \C$, where
\begin{align*}
\Ell_{d_2}(s_1,s_2,f) & \defeq \frac{d_2}{\nu(d_2)} \sum_{\ell \mid d_2} L_{\ell}(1,\sym^2 f) \frac{\varphi(\ell)}{\ell^{s_1 + s_2}}	\\
& \hspace{3cm} \times \sum_{v_1 w_1 = \ell} \frac{\nu(v_1)}{v_1} \frac{\mu(w_1) \lambda_f(w_1)}{w_1^{1 - s_1}} \sum_{v_2 w_2 = \ell} \frac{\nu(v_2)}{v_2} \frac{\mu(w_2) \lambda_f(w_2)}{w_2^{1 - s_2}},	\\
\Ell_D(s_1,s_2,t) & \defeq \frac{D}{\nu(D)} \sum_{\ell \mid D} \zeta_{\ell}(1 + 2it) \zeta_{\ell}(1 - 2it) \frac{1}{\ell^{s_1 + s_2 - 1}}	\\
& \hspace{3cm} \times \sum_{v_1 w_1 = \ell} \frac{\nu(v_1)}{v_1} \frac{\mu(w_1) \lambda(w_1,t)}{w_1^{1 - s_1}} \sum_{v_2 w_2 = \ell} \frac{\nu(v_2)}{v_2} \frac{\mu(w_2) \lambda(w_2,t)}{w_2^{1 - s_2}}.
\end{align*}
We additionally set
\[\MM^{\pm}\left(s_1,s_2;\hf\right) \defeq \MM^{\Maass,\pm}\left(s_1,s_2;h\right) + \MM^{\Eis}\left(s_1,s_2;h\right) + \delta_{\pm,+} \MM^{\hol}\left(s_1,s_2;h^{\hol}\right).\]

\begin{lemma}
\label{spectralrecpreaclemma}
For admissible $\hf$ and $5/4 < \Re(s_1), \Re(s_2) < 3/2$, we have that
\[\MM^{-}\left(s_1,s_2;\hf\right) = \NN\left(s_1,s_2;\hf\right) + \sum_{\pm} \MM^{\pm} \left(s_2,s_1;\Ts_{s_1,s_2,t_g}^{\pm} \hf\right),\]
where
\[
\NN\left(s_1,s_2;\hf\right) \defeq \frac{L(1,\chi_D) L(2s_2, \chi_D) L(s_1 + s_2, g_{\psi^2}) L^D(1 - s_1 + s_2,g_{\psi^2})}{\zeta^D(1 + 2s_2)} 2D^{2(1 - s_1)} \widehat{\Ks^{-} h}(2(1 - s_1))\]
and
\begin{equation}
\label{Tss1s2tgpmeq}
\begin{split}
\Ts_{s_1,s_2,t_g}^{+} \hf & \defeq \left(\Ls^{+} H_{s_1,s_2,t_g}^{+},\Ls^{\hol} H_{s_1,s_2,t_g}^{+}\right),	\\
\Ts_{s_1,s_2,t_g}^{-} \hf & \defeq \left(\Ls^{-} H_{s_1,s_2,t_g}^{-},0\right),
\end{split}
\end{equation}
with
\begin{multline}
\label{Hs1s2tgeq}
H_{s_1,s_2,t_g}^{\pm}(x) \defeq \frac{2}{\pi i} \int_{\sigma_1 - i\infty}^{\sigma_1 + i\infty} \widehat{\Ks^{-} h}(s) \left(\widehat{\JJ_0^{+}}(2 - s - 2s_1) \widehat{\JJ_{2t_g}^{\mp}}(2 - s - 2s_2) \right.	\\
\left. + \widehat{\JJ_0^{-}}(2 - s - 2s_1) \widehat{\JJ_{2t_g}^{\pm}}(2 - s - 2s_2)\right) x^{s + 2(s_1 + s_2 - 1)} \, ds,
\end{multline}
where $-3 < \sigma_1 < 2(1 - \max\{\Re(s_1),\Re(s_2)\})$.
\end{lemma}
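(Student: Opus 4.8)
The plan is to prove the identity entirely within the region of absolute convergence $5/4 < \Re(s_1),\Re(s_2) < 3/2$, as a chain of rearrangements of convergent Dirichlet series and contour integrals, following the template of Blomer, Li, and Miller \cite{BLM19} and Blomer and the second author \cite{BlK19a,BlK19b}. The four moves are a Dirichlet-series expansion, the Kuznetsov formula, Vorono\u\i{} summation applied to the resulting Kloosterman sums, and the Kuznetsov formula read in the reverse direction (the Kloosterman summation formula). The constraint $-3 < \sigma_1 < 2(1 - \max\{\Re(s_1),\Re(s_2)\})$ is precisely the range in which the Mellin--Barnes contour defining $H_{s_1,s_2,t_g}^{\pm}$ in \eqref{Hs1s2tgeq} converges and remains to the left of the poles of $\widehat{\JJ_r^{\pm}}$ recorded in \hyperref[JJrMellinasympcor]{Corollary \ref*{JJrMellinasympcor}}, so that every manipulation below is an identity of absolutely convergent objects and no analytic continuation is needed at this stage.

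First I would unfold. In the stated region $L(s_1,f) L(s_1,f\otimes\chi_D)$ is a Dirichlet series $\sum_n \sigma_{\chi_D}(n)\lambda_f(n) n^{-s_1}$ with $\sigma_{\chi_D}(n) = \sum_{d\mid n}\chi_D(d)$, and $L(s_2,f\otimes g_{\psi^2})$ is $\sum_m \lambda_{g_{\psi^2}}(m)\lambda_f(m) m^{-s_2}$, each up to elementary $\chi_D$- and $D$-Euler corrections that are absorbed into the $\Ell$-factors. The functions $\Ell_{d_2},\Ell_D$ are engineered so that, after summing over $\ell\mid d_2$ and the parameters $v_i,w_i$ and invoking the orthonormal basis of \hyperref[ILSlemma]{Lemma \ref*{ILSlemma}} together with \hyperref[EllMaassidentitylemma]{Lemmata \ref*{EllMaassidentitylemma}} and \ref{EllEisidentitylemma}, the sum over $f\in\BB_0^{\ast}(\Gamma_0(d_1))$ in $\MM^{\Maass,-}$ and the integral in $\MM^{\Eis}$ recombine into the full discrete-plus-continuous spectral average of the Kuznetsov formula on $\Gamma_0(D)\backslash\Hb$ taken over the cusp pairs $(\infty,\infty)$ and $(\infty,1)$, exactly as in the proof of \hyperref[MMtildeidentitylemma]{Lemma \ref*{MMtildeidentitylemma}}. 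The parity weight $\epsilon_f$ in $\MM^{\Maass,-}$ turns $\overline{\rho_f(m)}\rho_f(n)$ into $\overline{\rho_f(m)}\rho_f(-n)$, so the relevant formula is the \emph{opposite-sign} Kuznetsov formula of \hyperref[Kuznetsovthm]{Theorems \ref*{Kuznetsovthm}} and \ref{infty1Kuznetsovthm}; it has no diagonal term, and applying it expresses $\MM^{-}(s_1,s_2;\hf)$ as
\[
\sum_{n,m\geq 1}\frac{\sigma_{\chi_D}(n)\,\lambda_{g_{\psi^2}}(m)}{n^{s_1}m^{s_2}}\sqrt{nm}\sum_{D \mid c}\frac{S(n,-m;c)}{c}\,\bigl(\Ks^{-}h\bigr)\!\left(\frac{4\pi\sqrt{nm}}{c}\right)
\]
together with an entirely parallel cusp-$1$ contribution, whose $c$-sum runs over $(c,D)=1$ and whose Kloosterman sum is twisted by $\overline{D}$, and which ultimately supplies the factors $1/\sqrt D$ and $D^{2(1-s_1)}$.

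Next I would open $S(n,-m;c) = \sum_{d\bmod c}^{\ast} e\!\left((nd - m\bar d)/c\right)$ and apply the $\GL_2$ Vorono\u\i{} summation formula, \hyperref[Voronoilemma]{Lemma \ref*{Voronoilemma}}, to the $n$-sum against $e(nd/c)$ and to the $m$-sum against $e(-m\bar d/c)$. The Vorono\u\i{} formula for $\sigma_{\chi_D}$ (an Estermann-type identity, with archimedean kernels $\widehat{\JJ_0^{\pm}}$ since $\sigma_{\chi_D}$ is assembled from two copies of a Dirichlet $L$-function of spectral parameter $0$) contributes a zero-frequency term carrying a Gauss sum of $\chi_D$ and the value $L(1,\chi_D)$; collapsing the ensuing Ramanujan sums over $c$ and resumming against the Vorono\u\i{}-transformed $g_{\psi^2}$-series — whose functional equation introduces the kernels $\widehat{\JJ_{2t_g}^{\pm}}$ and replaces $\sum_m \lambda_{g_{\psi^2}}(m) m^{-s_2}$ by the combination $L(2s_2,\chi_D)\,L(s_1+s_2,g_{\psi^2})\,L^D(1-s_1+s_2,g_{\psi^2})/\zeta^D(1+2s_2)$ — produces precisely the main term $\NN(s_1,s_2;\hf)$, the Bessel part assembling into $D^{2(1-s_1)}\widehat{\Ks^{-}h}(2(1-s_1))$. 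The non-zero-frequency terms leave a double sum against $\lambda_{g_{\psi^2}}(\tilde m)$ and $\sigma_{\chi_D}(\tilde n)$ of Kloosterman sums $S(\tilde m,\pm\tilde n;c)$ weighted by a kernel that, after Mellin inversion, is exactly the two-argument expression $\widehat{\JJ_0^{+}}(2-s-2s_1)\widehat{\JJ_{2t_g}^{\mp}}(2-s-2s_2)+\widehat{\JJ_0^{-}}(2-s-2s_1)\widehat{\JJ_{2t_g}^{\pm}}(2-s-2s_2)$ of \eqref{Hs1s2tgeq}, the sign being inherited from pairing the two $\pm$-choices in the two Vorono\u\i{} transforms — which is why $\GG_{t_g}^{\pm}$ has the cross-term shape of \eqref{GGtgeq}. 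Reading the Kuznetsov formula backwards converts these $c$-sums into spectral averages over Maa\ss{} forms, over \emph{holomorphic} forms of even weight in the same-sign case (whence $\MM^{\hol}$ and the $\Ls^{\hol}$ in \eqref{Tss1s2tgpmeq}), and over Eisenstein series; reindexing $\tilde n,\tilde m$ against the transformed series — where the combined effect of the two Vorono\u\i{} steps and the scaling $x^{s+2(s_1+s_2-1)}$ is an \emph{interchange} $s_1\leftrightarrow s_2$ rather than a reflection $s_i\mapsto 1-s_i$ — reassembles the output as $\sum_{\pm}\MM^{\pm}(s_2,s_1;\Ts_{s_1,s_2,t_g}^{\pm}\hf)$ with $\Ts_{s_1,s_2,t_g}^{\pm}$ as in \eqref{Tss1s2tgpmeq}.

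The main obstacle is not a single estimate but the exactness of the bookkeeping. One must track the Mellin transforms $\widehat{\JJ_0^{\pm}}$, $\widehat{\JJ_{2t_g}^{\pm}}$ through the two Vorono\u\i{} steps so that they emerge with the arguments $2-s-2s_1$ and $2-s-2s_2$ demanded by \eqref{Hs1s2tgeq}, and check that every arithmetic sum — over $c$, over $d_1d_2=D$, and over the oldform parameters $\ell,v_1,w_1,v_2,w_2$ — reorganises into precisely $\Ell_{d_2}(s_2,s_1,f)$ and $\Ell_D(s_2,s_1,t)$ on the dual side and into the $\tfrac{D^{2}}{\nu(D)}$-normalised constant in $\NN$. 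A related subtlety is the disciplined use of the two cusp pairs: as in \hyperref[firstmomentsect]{Section \ref*{firstmomentsect}}, this is the mechanism by which the approximate-functional-equation-free argument produces every $d_1\mid D$ with the correct weight and Atkin--Lehner pseudo-eigenvalue $\eta_f(d_1)$, and by which the coprime-to-$D$ Kloosterman sums acquire the $\overline{D}$-shift and the $1/\sqrt D$ and $D^{2(1-s_1)}$ factors. Since everything remains an identity of absolutely convergent series throughout the stated $s_1,s_2$-range, the analytic-continuation questions are entirely deferred to the passage from this lemma to \hyperref[spectralreciprocityprop]{Proposition \ref*{spectralreciprocityprop}}.
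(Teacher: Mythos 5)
Your high-level architecture is right and matches the paper: work entirely with Dirichlet series in the region of absolute convergence, apply the opposite-sign Kuznetsov formula (whose lack of a diagonal term is the point of artificially inserting $\epsilon_f$), Mellin-invert $\Ks^{-}h$, apply Vorono\u{\i} to both Vorono\u{\i} $L$-series, extract the main term $\NN$ from the pole at $s=2(1-s_1)$, then read the Kloosterman sums backwards through \hyperref[Kloostermanthm]{Theorem~\ref*{Kloostermanthm}} into $\sum_{\pm}\MM^{\pm}(s_2,s_1;\Ts^{\pm}_{s_1,s_2,t_g}\hf)$. The appearance of $\widehat{\JJ_0^{\pm}}$ (from the Estermann-type formula) and $\widehat{\JJ_{2t_g}^{\pm}}$ (from $g_{\psi^2}$) in the cross-term shape of $\GG^{\pm}_{t_g}$, and the interchange $s_1\leftrightarrow s_2$ rather than a reflection, are correctly identified.

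However, your invocation of the second cusp pair $(\infty,1)$ is a genuine error, not a benign paraphrase. You claim $\MM^{-}(s_1,s_2;\hf)$ recombines "exactly as in \hyperref[MMtildeidentitylemma]{Lemma~\ref*{MMtildeidentitylemma}}" from the $(\infty,\infty)$ and $(\infty,1)$ Kuznetsov formulae, and then produce a "parallel cusp-$1$ contribution" with a $(c,D)=1$ sum and an $\eta_f$-twist supplying the $1/\sqrt{D}$ and $D^{2(1-s_1)}$ factors. But \hyperref[MMtildeidentitylemma]{Lemma~\ref*{MMtildeidentitylemma}} needed both cusp pairs precisely because the \emph{approximate functional equation} has two terms of complementary lengths, and the $(\infty,1)$ pair manufactures the root number $\eta_f(d_1)$ weighting the second term. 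In the region $5/4<\Re(s_i)<3/2$ the Dirichlet series has no second term, so only the $(\infty,\infty)$ pair is applied: the paper multiplies \hyperref[Kuznetsovthm]{Theorem~\ref*{Kuznetsovthm}} alone by $\lambda_{\chi_D,1}(m,0)\lambda_{g_{\psi^2}}(n)m^{-s_1}n^{-s_2}$ and the explicit expansion \hyperref[Kuznetsovlemma]{Lemma~\ref*{Kuznetsovlemma}} already produces the sum over all $d_1\mid D$ and the $\Ell$-structure via the oldform parameters $v_i,w_i$ (after the changes of variables $m\mapsto v_1m$, $n\mapsto v_2n$). The $L(2s_1,\chi_D)L(2s_2,\chi_D)$ divisors come from the Ramanujan-type Euler products in \hyperref[Ramanujanlemma]{Lemma~\ref*{Ramanujanlemma}} — not from a second cusp — and your first displayed identity for $\MM^{-}$ omits them (and carries a spurious $\sqrt{nm}$ and a $4\pi$ in the argument of $\Ks^{-}h$). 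The powers $D^{2(1-s_1)}$ in $\NN$ arise from the Gauss sum evaluation (\hyperref[Miyakelemma]{Lemma~\ref*{Miyakelemma}}) and the changes of variables $c\mapsto acD$, $m\mapsto am$ in the residue computation — not from a cusp-$1$ Kloosterman sum. Adding a second Kloosterman contribution here would double-count and break the identity. Relatedly, your description of the main term as "resumming against the Vorono\u{\i}-transformed $g_{\psi^2}$-series" with $\widehat{\JJ_{2t_g}^{\pm}}$ kernels is off: the residue at $s=2(1-s_1)$ is computed with the Vorono\u{\i} $L$-series $L(1-s_1+s_2,g_{\psi^2},-\overline d/c)$ opened directly as an absolutely convergent Dirichlet series (initially for $\Re(s_2)>\Re(s_1)$, then extended by holomorphy), so no $\widehat{\JJ_{2t_g}^{\pm}}$ factor appears in $\NN$ and none needs to "assemble" into $\widehat{\Ks^{-}h}(2(1-s_1))$ — the latter simply is $\widehat{\Ks^{-}h}$ evaluated at the pole.
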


The proof of this is similar to the proofs of analogous results in \cite{BLM19,BlK19a,BlK19b}; as such, we will be terse at times in justifying various technical steps, especially governing the absolute convergence required for the valid shifting of contours and interchanging of orders of integration and summation, for the details may be found in the aforementioned references.

\begin{proof}
We multiply the opposite sign Kuznetsov formula, \hyperref[Kuznetsovthm]{Theorem \ref*{Kuznetsovthm}}, by
\[\frac{\lambda_{\chi_D,1}(m,0) \lambda_{g_{\psi^2}}(n)}{m^{s_1} n^{s_2}}\]
with $\Re(s_1), \Re(s_2) > 1$ and sum over $m,n \in \N$, with $\lambda_{\chi_D,1}(m,0) = \sum_{ab = m} \chi_D(a)$ as in \eqref{lambdachi1eq}. Via \hyperref[Ramanujanlemma]{Lemmata \ref*{Ramanujanlemma}} and \ref{Kuznetsovlemma}, the Maa\ss{} cusp form and the Eisenstein terms are
\[\frac{\MM^{-}\left(s_1,s_2;\hf\right)}{L(2s_1,\chi_D) L(2s_2,\chi_D)}\]
after making the change of variables $m \mapsto v_1 m$ and $n \mapsto v_2 n$, and noting that $\lambda_{\chi_D,1}(vm,0) = \lambda_{\chi_D,1}(m,0)$ and $\lambda_{g_{\psi^2}}(vn) = \lambda_{g_{\psi^2}}(n)$ whenever $v \mid D$ via \hyperref[dihedralHeckeeigenlemma]{Lemma \ref*{dihedralHeckeeigenlemma}}. Since this is an application of the opposite sign Kuznetsov formula, there is no delta term. Finally, Mellin inversion together with \hyperref[MellinKsextendlemma]{Lemma \ref*{MellinKsextendlemma}} give the identity
\[(\Ks^{-} h)(x) = \frac{1}{2\pi i} \int_{\sigma_0 - i\infty}^{\sigma_0 + i\infty} \widehat{\Ks^{-} h}(s) x^{-s} \, ds\]
for $-3 < \sigma_0 < 3$. Using this, the Kloosterman term is seen to be
\begin{equation}
\label{Kloosttermeq}
\frac{D}{2\pi i} \int_{\sigma_0 - i\infty}^{\sigma_0 + i\infty} \widehat{\Ks^{-} h}(s) \sum_{\substack{c = 1 \\ c \equiv 0 \hspace{-.25cm} \pmod{D}}}^{\infty} \frac{1}{c^{1 - s}} \sum_{d \in (\Z/c\Z)^{\times}} L\left(\frac{s}{2} + s_1, E_{\chi_D,1}, \frac{d}{c}\right) L\left(\frac{s}{2} + s_2, g_{\psi^2}, -\frac{\overline{d}}{c}\right) \, ds,
\end{equation}
with the Vorono\u{\i} $L$-series as in \eqref{VoronoiLseriesdefeq}. This rearrangement is valid for $2 - 2 \min\{\Re(s_1),\Re(s_2)\} < \sigma_0 < -1/2$, for then both Vorono\u{\i} $L$-series converge absolutely, while the Weil bound ensures that the sum over $c$ converges.

Assuming that $\max\{\Re(s_1),\Re(s_2)\} < 3/2$, we may move the contour $\Re(s) = \sigma_0$ to $\Re(s) = \sigma_1$ such that $-3 < \sigma_1 < -2\max\{\Re(s_1),\Re(s_2)\}$; the Phragm\'{e}n--Lindel\"{o}f convexity principle ensures that the ensuing integral converges. The only pole that we encounter along the way is at $s = 2(1 - s_1)$, with the resulting residue being
\begin{equation}
\label{residue}
2D^{3/2} L(1,\chi_D) \widehat{\Ks^{-} h}(2(1 - s_1)) \sum_{\substack{c = 1 \\ c \equiv 0 \hspace{-.25cm} \pmod{D}}}^{\infty} \frac{1}{c^{2s_1}} \sum_{d \in (\Z/c\Z)^{\times}} \chi_D(d) L\left(1 - s_1 + s_2, g_{\psi^2}, -\frac{\overline{d}}{c}\right)
\end{equation}
via \hyperref[Voronoilemma]{Lemma \ref*{Voronoilemma}}. For $\Re(s_2) > \Re(s_1)$, the Vorono\u{\i} $L$-series $L(1 - s_1 + s_2, g_{\psi^2}, -\overline{d}/c)$ may be written as an absolutely convergent Dirichlet series, so that the sum over $c$ and $d$ is equal to
\begin{equation}
\label{cdsum}
\sum_{m = 1}^{\infty} \frac{\lambda_{g_{\psi^2}}(m)}{m^{1 - s_1 + s_2}} \sum_{\substack{c = 1 \\ c \equiv 0 \hspace{-.25cm} \pmod{D}}}^{\infty} \frac{1}{c^{2s_1}} \sum_{d \in (\Z/c\Z)^{\times}} \chi_D(d) e\left(-\frac{m\overline{d}}{c}\right).
\end{equation}
The sum over $d$ is a Gauss sum, which may be reexpressed as a sum over $a \mid (c/D, m)$ via \hyperref[Miyakelemma]{Lemma \ref*{Miyakelemma}}. By making the change of variables $c \mapsto acD$ and $m \mapsto am$, \eqref{cdsum} becomes
\[D^{\frac{1}{2} - 2s_1} \sum_{a = 1}^{\infty} \frac{1}{a^{s_1 + s_2}} \sum_{m = 1}^{\infty} \frac{\lambda_{g_{\psi^2}}(am) \chi_D(m)}{m^{1 - s_1 + s_2}} \sum_{c = 1}^{\infty} \frac{\mu(c) \chi_D(c)}{c^{2s_1}}.\]
Applying M\"{o}bius inversion to \eqref{cuspmult}, we see that
\begin{equation}
\label{invertcuspmult}
\lambda_{g_{\psi^2}}(am) = \sum_{b \mid (a,m)} \mu(b) \chi_D(b) \lambda_{g_{\psi^2}}\left(\frac{a}{b}\right) \lambda_{g_{\psi^2}}\left(\frac{m}{b}\right).
\end{equation}
Making the change of variables $a \mapsto ab$ and $m \mapsto bm$, \eqref{cdsum} is rewritten as
\begin{multline*}
D^{\frac{1}{2} - 2s_1} \sum_{a = 1}^{\infty} \frac{\lambda_{g_{\psi^2}}(a)}{a^{s_1 + s_2}} \sum_{m = 1}^{\infty} \frac{\lambda_{g_{\psi^2}}(m) \chi_D(m)}{m^{1 - s_1 + s_2}} \sum_{c = 1}^{\infty} \frac{\mu(c) \chi_D(c)}{c^{2s_1}} \sum_{\substack{b = 1 \\ (b,D) = 1}}^{\infty} \frac{\mu(b)}{b^{1 + 2s_2}}	\\
= D^{\frac{1}{2} - 2s_1} \frac{L(s_1 + s_2, g_{\psi^2}) L^D(1 - s_1 + s_2,g_{\psi^2})}{\zeta^D(1 + 2s_2) L(2s_1, \chi_D)},
\end{multline*}
recalling that $g_{\psi^2}$ being dihedral means that it is twist-invariant by $\chi_D$. So the residue \eqref{residue} is $\NN(s_1,s_2;\hf) / L(2s_1,\chi_D) L(2s_2,\chi_D)$, at least initially for $\Re(s_2) > \Re(s_1)$, and this is also valid for $5/4 < \Re(s_1),\Re(s_2) < 3/2$, since it is holomorphic in this region.

Now we wish to reexpress \eqref{Kloosttermeq}, where $\sigma_0$ has been replaced by $\sigma_1$, with $-3 < \sigma_1 < -2 \max\{\Re(s_1),\Re(s_2)\}$. We apply the Vorono\u{\i} summation formul\ae{}, \hyperref[Voronoilemma]{Lemma \ref*{Voronoilemma}}, to both Vorono\u{\i} $L$-series. The resulting Vorono\u{\i} $L$-series are absolutely convergent Dirichlet series; opening these up and interchanging the order of summation and integration then leads to the expression
\[\sum_{m = 1}^{\infty} \frac{\lambda_{\chi_D,1}(m,0)}{m^{s_2}} \sum_{n = 1}^{\infty} \frac{\lambda_{g_{\psi^2}}(n)}{n^{s_1}} \sum_{\pm} \OO_D\left(m,\pm n; H_{s_1,s_2,t_g}^{\pm}\right)\]
with $\OO_D$ as in \eqref{OOqeq} and $H_{s_1,s_2,t_g}^{\pm}$ as in \eqref{Hs1s2tgeq}. As the Mellin transform of $\Ks^{-} h$ defines a holomorphic function of $s$ for $-3 < \Re(s) < 3$, while the Mellin transform of $\JJ_r^{\pm}$ has simple poles at $s = 2(\pm ir - n)$ with $n \in \N_0$, the integrand is holomorphic in the strip $-3 < \Re(s) < 2(1 - \max\{\Re(s_1),\Re(s_2)\})$.

Finally, we apply \hyperref[Kloostermanthm]{Theorem \ref*{Kloostermanthm}}, the Kloosterman summation formula, in order to express this sum of Kloosterman sums in terms of Fourier coefficients of automorphic forms; the admissibility of $\hf$ ensures that $H_{s_1,s_2,t_g}^{\pm}$ satisfies the requisite conditions for this formula to be valid. We then interchange the order of summation and once again use \hyperref[Ramanujanlemma]{Lemma \ref*{Ramanujanlemma}} and \hyperref[Kuznetsovlemma]{Lemma \ref*{Kuznetsovlemma}}, making the change of variables $m \mapsto v_1 m$ and $n \mapsto v_2 n$. In this way, we arrive at
\[\sum_{\pm} \frac{\MM^{\pm} \left(s_2,s_1;\Ts_{s_1,s_2,t_g}^{\pm} \hf\right)}{L(2s_1,\chi_D) L(2s_2,\chi_D)}.\]
The proof is complete upon multiplying both sides by $L(2s_1,\chi_D) L(2s_2,\chi_D)$.
\end{proof}

\begin{proof}[Proof of {\hyperref[spectralreciprocityprop]{Proposition \ref*{spectralreciprocityprop}}}]
This follows the same method as \cite[Proof of Theorem 1]{BLM19}, \cite[Proof of Theorem 1]{BlK19b}, and \cite[Proof of Theorem 2]{BlK19a}: it is shown in \cite[Section 10]{BlK19b} that for $1/2 \leq \Re(s_1), \Re(s_2) \leq 3/2$, $\Ts_{s_1,s_2,t_g}^{\pm} \hf$ is weakly admissible in the sense of \cite[(1.3)]{BlK19b}, which implies that $\NN(s_1,s_2;\hf)$ and $\MM^{\pm} (s_2,s_1;\Ts_{s_1,s_2,t_g}^{\pm} \hf)$ extend meromorphically to this region. Moreover, we have the identity $\MM^{\pm}(1/2,1/2;\hf) = \MM^{\pm}(\hf)$, since
\[\Ell_{d_2}(f) = 2^{\omega(d_2)} \frac{\varphi(d_2)}{d_2} \frac{L_{d_2}(1,\sym^2 f)}{L_{d_2}\left(\frac{1}{2},f\right)}, \qquad \Ell_D(t) = 2^{\omega(D)} \frac{\zeta_D(1 + 2it) \zeta_D(1 - 2it)}{\zeta_D\left(\frac{1}{2} + it\right) \zeta_D\left(\frac{1}{2} - it\right)}\]
via \hyperref[EllMaassidentitylemma]{Lemmata \ref*{EllMaassidentitylemma}} and \ref{EllEisidentitylemma}, while $\MM^{\Maass,\pm}(1/2,1/2,h)$ is equal to $\MM^{\Maass}(h)$ as $L(1/2,f \otimes \chi_D) = L(1/2,f) L(1/2,f \otimes g_{\psi^2}) = 0$ when $\epsilon_f = -1$.

This process of meromorphic continuation is straightforward for the terms $\MM^{\Maass,-}(s_1,s_2;h)$, $\MM^{\Maass,\pm} (s_2,s_1;\Ls^{\pm} H_{s_1,s_2,t_g}^{\pm})$, $\MM^{\hol} (s_2,s_1;\Ls^{\hol} H_{s_1,s_2,t_g}^{+})$, and $\NN(s_1,s_2;\hf)$, but for $\MM^{\Eis}(s_1,s_2;h)$ and $\MM^{\Eis} (s_2,s_1;\Ls^{\pm} H_{s_1,s_2,t_g}^{\pm})$, additional polar divisors arise via shifting the contour in the integration over $t$; see, for example, \cite[Lemma 16]{BlK19b} and \cite[Lemma 3]{BlK19a}. In this way, the additional terms
\begin{align*}
\RR\left(s_1,s_2;\hf\right) & \defeq_{\pm_1} \sum \Res_{t = \pm_1 i (1 - s_1)} (\pm_1 i) h(t) \Ell_D(s_1,s_2,t)	\\
& \hspace{3cm} \times \prod_{\pm_2} \frac{\zeta(s_1 \pm_2 it) L(s_1 \pm_2 it, \chi_D) L(s_2 \pm_2 it, g_{\psi^2})}{\zeta(1 \pm_2 2it)},	\\
\RR^{\pm}\left(s_2,s_1;\Ts_{s_1,s_2,t_g}^{\pm} \hf\right) & \defeq \sum_{\pm_1} \Res_{t = \pm_1 i (1 - s_2)} (\pm_1 i) \left(\Ls^{\pm} H_{s_1,s_2,t_g}^{\pm}\right)(t) \Ell_D(s_2,s_1,t)	\\
& \hspace{3cm} \times \prod_{\pm_2} \frac{\zeta(s_2 \pm_2 it) L(s_2 \pm_2 it, \chi_D) L(s_1 \pm_2 it, g_{\psi^2})}{\zeta(1 \pm_2 2it)}
\end{align*}
arise when $\Re(s_1), \Re(s_2) < 1$. But these vanish when $s_1 = s_2 = 1/2$ since $\chi_D$ is even and so $L(s,\chi_D)$ has a trivial zero at $s = 0$.
\end{proof}

\section{Bounds for the Transform for the Short Initial Range}
\label{boundstranssect}

We take $\hf = (h,0)$ in \hyperref[spectralreciprocityprop]{Proposition \ref*{spectralreciprocityprop}} to be
\begin{equation}
\label{hTeq}
h(t) = h_T(t) \defeq e^{-\frac{t^2}{T^2}} P_T(t), \qquad P_T(t) \defeq \prod_{j = 1}^{N} \left(\frac{t^2 + \left(j - \frac{1}{2}\right)^2}{T^2}\right)^2
\end{equation}
for some fixed large integer $N \geq 500$ and $T > 0$, which is positive on $\R \cup i(-1/2,1/2)$ and bounded from below by a constant for $t \in [-2T,-T] \cup [T,2T]$. We wish to determine the asymptotic behaviour of the functions $(\Ls^{\pm} H_{T,t_g}^{\pm})(t)$ and $(\Ls^{\hol} H_{T,t_g}^{+})(k)$ with uniformity in all variables $T$, $t_g$, and $t$ or $k$, where $H_{t_g}^{\pm} = H_{T,t_g}^{\pm}$ is as in \eqref{Htgeq}. Were we to consider $t_g$ as being fixed, then such asymptotic behaviour has been studied by Blomer, Li, and Miller \cite[Lemma 3]{BLM19}. As we are interested in the behaviour of $\Ts_{t_g}^{\pm} \hf$ as $t_g$ tends to infinity, a little additional work is required.

\begin{lemma}
\label{Lspmintegrandboundslemma}
Define
\begin{align*}
\Omega^{+}(\tau,t,t_g) & \defeq \begin{dcases*}
2|t| - |\tau| & if $|\tau| \leq \min\{2|t|,4t_g\}$,	\\
0 & if $2|t| \leq |\tau| \leq 4t_g$,	\\
2|t| - 4t_g & if $4t_g \leq |\tau| \leq 2|t|$,	\\
|\tau| - 4t_g & if $|\tau| \geq \max\{2|t|,4t_g\}$,
\end{dcases*}	\\
\Omega^{-}(\tau,t,t_g) & \defeq \begin{dcases*}
|\tau| & if $|\tau| \leq \min\{2|t|,2t_g\}$,	\\
2|\tau| - 2|t| & if $2|t| \leq |\tau| \leq 2t_g$,	\\
4t_g - |\tau| & if $2t_g \leq |\tau| \leq \min\{2|t|,4t_g\}$,	\\
4t_g - 2|t| & if $\max\{2|t|,2t_g\} \leq |\tau| \leq 4t_g$,	\\
0 & if $4t_g \leq |\tau| \leq 2|t|$,	\\
|\tau| - 2|t| & if $|\tau| \geq \max\{2|t|,4t_g\}$,
\end{dcases*}	\\
\Omega^{\hol}(\tau,k,t_g) & \defeq \begin{dcases*}
0 & if $|\tau| \leq 4t_g$,	\\
|\tau| - 4t_g & if $|\tau| \geq 4t_g$.
\end{dcases*}
\end{align*}
For $s = \sigma + i\tau$ with $-N/2 < \sigma < 1$, provided that additionally $s$ is at least a bounded distance away from $\{2(\pm it - n) : n \in \N_0\}$, and for $t \in \R \cup i(-1/2,1/2)$ we have that
\begin{multline*}
\widehat{\Ks^{-} h_T}(s) \widehat{\JJ_t^{\pm}}(s) \GG_{t_g}^{\pm}(1 - s) \ll_{\sigma} T^{1 + \sigma} (1 + |\tau|)^{-N - \sigma} \left(\left(1 + \left|\tau + 4t_g\right|\right) \left(1 + \left|\tau - 4t_g\right|\right)\right)^{-\frac{\sigma}{2}}	\\
\times \left(\left(1 + \left|\tau + 2t\right|\right) \left(1 + \left|\tau - 2t\right|\right)\right)^{\frac{1}{2} (\sigma - 1)} e^{-\frac{\pi}{2} \Omega^{\pm}(\tau,t,t_g)},
\end{multline*}
and for $t \in \R$,
\begin{multline*}
\Res_{s = 2(\pm it - n)} \widehat{\Ks^{-} h_T}(s) \widehat{\JJ_t^{-}}(s) \GG_{t_g}^{-}(1 - s)	\\
\ll_n T^{1 - 2n} (1 + |t|)^{-N + n - \frac{1}{2}} (1 + |t + 4t_g|) (1 + |t - 4t_g|))^n e^{-\frac{\pi}{2} \Omega^{-}(t,t,t_g)}.
\end{multline*}
For $s = \sigma + i\tau$ with $-N/2 < \sigma < 1$, provided that additionally $s$ is at least a bounded distance away from $\{2(\pm it - n) : n \in \N_0\}$, and for $k \in 2\N$, we have that
\begin{multline*}
\widehat{\Ks^{-} h_T}(s) \widehat{\JJ_k^{\hol}}(s) \GG_{t_g}^{+}(1 - s) \ll_{\sigma} T^{1 + \sigma} (1 + |\tau|)^{-N - \sigma} \left(\left(1 + \left|\tau + 4t_g\right|\right) \left(1 + \left|\tau - 4t_g\right|\right)\right)^{-\frac{\sigma}{2}}	\\
\times \left(k + |\tau|\right)^{\sigma - 1} e^{-\frac{\pi}{2} \Omega^{\hol}(\tau,k,t_g)},
\end{multline*}
and
\[\Res_{s = 1 - k - 2n} \widehat{\Ks^{-} h_T}(s) \widehat{\JJ_k^{\hol}}(s) \GG_{t_g}^{+}(1 - s) \ll_n T^{2 - k - 2n} t_g^{k - 1 + 2n} \left(\frac{k - 1}{2\pi e}\right)^{1 - k} k^{-1/2}.\]
\end{lemma}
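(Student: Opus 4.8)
The plan is to reduce each factor to an explicit quotient of Gamma functions and then apply Stirling's formula \eqref{Stirlingeq}, keeping careful track of the dependence on $T$, $t_g$, $\tau = \Im(s)$, and on $t$ or $k$. First I would record the explicit shapes of the ingredients. The Mellin transforms $\widehat{\JJ_r^{\pm}}(s)$ and $\widehat{\JJ_k^{\hol}}(s)$ are ratios of Gamma functions with trigonometric prefactors, with simple poles at $s = 2(\pm ir - n)$ and $s = 1 - k - 2n$ respectively; their size on vertical lines is governed by Corollary~\ref{JJrMellinasympcor}. For $\widehat{\Ks^{-} h_T}(s)$, using the defining formula \eqref{NsKspmeq} for $\Ks^{-}$ together with the Mellin--Barnes representation of the Bessel $K$-function, one writes $\widehat{\Ks^{-} h_T}(s)$ as an integral of $h_T(r)$ against an explicit Gamma-quotient kernel $\gamma^{-}(s,r)$. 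Since $h_T(Tr) = e^{-r^2} P(r)$ for a polynomial $P$ depending only on $N$, rescaling $r$ makes the $T$-dependence transparent (one pulls out $T^{1 + \sigma}$); the $N$ double zeros of $P$ --- equivalently, the admissibility zeros of $h_T$ at $\pm(j - \frac{1}{2})i$ --- permit shifting the $r$-contour (or iterating integration by parts) to extract the polynomial decay $(1 + |\tau|)^{-N - \sigma}$, while the effective localisation $|r| \lesssim T$ of $h_T$ combined with the exponential decay of $\gamma^{-}(s,r)$ in $|\Im s| - |r|$ supplies additional rapid decay once $|\tau| \gg T$. This part is a mild elaboration of \cite[Lemma 3]{BLM19}.

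Next I would assemble the bound for the integrand $\widehat{\Ks^{-} h_T}(s)\,\widehat{\JJ_t^{\pm}}(s)\,\GG_{t_g}^{\pm}(1 - s)$. Expanding $\GG_{t_g}^{\pm}(1 - s) = \widehat{\JJ_0^{+}}(1 - s)\widehat{\JJ_{2t_g}^{\mp}}(1 - s) + \widehat{\JJ_0^{-}}(1 - s)\widehat{\JJ_{2t_g}^{\pm}}(1 - s)$ and applying Stirling to every Gamma factor, one finds that the $\widehat{\JJ_0^{\pm}}(1 - s)$ factors supply the power of $1 + |\tau|$, the $\widehat{\JJ_{2t_g}^{\mp}}(1 - s)$ and $\widehat{\JJ_{2t_g}^{\pm}}(1 - s)$ factors supply the weight $\bigl((1 + |\tau + 4t_g|)(1 + |\tau - 4t_g|)\bigr)^{-\sigma/2}$, and $\widehat{\JJ_t^{\pm}}(s)$ supplies the weight $\bigl((1 + |\tau + 2t|)(1 + |\tau - 2t|)\bigr)^{(\sigma - 1)/2}$; multiplying these against the bound for $\widehat{\Ks^{-} h_T}(s)$ and collecting exponents yields the claimed estimate. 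The holomorphic case runs identically, except that the $1/\Gamma$-type decay of $\widehat{\JJ_k^{\hol}}(s)$ recorded in Corollary~\ref{JJrMellinasympcor} produces the factor $(k + |\tau|)^{\sigma - 1}$ and, upon taking residues at $s = 1 - k - 2n$, the usual weight-aspect factor $\left(\frac{k - 1}{2\pi e}\right)^{1 - k} k^{-1/2}$. The residue bounds in the non-holomorphic case are obtained the same way: at a pole $s = 2(\pm it - n)$ of $\widehat{\JJ_t^{-}}(s)$ the residue of the product equals the (explicit, Stirling-bounded) residue of $\widehat{\JJ_t^{-}}(s)$ times $\widehat{\Ks^{-} h_T}$ and $\GG_{t_g}^{-}(1 - s)$ evaluated there, and one inserts Stirling again; the hypothesis that $s$ stay a bounded distance from the pole set is what lets one control the non-residue contributions.

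The main obstacle will be the uniform bookkeeping of the exponential-decay exponents $\Omega^{\pm}(\tau, t, t_g)$ and $\Omega^{\hol}(\tau, k, t_g)$. Each Gamma factor contributes through Stirling an exponential $e^{-\frac{\pi}{2}\Sigma}$, where $\Sigma$ is the sum of the absolute values of the imaginary parts of its arguments, and the two summands of $\GG_{t_g}^{\pm}(1 - s)$ come from the kernels $\JJ^{+}$ and $\JJ^{-}$, whose trigonometric prefactors decay at different rates; consequently one summand dominates the other in different subranges of $|\tau|$ delimited by the breakpoints $2|t|$, $2t_g$, and $4t_g$, and one must retain the dominant one in each. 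The interplay of these exponentials with the decay already carried by $\widehat{\Ks^{-} h_T}(s)$ (which effectively truncates at $|\tau| \lesssim T$) is precisely what produces the piecewise-linear functions $\Omega^{\pm}$ and $\Omega^{\hol}$; once this combinatorics is organised, the remaining Stirling estimates are routine and the stated bounds follow by collecting the powers of $T$, $t_g$, $1 + |\tau|$, $1 + |\tau \pm 2t|$, and $1 + |\tau \pm 4t_g|$ (respectively $k + |\tau|$).
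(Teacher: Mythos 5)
Your approach matches the paper's: it, too, combines the bound $\widehat{\Ks^{-} h_T}(s) \ll_\sigma T^{1+\sigma}(1+|\tau|)^{-N}$ (derived from the derivative estimates in \cite[Lemma 4]{BLM19}), the bounds and residues for $\widehat{\JJ_t^{\pm}}(s)$ and $\widehat{\JJ_k^{\hol}}(s)$ recorded in Corollary~\ref{JJrMellinasympcor}, and a Stirling-derived bound on $\GG_{t_g}^{\pm}(1-s)$, then assembles the piecewise-linear exponents $\Omega^{\pm}$, $\Omega^{\hol}$ by retaining the dominant of the two summands of $\GG_{t_g}^{\pm}$ in each $|\tau|$-range exactly as you describe. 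One bookkeeping caution: you first attribute the full $(1+|\tau|)^{-N-\sigma}$ to $\widehat{\Ks^{-} h_T}(s)$, and then separately say that the $\widehat{\JJ_0^{\pm}}(1-s)$ factors supply the power of $1 + |\tau|$; in fact $\widehat{\Ks^{-} h_T}$ gives only $(1+|\tau|)^{-N}$, while the extra $(1+|\tau|)^{-\sigma}$ comes from $\widehat{\JJ_0^{\pm}}(1-s)$ within $\GG_{t_g}^{\pm}(1-s)$, so take care not to double-count this exponent in the full write-up.
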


\begin{proof}
From \cite[Lemma 4]{BLM19}, we have the bound
\[x^j \frac{d^j}{dx^j} (\Ks^{-} h_T)(x) \ll_j T \min\left\{\left(\frac{x}{T}\right)^{N/2}, \left(\frac{x}{T}\right)^{-N/2}\right\}\]
for $j \in \{0,\ldots,N\}$, and consequently the Mellin transform of $\Ks^{-} h_T$ is holomorphic in the strip $-N/2 < \Re(s) < N/2$, in which it satisfies the bounds
\[\widehat{\Ks^{-} h_T}(s) \ll_{\sigma} T^{1 + \sigma} (1 + |\tau|)^{-N}\]
for $s = \sigma + i\tau$. Next, we use \hyperref[JJrMellinasympcor]{Corollary \ref*{JJrMellinasympcor}} to bound $\widehat{\JJ_k^{\hol}}(s)$ and $\widehat{\JJ_t^{\pm}}(s)$, as well as bound the residues at $s = 1 - k - 2n$ and $s = 2(\pm it - n)$ respectively, where $n \in \N_0$. Finally, Stirling's formula \eqref{Stirlingeq} shows that
\[\GG_{t_g}^{+}(1 - s) \ll_{\sigma} \left(1 + |\tau|\right)^{-\sigma} \left(\left(1 + \left|\tau + 4t_g\right|\right) \left(1 + \left|\tau - 4t_g\right|\right)\right)^{-\frac{\sigma}{2}} \times \begin{dcases*}
1 & if $|\tau| \leq 4t_g$,	\\
e^{-\frac{\pi}{2} (|\tau| - 4t_g)} & if $|\tau| \geq 4t_g$
\end{dcases*}\]
for $s = \sigma + i\tau$ with $\sigma < 1$, and similarly
\[\GG_{t_g}^{-}(1 - s) \ll_{\sigma} \left(1 + |\tau|\right)^{-\sigma} \left(\left(1 + \left|\tau + 4t_g\right|\right) \left(1 + \left|\tau - 4t_g\right|\right)\right)^{-\frac{\sigma}{2}} \times \begin{dcases*}
e^{-\frac{\pi}{2} |\tau|} & if $|\tau| \leq 2t_g$,	\\
e^{-\frac{\pi}{2}(4t_g - |\tau|)} & if $2t_g \leq |\tau| \leq 4t_g$,	\\
1 & if $|\tau| \geq 4t_g$.
\end{dcases*}\]
Combining these bounds yields the result.
\end{proof}

\begin{corollary}
\label{LHboundscor}
For fixed $-N/2 < \sigma < 1$, $t_g^{1/2} \ll T \ll t_g$, $t \in \R \cup i(-1/2,1/2)$, and $k \in 2\N$, we have that
\begin{align*}
\left(\Ls^{+} H_{T,t_g}^{+}\right)(t) & \ll_{\sigma} T \left(\frac{t_g}{T}\right)^{-\sigma} (1 + |t|)^{-N + \frac{1}{2}},	\\
\left(\Ls^{-} H_{T,t_g}^{-}\right)(t) & \ll_{\sigma} T \left(\frac{t_g}{(1 + |t|)T}\right)^{-\sigma} (1 + |t|)^{-1},	\\
\left(\Ls^{\hol} H_{T,t_g}^{+}\right)(k) & \ll_{\sigma} T \left(\frac{t_g}{kT}\right)^{-\sigma} k^{-1}.
\end{align*}
\end{corollary}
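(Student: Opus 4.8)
The plan is to reduce each of $\Ls^{\pm} H_{T,t_g}^{\pm}$ and $\Ls^{\hol} H_{T,t_g}^{+}$ to a single Mellin--Barnes integral whose integrand is exactly the quantity estimated in \hyperref[Lspmintegrandboundslemma]{Lemma \ref*{Lspmintegrandboundslemma}}, and then to bound that integral by integrating those pointwise bounds over the relevant vertical line. For the first step I would substitute the Mellin--Barnes representation \eqref{Htgeq} of $H_{T,t_g}^{\pm}$ into the defining integral transform \eqref{Lseq} for $\Ls^{\pm}$ (respectively $\Ls^{\hol}$), interchange the order of integration --- legitimate on a contour far enough to the left for absolute convergence --- and invoke the fact, recorded in \hyperref[toolboxappendix]{Appendix \ref*{toolboxappendix}}, that the Mellin transform of the Bessel kernel attached to $\Ls^{\pm}$ (respectively $\Ls^{\hol}$) is $\widehat{\JJ_t^{\pm}}$ (respectively $\widehat{\JJ_k^{\hol}}$). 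This produces, for $\Re(s)=\sigma$ in a suitable range, an identity of the form
\[
\left(\Ls^{\pm} H_{T,t_g}^{\pm}\right)(t) = \frac{2}{\pi i} \int_{\sigma - i\infty}^{\sigma + i\infty} \widehat{\Ks^{-} h_T}(s)\, \widehat{\JJ_t^{\pm}}(s)\, \GG_{t_g}^{\pm}(1 - s)\, ds + (\text{residue terms}),
\]
and similarly with $\widehat{\JJ_k^{\hol}}(s)$ and $\GG_{t_g}^{+}(1-s)$ for the holomorphic transform, with the residue terms coming from poles of $\widehat{\JJ_t^{-}}(s)$ at $s=2(\pm it-n)$ (respectively of $\widehat{\JJ_k^{\hol}}(s)$ at $s=1-k-2n$) with $n\in\N_0$ that are crossed when the contour is moved to the line $\Re(s)=\sigma$ of the statement.

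The substance of the proof is then the estimation of the shifted integral and of the residue terms, for which I would simply feed in \hyperref[Lspmintegrandboundslemma]{Lemma \ref*{Lspmintegrandboundslemma}}. On $\Re(s)=\sigma$ the integrand is, up to constants, $T^{1+\sigma}(1+|\tau|)^{-N-\sigma}$ times fixed powers of the shifted factors $1+|\tau\pm 4t_g|$ and $1+|\tau\pm 2t|$ (respectively $k+|\tau|$), times $e^{-\frac{\pi}{2}\Omega^{\pm}(\tau,t,t_g)}$. Since $N$ is large, the factor $(1+|\tau|)^{-N-\sigma}$ alone makes the $\tau$-integral convergent and confines it to $|\tau|\lesssim t_g$; within that range the exponential factor $e^{-\frac{\pi}{2}\Omega^{\pm}}$ further localises $\tau$ to the sub-range where $\Omega^{\pm}$ vanishes or is of bounded size, and there one reads off $[(1+|\tau+4t_g|)(1+|\tau-4t_g|)]^{-\sigma/2}\asymp t_g^{-\sigma}$ together with the appropriate size of the remaining power factors, so that integrating in $\tau$ yields $T^{1+\sigma}t_g^{-\sigma}=T(t_g/T)^{-\sigma}$ times the claimed power of $(1+|t|)$ or of $k$. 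The residue terms carry even more exponential decay (via $\Omega^{-}$ or the super-exponential factor $((k-1)/2\pi e)^{1-k}$) and are dominated by the main contribution, as are the subdominant ranges where $\Omega^{\pm}>0$; the hypotheses $t_g^{1/2}\ll T\ll t_g$ are used only to simplify this bookkeeping and to guarantee that these lower-order pieces are genuinely absorbed.

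I expect the main obstacle to be organisational rather than analytic. The $\tau$-integral must be split into a handful of cases according to the relative sizes of $|t|$ (or $k$) and $t_g$, and in each case the piecewise-linear exponent $\Omega^{\pm}$ has to be matched against the polynomial factors over the various sub-ranges delimited by $|\tau|=2|t|$ and $|\tau|=4t_g$, checking that the transition regions contribute nothing beyond the claimed bound. Everything reduces to integrating explicit elementary functions, and the asymptotic analysis parallels that of \cite[Lemma 3]{BLM19}; the one genuinely new feature is the requirement of uniformity in $t_g$, which is precisely why the exponential factors $e^{-\frac{\pi}{2}\Omega^{\pm}}$ and the residues of the Bessel Mellin transforms must be tracked with care rather than discarded as bounded. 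A secondary point requiring attention is verifying which residues are actually crossed when $\sigma$ is taken very negative, so that the residue bounds of \hyperref[Lspmintegrandboundslemma]{Lemma \ref*{Lspmintegrandboundslemma}} suffice for the whole stated range of $\sigma$.
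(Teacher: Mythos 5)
Your approach is essentially the paper's: write each transform as the Mellin--Barnes integral $\frac{2}{\pi i}\int_{(\sigma_1)}\widehat{\Ks^{-} h_T}(s)\widehat{\JJ_t^{\pm}}(s)\GG_{t_g}^{\pm}(1-s)\,ds$ (respectively with $\widehat{\JJ_k^{\hol}}$), feed in the pointwise bounds of \hyperref[Lspmintegrandboundslemma]{Lemma \ref*{Lspmintegrandboundslemma}}, and locate the dominant range of $\tau$ using $(1+|\tau|)^{-N-\sigma}$ and the piecewise-linear $\Omega^{\pm}$ or $\Omega^{\hol}$. One wrinkle you implicitly avoid but should spell out: \hyperref[Lspmintegrandboundslemma]{Lemma \ref*{Lspmintegrandboundslemma}} furnishes residue bounds only for $\widehat{\JJ_t^{-}}$ and $\widehat{\JJ_k^{\hol}}$, not for $\widehat{\JJ_t^{+}}$, so you cannot shift past those poles in the $\Ls^{+}$ case; the paper finesses this by observing that since $t_g/T\gg1$ the claimed bound is monotone in $\sigma$ (strongest as $\sigma\to1^{-}$), so it suffices to keep $\sigma_1\in(0,1)$ for $\Ls^{+}H_{T,t_g}^{+}$ and, in the $\Ls^{-}$ and $\Ls^{\hol}$ cases, only to shift to negative $\sigma$ when the relevant ratio $t_g/((1+|t|)T)$ or $t_g/(kT)$ drops below $1$. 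With that observation made explicit, your plan matches the paper's proof.
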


\begin{proof}
By Mellin inversion,
\begin{align*}
\left(\Ls^{\pm} H_{T,t_g}^{\pm}\right)(t) & = \frac{2}{\pi i} \int_{\sigma_1 - i\infty}^{\sigma_1 + i\infty} \widehat{\Ks^{-} h_T}(s) \widehat{\JJ_t^{\pm}}(s) \GG_{t_g}^{\pm}(1 - s) \, ds,	\\
\left(\Ls^{\hol} H_{T,t_g}^{+}\right)(k) & = \frac{2}{\pi i} \int_{\sigma_1 - i\infty}^{\sigma_1 + i\infty} \widehat{\Ks^{-} h_T}(s) \widehat{\JJ_k^{\hol}}(s) \GG_{t_g}^{+}(1 - s) \, ds
\end{align*}
for any $0 < \sigma_1 < 1$. We break each of these integrals over $s = \sigma_1 + i\tau$ into different ranges of $\tau$ depending on the size of $|t|$ or $k$ relative to $t_g$ and use the bounds for the integrands obtained in \hyperref[Lspmintegrandboundslemma]{Lemma \ref*{Lspmintegrandboundslemma}} to bound each portion of the integrals. In most regimes, we have exponential decay of the integrands due to the presence of $e^{-\frac{\pi}{2} \Omega^{\pm}(\tau,t,t_g)}$ or $e^{-\frac{\pi}{2} \Omega^{\hol}(\tau,k,t_g)}$; it is predominantly the regimes for which $\Omega^{\pm}(\tau,t,t_g)$ or $\Omega^{\hol}(\tau,k,t_g)$ are zero that have nonnegligible contributions.

For $(\Ls^{+} H_{T,t_g}^{+})(t)$, this is straightforward, noting that we can assume without loss of generality in this case that $0 < \sigma < 1$ with $\sigma_1 = \sigma$; the dominant contribution comes from the section of the integral with $2|t| \leq |\tau| \leq 4t_g$, as this is the regime for which $\Omega^+(\tau,t,t_g)$ is equal to zero.

Similarly, for $(\Ls^{-} H_{T,t_g}^{-})(t)$, we may assume that $0 \leq \sigma < 1$ with $\sigma_1 = \sigma$ for $1 + |t| \leq t_g T^{-1}$. For $1 + |t| > t_g T^{-1}$, we may assume that $-N/2 < \sigma \leq 0$: we shift the contour from $\Re(s) = \sigma_1$ to $\Re(s) = \sigma$, picking up residues at the poles at $s = 2(\pm it - n)$ for $0 \leq n < N/4$, with the dominant contribution in both cases being from the section of the integral with $|\tau|$ bounded (the remaining regimes involve exponential decay from the presence of $e^{-\frac{\pi}{2} \Omega^{-}(\tau,t,t_g)}$ unless $4t_g \leq |\tau| \leq 2|t|$, in which case $(1 + |\tau|)^{-N - \sigma}$ contributes significant polynomial decay).

Finally, we may again assume without loss of generality for $(\Ls^{\hol} H_{T,t_g}^{+})(k)$ that $0 \leq \sigma < 1$ for $k \leq t_g T^{-1}$ and $-N/2 < \sigma \leq 0$ for $k > t_g T^{-1}$, since we may shift the contour with impunity in this vertical strip; once again, the dominant contribution comes from the section of the integral with $|\tau|$ bounded due to the polynomial decay of $(1 + |\tau|)^{-N - \sigma}$.
\end{proof}

\section{Proof of \texorpdfstring{\hyperref[dihedralmomentsprop]{Proposition \ref*{dihedralmomentsprop} (1)}}{Proposition \ref{dihedralmomentsprop} (1)}: the Short Initial Range}
\label{(1)sect}

\begin{proof}[Proof of {\hyperref[dihedralmomentsprop]{Proposition \ref*{dihedralmomentsprop} (1)}}]
For $T < t_g^{\delta/2(1 + A)}$, where $\delta, A > 0$ are absolute constants arising from \hyperref[MVsubconvthm]{Theorem \ref*{MVsubconvthm}}, we use the subconvex bounds in \hyperref[MVsubconvthm]{Theorem \ref*{MVsubconvthm}} to bound the terms $L(1/2, f \otimes g_{\psi^2})$ and $|L(1/2 + it,g_{\psi^2})|$ by $O(T^A t_g^{1 - \delta})$, so that for $h(t) = 1_{E \cup -E}(t)$ with $E = [T,2T]$,
\begin{multline*}
\MM^{\Maass}(h) + \MM^{\Eis}(h) \ll T^A t_g^{1 - \delta} \sum_{d_1 d_2 = D} 2^{\omega(d_2)} \frac{\varphi(d_2)}{d_2} \sum_{\substack{f \in \BB_0^{\ast}(\Gamma_0(d_1)) \\ T \leq t_f \leq 2T}} \frac{L^{d_2}\left(\frac{1}{2},f\right) L\left(\frac{1}{2},f \otimes \chi_D\right)}{L^{d_2}(1,\sym^2 f)}	\\
+ T^A t_g^{1 - \delta} \frac{2^{\omega(D)}}{2\pi} \int\limits_{T \leq |t| \leq 2T} \left|\frac{\zeta^D\left(\frac{1}{2} + it\right) L\left(\frac{1}{2} + it, \chi_D\right)}{\zeta^D(1 + 2it)}\right|^2 \, dt.
\end{multline*}
We then use the Cauchy--Schwarz inequality, the approximate functional equation, \hyperref[approxfunclemma]{Lemma \ref*{approxfunclemma}}, and the large sieve, \hyperref[largesievethm]{Theorem \ref*{largesievethm}}, to bound the remaining moments of $L(1/2,f) L(1/2,f \otimes \chi_D)$ and of $|\zeta(1/2 + it) L(1/2 + it,\chi_D)|^2$ by $O_{\e}(T^{2 + \e})$, and so in this range,
\[\MM^{\Maass}(h) + \MM^{\Eis}(h) \ll_{\e} T^{2 + A + \e} t_g^{1 - \delta} \ll T t_g^{1 - \frac{\delta}{3}}.\]

For $t_g^{\delta/2(1 + A)} \leq T < t_g^{1/2}$, the subconvex bounds in \hyperref[Youngsubconvthm]{Theorems \ref*{Youngsubconvthm}} and \ref{MVsubconvthm} are used to bound the terms $2^{\omega(d_1)} L(1/2,f) L(1/2,f \otimes \chi_D)$ and $2^{\omega(D)} |\zeta(1/2 + it) L(1/2 + it, \chi_D)|^2$ by $O_{\e}(T^{5/6 + \e})$, so that
\[\MM^{\Maass}(h) + \MM^{\Eis}(h) \ll_{\e} T^{\frac{5}{6} + \e} \left(\widetilde{\MM}^{\Maass}(h_T) + \widetilde{\MM}^{\Eis}(h_T)\right)\]
with $h_T$ as in \eqref{hTfirstmomenteq}. \hyperref[firstmomentprop]{Proposition \ref*{firstmomentprop} (1)} then bounds $\widetilde{\MM}^{\Maass}(h_T) + \widetilde{\MM}^{\Eis}(h_T)$ by $O_{\e}(t_g^{1 + \e})$. So in this range,
\[\MM^{\Maass}(h) + \MM^{\Eis}(h) \ll_{\e} T^{\frac{5}{6} + \e} t_g^{1 + \e} \ll_{\e} T t_g^{1 - \frac{\delta}{12(1 + A)} + \e}.\]

For $t_g^{1/2} \leq T \ll t_g^{1 - \alpha}$, \hyperref[spectralreciprocityprop]{Proposition \ref*{spectralreciprocityprop}} implies that
\[\MM^{\Maass}(h) + \MM^{\Eis}(h) \ll \NN(\hf) + \sum_{\pm} \MM^{\pm} \left(\Ts_{t_g}^{\pm} \hf\right),\]
where $\hf = (h_T,0)$ with $h_T$ as in \eqref{hTeq}. Noting that $\NN(\hf) \ll_{\e} T^{2 + \e}$, \hyperref[LHboundscor]{Corollary \ref*{LHboundscor}} then shows that $\MM^{\pm} (\Ts_{t_g}^{\pm} \hf)$ are both $O(Tt_g^{1 - \delta})$ via the Cauchy--Schwarz inequality together with the approximate functional equation and the large sieve, except in a select few ranges, namely the range $t_f \asymp t_g/T$ in the term $\MM^{\Maass,-}(\Ls^{-} H_{T,t_g}^{-})$, the range $|t| \asymp t_g/T$ in $\MM^{\Eis}(\Ls^{-} H_{T,t_g}^{-})$, and the range $k_f \asymp t_g/T$ in $\MM^{\hol}(\Ls^{\hol} H_{T,t_g}^{+})$. The former two terms are then treated as we have just done for $T < t_g^{\delta/2(1 + A)}$ and for $t_g^{\delta/2(1 + A)} \leq T < t_g^{1/2}$, and the latter is treated via the same method, recalling that \hyperref[firstmomentprop]{Proposition \ref*{firstmomentprop} (2)} entails such bounds for holomorphic cusp forms.
\end{proof}

\begin{remark}
For the treatment of the range $t_g^{\delta/2(1 + A)} \leq T < t_g^{1/2}$, we in fact have the bound $O_{\e}(T^{2/3 + \e})$ for $2^{\omega(d_1)} L(1/2,f) L(1/2,f \otimes \chi_D)$ and $2^{\omega(D)} |\zeta(1/2 + it) L(1/2 + it, \chi_D)|^2$; see \hyperref[Weylsubconvexityremark]{Remark \ref*{Weylsubconvexityremark}}. In the treatment of the range $t_g^{1/2} \leq T \ll t_g^{1 - \alpha}$, we use spectral reciprocity and subsequently require subconvex bounds for $2^{\omega(d_1)} L(1/2,f) L(1/2,f \otimes \chi_D)$ with $f$ a holomorphic newform of level $d_1 \mid D$ and weight $k_f \asymp t_g/T$. Here we do \emph{not} know of such strong bounds if $d_1 > 1$: while the bound $L(1/2,f \otimes \chi_D) \ll_{\e} k_f^{1/3 + \e}$ is known \cite[Theorem 1.1]{You17}, and of course $L(1/2,f) \ll_{\e} k_f^{1/2 + \e}$ is merely the convexity bound, the bound $L(1/2,f) \ll_{\e} k_f^{1/3 + \e}$ is only known for $d_1 = 1$ \cite[Theorem 3.1.1]{Pen01}, and a modification of the proof of this bound to allow $d_1 > 1$ seems to be reasonably nontrivial.
\end{remark}

\section{Proof of \texorpdfstring{\hyperref[dihedralmomentsprop]{Proposition \ref*{dihedralmomentsprop} (2)}}{Proposition \ref{dihedralmomentsprop} (2)}: the Bulk Range}
\label{(2)sect}

The proof that we give of \hyperref[dihedralmomentsprop]{Proposition \ref*{dihedralmomentsprop} (2)} follows the approach of \cite{DK18b}, where an asymptotic formula is obtained for a similar expression pertaining instead to the regularised fourth moment of an Eisenstein series. As such, we shall be extremely brief, detailing only the minor ways in which our proof differs from that of \cite{DK18b}.

\subsection{An Application of the Kuznetsov Formula}

Following \cite[Section 2.1]{DK18b}, it suffices to obtain asymptotic formul\ae{} for
\[\MM^{\Maass}(h) + \MM^{\Eis}(h)\]
as in \eqref{MMMaasseq} and \eqref{MMEiseq} with
\begin{equation}
\label{hforbulkeq}
h(t) = \frac{\pi W(t) H(t)}{8D^2 L(1,\chi_D)^2 L(1,g_{\psi^2})^2},
\end{equation}
analogously to \cite[(2.2)]{DK18b}, where $H(t)$ is as in \eqref{H(t)defeq} and $W(t) = W_{\alpha}(t)$ is a certain weight function given in \cite[Lemma 5.1]{BuK17b} that localises $h(t)$ to the range $[-2t_g + t_g^{1 - \alpha}, -t_g^{1 - \alpha}] \cup [t_g^{1 - \alpha},2t_g - t_g^{1 - \alpha}]$. We may artificially insert the parity $\epsilon_f$ into the spectral sum $\MM^{\Maass}(h)$ since $L(1/2,f \otimes \chi_D) = L(1/2,f) L(1/2,f \otimes g_{\psi^2}) = 0$ when $\epsilon_f = -1$; this allows us to use the opposite sign Kuznetsov formula, which greatly simplifies future calculations.

Akin to the proof of \hyperref[MMtildeidentitylemma]{Lemma \ref*{MMtildeidentitylemma}}, we make use of the Kuznetsov formula associated to the pair of cusps $(\aa,\bb)$ with $\aa \sim \infty$ and $\bb \sim 1$, which once again naturally introduces the root numbers of $f \boxplus f \otimes \chi_D$ and of $f \otimes g_{\psi^2}$ in such a way to give approximate functional equations of the correct length for each level dividing $D$.

\begin{lemma}
\label{bulkKuzlemma}
With $h$ as in \eqref{hforbulkeq}, we have that
\begin{multline}
\label{bulkKuzeq}
\MM^{\Maass}(h) + \MM^{\Eis}(h)	\\
= \frac{\pi}{4D L(1,\chi_D)^2 L(1,g_{\psi^2})^2} \sum_{n,m,k,\ell = 1}^{\infty} \frac{\lambda_{\chi_D,1}(n,0) \lambda_{g_{\psi^2}}(m) \chi_D(k\ell)}{\sqrt{mn} k\ell}	\\
\times \left(\sum_{\substack{c = 1 \\ c \equiv 0 \hspace{-.25cm} \pmod{D}}}^{\infty} \frac{S(m,-n;c)}{c} \int_{-\infty}^{\infty} \JJ_r^{-}\left(\frac{\sqrt{mn}}{c}\right) V_1^1\left(\frac{nk^2}{D^{3/2}},r\right) V_2^1\left(\frac{m\ell^2}{D^{3/2}},r\right) W(r) H(r) \, d_{\spec}r \right.	\\
\left. + \sum_{\substack{c = 1 \\ (c,D) = 1}}^{\infty} \frac{S\left(m,-n\overline{D};c\right)}{c\sqrt{D}} \int_{-\infty}^{\infty} \JJ_r^{-}\left(\frac{\sqrt{mn}}{c\sqrt{D}}\right) V_1^1\left(\frac{nk^2}{D^{3/2}},r\right) V_2^1\left(\frac{m\ell^2}{D^{3/2}},r\right) W(r) H(r) \, d_{\spec}r\right)	\\
+ O_{\e}(t_g^{-1 + \e}).
\end{multline}
\end{lemma}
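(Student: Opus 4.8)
The plan is to mimic the corresponding step in \cite[Section 2.1]{DK18b}, deviating only — exactly as in the proof of Lemma \ref{MMtildeidentitylemma} — by applying the Kuznetsov formula attached to the $(\infty,1)$ pair of cusps, so as to obtain approximate functional equations whose lengths are the same for every level dividing $D$. First I would start from the definitions \eqref{MMMaasseq} and \eqref{MMEiseq} of $\MM^{\Maass}(h)$ and $\MM^{\Eis}(h)$ with $h$ as in \eqref{hforbulkeq}, where $W = W_{\alpha}$ is the weight of \cite[Lemma 5.1]{BuK17b}, and insert the parity $\epsilon_f$ into the spectral sum $\MM^{\Maass}(h)$: this changes nothing, since Lemma \ref{rootnumberlemma} shows that $L(1/2,f \otimes \chi_D) = L(1/2,f) L(1/2,f \otimes g_{\psi^2}) = 0$ whenever $\epsilon_f = -1$, but it permits the use of the opposite sign Kuznetsov formula, which carries no delta term and thereby streamlines the analysis.

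Next I would apply the approximate functional equation, Lemma \ref{approxfunclemma}, to the product $L(1/2,f) L(1/2,f \otimes \chi_D)$ and to $L(1/2,f \otimes g_{\psi^2})$. Pulling out of each the Euler product $L(2s,\chi_D)$ as the Dirichlet series $\sum_k \chi_D(k) k^{-1}$ and $\sum_{\ell} \chi_D(\ell) \ell^{-1}$ (exactly as in the definition of $\widetilde{V_2^1}$), and using the Rankin--Selberg factorisation $L(s,f)L(s,f\otimes\chi_D) = L(s, f \times E_{\chi_D,1})$ with $\lambda_{\chi_D,1}(n,0) = \sum_{ab = n}\chi_D(a)$ the Hecke eigenvalues of $1 \boxplus \chi_D$, these $L$-values become double Dirichlet polynomials of the shape
\[\sum_{n,k} \frac{\lambda_f(n) \lambda_{\chi_D,1}(n,0) \chi_D(k)}{\sqrt{n}\, k} V_1^1\!\left(\frac{nk^2}{Q_1(d_1)},t_f\right), \qquad \sum_{m,\ell} \frac{\lambda_f(m) \lambda_{g_{\psi^2}}(m) \chi_D(\ell)}{\sqrt{m}\, \ell} V_2^1\!\left(\frac{m\ell^2}{Q_2(d_1)},t_f\right),\]
where the lengths $Q_1(d_1), Q_2(d_1)$ are governed by the conductors of $f \otimes \chi_D$, $f$, and $f \otimes g_{\psi^2}$, and hence depend on the level $d_1$ of $f$. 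As in Section \ref{firstmomentsect}, I would circumvent this by summing instead the opposite sign Kuznetsov formula attached to the $(\infty,1)$ pair of cusps, Theorem \ref{infty1Kuznetsovthm}: this introduces the Atkin--Lehner pseudo-eigenvalue $\eta_f(d_1)$ — equivalently the root numbers of $f$ and of $f \otimes g_{\psi^2}$ — in precisely the manner needed for the effective lengths to collapse to $D^{3/2}$, uniformly in $d_1$, with the archimedean weights becoming $V_1^1(\cdot,r)$ and $V_2^1(\cdot,r)$ along the spectrum.

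Then I would perform the spectral average. The sum over the orthonormal basis $\BB_0(\Gamma_0(D))$, rewritten in terms of newforms of level $d_1 \mid D$ via Lemmata \ref{ILSlemma} and \ref{EllMaassidentitylemma}, together with the Eisenstein contribution, rewritten via Lemmata \ref{YoungEislemma} and \ref{EllEisidentitylemma}, and combined using the Atkin--Lehner and twist-invariance identities of Lemmata \ref{AtkinLehnerlemma} and \ref{abstwistlemma} and the dihedral Hecke relation $\lambda_{g_{\psi^2}}(vn) = \lambda_{g_{\psi^2}}(n)$ for $v \mid D$ (Lemma \ref{dihedralHeckeeigenlemma}), reduce to expressions of the shape $\sum_f \lambda_f(n) \lambda_f(m) h(t_f) / L(1,\sym^2 f)$ plus their Eisenstein analogues. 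Applying the opposite sign Kuznetsov formula, Theorem \ref{Kuznetsovthm}, and its $(\infty,1)$ variant, Theorem \ref{infty1Kuznetsovthm}, turns these into sums of Kloosterman sums $S(m,-n;c)$ over $c \equiv 0 \pmod D$ and $S(m,-n\overline{D};c)$ over $(c,D) = 1$ (with the extra factor $D^{-1/2}$), each weighted by $\JJ_r^{-}$ integrated against $V_1^1 V_2^1 W H$ over the spectrum; tracking the arithmetic factors $2^{\omega(d_2)}, \nu(d_2), \varphi(d_2)$ and the ramified Euler factors and checking that they telescope produces the constant $\pi/(4D\, L(1,\chi_D)^2 L(1,g_{\psi^2})^2)$, yielding the main sum in \eqref{bulkKuzeq}. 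Because we use the opposite sign formula there is no diagonal; the only residue is a collection of lower-order terms arising from the degenerate pieces of the approximate functional equation and Kuznetsov bookkeeping and from the zero-frequency Eisenstein contributions, which, via Stirling's formula \eqref{Stirlingeq}, the rapid decay of $V_1^1$ and $V_2^1$ in their first argument, the localisation of $W$ to $|r| \in [t_g^{1 - \alpha}, 2t_g - t_g^{1 - \alpha}]$, and standard bounds for $\JJ_r^{-}$, are $O_{\e}(t_g^{-1 + \e})$.

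The main obstacle will be the second step: showing that the $(\infty,1)$-cusp Kuznetsov formula genuinely produces approximate functional equations of the uniform length $D^{3/2}$ for every level dividing $D$ — that is, that the pseudo-eigenvalue $\eta_f(d_1)$ and the root number of $f \otimes g_{\psi^2}$ enter with exactly the right powers of $d_1$ and $D$ and with the right signs to absorb the level-dependence of $Q_1(d_1)$ and $Q_2(d_1)$. This rests on a careful combination of Lemma \ref{rootnumberlemma} (conductors and root numbers of $f$, $f \otimes \chi_D$, $f \otimes g_{\psi^2}$) with the Atkin--Lehner relations, and is where the squarefreeness of $D$ and the primitivity of $\chi_D$ enter decisively. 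A secondary but unavoidably tedious obstacle is the bookkeeping of the numerous arithmetic constants, so that they collapse to the displayed prefactor of \eqref{bulkKuzeq} and so that the leftover terms are genuinely of size $t_g^{-1 + \e}$ rather than merely $t_g^{\e}$.
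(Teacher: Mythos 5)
Your proposal follows essentially the same route as the paper's proof: insert the parity $\epsilon_f$ so that the opposite-sign Kuznetsov formula (with no delta term) applies, expand the central values by the approximate functional equation, and combine the $(\infty,\infty)$ and $(\infty,1)$ Kuznetsov formulae so that the pseudo-eigenvalues $\eta_f(d_1)$ render the effective lengths uniform ($D^{3/2}$) across all levels $d_1 \mid D$. Two details that your outline glosses over but that carry real content in the paper: the precise linear combination is \emph{twice} the $(\infty,\infty)$ expression plus the $(\infty,1)$ expression and its $m \leftrightarrow n$ swap, matching the four terms in the product of the two two-term approximate functional equations (two $\eta_f$-free terms and two cross terms); and the dominant $O_{\e}(t_g^{-1+\e})$ error arises not from the geometric side or the localisation of $W$, but from having to use $V_1^{1}$ in place of $V_1^{-1}$ for the odd Maa\ss{} forms (whose true central values vanish), a discrepancy bounded spectrally via the Cauchy--Schwarz inequality and the large sieve rather than by decay of $\JJ_r^{-}$.
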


Here $V_1^1$ and $V_2^1$ are as in \eqref{V1eq} and \eqref{V2eq}.

\begin{proof}
We use the opposite sign Kuznetsov formula associated to the $(\infty,\infty)$ pair of cusps, \hyperref[Kuznetsovthm]{Theorem \ref*{Kuznetsovthm}}, with
\[h(t) = \frac{\pi}{8D^2 L(1,\chi_D)^2 L(1,g_{\psi^2})^2} V_1^1\left(\frac{nk^2}{D^{3/2}},t\right) V_2^1\left(\frac{m\ell^2}{D^{3/2}},t\right) W(t) H(t),\]
noting that this requires Yoshida's extension of the Kuznetsov formula \cite[Theorem]{Yos97}, since $H(t)$ has poles at $t = \pm_1 2t_g \pm_2 i/2$. We subsequently multiply through by
\[\frac{\lambda_{\chi_D,1}(n,0) \lambda_{g_{\psi^2}}(m) \chi_D(k\ell)}{\sqrt{mn} k\ell}\]
and sum over $n,m,k,\ell \in \N$. Via the explicit expression in \hyperref[Kuznetsovlemma]{Lemma \ref*{Kuznetsovlemma}}, the Maa\ss{} cusp form term is
\begin{multline*}
\frac{\pi}{8D^2 L(1,\chi_D)^2 L(1,g_{\psi^2})^2} \sum_{d_1 d_2 = D} \frac{d_2}{\nu(d_2)} \sum_{f \in \BB_0^{\ast}(\Gamma_0(d_1))} \epsilon_f \frac{W(t_f) H(t_f)}{L(1,\sym^2 f)} \sum_{\ell \mid d_2} L_{\ell}(1,\sym^2 f) \frac{\varphi(\ell)}{\ell}	\\
\times \sum_{v_1 w_1 = \ell} \frac{\nu(v_1)}{v_1} \frac{\mu(w_1) \lambda_f(w_1)}{\sqrt{w_1}} \sum_{m = 1}^{\infty} \sum_{\ell = 1}^{\infty} \frac{\lambda_f(m) \lambda_{g_{\psi^2}}(m) \chi_D(\ell)}{\sqrt{m} \ell} V_2^1\left(\frac{v_1 m\ell^2}{D^{3/2}},t_f\right)	\\
\times \sum_{v_2 w_2 = \ell} \frac{\nu(v_2)}{v_2} \frac{\mu(w_2) \lambda_f(w_2)}{\sqrt{w_2}} \sum_{n = 1}^{\infty} \sum_{k = 1}^{\infty} \frac{\lambda_f(n) \lambda_{\chi_D,1}(n,0) \chi_D(k)}{\sqrt{n} k} V_1^1\left(\frac{v_2 nk^2}{D^{3/2}},t_f\right)
\end{multline*}
after making the change of variables $m \mapsto v_1 m$ and $n \mapsto v_2 n$.

We do the same with the opposite sign Kuznetsov formula associated to the $(\infty,1)$ pair of cusps, \hyperref[infty1Kuznetsovthm]{Theorem \ref*{infty1Kuznetsovthm}}, for which the resulting Maa\ss{} cusp form term is
\begin{multline*}
\frac{\pi}{8D^2 L(1,\chi_D)^2 L(1,g_{\psi^2})^2} \sum_{d_1 d_2 = D} \frac{d_2}{\nu(d_2)} \sum_{f \in \BB_0^{\ast}(\Gamma_0(d_1))} \epsilon_f \frac{W(t_f) H(t_f)}{L(1,\sym^2 f)} \sum_{\ell \mid d_2} L_{\ell}(1,\sym^2 f) \frac{\varphi(\ell)}{\ell}	\\
\times \sum_{v_1 w_1 = \ell} \frac{\nu(v_1)}{v_1} \frac{\mu(w_1) \lambda_f(w_1)}{\sqrt{w_1}} \eta_f(d_1) \sum_{n = 1}^{\infty} \sum_{\ell = 1}^{\infty} \frac{\lambda_f(n) \lambda_{g_{\psi^2}}(n) \chi_D(\ell)}{\sqrt{n} \ell} V_2^1\left(\frac{d_2 n\ell^2}{v_1 D^{3/2}},t_f\right)	\\
\times \sum_{v_2 w_2 = \ell} \frac{\nu(v_2)}{v_2} \frac{\mu(w_2) \lambda_f(w_2)}{\sqrt{w_2}} \sum_{m = 1}^{\infty} \sum_{k = 1}^{\infty} \frac{\lambda_f(m) \lambda_{\chi_D,1}(m,0) \chi_D(k)}{\sqrt{m} k} V_1^1\left(\frac{v_2 m k^2}{D^{3/2}},t_f\right)
\end{multline*}
via the explicit expression in \hyperref[infty1Kuznetsovlemma]{Lemma \ref*{infty1Kuznetsovlemma}}, after making the change of variables $m \mapsto d_2 m/ w_1$, $n \mapsto v_2 n$, and interchanging $v_1$ and $w_1$. We also do the same but with $m$ and $n$ interchanged.

We add twice the first expression to the second and the third. Using the approximate functional equations, \hyperref[approxfunclemma]{Lemma \ref*{approxfunclemma}}, with $X = \sqrt{d_2}/v_1$ and $X = \sqrt{d_2}/v_2$ respectively, and recalling \hyperref[EllMaassidentitylemma]{Lemma \ref*{EllMaassidentitylemma}}, we obtain $\MM^{\Maass}(h)$ with $h$ as in \eqref{hforbulkeq} as well as an error term arising from using $V_1^1$ in place of $V_1^{-1}$ for the odd Maa\ss{} cusp forms, just as in \cite[(2.9)]{DK18b}. By \cite[(2.5)]{DK18b}, the Cauchy--Schwarz inequality, and the large sieve, \hyperref[largesievethm]{Theorem \ref*{largesievethm}}, this error is $O_{\e}(t_g^{-1 + \e})$.

The Eisenstein terms from these instances of the Kuznetsov formula give rise to $\MM^{\Eis}(h)$ plus an error term of size $O_A(T^{-A})$ for any $A > 0$. There are no delta terms as these are opposite sign Kuznetsov formul\ae{}. Finally, the Kloosterman terms sum to the desired expression in \eqref{bulkKuzeq}.
\end{proof}

Following \cite[Section 2.3]{DK18b}, we insert a smooth compactly supported function $U(r/2t_g)$ as in \cite[(2.13)]{DK18b} into the integrand of the right-hand side of \eqref{bulkKuzeq}, absorb $W(r)$ into $U(r/2t_g)$, replace $H(r)$ with its leading order term via Stirling's formula \eqref{Stirlingeq}, and treat only the leading order terms $V(nk^2/D^{3/2}r^2)$ and $V(m\ell^2/D^{3/2}(4t_g^2 - r^2))$ of $V_1^1(nk^2/D^{3/2},r)$ and $V_2^1(m\ell^2/D^{3/2},r)$ respectively, with
\begin{equation}
\label{V(x)defeq}
V(x) \defeq \frac{1}{2\pi i} \int_{\sigma - i\infty}^{\sigma + i\infty} e^{s^2} (4\pi^2 x)^{-s} \, \frac{ds}{s}
\end{equation}
as in \cite[(2.14)]{DK18b}. Defining
\begin{equation}
\label{Q(r)defeq}
Q(r) \defeq \frac{U\left(\frac{r}{2t_g}\right)}{|r| (4t_g^2 - r^2)^{1/2}}
\end{equation}
as in \cite[(2.16)]{DK18b}, this shows that the integrals in \eqref{bulkKuzeq} can be replaced with
\begin{gather*}
\frac{16\pi \sqrt{mn}}{c} Q\left(\frac{2\pi\sqrt{mn}}{c}\right) V\left(\frac{k^2 c^2}{4\pi^2 D^{3/2} m}\right) V\left(\frac{m\ell^2}{4D^{3/2} t_g^2} \frac{1}{1 - \frac{\pi^2 mn}{t_g^2 c^2}}\right),	\\
\frac{16\pi \sqrt{mn}}{c\sqrt{D}} Q\left(\frac{2\pi\sqrt{mn}}{c\sqrt{D}}\right) V\left(\frac{k^2 c^2}{4\pi^2 \sqrt{D} m}\right) V\left(\frac{m\ell^2}{4D^{3/2} t_g^2} \frac{1}{1 - \frac{\pi^2 mn}{D t_g^2 c^2}}\right)
\end{gather*}
respectively, as in \cite[(2.15)]{DK18b}, at the cost of a negligible error. We are left with obtaining an asymptotic formula for
\begin{multline}
\label{preVoronoibulkeq}
\frac{4\pi^2}{D L(1,\chi_D)^2 L(1,g_{\psi^2})^2} \sum_{n,m,k,\ell = 1}^{\infty} \frac{\lambda_{\chi_D,1}(n,0) \lambda_{g_{\psi^2}}(m) \chi_D(k\ell)}{k\ell}	\\
\times \left(\sum_{\substack{c = 1 \\ c \equiv 0 \hspace{-.25cm} \pmod{D}}}^{\infty} \frac{S(m,-n;c)}{c^2} Q\left(\frac{2\pi\sqrt{mn}}{c}\right) V\left(\frac{k^2 c^2}{4\pi^2 D^{3/2} m}\right) V\left(\frac{m\ell^2}{4D^{3/2} t_g^2} \frac{1}{1 - \frac{\pi^2 mn}{t_g^2 c^2}}\right) \right.	\\
\left. + \sum_{\substack{c = 1 \\ (c,D) = 1}}^{\infty} \frac{S\left(m,-n\overline{D};c\right)}{c^2 D} Q\left(\frac{2\pi\sqrt{mn}}{c\sqrt{D}}\right) V\left(\frac{k^2 c^2}{4\pi^2 \sqrt{D} m}\right) V\left(\frac{m\ell^2}{4D^{3/2} t_g^2} \frac{1}{1 - \frac{\pi^2 mn}{D t_g^2 c^2}}\right)\right).
\end{multline}
We open up both Kloosterman sums and use the Vorono\u{\i} summation formula, \hyperref[Voronoilemma]{Lemma \ref*{Voronoilemma}}, for the sum over $n$. In both sums over $c$, the corresponding Vorono\u{\i} $L$-series has a pole at $s = 1$, which contributes a main term that we now calculate.

\subsection{The Main Term}

\begin{lemma}
\label{bulkpolelemma}
The pole at $s = 1$ in the Vorono\u{\i} $L$-series contributes a main term equal to
\[\frac{2}{\vol(\Gamma_0(D) \backslash \Hb)} + O\left(t_g^{-\delta}\right)\]
for \eqref{preVoronoibulkeq} for some $\delta > 0$.
\end{lemma}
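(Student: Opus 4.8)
The plan is to mirror the treatment of the corresponding main term in \cite{DK18b}, the principal difference being the presence of the quadratic twist $\chi_D$ inside the arithmetic function $\lambda_{\chi_D,1}(\cdot,0)$ and the two-fold splitting of the $c$-sum into the classes $c \equiv 0 \pmod{D}$ and $(c,D) = 1$ that was already set up in \hyperref[MMtildeidentitylemma]{Lemma \ref*{MMtildeidentitylemma}}. Having opened the Kloosterman sums $S(m,-n;c)$ and $S(m,-n\overline{D};c)$ in \eqref{preVoronoibulkeq} and applied the Vorono\u{\i} summation formula, \hyperref[Voronoilemma]{Lemma \ref*{Voronoilemma}}, to the sum over $n$, the relevant Vorono\u{\i} $L$-series is $\sum_{n} \lambda_{\chi_D,1}(n,0) e(an/c) n^{-s}$, which factors as $\sum_{b} \chi_D(b) b^{-s} \sum_{c'} e(abc'/c) c'^{-s}$. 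The inner sum is a Hurwitz-type series whose only pole, at $s = 1$, occurs precisely when $c \mid b$, so the residue at $s = 1$ equals $\chi_D(c) L(1,\chi_D)/c^2$ (times the Mellin weight of the relevant test function at $s=1$), independently of the unit $a$. This is the mechanism that produces the main term.

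First I would extract this residue, which collapses the dual variable $n$ to its zero frequency: the additive character $e(-n\overline{d}/c)$ disappears and the sum over $d \bmod c$ inside the Kloosterman sum becomes a Ramanujan sum $c_c(m)$ (respectively a mildly twisted variant for the $(c,D) = 1$ piece). One then carries out the remaining sums. Summing $\chi_D(c) c_c(m)$ against the $c$-weight via the standard identity $\sum_{c} c_c(m) c^{-w} = \sigma_{1-w}(m)/\zeta(w)$ and its twisted/restricted analogues, and then the $m$-sum against $\sigma_{1-w}(m)$ via the Rankin--Selberg-type unfolding $\sum_m \lambda_{g_{\psi^2}}(m)\sigma_{1-w}(m) m^{-u} = L(u,g_{\psi^2}) L(u-1+w,g_{\psi^2})/\zeta^{(\cdot)}(2u-1+w)$ (using self-duality of $g_{\psi^2}$), while the $k,\ell$-sums against $\chi_D(k\ell)$ assemble into $L(\cdot,\chi_D)$ factors — here the twist-invariance of $g_{\psi^2}$ under $\chi_D$, already exploited in \hyperref[spectralrecpreaclemma]{Lemma \ref*{spectralrecpreaclemma}}, is what makes these factorisations clean. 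Opening each $V$ via its Mellin--Barnes integral \eqref{V(x)defeq} and shifting contours to the leading poles, the factors $L(1,\chi_D)^2$ and $L(1,g_{\psi^2})^2$ that emerge cancel exactly against those in the denominator of \eqref{preVoronoibulkeq}, the archimedean pieces $H(r)$, $U(r/2t_g)$, and $Q(r)$ integrate to an absolute constant by the same Mellin manipulations as in \cite{DK18b} (no stationary phase is needed), and collecting the arithmetic constants — including the factor $2^{\omega(D)}$, the $\nu,\varphi$ weights inherited from \hyperref[dihedralspectralprop]{Proposition \ref*{dihedralspectralprop}}, the normalising constant $\pi/(8D^2 L(1,\chi_D)^2 L(1,g_{\psi^2})^2)$ of \eqref{hforbulkeq}, and the Euler factors at $p \mid D$ produced by the $c \equiv 0 \pmod D$ versus $(c,D) = 1$ dichotomy — one is left with $6/(\pi\nu(D)) = 2/\vol(\Gamma_0(D)\backslash\Hb)$. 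The error term $O(t_g^{-\delta})$ then comes from pushing the Mellin contours past these leading poles: the rapid decay of $e^{s^2}$ in \eqref{V(x)defeq}, convexity bounds for $\zeta$, $L(\cdot,\chi_D)$, $L(\cdot,g_{\psi^2})$ on the shifted lines, and the localisation $r \asymp t_g$ forced by $U(r/2t_g)$ together yield a genuine power saving.

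The step I expect to be the main obstacle is not conceptual but bookkeeping: verifying that every arithmetic and archimedean constant multiplies out to exactly $2/\vol(\Gamma_0(D)\backslash\Hb)$ with no spurious factor left over. One must correctly combine the residue $L(1,\chi_D)$ of the Vorono\u{\i} $L$-series, the evaluation of the Ramanujan sums, the twisted Euler products at $p \mid D$, the weights from \hyperref[EllMaassidentitylemma]{Lemmata \ref*{EllMaassidentitylemma}} and \ref{EllEisidentitylemma} built into the normalisation, and the constant $\pi/(8D^2 L(1,\chi_D)^2 L(1,g_{\psi^2})^2)$; since \cite{DK18b} performs an identical computation in the Eisenstein setting, the real work is tracking the modifications forced by the cuspidality and $\chi_D$-invariance of $g_{\psi^2}$. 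A secondary technical point is to confirm that the complementary (non-$\zeta$) part of the twisted Dirichlet series contributes no further pole, so that the residue identified above is the only main term; this is immediate because $L(s,\chi_D)$ is entire.
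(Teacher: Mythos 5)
Your overall route coincides with the paper's: the main term is extracted from the residue at $s = 1$ of the Vorono\u{\i} $L$-series attached to $E_{\chi_D,1}$, the surviving arithmetic sums are assembled into products of $L$-functions that cancel against the normalisation, the archimedean parts are handled by Mellin--Barnes, and the error comes from a contour shift. There are, however, two concrete errors in the execution.

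First, your identification of the residue is wrong on the branch $c \equiv 0 \pmod{D}$, which carries half the main term. Writing $\lambda_{\chi_D,1}(n,0) = \sum_{bc' = n} \chi_D(b)$ and extracting the pole from the untwisted divisor variable $c'$ forces $c \mid b$, whence $\chi_D(b) = 0$ as soon as $D \mid c$; so your formula $\chi_D(c) L(1,\chi_D)/c^2$, ``independently of the unit $a$'', would make the entire $c \equiv 0 \pmod{D}$ contribution vanish. On that branch the pole in fact comes from the $\chi_D$-twisted divisor variable, and the residue is $\tau(\chi_D) \overline{\chi_D}(d) L(1,\chi_D)/c$ (\hyperref[Voronoilemma]{Lemma \ref*{Voronoilemma}}): it depends on $d$. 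Consequently the subsequent $d$-sum there is the Gauss sum $\sum_d \chi_D(d) e(md/c)$, to be opened via \hyperref[Miyakelemma]{Lemma \ref*{Miyakelemma}} into a sum over $a \mid (c/D,m)$, whereas it is the $(c,D) = 1$ branch whose residue $\chi_D(c) L(1,\chi_D)/c$ is independent of $d$ and produces a genuine Ramanujan sum; you have the two branches reversed. This is not mere bookkeeping: the constant $6/\pi\nu(D)$ emerges because the two branches yield $s_1$-integrands proportional to $D^{-s_1/2} L^D(1 - s_1, g_{\psi^2}) L(1 + s_1, g_{\psi^2})$ and $D^{s_1/2} L(1 - s_1, g_{\psi^2}) L^D(1 + s_1, g_{\psi^2})$, whose sum is symmetric under $s_1 \mapsto -s_1$ so that the integral equals half the residue at $s_1 = 0$; killing or garbling one branch destroys this symmetry and the value of the constant.

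Second, convexity bounds on the shifted lines do not give a power saving. After shifting the inner Mellin variable to $\Re(s_2) = \sigma_1 - \frac{1}{2}$ with $\sigma_1 > \frac{1}{4}$, the factor $t_g^{2s_2}$ contributes at best $t_g^{-1/2 + \e}$, while $L(1 - s_1 + s_2, g_{\psi^2})$ then sits on the critical line of an $L$-function with spectral parameter $2t_g$, for which convexity gives only $O_{\e}(t_g^{1/2 + \e})$; the product is $O_{\e}(t_g^{\e})$ and no negative power of $t_g$ results. One genuinely needs the $t_g$-aspect subconvexity of Michel--Venkatesh (\hyperref[MVsubconvthm]{Theorem \ref*{MVsubconvthm}}) here, exactly as elsewhere in the paper.
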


\begin{proof}
For the first sum over $c$, the pole of the associated Vorono\u{\i} $L$-series as in \hyperref[Voronoilemma]{Lemma \ref*{Voronoilemma}} yields a residue equal to
\begin{multline*}
\frac{4\pi^2}{\sqrt{D} L(1,\chi_D) L(1,g_{\psi^2})^2} \sum_{m,k,\ell = 1}^{\infty} \sum_{\substack{c = 1 \\ c \equiv 0 \hspace{-.25cm} \pmod{D}}}^{\infty} \frac{\lambda_{g_{\psi^2}}(m) \chi_D(k\ell)}{k\ell c^3} V\left(\frac{k^2 c^2}{4\pi^2 D^{3/2} m}\right)	\\
\times \sum_{d \in (\Z/c\Z)^{\times}} \chi_D(d) e\left(\frac{md}{c}\right) \int_{0}^{\infty} Q\left(\frac{2\pi\sqrt{mx}}{c}\right) V\left(\frac{m\ell^2}{4D^{3/2} t_g^2} \frac{1}{1 - \frac{\pi^2 mx}{t_g^2 c^2}}\right) \, dx.
\end{multline*}
Following \cite[Section 3]{DK18b}, we make the change of variables $x \mapsto cx^2/2\pi\sqrt{m}$, extend the function $U(r/2t_g)$ in the definition \eqref{Q(r)defeq} of $Q(r)$ to the endpoints $0$ and $2t_g$ at the cost of a negligible error, make the change of variables $x \mapsto 2t_g x$, and use the definition \eqref{V(x)defeq} of $V$ as a Mellin transform, yielding an asymptotic expression of the form
\begin{multline*}
\frac{2}{\sqrt{D} L(1,\chi_D) L(1,g_{\psi^2})^2} \frac{1}{(2\pi i)^2} \int_{\sigma_1 - i\infty}^{\sigma_1 + i\infty} \int_{\sigma_2 - i\infty}^{\sigma_2 + i\infty} e^{s_1^2 + s_2^2} \pi^{-2s_2} t_g^{2s_2} D^{\frac{3}{2}(s_1 + s_2)} \int_{0}^{1} \frac{1}{(1 - x^2)^{\frac{1}{2} - s_2}} \, dx	\\
\times \sum_{m,k,\ell = 1}^{\infty} \sum_{\substack{c = 1 \\ c \equiv 0 \hspace{-.25cm} \pmod{D}}}^{\infty} \frac{\lambda_{g_{\psi^2}}(m) \chi_D(k\ell)}{m^{1 - s_1 + s_2} k^{1 + 2s_1} \ell^{1 + 2s_2} c^{1 + 2s_1}} \sum_{d \in (\Z/c\Z)^{\times}} \chi_D(d) e\left(\frac{md}{c}\right) \, \frac{ds_2}{s_2} \, \frac{ds_1}{s_1},
\end{multline*}
where $1/4 < \sigma_1 < \sigma_2 < 1/2$. We use \hyperref[Miyakelemma]{Lemma \ref*{Miyakelemma}} to reexpress the sum over $d$, a Gauss sum, as a sum over $a \mid (c/D,m)$; next, we make the change of variables $c \mapsto acD$ and $m \mapsto am$, then use \eqref{invertcuspmult} to separate $\lambda_{g_{\psi^2}}(am)$ as a sum over $b \mid (a,m)$; finally, we make the change of variables $a \mapsto ab$ and $m \mapsto bm$, yielding
\begin{multline*}
\frac{2}{D L(1,\chi_D) L(1,g_{\psi^2})^2} \frac{1}{(2\pi i)^2} \int_{\sigma_1 - i\infty}^{\sigma_1 + i\infty} \int_{\sigma_2 - i\infty}^{\sigma_2 + i\infty} e^{s_1^2 + s_2^2} \pi^{-2s_2} t_g^{2s_2} D^{\frac{3s_2 - s_1}{2}} \int_{0}^{1} \frac{1}{(1 - x^2)^{\frac{1}{2} - s_2}} \, dx	\\
\times \sum_{m = 1}^{\infty} \frac{\lambda_{g_{\psi^2}}(m) \chi_D(m)}{m^{1 - s_1 + s_2}} \sum_{k = 1}^{\infty} \frac{\chi_D(k)}{k^{1 + 2s_1}} \sum_{\ell = 1}^{\infty} \frac{\chi_D(\ell)}{\ell^{1 + 2s_2}} \sum_{c = 1}^{\infty} \frac{\mu(c) \chi_D(c)}{c^{1 + 2s_1}} \sum_{a = 1}^{\infty} \frac{\lambda_{g_{\psi^2}}(a)}{a^{1 + s_1 + s_2}} \sum_{\substack{b = 1 \\ (b,D) = 1}}^{\infty} \frac{\mu(b)}{b^{2 + 2s_2}} \, \frac{ds_2}{s_2} \, \frac{ds_1}{s_1}.
\end{multline*}
The sums over $m$, $k$, $\ell$, $c$, $a$, and $b$ in the second line simplify to
\[\frac{L(1 + 2s_2,\chi_D) L^D(1 - s_1 + s_2,g_{\psi^2}) L(1 + s_1 + s_2,g_{\psi^2})}{\zeta^D(2 + 2s_2)}.\]
We shift the contour in the integral over $s_2$ to the line $\Re(s_2) = \sigma_1 - 1/2$; via the subconvex bounds in \hyperref[MVsubconvthm]{Theorem \ref*{MVsubconvthm}}, the resulting contour integral is bounded by a negative power of $t_g$, so that the dominant contribution comes from the residue due to the simple pole at $s_2 = 0$, namely
\[\frac{6}{\pi \nu(D)} \frac{1}{2\pi i} \int_{\sigma_1 - i\infty}^{\sigma_1 + i\infty} e^{s_1^2} D^{-\frac{s_1}{2}} \frac{L^D(1 - s_1,g_{\psi^2}) L(1 + s_1,g_{\psi^2})}{L(1,g_{\psi^2}) L^D(1,g_{\psi^2})} \, \frac{ds_1}{s_1}.\]

Now we do the same with the second sum over $c$. We open up the Kloosterman sum, make the change of variables $d \mapsto -\overline{Dd}$, and use the Vorono\u{\i} summation formula, \hyperref[Voronoilemma]{Lemma \ref*{Voronoilemma}}, for the sum over $n$; the pole of the Vorono\u{\i} $L$-series at $s = 1$ yields the term
\begin{multline*}
\frac{4\pi^2}{D^2 L(1,\chi_D) L(1,g_{\psi^2})^2} \sum_{m,k,\ell = 1}^{\infty} \sum_{\substack{c = 1 \\ (c,D) = 1}}^{\infty} \frac{\lambda_{g_{\psi^2}}(m) \chi_D(k\ell c)}{k\ell c^2} V\left(\frac{k^2 c^2}{4\pi^2 \sqrt{D} m}\right)	\\
\times \sum_{d \in (\Z/c\Z)^{\times}} e\left(-\frac{m\overline{Dd}}{c}\right) \int_{0}^{\infty} Q\left(\frac{2\pi\sqrt{mx}}{c\sqrt{D}}\right) V\left(\frac{m\ell^2}{4D^{3/2} t_g^2} \frac{1}{1 - \frac{\pi^2 mx}{D t_g^2 c^2}}\right) \, dx.
\end{multline*}
We make the change of variables $x \mapsto c\sqrt{D} x^2/2\pi\sqrt{m}$, extend the function $U(r/2t_g)$ in the definition \eqref{Q(r)defeq} of $Q(r)$ to the endpoints $0$ and $2t_g$ at the cost of a negligible error, make the change of variables $x \mapsto 2t_g x$, and use the definition \eqref{V(x)defeq} of $V$ as a Mellin transform, yielding the asymptotic expression
\begin{multline*}
\frac{2}{D L(1,\chi_D) L(1,g_{\psi^2})^2} \frac{1}{(2\pi i)^2} \int_{\sigma_1 - i\infty}^{\sigma_1 + i\infty} \int_{\sigma_2 - i\infty}^{\sigma_2 + i\infty} e^{s_1^2 + s_2^2} \pi^{-2s_2} t_g^{2s_2} D^{\frac{s_1}{2}} \int_{0}^{1} \frac{1}{(1 - x^2)^{\frac{1}{2} - s_2}} \, dx	\\
\times \sum_{m,k,\ell = 1}^{\infty} \sum_{\substack{c = 1 \\ (c,D) = 1}}^{\infty} \frac{\lambda_{g_{\psi^2}}(m) \chi_D(k\ell c)}{m^{1 - s_1 + s_2} k^{1 + 2s_1} \ell^{1 + 2s_2} c^{1 + 2s_1}} \sum_{d \in (\Z/c\Z)^{\times}} e\left(-\frac{m\overline{Dd}}{c}\right) \, \frac{ds_2}{s_2} \, \frac{ds_1}{s_1}.
\end{multline*}
The sum over $d$ is a Ramanujan sum, $\sum_{a \mid (m,c)} a \mu(c/a)$. We make the change of variables $c \mapsto ac$ and $m \mapsto am$, then use \eqref{invertcuspmult} and make the change of variables $a \mapsto ab$ and $m \mapsto bm$, leading to
\begin{multline*}
\frac{2}{D L(1,\chi_D) L(1,g_{\psi^2})^2} \frac{1}{(2\pi i)^2} \int_{\sigma_1 - i\infty}^{\sigma_1 + i\infty} \int_{\sigma_2 - i\infty}^{\sigma_2 + i\infty} e^{s_1^2 + s_2^2} \pi^{-2s_2} t_g^{2s_2} D^{\frac{s_1}{2}} \int_{0}^{1} \frac{1}{(1 - x^2)^{\frac{1}{2} - s_2}} \, dx	\\
\times \sum_{m = 1}^{\infty} \frac{\lambda_{g_{\psi^2}}(m)}{m^{1 - s_1 + s_2}} \sum_{k = 1}^{\infty} \frac{\chi_D(k)}{k^{1 + 2s_1}} \sum_{\ell = 1}^{\infty} \frac{\chi_D(\ell)}{\ell^{1 + 2s_2}} \sum_{c = 1}^{\infty} \frac{\mu(c) \chi_D(c)}{c^{1 + 2s_1}} \sum_{a = 1}^{\infty} \frac{\lambda_{g_{\psi^2}}(a) \chi_D(a)}{a^{1 + s_1 + s_2}} \sum_{\substack{b = 1 \\ (b,D) = 1}}^{\infty} \frac{\mu(b)}{b^{2 + 2s_2}} \, \frac{ds_2}{s_2} \, \frac{ds_1}{s_1}.
\end{multline*}
The sums over $m$, $k$, $\ell$, $c$, $a$, and $b$ in the second line simplify to
\[\frac{L(1 + 2s_2,\chi_D) L(1 - s_1 + s_2,g_{\psi^2}) L^D(1 + s_1 + s_2,g_{\psi^2})}{\zeta^D(2 + 2s_2)}.\]
Again, we shift the contour in the integral over $s_2$ to the line $\Re(s_2) = \sigma_1 - 1/2$, with a main term coming from the residue at $s_2 = 0$ given by
\[\frac{6}{\pi \nu(D)} \frac{1}{2\pi i} \int_{\sigma_1 - i\infty}^{\sigma_1 + i\infty} e^{s_1^2} D^{\frac{s_1}{2}} \frac{L(1 - s_1,g_{\psi^2}) L^D(1 + s_1,g_{\psi^2})}{L(1,g_{\psi^2}) L^D(1,g_{\psi^2})} \, \frac{ds_1}{s_1}.\]

We finish by adding together these two main contributions and observing that the resulting integrand is odd and hence equal to half its residue at $s_1 = 0$, namely
\[\frac{6}{\pi \nu(D)} = \frac{2}{\vol(\Gamma_0(D) \backslash \Hb)}.\qedhere\]
\end{proof}

\subsection{The Vorono\u{\i} Dual Sums}

Having applied the Vorono\u{\i} summation formula, \hyperref[Voronoilemma]{Lemma \ref*{Voronoilemma}}, to the sum over $n$ in \eqref{preVoronoibulkeq} and dealt with the terms arising from the pole of the Vorono\u{\i} $L$-series, we now treat the terms arising from the Vorono\u{\i} dual sums.

\begin{lemma}
\label{bulkduallemma}
The Vorono\u{\i} dual sums are of size $O(t_g^{-\delta})$ for some $\delta > 0$.
\end{lemma}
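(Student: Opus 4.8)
The plan is to follow the treatment of the analogous Vorono\u{\i} dual sums in \cite{DK18b} as closely as possible, flagging only the modifications forced by the cuspidality of $g_{\psi^2}$. We have already applied \hyperref[Voronoilemma]{Lemma \ref*{Voronoilemma}} to the sum over $n$ in \eqref{preVoronoibulkeq} and extracted the polar contribution in \hyperref[bulkpolelemma]{Lemma \ref*{bulkpolelemma}}; what remains are the Vorono\u{\i} dual terms, in which the $n$-sum has become a dual sum against a Hankel-type transform of the archimedean weight and an additive character in $\overline{d}$ (respectively a Ramanujan sum in the piece coprime to $D$), twisted by $\chi_D(d)$. Since $m \asymp t_g^2$ and $c \ll t_g$ throughout \eqref{preVoronoibulkeq}, this forces the dual variable into a range of length $O_{\e}(t_g^{\e})$. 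First I would apply \hyperref[Voronoilemma]{Lemma \ref*{Voronoilemma}} a second time, now to the sum over $m$: because $g_{\psi^2}$ is cuspidal this introduces \emph{no} new main term --- the one genuine simplification over the divisor-function situation of \cite{DK18b} --- and again shortens the dual $m$-sum to length $O_{\e}(t_g^{\e})$. The two Vorono\u{\i} applications convert both of the additive characters coming from $S(m,-n;c)$ into characters in $d$ rather than $\overline{d}$, so the complete sum over $d$ collapses to a Gauss sum, which I would evaluate via \hyperref[Miyakelemma]{Lemma \ref*{Miyakelemma}}; after a change of variables this leaves a Kloosterman sum whose modulus divides $c$ but whose two entries have size $O_{\e}(t_g^{\e})$.

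Next I would invoke the Kloosterman summation formula, \hyperref[Kloostermanthm]{Theorem \ref*{Kloostermanthm}} (together with its holomorphic counterpart), to turn the surviving sum over these moduli into a spectral average over newforms $f$ of level dividing $D$ and over holomorphic cusp forms --- the spectral reciprocity phenomenon. Estimating the resulting iterated Mellin--Barnes integral by Stirling's formula, exactly as in \hyperref[JJrMellinasympcor]{Corollary \ref*{JJrMellinasympcor}} and as in the corresponding estimates of Blomer, Li, and Miller, one should find that the reciprocated spectral weight is, up to a negligible error, supported on $t_f \ll t_g^{\e}$ (respectively $k_f \ll t_g^{\e}$), is there of polynomial size, and carries an overall normalising factor of size $O_{\e}(t_g^{-1+\e})$. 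The dual sums are thereby bounded by an extremely short, well-normalised moment of $L(1/2,f) L(1/2,f \otimes \chi_D) L(1/2,f \otimes g_{\psi^2})$. I would estimate this by opening the $L$-functions with the approximate functional equation, \hyperref[approxfunclemma]{Lemma \ref*{approxfunclemma}}, handling the $L(1/2,f) L(1/2,f \otimes \chi_D)$ factors by Cauchy--Schwarz and the large sieve, \hyperref[largesievethm]{Theorem \ref*{largesievethm}}, and using the nonnegativity of $L(1/2,f \otimes g_{\psi^2})$ together with the hybrid subconvex bound $L(1/2,f \otimes g_{\psi^2}) \ll t_g^{1-\delta}$, uniform for $t_f \ll t_g^{\e}$, of Michel and Venkatesh, \hyperref[MVsubconvthm]{Theorem \ref*{MVsubconvthm}}. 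Since the spectral sum then has only $O_{\e}(t_g^{\e})$ terms, the combination of the $t_g^{-1}$-sized weight with the subconvex saving yields the claimed bound $O(t_g^{-\delta})$ after a possible shrinking of $\delta$.

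The hard part will be the uniform stationary phase bookkeeping for the archimedean transforms through the two Vorono\u{\i} summations and the Kloosterman summation formula: one must verify that, \emph{uniformly} as $t_g \to \infty$, the two dual summation variables genuinely truncate at length $t_g^{\e}$, that the spectral weight produced by the Kloosterman summation formula is negligible for $t_f \gg t_g^{\e}$, and that its derivatives are controlled there with the correct normalisation. I expect this to amount, for the most part, to a transcription of the estimates in \cite{DK18b}, the one new analytic ingredient being the Hankel kernel attached to the cusp form $g_{\psi^2}$ in place of the divisor-function kernel, whose relevant asymptotics are supplied by \hyperref[JJrMellinasympcor]{Corollary \ref*{JJrMellinasympcor}}. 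A secondary nuisance, handled throughout by \hyperref[Miyakelemma]{Lemma \ref*{Miyakelemma}}, is keeping track of the $\chi_D$-ramified Gauss sums and local factors at the primes dividing $D$; here the squarefreeness of $D$ keeps the combinatorics manageable.
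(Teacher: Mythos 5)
Your high-level strategy --- Vorono\u{\i} summation on $n$, then Vorono\u{\i} on $m$, then the Kloosterman summation formula, then subconvexity --- matches the paper's proof, and you are right that cuspidality of $g_{\psi^2}$ removes the second polar term that is present in the Eisenstein case of \cite{DK18b}. But there is a genuine gap in your bookkeeping of ranges, and it propagates through the rest of the argument.

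The error is the claim that the second Vorono\u{\i} (applied to the $m$-sum) "again shortens the dual $m$-sum to length $O_{\e}(t_g^{\e})$." The first Vorono\u{\i} does shorten, but only because the automorphic object attached to $n$ is the Eisenstein newform $E_{\chi_D,1}$, whose archimedean conductor is $O(1)$: with $c \ll t_g$ and $N \asymp t_g^2$, the dual variable satisfies $Nn/c^2 \ll t_g^{\e}$, so $n \ll t_g^{\e}$. The object attached to $m$ is $g_{\psi^2}$, whose archimedean conductor is $\asymp t_g^2$ --- this is precisely the Hankel kernel you yourself flag --- so the essential support of $\check{\Phi}_2^{\pm}(Mm/c^2,t_g)$ is $Mm/c^2 \ll t_g^{2+\e}$, and the dual $m$ still ranges up to $t_g^{2+\e}$. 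The second Vorono\u{\i} swaps the kernel and prepares the $c$-sum for the Kloosterman summation formula; it does not shorten $m$. The paper's reduction indeed arrives at $B \leq t_g^{2+\e}$ for the length of the residual $m$-sum, not $t_g^{\e}$.

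Because $m$ stays long, the $d$-sum does not collapse to a Gauss sum to be handled by Lemma \ref{Miyakelemma} (that happens only at the polar term, which is \hyperref[bulkpolelemma]{Lemma \ref*{bulkpolelemma}}); instead the two Vorono\u{\i} summations take the characters $e(md/c)$ and $e(-n\overline{d}/c)$ to $e(\mp m\overline{d}/c)$ and $e(\pm nd/c)$ respectively, the two extra $\chi_D(d)$ factors cancel since $\chi_D$ is quadratic, and the $d$-sum reconstitutes a Kloosterman sum $S(m,\pm n;c)$ with $n \ll t_g^{\e}$ short but $m \ll t_g^{2+\e}$ long. After the Kloosterman summation formula, the short $n$-entry yields only a bounded factor $\lambda_f(n)$, while the long $m$-sum produces $L(1/2,f\otimes g_{\psi^2})$ after Mellin inversion. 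Thus the spectral moment one must bound is not the triple product moment you describe, but rather $\sum_{t_f \ll t_g^{\e}} w_f\, L(1/2, f\otimes g_{\psi^2})$ with weights $w_f \ll_{\e} t_g^{-1+\e}$; the Michel--Venkatesh bound of Theorem \ref{MVsubconvthm} then closes the argument directly, with no Cauchy--Schwarz or large sieve needed. As written, your proposal is internally inconsistent: if both Kloosterman entries were genuinely of length $t_g^{\e}$ after the two Vorono\u{\i} summations, the spectral average emerging from the Kloosterman summation formula would carry no $L$-values at all, rather than the triple $L$-moment you then propose to estimate.
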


\begin{proof}
There are two dual sums associated to the two sums over $c$ in \eqref{preVoronoibulkeq}. We prove this bound only for the former dual sum; the proof for the latter follows with minor modifications. The dual sum to the first term can be expressed as a dyadic sum over $N \leq t_g^{2 + \e}$ times
\begin{multline*}
\frac{4\pi^2 N}{D L(1,\chi_D)^2 L(1,g_{\psi^2})^2} \sum_{\pm} \sum_{n,m,k,\ell= 1}^{\infty} \sum_{\substack{c = 1 \\ c \equiv 0 \hspace{-.25cm} \pmod{D}}}^{\infty} \sum_{d \in (\Z/c\Z)^{\times}} \chi_D(d) e\left(\frac{(m \pm n)d}{c}\right)	\\
\times \frac{\lambda_{\chi_D,1}(n,0) \lambda_{g_{\psi^2}}(m) \chi_D(k\ell)}{k\ell c^3} V\left(\frac{k^2 c^2}{4\pi^2 D^{3/2} m}\right) \check{\Phi}_1^{\pm}\left(\frac{Nn}{c^2}\right),
\end{multline*}
where $\Phi_1$ is a smooth function compactly supported on $(1/2,3/2)$ and
\begin{align*}
\check{\Phi}_1^{\pm}(x) & \defeq \frac{1}{2\pi i} \int_{\sigma - i\infty}^{\sigma + i\infty} \widehat{\JJ_0^{\pm}}(s) \widehat{\Phi_1}\left(-\frac{s}{2}\right) x^{-\frac{s}{2}} \, ds,	\\
\Phi_1(x) & \defeq x\Psi_1(x) Q\left(\frac{2\pi\sqrt{mNx}}{c}\right) V\left(\frac{m\ell^2}{4D^{3/2} t_g^2} \frac{1}{1 - \frac{\pi^2 mNx}{t_g^2 c^2}}\right),
\end{align*}
with $\sigma > 0$. This identity for the dual sum is proven in the same way as in \cite[Section 4.1]{DK18b}: we insert a smooth partition of unity $\Psi_1(n/N)$ to the sum over $n$ in \eqref{preVoronoibulkeq}, then apply of the Vorono\u{\i} summation formula, \hyperref[Voronoilemma]{Lemma \ref*{Voronoilemma}}, to the ensuing sum over $n$.

We proceed along the exact same lines as \cite[Section 4.1]{DK18b}; in this way, the problem is reduced to proving that the quantity
\[\frac{N}{t_g^2} \sum_{\pm} \sum_{m = 1}^{\infty} \sum_{\substack{c = 1 \\ c \equiv 0 \hspace{-.25cm} \pmod{D}}}^{\infty} \sum_{d \in (\Z/c\Z)^{\times}} \chi_D(d) e\left(\frac{(m \pm n)d}{c}\right) \frac{\lambda_{g_{\psi^2}}(m)}{c^3} Z\left(\frac{\sqrt{Nm}}{ct_g}\right) \Psi_2\left(\frac{m}{M}\right)\]
is $O(t_g^{-\delta})$ for any $n < t_g^{\e}$ and $t_g^{2 - \e} < M < t_g^{2 + \e}$, as in \cite[(4.3)]{DK18b}, with $\Psi_2$ another smooth function supported on $(1/2,3/2)$ and $Z(x) \defeq U(x)/4|x|\sqrt{1 - x^2}$.

Now we apply the Vorono\u{\i} summation formula, \hyperref[Voronoilemma]{Lemma \ref*{Voronoilemma}}, to the sum over $m$, yielding
\[\frac{2MN}{t_g^2} \sum_{\pm_1} \sum_{\pm_2} \sum_{m = 1}^{\infty} \sum_{\substack{c = 1 \\ c \equiv 0 \hspace{-.25cm} \pmod{D}}}^{\infty} \lambda_{g_{\psi^2}}(m) \frac{S(m,\pm_1 n;c)}{c^4} \check{\Phi}_2^{\pm_2}\left(\frac{Mm}{c^2},t_g\right),\]
where for $\sigma > 0$,
\begin{align*}
\check{\Phi}_2^{\pm}(x,t_g) & \defeq \frac{1}{2\pi i} \int_{\sigma - i\infty}^{\sigma + i\infty} \widehat{\JJ_{2t_g}^{\pm}}(s) \widehat{\Phi_2}\left(-\frac{s}{2}\right) x^{-\frac{s}{2}} \, ds,	\\
\Phi_2(x) & \defeq x \Psi_2(x) Z\left(\frac{\sqrt{MNx}}{ct_g}\right).
\end{align*}
We continue to follow \cite[Section 4.2]{DK18b}, by which the problem is reduced to showing that the quantity
\[\frac{1}{t_g} \sum_{\pm} \sum_{m = 1}^{\infty} \frac{\lambda_{g_{\psi^2}}(m)}{\sqrt{m}} \sum_{\substack{c = 1 \\ c \equiv 0 \hspace{-.25cm} \pmod{D}}}^{\infty} \frac{S(m,\pm n;c)}{c} \Phi\left(\frac{\sqrt{mn}}{c}\right) \Psi\left(\frac{m}{B}\right)\]
is $O(t_g^{-\delta})$, as in \cite[(4.6)]{DK18b}, where $\Phi$ and $\Psi$ are smooth bump functions with $\Psi$ supported on $(1/2,3/2)$ and $B \leq t_g^{2 + \e}$.

We spectrally expand the sums of Kloosterman sums via Kloosterman summation formul\ae{}, \hyperref[Kloostermanthm]{Theorems \ref*{Kloostermanthm}} and \ref{infty1Kloostermanthm}, with $H = \Phi$. From \cite[Lemma 3.6]{BuK17b}, $(\Ls^{\pm} \Phi)(t) \ll t_g^{-A}$ and $(\Ls^{\hol} \Phi)(k) \ll t_g^{-A}$ for any $A > 0$ unless $|t| < t_g^{\e}$ and $k < t_g^{\e}$, in which case we instead have the bound $O_{\e}(t_g^{\e})$. Using the explicit expressions for the Maa\ss{} cusp form, Eisenstein, and holomorphic cusp form terms given in \hyperref[Kuznetsovlemma]{Lemmata \ref*{Kuznetsovlemma}} and \ref{infty1Kuznetsovlemma}, we have reduced the problem to showing that both
\[\frac{1}{t_g} \sum_{m = 1}^{\infty} \frac{\lambda_{g_{\psi^2}}(m) \lambda_f(m)}{\sqrt{m}} \Psi\left(\frac{m}{B}\right), \qquad \frac{1}{t_g} \sum_{m = 1}^{\infty} \frac{\lambda_{g_{\psi^2}}(m) \lambda(m,t)}{\sqrt{m}} \Psi\left(\frac{m}{B}\right)\]
are $O(t_g^{-\delta})$ for $B < t_g^{2 + \e}$ for all $f$ in either $\BB_0^{\ast}(\Gamma_0(d_1))$ with $|t_f| < t_g^{\e}$ or in $\BB_{\hol}^{\ast}(\Gamma_0(d_1))$ with $k_f < t_g^{\e}$, where $d_1 \mid D$, and for $|t| < t_g^{\e}$. By Mellin inversion, these two expressions are respectively equal to
\begin{gather*}
\frac{1}{t_g} \frac{1}{2\pi i} \int_{\sigma - i\infty}^{\sigma + i\infty} \frac{L\left(\frac{1}{2} + s, f \otimes g_{\psi^2}\right)}{L(1 + 2s,\chi_D)} B^s \widehat{\Psi}(s) \, ds,	\\
\frac{1}{t_g} \frac{1}{2\pi i} \int_{\sigma - i\infty}^{\sigma + i\infty} \frac{L\left(\frac{1}{2} + s + it, g_{\psi^2}\right) L\left(\frac{1}{2} + s - it, g_{\psi^2}\right)}{L(1 + 2s,\chi_D)} B^s \widehat{\Psi}(s) \, ds
\end{gather*}
for any $\sigma > 1/2$. The rapid decay of $\widehat{\Psi}$ in vertical strips allows the integral to be restricted to $|\Im(s)| < t_g^{\e}$ and shifted to $\sigma = 0$, at which point the subconvex bounds in \hyperref[MVsubconvthm]{Theorem \ref*{MVsubconvthm}} bound the numerators by $O(t_g^{1 - \delta})$ for some $\delta > 0$, which completes the proof.
\end{proof}

\begin{proof}[Proof of {\hyperref[dihedralmomentsprop]{Proposition \ref*{dihedralmomentsprop} (2)}}]
This follows directly upon combining \hyperref[bulkKuzlemma]{Lemmata \ref*{bulkKuzlemma}}, \ref{bulkpolelemma}, and \ref{bulkduallemma}.
\end{proof}

\begin{remark}
Perhaps one can prove this result using analytic continuation, as in the proof of \hyperref[spectralreciprocityprop]{Proposition \ref*{spectralreciprocityprop}}, instead of using approximate functional equations. We choose the latter path since the groundwork is laid out in \cite{DK18b}, and it avoids technical difficulties in the analytic continuation approach of ensuring a valid choice of test function $h$.
\end{remark}

\section{Spectral Reciprocity for the Short Transition Range}
\label{spectralrec2sect}

For $\hf \defeq (h,h^{\hol}) : (\R \cup i(-1/2,1/2)) \times 2\N \to \C^2$, let
\[\undertilde{\MM}^{\pm}(\hf) \defeq \undertilde{\MM}^{\Maass}(h) + \undertilde{\MM}^{\Eis}(h) + \delta_{+,\pm} \undertilde{\MM}^{\hol}(h^{\hol})\]
with
\begin{align*}
\undertilde{\MM}^{\Maass}(h) & \defeq \sum_{d_1 d_2 = D} 2^{\omega(d_2)} \frac{\varphi(d_2)}{d_2} \sum_{f \in \BB_0^{\ast}(\Gamma_0(d_1))} \frac{L\left(\frac{1}{2},f\right)^2 L\left(\frac{1}{2},f \otimes \chi_D\right)^2}{L_{d_2}\left(\frac{1}{2}, f\right) L^{d_2}(1,\sym^2 f)} h(t_f),	\\
\undertilde{\MM}^{\Eis}(h) & \defeq \frac{2^{\omega(D)}}{2\pi} \int_{-\infty}^{\infty} \left|\frac{\zeta\left(\frac{1}{2} + it\right)^2 L\left(\frac{1}{2} + it, \chi_D\right)^2}{\zeta_D\left(\frac{1}{2} + it\right) \zeta^D(1 + 2it)}\right|^2 h(t) \, dt,	\\
\undertilde{\MM}^{\hol}\left(h^{\hol}\right) & \defeq \sum_{d_1 d_2 = D} 2^{\omega(d_2)} \frac{\varphi(d_2)}{d_2} \sum_{f \in \BB_{\hol}^{\ast}(\Gamma_0(d_1))} \frac{L\left(\frac{1}{2},f\right)^2 L\left(\frac{1}{2},f \otimes \chi_D\right)^2}{L_{d_2}\left(\frac{1}{2}, f\right) L^{d_2}(1,\sym^2 f)} h^{\hol}(k_f).
\end{align*}
The main result of this section is the following identity.

\begin{proposition}[{Cf.~\hyperref[spectralreciprocityprop]{Proposition \ref*{spectralreciprocityprop}}}]
\label{spectralreciprocity2prop}
For admissible $\hf$, we have that
\[\undertilde{\MM}^{-}(\hf) = \undertilde{\GG}(\hf) + \sum_{\pm} \undertilde{\MM}^{\pm} \left(\Ts_0^{\pm} \hf\right),\]
where $\Ts_0^{\pm}$ is as in \eqref{Tshfeq} with $t_g$ replaced by $0$ and $\undertilde{\GG}(\hf)$ is the holomorphic extension to $(s_1,s_2) = (1/2,1/2)$ of
\[\undertilde{\GG}\left(s_1,s_2;\hf\right) \defeq \undertilde{\NN}\left(s_1,s_2;\hf\right) - \undertilde{\RR}\left(s_1,s_2;\hf\right) + \sum_{\pm} \undertilde{\RR}\left(s_2,s_1;\Ts_{s_1,s_2,0}^{\pm} \hf\right)\]
with
\begin{align*}
\undertilde{\NN}\left(s_1,s_2;\hf\right) & \defeq 2 D^{2(1 - s_1)} L(1,\chi_D) \widehat{\Ks^{-} h}(2(1 - s_1))	\\
& \hspace{2cm} \times \frac{\zeta(s_1 + s_2) \zeta^D(1 - s_1 + s_2) L(s_1 + s_2,\chi_D) L(1 - s_1 + s_2,\chi_D)}{\zeta^D(1 + 2s_1) L(2s_1,\chi_D)}	\\
& \hspace{1.5cm} + 2 D^{2(1 - s_2)} L(1,\chi_D) \widehat{\Ks^{-} h}(2(1 - s_2))	\\
& \hspace{2cm} \times \frac{\zeta(s_1 + s_2) \zeta^D(1 + s_1 - s_2) L(s_1 + s_2,\chi_D) L(1 + s_1 - s_2,\chi_D)}{\zeta^D(1 + 2s_2) L(2s_2,\chi_D)},	\\
\undertilde{\RR}\left(s_1,s_2;\hf\right) & \defeq \sum_{\pm_1} \Res_{\substack{t = \pm_1 i (1 - s_1) \\ t = \pm_1 i (1 - s_2)}} (\pm_1 i) h(t) \Ell_D(s_1,s_2,t)	\\
& \hspace{2cm} \times \prod_{\pm_2} \frac{\zeta(s_1 \pm_2 it) \zeta(s_2 \pm_2 it) L(s_1 \pm_2 it, \chi_D) L(s_2 \pm_2 it, \chi_D)}{\zeta(1 \pm_2 2it)},	\\
\undertilde{\RR}\left(s_2,s_1;\Ts_{s_1,s_2,0}^{\pm} \hf\right) & \defeq \sum_{\pm_1} \Res_{\substack{t = \pm_1 i (1 - s_1) \\ t = \pm_1 i (1 - s_2)}} (\pm_1 i) \left(\Ls^{\pm} H_{s_1,s_2,0}^{\pm}\right)(t) \Ell_D(s_2,s_1,t)	\\
& \hspace{2cm} \times \prod_{\pm_2} \frac{\zeta(s_2 \pm_2 it) \zeta(s_1 \pm_2 it) L(s_2 \pm_2 it, \chi_D) L(s_1 \pm_2 it, \chi_D)}{\zeta(1 \pm_2 2it)}.
\end{align*}
Here $\Ts_{s_1,s_2,0}^{\pm} \hf$ is as in \eqref{Tss1s2tgpmeq} with $t_g$ replaced by $0$.
\end{proposition}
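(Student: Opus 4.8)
The plan is to mirror the proof of \hyperref[spectralreciprocityprop]{Proposition \ref*{spectralreciprocityprop}} step by step, the sole structural change being that the Dirichlet series attached to $g_{\psi^2}$ is everywhere replaced by the one attached to the Eisenstein newform $E_{\chi_D,1}$, whose Hecke eigenvalues $\lambda_{\chi_D,1}(n,0) = \sum_{ab = n} \chi_D(a)$ obey the same multiplicativity relation \eqref{cuspmult} as those of $g_{\psi^2}$ but whose Vorono\u{\i} $L$-series carries an extra simple pole. First I would establish, in a region of absolute convergence such as $5/4 < \Re(s_1), \Re(s_2) < 3/2$, the meromorphic identity
\begin{equation*}
\undertilde{\MM}^{-}(s_1,s_2;\hf) = \undertilde{\NN}(s_1,s_2;\hf) + \sum_{\pm} \undertilde{\MM}^{\pm}\left(s_2,s_1;\Ts_{s_1,s_2,0}^{\pm}\hf\right),
\end{equation*}
where $\undertilde{\MM}^{\Maass,\pm}(s_1,s_2;h)$, $\undertilde{\MM}^{\Eis}(s_1,s_2;h)$, and $\undertilde{\MM}^{\hol}(s_1,s_2;h^{\hol})$ are defined exactly as $\MM^{\Maass,\pm}(s_1,s_2;h)$, $\MM^{\Eis}(s_1,s_2;h)$, and $\MM^{\hol}(s_1,s_2;h^{\hol})$ of \hyperref[spectralrecsect]{Section \ref*{spectralrecsect}} but with the cuspidal factor $L(s_2,f \otimes g_{\psi^2})$ replaced by $L(s_2,f) L(s_2,f \otimes \chi_D)$ (and $L(s_2 \pm it, g_{\psi^2})$ by $\zeta(s_2 \pm it) L(s_2 \pm it,\chi_D)$ in the Eisenstein variant, analogously for the holomorphic one), the arithmetic weights $\Ell_{d_2}(s_1,s_2,f)$ and $\Ell_D(s_1,s_2,t)$ being unchanged. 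One checks via \hyperref[EllMaassidentitylemma]{Lemmata \ref*{EllMaassidentitylemma}} and \ref{EllEisidentitylemma} that $\undertilde{\MM}^{\pm}(1/2,1/2;\hf) = \undertilde{\MM}^{\pm}(\hf)$, and, exactly as in \hyperref[spectralrecsect]{Section \ref*{spectralrecsect}}, one may freely insert the parity $\epsilon_f$ into the cuspidal sum since $L(1/2,f) = L(1/2,f \otimes \chi_D) = 0$ whenever $\epsilon_f = -1$ by \hyperref[rootnumberlemma]{Lemma \ref*{rootnumberlemma}}.

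To prove this identity I would follow the proof of \hyperref[spectralrecpreaclemma]{Lemma \ref*{spectralrecpreaclemma}}: multiply the opposite sign Kuznetsov formul\ae{} attached to the pairs of cusps $(\infty,\infty)$ and $(\infty,1)$, \hyperref[Kuznetsovthm]{Theorems \ref*{Kuznetsovthm}} and \ref{infty1Kuznetsovthm}, by $\lambda_{\chi_D,1}(m,0) \lambda_{\chi_D,1}(n,0) m^{-s_1} n^{-s_2}$ and sum over $m,n \in \N$. By \hyperref[Ramanujanlemma]{Lemmata \ref*{Ramanujanlemma}} and \ref{Kuznetsovlemma} and the changes of variables $m \mapsto v_1 m$, $n \mapsto v_2 n$, the Maa\ss{} and Eisenstein contributions assemble into $\undertilde{\MM}^{-}(s_1,s_2;\hf)/(L(2s_1,\chi_D) L(2s_2,\chi_D))$, with no delta term. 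Mellin inversion applied to $\Ks^{-} h$ via \hyperref[MellinKsextendlemma]{Lemma \ref*{MellinKsextendlemma}} turns the Kloosterman contribution into a contour integral of sums over $c$ of $c^{s-1} \sum_d L(s/2 + s_1, E_{\chi_D,1}, d/c) L(s/2 + s_2, E_{\chi_D,1}, -\overline{d}/c)$ (with the analogous term for $(c,D) = 1$). Shifting the $s$-contour to the left now produces \emph{two} residues, at $s = 2(1 - s_1)$ and at $s = 2(1 - s_2)$, from the poles of the two Eisenstein Vorono\u{\i} $L$-series; each is evaluated just as the single residue in \hyperref[spectralrecpreaclemma]{Lemma \ref*{spectralrecpreaclemma}} --- opening the Gau\ss{} sum via \hyperref[Miyakelemma]{Lemma \ref*{Miyakelemma}}, inverting \eqref{cuspmult} as in \eqref{invertcuspmult}, and resumming --- and together they give the two summands of $\undertilde{\NN}(s_1,s_2;\hf)$. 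Applying the Vorono\u{\i} summation formula, \hyperref[Voronoilemma]{Lemma \ref*{Voronoilemma}}, to both $L$-series in the remaining integral and then the Kloosterman summation formul\ae{}, \hyperref[Kloostermanthm]{Theorems \ref*{Kloostermanthm}} and \ref{infty1Kloostermanthm}, yields $\sum_{\pm} \undertilde{\MM}^{\pm}(s_2,s_1;\Ts_{s_1,s_2,0}^{\pm}\hf)/(L(2s_1,\chi_D) L(2s_2,\chi_D))$, and clearing the common factor gives the identity. The absolute convergence needed to justify the interchanges of summation and integration and the contour shifts is routine and may be imported from \cite{BLM19,BlK19a,BlK19b}.

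The second step is the analytic continuation to $(s_1,s_2) = (1/2,1/2)$, once more following \cite{BLM19,BlK19a,BlK19b} and the proof of \hyperref[spectralreciprocityprop]{Proposition \ref*{spectralreciprocityprop}}: weak admissibility of $\Ts_{s_1,s_2,0}^{\pm}\hf$, verified exactly as in \cite[Section 10]{BlK19b}, ensures that every term extends meromorphically to $1/2 \le \Re(s_1),\Re(s_2) \le 3/2$. The cuspidal and holomorphic pieces, and $\undertilde{\NN}$, continue without difficulty, while the Eisenstein pieces $\undertilde{\MM}^{\Eis}(s_1,s_2;h)$ and $\undertilde{\MM}^{\Eis}(s_2,s_1;\Ls^{\pm}H_{s_1,s_2,0}^{\pm})$ acquire additional residues as the contour in the $t$-integral is shifted across the poles of $\zeta(s_1 \pm it)$, $\zeta(s_2 \pm it)$, $L(s_1 \pm it,\chi_D)$, and $L(s_2 \pm it,\chi_D)$ (cf.\ \cite[Lemma 16]{BlK19b}, \cite[Lemma 3]{BlK19a}); these collect precisely into $-\undertilde{\RR}(s_1,s_2;\hf)$ on the left and $\sum_{\pm} \undertilde{\RR}(s_2,s_1;\Ts_{s_1,s_2,0}^{\pm}\hf)$ on the right. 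In contrast to \hyperref[spectralreciprocityprop]{Proposition \ref*{spectralreciprocityprop}}, where the analogous residues vanish at $(1/2,1/2)$ owing to the trivial zero of $L(s,\chi_D)$ at $s = 0$, here each such residue contains both such a trivial zero \emph{and} a simple pole of a $\zeta$ factor, so that neither it nor $\undertilde{\NN}(s_1,s_2;\hf)$ is individually regular at $(1/2,1/2)$; the point is that $\undertilde{\MM}^{-}(s_1,s_2;\hf)$ is holomorphic there, which forces the combination $\undertilde{\GG}(s_1,s_2;\hf) = \undertilde{\NN}(s_1,s_2;\hf) - \undertilde{\RR}(s_1,s_2;\hf) + \sum_{\pm} \undertilde{\RR}(s_2,s_1;\Ts_{s_1,s_2,0}^{\pm}\hf)$ to extend holomorphically across $(1/2,1/2)$, and evaluating the continued identity there yields the assertion.

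I expect the main obstacle to be the bookkeeping of these Eisenstein contour shifts: determining exactly which poles of $\zeta(s_1 \pm it)$, $\zeta(s_2 \pm it)$, $L(s_1 \pm it,\chi_D)$, $L(s_2 \pm it,\chi_D)$ are crossed as $(\Re(s_1),\Re(s_2))$ descends from the region of absolute convergence to $(1/2,1/2)$, organising the ensuing residues into the symmetric shape of $\undertilde{\RR}$, and confirming that the apparent $0 \cdot \infty$ indeterminacies in the individual summands of $\undertilde{\GG}(s_1,s_2;\hf)$ cancel to leave a function holomorphic at $(1/2,1/2)$. Everything else is a direct transcription of arguments already carried out for \hyperref[spectralreciprocityprop]{Proposition \ref*{spectralreciprocityprop}}.
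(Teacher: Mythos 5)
Your proposal is structurally the same as the paper's: establish the identity for $5/4 < \Re(s_1), \Re(s_2) < 3/2$ with $s_1 \neq s_2$ by Kuznetsov--Vorono\u{\i}--Kloosterman, observing that replacing $\lambda_{g_{\psi^2}}$ with $\lambda_{\chi_D,1}(\cdot,0)$ turns the single simple pole of the Vorono\u{\i} $L$-series into two (at $s = 2(1 - s_1)$ and $s = 2(1 - s_2)$) so that $\undertilde{\NN}$ acquires two summands; then continue to $(1/2,1/2)$ following \cite{BLM19,BlK19a,BlK19b}, with the Eisenstein $t$-integrals contributing the residue terms $\undertilde{\RR}$, and conclude by noting that $\undertilde{\GG}$ must be holomorphic because everything else in the identity is. You also correctly identify the key difference from \hyperref[spectralreciprocityprop]{Proposition \ref*{spectralreciprocityprop}}: the $\undertilde{\RR}$ residues no longer vanish outright at $(1/2,1/2)$ (the trivial zero of $L(s,\chi_D)$ at $s=0$ is offset by a pole of a $\zeta$ factor), so they survive as genuine contributions to $\undertilde{\GG}(\hf)$.

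There is, however, one concrete misstep. You propose to multiply \emph{both} the $(\infty,\infty)$ and $(\infty,1)$ opposite sign Kuznetsov formul\ae{} (\hyperref[Kuznetsovthm]{Theorems \ref*{Kuznetsovthm}} and \ref{infty1Kuznetsovthm}) by $\lambda_{\chi_D,1}(m,0)\lambda_{\chi_D,1}(n,0) m^{-s_1} n^{-s_2}$ and sum, yielding two Kloosterman sums (over $c \equiv 0 \pmod D$ and over $(c,D)=1$). But \hyperref[spectralrecpreaclemma]{Lemma \ref*{spectralrecpreaclemma}}, which you claim to be mirroring, uses \emph{only} the $(\infty,\infty)$ formula: after the change of variables $m \mapsto v_1 m$, $n \mapsto v_2 n$ and an application of \hyperref[Ramanujanlemma]{Lemma \ref*{Ramanujanlemma}}, the single $(\infty,\infty)$ spectral side already produces exactly the weights $\Ell_{d_2}(s_1,s_2,f)$ and $\Ell_D(s_1,s_2,t)$ appearing in the definition of $\undertilde{\MM}^{-}(s_1,s_2;\hf)$. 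Adding in the $(\infty,1)$ contribution --- which carries the Atkin--Lehner pseudo-eigenvalues $\eta_f(d_1)$ and the constraint $\frac{d_2}{\ell} \mid m$ from \hyperref[infty1Kuznetsovlemma]{Lemma \ref*{infty1Kuznetsovlemma}} --- would change the spectral side and no longer match $\undertilde{\MM}^{-}(s_1,s_2;\hf)$. (You are likely conflating this lemma with \hyperref[MMtildeidentitylemma]{Lemma \ref*{MMtildeidentitylemma}} or \hyperref[bulkKuzlemma]{Lemma \ref*{bulkKuzlemma}}, which do use both cusp pairs, but there the two applications are combined through approximate functional equations of \emph{different} lengths $X = \sqrt{d_2}/w_2$, a mechanism that has no counterpart here since the proof works with Dirichlet series in regions of absolute convergence.) The fix is simply to drop the $(\infty,1)$ Kuznetsov and the $(c,D)=1$ Kloosterman term; the Kloosterman sum then runs only over $c \equiv 0 \pmod D$, precisely as in \eqref{Kloosttermeq}, and the rest of your argument goes through.
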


Similarly to \hyperref[spectralrecsect]{Section \ref*{spectralrecsect}}, we define
\begin{align*}
\undertilde{\MM}^{\Maass,\pm}\left(s_1,s_2;h\right) & \defeq \sum_{d_1 d_2 = D} \sum_{f \in \BB_0^{\ast}(\Gamma_0(d_1))} \epsilon_f^{\frac{1 \mp 1}{2}} \Ell_{d_2}(s_1,s_2,f)	\\
& \hspace{2cm} \times \frac{L(s_1,f) L(s_2,f) L(s_1,f \otimes \chi_D) L(s_2,f \otimes \chi_D)}{L(1,\sym^2 f)} h(t_f),	\\
\undertilde{\MM}^{\Eis}\left(s_1,s_2;h\right) & \defeq \frac{1}{2\pi} \int_{-\infty}^{\infty} \Ell_D(s_1,s_2,t)	\\
& \hspace{2cm} \times \prod_{\pm} \frac{\zeta(s_1 \pm it) \zeta(s_2 \pm it) L(s_1 \pm it, \chi_D) L(s_2 \pm it, \chi_D)}{\zeta(1 \pm 2it)} h(t) \, dt,	\\
\undertilde{\MM}^{\hol}\left(s_1,s_2;h^{\hol}\right) & \defeq \sum_{d_1 d_2 = D} \sum_{f \in \BB_{\hol}^{\ast}(\Gamma_0(d_1))} \Ell_{d_2}(s_1,s_2,f)	\\
& \hspace{2cm} \times \frac{L(s_1,f) L(s_2,f) L(s_1,f \otimes \chi_D) L(s_2,f \otimes \chi_D)}{L(1,\sym^2 f)} h^{\hol}(k_f),
\end{align*}
for $s_1,s_2 \in \C$. We additionally set
\[\undertilde{\MM}^{\pm}\left(s_1,s_2;\hf\right) \defeq \undertilde{\MM}^{\Maass,\pm}\left(s_1,s_2;h\right) + \undertilde{\MM}^{\Eis}\left(s_1,s_2;h\right) + \delta_{\pm,+} \undertilde{\MM}^{\hol}\left(s_1,s_2;h^{\hol}\right).\]

\begin{lemma}[{Cf.~\hyperref[spectralrecpreaclemma]{Lemma \ref*{spectralrecpreaclemma}}}]
For admissible $\hf$ and $5/4 < \Re(s_1), \Re(s_2) < 3/2$ with $s_1 \neq s_2$, we have that
\[\undertilde{\MM}^{-}\left(s_1,s_2;\hf\right) = \undertilde{\NN}\left(s_1,s_2;\hf\right) + \sum_{\pm} \undertilde{\MM}^{\pm} \left(s_2,s_1;\Ts_{s_1,s_2,0}^{\pm} \hf\right).\]
\end{lemma}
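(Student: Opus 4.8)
The strategy is to repeat the proof of \hyperref[spectralrecpreaclemma]{Lemma \ref*{spectralrecpreaclemma}} almost verbatim, the only structural change being that the cuspidal form $g_{\psi^2}$ is replaced throughout by the Eisenstein newform $E_{\chi_D,1}$, whose Hecke eigenvalues are $\lambda_{\chi_D,1}(n,0) = \sum_{ab = n} \chi_D(a)$; its spectral parameter is $0$, which is why every occurrence of $t_g$ becomes $0$. First I would start from the opposite sign Kuznetsov formula, \hyperref[Kuznetsovthm]{Theorem \ref*{Kuznetsovthm}}, multiply through by $\lambda_{\chi_D,1}(m,0) \lambda_{\chi_D,1}(n,0) m^{-s_1} n^{-s_2}$ with $\Re(s_1), \Re(s_2) > 1$, and sum over $m, n \in \N$. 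By \hyperref[Ramanujanlemma]{Lemmata \ref*{Ramanujanlemma}} and \ref{Kuznetsovlemma}, together with the change of variables $m \mapsto v_1 m$, $n \mapsto v_2 n$ (legitimate since $\lambda_{\chi_D,1}(vn,0) = \lambda_{\chi_D,1}(n,0)$ for $v \mid D$ by \hyperref[dihedralHeckeeigenlemma]{Lemma \ref*{dihedralHeckeeigenlemma}}), the Maa\ss{} cusp form and Eisenstein contributions become $\undertilde{\MM}^{-}(s_1,s_2;\hf) / L(2s_1,\chi_D) L(2s_2,\chi_D)$; there is no delta term because the Kuznetsov formula used is of opposite sign type.

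On the geometric side, Mellin inversion of $\Ks^{-} h$ via \hyperref[MellinKsextendlemma]{Lemma \ref*{MellinKsextendlemma}} rearranges the Kloosterman term into a contour integral of $\widehat{\Ks^{-} h}(s)$ against a sum over $c \equiv 0 \pmod{D}$ of two Vorono\u{\i} $L$-series attached to $E_{\chi_D,1}$, evaluated at $s/2 + s_1$ and $s/2 + s_2$. Here lies the one genuinely new feature: both of these Vorono\u{\i} $L$-series are of Eisenstein type, so \emph{each} has a simple pole (at $s/2 + s_j = 1$), inherited from the pole of $\sum_n \lambda_{\chi_D,1}(n,0) n^{-w} = \zeta(w) L(w,\chi_D)$ at $w = 1$ --- whereas in \hyperref[spectralrecpreaclemma]{Lemma \ref*{spectralrecpreaclemma}} the $g_{\psi^2}$-factor was entire. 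Shifting the contour to $\Re(s) = \sigma_1 < -2\max\{\Re(s_1),\Re(s_2)\}$ therefore crosses \emph{two} simple poles, at $s = 2(1 - s_1)$ and at $s = 2(1 - s_2)$, and these are distinct precisely when $s_1 \neq s_2$ --- this is exactly where the hypothesis enters, and it also ensures that the surviving Vorono\u{\i} $L$-series in each residue (evaluated at argument $1 - s_1 + s_2$ or $1 - s_2 + s_1$) is pole-free. Each residue is then collapsed by the now-standard manipulations --- expanding the surviving Vorono\u{\i} $L$-series as an absolutely convergent Dirichlet series in the appropriate half-plane, rewriting the Gauss sum via \hyperref[Miyakelemma]{Lemma \ref*{Miyakelemma}}, changing variables, and inverting the Hecke multiplicativity of $\lambda_{\chi_D,1}(\cdot,0)$ exactly as in \eqref{invertcuspmult} --- into one of the two summands of $\undertilde{\NN}(s_1,s_2;\hf)$, the two summands being $s_1 \leftrightarrow s_2$ images of one another. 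One of these collapses is valid a priori only for $\Re(s_2) > \Re(s_1)$ and the other only for $\Re(s_1) > \Re(s_2)$, but since every factor produced is a ratio of $\zeta$- and Dirichlet $L$-functions, the closed forms extend meromorphically to all of $5/4 < \Re(s_1), \Re(s_2) < 3/2$.

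For the remaining integral along $\Re(s) = \sigma_1$, I would apply the Vorono\u{\i} summation formula, \hyperref[Voronoilemma]{Lemma \ref*{Voronoilemma}}, to both Vorono\u{\i} $L$-series; as both now have spectral parameter $0$, both dualisations produce $\widehat{\JJ_0^{\pm}}$, so the emerging kernel is precisely $H_{s_1,s_2,0}^{\pm}$ as in \eqref{Hs1s2tgeq} with $t_g = 0$, consistently with $\GG_{t_g}^{\pm}$ from \eqref{GGtgeq} at $t_g = 0$. Interchanging summation and integration recasts this as a sum over the quantities $\OO_D$, to which the Kloosterman summation formula, \hyperref[Kloostermanthm]{Theorem \ref*{Kloostermanthm}}, applies --- admissibility of $\hf$ guarantees the kernel meets its hypotheses, just as in \hyperref[spectralrecpreaclemma]{Lemma \ref*{spectralrecpreaclemma}}. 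Reapplying \hyperref[Ramanujanlemma]{Lemmata \ref*{Ramanujanlemma}} and \ref{Kuznetsovlemma} and changing variables once more assembles the spectral output into $\sum_{\pm} \undertilde{\MM}^{\pm}(s_2,s_1;\Ts_{s_1,s_2,0}^{\pm}\hf) / L(2s_1,\chi_D) L(2s_2,\chi_D)$ --- with the arguments $s_1, s_2$ interchanged, as in the earlier reciprocity --- and multiplying both sides by $L(2s_1,\chi_D) L(2s_2,\chi_D)$ yields the claim. The main work, beyond faithfully tracking the \emph{pair} of residue contributions and confirming the role of $s_1 \neq s_2$, is the routine but lengthy verification of the absolute convergence that licenses each contour shift, interchange of summation and integration, and Dirichlet series rearrangement; for these I would lean on the parallel arguments in \cite{BLM19,BlK19a,BlK19b} and in \hyperref[spectralrecpreaclemma]{Lemma \ref*{spectralrecpreaclemma}}, which carry over with only notational changes.
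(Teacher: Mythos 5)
Your proposal is correct and follows essentially the same route as the paper: rerun the proof of \hyperref[spectralrecpreaclemma]{Lemma \ref*{spectralrecpreaclemma}} with $\lambda_{g_{\psi^2}}(n)$ replaced by $\lambda_{\chi_D,1}(n,0)$ (hence $t_g$ by $0$), the one genuinely new point being that the contour shift now crosses two simple poles, at $s = 2(1-s_1)$ and $s = 2(1-s_2)$, whose residues give the two summands of $\undertilde{\NN}(s_1,s_2;\hf)$ and which are handled first for $\Re(s_2) > \Re(s_1)$ (resp.\ $\Re(s_1) > \Re(s_2)$) and then extended holomorphically. You have also correctly identified why $s_1 \neq s_2$ is needed, which matches the paper's treatment.
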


\begin{proof}
This follows by the same method of proof as for \hyperref[spectralreciprocityprop]{Proposition \ref*{spectralreciprocityprop}} except that we replace $\lambda_{g_{\psi^2}}(n)$ with $\lambda_{\chi_D,1}(n,0)$, so that $t_g$ is replaced by $0$. In place of a simple pole at $s = 2(1 - s_1)$ with residue given by \eqref{residue}, there are two simple poles at $s = 2(1 - s_1)$ and $s = 2(1 - s_2)$. When $\Re(s_2) > \Re(s_1)$, the former is given by
\[2 D^{3/2} L(1,\chi_D) \widehat{\Ks^{-} h}(2(1 - s_1)) \sum_{\substack{c = 1 \\ c \equiv 0 \hspace{-.25cm} \pmod{D}}}^{\infty} \frac{1}{c^{2s_1}} \sum_{d \in (\Z/c\Z)^{\times}} \chi_D(d) L\left(1 - s_1 + s_2, E_{\chi_D,1},-\frac{\overline{d}}{c}\right)\]
by \hyperref[Voronoilemma]{Lemma \ref*{Voronoilemma}}. Just as in the proof of \hyperref[spectralreciprocityprop]{Proposition \ref*{spectralreciprocityprop}}, we open up the Vorono\u{\i} $L$-series, reexpress the Gauss sum over $d$ as a sum over $a \mid (c/D,m)$ via \hyperref[Miyakelemma]{Lemma \ref*{Miyakelemma}}, make the change of variables $c \mapsto acD$ and $m \mapsto am$, apply \eqref{invertcuspmult}, and then make the change of variables $m \mapsto bm$ and $a \mapsto ab$, which leads us to
\[2 D^{2(1 - s_1)} L(1,\chi_D) \widehat{\Ks^{-} h}(2(1 - s_1)) \frac{\zeta(s_1 + s_2) \zeta^D(1 - s_1 + s_2) L(s_1 + s_2,\chi_D) L(1 - s_1 + s_2,\chi_D)}{\zeta^D(1 + 2s_1) L(2s_1,\chi_D)}.\]
While only initially valid for $\Re(s_2) > \Re(s_1)$, this extends holomorphically in the region $5/4 < \Re(s_1), \Re(s_2) < 3/2$ with $s_1 \neq s_2$. An identical calculation yields the residue at $s = 2(1 - s_2)$.
\end{proof}

\begin{proof}[Proof of {\hyperref[spectralreciprocity2prop]{Proposition \ref*{spectralreciprocity2prop}}}]
This follows the same method as \cite[Proof of Theorem 1]{BLM19}, \cite[Proof of Theorem 1]{BlK19b}, and \cite[Proof of Theorem 2]{BlK19a}. The holomorphic extensions of $\MM^{\Eis}(s_1,s_2;h)$ and $\MM^{\Eis} (s_2,s_1;\Ls^{\pm} H_{s_1,s_2,0}^{\pm})$ for $\Re(s_1), \Re(s_2) < 1$ give rise to additional polar divisors arise via shifting the contour in the integration over $t$, namely $\undertilde{\RR}(s_1,s_2;\hf)$ and $\undertilde{\RR}(s_2,s_1;\Ts_{s_1,s_2,0}^{\pm} \hf)$. In this way, we obtain the identity
\[\undertilde{\MM}^{-}\left(s_1,s_2;\hf\right) = \undertilde{\GG}\left(s_1,s_2;\hf\right) + \sum_{\pm} \undertilde{\MM}^{\pm}\left(s_2,s_1;\Ts_{s_1,s_2,0}^{\pm} \hf\right)\]
for $\Re(s_1),\Re(s_2) \geq 1/2$ with $s_1 \neq s_2$. It remains to note that since the terms $\undertilde{\MM}^{-}(s_1,s_2;\hf)$ and $\undertilde{\MM}^{\pm}(s_2,s_1;\Ts_{s_1,s_2,0}^{\pm} \hf)$ extend holomorphically to $(s_1,s_2) = (1/2,1/2)$, so must $\undertilde{\GG}(s_1,s_2;\hf)$.
\end{proof}

\section{Bounds for the Transform for the Short Transition Range}
\label{boundstrans2sect}

We take $\hf = (h,0)$ in \hyperref[spectralreciprocity2prop]{Proposition \ref*{spectralreciprocity2prop}} to be
\begin{equation}
\label{hTUeq}
h(t) = h_{T,U}(t) \defeq \left(e^{-\left(\frac{t - T}{U}\right)^2} + e^{-\left(\frac{t + T}{U}\right)^2}\right) P_T(t), \quad P_T(t) \defeq \prod_{j = 1}^{N} \left(\frac{t^2 + \left(j - \frac{1}{2}\right)^2}{T^2}\right)^2,
\end{equation}
for some fixed large integer $N \geq 500$, $T > 0$, and $T^{1/3} \ll U \ll T$, so that $h_{T,U}(t)$ is positive for $t \in \R \cup i(-1/2,1/2)$ and bounded from below by a constant dependent only on $N$ for $t \in [-T - U, -T + U] \cup [T - U, T + U]$. The transform $H_{T,U}^{\pm}$ as in \eqref{Htgeq} of $h_{T,U}$ is
\[H_{T,U}^{\pm}(x) = \frac{2}{\pi i} \int_{\sigma_1 - i\infty}^{\sigma_1 + i\infty} \widehat{\Ks^{-} h_{T,U}}(s) \GG_0^{\pm}(1 - s) x^s \, ds\]
with $-3 < \sigma_1 < 1$, where $\GG_0^{\pm}(s)$ is as in \eqref{GGtgeq}. We once again wish to determine the asymptotic behaviour of the functions
\begin{align*}
\left(\Ls^{\pm} H_{T,U}^{\pm}\right)(t) & = \int_{0}^{\infty} \JJ_t^{\pm}(x) H_{T,U}^{\pm}(x) \, \frac{dx}{x},	\\
\left(\Ls^{\hol} H_{T,U}^{+}\right)(k) & = \int_{0}^{\infty} \JJ_k^{\hol}(x) H_{T,U}^{+}(x) \, \frac{dx}{x},
\end{align*}
with uniformity in all variables $T$, $U$, and $t$ or $k$.

\begin{lemma}[{Cf.~\cite[Lemma 4]{BLM19}, \cite[Lemma 1]{BlK19a}}]
\label{BLMLemma4lemma}
For $j \in \N_0$ with $j \leq N$, we have that
\[x^j \frac{d^j}{dx^j} \left(\Ks^{-} h_{T,U}\right)(x) \ll_j \begin{dcases*}
U \min\left\{\left(\frac{x}{T}\right)^{N/2}, \left(\frac{x}{T}\right)^{-N/2}\right\} & if $|x - T| > U \log T$,	\\
T \left(\frac{T}{U}\right)^j \left(1 + \frac{|x - T|}{U}\right)^{4N} e^{-\left(\frac{x - T}{U}\right)^2} & if $|x - T| \leq U \log T$.
\end{dcases*}\]
\end{lemma}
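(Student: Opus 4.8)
## Proof Proposal for Lemma \ref{BLMLemma4lemma}

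The plan is to follow the strategy of \cite[Lemma 4]{BLM19} and \cite[Lemma 1]{BlK19a}, adapting it to account for the concentration of $h_{T,U}$ on the short interval of length $U$ around $\pm T$ rather than a long interval of length $T$. Recall that $\Ks^{-}$ is an integral transform against a Bessel-type kernel (via \eqref{NsKspmeq}), so that $(\Ks^{-} h_{T,U})(x)$ is, up to harmless factors, $\int_{-\infty}^{\infty} \JJ_r^{-}(x) h_{T,U}(r) r \tanh(\pi r)\, dr$. The key input is the oscillatory behaviour of $\JJ_r^{-}(x)$ in $r$: for $x$ small compared to $1$ there is rapid decay in $x$ (giving the $(x/T)^{N/2}$ branch when $x \ll T$), while for $x$ large the relevant phase is stationary at a point determined by the ratio of $x$ to $r$, so that the $r$-integral is governed by a stationary phase analysis localised near $r \asymp x$. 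Since $h_{T,U}$ itself already localises $r$ to within $O(U\log T)$ of $\pm T$ (up to negligible error, by the Gaussian factor), the two regimes in the statement correspond to whether the stationary point falls inside or outside the support of $h_{T,U}$.

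The steps, in order, would be: (1) Write $(\Ks^{-} h_{T,U})(x)$ explicitly as an $r$-integral using \eqref{NsKspmeq} and \eqref{Jrpmeq}, and differentiate $j$ times under the integral sign, noting that each $x$-derivative either produces a factor of order $r/x$ from the oscillatory kernel or acts on a slowly varying amplitude. (2) For $|x - T| > U\log T$: use repeated integration by parts in $r$ — the Gaussian $e^{-((r\mp T)/U)^2}$ together with the polynomial $P_T$ contributes derivatives of size $(T/U)^{\text{(number of parts)}}$ near $r = \pm T$, but the phase of $\JJ_r^-(x)$ has derivative in $r$ bounded below by a quantity comparable to $\log|x/T|$ away from the stationary point, hence each integration by parts gains a factor and the whole thing is $\ll U (x/T)^{\pm N/2}$ after $N/2$ repetitions; the sign of the exponent is dictated by whether $x < T$ or $x > T$, and the restriction $|x-T| > U\log T$ ensures we are genuinely away from the stationary regime so that the gain is clean. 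Stirling's formula \eqref{Stirlingeq} together with \hyperref[JJrMellinasympcor]{Corollary \ref*{JJrMellinasympcor}} can be used to control the Mellin transform of $\JJ_r^-$ and make these bounds quantitative. (3) For $|x - T| \leq U\log T$: here the stationary point sits inside the support, so one does \emph{not} integrate by parts; instead, bound the integral trivially after extracting the main oscillation, picking up the length $U$ of the support, an additional $T$ from the $r\tanh(\pi r)$ factor (since $r \asymp T$ there), the factor $(T/U)^j$ from the $j$ differentiations in $x$ (each costing $r/x \asymp T/|x| \asymp 1$ times a derivative of the localising data, which is of size $1/U$... more precisely each $x$-derivative of the phase-extracted amplitude costs $T/U$), and the polynomial growth $(1 + |x-T|/U)^{4N}$ coming from $P_T$ evaluated and differentiated near $r = T$ combined with the tails of the Gaussian, retaining the Gaussian decay $e^{-((x-T)/U)^2}$ to interpolate between the two branches at the threshold $|x - T| \asymp U\log T$.

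The main obstacle will be step (3): correctly tracking the interplay between the stationary-phase contribution, the $j$ derivatives in $x$, and the polynomial factor $P_T$ so as to obtain precisely the weight $(1 + |x-T|/U)^{4N} e^{-((x-T)/U)^2}$ and the correct power $(T/U)^j$. In particular one must verify that the stationary phase point's dependence on $x$ transmits the $x$-derivatives into $r$-derivatives of $h_{T,U}$ without losing more than $T/U$ per derivative, and that the $4N$ exponent (rather than something smaller) genuinely suffices to absorb both the polynomial $P_T$ and its derivatives after the change of variables to the stationary point. This is a somewhat delicate but ultimately routine stationary-phase bookkeeping exercise, entirely parallel to \cite[Lemma 4]{BLM19} and \cite[Lemma 1]{BlK19a}, with the one genuinely new feature being that $U$ need not be comparable to $T$; one checks that the hypothesis $T^{1/3} \ll U \ll T$ is exactly what is needed for the error terms discarded along the way to remain negligible.
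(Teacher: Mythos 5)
Your proposal and the paper's proof share the same high-level philosophy (oscillation in the Bessel kernel localises the transform), but the technical route is genuinely different, and the proposal has a substantive gap where it matters most.

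The paper does \emph{not} run a stationary phase argument directly on the $r$-integral $\int \JJ_r^-(x) h_{T,U}(r)\,d_{\spec}r$. Instead it (a) splits off the extreme ranges $x \leq 1$ and $x \geq T^{13/12}$, which it handles by entirely non-stationary-phase arguments — a contour shift to $\Im(r) = \mp N$ using the series representation of $K_{2ir}$ in terms of $I_{\pm 2ir}$ for $x \leq 1$, and crude pointwise bounds for $\JJ_r^-$ together with a split at $|r| = x/3\pi$ for $x \geq T^{13/12}$ — and (b) in the main range $1 \leq x \leq T^{13/12}$, introduces the auxiliary function $h_{\spec}(r) = \tfrac{1}{2\pi^2}h_{T,U}(r)\,r\tanh\pi r$ and proves the two bounds \eqref{(6.3)}, \eqref{(6.4)} on the derivatives of its Fourier transform $\check{h}_{\spec}$. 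Those two bounds are then plugged into the already-existing machinery of \cite[Proof of Lemma 4]{BLM19}, whose output \cite[(6.12)]{BLM19} is a stationary-phase expansion in the Fourier-dual variable; the main term involves $h_{\spec}$ evaluated at the stationary point. This is precisely where the $(1+|x-T|/U)^{4N}$, $(T/U)^j$, and $e^{-((x-T)/U)^2}$ weights come from, and it is also where the hypothesis $U \gg T^{1/3}$ is actually used: the intermediate regime $U^2/T \leq |x - T| \leq U\log T$ only closes under that assumption, a point that is invisible in your outline until after-the-fact ``checking''.

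The gap in your proposal is therefore twofold. First, you do not identify the Fourier-transform-of-$h_{\spec}$ device at all — you propose working directly in the $r$-variable — and the oscillation of $K_{2ir}(4\pi x)$ in $r$ is not a simple exponential with a polynomial phase, so ``integrate by parts $N/2$ times and gain $\log|x/T|$ per step'' is not something one can just execute without passing to an integral representation that linearises the oscillation (which is exactly what $\check{h}_{\spec}$ accomplishes). Second, you never address the tails $x \leq 1$ and $x$ much larger than $T$; these require a different argument and, in the paper, take up two of the three paragraphs. Your step (3) is where you honestly flag that the $4N$ exponent and the $(T/U)^j$ factor are not derived but merely hoped for; that bookkeeping is not routine — it is the content of the citation to \cite[Lemma 4]{BLM19}, which the paper deliberately reuses rather than redoing. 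As written, your plan would stall at precisely the point where the paper hands off to the existing literature.
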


\begin{proof}
The proof will follow via the same methods as \cite[Proof of Lemma 4]{BLM19} and \cite[Proof of Lemma 1]{BlK19a}, which in turn are inspired by \cite[Proof of Lemma 3.8]{BuK17a}, so we only sketch the details. We recall that
\[\left(\Ks^{-} h_{T,U}\right)(x) = \int_{-\infty}^{\infty} \JJ_r^{-}(x) h_{T,U}(r) \, d_{\spec}r.\]
We will use the following, from \cite[(2.15), (A.1), (A.2), (A.3), (A.6)]{BLM19}:
\begin{align}
\label{(2.15)}
\JJ_r^{-}(x) & = 4 \cosh \pi r K_{2ir}(4\pi x) = \pi i \frac{I_{2ir}(4\pi x) - I_{-2ir}(4\pi x)}{\sinh \pi r},	\\
\label{(A.1)}
\frac{d^j}{dx^j} K_{2ir}(4\pi x) & = (-2\pi)^j \sum_{n = 0}^{j} \binom{j}{n} K_{2ir - j + 2n}(4\pi x) \quad \text{for $j \in \N_0$,}	\\
\
\label{(A.2)}
\frac{d^j}{dx^j} I_{2ir}(4\pi x) & = (2\pi)^j \sum_{n = 0}^{j} \binom{j}{n} I_{2ir - j + 2n}(4\pi x) \quad \text{for $j \in \N_0$,}	\\
\label{(A.3)}
\JJ_r^{-}(x) & \ll_{\Im(r)} e^{\min\{0,-4\pi x + \pi |\Re(r)|\}} \left(\frac{1 + |r| + 4\pi x}{4\pi x}\right)^{2|\Im(r)| + \frac{1}{10}},	\\
\label{(A.6)}
e^{-\pi|r|} I_{2ir}(4\pi x) & \ll_{\Im(r)} \frac{x^{-2\Im(r)}}{(1 + |r|)^{\frac{1}{2} - 2\Im(r)}} \quad \text{for $0 < x < \frac{(1 + |r|)^{1/2}}{4\pi}$.}
\end{align}

We first deal with the range $x \leq 1$. We use \eqref{(2.15)} to split up into $I_{2ir}(4\pi x)$ and $I_{-2ir}(4\pi x)$, then shift the contour to $\Im(r) = -N$ and $\Im(r) = N$ respectively. We differentiate under the integral sign and then use \eqref{(A.2)} and \eqref{(A.6)}, which shows that
\[x^j \frac{d^j}{dx^j} \left(\Ks^{-} h_{T,U}\right)(x) \ll_j x^j \int_{0}^{\infty}
\frac{x^{2N - j}}{(1 + r)^{\frac{1}{2} + 2N - j}} e^{-\left(\frac{r - T}{U}\right)^2} \left(\frac{1 + r}{T}\right)^{4N} r \, dr \ll_j U \frac{x^{2N}}{T^{2N - j + \frac{3}{2}}},\]
which is certainly sufficient.

Next, we deal with the range $1 \leq x \leq T^{13/12}$. We consider
\[h_{\spec}(r) \defeq \frac{1}{2\pi^2} h_{T,U}(r) r \tanh \pi r.\]
The $j$-th derivative of the Fourier transform $\check{h}_{\spec}(x)$ is
\[\frac{d^j}{dx^j} \check{h}_{\spec}(x) = (-2\pi i)^j \int_{-\infty}^{\infty} \frac{1}{2\pi^2} h_{T,U}(r) r^{1 + j} \tanh \pi r e(-rx) \, dr.\]
We integrate by parts $A_1$ times:
\[\frac{d^j}{dx^j} \check{h}_{\spec}(x) = \frac{1}{2\pi^2} (-1)^j (2\pi i)^{j - A_1} x^{-A_1} \int_{-\infty}^{\infty} \frac{d^{A_1}}{dr^{A_1}} \left(h_{T,U}(r) r^{1 + j} \tanh \pi r\right) e(-rx) \, dr.\]
By the Leibniz rule, we find that
\begin{equation}
\label{(6.3)}
\frac{d^j}{dx^j} \check{h}_{\spec}(x) \ll_{j,A_1} T^{1 + j} U (1 + T|x|)^{-A_1}	
\end{equation}
for $0 \leq A_1 \leq 4N$. Alternatively, we may shift the contour to $\Im(r) = -\sgn(x)N$, which gives
\begin{equation}
\label{(6.4)}
\frac{d^j}{dx^j} \check{h}_{\spec}(x) \ll_j T^{1 + j} U e^{-\pi N|x|}.
\end{equation}
Following \cite[Proof of Lemma 4]{BLM19}, using \eqref{(6.3)} and \eqref{(6.4)} in place of \cite[(6.3) and (6.4)]{BLM19}, we find that $x^j (\Ks^{-} h_{T,U})^{(j)}(x)$ is equal to \cite[(6.12)]{BLM19}, except for the three error terms in this equation being bounded by $UT^{1 + \frac{5}{14}j - N}$, and the main term being a linear combination of terms of the form
\[x^{\beta} \frac{d^{\alpha + \gamma}}{dx^{\alpha + \gamma}} x^n h_{\spec}\left(\frac{x}{2}\right) \ll \frac{x^{\beta}}{T^{4N}} e^{-\left(\frac{x - T}{U}\right)^2} x^{4N + n + 1} \left(\frac{|x - T|}{U^2} + \frac{1}{x}\right)^{\alpha + \gamma},\]
where $0 \leq \alpha \leq \frac{3}{7} (6N - 2j - 3)$, $0 \leq \beta \leq \alpha/3$, $0 \leq n \leq j \leq N$, and $n \leq \gamma \leq \frac{2}{21} (14N + 9j - 7)$. For $|x - T| \geq U \log T$, this decays faster than any power of $T$. If $|x - T| \leq U^2/T$, then we have the bound $O(T)$. Finally, for $U^2/T \leq |x - T| \leq U \log T$, the bound
\[O\left(T \left(\frac{T}{U}\right)^j \left(1 + \frac{|x - T|}{U}\right)^{4N} e^{-\left(\frac{x - T}{U}\right)^2}\right)\]
holds provided that $U \gg T^{1/3}$.

Finally, for $x \geq T^{13/12}$, we use \eqref{(A.1)} and \eqref{(A.3)} and split the integral at $|r| = x/3\pi$, which is readily seen to give
\[x^j \frac{d^j}{dx^j} \left(\Ks^{-} h_{T,U}\right)(x) \ll_j e^{-2\pi x} + x^{j + 2} e^{-\left(\frac{2\pi x}{5U}\right)^2},\]
as in \cite[Proof of Lemma 4]{BLM19}, which is more than sufficient.
\end{proof}

\begin{corollary}
\label{KshTUMellinboundcor}
For $-N/2 < \sigma < N/2$ and $j \in \N_0$ with $j \leq N/2$,
\[\widehat{\Ks^{-} h_{T,U}}(s) \ll_N UT^{\sigma} \left(\frac{T}{U(1 + |\tau|)}\right)^j.\]
\end{corollary}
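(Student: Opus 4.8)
The plan is to deduce this from the pointwise derivative bounds of \hyperref[BLMLemma4lemma]{Lemma \ref*{BLMLemma4lemma}} by a standard integration by parts in the Mellin integral. Write $f \defeq \Ks^{-} h_{T,U}$ and $s = \sigma + i\tau$, so that $\widehat{f}(s) = \int_0^\infty f(x) x^{s - 1} \, dx$. The case $j = 0$ of \hyperref[BLMLemma4lemma]{Lemma \ref*{BLMLemma4lemma}} (together with the rapid decay of $f$ as $x \to 0$ exhibited in the ``$x \leq 1$'' part of its proof) shows this integral converges absolutely for $-N/2 < \sigma$, and integrating by parts $j$ times, at each step differentiating $f$ and integrating the monomial, yields
\[\widehat{f}(s) = \frac{(-1)^j}{s(s + 1) \cdots (s + j - 1)} \int_0^\infty \left(\frac{d^j}{dx^j} f(x)\right) x^{s + j - 1} \, dx .\]
All boundary terms vanish: for $0 \leq m \leq j - 1 \leq N/2 - 1$ the bounds of \hyperref[BLMLemma4lemma]{Lemma \ref*{BLMLemma4lemma}} (again supplemented near $0$) give $f^{(m)}(x) x^{s + m} \to 0$ both as $x \to 0$ (using $\sigma > -N/2$) and as $x \to \infty$ (using $\sigma < N/2$), and the same estimates guarantee absolute convergence of the final integral. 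This is why the hypotheses $-N/2 < \sigma < N/2$ and $j \leq N/2$ appear.

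Next I would bound the two factors. For $|\tau| \geq 1$ one has $|s(s + 1) \cdots (s + j - 1)| \geq |\tau|^j \gg_N (1 + |\tau|)^j$, so it suffices to show that
\[\int_0^\infty \left|x^j \frac{d^j}{dx^j} f(x)\right| x^{\sigma - 1} \, dx \ll_N U T^{\sigma} \left(\frac{T}{U}\right)^j ,\]
after which multiplication by the prefactor yields the claim for $|\tau| \geq 1$. I would split this integral according to whether $|x - T| > U \log T$ or $|x - T| \leq U \log T$. On the first range, \hyperref[BLMLemma4lemma]{Lemma \ref*{BLMLemma4lemma}} bounds the integrand by $U \min\{(x/T)^{N/2}, (x/T)^{-N/2}\} x^{\sigma - 1}$; extending the domain to all $x \neq T$ and splitting at $x = T$, the part with $x < T$ contributes $\ll_N U T^{-N/2} \int_0^T x^{\sigma + N/2 - 1} \, dx \ll_N U T^{\sigma}$ and the part with $x > T$ contributes $\ll_N U T^{N/2} \int_T^\infty x^{\sigma - N/2 - 1} \, dx \ll_N U T^{\sigma}$. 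On the second range, where $x \asymp T$ (the residual small-$x$ portion, which can occur only when $U$ is comparable to $T$, being negligible by the stronger decay in the proof of \hyperref[BLMLemma4lemma]{Lemma \ref*{BLMLemma4lemma}}), the integrand is $\ll_N T (T/U)^j (1 + |x - T|/U)^{4N} e^{-((x - T)/U)^2} x^{\sigma - 1}$, and the substitution $x = T + U v$ turns the integral into $\ll_N T^{\sigma} (T/U)^j U \int_{\R} (1 + |v|)^{4N} e^{-v^2} \, dv \ll_N U T^{\sigma} (T/U)^j$. Since $U \ll T$, the two ranges combine to the asserted bound.

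It remains to dispose of the range $|\tau| \leq 1$, where the factor $s(s + 1) \cdots (s + j - 1)$ need not be bounded below. Here I would simply invoke absolute convergence: the $j = 0$ case of \hyperref[BLMLemma4lemma]{Lemma \ref*{BLMLemma4lemma}}, treated by the same splitting as above, gives $\widehat{\Ks^{-} h_{T,U}}(s) \ll_N U T^{\sigma}$ directly, and since $U \ll T$ forces $T/(U(1 + |\tau|)) \gg 1$ for bounded $|\tau|$, one has $(T/(U(1 + |\tau|)))^j \gg_N 1$ (the implied constant depending only on $N$ because $j \leq N/2$ is bounded), so $U T^{\sigma} \ll_N U T^{\sigma} (T/(U(1 + |\tau|)))^j$. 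There is no genuine obstacle in this argument: the substantive work is contained in \hyperref[BLMLemma4lemma]{Lemma \ref*{BLMLemma4lemma}}, and the corollary is a formal consequence; the only points demanding a modicum of care are the vanishing of the boundary terms in the repeated integration by parts and the degenerate behaviour of the prefactor for bounded $|\tau|$, both handled above.
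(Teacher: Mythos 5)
Your argument follows the same route as the paper's one-sentence proof: an integration by parts against the Mellin kernel reduces the estimate to the derivative bounds of Lemma \ref{BLMLemma4lemma}, followed by a split at $|x - T| \lessgtr U\log T$. The only organizational difference is that you integrate by parts $j$ times over the full half-line before splitting (with vanishing boundary terms at $0$ and $\infty$) rather than splitting first; both are fine, and your treatment of the prefactor $s(s+1)\cdots(s+j-1)$ and of the outer ranges is correct.

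One step of your middle-range estimate needs substantially more care than the parenthetical remark you give it. Replacing $(T+Uv)^{\sigma-1}$ by $T^{\sigma-1}$ after the substitution $x = T + Uv$ is legitimate only where $x \asymp T$. When $U$ is comparable to $T$ (so that $T - U\log T < 1$, which can happen since $h_{T,U}$ allows $T^{1/3} \ll U \leq T$) and $\sigma < 0$, the middle range contains all of $[1, T/2]$, on which the displayed bound of Lemma \ref{BLMLemma4lemma} is $\asymp T(T/U)^j$ \emph{uniformly} (both $(1 + |x-T|/U)^{4N}$ and $e^{-((x-T)/U)^2}$ are $\asymp 1$ since $|x - T| \asymp U$ there); integrating this against $x^{\sigma-1}$ gives $\asymp T(T/U)^j$, which overshoots the target $UT^{\sigma}(T/U)^j$ by a factor of at least $T^{-\sigma} \gg 1$. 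The dichotomy in the \emph{statement} of Lemma \ref{BLMLemma4lemma} is thus genuinely insufficient on this portion of the middle range, and one must return to its proof: for $1 \leq x \leq T^{13/12}$ the error term there is $O(UT^{1 + \frac{5}{14}j - N})$ and the main-term bound carries a factor of $x^{4N}/T^{4N}$, both minuscule when $x \ll T$, and each of these integrates against $x^{\sigma-1}$ to something $\ll UT^{\sigma}(T/U)^j$ once $N$ is large. You gesture at this with ``the residual small-$x$ portion \ldots being negligible by the stronger decay in the proof,'' but ``small-$x$'' and the bound you allude to (the $Ux^{2N}/T^{2N - j + 3/2}$ estimate) only address $x \leq 1$; the range $1 \leq x \ll T$ is the delicate one and deserves an explicit argument. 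This is a genuine subtlety that the paper's own terse proof sketch also passes over.
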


\begin{proof}
We estimate the integral
\[\widehat{\Ks^{-} h_{T,U}}(s) = \int_{0}^{\infty} (\Ks^{-} h_{T,U})(x) x^s \, \frac{dx}{x}\]
by breaking this into the three ranges $(0,T - U\log T)$, $[T - U\log T, T + U\log T]$, and $(T + U\log T,\infty)$. We then estimate each of these ranges via integration by parts and \hyperref[BLMLemma4lemma]{Lemma \ref*{BLMLemma4lemma}}; the main contribution comes from the middle range.
\end{proof}

\begin{lemma}[{Cf.~\hyperref[Lspmintegrandboundslemma]{Lemma \ref*{Lspmintegrandboundslemma}}}]
\label{Lspmintegrandbounds2lemma}
Define
\begin{align*}
\Omega^{+}(\tau,t,0) & \defeq \begin{dcases*}
2|t| & if $|\tau| \leq 2|t|$,	\\
|\tau| & if $|\tau| \geq 2|t|$,
\end{dcases*}	\\
\Omega^{-}(\tau,t,0) & \defeq \begin{dcases*}
0 & if $|\tau| \leq 2|t|$,	\\
|\tau| - 2|t| & if $|\tau| \geq 2|t|$,
\end{dcases*}	\\
\Omega^{\hol}(\tau,k,0) & \defeq |\tau|.
\end{align*}
For $s = \sigma + i\tau$ with $-N/2 < \sigma < 1$ and $j \in \N_0$ with $j \leq N/2$, proved that additionally $s$ is at least a bounded distance away from $\{2(\pm it - n) : n \in \N_0\}$,
\begin{multline*}
\widehat{\Ks^{-} h_{T,U}}(s) \widehat{\JJ_t^{\pm}}(s) \GG_0^{\pm}(1 - s) \ll_{\sigma,j} UT^{\sigma} \left(\frac{T}{U(1 + |\tau|)}\right)^j (1 + |\tau|)^{-2\sigma}	\\
\times \left(\left(1 + \left|\tau + 2t\right|\right) \left(1 + \left|\tau - 2t\right|\right)\right)^{\frac{1}{2} (\sigma - 1)} e^{-\frac{\pi}{2} \Omega^{\pm}(\tau,t,0)},
\end{multline*}
and
\begin{align*}
\Res_{s = 2(\pm it - n)} \widehat{\Ks^{-} h_{T,U}}(s) \widehat{\JJ_t^{-}}(s) \GG_0^{-}(1 - s) & \ll_{n,j} UT^{-2n} \left(\frac{T}{U(1 + |t|)}\right)^j (1 + |t|)^{3n - \frac{1}{2}},	\\
\Res_{s = 2(\pm it - n)} \widehat{\Ks^{-} h_{T,U}}(s) \widehat{\JJ_t^{-}}(s) \GG_0^{-}(1 - s) & \ll_{n,j} UT^{-2n} \left(\frac{T}{U(1 + |t|)}\right)^j (1 + |t|)^{3n - \frac{1}{2}} e^{-\frac{\pi}{2} |t|}.
\end{align*}
For $s = \sigma + i\tau$ with $-N/2 < \sigma < 1$ and $j \in \N_0$ with $j \leq N/2$, proved that additionally $s$ is at least a bounded distance away from $\{1 - k - 2n : n \in \N_0\}$,
\[\widehat{\Ks^{-} h_{T,U}}(s) \widehat{\JJ_k^{\hol}}(s) \GG_0^{+}(1 - s) \ll_{\sigma,j} UT^{\sigma} \left(\frac{T}{U(1 + |\tau|)}\right)^j (1 + |\tau|)^{-2\sigma} \left(k + |\tau|\right)^{\sigma - 1} e^{-\frac{\pi}{2} \Omega^{\hol}(\tau,k,0)},\]
and
\[\Res_{s = 1 - k - 2n} \widehat{\Ks^{-} h_{T,U}}(s) \widehat{\JJ_k^{\hol}}(s) \GG_0^{+}(1 - s) \ll_n UT^{1 - k - 2n} \left(\frac{k - 1}{2\pi e}\right)^{1 - k} k^{-1/2}.\]
\end{lemma}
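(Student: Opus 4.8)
The proof is a close parallel to that of \hyperref[Lspmintegrandboundslemma]{Lemma \ref*{Lspmintegrandboundslemma}}, the sole structural difference being that the rapidly decaying bound for the Mellin transform of $\Ks^{-} h_T$ used there is now replaced by \hyperref[KshTUMellinboundcor]{Corollary \ref*{KshTUMellinboundcor}}, which supplies, for every $j \in \N_0$ with $j \leq N/2$, the integration-by-parts gain $(T/U(1 + |\tau|))^j$. First I would observe that $\GG_0^{\pm}$ is nothing but $\GG_{t_g}^{\pm}$ specialised to $t_g = 0$, so that the functions $\Omega^{\pm}(\tau,t,0)$ and $\Omega^{\hol}(\tau,k,0)$ in the statement are exactly the degenerations obtained by setting $t_g = 0$ in the piecewise definitions of $\Omega^{\pm}(\tau,t,t_g)$ and $\Omega^{\hol}(\tau,k,t_g)$ from \hyperref[Lspmintegrandboundslemma]{Lemma \ref*{Lspmintegrandboundslemma}}: the ``outer'' breakpoints at $4t_g$ collapse to the origin, and the remaining cases simplify to the stated two- and one-piece formulae. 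Stirling's formula \eqref{Stirlingeq}, applied verbatim as in the proof of \hyperref[Lspmintegrandboundslemma]{Lemma \ref*{Lspmintegrandboundslemma}}, then yields
\[\GG_0^{+}(1 - s) \ll_{\sigma} (1 + |\tau|)^{-2\sigma} e^{-\frac{\pi}{2}|\tau|}, \qquad \GG_0^{-}(1 - s) \ll_{\sigma} (1 + |\tau|)^{-2\sigma}\]
for $s = \sigma + i\tau$ with $\sigma < 1$.

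With these ingredients in hand, I would bound each of $\widehat{\JJ_t^{\pm}}(s)$, $\widehat{\JJ_k^{\hol}}(s)$ and their residues at $s = 2(\pm it - n)$, respectively $s = 1 - k - 2n$, via \hyperref[JJrMellinasympcor]{Corollary \ref*{JJrMellinasympcor}}, precisely as in the proof of \hyperref[Lspmintegrandboundslemma]{Lemma \ref*{Lspmintegrandboundslemma}}; multiplying together the bound for $\widehat{\Ks^{-} h_{T,U}}(s)$ from \hyperref[KshTUMellinboundcor]{Corollary \ref*{KshTUMellinboundcor}}, the bound for the relevant Mellin transform of $\JJ$, and the above Stirling estimate for $\GG_0^{\pm}$ then gives the four displayed inequalities directly. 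The pairing of signs is as in \hyperref[Lspmintegrandboundslemma]{Lemma \ref*{Lspmintegrandboundslemma}}: the exponential $e^{-\pi|\tau|/2}$ furnished by $\GG_0^{+}$ combines with the growth or decay of $\widehat{\JJ_t^{+}}(s)$ in $\tau$ relative to $2t$ to produce the factor $e^{-\frac{\pi}{2}\Omega^{+}(\tau,t,0)}$, while for the $-$ case all the exponential behaviour comes from $\widehat{\JJ_t^{-}}(s)$ and yields $e^{-\frac{\pi}{2}\Omega^{-}(\tau,t,0)}$; for the residue bounds one additionally uses that $\widehat{\JJ_t^{-}}$ has simple poles at $s = 2(\pm it - n)$ with residues controlled by \hyperref[JJrMellinasympcor]{Corollary \ref*{JJrMellinasympcor}}, and that evaluating $\widehat{\Ks^{-} h_{T,U}}$ and $\GG_0^{-}(1-s)$ at these points contributes the stated powers of $1 + |t|$.

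There is no substantial obstacle here: every nontrivial estimate has already been established --- the Mellin decay of $\Ks^{-} h_{T,U}$ in \hyperref[KshTUMellinboundcor]{Corollary \ref*{KshTUMellinboundcor}}, which in turn rests on the delicate stationary-phase-type analysis of \hyperref[BLMLemma4lemma]{Lemma \ref*{BLMLemma4lemma}}, and the asymptotics of the Bessel Mellin transforms in \hyperref[JJrMellinasympcor]{Corollary \ref*{JJrMellinasympcor}} --- so the work is purely one of bookkeeping. The one point requiring a little care is carrying the free parameter $j$ uniformly through all cases and checking that the degenerate piecewise functions $\Omega^{\pm}(\tau,t,0)$ and $\Omega^{\hol}(\tau,k,0)$ are genuinely the $t_g \to 0$ limits of their $t_g$-dependent counterparts, so that the subsequent contour-shifting argument (the analogue of \hyperref[LHboundscor]{Corollary \ref*{LHboundscor}}) can be run with the extra $j$-gain replacing the exponential decay in $t_g$ that is unavailable when $t_g = 0$.
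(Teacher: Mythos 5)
Your proposal is correct and follows essentially the same approach as the paper, which dispatches the lemma in one line by citing the method of Lemma~\ref{Lspmintegrandboundslemma} with Corollary~\ref{KshTUMellinboundcor} substituted for the bound on $\widehat{\Ks^{-} h_T}$ coming from \cite[Lemma~4]{BLM19}. Your extra observations --- that $\GG_0^{\pm}$ is the $t_g=0$ specialisation of $\GG_{t_g}^{\pm}$, that $\Omega^{\pm}(\tau,t,0)$ and $\Omega^{\hol}(\tau,k,0)$ are the degenerate piecewise forms, and that the $j$-gain $(T/U(1+|\tau|))^j$ replaces the $(1+|\tau|)^{-N}$ decay --- are precisely the bookkeeping the paper leaves implicit.
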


\begin{proof}
This follows via the same method as the proof of \hyperref[Lspmintegrandboundslemma]{Lemma \ref*{Lspmintegrandboundslemma}}, using \hyperref[KshTUMellinboundcor]{Corollary \ref*{KshTUMellinboundcor}} in place of \cite[Lemma 4]{BLM19}.
\end{proof}

\begin{corollary}[{Cf.~\hyperref[LHboundscor]{Corollary \ref*{LHboundscor}}}]
\label{LHbounds2cor}
For fixed $j \in \N_0$ with $j \leq N/2$,
\begin{align*}
\left(\Ls^{+} H_{T,U}^{+}\right)(t) & \ll_j U \left(\frac{T}{U(1 + |t|)}\right)^j (1 + |t|)^{-1/2} e^{-\frac{\pi}{2} |t|},	\\
\left(\Ls^{-} H_{T,U}^{-}\right)(t) & \ll_j U \left(\frac{T}{U(1 + |t|)}\right)^j (1 + |t|)^{-1/2},	\\
\intertext{while for fixed $-\min\{N/2,1 - k\} < \sigma < 1$,}
\left(\Ls^{\hol} H_{T,U}^{+}\right)(k) & \ll_{\sigma} UT^{\sigma} k^{\sigma - 1}.
\end{align*}
\end{corollary}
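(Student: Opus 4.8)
The plan is to proceed exactly as in the proof of \hyperref[LHboundscor]{Corollary \ref*{LHboundscor}}, replacing the estimates of \hyperref[Lspmintegrandboundslemma]{Lemma \ref*{Lspmintegrandboundslemma}} by those of \hyperref[Lspmintegrandbounds2lemma]{Lemma \ref*{Lspmintegrandbounds2lemma}} (and the bound \cite[Lemma 4]{BLM19} for the Mellin transform of $\Ks^{-} h_T$ by \hyperref[KshTUMellinboundcor]{Corollary \ref*{KshTUMellinboundcor}} for that of $\Ks^{-} h_{T,U}$). First I would write each transform as a single contour integral: unfolding the definition \eqref{Htgeq} of $H_{T,U}^{\pm}$, interchanging the order of integration (legitimate by the rapid decay of $\widehat{\Ks^{-} h_{T,U}}$ in vertical strips from \hyperref[KshTUMellinboundcor]{Corollary \ref*{KshTUMellinboundcor}}), and recognising the inner $x$-integral as a Mellin transform of $\JJ_t^{\pm}$ or $\JJ_k^{\hol}$, one obtains
\[\left(\Ls^{\pm} H_{T,U}^{\pm}\right)(t) = \frac{2}{\pi i} \int_{\sigma_1 - i\infty}^{\sigma_1 + i\infty} \widehat{\Ks^{-} h_{T,U}}(s) \widehat{\JJ_t^{\pm}}(s) \GG_0^{\pm}(1 - s) \, ds\]
for any $0 < \sigma_1 < 1$, and the analogous identity for $\left(\Ls^{\hol} H_{T,U}^{+}\right)(k)$ with $\widehat{\JJ_k^{\hol}}(s)$ and $\GG_0^{+}(1 - s)$ in place of $\widehat{\JJ_t^{\pm}}(s)$ and $\GG_0^{\pm}(1 - s)$.

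Next I would fix $\sigma_1$ and, in the regimes where it is advantageous, shift the contour to a possibly negative abscissa, crossing the poles of $\widehat{\JJ_t^{-}}$ at $s = 2(\pm it - n)$ (for $\Ls^{-}$) or the poles of $\widehat{\JJ_k^{\hol}}$ at $s = 1 - k - 2n$ (for $\Ls^{\hol}$), just as in \hyperref[LHboundscor]{Corollary \ref*{LHboundscor}}, and account for the residues via the residue bounds in \hyperref[Lspmintegrandbounds2lemma]{Lemma \ref*{Lspmintegrandbounds2lemma}}. I would then split the integral over $s = \sigma_1 + i\tau$ according to the piecewise regimes appearing in $\Omega^{\pm}(\tau,t,0)$ and $\Omega^{\hol}(\tau,k,0)$ and bound the integrand on each piece by \hyperref[Lspmintegrandbounds2lemma]{Lemma \ref*{Lspmintegrandbounds2lemma}}. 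In every regime save $|\tau| \leq 2|t|$ (for $\Ls^{-}$, where $\Omega^{-}$ vanishes) and $|\tau|$ bounded (for $\Ls^{+}$ and $\Ls^{\hol}$), the factor $e^{-\frac{\pi}{2} \Omega}$ yields exponential decay and hence a negligible contribution; the stated bounds then come from the remaining ranges, where one trades the factor $\left(\frac{T}{U(1 + |\tau|)}\right)^j$ from \hyperref[KshTUMellinboundcor]{Corollary \ref*{KshTUMellinboundcor}} against the mild polynomial growth of $\widehat{\JJ}$ and $\GG_0$ by taking $j$ as large as permitted (up to $N/2$).

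The main obstacle, as opposed to the situation of \hyperref[LHboundscor]{Corollary \ref*{LHboundscor}}, is maintaining uniformity in $T$, $U$, and $t$ (or $k$) simultaneously: $\widehat{\Ks^{-} h_{T,U}}$ is not simply of rapid decay in $\tau$ but is concentrated in a window $|\tau| \ll T/U$ of variable width, with $T^{1/3} \ll U \ll T$, so the choice of $j$ must depend on the relative sizes of $|\tau|$, $|t|$ (or $k$), and $T/U$, and one must check that summing the $O(\log)$ dyadic contributions reproduces the powers of $(1 + |t|)$ (or $k$) and the exponential $e^{-\frac{\pi}{2}|t|}$ exactly as claimed. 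The transition regions $|\tau| \asymp 2|t|$, where the factor $\bigl((1 + |\tau + 2t|)(1 + |\tau - 2t|)\bigr)^{\frac{1}{2}(\sigma - 1)}$ is extremal, and $|\tau| \asymp T/U$, where the effective length of the $\tau$-integral is governed, are where the bookkeeping is most delicate; away from these the estimate is routine. For the holomorphic transform the freedom in $\sigma$ in the stated bound reflects that, when $k$ is large, one pushes $\sigma$ into the negative range (staying to the right of $s = 1 - k$, so that no additional poles are crossed) so as to convert the factor $T^{\sigma} k^{\sigma - 1}$ into genuine decay in $k$.
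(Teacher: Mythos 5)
Your outline has the correct architecture — Mellin inversion to the contour integral, the bounds of \hyperref[Lspmintegrandbounds2lemma]{Lemma \ref*{Lspmintegrandbounds2lemma}} and \hyperref[KshTUMellinboundcor]{Corollary \ref*{KshTUMellinboundcor}}, contour shifts to pick up residues — and matches the paper's proof in broad strokes, but it misassigns the $\Ls^{+}$ case to the wrong category, and this is a genuine gap rather than a bookkeeping detail.

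You propose to shift the contour only for $\Ls^{-}$ and $\Ls^{\hol}$, handling $\Ls^{+}$ at a positive abscissa $0 < \sigma_1 < 1$ ``just as in \hyperref[LHboundscor]{Corollary \ref*{LHboundscor}}'', and you group $\Ls^{+}$ with $\Ls^{\hol}$ in the statement that the main contribution is from $|\tau|$ bounded. But the analogy with \hyperref[LHboundscor]{Corollary \ref*{LHboundscor}} breaks down here. In that corollary the factor $T^{1+\sigma}$ from $\widehat{\Ks^{-}h_T}$ is compensated by $(t_g/T)^{-\sigma}$ for $0 < \sigma < 1$, because the $t_g$-parameter survives into the bound; in the present setting $t_g$ has been replaced by $0$, and \hyperref[KshTUMellinboundcor]{Corollary \ref*{KshTUMellinboundcor}} gives a bare $UT^{\sigma}$ with no compensating factor. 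Thus the contribution of the integral over $|\tau| \lesssim 1$ at $\Re(s) = \sigma_1 > 0$ is of order $UT^{\sigma_1}(T/U)^j(1+|t|)^{\sigma_1-1}e^{-\pi|t|}$, and comparing with the claimed bound requires $T^{\sigma_1}(1+|t|)^{j+\sigma_1-1/2}e^{-\pi|t|/2} \ll_j 1$; the $(1+|t|)$-part is fine, but $T^{\sigma_1}$ is unbounded. No choice of $j$ rescues this, since $(T/U(1+|\tau|))^j$ is largest at small $\tau$ and only degrades the estimate there.

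The fix, which is exactly what the paper does, is to shift the $\Ls^{+}$ contour to $-2 < \sigma < 0$, precisely as for $\Ls^{-}$: the function $\widehat{\JJ_t^{+}}(s)$ has the same simple poles at $s = 2(\pm it - n)$ as $\widehat{\JJ_t^{-}}(s)$, and crossing the $n = 0$ pair at $s = \pm 2it$ produces residues that, by the second residue bound in \hyperref[Lspmintegrandbounds2lemma]{Lemma \ref*{Lspmintegrandbounds2lemma}}, are exactly $U\bigl(T/U(1+|t|)\bigr)^j(1+|t|)^{-1/2}e^{-\pi|t|/2}$. With $\sigma < 0$, the remaining shifted integral carries the saving $T^{\sigma} < 1$ and is strictly subordinate. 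Related to this, your claim that ``the stated bounds come from the remaining ranges'' is off for $\Ls^{\pm}$: for those transforms the stated bounds come from the residues (where $\tau = \pm 2t$ automatically supplies the $(1+|t|)^{-j}$, so there is no trading to do), while it is only for $\Ls^{\hol}$ — whose poles at $s = 1 - k - 2n$ lie far to the left and contribute super-exponentially small residues — that the integral near $|\tau|$ small dominates and the freedom in $\sigma$ (rather than $j$) carries the day.
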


\begin{proof}
By Mellin inversion,
\begin{align*}
\left(\Ls^{\pm} H_{T,U}^{\pm}\right)(t) & = \frac{2}{\pi i} \int_{\sigma_1 - i\infty}^{\sigma_1 + i\infty} \widehat{\Ks^{-} h_{T,U}}(s) \widehat{\JJ_t^{\pm}}(s) \GG_0^{\pm}(1 - s) \, ds,	\\
\left(\Ls^{\hol} H_{T,U}^{+}\right)(k) & = \frac{2}{\pi i} \int_{\sigma_1 - i\infty}^{\sigma_1 + i\infty} \widehat{\Ks^{-} h_{T,U}}(s) \widehat{\JJ_k^{\hol}}(s) \GG_0^{+}(1 - s) \, ds,
\end{align*}
where $0 < \sigma_1 < 1$. As in the proof of \hyperref[LHboundscor]{Corollary \ref*{LHboundscor}}, we use \hyperref[Lspmintegrandbounds2lemma]{Lemma \ref*{Lspmintegrandbounds2lemma}} to bound these integrals. For $(\Ls^{\pm} H_{T,U}^{\pm})(t)$, we shift the contour from $\Re(s) = \sigma_1$ to $\Re(s) = \sigma$ with $-2 < \sigma < 0$, with the dominant contribution combing from the residues at the poles at $s = \pm 2it$. We do the same with $(\Ls^{\hol} H_{T,U}^{+})(k)$ with $-\min\{N/2,1 - k\} < \sigma < 1$; the dominant contribution of the ensuing integral comes from when $|\tau|$ is small.
\end{proof}

\begin{lemma}
\label{tildeGsizelemma}
We have that $\undertilde{\GG}(\hf) \ll_{\e} (TU)^{1 + \e}$.
\end{lemma}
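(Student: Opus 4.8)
The plan is to recover $\undertilde{\GG}(\hf)=\undertilde{\GG}(1/2,1/2;\hf)$ from a two‑dimensional Cauchy integral and to estimate on that contour each of the three pieces $\undertilde{\NN}$, $\undertilde{\RR}(\,\cdot\,;\hf)$ and $\sum_{\pm}\undertilde{\RR}(\,\cdot\,;\Ts^{\pm}_{s_1,s_2,0}\hf)$ making up $\undertilde{\GG}(s_1,s_2;\hf)$ in \hyperref[spectralreciprocity2prop]{Proposition \ref*{spectralreciprocity2prop}}. Since that proposition asserts that $\undertilde{\GG}(s_1,s_2;\hf)$ is holomorphic in a neighbourhood of $(1/2,1/2)$, for any sufficiently small radii with $0<\rho_1<\rho_2$ (which we take to be $\asymp\e$) we have
\[\undertilde{\GG}(\hf)=\frac{1}{(2\pi i)^2}\oint_{|s_1-1/2|=\rho_1}\oint_{|s_2-1/2|=\rho_2}\undertilde{\GG}(s_1,s_2;\hf)\,\frac{ds_2\,ds_1}{\left(s_1-\frac12\right)\left(s_2-\frac12\right)}.\]
Choosing $\rho_1\neq\rho_2$ forces $|s_1-s_2|\geq\rho_2-\rho_1$ and $|s_1+s_2-1|\geq\rho_2-\rho_1$ on the torus, so the torus avoids the polar divisors $s_1=s_2$ and $s_1+s_2=1$ of the individual terms, as well as the divisors $s_1=\frac12$, $s_2=\frac12$ where the factors $\Ell_D(s_1,s_2,\pm_1 i(1-s_1))$ and their analogues are singular; as $D$ is fixed and $\rho_1,\rho_2\asymp\e$, these singular factors contribute at most $\ll_{D,\e}\rho_i^{-\omega(D)}\ll_{\e}1$ there. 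It therefore suffices to prove $\undertilde{\GG}(s_1,s_2;\hf)\ll_{\e}UT^{1+\e}$ uniformly on the torus.

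For $\undertilde{\NN}(s_1,s_2;\hf)$ the size is controlled by $\widehat{\Ks^{-}h_{T,U}}(2(1-s_1))$ and $\widehat{\Ks^{-}h_{T,U}}(2(1-s_2))$, both evaluated at arguments of real part within $O(\rho_i)$ of $1$; by \hyperref[KshTUMellinboundcor]{Corollary \ref*{KshTUMellinboundcor}} (taking $j=0$ and $\sigma=1+O(\e)$) each is $\ll_{\e}UT^{1+\e}$. The accompanying zeta and $L$‑factors are all evaluated near $s=1$, near $s=2$, or at the trivial zero $s=0$ of $L(\cdot,\chi_D)$, and hence are $\ll_{\e}1$ on the torus, the simple poles of $\zeta$ and $\zeta^{D}$ along $s_1=s_2$ and $s_1+s_2=1$ being harmless because $|s_1-s_2|,|s_1+s_2-1|\gg_{\e}1$ there. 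Thus $\undertilde{\NN}(s_1,s_2;\hf)\ll_{\e}UT^{1+\e}$.

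The residue terms are picked up at $t=\pm_1 i(1-s_1)$ and $t=\pm_1 i(1-s_2)$, i.e.\ at points at distance $\asymp\rho_i$ from $\pm i/2$. In $\undertilde{\RR}(s_1,s_2;\hf)$ the factor $h_{T,U}(\pm_1 i(1-s_1))$ appears, and since the polynomial $P_T$ in \eqref{hTUeq} vanishes to order two at $t=\pm i/2$ while its remaining factors there are of size $\asymp T^{-4(N-1)}$ and the Gaussian factor is $e^{-T^2/U^2+O(T\rho_1)}\ll1$, this term is $\ll_{N,D,\e}T^{-A}$ for every $A>0$, hence negligible. In $\undertilde{\RR}(s_2,s_1;\Ts^{\pm}_{s_1,s_2,0}\hf)$ the factor is $(\Ls^{\pm}H^{\pm}_{s_1,s_2,0})(t)$ with $t$ near $\pm i/2$; the argument of \hyperref[LHbounds2cor]{Corollary \ref*{LHbounds2cor}}, applied uniformly for $s_1,s_2$ in a small neighbourhood of $1/2$, gives $(\Ls^{\pm}H^{\pm}_{s_1,s_2,0})(t)\ll U$ there (with, in fact, an extra unneeded vanishing since $\JJ^{\pm}_{r}$ vanishes at $r=\pm i/2$), and the remaining $\Ell_D$‑ and zeta/$L$‑factors are $\ll_{\e}1$ on the torus, whence $\undertilde{\RR}(s_2,s_1;\Ts^{\pm}_{s_1,s_2,0}\hf)\ll_{\e}U$.

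Summing the boundedly many contributions gives $\undertilde{\GG}(s_1,s_2;\hf)\ll_{\e}UT^{1+\e}$ uniformly on the torus, and since the weight $\tfrac{1}{(2\pi i)^2}\cdot\tfrac{1}{(s_1-1/2)(s_2-1/2)}$ has absolute value $(2\pi)^{-2}\rho_1^{-1}\rho_2^{-1}$ against contours of total length $(2\pi\rho_1)(2\pi\rho_2)$, the Cauchy integral yields $\undertilde{\GG}(\hf)\ll_{\e}UT^{1+\e}$; as $U\gg T^{1/3}\gg1$ in the transition regime, this is $\ll_{\e}(TU)^{1+\e}$. The genuine content of the argument has already been carried out in \hyperref[boundstrans2sect]{Section \ref*{boundstrans2sect}} — the uniform (in $T$, $U$) estimates for $\widehat{\Ks^{-}h_{T,U}}$ and for the transforms $\Ls^{\pm}H^{\pm}_{s_1,s_2,0}$ near the exceptional point $\pm i/2$; the only remaining point to watch is the clustering of the polar divisors $s_1=s_2$, $s_1+s_2=1$, $s_1=\tfrac12$, $s_2=\tfrac12$ at $(1/2,1/2)$, which is dealt with simply by taking $\rho_1\neq\rho_2$.
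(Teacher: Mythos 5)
Your argument is correct and takes a genuinely different route from the paper. The paper proceeds by setting $s_1 = 1/2$, forming the Laurent expansion of each piece $\undertilde{\NN}(1/2,s_2;\hf)$, $-\undertilde{\RR}(1/2,s_2;\hf)$, $\undertilde{\RR}(1/2,s_2;\Ts_{1/2,s_2,0}^{\pm}\hf)$ in $s_2$ about $1/2$, noting that the principal parts must cancel since their sum is holomorphic, and bounding the constant terms; the dominant contribution is identified as $\ll_D |\widehat{\Ks^- h_{T,U}}''(1)|$, which is bounded directly by inserting the definition of the Mellin transform. Your two-dimensional Cauchy integral over the torus $|s_1-1/2| = \rho_1$, $|s_2-1/2| = \rho_2$ with $\rho_1 \neq \rho_2$, both $\asymp \e$, is a clean equivalent: the unequal radii push the torus away from all the polar divisors $s_1 = s_2$, $s_1+s_2=1$, $s_1 = 1/2$, $s_2 = 1/2$ of the individual pieces, and bounding each piece uniformly on the torus via \hyperref[KshTUMellinboundcor]{Corollary \ref*{KshTUMellinboundcor}} gives the lemma. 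What this buys is that one never has to figure out which derivative of $\widehat{\Ks^- h_{T,U}}$ (or of the other factors) appears in the constant term of a Laurent expansion; the price is that one must check the bounds of \hyperref[boundstrans2sect]{Section \ref*{boundstrans2sect}} hold uniformly as $(s_1,s_2)$ ranges over the torus. Two small points to tighten: first, \hyperref[LHbounds2cor]{Corollary \ref*{LHbounds2cor}} is stated only for $H_{T,U}^{\pm} = H_{1/2,1/2,0}^{\pm}$, so invoking ``the argument of'' it uniformly near $(1/2,1/2)$ should either be spelled out, or — as the paper does for $\undertilde{\RR}(\cdot;\Ts^{\pm}\hf)$ — you can bypass \hyperref[LHbounds2cor]{Corollary \ref*{LHbounds2cor}} entirely and bound the contour-integral representation \eqref{LsHcontoureq} directly via \hyperref[KshTUMellinboundcor]{Corollary \ref*{KshTUMellinboundcor}}. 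Second, the parenthetical remark that $\JJ_r^{\pm}$ vanishes at $r = \pm i/2$ is only true for $\JJ_r^{-}$ (from the $\cosh \pi r$ factor); $\JJ_r^{+}$ does not vanish there — but as you flag this as unneeded, it is harmless.
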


\begin{proof}
Via Mellin inversion, we have that for $1/2 < \Re(s_1),\Re(s_2) < 1$ with $s_1 \neq s_2$,
\begin{multline}
\label{LsHcontoureq}
\left(\Ls^{+} H_{s_1,s_2,0}^{+}\right)(\pm i(s_1 - 1)) = \frac{2}{\pi i} \int_{\sigma_1 - i\infty}^{\sigma_1 + i\infty} \widehat{\Ks^{-} h}(s) \widehat{\JJ_{\pm i(s_1 - 1)}^{+}}(s + 2(s_1 + s_2 - 1))	\\
\times \left(\widehat{\JJ_0^{+}}(2 - s - 2s_1) \widehat{\JJ_0^{-}}(2 - s - 2s_2) + \widehat{\JJ_0^{-}}(2 - s - 2s_1) \widehat{\JJ_0^{+}}(2 - s - 2s_2)\right) \, ds,
\end{multline}
where $4(1 - \Re(s_1) - \Re(s_2)) + 2\max\{\Re(s_1),\Re(s_2)\} < \sigma < 2(1 - \max\{\Re(s_1),\Re(s_2)\})$. We shift the contour to $\Re(s) = \sigma_2$ with $\sigma_2$ slightly to the left of $4(1 - \Re(s_1) - \Re(s_2)) + 2\max\{\Re(s_1),\Re(s_2)\}$, picking up a residue at $s = 4 - 4s_1 - 2s_2$ equal to
\begin{multline*}
4 \widehat{\Ks^{-} h}(2(2 - 2s_1 - s_2)) (2\pi)^{2(s_1 - 1)} \Gamma(2(1 - s_1)) \cos \pi(s_1 - 1)	\\
\times \left(\widehat{\JJ_0^{+}}(2(s_1 + s_2 - 1)) \widehat{\JJ_0^{-}}(2(2s_1 - 1)) + \widehat{\JJ_0^{-}}(2(s_1 + s_2 - 1)) \widehat{\JJ_0^{+}}(2(s_1 - 1))\right).
\end{multline*}
Similar calculations hold for the terms $(\Ls^{+} H_{s_1,s_2,0}^{+})(\pm i(s_2 - 1))$, $(\Ls^{-} H_{s_1,s_2,0}^{-})(\pm i(s_1 - 1))$, and $(\Ls^{-} H_{s_1,s_2,0}^{-})(\pm i(s_2 - 1))$.

Now we let $s_1 = 1/2$ and consider the Laurent expansions about $s_2 = 1/2$ of $\undertilde{\NN}(1/2,s_2;\hf)$, $-\undertilde{\RR}(1/2,s_2;\hf)$, and $\undertilde{\RR}(1/2,s_2;\Ts_{1/2,s_2,0}^{\pm} \hf)$. Since $\undertilde{\GG}(1/2,s_2;\hf)$ is holomorphic at $s_2 = 1/2$, the principal parts must sum to zero, and so it suffices to bound the constant term in each Laurent expansion. For $\undertilde{\RR}(1/2,s_2;\Ts_{1/2,s_2,0}^{\pm} \hf)$, we use \hyperref[KshTUMellinboundcor]{Corollary \ref*{KshTUMellinboundcor}} to bound \eqref{LsHcontoureq} with $\sigma_1$ replaced by $\sigma_2 \in (0,1)$. For the remaining terms, it is readily seen that the dominant contribution is bounded by a constant multiple dependent on $D$ of
\begin{align*}
\left|\widehat{\Ks^{-} h_{T,U}}''(1)\right| & \leq \int_{-\infty}^{\infty} \left|\widehat{\JJ_r^{-}}''(1)\right| h_{T,U}(r) \, d_{\spec} r	\\
& \ll \int_{-\infty}^{\infty} (1 + |r|) (\log (1 + |r|))^2 h_{T,U}(r) \, dr	\\
& \ll_{\e} (TU)^{1 + \e}.
\qedhere
\end{align*}
\end{proof}

\section{Proof of \texorpdfstring{\hyperref[dihedralmomentsprop]{Proposition \ref*{dihedralmomentsprop} (3)}}{Proposition \ref{dihedralmomentsprop} (3)}: the Short Transition Range}
\label{(3)sect}

\begin{proof}[Proof of {\hyperref[dihedralmomentsprop]{Proposition \ref*{dihedralmomentsprop} (3)}}]
Via the approximate functional equation, \hyperref[approxfunclemma]{Lemma \ref*{approxfunclemma}}, and the large sieve, \hyperref[largesievethm]{Theorem \ref*{largesievethm}},
\begin{multline*}
\sum_{d_1 d_2 = D} 2^{\omega(d_2)} \frac{\varphi(d_2)}{d_2} \sum_{\substack{f \in \BB_0^{\ast}(\Gamma_0(d_1)) \\ T - U \leq t_f \leq T + U}} \frac{L\left(\frac{1}{2},f \otimes g_{\psi^2}\right)^2}{L_{d_2}\left(\frac{1}{2}, f\right) L^{d_2}(1,\sym^2 f)}	\\
+ \frac{2^{\omega(D)}}{2\pi} \int\limits_{T - U \leq |t| \leq T + U} \left|\frac{L\left(\frac{1}{2} + it, g_{\psi^2}\right)^2}{\zeta_D\left(\frac{1}{2} + it\right) \zeta^D(1 + 2it)}\right|^2 \, dt \ll_{\e} (TU)^{1 + \e}
\end{multline*}
for $1 + |2t_g - T| \ll U \leq T \ll t_g$. Next, we claim that
\begin{multline}
\label{f,fchiDmomentboundeq}
\sum_{d_1 d_2 = D} 2^{\omega(d_2)} \frac{\varphi(d_2)}{d_2} \sum_{\substack{f \in \BB_0^{\ast}(\Gamma_0(d_1)) \\ T - U \leq t_f \leq T + U}} \frac{L\left(\frac{1}{2},f\right)^2 L\left(\frac{1}{2},f \otimes \chi_D\right)^2}{L_{d_2}\left(\frac{1}{2}, f\right) L^{d_2}(1,\sym^2 f)}	\\
+ \frac{2^{\omega(D)}}{2\pi} \int\limits_{T - U \leq |t| \leq T + U} \left|\frac{\zeta\left(\frac{1}{2} + it\right)^2 L\left(\frac{1}{2} + it, \chi_D\right)^2}{\zeta_D\left(\frac{1}{2} + it\right) \zeta^D(1 + 2it)}\right|^2 \, dt \ll_{\e} (TU)^{1 + \e}
\end{multline}
for $T^{1/3} \ll U \leq T$. To see this, we use \hyperref[spectralreciprocity2prop]{Proposition \ref*{spectralreciprocity2prop}} with $\hf = (h_{T,U},0)$, where $h_{T,U}$ is as in \eqref{hTUeq}. \hyperref[tildeGsizelemma]{Lemma \ref*{tildeGsizelemma}} shows that $\undertilde{\GG}(\hf) \ll_{\e} (TU)^{1 + \e}$. For $\undertilde{\MM}^{\pm} (\Ts_0^{\pm} \hf)$, we break up each term into dyadic intervals and use \hyperref[LHbounds2cor]{Corollary \ref*{LHbounds2cor}} to bound $(\Ls^{\pm} H_{T,U}^{\pm})(t)$ and $(\Ls^{\hol} H_{T,U}^{+})(t)$ and the approximate functional equation and large sieve to bound each spectral sum of $L$-functions. The largest contributions come from $\undertilde{\MM}^{\Maass,-}(\Ls^{-} H_{T,U}^{-})$ when $t_f \asymp T/U$ and $\undertilde{\MM}^{\Eis}(\Ls^{-} H_{T,U}^{-})$ when $|t| \asymp T/U$, which give terms of size $O_{\e}(T^{3/2 + \e} U^{-1/2 + \e})$. Since $U \geq T^{1/3}$, this is $O_{\e}((TU)^{1 + \e})$.

The result now follows from the Cauchy--Schwarz inequality.
\end{proof}

\begin{remark}
\label{Weylsubconvexityremark}
Taking $U = T^{1/3}$ and dropping all but one term in \eqref{f,fchiDmomentboundeq} implies that
\begin{equation}
\label{f,fchiDsubconvexeq}
\begin{split}
L\left(\frac{1}{2},f\right) L\left(\frac{1}{2},f \otimes \chi_D\right) & \ll_{\e} D^{\frac{3}{4} + \e} t_f^{\frac{2}{3} + \e},	\\
\left|\zeta\left(\frac{1}{2} + it\right) L\left(\frac{1}{2} + it, \chi_D\right)\right|^2 & \ll_{\e} D^{\frac{3}{4} + \e} |t|^{\frac{2}{3} + \e}
\end{split}
\end{equation}
for $f \in \BB_0^{\ast}(\Gamma_0(d_1))$ and $t \in \R$, where we have additionally kept track of the $D$-dependence. This is a Weyl-strength subconvex bound in the $t_f$- and $t$-aspects and a convex bound in the $D$-aspect. For $D = 1$, \eqref{f,fchiDmomentboundeq} and its corollary \eqref{f,fchiDsubconvexeq} are results of Jutila \cite[Theorem]{Jut01}; the proof is not wholly dissimilar, though it is perhaps slightly less direct, for it passes through the spectral decomposition of shifted convolution sums.
\end{remark}

\section{Proof of \texorpdfstring{\hyperref[dihedralmomentsprop]{Proposition \ref*{dihedralmomentsprop} (4)}}{Proposition \ref{dihedralmomentsprop} (4)}: the Tail Range}
\label{(4)sect}

\begin{proof}[{Proof of {\hyperref[dihedralmomentsprop]{Proposition \ref*{dihedralmomentsprop} (4)}}}]
This follow simply via the Cauchy--Schwarz inequality, the approximate functional equation, \hyperref[approxfunclemma]{Lemma \ref*{approxfunclemma}}, and the large sieve, \hyperref[largesievethm]{Theorem \ref*{largesievethm}}.
\end{proof}

\section{Proof of \texorpdfstring{\hyperref[dihedralmomentsprop]{Proposition \ref*{dihedralmomentsprop} (5)}}{Proposition \ref{dihedralmomentsprop} (5)}: the Exceptional Range}
\label{(5)sect}

\begin{proof}[Proof of {\hyperref[dihedralmomentsprop]{Proposition \ref*{dihedralmomentsprop} (5)}}]
This follows directly from the subconvex bounds in \hyperref[Youngsubconvthm]{Theorems \ref*{Youngsubconvthm}} and \ref{MVsubconvthm}, noting that there are only finitely many exceptional eigenvalues (and conjecturally none).
\end{proof}

\appendix

\section{Automorphic Machinery}
\label{toolboxappendix}

In this appendix, we detail the many tools that are used in the course of proving \hyperref[dihedralmomentsprop]{Proposition \ref*{dihedralmomentsprop}}. These are the following: the explicit relation between dihedral Maa\ss{} newforms and Hecke Gr\"{o}\ss{}encharaktere; several root number calculations; the approximate functional equation; explicit forms of the Kuznetsov, Petersson, Kloosterman, and Vorono\u{\i} summation formul\ae{}; details on Mellin transforms of certain functions arising in the aforementioned summation formul\ae{}; the large sieve; and pre-existing subconvexity estimates for certain $L$-functions.

\subsection{Dihedral Maa\ss{} Newforms and Hecke Gr\"{o}\ss{}encharaktere}
\label{Grosssect}

Let $D \equiv 1 \pmod{4}$ be a positive squarefree fundamental discriminant of a real quadratic field $K = \Q(\sqrt{D})$ with ring of integers $\OO_K$ and let $\chi_D$ be the quadratic character modulo $D$ associated to the extension $K/\Q$ via class field theory. We record here the fact that the Gauss sum $\tau(\chi_D)$ of $\chi_D$ is equal to $\sqrt{D}$.

The Hecke Gr\"{o}\ss{}encharaktere $\psi$ of conductor $\OO_K$ satisfy
\[\psi((\alpha)) = \sgn(\alpha \sigma(\alpha))^{\kappa} \left|\frac{\alpha}{\sigma(\alpha)}\right|^{\frac{\pi i \ell}{\log \epsilon_K}}\]
for every principal ideal $(\alpha)$ of $\OO_K$, with $\ell \in \Z$ and $\kappa \in \{0,1\}$ subject to the restriction that $\kappa = 0$ if $\epsilon_K \sigma(\epsilon_K) = -1$, where $\sigma$ denotes the nontrivial element of $\Gal(K/\Q)$ and $\epsilon_K > 0$ is the fundamental unit of $K$. Moreover, every Hecke Gr\"{o}\ss{}encharakter is determined by $\ell$, $\kappa$, and a class group character, and such a Hecke Gr\"{o}\ss{}encharakter does not factor through the norm map $N_{K/\Q}$ if and only if either $\ell$ is positive or the class group character associated to $\psi$ is complex.

A dihedral Maa\ss{} newform $g = g_{\psi}$ is the automorphic induction of a Hecke Gr\"{o}\ss{}encharakter $\psi$ of $K$ for which $\psi$ does not factor through the norm map $N_{K/\Q}$. When $\psi$ has conductor $\OO_K$, $g_{\psi}$ is an element of $\BB_0^{\ast}(D,\chi_D)$ whose Fourier expansion about the cusp at infinity is given by
\begin{align*}
g_{\psi}(z) & = \sum_{\substack{n = -\infty \\ n \neq 0}}^{\infty} \rho_{g_{\psi}}(n) W_{0,it_g}(4\pi |n| y) e(nx)	\\
& = \rho_{g_{\psi}}(1) \sum_{\substack{\aa \subset \OO_K \\ \aa \neq \{0\}}} \frac{\psi(\aa)}{\sqrt{N(\aa)}} W_{0,it_g} \left(4\pi N(\aa) y\right) \left(e\left(N(\aa) x\right) + (-1)^{\kappa} e\left(-N(\aa) x\right)\right),
\end{align*}
where
\[t_g = \frac{\pi |\ell|}{\log \epsilon_K}, \qquad \rho_{g_{\psi}}(n) = \sgn(n)^{\kappa} \lambda_{g_{\psi}}(|n|) \frac{\rho_{g_{\psi}}(1)}{\sqrt{|n|}}, \qquad \lambda_{g_{\psi}}(n) = \sum_{N(\aa) = n} \psi(\aa),\]
and $N(\aa) \defeq \# \OO_K / \aa$ denotes the absolute norm of a nonzero ideal $\aa \subset \OO_K$. Note that $\rho_{g_{\psi}}(-n) = (-1)^{\kappa} \rho_{g_{\psi}}(n)$; that is, $(-1)^{\kappa}$ is the parity of $g_{\psi}$. In particular, $g_{\psi}$ is even if $\psi$ is the square of another Hecke Gr\"{o}\ss{}encharakter.

The Satake parameters $\alpha_{g_{\psi}}(p),\beta_{g_{\psi}}(p)$ of $g_{\psi}$ at a prime $p$ are related to the Hecke eigenvalue $\lambda_{g_{\psi}}(p)$ and nebentypus $\chi_D(p)$ via
\[\alpha_{g_{\psi}}(p) + \beta_{g_{\psi}}(p) = \lambda_{g_{\psi}}(p), \qquad \alpha_{g_{\psi}}(p) \beta_{g_{\psi}}(p) = \chi_D(p).\]
The relationship between the Satake parameters of $g_{\psi}$ at a prime $p$ and the values of the Hecke Gr\"{o}\ss{}encharakter $\psi$ on prime ideals $\pp \mid p \OO_K$ is as follows:
\begin{itemize}
\item If $\chi_D(p) = 1$, then $p$ splits in $K$, so that $p \OO_K = \pp \sigma(\pp)$, and its Satake parameters are $\alpha_f(p) = \psi(\pp)$ and $\beta_f(p) = \psi(\sigma(\pp)) = \overline{\psi}(\pp)$.
\item If $\chi_D(p) = -1$, then $p$ is inert in $K$, so that $p \OO_K = \pp$, and $\alpha_{g_{\psi}}(p) = -\beta_{g_{\psi}}(p) = 1$.
\item If $\chi_D(p) = 0$, then $p$ ramifies in $K$, so that $p \OO_K = \pp^2$, and $\alpha_{g_{\psi}}(p) = \psi(\pp)$ while $\beta_{g_{\psi}}(p) = 0$.
\end{itemize}
In all cases, $|\alpha_{g_{\psi}}(p)| = 1$. We record the following useful consequences.

\begin{lemma}
\label{dihedralHeckeeigenlemma}
The Hecke eigenvalues of a dihedral newform $g_{\psi} \in \BB_0^{\ast}(D,\chi_D)$ satisfy $\lambda_{g_{\psi}}(p) \in [-2,2]$ and $\lambda_{g_{\psi}}(n) \in \{\pm 1\}$ when $n \mid D^{\infty}$; moreover, $\lambda_{g_{\psi}}(n) = 1$ when $n \mid D^{\infty}$ if $g_{\psi}$ is even. We have that $g_{\psi} \otimes \chi_D = \overline{g_{\psi}} = g_{\psi}$ and $\lambda_{g_{\psi}}(n) \chi_D(n) = \delta_{(D,n),1} \lambda_{g_{\psi}}(n)$.
\end{lemma}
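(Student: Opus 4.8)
The plan is to read off all four assertions from the explicit description of the Satake parameters $\alpha_{g_{\psi}}(p),\beta_{g_{\psi}}(p)$ recorded above, together with Hecke multiplicativity \eqref{cuspmult} (valid for $g_{\psi}$ with $\chi=\chi_D$) and the elementary identity $\psi((p))=\sgn(p\sigma(p))^{\kappa}|p/\sigma(p)|^{\pi i\ell/\log\epsilon_K}=1$ for a rational prime $p$, which yields $\psi(\pp)^{2}=\psi(\pp^{2})=\psi((p))=1$ whenever $p$ ramifies in $K$. For the bound $\lambda_{g_{\psi}}(p)\in[-2,2]$ I would split into the three cases for $\chi_D(p)$: if $p$ splits then $\beta_{g_{\psi}}(p)=\overline{\alpha_{g_{\psi}}(p)}$ with $|\alpha_{g_{\psi}}(p)|=1$, so $\lambda_{g_{\psi}}(p)=2\Re\alpha_{g_{\psi}}(p)\in[-2,2]$; if $p$ is inert then $\lambda_{g_{\psi}}(p)=1+(-1)=0$; and if $p$ ramifies then $\beta_{g_{\psi}}(p)=0$ and $\lambda_{g_{\psi}}(p)=\alpha_{g_{\psi}}(p)=\psi(\pp)\in\{\pm1\}$ by the identity above.

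For $n\mid D^{\infty}$ the only ideal of $\OO_K$ of norm $p^{k}$ with $p\mid D$ is $\pp^{k}$, so $\lambda_{g_{\psi}}(p^{k})=\psi(\pp)^{k}=(\pm1)^{k}$, and multiplicativity via \eqref{cuspmult} then gives $\lambda_{g_{\psi}}(n)\in\{\pm1\}$. If moreover $g_{\psi}$ is even, then by the discussion above $\kappa=0$; I would then pin down the sign $\psi(\pp)=+1$ for each ramified $\pp$. This is immediate when $\psi=\chi^{2}$ is a square of a Hecke Gr\"o\ss{}encharakter, since then $\psi(\pp)=\chi(\pp)^{2}=\chi(\pp^{2})=\chi((p))=1$; in particular it covers the form $g_{\psi^{2}}$ that occurs throughout the paper. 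For a general even $g_{\psi}$ one reduces to this case (up to a genus character, which does not alter the values of $\psi$ on the ramified prime ideals), or argues directly with the infinity type of $\psi$ at the ramified places; this is the one genuinely delicate point of the lemma, everything else being a routine unwinding of the definitions and Satake data.

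For the self-duality and self-twist: from $\lambda_{g_{\psi}}(n)=\sum_{N(\aa)=n}\psi(\aa)$ and the fact that $g_{\psi}$ has nebentypus $\chi_D=\overline{\chi_D}$, the complex conjugate $\overline{g_{\psi}}$ has Hecke eigenvalues $\overline{\lambda_{g_{\psi}}(n)}$ and the same nebentypus, so $\overline{g_{\psi}}=g_{\overline{\psi}}$; since $\aa\sigma(\aa)=(N(\aa))$ forces $\psi(\aa)\psi(\sigma(\aa))=\psi((N(\aa)))=1$ and hence $\overline{\psi}=\psi\circ\sigma$, and automorphic induction is insensitive to the action of $\Gal(K/\Q)$, we get $g_{\overline{\psi}}=g_{\psi}$, so $\overline{g_{\psi}}=g_{\psi}$. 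The identity $g_{\psi}\otimes\chi_D=g_{\psi}$ is the standard self-twist property of automorphic induction from $K$ by the quadratic character $\chi_D=\chi_{K/\Q}$ cutting out $K$; alternatively it is checked on Satake parameters (at split $p$, $\chi_D(p)=1$; at inert $p$, $\lambda_{g_{\psi}}(p)=0$; at ramified $p$ the two local representations coincide). Comparing $p$-th Hecke eigenvalues of $g_{\psi}$ and $g_{\psi}\otimes\chi_D$ for $p\nmid D$ gives $\lambda_{g_{\psi}}(p)=\chi_D(p)\lambda_{g_{\psi}}(p)$, so $\lambda_{g_{\psi}}(p)=0$ for every inert $p$; inspecting $\lambda_{g_{\psi}}(p^{k})$ directly from $\alpha_{g_{\psi}}(p)=-\beta_{g_{\psi}}(p)=1$ at inert $p$ shows $\lambda_{g_{\psi}}(p^{k})=\chi_D(p)^{k}\lambda_{g_{\psi}}(p^{k})$, and with multiplicativity this yields $\lambda_{g_{\psi}}(n)\chi_D(n)=\lambda_{g_{\psi}}(n)$ when $(n,D)=1$; when $(n,D)>1$ both sides vanish. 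This is exactly $\lambda_{g_{\psi}}(n)\chi_D(n)=\delta_{(D,n),1}\lambda_{g_{\psi}}(n)$.
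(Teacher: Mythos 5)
The paper gives no written proof of this lemma; it appears immediately after the Satake-parameter description under the heading ``we record the following useful consequences,'' so your unwinding of that description (three cases for $\chi_D(p)$, the identity $\psi((p))=1$ for rational $p$ and hence $\psi(\pp)^2=1$ at ramified $\pp$, Hecke multiplicativity, $\overline{\psi}=\psi\circ\sigma$) is exactly the argument the authors are taking for granted. Your treatment of $\lambda_{g_{\psi}}(p)\in[-2,2]$, of $\lambda_{g_{\psi}}(n)\in\{\pm 1\}$ for $n\mid D^{\infty}$, of $g_{\psi}\otimes\chi_D=\overline{g_{\psi}}=g_{\psi}$, and of $\lambda_{g_{\psi}}(n)\chi_D(n)=\delta_{(D,n),1}\lambda_{g_{\psi}}(n)$ via the vanishing of $\lambda_{g_{\psi}}(p^k)$ for $p$ inert and $k$ odd, is correct and complete.

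The one weak spot is the one you flagged, and your proposed fix for it does not work. Multiplying by a genus character certainly does alter $\psi$ on ramified prime ideals: for a factorization $D=D_1D_2$ into coprime fundamental discriminants, the genus character $\chi_{D_1,D_2}$ takes the value $\chi_{D_2}(p)$ at the prime $\pp$ above a ramified $p\mid D_1$, which is frequently $-1$. Worse, once $D$ is composite the asserted implication ``$g_{\psi}$ even $\Rightarrow\lambda_{g_{\psi}}(n)=1$ for $n\mid D^{\infty}$'' is not actually correct as stated: the narrow class group of $\Q(\sqrt D)$ then has even order, and multiplying an even $\psi$ (i.e.\ one with $\kappa=0$ and nontrivial infinity type) by a nontrivial genus character yields a second even Hecke Gr\"o\ss{}encharakter inducing a \emph{distinct} newform on which $\psi(\pp)$ has the opposite sign at some ramified $\pp$; take for instance $D=65$, where the ideal above $5$ is non-principal and $\chi_{13}(5)=-1$. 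This is harmless for the paper, which only ever invokes this clause for $g_{\psi^2}$, where your calculation $\psi^2(\pp)=\psi(\pp)^2=\psi(\pp^2)=\psi((p))=1$ settles it; but you should regard the general ``even'' formulation as an overstatement in the source rather than something to be proved, and should not present the genus-character reduction as a repair.
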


\subsection{Root Number Calculations}

Since \hyperref[dihedralmomentsprop]{Proposition \ref*{dihedralmomentsprop}} involves moments of $L$-functions of level greater than $1$, we must explicitly determine the root numbers and conductors of these $L$-functions in order to precisely utilise the approximate functional equation.

Recall that the Atkin--Lehner pseudo-eigenvalue $\eta_f(w)$ of $f \in \BB_0^{\ast}(\Gamma_0(q))$ with $w \mid q$ is independent of the choice of integer entries $a,b,c,d \in \Z$ in the definition of the Atkin--Lehner operator $W_w$ provided that $\det W_w = 1$ (cf.~\hyperref[AtkinLehnersect]{Section \ref*{AtkinLehnersect}}).

\begin{lemma}[Cf.~{\cite[Section 2.3]{HT14}}]
\label{rootnumberlemma}
Let $f$ be either a member of $\BB_0^{\ast}(\Gamma_0(d_1))$ or $\BB_{\hol}^{\ast}(\Gamma_0(d_1))$ with $d_1 d_2 = D$. Then the conductors and root numbers of $f$, $f \otimes \chi_D$, and $f \otimes g_{\psi^2}$ are given by
\begin{align*}
q(f) = d_1, \qquad & \epsilon(f) = \begin{dcases*}
\epsilon_f \eta_f(d_1) & if $f \in \BB_0^{\ast}(\Gamma_0(d_1))$,	\\
i^{k_f} \eta_f(d_1) & if $f \in \BB_{\hol}^{\ast}(\Gamma_0(d_1))$,
\end{dcases*}	\\
q(f \otimes \chi_D) = D^2, \qquad & \epsilon(f \otimes \chi_D) = \begin{dcases*}
\epsilon_f & if $f \in \BB_0^{\ast}(\Gamma_0(d_1))$,	\\
i^{k_f} & if $f \in \BB_{\hol}^{\ast}(\Gamma_0(d_1))$,
\end{dcases*}	\\
q(f \otimes g_{\psi^2}) = D^2 d_1, \qquad & \epsilon(f \otimes g_{\psi^2}) = \eta_f(d_1).
\end{align*}
\end{lemma}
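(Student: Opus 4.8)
\textbf{Proof proposal for Lemma \ref{rootnumberlemma}.}

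The plan is to compute each conductor and root number from the completed $L$-function via its functional equation, exploiting the fact that all three $L$-functions factor through objects whose functional equations are classical. For the conductors, I would argue as follows. Since $f$ is a newform of level $d_1$ and $\chi_D$ has conductor $D$ with $(d_1, D/d_1) = 1$ (as $D$ is squarefree), the twist $f \otimes \chi_D$ has conductor $\mathrm{lcm}(d_1, D^2)/(\text{common factors})$; the standard formula for the conductor of a twist of a newform by a character of coprime-to-part conductor gives $q(f \otimes \chi_D) = d_1 \cdot D^2 / d_1 = D^2$, using that at each $p \mid d_1$ the local representation $\pi_{f,p}$ is special (Steinberg up to twist) and $\chi_{D,p}$ is ramified of conductor exponent $1$, so the twisted conductor exponent at $p$ becomes $2$ rather than $1 + 1$, which exactly compensates (cf.\ the local analysis already invoked in the proof of \hyperref[sqfreetripleproductcor]{Corollary \ref*{sqfreetripleproductcor}}, where $\pi_{f,p} = \omega_{3,p}\St$). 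For $f \otimes g_{\psi^2}$, I would write $g_{\psi^2}$ as the automorphic induction of $\psi^2$, so that $f \otimes g_{\psi^2}$ corresponds over $K = \Q(\sqrt D)$ to the base change of $f$ twisted by $\psi^2$, and by the conductor-discriminant formula $q(f \otimes g_{\psi^2}) = D \cdot N_{K/\Q}(\mathfrak{q}(f_K \otimes \psi^2))$; since $\psi^2$ has trivial conductor and $f_K$ has conductor $d_1 \OO_K$ (as $d_1$ is unramified or tamely handled away from $D$), this yields $D \cdot d_1 \cdot (\text{factor } D)$... more cleanly, one uses $q(f \otimes g_{\psi^2}) = q(g_{\psi^2})^{2} \cdot q(f) / (\text{gcd adjustments}) = D^2 d_1$, again because the conductor of $g_{\psi^2}$ is $D$ and at primes dividing $d_1$ the local factor of $g_{\psi^2}$ is an unramified principal series (since $d_1 \mid D$ would force ramification, but here the point is $p \mid d_1 \Rightarrow \chi_D(p) = 0$ so $g_{\psi^2}$ is ramified of conductor exponent $1$ at $p$, contributing an extra factor $d_1$). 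I would lay this out place-by-place rather than with a single global formula.

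For the root numbers I would use the factorization of the global root number into local root numbers, $\epsilon(f \otimes \rho) = \prod_v \epsilon_v(f \otimes \rho, \psi_v)$, and at the archimedean place observe that for $f$ a weight-zero Maa\ss{} form the archimedean root number is $\epsilon_f$ (the parity), while for $f$ holomorphic of weight $k_f$ it is $i^{k_f}$; this accounts for the dichotomy in every line of the statement. At finite places away from $d_1$ and $D$ everything is unramified and contributes $1$. The twist by $\chi_D$: at primes $p \mid d_1$ the local root number of $\pi_{f,p} \otimes \chi_{D,p}$ is computed from the Steinberg-times-character formula and, crucially, combines with the $p \mid D/d_1$ contributions so that the product of all the finite-place changes relative to $\epsilon(f)$ is exactly $\eta_f(d_1)^{-1}$ times the product of $\chi_D$-Gauss-sum normalizations, which by $\tau(\chi_D) = \sqrt D$ (recorded in \hyperref[Grosssect]{Appendix \ref*{Grosssect}}) is a genuine sign. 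This is what produces $\epsilon(f \otimes \chi_D) = \epsilon_f$ (resp.\ $i^{k_f}$): the Atkin--Lehner pseudo-eigenvalue gets cancelled upon twisting, since $f \otimes \chi_D$ is again $\chi_D$-self-twist-compatible only when $d_1 = D$, and more generally the finite root number of $f \otimes \chi_D$ is $+1$ because $\chi_D$ is the quadratic character attached to a real field (so $\chi_D(-1) = 1$) and $D$ is a fundamental discriminant. For $f \otimes g_{\psi^2}$, I would use $\epsilon(f \otimes g_{\psi^2}) = \epsilon(f_K \otimes \psi^2)$ over $K$ (automorphic induction preserves root numbers), and then the key identity: since $\psi^2$ is unramified and $f_K$ has trivial-or-tame local behaviour, the global root number of $f_K \otimes \psi^2$ equals the product of the local root numbers at ramified places, which reduces to $\eta_f(d_1)$ by the Atkin--Lehner calculation in \hyperref[ALlemma]{Lemma \ref*{ALlemma}} together with the fact that the archimedean contributions cancel between the two conjugate embeddings (this is where the evenness of $g_{\psi^2}$, i.e.\ $(-1)^{\kappa} = 1$ for a square, enters to kill the archimedean sign).

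The main obstacle will be the finite-place root number bookkeeping for $f \otimes \chi_D$ at primes dividing $d_1$: there the local representation $\pi_{f,p}$ is special, $\chi_{D,p}$ is ramified, and one must carefully track the local epsilon factor $\epsilon_p(\pi_{f,p} \otimes \chi_{D,p}, \psi_p)$ and verify that the product over $p \mid D$ of these factors, together with the contribution of the conductor-normalizing powers of $D$ in passing from $\Lambda$ to $L$, collapses to exactly $\epsilon_f$ (resp.\ $i^{k_f}$) with no residual unit. I would handle this by citing the explicit local epsilon-factor formulas for twists of Steinberg representations (as in the references \cite{Sch02}, \cite{AL78}, and the discussion paralleling \cite[Section 2.3]{HT14} that the statement already points to), and by using that $\eta_f(w)$ for $w \mid d_1$ satisfies the multiplicativity $\eta_f(d_1) = \prod_{p \mid d_1} \eta_f(p)$ with each $\eta_f(p) = \pm 1$ given explicitly in \hyperref[ALlemma]{Lemma \ref*{ALlemma}}. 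Everything else is a routine assembly of known local computations.
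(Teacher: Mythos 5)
Your overall strategy --- factoring each conductor and root number into local contributions at $\infty$, at $p \mid d_1$ (where $\pi_{f,p}$ is special), at $p \mid d_2$ (where $\pi_{f,p}$ is an unramified principal series while $\chi_{D,p}$ and $\pi_{g_{\psi^2},p}$ are ramified), and at $p \nmid D$ --- is exactly the paper's strategy, and your conductor counts are right. However, for $\epsilon(f \otimes g_{\psi^2})$ you propose computing over $K = \Q(\sqrt D)$ via base change and then asserting that ``automorphic induction preserves root numbers.'' That is false as stated: the inductive property of $\epsilon$-factors involves a Langlands $\lambda$-factor, $\epsilon(s, \operatorname{Ind}_K^{\Q}\sigma, \psi) = \lambda(K/\Q, \psi)^{\dim\sigma}\,\epsilon(s, \sigma, \psi_K)$, which you would need to evaluate. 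For $\dim\sigma = 2$ the factor is $\lambda(K/\Q,\psi)^2 = \chi_D(-1) = 1$, so you are rescued, but silently. The paper avoids this entirely by computing the tensor $\pi_{f,p}\otimes\pi_{g_{\psi^2},p}$ directly at each finite place, which is cleaner given that $\pi_{g_{\psi^2},p} = \chi_{D,p}\boxplus 1$ for every $p \mid D$; I would recommend that route.

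For $\epsilon(f\otimes\chi_D)$, ``the finite root number is $+1$ because $\chi_D$ is the quadratic character of a real field'' is a plausibility remark, not a computation, and your parenthetical about $f\otimes\chi_D$ being ``$\chi_D$-self-twist-compatible only when $d_1=D$'' plays no role and is not correct as stated. Moreover, contrary to your framing, $\eta_f(d_1)$ does not appear in $\epsilon(f\otimes\chi_D)$ and get cancelled; the finite part is simply equal to $1$. What actually happens: at $p\mid d_1$ the local factor of $\chi_{D,p}\omega_{f,p}\St_p$ is $\omega_{f,p}(p)^{-2}\epsilon(\tfrac12,\chi_{D,p},\psi_p)^2$, and the unramified twist squares to $1$ because $\omega_{f,p}(p)^2 = \lambda_f(p)^2 p = 1$ for $p \mid d_1$; at $p\mid d_2$ the unramified twists in $\chi_{D,p}\omega_{f,p}\boxplus\chi_{D,p}\omega_{f,p}^{-1}$ cancel against each other. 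You are then left with $\prod_{p\mid D}\epsilon(\tfrac12,\chi_{D,p},\psi_p)^2 = (\tau(\chi_D)/\sqrt D)^2 = 1$. This is the content of the ``Gauss sum normalization'' you wave at, but it has to be carried out, and for $\epsilon(f\otimes g_{\psi^2})$ the parallel product $\prod_{p\mid D}(\tau(\chi_p)/\sqrt p)^2$ is where the hypothesis $D\equiv 1\pmod 4$ (even number of primes $\equiv 3\pmod 4$) genuinely enters --- a point your sketch does not mention.
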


\begin{proof}
This follows by a local argument studying the local components of $\pi_f$, $\pi_f \otimes \omega_D$, and $\pi_f \otimes \pi_{g_{\psi^2}}$, where $\pi_f, \pi_{g_{\psi^2}}$ are the cuspidal automorphic representations of $\GL_2(\A_{\Q})$ associated to the newforms $f, g_{\psi^2}$ and $\omega_D$ is the Hecke character of $\Q^{\times} \backslash \A_{\Q}^{\times}$ that is the id\`{e}lic lift of $\chi_D$. We give only the proof for the root number and conductor of $f \otimes g_{\psi^2}$, for the other two cases are similar but simpler.
\begin{itemize}
\item At the archimedean place, $f \in \BB_0^{\ast}(\Gamma_0(d_1))$ implies that $\pi_{f,\infty}$ is a principal series representation $\sgn^{\kappa_f} |\cdot|^{it_f} \boxplus \sgn^{\kappa_f} |\cdot|^{-it_f}$ and $\pi_{g_{\psi^2},\infty} = |\cdot|^{2it_g} \boxplus |\cdot|^{-2it_g}$, where $\kappa_f$ is zero if $f$ is even and one if $f$ is odd, and so
\[\pi_{f,\infty} \otimes \pi_{g_{\psi^2},\infty} = \bigboxplus_{\pm_1} \bigboxplus_{\pm_2} \sgn^{\kappa_f} |\cdot|^{\pm_1 i(t_f \pm_2 2t_g)}.\]
The local epsilon factor $\epsilon(s,\pi_{f,\infty} \otimes \pi_{g_{\psi^2},\infty},\psi_{\infty})$ is $i^{4\kappa_f} = 1$. Similarly, $f \in \BB_{\hol}^{\ast}(\Gamma_0(d_1))$ implies that $\pi_{f,\infty} = D_{k_f}$ where $k_f \in 2\N$ is the weight of $f$ and $D_k$ is the discrete series representation of weight $k$. Then
\[\pi_{f,\infty} \otimes \pi_{g_{\psi^2},\infty} = D_{k_f} \left|\det\right|^{2it_g} \boxplus D_{k_f} \left|\det\right|^{-2it_g}.\]
The local epsilon factor is $i^{2k_f} = 1$.
\item At a prime $p \mid d_1$, $\pi_{f,p}$ is a special representation $\omega_{f,p} \St_p$, where $\omega_{f,p}$ is either trivial or the unramified quadratic character, and $\pi_{g_{\psi^2},p} = \omega_{g_{\psi^2},p} \boxplus 1$, where $\omega_{g_{\psi^2},p}$ is the local component of $\chi_D$ (and hence a ramified character of $\Q_p^{\times}$ of conductor exponent $c(\omega_{g_{\psi^2},p}) = 1$). It follows that
\[\pi_{f,p} \otimes \pi_{g_{\psi^2},p} = \omega_{g_{\psi^2},p} \omega_{f,p} \St_p \boxplus \omega_{f,p} \St_p,\]
and so the local conductor exponent $c(\pi_{f,p} \otimes \pi_{g_{\psi^2},p})$ is
\[c\left(\omega_{g_{\psi^2},p} \omega_{f,p} \St_p\right) + c\left(\omega_{f,p} \St_p\right) = 2 + 1 = 3,\]
while the local epsilon factor $\epsilon(s,\pi_{f,p} \otimes \pi_{g_{\psi^2},p},\psi_p)$ is equal to
\[\epsilon\left(s,\omega_{g_{\psi^2},p} \omega_{f,p} \St_p,\psi_p\right) \epsilon\left(s,\omega_{f,p} \St_p,\psi_p\right) = -\omega_{f,p}(p) \epsilon\left(\frac{1}{2},\omega_{g_{\psi^2},p},\psi_p\right)^2 p^{-3\left(s - \frac{1}{2}\right)},\]
and $\epsilon(1/2,\omega_{g_{\psi^2},p},\psi_p)$ is $\tau(\chi_p) p^{-1/2}$, where $\chi_p$ is the quadratic character modulo $p$, while $\omega_{f,p}(p)$ is $\lambda_f(p) \sqrt{p}$.
\item At a prime $p \mid d_2$, $\pi_{f,p} = \omega_{f,p} \boxplus \omega_{f,p}^{-1}$, where both characters are unramified, and $\pi_{g_{\psi^2},p} = \omega_{g_{\psi^2},p} \boxplus 1$, where $\omega_{g_{\psi^2},p}$ is the local component of $\chi_D$. It follows that
\[\pi_{f,p} \otimes \pi_{g_{\psi^2},p} = \omega_{g_{\psi^2},p} \omega_{f,p} \boxplus \omega_{g_{\psi^2},p} \omega_{f,p}^{-1} \boxplus \omega_{f,p} \boxplus \omega_{f,p}^{-1},\]
and so the local conductor exponent $c(\pi_{f,p} \otimes \pi_{g_{\psi^2},p})$ is
\[c\left(\omega_{g_{\psi^2},p} \omega_{f,p}\right) + c\left(\omega_{g_{\psi^2},p} \omega_{f,p}^{-1}\right) + c\left(\omega_{f,p}\right) + c\left(\omega_{f,p}^{-1}\right) = 1 + 1 + 0 + 0 = 2,\]
while the local root number $\epsilon(s,\pi_{f,p} \otimes \pi_{g_{\psi^2},p},\psi_p)$ is equal to
\[\epsilon\left(s,\omega_{g_{\psi^2},p} \omega_{f,p}, \psi_p\right) \epsilon\left(s,\omega_{g_{\psi^2},p} \omega_{f,p}^{-1}, \psi_p\right) \epsilon\left(s,\omega_{f,p}, \psi_p\right) \epsilon\left(s,\omega_{f,p}^{-1}, \psi_p\right) = \epsilon\left(\frac{1}{2},\omega_{g_{\psi^2},p},\psi_p\right)^2 p^{-2\left(s - \frac{1}{2}\right)},\]
and again $\epsilon(1/2,\omega_{g_{\psi^2},p},\psi_p)$ is $\tau(\chi_p) p^{-1/2}$.
\item At a prime $p \nmid D$, both $\pi_{f,p}$ and $\pi_{g_{\psi^2},p}$ are spherical principal series representations, so that $c(\pi_{f,p} \otimes \pi_{g_{\psi^2},p}) = 0$ and $\epsilon(s,\pi_{f,p} \otimes \pi_{g_{\psi^2},p},\psi_p) = 1$.
\end{itemize}
With this, we see that
\[q(f \otimes g_{\psi^2}) = \prod_p p^{c(\pi_{f,p} \otimes \pi_{g_{\psi^2},p})} = \prod_{p \mid d_1} p^3 \prod_{p \mid d_2} p^2 = D^2 d_1,\]
while the fact that
\[\tau(\chi_p) = \begin{dcases*}
\sqrt{p} & if $p \equiv 1 \pmod{4}$,	\\
i\sqrt{p} & if $p \equiv 3 \pmod{4}$,
\end{dcases*}\]
and $D$ being $1$ modulo $4$ ensuring that it has an even number of prime divisors that are $3$ modulo $4$ implies that the root number $\epsilon(f \otimes g_{\psi^2}) = \epsilon(1/2,\pi_f \otimes \pi_{g_{\psi^2}})$ is
\[\epsilon\left(\frac{1}{2},\pi_{f,\infty} \otimes \pi_{g_{\psi^2},\infty},\psi_{\infty}\right) \prod_p \epsilon\left(\frac{1}{2},\pi_{f,p} \otimes \pi_{g_{\psi^2},p},\psi_p\right) = \mu(d_1) \lambda_f(d_1) \sqrt{d_1}.\]
As $\tau(\chi_{0(d_1)}) = \mu(d_1)$ and $\lambda_f(d_1) \sqrt{d_1} \in \{1,-1\}$, this is precisely $\eta_f(d_1)$.
\end{proof}

\subsection{The Approximate Functional Equation}

First, we recall some standard identities for writing Rankin--Selberg $L$-functions as Dirichlet series. Let $\chi$ be an even primitive character modulo $q$ with $q > 1$, and denote by $E_{\infty}(z,s,\chi)$ the Eisenstein series of weight $0$, level $q$, and nebentypus $\chi$ associated to the cusp at infinity, which is given by
\[E_{\infty}(z,s,\chi) \defeq \sum_{\gamma \in \Gamma_{\infty} \backslash \Gamma_0(q)} \overline{\chi}(\gamma) \Im(\gamma z)^s\]
for $\Re(s) > 1$ and extends by meromorphic continuation to the entire complex plane. In particular, $E_{\infty}(z,1/2 + it,\chi)$ is an Eisenstein series newform \cite{You19} with Hecke eigenvalues
\begin{equation}
\label{lambdachi1eq}
\lambda_{\chi,1}(m,t) \defeq \sum_{ab = m} \chi(a) a^{it} b^{-it}.
\end{equation}

\begin{lemma}
\label{Ramanujanlemma}
For $f$ either in $\BB_0^{\ast}(\Gamma_0(d_1))$ or $\BB_{\hol}^{\ast}(\Gamma_0(d_1))$ with $d_1 \mid D$ and $t \in \R$, we have the identities
\begin{align*}
L(s,f) L(s,f \otimes \chi_D) & = L(2s,\chi_D) \sum_{m = 1}^{\infty} \frac{\lambda_f(m) \lambda_{\chi_D,1}(m,0)}{m^s},	\\
L(s,f \otimes g_{\psi^2}) & = L(2s,\chi_D) \sum_{n = 1}^{\infty} \frac{\lambda_f(n) \lambda_{g_{\psi^2}}(n)}{n^s},	\\
\zeta(s + it) \zeta(s - it) L(s + it, \chi_D) L(s - it, \chi_D) & = L(2s,\chi_D) \sum_{m = 1}^{\infty} \frac{\lambda(m,t) \lambda_{\chi_D,1}(m,0)}{m^s},	\\
L(s + it, g_{\psi^2}) L(s - it, g_{\psi^2}) & = L(2s,\chi_D) \sum_{n = 1}^{\infty} \frac{\lambda(n,t) \lambda_{g_{\psi^2}}(n)}{n^s}
\end{align*}
for $\Re(s) > 1$.
\end{lemma}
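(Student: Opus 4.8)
The plan is to establish all four identities by comparing local Euler factors at every prime $p$. This is legitimate because each Dirichlet series appearing is the generating series of a multiplicative arithmetic function: $\lambda_f(n)$ is multiplicative by the Hecke relations for $f$, as is $\lambda_{g_{\psi^2}}(n)$ by \eqref{cuspmult}, while $\lambda_{\chi_D,1}(n,t)$ and $\lambda(n,t)$ are the Hecke eigenvalues of the Eisenstein newform $E_\infty(\cdot,1/2+it,\chi_D)$ and of the level-one Eisenstein series $E(\cdot,1/2+it)$ respectively, hence also multiplicative, and a termwise product of two multiplicative sequences is again multiplicative. In each case the left-hand side is a Rankin--Selberg $L$-function $L(s,A\times B)$ with $B\in\{g_{\psi^2},\,\chi_D\boxplus 1\}$ and $A$ equal either to $f$ (in the first and second identities) or to the level-one Eisenstein series $E(\cdot,1/2+it)$ (in the third and fourth, furnishing the coefficients $\lambda(n,t)$), and the product of the central characters of $A$ and $B$ equals $\chi_D$ throughout; so each identity is a case of the assertion $\sum_{n\ge 1}\lambda_A(n)\lambda_B(n)n^{-s}=L(s,A\times B)\,L(2s,\chi_D)^{-1}$.

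For a prime $p\nmid D$ this is the classical Ramanujan identity for Rankin--Selberg local factors: writing the Satake parameters of $A$ and $B$ at $p$ as $\alpha_1,\alpha_2$ and $\beta_1,\beta_2$, so that $\alpha_1\alpha_2\beta_1\beta_2=\chi_D(p)$, one has $\sum_{k\ge 0}\lambda_A(p^k)\lambda_B(p^k)X^k=(1-\chi_D(p)X^2)\prod_{i,j}(1-\alpha_i\beta_j X)^{-1}$, which with $X=p^{-s}$ is exactly $L_p(s,A\times B)\,L_p(2s,\chi_D)^{-1}$; the Satake parameters of $g_{\psi^2}$ at split and inert primes needed here are recorded in \hyperref[Grosssect]{Appendix \ref*{Grosssect}}.

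For a prime $p\mid D$ one has $\chi_D(p)=0$, hence $L_p(2s,\chi_D)=1$, and the relevant local representations $\pi_{f,p}$, $\pi_{f,p}\otimes\omega_{D,p}$, and $\pi_{f,p}\otimes\pi_{g_{\psi^2},p}$ are precisely those tabulated in the proof of \hyperref[rootnumberlemma]{Lemma \ref*{rootnumberlemma}}: $\pi_{f,p}$ is a special representation when $p\mid d_1$ and an unramified principal series when $p\mid d_2$, and $L_p(s,f\otimes\chi_D)=1$ in either case since $\chi_{D,p}$ is ramified. Thus the first and third identities reduce at $p$ to the tautology $L_p(s,f)=\sum_{k\ge 0}\lambda_f(p^k)p^{-ks}$. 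For the second and fourth identities one uses in addition that $g_{\psi^2}$ is even, so that $\lambda_{g_{\psi^2}}(p^k)=1$ for $p\mid D$ by \hyperref[dihedralHeckeeigenlemma]{Lemma \ref*{dihedralHeckeeigenlemma}}; a short direct computation then gives $L_p(s,f\otimes g_{\psi^2})=L_p(s,f)$ and $L_p(s+it,g_{\psi^2})L_p(s-it,g_{\psi^2})=(1-p^{-s-it})^{-1}(1-p^{-s+it})^{-1}$, which matches the right-hand sides. The only subtle point is this ramified-prime bookkeeping: the naive Rankin--Selberg Dirichlet series identity can fail at primes dividing both conductors, and here it holds only by virtue of the special (resp.\ unramified principal series) shape of $\pi_{f,p}$ together with the $\chi_D$-invariance and evenness of $g_{\psi^2}$. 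Since all the requisite local data is already assembled in the proofs of \hyperref[rootnumberlemma]{Lemmata \ref*{rootnumberlemma}} and \ref{dihedralHeckeeigenlemma}, no new computation is needed.
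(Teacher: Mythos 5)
The paper states this lemma without proof, treating it as a standard multiplicativity/Euler-product computation, so there is no argument of the authors to compare against. Your proof is correct: the reduction to local Euler factors via multiplicativity, the classical Ramanujan identity for the unramified factors, and the explicit ramified-prime verification using the local types from \hyperref[rootnumberlemma]{Lemma \ref*{rootnumberlemma}} (special, resp.\ unramified principal series, for $\pi_{f,p}$, and ramified principal series $\omega_{g_{\psi^2},p}\boxplus 1$ for $\pi_{g_{\psi^2},p}$) together with $\lambda_{g_{\psi^2}}(p^k)=1$ and $\lambda_{\chi_D,1}(p^k,0)=1$ for $p\mid D$ from \hyperref[dihedralHeckeeigenlemma]{Lemma \ref*{dihedralHeckeeigenlemma}} all check out, and you are right to flag the ramified-prime bookkeeping as the one place where the naive identity could fail. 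One small slip in wording: for the third identity at $p\mid D$ the local check is $\zeta_p(s+it)\zeta_p(s-it)=\sum_k\lambda(p^k,t)p^{-ks}$, not $L_p(s,f)=\sum_k\lambda_f(p^k)p^{-ks}$ as you wrote (no $f$ appears there); the computation is the same in spirit, so this is a typo rather than a gap.
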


\begin{lemma}
\label{approxfunclemma}
Fix $X > 0$. For $f \in \BB_0^{\ast}(\Gamma_0(d_1))$ and $t \in \R \setminus \{0\}$, we have that
\begin{align*}
L\left(\frac{1}{2},f\right) L\left(\frac{1}{2},f \otimes \chi_D\right) & = \sum_{m = 1}^{\infty} \sum_{k = 1}^{\infty} \frac{\lambda_f(m) \lambda_{\chi_D,1}(m,0) \chi_D(k)}{\sqrt{m} k} V_1^{\epsilon_f}\left(\frac{mk^2}{X D\sqrt{d_1}},t_f\right)	\\
& \quad + \eta_f(d_1) \sum_{m = 1}^{\infty} \sum_{k = 1}^{\infty} \frac{\lambda_f(m) \lambda_{\chi_D,1}(m,0) \chi_D(k)}{\sqrt{m} k} V_1^{\epsilon_f}\left(\frac{mk^2 X}{D\sqrt{d_1}},t_f\right),	\\
L\left(\frac{1}{2},f \otimes g_{\psi^2}\right) & = \sum_{n = 1}^{\infty} \sum_{\ell = 1}^{\infty} \frac{\lambda_f(n) \lambda_{g_{\psi^2}}(n) \chi_D(\ell)}{\sqrt{n} \ell} V_2^{\epsilon_f}\left(\frac{n\ell^2}{X D\sqrt{d_1}},t_f\right)	\\
& \quad + \eta_f(d_1) \sum_{n = 1}^{\infty} \sum_{\ell = 1}^{\infty} \frac{\lambda_f(n) \lambda_{g_{\psi^2}}(n) \chi_D(\ell)}{\sqrt{n} \ell} V_2^{\epsilon_f}\left(\frac{n\ell^2 X}{D\sqrt{d_1}},t_f\right),	\\
\left|\zeta\left(\frac{1}{2} + it\right) L\left(\frac{1}{2} + it,\chi_D\right)\right|^2 & = \sum_{m = 1}^{\infty} \sum_{k = 1}^{\infty} \frac{\lambda(m,t) \lambda_{\chi_D,1}(m,0) \chi_D(k)}{\sqrt{m} k} V_1^1\left(\frac{mk^2}{X D},t\right)	\\
& \quad + \sum_{m = 1}^{\infty} \sum_{k = 1}^{\infty} \frac{\lambda(m,t) \lambda_{\chi_D,1}(m,0) \chi_D(k)}{\sqrt{m} k} V_1^1\left(\frac{mk^2 X}{D},t\right)	\\
& \qquad + R(X,D,t),	\\
\left|L\left(\frac{1}{2} + it,g_{\psi^2}\right)\right|^2 & = \sum_{n = 1}^{\infty} \sum_{\ell = 1}^{\infty} \frac{\lambda(n,t) \lambda_{g_{\psi^2}}(n) \chi_D(\ell)}{\sqrt{n} \ell} V_2^1\left(\frac{n\ell^2}{X D\sqrt{d_1}},t\right)	\\
& \quad + \sum_{n = 1}^{\infty} \sum_{\ell = 1}^{\infty} \frac{\lambda(n,t) \lambda_{g_{\psi^2}}(n) \chi_D(\ell)}{\sqrt{n} \ell} V_2^1\left(\frac{n\ell^2 X}{D\sqrt{d_1}},t\right),
\end{align*}
where for $\Gamma_{\R}(s) \defeq \pi^{-s/2} \Gamma(s/2)$,
\begin{multline*}
R(X,D,t) \defeq 2\Re\left(e^{\left(\frac{1}{2} + it\right)^2} \left((X\sqrt{D})^{\frac{1}{2} + it} + \left(\frac{\sqrt{D}}{X}\right)^{\frac{1}{2} + it}\right) \vphantom{e^{\left(\frac{1}{2} + it\right)^2} \left((X\sqrt{D})^{\frac{1}{2} + it} + \left(\frac{\sqrt{D}}{X}\right)^{\frac{1}{2} + it}\right) \left(\frac{\Gamma_{\R}\left(1 + 2it\right)}{\Gamma_{\R}\left(\frac{1}{2} + it\right) \Gamma_{\R}\left(\frac{1}{2} - it\right)}\right)^2 \zeta(1 + 2it) L(1 + 2it) L(1,\chi_D)}	\right.	\\
\left. \vphantom{e^{\left(\frac{1}{2} + it\right)^2} \left((X\sqrt{D})^{\frac{1}{2} + it} + \left(\frac{\sqrt{D}}{X}\right)^{\frac{1}{2} + it}\right) \left(\frac{\Gamma_{\R}\left(1 + 2it\right)}{\Gamma_{\R}\left(\frac{1}{2} + it\right) \Gamma_{\R}\left(\frac{1}{2} - it\right)}\right)^2 \zeta(1 + 2it) L(1 + 2it) L(1,\chi_D)} \times \left(\frac{\Gamma_{\R}\left(1 + 2it\right)}{\Gamma_{\R}\left(\frac{1}{2} + it\right) \Gamma_{\R}\left(\frac{1}{2} - it\right)}\right)^2 \zeta(1 + 2it) L(1 + 2it) L(1,\chi_D)\right),
\end{multline*}
and for $x,\sigma > 0$, $t \in \R$, and $\epsilon \in \{1,-1\}$,
\begin{align}
\label{V1eq}
V_1^{\epsilon}(x,t) & \defeq \frac{1}{2\pi i} \int_{\sigma - i\infty}^{\sigma + i\infty} e^{s^2} x^{-s} \left(\prod_{\pm} \frac{\Gamma_{\R}\left(1 - \frac{\epsilon}{2} + s \pm it\right)}{\Gamma_{\R}\left(1 - \frac{\epsilon}{2} \pm it\right)}\right)^2 \, \frac{ds}{s},	\\
\label{V2eq}
V_2^{\epsilon}(x,t) & \defeq \frac{1}{2\pi i} \int_{\sigma - i\infty}^{\sigma + i\infty} e^{s^2} x^{-s} \prod_{\pm_1} \prod_{\pm_2} \frac{\Gamma_{\R}\left(1 - \frac{\epsilon}{2} + s \pm_1 i(2t_g \pm_2 t)\right)}{\Gamma_{\R}\left(1 - \frac{\epsilon}{2} \pm_1 i(2t_g \pm_2 t)\right)} \, \frac{ds}{s}.
\end{align}
Finally,
\begin{multline*}
L\left(\frac{1}{2},f \otimes g_{\psi^2}\right) = \sum_{n = 1}^{\infty} \sum_{\ell = 1}^{\infty} \frac{\lambda_f(n) \lambda_{g_{\psi^2}}(n) \chi_D(\ell)}{\sqrt{n} \ell} V_2^{\hol}\left(\frac{n\ell^2}{X D\sqrt{d_1}},k_f\right)	\\
+ \eta_f(d_1) \sum_{n = 1}^{\infty} \sum_{\ell = 1}^{\infty} \frac{\lambda_f(n) \lambda_{g_{\psi^2}}(n) \chi_D(\ell)}{\sqrt{n} \ell} V_2^{\hol}\left(\frac{n\ell^2 X}{D\sqrt{d_1}},k_f\right)
\end{multline*}
for $f \in \BB_{\hol}^{\ast}(\Gamma_0(d_1))$, where
\[V_2^{\hol}(x,k) \defeq \frac{1}{2\pi i} \int_{\sigma - i\infty}^{\sigma + i\infty} e^{s^2} x^{-s} \prod_{\pm_1} \prod_{\pm_2} \frac{\Gamma_{\R}\left(s + \frac{k \pm_1 1}{2} \pm_2 2it_g\right)}{\Gamma_{\R}\left(\frac{1}{2} + \frac{k \pm_1 1}{2} \pm_2 2it_g\right)} \, \frac{ds}{s}.\]
\end{lemma}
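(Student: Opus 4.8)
The plan is to derive each identity by the standard \emph{unbalanced} approximate functional equation; the only work is bookkeeping, so I will describe the mechanism once for a representative case and then indicate the two points needing care.

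\textbf{The basic mechanism.} Fix one of the (products of) $L$-functions in the statement, say $F(s) = L(s,f)L(s,f\otimes\chi_D)$ for $f\in\BB_0^{\ast}(\Gamma_0(d_1))$, and let $\Lambda_F(s) = q_F^{s/2}\gamma_F(s)F(s)$ be its completion. By Lemma~\ref{rootnumberlemma} the conductor is $q_F = q(f)q(f\otimes\chi_D) = d_1 D^2$, so $q_F^{1/2} = D\sqrt{d_1}$, and the functional equation $\Lambda_F(s) = \epsilon_F\Lambda_F(1-s)$ has root number $\epsilon_F = \epsilon(f)\epsilon(f\otimes\chi_D) = \epsilon_f\eta_f(d_1)\cdot\epsilon_f = \eta_f(d_1)$; since $\chi_D$ is even, $f$ and $f\otimes\chi_D$ share the same archimedean component, so $\gamma_F(s)$ is the square of $\Gamma_{\R}(s+\kappa_f+it_f)\Gamma_{\R}(s+\kappa_f-it_f)$, where $\kappa_f = (1-\epsilon_f)/2\in\{0,1\}$, which is exactly the ``$1-\epsilon_f/2$'' shift in \eqref{V1eq}. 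Starting from $\frac{1}{2\pi i}\int_{(3)}\Lambda_F(1/2+w)e^{w^2}X^w\,\frac{dw}{w}$, I would insert the Dirichlet series $F(s) = \sum_{m,k}\lambda_f(m)\lambda_{\chi_D,1}(m,0)\chi_D(k)\,m^{-s}k^{-2s}$ from Lemma~\ref{Ramanujanlemma}, interchange summation and integration to produce the first sum in the claimed identity with weight function $V_1^{\epsilon_f}(y,t) = \frac{1}{2\pi i}\int_{(3)}e^{w^2}y^{-w}\,\frac{\gamma_F(1/2+w)}{\gamma_F(1/2)}\,\frac{dw}{w}$ (which is \eqref{V1eq} after expanding $\gamma_F$), shift the contour to $\Re(w) = -3$ picking up the residue $\Lambda_F(1/2)$ at $w=0$, apply $w\mapsto -w$ and the functional equation to turn the remaining integral into $\eta_f(d_1)$ times the same object with $X$ replaced by $1/X$, and finally divide through by $q_F^{1/4}\gamma_F(1/2) = \Lambda_F(1/2)/F(1/2)$. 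The cases $L(1/2,f\otimes g_{\psi^2})$ (conductor $D^2 d_1$, root number $\epsilon(f\otimes g_{\psi^2}) = \eta_f(d_1)$ by Lemma~\ref{rootnumberlemma}, archimedean factor $\prod_{\pm_1,\pm_2}\Gamma_{\R}(s+\kappa_f\pm_1 i(2t_g\pm_2 t_f))$ since $g_{\psi^2}$ is even with spectral parameter $2t_g$), $|L(1/2+it,g_{\psi^2})|^2$ (self-dual, root number $1$, entire since $L(s\pm it,g_{\psi^2})$ are entire), and the holomorphic case (archimedean factor built from $\Gamma_{\C}(s+(k_f-1)/2\pm 2it_g)$) run identically; in each case self-duality of the coefficients — for $f\otimes g_{\psi^2}$ this uses that $\lambda_f(n)$ and $\lambda_{g_{\psi^2}}(n)$ are real, the latter by Lemma~\ref{dihedralHeckeeigenlemma} — guarantees that the dual sum has the same coefficients, and no exceptional pole appears ($f\otimes g_{\psi^2}$ is cuspidal because $f$ and $g_{\psi^2}$ have different nebentypen).

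\textbf{The polar term.} The only case that is not immediate is $|\zeta(1/2+it)L(1/2+it,\chi_D)|^2 = \zeta(s+it)\zeta(s-it)L(s+it,\chi_D)L(s-it,\chi_D)$ at $s=1/2$, whose completion has conductor $D^2$ (the two shifted Riemann zeta factors contributing $1$, the two Dirichlet $L$-factors contributing $D$ each) and root number $1$ (as $\tau(\chi_D) = \sqrt{D}$), but which has \emph{simple poles} at $w = 1/2\mp it$ coming from $\zeta(\tfrac12+w\pm it)$ at $\tfrac12+w\pm it=1$. Crossing these in the contour shift, together with the analogous residues produced after applying the functional equation, contributes precisely $R(X,D,t)$; computing these residues via $\Res_{s=1}\zeta(s)=1$, dividing out the archimedean and conductor factors, and combining the $w=1/2+it$ and $w=1/2-it$ contributions — which are complex conjugates of each other for $t\in\R$, since $\zeta$, $L(\cdot,\chi_D)$ and $\Gamma_{\R}$ are real on the real axis — yields the displayed $2\Re(\cdots)$, with $\zeta(1\pm 2it)$, $L(1\pm 2it,\chi_D)$ and $L(1,\chi_D)$ being exactly the values of the surviving factors at the pole.

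\textbf{The role of $X$ and the main difficulty.} The free parameter $X>0$ is carried through so that, on later substituting a convenient value (e.g.\ $X = \sqrt{d_2}/w_2$ in the proof of Lemma~\ref{MMtildeidentitylemma}, for which $XD\sqrt{d_1} = D^{3/2}/w_2$), the lengths of the two Dirichlet sums become a \emph{fixed} function of the summation variables and $w_2$, independent of the level $d_1$ — which is what allows the Kuznetsov and Petersson formul\ae, mixing all levels $d_1\mid D$, to be applied term by term. I expect the only real obstacle to be exactly this arithmetic bookkeeping: reading off the conductors and root numbers of the (product) $L$-functions for every $d_1\mid D$ from Lemma~\ref{rootnumberlemma} (in particular the cancellations $\epsilon(f)\epsilon(f\otimes\chi_D) = \epsilon(f\otimes g_{\psi^2}) = \eta_f(d_1)$), matching the archimedean $\Gamma$-factors with the definitions \eqref{V1eq}, \eqref{V2eq} of $V_1^{\epsilon}$, $V_2^{\epsilon}$, $V_2^{\hol}$, and pinning down $R(X,D,t)$; there is no analytic subtlety beyond the standard absolute-convergence and Phragm\'en--Lindel\"of justifications for the contour shifts and interchanges.
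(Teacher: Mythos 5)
Your proposal is correct and takes essentially the same route as the paper, which proves the lemma simply by citing the general unbalanced approximate functional equation of \cite[Theorem~5.3]{IK04} together with the conductor and root-number determinations of \hyperref[rootnumberlemma]{Lemma~\ref*{rootnumberlemma}} and the Dirichlet-series factorizations of \hyperref[Ramanujanlemma]{Lemma~\ref*{Ramanujanlemma}}. You have simply unwound the contour-shift argument behind that citation, and your bookkeeping (in particular $q_F^{1/2}=D\sqrt{d_1}$, the cancellation $\epsilon(f)\epsilon(f\otimes\chi_D)=\epsilon_f^2\eta_f(d_1)=\eta_f(d_1)$, the matching of $\gamma$-factors with $V_1^\epsilon$, $V_2^\epsilon$, $V_2^{\hol}$, the reality of the coefficients, and the polar contribution from the shifted $\zeta$-factors) matches what the citation supplies.
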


\begin{proof}
This follows from \cite[Theorem 5.3]{IK04} coupled with \hyperref[rootnumberlemma]{Lemmata \ref*{rootnumberlemma}} and \ref{Ramanujanlemma}.
\end{proof}

We briefly mention the fact that \cite[Proposition 5.4]{IK04} implies that the functions $V(x,\cdot)$ appearing in \hyperref[approxfunclemma]{Lemma \ref*{approxfunclemma}} are of rapid decay in $x$ once $x$ is much larger than the square root of the archimedean part of the analytic conductor of the associated $L$-function.

\subsection{Explicit Expressions for Spectral Sums}

For a function $h : \R \cup i(-1/2,1/2)$ and $m,n,q \in \N$, define
\begin{align*}
\AA_q^{\Maass}(m,\pm n;h) & \defeq 2q \sum_{f \in \BB_0(\Gamma_0(q))} \frac{\sqrt{mn} \overline{\rho_f}(m) \rho_f(\pm n)}{\cosh \pi t_f} h(t_f),	\\
\AA_q^{\Eis}(m,\pm n;h) & \defeq \frac{q}{2\pi} \sum_{\aa} \int_{-\infty}^{\infty} \frac{\sqrt{mn} \overline{\rho_{\aa}}(m,t) \rho_{\aa}(\pm n,t)}{\cosh \pi t} h(t) \, dt,
\end{align*}
where $\BB_0(\Gamma_0(q))$ is an orthonormal basis of the space of Maa\ss{} cusp forms of weight zero, level $q$, and principal nebentypus, and the Fourier expansion of such a Maa\ss{} cusp form $f$ with Laplacian eigenvalue $\lambda_f = 1/4 + t_f^2$ about the cusp at infinity is
\[f(z) = \sum_{\substack{n = -\infty \\ n \neq 0}}^{\infty} \rho_f(n) W_{0,it_f}(4\pi|n|y) e(nz).\]
Similarly, for a sequence $h^{\hol} : 2\N \to \C$, define
\[\AA_q^{\hol}\left(m,n;h^{\hol}\right) \defeq \frac{2 q}{\pi} \sum_{f \in \BB_{\hol}(\Gamma_0(q))} \Gamma(k_f) \sqrt{mn} \overline{\rho_f}(m) \rho_f(n) h^{\hol}(k_f),\]
where $\BB_{\hol}(\Gamma_0(q)) \ni f$ is an orthonormal basis of holomorphic cusp forms of weight $k_f \in 2\N$, level $q$, and principal nebentypus, and the Fourier expansion of such a holomorphic cusp form $f$ about the cusp at infinity is
\[f(z) = \sum_{n = 1}^{\infty} \rho_f(n) (4\pi n)^{k_f/2} e(nz).\]

\begin{lemma}
\label{Kuznetsovlemma}
For squarefree $q$, $\AA_q^{\Maass}(m,\pm n;h)$ is equal to
\begin{multline*}
\sum_{q_1 q_2 = q} \frac{q_2}{\nu(q_2)} \sum_{f \in \BB_0^{\ast}(\Gamma_0(q_1))} \epsilon_f^{\frac{1 \mp 1}{2}} \frac{h(t_f)}{L(1,\sym^2 f)} \sum_{\ell \mid q_2} L_{\ell}(1,\sym^2 f) \frac{\varphi(\ell)}{\ell}	\\
\times \sum_{\substack{v_1 w_1 = \ell \\ v_1 \mid m}} \frac{\nu(v_1)}{\sqrt{v_1}} \frac{\mu(w_1) \lambda_f(w_1)}{\sqrt{w_1}} \lambda_f\left(\frac{m}{v_1}\right) \sum_{\substack{v_2 w_2 = \ell \\ v_2 \mid n}} \frac{\nu(v_2)}{\sqrt{v_2}} \frac{\mu(w_2) \lambda_f(w_2)}{\sqrt{w_2}} \lambda_f\left(\frac{n}{v_2}\right),
\end{multline*}
$\AA_q^{\Eis}(m,\pm n;h)$ is equal to
\begin{multline*}
\frac{q}{2\pi \nu(q)} \int_{-\infty}^{\infty} \frac{h(t)}{\zeta(1 + 2it) \zeta(1 - 2it)} \sum_{\ell \mid q} \zeta_{\ell}(1 + 2it) \zeta_{\ell}(1 - 2it)	\\
\times \sum_{\substack{v_1 w_1 = \ell \\ v_1 \mid m}} \frac{\nu(v_1)}{\sqrt{v_1}} \frac{\mu(w_1) \lambda(w_1,t)}{\sqrt{w_1}} \lambda\left(\frac{m}{v_1},t\right) \sum_{\substack{v_2 w_2 = \ell \\ v_2 \mid n}} \frac{\nu(v_2)}{\sqrt{v_2}} \frac{\mu(w_2) \lambda(w_2,t)}{\sqrt{w_2}} \lambda\left(\frac{n}{v_2},t\right) \, dt,
\end{multline*}
and $\AA_q^{\hol}(m,n; h^{\hol})$ is equal to
\begin{multline*}
\sum_{q_1 q_2 = q} \frac{q_2}{\nu(q_2)} \sum_{f \in \BB_{\hol}^{\ast}(\Gamma_0(q_1))} \frac{h^{\hol}(k_f)}{L(1,\sym^2 f)} \sum_{\ell \mid q_2} L_{\ell}(1,\sym^2 f) \frac{\varphi(\ell)}{\ell}	\\
\times \sum_{\substack{v_1 w_1 = \ell \\ v_1 \mid m}} \frac{\nu(v_1)}{\sqrt{v_1}} \frac{\mu(w_1) \lambda_f(w_1)}{\sqrt{w_1}} \lambda_f\left(\frac{m}{v_1}\right) \sum_{\substack{v_2 w_2 = \ell \\ v_2 \mid n}} \frac{\nu(v_2)}{\sqrt{v_2}} \frac{\mu(w_2) \lambda_f(w_2)}{\sqrt{w_2}} \lambda_f\left(\frac{n}{v_2}\right).
\end{multline*}
\end{lemma}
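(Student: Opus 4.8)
The plan is to establish all three identities in the same way, by inserting the explicit orthonormal bases of \hyperref[ILSlemma]{Lemma \ref*{ILSlemma}} (for Maa\ss{} cusp forms, and its holomorphic analogue, valid by \hyperref[ALlemma]{Lemma \ref*{ALlemma}}) and \hyperref[YoungEislemma]{Lemma \ref*{YoungEislemma}} (for Eisenstein series) into the definitions of $\AA_q^{\Maass}$, $\AA_q^{\hol}$, and $\AA_q^{\Eis}$, and then bookkeeping. I would treat the Maa\ss{} case first and then indicate the modifications. Writing $q = q_1 q_2$, each basis element $f_{\ell}$ with $f \in \BB_0^{\ast}(\Gamma_0(q_1))$ and $\ell \mid q_2$ is, by \hyperref[ILSlemma]{Lemma \ref*{ILSlemma}}, the explicit linear combination $\left(L_{\ell}(1,\sym^2 f) \varphi(\ell)/\ell\right)^{1/2} \sum_{vw = \ell} \frac{\nu(v)}{v} \frac{\mu(w) \lambda_f(w)}{\sqrt{w}} \iota_v f$ of translates of the newform $f$; since $(\iota_v f)(z) = f(vz)$, the Fourier coefficient of $\iota_v f$ about the cusp at infinity at index $m$ is $\rho_f(m/v)$ when $v \mid m$ and vanishes otherwise, which accounts for the divisibility constraints $v_1 \mid m$, $v_2 \mid n$ in the stated formula. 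Using the standard relation $\sqrt{m}\,\rho_f(m) = \rho_f(1) \lambda_f(m)$, one finds $\sqrt{m}\,\rho_{f_{\ell}}(m) = \left(L_{\ell}(1,\sym^2 f)\varphi(\ell)/\ell\right)^{1/2} \rho_f(1) \sum_{v_1 w_1 = \ell,\ v_1 \mid m} \frac{\nu(v_1)}{\sqrt{v_1}} \frac{\mu(w_1) \lambda_f(w_1)}{\sqrt{w_1}} \lambda_f(m/v_1)$, the combination $\nu(v_1) v_1^{-1} \sqrt{v_1} = \nu(v_1)/\sqrt{v_1}$ producing exactly the arithmetic weights appearing in the claim.

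For the Maa\ss{} identity I would then multiply the two such expressions for indices $m$ and $\pm n$, noting that $f_{\ell}$ inherits the parity $\epsilon_f$ of $f$ so that $\rho_{f_{\ell}}(-n) = \epsilon_f \rho_{f_{\ell}}(n)$; since $f$ has trivial nebentypus its Hecke eigenvalues are real, so this produces a clean factor $\epsilon_f^{(1 \mp 1)/2}$ and an overall $|\rho_f(1)|^2$, together with $L_{\ell}(1,\sym^2 f) \varphi(\ell)/\ell$ and the product of the two $v_i,w_i$-sums. The crucial input is then the value of $|\rho_f(1)|^2$: applying \hyperref[rho(1)^2lemma]{Lemma \ref*{rho(1)^2lemma}} with trivial nebentypus (so that $\ad f = \sym^2 f$) and $\langle f, f\rangle_q = 1$ gives $|\rho_f(1)|^2 = q_2 \cosh(\pi t_f)/\left(2q\, \nu(q_2) L(1,\sym^2 f)\right)$, whence the prefactor $2q/\cosh(\pi t_f)$ in the definition of $\AA_q^{\Maass}$ collapses everything to $q_2/\left(\nu(q_2) L(1,\sym^2 f)\right)$; summing over $f$, over $q_1 q_2 = q$, and over $\ell \mid q_2$ then yields precisely the asserted formula, with no further manipulation needed since the $v_i,w_i$-sums are already in their final shape.

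The Eisenstein case proceeds identically, with $\{E_{\ell}(\cdot,1/2+it) : \ell \mid q\}$ from \hyperref[YoungEislemma]{Lemma \ref*{YoungEislemma}} in place of the cusp basis $\{E_{\aa}\}$: the quantity $\sum_{\aa} \sqrt{mn}\, \overline{\rho_{\aa}}(m,t) \rho_{\aa}(n,t)$ is independent of the choice of orthonormal basis of the Eisenstein space at spectral parameter $t$ (it is the continuous-spectrum piece of the automorphic kernel), so it may be computed in the $E_{\ell}$ basis; here one uses $E_1 = E/\sqrt{\nu(q)}$, the coefficients $\sqrt{n}\,\rho(n,t) = \lambda(n,t)/\Lambda(1+2it)$, and the elementary identity $\left|\Gamma_{\R}(1+2it)\right|^2 = \left|\Gamma(1/2+it)\right|^2/\pi = 1/\cosh(\pi t)$, which gives $1/\left|\Lambda(1+2it)\right|^2 = \cosh(\pi t)/\left(\zeta(1+2it)\zeta(1-2it)\right)$, the $\cosh(\pi t)$ again cancelling the Kloosterman-formula weight. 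The holomorphic case is the same once more, using the holomorphic analogue of \hyperref[ILSlemma]{Lemma \ref*{ILSlemma}} and the Petersson-normalisation identity for $\Gamma(k_f) |\rho_f(1)|^2$ in place of \hyperref[rho(1)^2lemma]{Lemma \ref*{rho(1)^2lemma}}. There is no analytic difficulty anywhere; the only real work, and the step where I expect to spend the bulk of the effort, is the arithmetic bookkeeping --- checking that the powers of $v_i$, $w_i$, the factors $\nu$, $\varphi$, $\mu$, and the local Euler factors $L_{\ell}(1,\sym^2 f)$ and $\zeta_{\ell}(1 \pm 2it)$ all combine exactly as stated --- together with a careful justification of the basis-independence used in the Eisenstein computation.
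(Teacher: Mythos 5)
Your proposal is correct and proceeds exactly as the paper's own proof: insert the orthonormal bases of Lemma~\ref{ILSlemma} and Lemma~\ref{YoungEislemma} (and the holomorphic analogue from~\cite{ILS00}), expand $\rho_{f_{\ell}}(n)$ via~\eqref{cusprholambda} to produce the constrained $v_i,w_i$-sums, and invoke Lemma~\ref{rho(1)^2lemma} (and its holomorphic counterpart) to evaluate $|\rho_f(1)|^2$ and cancel the $\cosh\pi t_f$ or $\Gamma(k_f)$ weight. The only addition you make beyond the paper's terse proof is the explicit justification of basis-independence for the Eisenstein term and the parity discussion, both of which the paper leaves implicit but which you handle correctly.
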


\begin{proof}
For $\AA_q^{\Maass}(m,\pm n;h)$, we use the orthonormal basis in \hyperref[ILSlemma]{Lemma \ref*{ILSlemma}} and make use of \eqref{cusprholambda}, so that for $f \in \BB_0^{\ast}(\Gamma_0(q_1))$ and $\ell \mid q_2$,
\begin{align*}
\rho_{f_{\ell}}(n) & = \left(L_{\ell}(1,\sym^2 f) \frac{\varphi(\ell)}{\ell}\right)^{1/2} \sum_{vw = \ell} \frac{\nu(v)}{v} \frac{\mu(w) \lambda_f(w)}{\sqrt{w}} \rho_{\iota_v f}(n)	\\
& = \left(L_{\ell}(1,\sym^2 f) \frac{\varphi(\ell)}{\ell}\right)^{1/2} \frac{\rho_f(1)}{\sqrt{n}} \sum_{\substack{v w = \ell \\ v \mid n}} \frac{\nu(v)}{\sqrt{v}} \frac{\mu(w) \lambda_f(w)}{\sqrt{w}} \lambda_f\left(\frac{n}{v}\right).
\end{align*}
\hyperref[rho(1)^2lemma]{Lemma \ref*{rho(1)^2lemma}} gives an explicit expression for $|\rho_f(1)|^2$, which gives the desired identity.

The orthonormal basis in \hyperref[YoungEislemma]{Lemma \ref*{YoungEislemma}} similarly gives the identity for $\AA_q^{\Eis}(m,\pm n;h)$.

Finally, an orthonormal basis of $\BB_{\hol}(\Gamma_0(q))$ is given by
\[\BB_{\hol}\left(\Gamma_0(q)\right) = \left\{f_{\ell} : f \in \BB_{\hol}^{\ast}\left(\Gamma_0(q_1)\right), \ q_1 q_2 = q, \ \ell \mid q_2\right\}\]
via \cite[Proposition 2.6]{ILS00}, where
\[f_{\ell} \defeq \left(L_{\ell}(1,\sym^2 f) \frac{\varphi(\ell)}{\ell}\right)^{1/2} \sum_{vw = \ell} \frac{\nu(v)}{v^{1 - k_f}} \frac{\mu(w) \lambda_f(w)}{\sqrt{w}} \iota_v f\]
with $f \in \BB_{\hol}^{\ast}(\Gamma_0(q_1))$ normalised such that $\langle f, f \rangle_q = 1$, so that
\[\rho_{f_{\ell}}(n) = \left(L_{\ell}(1,\sym^2 f) \frac{\varphi(\ell)}{\ell}\right)^{1/2} \frac{\rho_f(1)}{\sqrt{n}} \sum_{\substack{vw = \ell \\ v \mid n}} \frac{\nu(v)}{\sqrt{v}} \frac{\mu(w) \lambda_f(w)}{\sqrt{w}} \lambda_f\left(\frac{n}{v}\right).\]
Moreover, via the same method of proof of \hyperref[rho(1)^2lemma]{Lemma \ref*{rho(1)^2lemma}},
\[|\rho_f(1)|^2 = \frac{\pi q_2 \langle f, f\rangle_q}{2q \nu(q_2) \Gamma(k_f) L(1,\sym^2 f)}\]
for $f \in \BB_{\hol}^{\ast}(\Gamma_0(q_1))$ with $q_1 q_2 = q$. The result then follows.
\end{proof}

The terms $\AA_q^{\Maass}(m,\pm n;h)$, $\AA_q^{\Eis}(m,\pm n;h)$, and $\AA_q^{\hol}(m,n;h^{\hol})$ arise from the spectral expansion of the inner product of two Poincar\'{e} series associated to the pair of cusps $(\aa,\bb) = (\infty,\infty)$. We require similar identities for $\bb \sim 1$, for which we choose the scaling matrix
\[\sigma_{\bb} = \begin{pmatrix} \sqrt{q} & b/\sqrt{q} \\ \sqrt{q} & d\sqrt{q} \end{pmatrix},\]
where $b,d \in \Z$ are such that $dq - b = 1$. We define
\begin{align*}
\AA_q^{\Maass}(\sigma_{\bb};m,\pm n;h) & \defeq 2q \sum_{f \in \BB_0(\Gamma_0(q))} \frac{\sqrt{mn} \overline{\rho_f}(\sigma_{\bb};m) \rho_f(\pm n)}{\cosh \pi t_f} h(t_f),	\\
\AA_q^{\Eis}(\sigma_{\bb};m,\pm n;h) & \defeq \frac{q}{2\pi} \sum_{\aa} \int_{-\infty}^{\infty} \frac{\sqrt{mn} \overline{\rho_{\aa}}(\sigma_{\bb};m,t) \rho_{\aa}(\pm n,t)}{\cosh \pi t} h(t) \, dt,	\\
\AA_q^{\hol}(\sigma_{\bb};m,n;h^{\hol}) & \defeq \frac{2 q}{\pi} \sum_{f \in \BB_{\hol}(\Gamma_0(q))} \Gamma(k_f) \sqrt{mn} \overline{\rho_f}(\sigma_{\bb};m) \rho_f(n) h^{\hol}(k_f).
\end{align*}
Here $\rho_f(\sigma_{\bb};m)$ denotes the $m$-Fourier coefficient of $f(\sigma_{\bb} z)$ and $\rho_{\aa}(\sigma_{\bb};m,t)$ denotes the $m$-th Fourier coefficient of $E_{\aa}(\sigma_{\bb} z,1/2 + it)$.

\begin{lemma}
\label{infty1Kuznetsovlemma}
For squarefree $q$, $\AA_q^{\Maass}(\sigma_{\bb};m,\pm n;h)$ is equal to
\begin{multline*}
\sum_{q_1 q_2 = q} \frac{q_2^{3/2}}{\nu(q_2)} \sum_{f \in \BB_0^{\ast}(\Gamma_0(q_1))} \epsilon_f^{\frac{1 \mp 1}{2}} \eta_f(q_1) \frac{h(t_f)}{L(1,\sym^2 f)} \sum_{\substack{\ell \mid q_2 \\ \frac{q_2}{\ell} \mid m}} L_{\ell}(1,\sym^2 f) \frac{\varphi(\ell)}{\ell}	\\
\times \sum_{\substack{v_1 w_1 = \ell \\ v_1 \mid m}} \frac{\nu(w_1)}{w_1^{3/2}} \frac{\mu(v_1) \lambda_f(v_1)}{\sqrt{v_1}} \lambda_f\left(\frac{w_1 m}{q_2}\right) \sum_{\substack{v_2 w_2 = \ell \\ v_2 \mid n}} \frac{\nu(v_2)}{\sqrt{v_2}} \frac{\mu(w_2) \lambda_f(w_2)}{\sqrt{w_2}} \lambda_f\left(\frac{n}{v_2}\right),
\end{multline*}
$\AA_q^{\Eis}(\sigma_{\bb};m,\pm n;h)$ is equal to
\begin{multline*}
\frac{q^{3/2}}{2\pi \nu(q)} \int_{-\infty}^{\infty} \frac{h(t)}{\zeta(1 + 2it) \zeta(1 - 2it)} \sum_{\substack{\ell \mid q \\ \frac{q}{\ell} \mid m}} \zeta_{\ell}(1 + 2it) \zeta_{\ell}(1 - 2it)	\\
\times \sum_{\substack{v_1 w_1 = \ell \\ v_1 \mid m}} \frac{\nu(w_1)}{w_1^{3/2}} \frac{\mu(v_1) \lambda(v_1,t)}{\sqrt{v_1}} \lambda\left(\frac{w_1 m}{q},t\right) \sum_{\substack{v_2 w_2 = \ell \\ v_2 \mid n}} \frac{\nu(v_2)}{\sqrt{v_2}} \frac{\mu(w_2) \lambda(w_2,t)}{\sqrt{w_2}} \lambda\left(\frac{n}{v_2},t\right) \, dt,
\end{multline*}
and $\AA_q^{\hol}(\sigma_{\bb};m,n;h^{\hol})$ is equal to
\begin{multline*}
\sum_{q_1 q_2 = q} \frac{q_2^{3/2}}{\nu(q_2)} \sum_{f \in \BB_{\hol}^{\ast}(\Gamma_0(q_1))} \eta_f(q_1) \frac{h^{\hol}(k_f)}{L(1,\sym^2 f)} \sum_{\substack{\ell \mid q_2 \\ \frac{q_2}{\ell} \mid m}} L_{\ell}(1,\sym^2 f) \frac{\varphi(\ell)}{\ell}	\\
\times \sum_{\substack{v_1 w_1 = \ell \\ v_1 \mid m}} \frac{\nu(w_1)}{w_1^{3/2}} \frac{\mu(v_1) \lambda_f(v_1)}{\sqrt{v_1}} \lambda_f\left(\frac{w_1 m}{q_2}\right) \sum_{\substack{v_2 w_2 = \ell \\ v_2 \mid n}} \frac{\nu(v_2)}{\sqrt{v_2}} \frac{\mu(w_2) \lambda_f(w_2)}{\sqrt{w_2}} \lambda_f\left(\frac{n}{v_2}\right).
\end{multline*}
\end{lemma}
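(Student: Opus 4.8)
The plan is to mirror the proof of \hyperref[Kuznetsovlemma]{Lemma \ref*{Kuznetsovlemma}}, the only genuinely new ingredient being an explicit description of the action of the scaling matrix $\sigma_{\bb}$ on the orthonormal bases that appear there. Observe that $\sigma_{\bb} = \left(\begin{smallmatrix} \sqrt{q} & b/\sqrt{q} \\ \sqrt{q} & d\sqrt{q} \end{smallmatrix}\right)$ with $dq - b = 1$ is precisely the Fricke involution $W_q$ on $\Gamma_0(q)$ in the notation of \hyperref[AtkinLehnersect]{Section \ref*{AtkinLehnersect}} (taking $v = 1$, $w = q$). Since $q = q_1 q_2$ is squarefree, $W_q$ factors, up to an element of $\Gamma_0(q)$ (which is irrelevant at weight zero), as a product $W_{q_1} W_{q_2}$ of Atkin--Lehner operators at level $q$.

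First I would compute $\rho_{f_{\ell}}(\sigma_{\bb};m)$, the $m$-th Fourier coefficient of $f_{\ell}(W_q z)$, for $f \in \BB_0^{\ast}(\Gamma_0(q_1))$ with $q_1 q_2 = q$ and $\ell \mid q_2$. Writing $f_{\ell}$ as a linear combination of the oldform copies $\iota_v f$ with $v \mid \ell$ via \hyperref[ILSlemma]{Lemma \ref*{ILSlemma}}, one is reduced to understanding $(\iota_v f)(W_q z) = f\left(\left(\begin{smallmatrix}\sqrt{v} & 0 \\ 0 & 1/\sqrt{v}\end{smallmatrix}\right) W_q z\right)$. Decomposing $W_q = W_{q_1} W_{q_2}$ and rewriting $\left(\begin{smallmatrix}\sqrt{v} & 0 \\ 0 & 1/\sqrt{v}\end{smallmatrix}\right) W_{q_2}$ as in the proof of \hyperref[AtkinLehnerlemma]{Lemma \ref*{AtkinLehnerlemma}} as a level-$q_1$ automorphy matrix times an appropriately scaled diagonal matrix, one sees that $W_{q_2}$ permutes the oldform copies (sending the $v$-th copy to the copy indexed by $q_2/v$), while the remaining factor $W_{q_1}$ is the Fricke involution at level $q_1$, so that \hyperref[ALlemma]{Lemma \ref*{ALlemma}} — with $f$ of trivial nebentypus, hence $f \otimes \overline{\chi_{q_1}} = f$ — gives $f(W_{q_1} \cdot) = \eta_f(q_1) f$. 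Reading off the $m$-th Fourier coefficient of the resulting translate of $f$ via \eqref{cusprholambda} then produces the Hecke eigenvalue $\lambda_f(w_1 m/q_2)$ together with the divisibility constraints $\frac{q_2}{\ell} \mid m$ and $v_1 \mid m$, which are exactly what is required for $\lambda_f(w_1 m/q_2)$ to be defined since $\frac{q_2}{w_1} = v_1 \cdot \frac{q_2}{\ell}$ with $v_1$ coprime to $\frac{q_2}{\ell}$, as well as an additional factor of $\sqrt{q_2}$ coming from the scaling built into $\sigma_{\bb}$, which is responsible for the normalising constant $q_2^{3/2}/\nu(q_2)$ appearing in place of $q_2/\nu(q_2)$. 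Inserting the explicit value of $|\rho_f(1)|^2$ from \hyperref[rho(1)^2lemma]{Lemma \ref*{rho(1)^2lemma}} and simplifying the sum over $v_1 w_1 = \ell$ by multiplicativity then yields the asserted formula for $\AA_q^{\Maass}(\sigma_{\bb};m,\pm n;h)$.

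For $\AA_q^{\Eis}(\sigma_{\bb};m,\pm n;h)$ one argues identically, replacing \hyperref[ILSlemma]{Lemma \ref*{ILSlemma}} by Young's orthonormal basis of Eisenstein series \hyperref[YoungEislemma]{Lemma \ref*{YoungEislemma}} and using that the Fricke involution fixes $E_{\infty}(\cdot,1/2 + it)$ up to a scalar (equivalently, the functional equation of the $\Gamma \backslash \Hb$-Eisenstein series), with $\eta_f(q_1)$ replaced by the corresponding Eisenstein pseudo-eigenvalue (trivial here). For $\AA_q^{\hol}(\sigma_{\bb};m,n;h^{\hol})$ one uses the holomorphic ILS basis recorded in the proof of \hyperref[Kuznetsovlemma]{Lemma \ref*{Kuznetsovlemma}} together with the holomorphic analogue of the formula for $|\rho_f(1)|^2$ given there, noting that the weight-$k_f$ automorphy factor of $W_q$ is absorbed into the normalisation of the Fourier coefficients, so that no power of $i$ survives. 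The main obstacle is the Atkin--Lehner bookkeeping of the first step: correctly tracking how $W_q$ permutes the oldform copies $\iota_v f$, pinning down the exact power of $q_2$ in the normalising constant, and extracting the divisibility conditions $\frac{q_2}{\ell} \mid m$, $v_1 \mid m$ together with the precise argument $w_1 m/q_2$ of the Hecke eigenvalue; everything else is a routine repetition of the computations already carried out for \hyperref[Kuznetsovlemma]{Lemma \ref*{Kuznetsovlemma}}.
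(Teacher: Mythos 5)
Your proposal is correct and follows essentially the same route as the paper: the paper likewise reduces everything to showing that $(\iota_v f)(\sigma_{\bb} z) = \eta_f(q_1) (\iota_{q_2/v} f)(z)$ and then reads off Fourier coefficients via \eqref{cusprholambda}, exactly as you describe. The only cosmetic difference is that the paper establishes this by a single direct matrix identity, $\left(\begin{smallmatrix} \sqrt{v} & 0 \\ 0 & 1/\sqrt{v} \end{smallmatrix}\right) \sigma_{\bb} = \left(\begin{smallmatrix} v\sqrt{q_1} & b/\sqrt{q_1} \\ \sqrt{q_1} & dw\sqrt{q_1} \end{smallmatrix}\right) \left(\begin{smallmatrix} \sqrt{w} & 0 \\ 0 & 1/\sqrt{w} \end{smallmatrix}\right)$ with $vw = q_2$, rather than by first factoring $W_q = W_{q_1} W_{q_2}$; the permutation $v \mapsto q_2/v$, the factor $\eta_f(q_1)$, the extra $\sqrt{q_2}$, and the divisibility constraints all come out identically.
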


\begin{proof}
If $vw = q_2$ with $q_1 q_2 = q$,
\[\begin{pmatrix} \sqrt{v} & 0 \\ 0 & 1/\sqrt{v} \end{pmatrix} \begin{pmatrix} \sqrt{q} & b/\sqrt{q} \\ \sqrt{q} & d\sqrt{q} \end{pmatrix} = \begin{pmatrix} v\sqrt{q_1} & b/\sqrt{q_1} \\ \sqrt{q_1} & dw\sqrt{q_1} \end{pmatrix} \begin{pmatrix} \sqrt{w} & 0 \\ 0 & 1/\sqrt{w} \end{pmatrix}.\]
So if $f$ is a member of $\BB_0^{\ast}(\Gamma_0(q_1))$ or $\BB_{\hol}^{\ast}(\Gamma_0(q_1))$,
\[(\iota_v f)(\sigma_{\bb} z) = \eta_f(q_1)(\iota_w f)(z)\]
by \hyperref[AtkinLehnerlemma]{Lemma \ref*{AtkinLehnerlemma}}. The Fourier coefficients $\rho_{\iota_v f}(\sigma_{\bb};n)$ of $(\iota_v f)(\sigma_{\bb} z)$ therefore satisfy
\[\rho_{\iota_v f}(\sigma_{\bb};n) = \begin{dcases*}
\eta_f(q_1) \rho_f(1) \lambda_f\left(\frac{n}{w}\right) \sqrt{\frac{w}{n}} & if $n \equiv 0 \pmod{w}$,	\\
0 & otherwise
\end{dcases*}\]
via \eqref{cusprholambda}. It follows that for $\ell \mid q_2$,
\begin{align*}
\rho_{f_{\ell}}(\sigma_{\bb};n) & = \left(L_{\ell}(1,\sym^2 f) \frac{\varphi(\ell)}{\ell}\right)^{1/2} \sum_{vw = \ell} \frac{\nu(v)}{v} \frac{\mu(w) \lambda_f(w)}{\sqrt{w}} \rho_{\iota_v f}(\sigma_{\bb};n)	\\
& = \sqrt{q_2} \eta_f(q_1) \left(L_{\ell}(1,\sym^2 f) \frac{\varphi(\ell)}{\ell}\right)^{1/2} \frac{\rho_f(1)}{\sqrt{n}} \sum_{\substack{vw = \ell \\ \frac{q_2}{v} \mid n}} \frac{\nu(v)}{v^{3/2}} \frac{\mu(w) \lambda_f(w)}{\sqrt{w}} \lambda_f\left(\frac{vn}{q_2}\right).
\end{align*}
Now the proof follows in the same way as the proof of \hyperref[Kuznetsovlemma]{Lemma \ref*{Kuznetsovlemma}}.
\end{proof}

\subsection{Spectral Summation Formul\ae{}}

\subsubsection{The Kuznetsov Formula}

The Kuznetsov formula is an identity between a spectral sum of Fourier coefficients of Maa\ss{} cusps forms and integral of Fourier coefficients of Eisenstein series and a delta term and weighted sum of Kloosterman sums.

\begin{theorem}[{\cite[Theorem 9.3]{Iwa02}}]
\label{Kuznetsovthm}
Let $\delta > 0$, and let $h$ be a function that is even, holomorphic in the horizontal strip $|\Im(t)| \leq 1/4 + \delta$, and satisfies $h(t) \ll (1 + |t|)^{-2 - \delta}$. Then for $m,n \in \N$,
\[\AA_q^{\Maass}(m,\pm n;h) + \AA_q^{\Eis}(m,\pm n;h) = \DD_q(m,\pm n;\Ns h) + \OO_q(m,\pm n;\Ks^{\pm} h),\]
where
\begin{align}
\notag
\DD_q(m,\pm n;\Ns h) & \defeq \delta_{m, \pm n} q \Ns h,	\\
\label{OOqeq}
\OO_q(m,\pm n;\Ks^{\pm} h) & \defeq q \sum_{\substack{c = 1 \\ c \equiv 0 \hspace{-.25cm} \pmod{q}}}^{\infty} \frac{S(m,\pm n;c)}{c} (\Ks^{\pm} h)\left(\frac{\sqrt{mn}}{c}\right).
\end{align}
Here
\begin{gather}
\label{Kloosteq}
S(m,n;c) \defeq \sum_{d \in (\Z/c\Z)^{\times}} e\left(\frac{md + n\overline{d}}{c}\right),	\\
\label{NsKspmeq}
\Ns h \defeq \int_{-\infty}^{\infty} h(r) \, d_{\spec}r, \qquad (\Ks^{\pm} h)(x) \defeq \int_{-\infty}^{\infty} \JJ_r^{\pm}(x) h(r) \, d_{\spec}r,	\\
\label{Jrpmeq}
\JJ_r^{+}(x) \defeq \frac{\pi i}{\sinh \pi r} \left(J_{2ir}(4\pi x) - J_{-2ir}(4\pi x)\right), \qquad \JJ_r^{-}(x) \defeq 4 \cosh \pi r K_{2ir}(4\pi x),	\\
\label{dspeceq}
d_{\spec} r \defeq \frac{1}{2\pi^2} r \tanh \pi r \, dr,
\end{gather}
where $K_{\nu}(z)$ denotes the modified Bessel function of the second kind.
\end{theorem}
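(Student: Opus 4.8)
The plan is to obtain the formula by computing the Petersson inner product of two Poincar\'{e} series in two different ways, in the style of Kuznetsov and Bruggeman (cf.\ \cite[Chapter 9]{Iwa02}). For $\Re(s) > 1$ and $m \in \N$ one introduces the $m$-th Poincar\'{e} series
\[P_m(z,s) \defeq \sum_{\gamma \in \Gamma_{\infty} \backslash \Gamma_0(q)} \Im(\gamma z)^{s} e\bigl(m \Re(\gamma z)\bigr)\]
(or the variant incorporating a Whittaker factor if one prefers to work at fixed Laplace eigenvalue). The first step is the \emph{geometric} evaluation: unfolding $\langle P_m(\cdot,s), P_n(\cdot,s') \rangle_q$ against the definition of one of the two series and sorting the double cosets in $\Gamma_{\infty} \backslash \Gamma_0(q) / \Gamma_{\infty}$ according to their lower-left entry $c$. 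The trivial coset yields a term proportional to $\delta_{m,\pm n}$ (which is vacuous when the frequency signs disagree, explaining why the opposite-sign formula carries no delta term), while the cosets with $c \equiv 0 \pmod{q}$ and $c \neq 0$ yield $\sum_{c} c^{-1} S(m,\pm n;c)$ times an explicit Bessel-type integral of the two archimedean $\Gamma$-factors; the sign $\pm$ records whether the two series carry frequencies of the same or opposite sign, which is exactly what selects $\JJ_r^{+}$ ($J$-Bessel) versus $\JJ_r^{-}$ ($K$-Bessel). The Weil bound for $S(m,\pm n;c)$ guarantees absolute convergence of this geometric side for $\Re(s),\Re(s')$ large.

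Second, I would expand the same inner product spectrally using the decomposition of $L^2(\Gamma_0(q) \backslash \Hb)$ into the constant function, Maa\ss{} cusp forms, and Eisenstein series \cite[Theorem 15.5]{IK04}: $\langle P_m, P_n \rangle_q$ equals $\sum_{f \in \BB_0(\Gamma_0(q))} \langle P_m, f\rangle_q \overline{\langle P_n, f\rangle_q}$ plus the analogous Eisenstein integral plus the constant-term contribution. Unfolding $\langle P_m, f\rangle_q$ shows it to be a product of $\overline{\rho_f}(m)$ with a ratio of Gamma functions depending on $s$ and $t_f$; the constant function contributes nothing to a nonzero frequency, and the continuous part is treated identically with $\rho_{\aa}(m,t)$ in place of $\rho_f(m)$. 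Assembling the pieces and matching against the normalisations in the definitions of $\AA_q^{\Maass}$ and $\AA_q^{\Eis}$, the spectral side becomes precisely $\AA_q^{\Maass}(m,\pm n;h_s) + \AA_q^{\Eis}(m,\pm n;h_s)$ for the specific test function $h_s(t)$ built from the two $\Gamma$-factor ratios at spectral parameter $t$; by Stirling's formula this $h_s$ decays rapidly in $t$, so this side also converges for the same range of $s$.

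Third --- and here is the crux --- one must pass from the one-parameter family $\{h_s\}$ to an arbitrary $h$ satisfying the hypotheses. The device is to integrate the identity just obtained against a suitable kernel in the $s$-variable and invoke the inversion formula for the relevant Bessel transform: the Kontorovich--Lebedev inversion in the opposite-sign ($K$-Bessel) case and the corresponding Hankel-type inversion in the same-sign ($J$-Bessel) case. These show that the transforms $h \mapsto \Ns h$ and $h \mapsto \Ks^{\pm} h$ appearing on the right are exactly those dual to the Poincar\'{e}-series kernels, and that every admissible $h$ can be realised as a superposition of the $h_s$ (or approximated by such superpositions, the error being absorbed by absolute convergence). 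The analytic bookkeeping here is the main obstacle: one justifies the interchanges of sum and integral, identifies the common domain in the $s$-parameters where both sides converge, and then continues meromorphically to the regime of the desired $h$. The hypothesis that $h$ be holomorphic in the strip $|\Im(t)| \leq 1/4 + \delta$ is precisely what lets the contours be moved past the exceptional parameter $t = \pm i/4$ without producing spurious residues, and the decay $h(t) \ll (1+|t|)^{-2-\delta}$ is what makes $\Ns h$, the Eisenstein integral, and the Kloosterman series (after applying $\Ks^{\pm}$, using the uniform decay of $\JJ_r^{\pm}$) all absolutely convergent. The delicate handling of the continuous spectrum --- regularising the Eisenstein integral and tracking the constant-term and scattering contributions --- is the only other point requiring real care; no single hard estimate is needed beyond the Weil bound and Stirling's formula.
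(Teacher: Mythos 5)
The paper does not prove this theorem: it is quoted directly from Iwaniec \cite[Theorem 9.3]{Iwa02}, so there is no in-paper argument to compare against, and your Poincar\'{e}-series / spectral-decomposition sketch is the standard derivation and matches that reference in outline. The one point that is genuinely off is your explanation of the holomorphicity strip. Iwaniec's Theorem 9.3 actually requires $h$ to be holomorphic in $|\Im(t)| \leq 1/2 + \delta$; the relaxation to $1/4 + \delta$ stated here is a later refinement due to Yoshida \cite{Yos97}, which --- as the paper itself records in the remark following \hyperref[infty1Kuznetsovthm]{Theorem \ref*{infty1Kuznetsovthm}} --- rests on the Weil bound for the Kloosterman sums appearing in the formula. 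The Weil bound forces any exceptional eigenvalue to satisfy $\lambda_f \geq 3/16$, equivalently $t_f \in i[-1/4,1/4]$, so the test function only ever needs to be evaluated inside the narrower strip. There is no ``pole at $\pm i/4$'' that a contour is being shifted past, so that sentence of your sketch should be replaced by the Weil-bound argument. The rest of the outline --- double coset decomposition for the geometric side, unfolding and spectral decomposition for the spectral side, Bessel inversion (Kontorovich--Lebedev / Sears--Titchmarsh) to pass from the one-parameter family $\{h_s\}$ to a general admissible $h$ --- is the right one, subject to the usual caveat that the inner product of two non-$L^2$ Poincar\'{e} series requires a regularisation or truncation device before the Parseval expansion can be applied.
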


This is the Kuznetsov formula associated to the pair of cusps $(\aa,\bb) = (\infty,\infty)$. We also require the Kuznetsov formula associated to the pair of cusps $(\aa,\bb) = (\infty,1)$.

\begin{theorem}[{\cite[Theorem 9.3]{Iwa02}}]
\label{infty1Kuznetsovthm}
Let $\delta > 0$, and let $h$ be a function that is even, holomorphic in the horizontal strip $|\Im(t)| \leq 1/4 + \delta$, and satisfies $h(t) \ll (1 + |t|)^{-2 - \delta}$. Then for $m,n \in \N$ and $q > 1$,
\[\AA_q^{\Maass}(\sigma_{\bb};m,\pm n;h) + \AA_q^{\Eis}(\sigma_{\bb};m,\pm n;h) = \OO_q(\sigma_{\bb};m,\pm n;\Ks^{\pm} h),\]
where for $\overline{q} \in \Z$ such that $\overline{q} q \equiv 1 \pmod{c}$,
\[\OO_q(\sigma_{\bb};m,\pm n;\Ks^{\pm} h) \defeq \sqrt{q} \sum_{\substack{c = 1 \\ (c,q) = 1}}^{\infty} \frac{S(m,\pm n\overline{q};c)}{c} (\Ks^{\pm} h)\left(\frac{\sqrt{mn}}{c\sqrt{q}}\right).\]
\end{theorem}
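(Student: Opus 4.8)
\textit{Proof proposal.} The plan is to deduce the identity by specialising the general spectral summation (Kuznetsov) formula for an \emph{ordered pair of cusps} $(\aa,\bb)$ of $\Gamma_0(q) \backslash \Hb$ — the same result of \cite[Theorem 9.3]{Iwa02} that underlies \hyperref[Kuznetsovthm]{Theorem \ref*{Kuznetsovthm}} — to $\aa \sim \infty$ with $\sigma_{\aa}$ the identity and $\bb \sim 1$ with $\sigma_{\bb} = W_q$ the Atkin--Lehner operator appearing in the statement. Equivalently, one spectrally expands the inner product of the Poincar\'{e} series at $\infty$ against the Poincar\'{e} series at $\bb$, exactly as the terms $\AA_q^{\Maass}(m,\pm n;h)$, $\AA_q^{\Eis}(m,\pm n;h)$, $\AA_q^{\hol}(m,n;h^{\hol})$ arise from the $(\infty,\infty)$ pair. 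For a general cusp pair this formula has the same shape as \hyperref[Kuznetsovthm]{Theorem \ref*{Kuznetsovthm}}, with the same integral kernels $\Ns$ and $\Ks^{\pm}$ and the same hypotheses on $h$, except that the Fourier coefficients on the spectral side are those of $f(\sigma_{\bb} z)$ and of the Eisenstein series composed with $\sigma_{\bb}$ — that is, the quantities $\rho_f(\sigma_{\bb};\cdot)$ and $\rho_{\aa}(\sigma_{\bb};\cdot)$ in the definitions of $\AA_q^{\Maass}(\sigma_{\bb};\cdot)$ and $\AA_q^{\Eis}(\sigma_{\bb};\cdot)$ — the delta term is present only when $\aa$ and $\bb$ are $\Gamma_0(q)$-equivalent, and the weighted sum of Kloosterman sums runs over the set $\mathcal{C}_{\infty\bb}$ of lower-left entries of the matrices in $\sigma_{\aa}^{-1} \Gamma_0(q) \sigma_{\bb} = \Gamma_0(q) W_q$, with $S(m,n;c)$ replaced by the generalised Kloosterman sum $S_{\infty\bb}(m,n;c)$. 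The normalising constants $2q$, $q/(2\pi)$ and $1/\cosh \pi t_f$ are then matched exactly as in the $(\infty,\infty)$ case, together with the overall factor of $q$ that appears on both sides there.

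Since $q > 1$, the cusps $\infty$ and $1$ are inequivalent under $\Gamma_0(q)$ — in the parametrisation of the cusps of $\Gamma_0(q) \backslash \Hb$ by divisors of $q$ they correspond to $v = q$ and $v = 1$ respectively — so no delta term appears, which accounts for the absence of a $\DD_q$-term in the statement. It then remains to identify $\mathcal{C}_{\infty\bb}$ and $S_{\infty\bb}$. Writing a typical element of $\Gamma_0(q) W_q$ as $\left(\begin{smallmatrix} a_0 & b_0 \\ c_0 q & d_0 \end{smallmatrix}\right) W_q$ with $W_q = \left(\begin{smallmatrix} \sqrt{q} & b_1/\sqrt{q} \\ \sqrt{q} & d_1 \sqrt{q} \end{smallmatrix}\right)$ and $d_1 q - b_1 = 1$, its lower-left entry is $(c_0 q + d_0)\sqrt{q}$; since $a_0 d_0 \equiv 1 \pmod{q}$ forces $(d_0,q) = 1$, one has $(c_0 q + d_0, q) = 1$, and conversely every positive integer coprime to $q$ arises, so $\mathcal{C}_{\infty\bb} = \{c\sqrt{q} : c \in \N,\ (c,q) = 1\}$. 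Using $\det W_q = 1$, a short computation with the entries of $\left(\begin{smallmatrix} a_0 & b_0 \\ c_0 q & d_0 \end{smallmatrix}\right) W_q$ shows that when the lower-left entry equals $c\sqrt{q}$, the upper-left entry $A\sqrt{q}$ and lower-right entry $D'\sqrt{q}$ satisfy $qAD' \equiv 1 \pmod{c}$, whence $D' \equiv \overline{A}\,\overline{q} \pmod{c}$; as the double coset varies, $A$ ranges over $(\Z/c\Z)^{\times}$, giving
\[S_{\infty\bb}(m,\pm n; c\sqrt{q}) = \sum_{A \in (\Z/c\Z)^{\times}} e\left(\frac{mA \pm n\overline{q}\,\overline{A}}{c}\right) = S(m, \pm n\overline{q}; c).\]
Substituting $c \mapsto c\sqrt{q}$ into the geometric side, so that $q \cdot \tfrac{1}{c\sqrt{q}} = \tfrac{\sqrt{q}}{c}$ and $\tfrac{\sqrt{mn}}{c\sqrt{q}}$ is the argument of $\Ks^{\pm} h$, reproduces precisely $\OO_q(\sigma_{\bb};m,\pm n;\Ks^{\pm} h)$ as defined in the statement, while the spectral side is $\AA_q^{\Maass}(\sigma_{\bb};m,\pm n;h) + \AA_q^{\Eis}(\sigma_{\bb};m,\pm n;h)$.

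The main obstacle is precisely the bookkeeping in the second paragraph: pinning down the set $\mathcal{C}_{\infty\bb}$ of admissible moduli and evaluating $S_{\infty\bb}$ — in particular getting the twist by $\overline{q}$ and the power of $\sqrt{q}$ exactly right — since any slip here would change the length of the approximate functional equations extracted from this formula in \hyperref[MMtildeidentitylemma]{Lemma \ref*{MMtildeidentitylemma}} and \hyperref[bulkKuzlemma]{Lemma \ref*{bulkKuzlemma}}. Everything else is routine and parallels the $(\infty,\infty)$ case verbatim: the hypotheses on $h$ are identical to those for \hyperref[Kuznetsovthm]{Theorem \ref*{Kuznetsovthm}}, the Weil bound for $S_{\infty\bb}$ (equivalently, for $S(m,\pm n\overline{q};c)$) guarantees the convergence needed to apply the formula, and the matching of normalising constants is the same computation.
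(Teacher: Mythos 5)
The paper gives no proof of this statement: it is quoted as \cite[Theorem 9.3]{Iwa02} specialised to the cusp pair $(\infty,1)$ of $\Gamma_0(q) \backslash \Hb$, and that is exactly the specialisation you carry out. Your bookkeeping is correct: $\infty \not\sim 1$ for $q > 1$ kills the delta term; the lower-left entries of $\Gamma_0(q) W_q$ are $c\sqrt{q}$ with $(c,q) = 1$; the determinant relation $A' D' q \equiv 1 \pmod{c}$ identifies the generalised Kloosterman sum $S_{\infty\bb}(m,\pm n; c\sqrt{q})$ with $S(m,\pm n\overline{q};c)$; and replacing $c \mapsto c\sqrt{q}$ in the general Kloosterman term and matching normalisations with the $(\infty,\infty)$ case reproduces $\OO_q(\sigma_{\bb};m,\pm n;\Ks^{\pm} h)$. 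The one point you leave implicit is that a verbatim application of \cite[Theorem 9.3]{Iwa02} yields the result under the holomorphy hypothesis $|\Im(t)| \leq 1/2 + \delta$, whereas the stated theorem only demands $|\Im(t)| \leq 1/4 + \delta$; this weakening is the Yoshida refinement that the paper records in the remark following \hyperref[infty1Peterssonthm]{Theorem \ref*{infty1Peterssonthm}}, and it applies here precisely because the Kloosterman sums $S(m,\pm n\overline{q};c)$ you computed satisfy the Weil bound, which you do note but without tying it explicitly to the reduced strip.
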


The weakening of the requirement that $h$ need only be holomorphic in the strip $|\Im(t)| \leq 1/4 + \delta$ instead of $1/2 + \delta$ is due to Yoshida \cite[Theorem]{Yos97}, where this is proven only in the case $q = 1$; the proof generalises immediately to all cases of the Kuznetsov formula for which the Kloosterman sums appearing in the Kloosterman term satisfy the Weil bound.

\subsubsection{The Petersson Formula}

The Petersson formula is an identity between a sum of Fourier coefficients of holomorphic cusps forms and a delta term and weighted sum of Kloosterman sums.

\begin{theorem}[{\cite[Theorem 9.6]{Iwa02}}]
\label{Peterssonthm}
Let $h^{\hol} : 2\N \to \C$ be a sequence satisfying $h^{\hol}(k) \ll k^{-2 - \delta}$ for some $\delta > 0$. Then for $m,n \in \N$,
\[\AA_q^{\hol}\left(m,n;h^{\hol}\right) = \DD_q^{\hol}\left(m,n; \Ns h^{\hol}\right) + \OO_q^{\hol}\left(m,n; \Ks^{\hol} h^{\hol}\right),\]
where
\begin{align*}
\DD_q^{\hol}\left(m,n; \Ns h^{\hol}\right) & \defeq \delta_{m,n} q \sum_{\substack{k = 2 \\ k \equiv 0 \hspace{-.25cm} \pmod{2}}}^{\infty} \frac{k - 1}{2 \pi^2} h^{\hol}(k),	\\
\OO_q^{\hol}\left(m,n; \Ks^{\hol} h^{\hol}\right) & \defeq q \sum_{\substack{c = 1 \\ c \equiv 0 \hspace{-.25cm} \pmod{q}}}^{\infty} \frac{S(m,n;c)}{c} \left(\Ks^{\hol} h^{\hol}\right)\left(\frac{\sqrt{mn}}{c}\right).
\end{align*}
Here
\begin{equation}
\label{KsholJkholeq}
\left(\Ks^{\hol} h^{\hol}\right)(x) \defeq \sum_{\substack{k = 2 \\ k \equiv 0 \hspace{-.25cm} \pmod{2}}}^{\infty} \frac{k - 1}{2 \pi^2} \JJ_k^{\hol}(x) h^{\hol}(k), \qquad \JJ_k^{\hol}(x) \defeq 2\pi i^{-k} J_{k - 1}(4\pi x).
\end{equation}
\end{theorem}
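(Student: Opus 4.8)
The plan is to deduce the spectrally weighted Petersson formula from the classical fixed-weight Petersson formula by multiplying through by $\frac{k-1}{2\pi^2} h^{\hol}(k)$ and summing over $k \in 2\N$. Recall first how the fixed-weight formula at weight $k$, level $q$, and principal nebentypus (see \cite[Theorem 3.6]{Iwa97}) is itself proven: one evaluates the Petersson inner product $\langle P_{m,k}, P_{n,k}\rangle_q$ of the holomorphic Poincar\'{e} series
\[
P_{m,k}(z) \defeq \sum_{\gamma \in \Gamma_{\infty} \backslash \Gamma_0(q)} j(\gamma,z)^{-k} e(m\gamma z), \qquad m \in \N,
\]
in two ways. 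Unfolding $\langle P_{m,k}, P_{n,k}\rangle_q$ against the outer Poincar\'{e} series reduces it to a sum over $\Gamma_{\infty} \backslash \Gamma_0(q) / \Gamma_{\infty}$: the identity double coset contributes the diagonal term $\delta_{m,n}$ (with an explicit constant involving $\Gamma(k-1)/(4\pi m)^{k-1}$, the factor $q$ in $\DD_q^{\hol}$ tracking the normalisation of the inner product on $\Gamma_0(q) \backslash \Hb$), while the remaining double cosets, indexed by moduli $c \equiv 0 \pmod{q}$, produce $\sum_c c^{-1} S(m,n;c)$ times an elementary Bessel integral that evaluates, by a classical Lommel-type identity, to a constant multiple of $J_{k-1}(4\pi\sqrt{mn}/c)$. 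On the other hand, $P_{m,k}$ lies in the space of holomorphic cusp forms of weight $k$, so expanding it in the orthonormal basis $\BB_k(\Gamma_0(q))$ and unfolding each $\langle P_{m,k}, f\rangle_q$ to pick out the $m$-th Fourier coefficient of $f$, normalised as in $f(z) = \sum_n \rho_f(n)(4\pi n)^{k/2} e(nz)$, yields the alternative evaluation as a finite sum of $\overline{\rho_f(m)}\rho_f(n)$. Equating the two evaluations and clearing constants gives exactly the $h^{\hol}(k) = \delta_{k,k_0}$ case of the asserted identity.

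Next I would multiply this fixed-weight identity by $\frac{k-1}{2\pi^2} h^{\hol}(k)$ and sum over $k \in 2\N$. Because $\BB_{\hol}(\Gamma_0(q))$ is the disjoint union of the bases $\BB_k(\Gamma_0(q))$ over $k \in 2\N$, the spectral side reassembles, after matching the constants coming from the normalisation $f(z) = \sum_n \rho_f(n)(4\pi n)^{k/2} e(nz)$ and from $\Gamma(k) = (k-1)\Gamma(k-1)$, into (a fixed multiple of) $\AA_q^{\hol}(m,n;h^{\hol})$; the absolute convergence needed here is immediate since $\sum_{k \in 2\N} \frac{k-1}{2\pi^2} |h^{\hol}(k)| \ll \sum_k k^{-1-\delta} < \infty$. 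The diagonal terms sum directly to $\DD_q^{\hol}(m,n;\Ns h^{\hol}) = \delta_{m,n} q \sum_{k \in 2\N} \frac{k-1}{2\pi^2} h^{\hol}(k)$. For the off-diagonal terms one interchanges the sum over $c \equiv 0 \pmod{q}$ with the sum over $k$; the resulting inner $k$-sum $\sum_{k \in 2\N} \frac{k-1}{2\pi^2} \cdot 2\pi i^{-k} J_{k-1}(4\pi\sqrt{mn}/c) h^{\hol}(k)$ is, by the definitions $\JJ_k^{\hol}(x) = 2\pi i^{-k} J_{k-1}(4\pi x)$ and $(\Ks^{\hol} h^{\hol})(x) = \sum_{k \in 2\N} \frac{k-1}{2\pi^2} \JJ_k^{\hol}(x) h^{\hol}(k)$, precisely $(\Ks^{\hol} h^{\hol})(\sqrt{mn}/c)$, so the off-diagonal contribution is $\OO_q^{\hol}(m,n;\Ks^{\hol} h^{\hol})$.

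The one point requiring genuine justification—the main, if modest, obstacle—is the legitimacy of interchanging the $c$-sum and the $k$-sum, that is, the absolute convergence of $\sum_{c \equiv 0 \pmod{q}} c^{-1} |S(m,n;c)| \sum_{k \in 2\N} k \, |J_{k-1}(4\pi\sqrt{mn}/c)| \, |h^{\hol}(k)|$. I would establish this using the uniform Bessel bounds $|J_{k-1}(x)| \leq 1$ and $J_{k-1}(x) \ll (x/2)^{k-1}/\Gamma(k)$, the Weil bound $S(m,n;c) \ll_{\e} c^{1/2+\e}(m,n,c)^{1/2}$, and the hypothesis $h^{\hol}(k) \ll k^{-2-\delta}$. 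For the $O_{m,n}(1)$ moduli with $c \leq 4\pi\sqrt{mn}$ one uses $|J_{k-1}| \leq 1$, whence the $k$-sum converges like $\sum_k k \cdot k^{-2-\delta}$, uniformly in those $c$; for $c > 4\pi\sqrt{mn}$ the bound $J_{k-1}(4\pi\sqrt{mn}/c) \ll (2\pi\sqrt{mn}/c)^{k-1}/\Gamma(k)$ makes the double sum converge rapidly, again uniformly in $k \geq 2$. This decay requirement on $h^{\hol}$ is the holomorphic-weight analogue of the holomorphy-strip and decay hypotheses in the Kuznetsov formula (Theorems \ref{Kuznetsovthm} and \ref{infty1Kuznetsovthm}); no Yoshida-type sharpening is needed or claimed here. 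Once the interchange is justified, the diagonal and off-diagonal pieces combine into the stated identity, completing the proof.
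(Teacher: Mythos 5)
Your proof is correct and is precisely the standard argument; the paper does not give its own proof of this statement but simply cites \cite[Theorem 9.6]{Iwa02}, whose proof proceeds as you describe by summing the fixed-weight Petersson formula against $\frac{k-1}{2\pi^2}h^{\hol}(k)$ over $k \in 2\N$, justifying the interchange of the $c$- and $k$-sums via the Weil bound, the uniform estimates $|J_{k-1}(x)| \leq 1$ and $|J_{k-1}(x)| \leq (x/2)^{k-1}/\Gamma(k)$, and the decay hypothesis $h^{\hol}(k) \ll k^{-2-\delta}$, exactly as you do. The one imprecision is minor and parenthetical: the factor $q$ appearing in $\DD_q^{\hol}$ does not arise from a normalisation of the Petersson inner product on $\Gamma_0(q)\backslash\Hb$, but is simply the paper's cosmetic rescaling present uniformly in the definitions of $\AA_q^{\hol}$, $\DD_q^{\hol}$, and $\OO_q^{\hol}$, so that it cancels from both sides of the identity and plays no role in the argument.
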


We also require the Petersson formula associated to $(\aa,\bb) = (\infty,1)$.

\begin{theorem}[{\cite[Theorem 9.6]{Iwa02}}]
\label{infty1Peterssonthm}
Let $h^{\hol} : 2\N \to \C$ be a sequence satisfying $h^{\hol}(k) \ll k^{-2 - \delta}$ for some $\delta > 0$. Then for $m,n \in \N$ and $q > 1$,
\[\AA_q^{\hol}\left(\sigma_{\bb};m,n;h^{\hol}\right) = \OO_q^{\hol}\left(\sigma_{\bb};m,n; \Ks^{\hol} h^{\hol}\right),\]
where
\[\OO_q^{\hol}\left(\sigma_{\bb};m,n; \Ks^{\hol} h^{\hol}\right) \defeq \sqrt{q} \sum_{\substack{c = 1 \\ (c,q) = 1}}^{\infty} \frac{S(m,n\overline{q};c)}{c} \left(\Ks^{\hol} h^{\hol}\right)\left(\frac{\sqrt{mn}}{c\sqrt{q}}\right).\]
\end{theorem}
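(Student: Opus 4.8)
The plan is to obtain this as a special case of the general Petersson formula, \cite[Theorem 9.6]{Iwa02}, which is valid for any ordered pair of cusps $(\aa,\bb)$ of $\Gamma_0(q) \backslash \Hb$, specialised here to $\aa \sim \infty$ with $\sigma_{\aa} = I$ and $\bb \sim 1$ with the scaling matrix $\sigma_{\bb}$ chosen above. In this generality the formula expresses the weighted spectral sum $\AA_q^{\hol}(\sigma_{\bb};m,n;h^{\hol})$ of products of Fourier coefficients $\overline{\rho_f}(\sigma_{\bb};m) \rho_f(n)$ over an orthonormal basis of holomorphic cusp forms of level $q$ as the sum of a diagonal term, which is a scalar multiple of $\delta_{\aa \sim \bb} \delta_{m,n}$, and a sum over the allowed moduli of generalised Kloosterman sums $S_{\aa\bb}(m,n;c)$ integrated against the Bessel kernel $\Ks^{\hol} h^{\hol}$ from \eqref{KsholJkholeq}. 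Since $\infty$ and $1$ are inequivalent cusps of $\Gamma_0(q) \backslash \Hb$ for $q > 1$, the diagonal term vanishes identically, which is precisely why no delta term appears in the statement; and the decay hypothesis $h^{\hol}(k) \ll k^{-2 - \delta}$ guarantees absolute convergence of the Poincar\'{e} series and of the resulting sum of Kloosterman sums, exactly as in the proof of \hyperref[Peterssonthm]{Theorem \ref*{Peterssonthm}}, so all rearrangements are justified.

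The first substantive step is to determine the set of allowed moduli and the generalised Kloosterman sum attached to this pair of cusps. Writing a general element of $\Gamma_0(q)$ as $\left(\begin{smallmatrix} \alpha & \beta \\ \gamma q & \delta \end{smallmatrix}\right)$ with $\alpha\delta - \beta\gamma q = 1$ and multiplying out $\left(\begin{smallmatrix} \alpha & \beta \\ \gamma q & \delta \end{smallmatrix}\right) \sigma_{\bb}$, one finds that the lower-left entry equals $\sqrt{q}(\gamma q + \delta)$; since $\det = 1$ forces $(\delta,q) = 1$, we have $(\gamma q + \delta, q) = 1$, so the moduli appearing are exactly $c\sqrt{q}$ with $c \in \N$ and $(c,q) = 1$, which accounts for the $\sqrt{q}$ normalisation in $\OO_q^{\hol}(\sigma_{\bb};m,n;\Ks^{\hol} h^{\hol})$. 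Tracking the $(1,1)$ and $(2,2)$ entries of a complete set of double-coset representatives $\Gamma_{\infty} \backslash \{\gamma \sigma_{\bb}\} / \Gamma_{\infty}$ with fixed lower-left entry then identifies $S_{\aa\bb}(m,n;c\sqrt{q})$ with the twisted Kloosterman sum $S(m,n\overline{q};c)$ from \eqref{Kloosteq}, where $\overline{q}q \equiv 1 \pmod{c}$; here the appearance of $\overline{q}$ is the only genuinely delicate point, and it arises from the entry $\delta d \equiv \overline{q} \pmod{c}$ modulo $c = \gamma q + \delta$ after using the relation $dq - b = 1$ defining $\sigma_{\bb}$.

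It then remains to match normalisation constants. The relation \eqref{cusprholambda} between $\rho_f(n)$ and $\lambda_f(n)$, combined with the normalisation of $\AA_q^{\hol}$ built from $\frac{2q}{\pi} \Gamma(k_f) \sqrt{mn} \overline{\rho_f}(\sigma_{\bb};m) \rho_f(n)$ and the Bessel kernel defined via $\JJ_k^{\hol}(x) = 2\pi i^{-k} J_{k-1}(4\pi x)$, converts \cite[Theorem 9.6]{Iwa02} into exactly the stated identity $\AA_q^{\hol}(\sigma_{\bb};m,n;h^{\hol}) = \OO_q^{\hol}(\sigma_{\bb};m,n;\Ks^{\hol} h^{\hol})$, with the factor $q$ of the $(\infty,\infty)$ formula replaced here by $\sqrt{q}$ because one cusp has width $1$ and the other width $q$. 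The main obstacle is precisely this bookkeeping: the explicit evaluation of the generalised Kloosterman sum and the careful tracking of the normalisation constants (the $\sqrt{q}$ versus $q$, and the denominator $c$ versus $c\sqrt{q}$ in the modulus of the Kloosterman sum versus the argument of the Bessel transform). There is no analytic difficulty beyond what is already present in the classical Petersson formula.
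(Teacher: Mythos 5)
The paper does not actually give a proof of this theorem: it is stated as a citation of \cite[Theorem~9.6]{Iwa02}, just as \hyperref[Peterssonthm]{Theorem~\ref*{Peterssonthm}} and the Kuznetsov analogues are. Your proposal therefore cannot be compared against a written argument in the paper, but it correctly reconstructs the standard route: specialise the general Petersson formula for a pair of cusps $(\aa,\bb)$ to $\aa\sim\infty$, $\bb\sim 1$, observe that the diagonal term vanishes because the two cusps are inequivalent when $q>1$, identify the set of allowed moduli as $c\sqrt{q}$ with $(c,q)=1$ by computing the lower-left entry of $\sigma_{\infty}^{-1}\gamma\sigma_{\bb}$, and then simplify the generalised Kloosterman sum. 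All of this is right and is what the citation is implicitly asking the reader to do.

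One bookkeeping claim in your second paragraph is imprecise, though. You assert that the $\overline{q}$ arises ``from the entry $\delta d \equiv \overline{q} \pmod{c}$.'' With $c = \gamma q + \delta$ this congruence does not hold as stated: the $(2,2)$-entry of $\gamma\sigma_{\bb}$ (after dividing out $\sqrt{q}$) is $\gamma b + \delta d$, and reducing modulo $c$ using $\delta \equiv -\gamma q$ and $dq - b = 1$ gives $\gamma b + \delta d \equiv -\gamma \pmod{c}$, not $\overline{q}$. The $\overline{q}$ actually enters through the $(1,1)$-entry $\alpha + \beta$, for which the determinant relation $\alpha\delta - \beta\gamma q = 1$ together with $\delta = c - \gamma q$ yields $(\alpha+\beta)\gamma q \equiv -1 \pmod{c}$, i.e. $\alpha + \beta \equiv -\overline{\gamma}\,\overline{q}$. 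Summing over $\gamma$ in a reduced residue system modulo $c$ then produces $S(n, m\overline{q};c)$, which equals $S(m, n\overline{q};c)$ after the substitutions $\gamma\mapsto -\gamma$ and $\gamma\mapsto q\gamma$. So your conclusion is correct; only the localisation of where $\overline{q}$ appears needs correcting. Everything else — the vanishing of the diagonal term, the $\sqrt{q}$ normalisation, the argument $\sqrt{mn}/(c\sqrt{q})$ of the Bessel transform, the decay hypothesis justifying convergence — is exactly as it should be.
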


\subsubsection{The Kloosterman Summation Formula}

The Kloosterman summation formula (due to Kuznetsov and often referred to as the Kuznetsov formula, though differing from \hyperref[Kuznetsovthm]{Theorem \ref*{Kuznetsovthm}}) gives an expression in reverse to \hyperref[Kuznetsovthm]{Theorems \ref*{Kuznetsovthm}} and \ref{Peterssonthm}. Rather than expressing sums of Fourier coefficients of automorphic forms weighted by functions $h$ or $h^{\hol}$ in terms of a delta term and sums of Kloosterman sums weighted by transformed functions $\Ks^{\pm} h$ and $\Ks^{\hol} h^{\hol}$, it expresses sums of Kloosterman sums weighted by a function $H$ in terms of sums of automorphic forms weighted by transformed functions $\Ls^{\pm} H$ and $\Ls^{\hol} H$. Notably, there is no delta term in the Kloosterman summation formula.

\begin{theorem}[{\cite[Theorem 16.5]{IK04}}]
\label{Kloostermanthm}
For $H \in C^3((0,\infty))$ satisfying
\[x^j \frac{d^j}{dx^j} H(x) \ll \min\left\{x,x^{-\frac{3}{2}}\right\}\]
for $j \in \{0,1,2,3\}$ and $m,n \geq 1$, we have that
\[\AA_q^{\Maass}\left(m,\pm n; \Ls^{\pm} H\right) + \AA_q^{\Eis}\left(m,\pm n; \Ls^{\pm} H\right) + \delta_{\pm,+} \AA_q^{\hol}\left(m,n; \Ls^{\hol} H\right) = \OO_q(m,\pm n; H),\]
where
\begin{equation}
\label{Lseq}
(\Ls^{\pm} H)(t) \defeq \int_{0}^{\infty} \JJ_t^{\pm}(x) H(x) \, \frac{dx}{x}, \qquad (\Ls^{\hol} H)(k) \defeq \int_{0}^{\infty} \JJ_k^{\hol}(x) H(x) \, \frac{dx}{x}.
\end{equation}
\end{theorem}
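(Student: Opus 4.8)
The statement is an adaptation of the Kloosterman summation formula \cite[Theorem 16.5]{IK04} to level $q$ and to the normalised spectral sums $\AA_q^{\Maass}$, $\AA_q^{\Eis}$, $\AA_q^{\hol}$; the plan is to deduce it from the Kuznetsov and Petersson formul\ae{} already recorded as Theorems \ref{Kuznetsovthm} and \ref{Peterssonthm}, together with the classical inversion formul\ae{} for Bessel transforms. The idea is simply to feed the test data $h = \Ls^{\pm} H$ and $h^{\hol} = \Ls^{\hol} H$ into those formul\ae{} and then to invert the Bessel transforms.

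First I would verify that, under the hypothesis $x^j \frac{d^j}{dx^j} H(x) \ll \min\{x, x^{-3/2}\}$ for $j \in \{0,1,2,3\}$, the function $t \mapsto (\Ls^{\pm} H)(t)$ extends to an even function holomorphic in $|\Im(t)| \leq 1/4 + \delta$ with $(\Ls^{\pm} H)(t) \ll (1 + |t|)^{-2 - \delta}$, and the sequence satisfies $(\Ls^{\hol} H)(k) \ll k^{-2 - \delta}$; this is a direct computation from the integral representations \eqref{Jrpmeq}, \eqref{KsholJkholeq}, and \eqref{Lseq} using uniform bounds for $J_{2it}$, $K_{2it}$, and $J_{k - 1}$ together with integration by parts in $x$, with the decay of $H$ at $0$ and $\infty$ supplying both convergence and rapid decay. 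Hence Theorems \ref{Kuznetsovthm} and \ref{Peterssonthm} may be applied. In the minus case, Theorem \ref{Kuznetsovthm} gives $\AA_q^{\Maass}(m,-n;\Ls^{-} H) + \AA_q^{\Eis}(m,-n;\Ls^{-} H) = \DD_q(m,-n;\Ns \Ls^{-} H) + \OO_q(m,-n;\Ks^{-} \Ls^{-} H)$; since $m, n \geq 1$ the delta term $\delta_{m,-n} q \Ns \Ls^{-} H$ vanishes, so it remains only to establish the pointwise identity $(\Ks^{-} \Ls^{-} H)(x) = H(x)$. Unwinding \eqref{NsKspmeq} and \eqref{Lseq}, this reads $\int_{-\infty}^{\infty} \JJ_r^{-}(x) \left(\int_{0}^{\infty} \JJ_r^{-}(y) H(y) \, \frac{dy}{y}\right) d_{\spec} r = H(x)$, which, on writing $\JJ_r^{-}(x) = 4 \cosh(\pi r) K_{2ir}(4\pi x)$ and interchanging the two integrals (legitimate by the decay of $H$), is precisely the Kontorovich--Lebedev inversion formula for the $K$-Bessel transform.

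For the plus case I would add Theorem \ref{Kuznetsovthm} (plus sign) applied to $\Ls^{+} H$ to Theorem \ref{Peterssonthm} applied to $\Ls^{\hol} H$: the Kloosterman terms combine into $\OO_q(m,n;\Ks^{+} \Ls^{+} H + \Ks^{\hol} \Ls^{\hol} H)$ via \eqref{OOqeq}, and the delta terms combine into $\delta_{m,n} q \big(\Ns \Ls^{+} H + \sum_{k \in 2\N} \tfrac{k - 1}{2\pi^2} (\Ls^{\hol} H)(k)\big)$. The identity therefore reduces to the combined Bessel completeness relation $(\Ks^{+} \Ls^{+} H)(x) + (\Ks^{\hol} \Ls^{\hol} H)(x) = H(x)$ together with the vanishing of that combined delta term, which is the statement that the odd-index $J$-Bessel functions $\{J_{k - 1} : k \in 2\N\}$ and the continuous family $\{J_{2ir} : r \in \R\}$ form a complete system on $L^2((0,\infty), \tfrac{dx}{x})$ with the spectral measures implicit in \eqref{dspeceq} and \eqref{KsholJkholeq} (the Sears--Titchmarsh / Neumann-series inversion); the precise form of this relation also produces the cancellation of the combined delta term. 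The passage to level $q > 1$ is pure bookkeeping: Theorems \ref{Kuznetsovthm} and \ref{Peterssonthm} are already in the $\Gamma_0(q)$ setting, the $\AA_q$-sums merely repackage their spectral sides, and $\OO_q(m, \pm n; H)$ is carried along unchanged; the companion formula for the $(\infty,1)$ pair of cusps follows identically from Theorems \ref{infty1Kuznetsovthm} and \ref{infty1Peterssonthm}.

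The main obstacle is not the algebra but the analysis underlying these steps: the Bessel functions and the Kloosterman sums decay only slowly, so one must exploit the full strength of the hypothesis $x^j \frac{d^j}{dx^j} H(x) \ll \min\{x, x^{-3/2}\}$ --- in particular the $C^3$ regularity together with the $x^{-3/2}$ tail --- to justify every interchange of integration and summation, the absolute convergence of the $c$-sum in $\OO_q$, and the claim that $\Ls^{\pm} H$ and $\Ls^{\hol} H$ genuinely lie in the admissible classes for the Kuznetsov and Petersson formul\ae{}. The two Bessel inversion identities invoked (Kontorovich--Lebedev and Sears--Titchmarsh) are classical and may be quoted.
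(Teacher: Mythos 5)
The paper itself does not prove this statement: it cites it directly to \cite[Theorem~16.5]{IK04}, whose proof there proceeds by unfolding an inner product of two Poincar\'{e} series and spectrally expanding, \emph{not} by inversion from the Kuznetsov and Petersson formul\ae{}. Your proposal therefore takes a genuinely different route, and it is a reasonable one, but it has a gap in the $+$ case that you do not close.

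Your treatment of the $-$ case is complete: after substituting $h = \Ls^{-} H$ into \hyperref[Kuznetsovthm]{Theorem \ref*{Kuznetsovthm}}, the delta term is $\delta_{m,-n}\, q\, \Ns\Ls^{-}H$, which vanishes identically for $m,n\geq 1$, so only the Kontorovich--Lebedev identity $\Ks^{-}\Ls^{-}H = H$ is required, and that is classical. The issue is in the $+$ case. Adding \hyperref[Kuznetsovthm]{Theorem \ref*{Kuznetsovthm}} (with $h=\Ls^{+}H$) to \hyperref[Peterssonthm]{Theorem \ref*{Peterssonthm}} (with $h^{\hol}=\Ls^{\hol}H$), the Kloosterman terms combine to $\OO_q(m,n;\Ks^{+}\Ls^{+}H + \Ks^{\hol}\Ls^{\hol}H)$, and the Sears--Titchmarsh completeness relation gives $\Ks^{+}\Ls^{+}H + \Ks^{\hol}\Ls^{\hol}H = H$; that part is fine. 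But you are then left with the combined delta term
\[
\delta_{m,n}\, q\left(\Ns\Ls^{+}H + \sum_{\substack{k\geq 2 \\ k\equiv 0\ (2)}}\frac{k-1}{2\pi^2}\,(\Ls^{\hol}H)(k)\right),
\]
and you assert that ``the precise form of this relation also produces the cancellation of the combined delta term.'' That does not follow: the completeness relation is a two-variable kernel identity (schematically $\int \JJ_r^{+}(x)\JJ_r^{+}(y)\, d_{\spec}r + \sum_k \tfrac{k-1}{2\pi^2}\JJ_k^{\hol}(x)\JJ_k^{\hol}(y) = y\,\delta(x-y)$), whereas the quantity you need to kill is the \emph{one-variable} integral $\int \JJ_r^{+}(y)\, d_{\spec}r + \sum_k \tfrac{k-1}{2\pi^2}\JJ_k^{\hol}(y)$ paired against $H(y)\, dy/y$. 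These are different objects; the former does not imply the vanishing of the latter. In Mellin terms, you would need to show that $\int_{-\infty}^{\infty}\widehat{\JJ_r^{+}}(s)\, d_{\spec}r + \sum_{k} \tfrac{k-1}{2\pi^2}\widehat{\JJ_k^{\hol}}(s) = 0$ on the relevant vertical line, which via \eqref{J_r+Mellin} and a contour shift in $r$ involves not only the residues of $\tanh\pi r$ at $r = -i(2n+1)/2$ (which do reproduce the holomorphic sum) but also additional residues from the poles of $\Gamma(s/2 - ir)$ in the lower half-plane, and the interaction of these with the vanishing of $H$ at the origin is precisely the point that needs an argument. So either prove this Bessel identity directly, or sidestep the issue entirely by following the direct Poincar\'{e}-series unfolding of \cite[Theorem~16.5]{IK04}, which is what the paper is in fact relying on.

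A secondary caveat: you should also check that $\Ls^{\pm}H$ is genuinely admissible for \hyperref[Kuznetsovthm]{Theorem \ref*{Kuznetsovthm}} in Yoshida's weakened form, i.e.~that it is holomorphic in $|\Im(t)|\leq 1/4 + \delta$ with decay $\ll (1+|t|)^{-2-\delta}$; the hypothesis $x^j H^{(j)}(x) \ll \min\{x, x^{-3/2}\}$ for $j\leq 3$ should suffice via repeated integration by parts and \eqref{J_r+Mellin}--\eqref{J_r-Mellin}, but the analytic continuation in $t$ in particular deserves a sentence.
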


Once more, we will require the Kloosterman summation formula associated to the pair of cusps $(\aa,\bb) = (\infty,1)$.

\begin{theorem}
\label{infty1Kloostermanthm}
For $H \in C^3((0,\infty))$ satisfying
\[x^j \frac{d^j}{dx^j} H(x) \ll \min\left\{x,x^{-\frac{3}{2}}\right\}\]
for $j \in \{0,1,2,3\}$ and $m,n \geq 1$, we have that
\begin{multline*}
\AA_q^{\Maass}\left(\sigma_{\bb};m,\pm n; \Ls^{\pm} H\right) + \AA_q^{\Eis}\left(\sigma_{\bb};m,\pm n; \Ls^{\pm} H\right) + \delta_{\pm,+} \AA_q^{\hol}\left(\sigma_{\bb};m,n; \Ls^{\hol} H\right)	\\
= \OO_q(\sigma_{\bb};m,\pm n; H).
\end{multline*}
\end{theorem}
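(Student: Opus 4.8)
The statement to prove is \hyperref[infty1Kloostermanthm]{Theorem \ref*{infty1Kloostermanthm}}, the Kloosterman summation formula associated to the pair of cusps $(\aa,\bb) = (\infty,1)$.

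My plan is to reduce this statement to the already-available Kloosterman summation formula for the pair of cusps $(\infty,\infty)$, namely \hyperref[Kloostermanthm]{Theorem \ref*{Kloostermanthm}}, by translating between Fourier coefficients at the cusp $\bb \sim 1$ and Fourier coefficients at the cusp $\infty$. The left-hand side of the desired identity is, by definition, a weighted spectral sum of products $\overline{\rho_f}(\sigma_{\bb};m) \rho_f(\pm n)$ over Maa\ss{} cusp forms, together with the analogous Eisenstein integral and (when the sign is $+$) the holomorphic contribution; here $\rho_f(\sigma_{\bb};m)$ is the $m$-th Fourier coefficient of $f(\sigma_{\bb} z)$ where $\sigma_{\bb} = \left(\begin{smallmatrix} \sqrt{q} & b/\sqrt{q} \\ \sqrt{q} & d\sqrt{q} \end{smallmatrix}\right)$. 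The key point, already used in the proof of \hyperref[infty1Kuznetsovlemma]{Lemma \ref*{infty1Kuznetsovlemma}}, is that $\sigma_{\bb}$ is, up to the diagonal scaling $\left(\begin{smallmatrix} \sqrt{q} & 0 \\ 0 & 1/\sqrt{q} \end{smallmatrix}\right)$, the Atkin--Lehner operator $W_q$ on $\Gamma_0(q)$; since Atkin--Lehner operators normalise $\Gamma_0(q)$, the map $f \mapsto f \circ \sigma_{\bb}$ permutes each of the relevant orthonormal bases $\BB_0(\Gamma_0(q))$, $\BB_{\hol}(\Gamma_0(q))$, and the family of Eisenstein series, while preserving the spectral parameters $t_f$, $k_f$ and the inner product. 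Consequently the left-hand side of the $(\infty,1)$ Kloosterman formula equals the left-hand side of the $(\infty,\infty)$ Kloosterman formula in which one of the two Fourier indices has been ``rotated'' by $\sigma_{\bb}$.

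Concretely, I would argue as follows. First, recall from \cite[(16.39)--(16.41)]{IK04} (or rederive it directly) that the general Kloosterman summation formula holds for an arbitrary pair of inequivalent cusps $(\aa,\bb)$ of $\Gamma_0(q)$: the spectral side is built from the Fourier coefficients $\rho_{\aa f}(m)$ and $\rho_{\bb f}(n)$ at the two cusps, and the geometric side is $\sum_{c \in \CC_{\aa\bb}} c^{-1} S_{\aa\bb}(m,\pm n;c) (\Ls^{\pm} H)\left(\sqrt{mn}/c\right)$, where $\CC_{\aa\bb}$ and $S_{\aa\bb}$ are the allowed moduli and the generalised Kloosterman sum for the pair $(\aa,\bb)$. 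The sole remaining task is then to identify, for the specific choice $\aa \sim \infty$, $\bb \sim 1$, $\sigma_{\bb} = W_q \left(\begin{smallmatrix} 1/\sqrt{q} & 0 \\ 0 & \sqrt{q}\end{smallmatrix}\right)$, the set $\CC_{\infty\bb}$ and the sum $S_{\infty\bb}$. A direct coset computation with $\sigma_{\aa}^{-1} \Gamma_0(q) \sigma_{\bb} = \Gamma_0(q) \sigma_{\bb}$ shows that the allowed moduli are exactly $c$ with $(c,q) = 1$ (the factor $\sqrt{q}$ enters through the scaling matrix), and that $S_{\infty\bb}(m,n;c) = S(m, n\overline{q}; c)$ where $\overline{q}q \equiv 1 \pmod c$, with the normalising factor $\sqrt{q}$ appearing out front and inside the argument $\sqrt{mn}/(c\sqrt{q})$ of $\Ls^{\pm} H$; this is precisely the shape of $\OO_q(\sigma_{\bb};m,\pm n;H)$ in \hyperref[infty1Kuznetsovthm]{Theorem \ref*{infty1Kuznetsovthm}}. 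The analytic hypothesis $x^j H^{(j)}(x) \ll \min\{x, x^{-3/2}\}$ for $j \le 3$ is exactly what is needed for absolute convergence and for the validity of the unfolding, identically to \hyperref[Kloostermanthm]{Theorem \ref*{Kloostermanthm}}; and the transforms $\Ls^{\pm}$, $\Ls^{\hol}$ are unchanged since the Bessel kernels $\JJ_r^{\pm}$, $\JJ_k^{\hol}$ depend only on the spectral parameter, not on the cusps.

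The main obstacle is purely bookkeeping: correctly computing the double coset space and the resulting generalised Kloosterman sum $S_{\infty\bb}$ and verifying that it collapses to the classical Kloosterman sum $S(m,n\overline{q};c)$ with the stated normalisation. Once the coset computation is done, everything else is a transcription of \hyperref[Kloostermanthm]{Theorem \ref*{Kloostermanthm}}. An entirely equivalent route, which avoids invoking the general-cusp Kloosterman formula and instead leverages material already in the paper, is to observe that \hyperref[Kloostermanthm]{Theorem \ref*{Kloostermanthm}} and \hyperref[infty1Kuznetsovthm]{Theorem \ref*{infty1Kuznetsovthm}} (together with its holomorphic counterpart \hyperref[infty1Peterssonthm]{Theorem \ref*{infty1Peterssonthm}}) are, respectively, the ``Kloosterman-to-spectral'' and ``spectral-to-Kloosterman'' directions of one and the same Poincar\'{e}-series inner-product identity for the pair $(\infty,\bb)$; since the spectral-to-Kloosterman direction for this pair of cusps is already established in \hyperref[infty1Kuznetsovthm]{Theorem \ref*{infty1Kuznetsovthm}} and \hyperref[infty1Peterssonthm]{Theorem \ref*{infty1Peterssonthm}}, one inverts it by running the Sears--Titchmarsh-type inversion of the kernel transforms $\Ns, \Ks^{\pm}, \Ks^{\hol} \leftrightarrow \Ls^{\pm}, \Ls^{\hol}$ exactly as in the passage from \cite[Theorem 9.3]{Iwa02} to \cite[Theorem 16.5]{IK04}, using the growth hypothesis on $H$ to justify the interchange of the spectral sum with the Bessel integrals. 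Either way, no new analytic input beyond what underlies \hyperref[Kloostermanthm]{Theorem \ref*{Kloostermanthm}} is required.
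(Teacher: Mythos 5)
Your proposal is correct and follows essentially the route the paper intends: the theorem is stated without proof as the standard general-cusp Kloosterman summation formula of \cite[Theorem 16.5]{IK04} specialised to the pair $(\infty,1)$, with the geometric side (moduli $c$ coprime to $q$, the twisted sums $S(m,\pm n\overline{q};c)$, and the normalising $\sqrt{q}$) already identified in \hyperref[infty1Kuznetsovthm]{Theorem \ref*{infty1Kuznetsovthm}}. Either of your two routes --- the double-coset computation of $S_{\infty\bb}$ or the Bessel-transform inversion of \hyperref[infty1Kuznetsovthm]{Theorems \ref*{infty1Kuznetsovthm}} and \ref{infty1Peterssonthm} --- is the standard derivation and requires no analytic input beyond what underlies \hyperref[Kloostermanthm]{Theorem \ref*{Kloostermanthm}}.
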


\subsection{The Mellin Transform}

We recall the following definitions and properties of the Mellin transform; see \cite[Section 2.1]{BlK19b}. Let $W : [0,\infty) \to \C$ be a $J$-times continuously differentiable function satisfying $x^j W^{(j)}(x) \ll_{J,a,b} \min\{x^{-a},x^{-b}\}$ for some $-\infty < a < b < \infty$ and $j \in \{0,\ldots,J\}$. The Mellin transform $\widehat{W}$ of $W$ is
\[\widehat{W}(s) \defeq \int_{0}^{\infty} W(x) x^s \, \frac{dx}{x}.\]
This is defined initially as an absolutely convergent integral for $a < \Re(s) < b$ and satisfies $\widehat{W}(s) \ll_J (1 + |s|)^{-J}$ in this region. Similarly, the inverse Mellin transform of a holomorphic function $\WW :\{z \in \C : a < \Re(s) < b\} \to \C$ satisfying $\WW(s) \ll_r (1 + |s|)^{-r}$ for some $r > 1$ is given by
\[\widecheck{\WW}(x) \defeq \frac{1}{2\pi i} \int_{\sigma - i\infty}^{\sigma + i\infty} \WW(s) x^{-s} \, ds,\]
where $a < \sigma < b$. This is a $J$-times continuously differentiable function on $[0,\infty)$, where $J = \lceil r \rceil - 1$, and satisfies $x^j \widecheck{\WW}^{(j)}(x) \ll_{J,a,b} \min\{x^{-a},x^{-b}\}$ for $j \in \{0,\ldots,J\}$.

\begin{lemma}[{\cite[(A.7)]{BLM19}, \cite[(3.13)]{BlK19b}}]
We have that
\begin{align}
\notag
\widehat{\JJ_r^{+}}(s) & = \frac{\pi i (2\pi)^{-s}}{2 \sinh \pi r} \left(\frac{\Gamma\left(\frac{s}{2} + ir\right)}{\Gamma\left(1 - \frac{s}{2} + ir\right)} - \frac{\Gamma\left(\frac{s}{2} - ir\right)}{\Gamma\left(1 - \frac{s}{2} - ir\right)}\right)	\\
\label{J_r+Mellin}
& = (2\pi)^{-s} \Gamma\left(\frac{s}{2} + ir\right) \Gamma\left(\frac{s}{2} - ir\right) \cos \frac{\pi s}{2},	\\
\label{J_r-Mellin0}
\widehat{\JJ_r^{-}}(s) & = \frac{\pi i (2\pi)^{-s}}{2 \tanh \pi r \cos \frac{\pi s}{2}} \left(\frac{\Gamma\left(\frac{s}{2} + ir\right)}{\Gamma\left(1 - \frac{s}{2} + ir\right)} - \frac{\Gamma\left(\frac{s}{2} - ir\right)}{\Gamma\left(1 - \frac{s}{2} - ir\right)}\right)	\\
\label{J_r-Mellin}
& = (2\pi)^{-s} \Gamma\left(\frac{s}{2} + ir\right) \Gamma\left(\frac{s}{2} - ir\right) \cosh \pi r,	\\
\notag
\widehat{\JJ_k^{\hol}}(s) & = \pi i^{-k} (2\pi)^{-s} \frac{\Gamma\left(\frac{s + k - 1}{2}\right)}{\Gamma\left(\frac{1 - s + k}{2}\right)}	\\
\notag
& = (2\pi)^{-s} \Gamma\left(\frac{s + k - 1}{2}\right) \Gamma\left(\frac{s - k + 1}{2}\right) \cos \frac{\pi s}{2}.
\end{align}
\end{lemma}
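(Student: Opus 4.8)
The plan is to reduce everything to the classical Mellin transforms of Bessel functions and then to convert the resulting quotients of Gamma functions into products by means of the reflection formula.

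First I would recall the two standard evaluations (see, e.g., \cite{GR07})
\[\int_0^\infty J_\nu(u) u^{s} \, \frac{du}{u} = 2^{s - 1} \frac{\Gamma\left(\frac{s + \nu}{2}\right)}{\Gamma\left(1 + \frac{\nu - s}{2}\right)}, \qquad \int_0^\infty K_\nu(u) u^{s} \, \frac{du}{u} = 2^{s - 2} \Gamma\left(\frac{s + \nu}{2}\right) \Gamma\left(\frac{s - \nu}{2}\right),\]
the former valid for $-\Re(\nu) < \Re(s) < 3/2$ and the latter for $\Re(s) > |\Re(\nu)|$. Substituting $u = 4\pi x$ turns $2^{s}$ into $(2\pi)^{-s}$ and, after inserting the definitions \eqref{Jrpmeq} and \eqref{KsholJkholeq} and taking $\nu = \pm 2ir$ or $\nu = k - 1$, reproduces: for $\widehat{\JJ_r^{+}}$ the first displayed expression in \eqref{J_r+Mellin} (from the $J$-Bessel transform applied to $J_{\pm 2ir}$), for $\widehat{\JJ_r^{-}}$ the second expression in \eqref{J_r-Mellin} (directly from the $K$-Bessel transform), and for $\widehat{\JJ_k^{\hol}}$ the first of its two displayed expressions. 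Here one must be mildly careful that $r$ ranges over $\R \cup i(-1/2,1/2)$, so that $2ir$ has real part in $(-1,1)$; this merely shifts the strip of absolute convergence by a bounded amount, and all identities then extend by analytic continuation in $s$ away from the obvious poles (at $s \in 2(\pm ir - \N_0)$ for $\widehat{\JJ_r^{\pm}}$, at $s \in 1 - k - 2\N_0$ for $\widehat{\JJ_k^{\hol}}$).

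Next I would pass between the quotient form and the product form in each case using $\Gamma(z)\Gamma(1 - z) = \pi / \sin \pi z$. Writing $\Gamma(1 - s/2 \pm ir)^{-1} = \pi^{-1}\Gamma(s/2 \mp ir) \sin\pi(s/2 \mp ir)$ and invoking the difference identity
\[\sin\pi\left(\frac{s}{2} - ir\right) - \sin\pi\left(\frac{s}{2} + ir\right) = -2i \cos\frac{\pi s}{2} \sinh \pi r\]
(using $\sin(\pi i r) = i \sinh \pi r$), the bracketed difference of Gamma-quotients appearing in \eqref{J_r+Mellin} and \eqref{J_r-Mellin0} collapses to $\pi^{-1}\Gamma(s/2 + ir)\Gamma(s/2 - ir)\left(-2i\cos\frac{\pi s}{2}\sinh\pi r\right)$; feeding this back shows that the two expressions for $\widehat{\JJ_r^{+}}$ coincide and simultaneously identifies the first form \eqref{J_r-Mellin0} of $\widehat{\JJ_r^{-}}$ with the second. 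For $\widehat{\JJ_k^{\hol}}$ the same manipulation produces $i^{-k}(2\pi)^{-s}\Gamma\left(\frac{s + k - 1}{2}\right)\Gamma\left(\frac{s - k + 1}{2}\right) \sin\pi\frac{s - k + 1}{2}$, and since $k \in 2\N$ one has $i^{-k} \sin\pi\frac{s - k + 1}{2} = \cos\frac{\pi s}{2}$, which yields the second form.

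I expect the computation to be essentially bookkeeping, so the only genuine point of care is the first step: ensuring the Mellin integrals converge (which confines one to a vertical strip in $\Re(s)$, and in the $J$-Bessel case forces the upper bound $\Re(s) < 3/2$) and then checking that the closed-form identities persist throughout the asserted domain by analytic continuation. Everything after that is an application of the reflection formula together with elementary trigonometric identities.
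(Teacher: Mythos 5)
Your proposal is correct, and it is the standard derivation: the paper does not prove this lemma itself but cites \cite{BLM19} and \cite{BlK19b}, whose computations are exactly the ones you carry out (Mellin transforms of $J_\nu$ and $K_\nu$, the substitution $u = 4\pi x$, and the reflection formula $\Gamma(z)\Gamma(1-z) = \pi/\sin \pi z$ combined with $\sin\pi(\tfrac{s}{2}-ir) - \sin\pi(\tfrac{s}{2}+ir) = -2i\cos\tfrac{\pi s}{2}\sinh\pi r$ to pass from the quotient form to the product form). All your constants and the parity bookkeeping $i^{-k}\sin\pi\tfrac{s-k+1}{2} = \cos\tfrac{\pi s}{2}$ for $k \in 2\N$ check out.
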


From Stirling's formula \eqref{Stirlingeq}, we obtain the following.

\begin{corollary}
\label{JJrMellinasympcor}
The functions $\widehat{\JJ_r^{\pm}}(s)$ extend meromorphically to $\C$ with simple poles at $s = 2(\pm ir - n)$ for $n \in \N_0$. For $s = \sigma + i\tau \in \C$ in bounded vertical strips at least a bounded distance away from $\{2(\pm ir - n) : n \in \N_0\}$ and $r = u + iv$ in bounded horizontal strips,
\begin{align*}
\widehat{\JJ_r^{+}}(s) & \ll_{\sigma,v} \left(1 + \left|\tau + 2u\right|\right)^{\frac{1}{2} (\sigma - 2v - 1)} \left(1 + \left|\tau - 2u\right|\right)^{\frac{1}{2} (\sigma + 2v - 1)} \times \begin{dcases*}
e^{-\frac{\pi}{2}(2|u| - |\tau|)} & if $|\tau| \leq 2|u|$,	\\
1 & if $|\tau| \geq 2|u|$,
\end{dcases*}	\\
\widehat{\JJ_r^{-}}(s) & \ll_{\sigma,v} \left(1 + \left|\tau + 2u\right|\right)^{\frac{1}{2} (\sigma - 2v - 1)} \left(1 + \left|\tau - 2u\right|\right)^{\frac{1}{2} (\sigma + 2v - 1)} \times \begin{dcases*}
1 & if $|\tau| \leq 2|u|$,	\\
e^{-\frac{\pi}{2}(|\tau| - 2|u|)} & if $|\tau| \geq 2|u|$.
\end{dcases*}
\end{align*}
Moreover,
\[\Res_{s = 2(\pm ir - n)} \widehat{\JJ_r^{+}}(s) = (-1)^n \Res_{s = 2(\pm ir - n)} \widehat{\JJ_r^{-}}(s) \ll_{\sigma,v} (1 + |u|)^{-n \mp v - \frac{1}{2}}.\]
For $s = \sigma + i\tau \in \C$ in bounded vertical strips, at least a bounded distance away from $\{1 - k - 2n : n \in \N_0\}$,
\[\widehat{\JJ_k^{\hol}}(s) \ll_{\sigma} (k + |\tau|)^{\sigma - 1}.\]
Moreover,
\[\Res_{s = 1 - k - 2n} \widehat{\JJ_k^{\hol}}(s) = \frac{(2\pi i)^{k + 2n}}{\Gamma(k + n) \Gamma(n + 1)}.\]
\end{corollary}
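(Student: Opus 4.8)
The plan is to read everything off the closed-form expressions for $\widehat{\JJ_r^{\pm}}(s)$ and $\widehat{\JJ_k^{\hol}}(s)$ recorded in the preceding lemma --- namely $\widehat{\JJ_r^{+}}(s) = (2\pi)^{-s}\Gamma(\tfrac{s}{2} + ir)\Gamma(\tfrac{s}{2} - ir)\cos\tfrac{\pi s}{2}$ from \eqref{J_r+Mellin}, $\widehat{\JJ_r^{-}}(s) = (2\pi)^{-s}\Gamma(\tfrac{s}{2} + ir)\Gamma(\tfrac{s}{2} - ir)\cosh\pi r$ from \eqref{J_r-Mellin}, and the analogous formula for $\widehat{\JJ_k^{\hol}}$ --- together with Stirling's formula \eqref{Stirlingeq}. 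Since $\Gamma$ is meromorphic on $\C$ with simple poles at the nonpositive integers, each of these expressions visibly extends meromorphically to $\C$: the poles of $\widehat{\JJ_r^{\pm}}$ come from $\Gamma(\tfrac{s}{2} \pm ir)$ at $s = 2(\pm ir - n)$, $n \in \N_0$, and the prefactors $(2\pi)^{-s}$, $\cos\tfrac{\pi s}{2}$, $\cosh\pi r$ are entire (the last being constant in $s$). These poles are simple because for $r \in \R \cup i(-1/2,1/2)$ with $r \neq 0$ the two sequences $\{2(ir - n)\}$ and $\{2(-ir - n)\}$ are disjoint (the degenerate value $r = 0$, where \eqref{Jrpmeq} itself requires a limiting interpretation, is recovered by continuity). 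For the holomorphic case, the factor $\Gamma(\tfrac{s - k + 1}{2})$ has poles at $s = k - 1 - 2n$, but there $k$ even makes $s$ odd so $\cos\tfrac{\pi s}{2}$ vanishes and these poles cancel; only the poles of $\Gamma(\tfrac{s + k - 1}{2})$ at $s = 1 - k - 2n$ survive, and they are simple, as is transparent from the equivalent form $\widehat{\JJ_k^{\hol}}(s) = \pi i^{-k}(2\pi)^{-s}\Gamma(\tfrac{s + k - 1}{2}) / \Gamma(\tfrac{1 - s + k}{2})$, whose denominator is finite and nonzero at those points.

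For the size estimates I would apply Stirling in the form $|\Gamma(\sigma' + i\tau')| \asymp (1 + |\tau'|)^{\sigma' - 1/2} e^{-\tfrac{\pi}{2}|\tau'|}$, valid in bounded vertical strips, to each $\Gamma$-factor. Writing $s = \sigma + i\tau$ and $r = u + iv$, the arguments $\tfrac{s}{2} + ir$ and $\tfrac{s}{2} - ir$ have real parts $\tfrac12(\sigma - 2v)$, $\tfrac12(\sigma + 2v)$ and imaginary parts $\tfrac12(\tau + 2u)$, $\tfrac12(\tau - 2u)$, so $|\Gamma(\tfrac{s}{2} + ir)\Gamma(\tfrac{s}{2} - ir)| \asymp (1 + |\tau + 2u|)^{\tfrac12(\sigma - 2v - 1)}(1 + |\tau - 2u|)^{\tfrac12(\sigma + 2v - 1)} e^{-\tfrac{\pi}{4}(|\tau + 2u| + |\tau - 2u|)}$, and the identity $|\tau + 2u| + |\tau - 2u| = 2\max\{|\tau|, 2|u|\}$ rewrites the exponential as $e^{-\tfrac{\pi}{2}\max\{|\tau|, 2|u|\}}$. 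Multiplying by $|\cos\tfrac{\pi s}{2}| \asymp e^{\tfrac{\pi}{2}|\tau|}$ in the $+$ case and by $|\cosh\pi r| \asymp e^{\pi|u|}$ in the $-$ case, and combining exponents separately in the regimes $|\tau| \leq 2|u|$ and $|\tau| \geq 2|u|$, produces exactly the two dichotomous bounds claimed (the factor $(2\pi)^{-s}$ is harmless in bounded strips). The bound $\widehat{\JJ_k^{\hol}}(s) \ll_{\sigma} (k + |\tau|)^{\sigma - 1}$ follows in the same spirit from the ratio form, using $\Gamma(a + w)/\Gamma(b + w) \asymp |w|^{a - b}$ with $w \asymp \tfrac{k}{2} + i\tfrac{\tau}{2}$ and $a - b = \sigma - 1$.

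For the residues, at $s = 2(\pm ir - n)$ precisely one of the factors $\Gamma(\tfrac{s}{2} \mp ir)$ has a simple pole, with residue $\tfrac{2(-1)^n}{n!}$ in $s$ (the $2$ from the chain rule), so evaluating the surviving factors gives $\Res \widehat{\JJ_r^{+}}$ and $\Res \widehat{\JJ_r^{-}}$ up to the ratio $\cos(\pi(ir - n)) / \cosh\pi r = (-1)^n \cosh(\pi r)/\cosh(\pi r) = (-1)^n$, using $\cos(\pi i r) = \cosh\pi r$; this yields the asserted proportionality $\Res\widehat{\JJ_r^{+}} = (-1)^n \Res\widehat{\JJ_r^{-}}$, and the size bound comes from Stirling applied to the surviving $\Gamma(2ir - n)$ against the $\cosh\pi r$ (equivalently $\cos(\pi(ir-n))$) growth. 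The holomorphic residue is similar: $\Res_{s = 1 - k - 2n}\Gamma(\tfrac{s + k - 1}{2}) = \tfrac{2(-1)^n}{n!}$, and evaluating $\pi i^{-k}(2\pi)^{-s}/\Gamma(\tfrac{1 - s + k}{2})$ at $s = 1 - k - 2n$ gives $\tfrac{2\pi i^{-k}(2\pi)^{k - 1 + 2n}(-1)^n}{\Gamma(k + n)\Gamma(n + 1)}$, which the identity $i^{-k}(-1)^n = i^{k + 2n}$ (valid as $k$ is even) rewrites as $\tfrac{(2\pi i)^{k + 2n}}{\Gamma(k + n)\Gamma(n + 1)}$. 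The only real difficulty here is bookkeeping: one must track the interplay of the exponential factors $e^{\pm\tfrac{\pi}{2}|\tau|}$ and $e^{\pm\pi|u|}$ coming from $\cos\tfrac{\pi s}{2}$, $\cosh\pi r$, and the two $\Gamma$-factors so that the regime-dependent cancellations come out correctly, and be scrupulous with the sign conventions $r = u + iv$, $s = \sigma + i\tau$ when extracting the polynomial exponents; there is nothing conceptually deep.
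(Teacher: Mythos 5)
Your proposal is correct and is exactly the paper's (essentially unwritten) argument: the corollary is stated as an immediate consequence of Stirling's formula applied to the closed forms \eqref{J_r+Mellin}, \eqref{J_r-Mellin}, and the analogous expression for $\widehat{\JJ_k^{\hol}}$, and you have filled in precisely the right details — the identity $|\tau+2u|+|\tau-2u| = 2\max\{|\tau|,2|u|\}$ against the growth of $\cos\tfrac{\pi s}{2}$ or $\cosh\pi r$, and the residue computations including the parity identity $i^{-k}(-1)^n = i^{k+2n}$ for even $k$. Nothing further is needed.
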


We require the following result on properties of $\widehat{\Ks^{-} h}(s)$.

\begin{lemma}[{\cite[Section 3.3]{Mot97}}]
\label{MellinKsextendlemma}
Suppose that $h(r)$ is an even holomorphic function in the strip $-3/2 < \Im(r) < 3/2$ with zeroes at $\pm i/2$ and satisfies $h(r) \ll (1 + |r|)^{-4 - \delta}$ in this region for some $\delta > 0$. Then the Mellin transform of $\Ks^{-} h$ extends to a holomorphic function in the strip $-3 < \Re(s) < 1$.
\end{lemma}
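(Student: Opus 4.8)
## Proof plan for Lemma \ref{MellinKsextendlemma}

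The plan is to start from the integral representation $\widehat{\Ks^{-} h}(s) = \int_0^\infty (\Ks^{-} h)(x) x^{s}\,\frac{dx}{x}$ and to exploit the fact that, by definition, $(\Ks^{-} h)(x) = \int_{-\infty}^{\infty} \JJ_r^{-}(x) h(r)\,d_{\spec}r$ with $\JJ_r^{-}(x) = 4\cosh(\pi r)K_{2ir}(4\pi x)$. First I would establish convergence of $\widehat{\Ks^{-} h}(s)$ in the narrower strip where it is manifestly absolutely convergent: from the uniform bounds on the Bessel $K$-function (exponential decay as $x \to \infty$, and $K_{2ir}(4\pi x) \ll x^{-\e}$-type behaviour together with the oscillatory factor $x^{\pm 2ir}$ as $x \to 0$), together with the decay hypothesis $h(r) \ll (1+|r|)^{-4-\delta}$ which makes the $r$-integral converge absolutely and uniformly, one checks that $(\Ks^{-} h)(x) \ll \min\{x^{1-\e}, x^{-3/2}\}$ or similar — in fact this is exactly the type of bound recorded in \cite[Lemma 4]{BLM19} in a quantitative form. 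This gives holomorphy of $\widehat{\Ks^{-} h}(s)$ in some initial vertical strip, say $0 < \Re(s) < 3/2$.

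The heart of the argument is the analytic continuation to $-3 < \Re(s) < 1$, and here the approach I would follow is the one indicated by the reference to \cite[Section 3.3]{Mot97}: interchange the order of integration (justified in the region of absolute convergence) to write
\[
\widehat{\Ks^{-} h}(s) = \int_{-\infty}^{\infty} h(r) \left(\int_0^\infty \JJ_r^{-}(x) x^{s}\,\frac{dx}{x}\right) d_{\spec}r = \int_{-\infty}^{\infty} h(r) \widehat{\JJ_r^{-}}(s)\, d_{\spec}r,
\]
and then invoke the explicit formula \eqref{J_r-Mellin}, namely $\widehat{\JJ_r^{-}}(s) = (2\pi)^{-s}\Gamma(\tfrac{s}{2}+ir)\Gamma(\tfrac{s}{2}-ir)\cosh\pi r$. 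The poles of $\widehat{\JJ_r^{-}}(s)$ sit at $s = 2(\pm ir - n)$, $n \in \N_0$; as $r$ ranges over $\R$ (shifted slightly into the strip $|\Im(r)| < 3/2$), the would-be polar lines $\Re(s) = -2n$ for $n \geq 1$ are the obstruction to pushing the $s$-contour leftward past $\Re(s) = 0$. The trick is that the Mellin integrand, as a function of $r$, can have its contour shifted: write $d_{\spec}r = \frac{1}{2\pi^2} r\tanh(\pi r)\,dr$ and move the $r$-contour off the real line into the horizontal strip $|\Im(r)| < 3/2$. The hypotheses that $h$ is holomorphic there, vanishes at $r = \pm i/2$ (killing the poles of $\tanh(\pi r)$ that would otherwise be crossed), and decays like $(1+|r|)^{-4-\delta}$, mean that this shift is legitimate and produces, by Cauchy's theorem, a representation of $\widehat{\Ks^{-} h}(s)$ valid — and holomorphic — in a strictly larger $s$-strip; the key point is that moving $r$ to $\Im(r) = \pm(3/2 - \e)$ moves the polar locus $\Re(s) = \mp 2\Im(r)$ out to $\Re(s) = -3 + \e$, which is exactly the stated range. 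Combined with the $K$-Bessel growth bounds, this yields both the continuation to $-3 < \Re(s) < 1$ and holomorphy there (and, if desired, polynomial decay of $\widehat{\Ks^{-}h}(s)$ in vertical strips, which is what is actually used downstream, e.g.\ in \hyperref[Lspmintegrandboundslemma]{Lemma \ref*{Lspmintegrandboundslemma}}).

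The main obstacle I anticipate is the careful bookkeeping of the contour shift in the $r$-variable: one must verify that no poles of $h(r) r\tanh(\pi r)\widehat{\JJ_r^{-}}(s)$ (as a function of $r$) are crossed except those annihilated by the vanishing of $h$ at $\pm i/2$, and that the horizontal pieces of the shifted contour contribute nothing in the limit, which requires the decay hypothesis on $h$ to dominate the growth of $\Gamma(\tfrac{s}{2}\pm ir)$ along $\Im(r) = $ const (here Stirling's formula \eqref{Stirlingeq} gives $\Gamma(\tfrac{s}{2}+ir)\Gamma(\tfrac{s}{2}-ir)\cosh\pi r$ polynomial growth in $r$ for $r$ in a bounded horizontal strip, so $(1+|r|)^{-4-\delta}$ is more than enough). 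A secondary technical point is justifying the original interchange of the $x$- and $r$-integrals, which follows from Fubini once the $\min\{x,x^{-3/2}\}$-type majorant for the integrand is in hand. Both of these are routine given the cited machinery, so I would present them briefly rather than in full detail, referring to \cite{Mot97} and \cite{BLM19} for the analogous computations.
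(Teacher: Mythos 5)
Your high-level strategy — interchange the integrals to write $\widehat{\Ks^{-}h}(s) = \int_{-\infty}^{\infty}\widehat{\JJ_r^{-}}(s)\,h(r)\,d_{\spec}r$ and then shift the $r$-contour inside the strip $|\Im(r)|<3/2$ — is exactly the paper's. However, two of the steps as you describe them do not go through. First, you work with the form \eqref{J_r-Mellin}, $\widehat{\JJ_r^{-}}(s) = (2\pi)^{-s}\Gamma(\tfrac{s}{2}+ir)\Gamma(\tfrac{s}{2}-ir)\cosh\pi r$; multiplying by $d_{\spec}r$ gives an integrand proportional to $\Gamma(\tfrac{s}{2}+ir)\Gamma(\tfrac{s}{2}-ir)\sinh(\pi r)\,r\,h(r)$, whose $r$-poles sit at $r=\pm i(n+\tfrac{s}{2})$, $n\in\N_0$, \emph{symmetrically above and below} the real axis. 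A single horizontal contour $\Im(r)=-v$ cannot dodge both families across the whole range $-3<\Re(s)<1$: the family $r=i(n+\tfrac{s}{2})$ forces $\Re(s)\neq -2v-2n$ and the family $r=-i(n+\tfrac{s}{2})$ forces $\Re(s)\neq 2v-2n$, and for every $v\in(0,3/2)$ at least one such excluded line (e.g.\ $\Re(s)=-2v$) lies inside $(-3,1)$. The paper sidesteps this by starting from \eqref{J_r-Mellin0} and using the \emph{evenness of $h$} to fold the two $\Gamma$-ratio terms into one, arriving at $\frac{i(2\pi)^{-s-1}}{\cos(\pi s/2)}\int_{-\infty}^{\infty}\frac{\Gamma(\frac{s}{2}+ir)}{\Gamma(1-\frac{s}{2}+ir)}\,r\,h(r)\,dr$, whose $r$-poles all lie at $r=i(n+\tfrac{s}{2})$ and hence stay \emph{above} a downward-shifted contour; only then does a single shift to $\Im(r)=-\sigma_0/2-\e$ (with $\sigma_0$ up to $3$) give holomorphy on $-\sigma_0<\Re(s)<1$ in one stroke. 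Without this symmetrization you would have to track residues as poles from the two families repeatedly cross the contour, which your proposal does not do.

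Second, and more directly an error: the vanishing $h(\pm i/2)=0$ does \emph{not} serve to kill the poles of $\tanh(\pi r)$. Those poles (at $r=\pm i/2$, where $\cosh(\pi r)=0$) are already cancelled by the $\cosh(\pi r)$ present in $\widehat{\JJ_r^{-}}(s)$ itself, since $\cosh(\pi r)\tanh(\pi r)=\sinh(\pi r)$ is entire; no hypothesis on $h(\pm i/2)$ is required for that. The genuine role of $h(\pm i/2)=0$ is to make $s=-1$ a removable singularity: in the symmetrized formula the factor $1/\cos(\pi s/2)$ threatens a simple pole there, and at $s=-1$ the inner integral on the shifted contour becomes $-\int_{\Im(r)=-1/2-\e}\frac{r\,h(r)}{\frac14+r^2}\,dr$; pushing back to $\Im(r)=0$ gives zero (odd integrand) plus the residue $-\pi i\,h(i/2)$ at $r=-i/2$, which vanishes precisely because of the hypothesis. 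Your write-up never confronts the potential pole at $s=-1$, so even granting the first point it would not establish holomorphy across that line.
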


\begin{proof}
Since $h$ is even and recalling \eqref{J_r-Mellin0}, we have that for $0 < \Re(s) < 1$,
\[\widehat{\Ks^{-} h}(s) = \int_{-\infty}^{\infty} \widehat{\JJ_r^{-}}(s) h(r) \, d_{\spec}r = \frac{i (2\pi)^{-s - 1}}{\cos \frac{\pi s}{2}} \int_{-\infty}^{\infty} \frac{\Gamma\left(\frac{s}{2} + ir\right)}{\Gamma\left(1 - \frac{s}{2} + ir\right)} r h(r) \, dr.\]
Indeed, standard bounds for $\JJ_r^{-}(x)$ (see, for example, \cite[(A.3)]{BLM19}) allow us to interchange the order of integration. For $\Re(s) > -\sigma_0$, we may shift the contour to $\Im(r) = -\sigma_0/2 - \e$; provided that the integral converges, we see that the integral extends holomorphically to $-\sigma_0 < \Re(s) < 1$. \hyperref[JJrMellinasympcor]{Corollary \ref*{JJrMellinasympcor}} then implies that the integral over $r$ converges provided that $h(r) \ll (1 + |r|)^{-1 - \Re(s) - \delta}$ for some $\delta > 0$.

This proves the analytic continuation of the integral to $-3 < \Re(s) < 1$. The Mellin transform of $\Ks^{-} h$ may have a pole at $s = 1$, however, due to the presence of the term $\cos (\pi s/2)$. The integral in this case is
\[-\int_{\Im(r) = -1/2 - \e} \frac{r h(r)}{\left(\frac{1}{4} + r^2\right)} \, dr.\]
We move the contour back to $\Im(r) = 0$. The resulting integral vanishes, while we pick up a residue at $r = -i/2$ given by $-\pi i h(i/2)$. By assumption, this vanishes, which completes the proof.
\end{proof}

\subsection{Vorono\u{\i} Summation Formul\ae{}}

For $\Re(s) > 1$, $c \in \N$, and $d \in (\Z/c\Z)^{\times}$, we define the Vorono\u{\i} $L$-series
\begin{equation}
\label{VoronoiLseriesdefeq}
\begin{split}
L\left(s, E_{\chi,1}, \frac{d}{c}\right) & \defeq \sum_{m = 1}^{\infty} \frac{\lambda_{\chi,1}(m,0) e\left(\frac{md}{c}\right)}{m^s},	\\
L\left(s, g, \frac{d}{c}\right) & \defeq \sum_{n = 1}^{\infty} \frac{\lambda_g(n) e\left(\frac{nd}{c}\right)}{n^s}.
\end{split}
\end{equation}
These functions are associated to the automorphic forms $E_{\chi,1}(z) \defeq E_{\infty}(z,1/2,\chi)$ and even $g \in \BB_0^{\ast}(q,\chi)$ respectively.

\begin{lemma}
\label{Voronoilemma}
For $c \equiv 0 \pmod{q}$ or $(c,q) = 1$, the Vorono\u{\i} $L$-series $L(s,E_{\chi,1},d/c)$ extends to a meromorphic function on $\C$ with a simple pole at $s = 1$ with residue
\[\begin{dcases*}
\frac{\tau(\chi) \overline{\chi}(d) L(1, \overline{\chi})}{c} & if $c \equiv 0 \pmod{q}$,	\\
\frac{\chi(c) L(1,\chi)}{c} & if $(c,q) = 1$,
\end{dcases*}\]
while the Vorono\u{\i} $L$-series $L(s,g,d/c)$ extends to an entire function. We have the functional equations
\begin{align*}
L\left(s, E_{\chi,1}, \frac{d}{c}\right) & = \frac{2 \overline{\chi}(d)}{c^{2s - 1}} \sum_{\pm} \widehat{\JJ_0^{\pm}}(2(1 - s)) L\left(1 - s, E_{\chi,1}, \mp\frac{\overline{d}}{c}\right),	\\
L\left(s, g, \frac{d}{c}\right) & = \frac{2 \overline{\chi}(d)}{c^{2s - 1}} \sum_{\pm} \widehat{\JJ_{t_g}^{\pm}}(2(1 - s)) L\left(1 - s, g,\mp\frac{\overline{d}}{c}\right)
\end{align*}
if $c \equiv 0 \pmod{q}$, while for $(c,q) = 1$,
\begin{align*}
L\left(s, E_{\chi,1}, \frac{d}{c}\right) & = \frac{2 \chi(-c) \tau(\chi)}{c^{2s - 1} q^s} \sum_{\pm} \widehat{\JJ_0^{\pm}}(2(1 - s)) L\left(1 - s, E_{\chi,1},\mp\frac{\overline{dq}}{c}\right),	\\
L\left(s, g, \frac{d}{c}\right) & = \frac{2 \chi(-c) \tau(\chi)}{\lambda_g(q) c^{2s - 1} q^s} \sum_{\pm} \widehat{\JJ_{t_g}^{\pm}}(2(1 - s)) L\left(1 - s, g,\mp\frac{\overline{dq}}{c}\right).
\end{align*}
\end{lemma}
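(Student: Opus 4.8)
The plan is to deduce both parts of the lemma from the automorphy of $g$ and of $E_{\chi,1}$, combined with the Mellin transform identities for the Whittaker function $W_{0,ir}$ and for the Bessel kernels $\JJ_r^{\pm}$ recorded above. First I would relate the additive twists to Mellin transforms along the vertical geodesic above the cusp $d/c$. Since $g \in \BB_0^{\ast}(q,\chi)$ is even, its Fourier expansion about the cusp at infinity (see \hyperref[Grosssect]{Appendix \ref*{Grosssect}}) gives, for $\Re(s)$ large,
\[
\int_0^{\infty} g\!\left(\frac{d}{c}+iy\right) y^{s}\,\frac{dy}{y}
= \rho_g(1)\,\gamma_{t_g}(s)\left(L\!\left(s,g,\frac{d}{c}\right)+L\!\left(s,g,-\frac{d}{c}\right)\right),
\]
where $\gamma_{t_g}(s)$ is an explicit multiple of $(2\pi)^{-2s}\widehat{\JJ_{t_g}^{-}}(2s)/\cosh\pi t_g$ by the Mellin transform of $W_{0,it_g}(4\pi y)$ coupled with \eqref{J_r-Mellin0}; the same computation applies to $E_{\chi,1}=E_{\infty}(\cdot,1/2,\chi)$, with $\widehat{\JJ_0^{-}}$ replacing $\widehat{\JJ_{t_g}^{-}}$ and with an additional contribution coming from the (two) constant terms of $E_{\infty}(\cdot,1/2,\chi)$.

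For the functional equations I would split the $y$-integral at a self-dual point and apply automorphy to the short-$y$ half. When $c\equiv 0\pmod q$, the cusp $d/c$ is $\Gamma_0(q)$-equivalent to $\infty$: with $d\bar d\equiv 1\pmod c$ the matrix $\left(\begin{smallmatrix} d & (d\bar d-1)/c\\ c & \bar d\end{smallmatrix}\right)\in\Gamma_0(q)$ has bottom-right entry $\bar d$ and sends $-\bar d/c+iv\mapsto d/c+i/(c^2v)$, so that $g(d/c+i/(c^2v))=\overline{\chi}(d)\,g(-\bar d/c+iv)$, and reassembling the two halves produces the dual series $L(1-s,g,\mp\bar d/c)$ with the stated powers of $c$. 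When $(c,q)=1$, the cusp $d/c$ is instead $\Gamma_0(q)$-equivalent to the cusp $0$, and one first invokes the Atkin--Lehner relation $g(W_q z)=\eta_g(q)(g\otimes\overline{\chi})(z)=\eta_g(q)\,g(z)$, using that the dihedral forms of interest satisfy $g\otimes\overline{\chi}=g$ (\hyperref[dihedralHeckeeigenlemma]{Lemma \ref*{dihedralHeckeeigenlemma}}) and $\eta_g(q)=\tau(\chi)/(\lambda_g(q)\sqrt q)$ (\hyperref[ALlemma]{Lemma \ref*{ALlemma}}); this is the source of the factor $\tau(\chi)/\lambda_g(q)$ and the extra $q^{-s}$, and for $E_{\chi,1}$ the analogous Fricke relation produces $\tau(\chi)$ but no $\lambda_g(q)$. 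In both regimes the two summands $\mp\bar d/c$ and the two kernels $\widehat{\JJ^{+}}$, $\widehat{\JJ^{-}}$ arise from the $e(nx)$ and $e(-nx)$ halves of the Fourier expansion at $\infty$ (equivalently, from analytically continuing the two pieces of the split integral), the Gamma-factors being converted to $\widehat{\JJ_{t_g}^{\pm}}(2(1-s))$ by \eqref{J_r+Mellin} and \eqref{J_r-Mellin}.

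Finally, for analytic continuation and the pole: when $g$ is cuspidal the Mellin transform $\int_0^\infty g(d/c+iy)y^{s-1}\,dy$ is entire and of rapid decay in vertical strips, so $L(s,g,d/c)$ is entire. For $E_{\chi,1}$ the only obstruction to holomorphy is the constant term of $E_{\infty}(\cdot,1/2,\chi)$ on the relevant half of the split integral, which contributes a Mellin transform of $y^{1/2}$ and hence a simple pole at $s=1$; I would identify the residue directly from the Dirichlet series, writing $\lambda_{\chi,1}(m,0)=\sum_{ab=m}\chi(a)$ and isolating the terms with $c\mid ad$ to get $\chi(c)L(1,\chi)/c$ when $(c,q)=1$, and then obtaining the residue $\tau(\chi)\overline{\chi}(d)L(1,\overline{\chi})/c$ in the case $c\equiv 0\pmod q$ either from the functional equation or, equivalently, from the second constant term of $E_{\infty}(\cdot,1/2,\chi)$ at the cusp $0$, whose scattering coefficient involves $\tau(\chi)L(s,\overline{\chi})$. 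I expect the main obstacle to be precisely this Eisenstein bookkeeping: keeping track of both constant terms of $E_{\infty}(\cdot,1/2,\chi)$ and their explicit scattering entries, the Gauss sums, and the cusp reduction carefully enough to match the exact normalisations in the stated formulae (the constants $2$, the powers of $c$ and $q$, and the precise appearances of $\chi$, $\overline{\chi}$, $d$, $\overline{d}$); once the Eisenstein case is in hand, the cuspidal case follows from the same manipulations with the pole term absent.
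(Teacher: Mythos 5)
Your plan has a genuine gap at its core. The vertical Mellin transform $\int_0^\infty g(d/c+iy)\,y^s\,\tfrac{dy}{y}$ accesses only the \emph{symmetric} combination $L(s,g,d/c)+L(s,g,-d/c)$, precisely because $W_{0,it_g}(4\pi|n|y)$ depends only on $|n|$: the $e(nd/c)$ and $e(-nd/c)$ halves of the Fourier expansion feed into identical Gamma factors. Consequently the split-integral argument — automorphy applied to the short-$y$ piece, reassembly with the long-$y$ piece — can only yield a functional equation for $L(s,g,d/c)+L(s,g,-d/c)$ in terms of $L(1-s,g,\overline{d}/c)+L(1-s,g,-\overline{d}/c)$ with a \emph{single} Gamma factor on each side. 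But the lemma asserts a strictly stronger, asymmetric statement: $L(s,g,d/c)$ alone is expressed via two distinct kernels $\widehat{\JJ_{t_g}^{+}}$ and $\widehat{\JJ_{t_g}^{-}}$ attached to the two dual arguments $\mp\overline{d}/c$. Your claim that the two kernels ``arise from the $e(nx)$ and $e(-nx)$ halves of the Fourier expansion at $\infty$ (equivalently, from analytically continuing the two pieces of the split integral)'' does not hold up: neither mechanism separates the signs of $n$, and the two halves of the split integral produce the same Gamma factor, not $\widehat{\JJ^+}$ versus $\widehat{\JJ^-}$.

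To close the gap you need a second input that accesses the \emph{antisymmetric} combination $L(s,g,d/c)-L(s,g,-d/c)$. The classical device is the Mellin transform of $y\,\partial_x g(d/c+iy)$ (or equivalently the weight-raised companion), whose Fourier expansion carries the extra factor $2\pi i n$ and hence a $\sgn(n)$ separating the two halves; one then combines the symmetric and antisymmetric functional equations to recover the asymmetric formula. Alternatively one can proceed as in \cite[Section 2.4]{HM06} (which the paper cites), expanding $e(nd/c)$ over multiplicative characters modulo $c$ via Gauss sums and invoking the functional equations of the twisted $L$-functions $L(s,g\otimes\chi)$, which naturally delivers the $\pm$-structure. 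A secondary, smaller issue: your claim that $\gamma_{t_g}(s)$ is an explicit multiple of $\widehat{\JJ_{t_g}^{-}}(2s)/\cosh\pi t_g$ is imprecise --- the Whittaker Mellin transform produces $\Gamma\bigl(\tfrac{s\pm it_g}{2}+\tfrac14\bigr)$-type factors, whereas $\widehat{\JJ_{t_g}^{-}}(2s)$ carries $\Gamma(s\pm it_g)$; these are related only after a Legendre duplication, which introduces an extra Gamma quotient, so the two are not in constant ratio. The Eisenstein, pole-residue, and cusp-reduction ingredients you describe are in the right spirit, but the functional equation step needs the missing second transform before your derivation can match the asymmetric statement of the lemma.
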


\begin{proof}
For $L(s,g,d/c)$, this follows from \cite[Appendix A.4]{KMV02} and \cite[Section 2.4]{HM06}. After Mellin inversion, the identities for $L(s,E_{\chi,1},d/c)$ are shown in \cite[Theorems 4.13 and 4.14]{IK04} and also \cite[Theorem A]{LT05}.
\end{proof}

A useful tool to couple with the Vorono\u{\i} summation formula is the following identity for Gauss sums.

\begin{lemma}[{\cite[Lemma 3.1.3]{Miy06}}]
\label{Miyakelemma}
Let $\chi$ be a primitive Dirichlet character modulo $q$ and $c \equiv 0 \pmod{q}$. We have that
\[\sum_{d \in (\Z/c\Z)^{\times}} \chi(d) e\left(\frac{md}{c}\right) = \tau(\chi) \sum_{a \mid \left(\frac{c}{q}, m\right)} a \mu\left(\frac{c}{aq}\right) \chi\left(\frac{c}{aq}\right) \overline{\chi}\left(\frac{m}{a}\right).\]
\end{lemma}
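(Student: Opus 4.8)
The plan is to peel the twisted Gauss sum on the left-hand side apart into a short alternating sum of \emph{complete} Gauss sums of the primitive character $\chi$ against varying moduli, each of which admits a closed-form evaluation. Throughout I would write $r \defeq c/q$, so that $q \mid c$ means $r \in \N$.

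First I would remove the coprimality restriction. Since $\chi$ is a character modulo $q$, the summand $\chi(d) e(md/c)$ already vanishes when $(d,q) > 1$; and as every prime divisor of $c$ divides $q$ or $r$, the condition $(d,c) = 1$ is equivalent to $(d,q) = 1$ together with $(d,r) = 1$. Hence the left-hand side equals $\sum_{d \bmod c,\, (d,r) = 1} \chi(d) e(md/c)$. Detecting $(d,r)=1$ via $\sum_{a \mid (d,r)} \mu(a)$, interchanging summations, writing $d = ad'$ with $d'$ running modulo $c/a$, and using complete multiplicativity of $\chi$, this becomes
\[\sum_{a \mid r} \mu(a) \chi(a) \sum_{d' \bmod{c/a}} \chi(d')\, e\!\left(\frac{m d'}{c/a}\right).\]

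The key input is then the evaluation of the inner complete Gauss sum: if $q \mid N$ and $N' \defeq N/q$, then $\sum_{d \bmod N} \chi(d) e(nd/N)$ equals $N' \tau(\chi)\, \overline{\chi}(n/N')$ when $N' \mid n$ and vanishes otherwise. This I would prove by inserting the Gauss-sum inversion $\chi(d) = \tau(\overline{\chi})^{-1} \sum_{b \bmod q} \overline{\chi}(b)\, e(bd/q)$ --- valid for \emph{all} residues $d$ precisely because $\chi$ is primitive --- carrying out the resulting sum over $d \bmod N$ by orthogonality (which forces $bN' + n \equiv 0 \pmod N$, hence $N' \mid n$ and then $b \equiv -n/N' \pmod q$), and simplifying with $\tau(\chi)\tau(\overline{\chi}) = \chi(-1)q$ and $\overline{\chi}(-1) = \chi(-1)$. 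Applying this with $N = c/a$ (so $q \mid c/a$ since $a \mid r$, and $N' = r/a$), substituting the summation variable $a$ by $b \defeq r/a$, and observing that only the divisors $b \mid (r,m)$ contribute, the expression collapses to $\tau(\chi) \sum_{b \mid (r,m)} b\, \mu(r/b)\, \chi(r/b)\, \overline{\chi}(m/b)$, which is the claimed identity after recalling $r = c/q$ and renaming $b$ as $a$.

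The main --- really the only --- obstacle is the complete Gauss sum evaluation in the middle step, where primitivity of $\chi$ is indispensable: the inversion formula $\sum_{b \bmod q} \overline{\chi}(b) e(bd/q) = \chi(d) \tau(\overline{\chi})$ must hold for every $d$, not merely for $(d,q) = 1$, and this is false for imprimitive $\chi$. The remaining bookkeeping is routine; one should only note that terms in which $r/b$ fails to be squarefree or in which $(m/b,q) > 1$ contribute nothing on either side, so no spurious main terms are introduced.
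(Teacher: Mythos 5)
Your proof is correct, and all of its steps check out: the coprimality strip via M\"{o}bius is valid because $(d,c)=1 \iff (d,q)=1 \wedge (d,c/q)=1$, the complete Gauss sum evaluation rests precisely on the separability identity $\sum_{b \bmod q} \overline{\chi}(b)\, e(bd/q) = \chi(d)\,\tau(\overline{\chi})$ for \emph{all} $d$ (which is exactly where primitivity is needed), and the final resubstitution $b = r/a$ produces the stated right-hand side. The paper does not prove this lemma itself but cites Miyake \cite[Lemma 3.1.3]{Miy06}; your argument is the standard textbook derivation of this identity, so there is no substantive divergence to report.
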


\subsection{The Large Sieve}

\begin{theorem}[{\cite[Theorems 2.2 and 2.6]{Lam14}}]
\label{largesievethm}
For squarefree $q$, $1 \ll U \ll T$, and $N \geq 1$, each of the quantities
\begin{gather*}
\sum_{\substack{f \in \BB_0^{\ast}(\Gamma_0(q)) \\ T - U \leq t_f \leq T + U}} \frac{1}{L(1,\sym^2 f)} \left|\sum_{N \leq n \leq 2N} a_n \lambda_f(n)\right|^2,	\\
\frac{\delta_{q,1}}{2\pi} \int\limits_{T - U \leq |t| \leq T + U} \frac{1}{\zeta(1 + 2it) \zeta(1 - 2it)} \left|\sum_{N \leq n \leq 2N} a_n \lambda(n,t)\right|^2 \, dt,	\\
\sum_{\substack{f \in \BB_{\hol}^{\ast}(\Gamma_0(q)) \\ T - U \leq k_f \leq T + U}} \frac{1}{L(1,\sym^2 f)} \left|\sum_{N \leq n \leq 2N} a_n \lambda_f(n)\right|^2
\end{gather*}
is bounded by a constant multiple depending on $\e$ of
\[(qTU + N)(qTN)^{\e} \sum_{N \leq n \leq 2N} |a_n|^2.\]
\end{theorem}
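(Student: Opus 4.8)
The plan is to deduce the large sieve from the Kuznetsov and Petersson formul\ae{} (\hyperref[Kuznetsovthm]{Theorems \ref*{Kuznetsovthm}} and \ref{Peterssonthm}) in the classical manner of Deshouillers--Iwaniec, with the short-window refinement handled as in the work of Jutila--Motohashi (and carried out in detail in \cite{Lam14}). Fix a nonnegative even test function $h$ that is holomorphic in a suitable horizontal strip, meets the decay hypotheses of \hyperref[Kuznetsovthm]{Theorem \ref*{Kuznetsovthm}}, is bounded below by a constant on $[-T-U,-T+U]\cup[T-U,T+U]$, and is positive on the exceptional range $i(-1/2,1/2)$ --- one may take $h=h_{T,U}$ as in \eqref{hTUeq} --- and similarly a nonnegative sequence $h^{\hol}$ bounded below on $[T-U,T+U]$ with rapid decay. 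Write $\mathcal{S}$ for whichever of the three quantities is to be bounded. Since $\lambda_f$ is real for $f$ of principal nebentypus, so that $\left|\sum_{N\le n\le 2N}a_n\lambda_f(n)\right|^2\ge 0$, and since every summand in the old/new expansions of \hyperref[Kuznetsovlemma]{Lemma \ref*{Kuznetsovlemma}} factors through a square of a real quantity weighted by a positive constant, positivity together with the exact identity $\frac{1}{L(1,\sym^2 f)}=\frac{2q\left|\rho_f(1)\right|^2}{\cosh\pi t_f}$ for newforms of level $q$ from \hyperref[rho(1)^2lemma]{Lemma \ref*{rho(1)^2lemma}} (and its holomorphic analogue) and the relation $\sqrt{mn}\,\overline{\rho_f}(m)\rho_f(n)=\left|\rho_f(1)\right|^2\lambda_f(m)\lambda_f(n)$ from \eqref{cusprholambda} yield
\[\mathcal{S}\ll\sum_{N\le m,n\le 2N}a_m\overline{a_n}\left(\AA_q^{\Maass}(m,n;h)+\AA_q^{\Eis}(m,n;h)\right)\]
in the Maa\ss{} case, and the analogous bound with $\AA_q^{\hol}(m,n;h^{\hol})$ in the holomorphic case; for $q=1$ the Eisenstein sum is dominated directly, as it occupies the same side of the Kuznetsov formula, while for $q>1$ there is nothing to prove about it.

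Applying \hyperref[Kuznetsovthm]{Theorem \ref*{Kuznetsovthm}} (respectively \hyperref[Peterssonthm]{Theorem \ref*{Peterssonthm}}) in the same-sign ($+$) form reduces $\mathcal{S}$ to a diagonal contribution and an off-diagonal sum of Kloosterman sums. The diagonal term is $\sum_{m,n}a_m\overline{a_n}\,\DD_q(m,n;\Ns h)=q(\Ns h)\sum_{N\le n\le 2N}|a_n|^2$; since $\Ns h_{T,U}=\int_{-\infty}^{\infty}h_{T,U}(r)\,d_{\spec}r\asymp TU$ (the polynomial factor being $\asymp 1$ on $|r|\asymp T$, negligibly small near $0$, and overwhelmed by the Gaussian in the tails, using $U\ll T$), this contributes $\ll qTU\sum|a_n|^2$, and the holomorphic diagonal $q\sum_{k}\frac{k-1}{2\pi^2}h^{\hol}(k)\asymp qTU$ is identical.

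The off-diagonal term is $q\sum_{c\equiv 0\ (q)}\tfrac{1}{c}\sum_{m,n}a_m\overline{a_n}S(m,n;c)(\Ks^{+}h)\!\left(\frac{\sqrt{mn}}{c}\right)$, and its holomorphic counterpart has $\Ks^{\hol}h^{\hol}$ with $\JJ_k^{\hol}(x)=2\pi i^{-k}J_{k-1}(4\pi x)$. The key input is a transform estimate, uniform in all parameters (in particular for every $1\ll U\ll T$): for $h$ localised at spectral parameter $\asymp T$ with width $U$, the function $\Ks^{+}h(x)$ (and $\Ks^{\hol}h^{\hol}(x)$) is negligible for $x\ll T^{1-\e}$ --- Bessel localisation, $J_{\pm 2ir}(4\pi x)$ and $J_{k-1}(4\pi x)$ being exponentially small when the argument is much smaller than the order --- and is $\ll T^{\e}\,TU\,x^{-1/2}$ with rapid decay beyond. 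Hence only $c\ll N^{1+\e}/(qT^{1-\e})$ survive, so the off-diagonal is empty unless $N\gg qTU$ up to $\e$-powers; for the surviving $c$ one invokes the Weil bound $|S(m,n;c)|\ll d(c)c^{1/2}(m,n,c)^{1/2}$ together with the elementary estimate $\sum_{e\mid c}\sum_{\substack{N\le m\le 2N\\ e\mid m}}|a_m|\ll\sum_{e\mid c}(N/e)^{1/2}\bigl(\sum_{e\mid m}|a_m|^2\bigr)^{1/2}$, Cauchy--Schwarz, and the bound on $\Ks^{+}h$, obtaining a contribution $\ll N(qTN)^{\e}\sum|a_n|^2$. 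Adding the diagonal and off-diagonal gives $\mathcal{S}\ll(qTU+N)(qTN)^{\e}\sum_{N\le n\le 2N}|a_n|^2$, as required.

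The main obstacle is the transform analysis: showing, uniformly in $T$, $U$, and the level $q$, that $\Ks^{+}h_{T,U}$ and $\Ks^{\hol}h^{\hol}$ are negligible below the scale $T^{1-\e}$ and decay like $T^{\e}TUx^{-1/2}$ above it, so that the off-diagonal genuinely contributes only the term $N$ and not something like $N^{1/2}T$ or $NT^{\e}$ with a worse power of $T$. This is a stationary-phase/asymptotic computation with Bessel functions of large order and imaginary index of the same flavour as \hyperref[BLMLemma4lemma]{Lemma \ref*{BLMLemma4lemma}}, and the uniformity in $U$ across the whole range $1\ll U\ll T$ is exactly the delicate point separating this from the classical case $U\asymp T$. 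Alternatively, one simply cites \cite[Theorems 2.2 and 2.6]{Lam14}, which carry out precisely this analysis.
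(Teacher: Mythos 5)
The paper does not prove this theorem at all: it is cited verbatim from \cite{Lam14}. So the only question is whether your sketch is a valid proof of it, and I do not think it is.

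The set-up (positivity, domination of the newform sum by the full spectral sum via \hyperref[rho(1)^2lemma]{Lemma \ref*{rho(1)^2lemma}} and \eqref{cusprholambda}, application of \hyperref[Kuznetsovthm]{Theorem \ref*{Kuznetsovthm}} or \hyperref[Peterssonthm]{Theorem \ref*{Peterssonthm}}, and the diagonal analysis giving $q\Ns h\asymp qTU$) is fine and standard. But there are two problems with the off-diagonal. First, the rationale you give for the transform being negligible below $T^{1-\e}$ — ``$J_{\pm 2ir}(4\pi x)$... being exponentially small when the argument is much smaller than the order'' — is false for the Maa\ss{} case: for imaginary order, $|J_{2ir}(y)|\asymp e^{\pi|r|}/\sqrt{|r|}$ as $y\to 0$, so $\JJ_r^{+}(x)=O(|r|^{-1/2})$ for bounded $x$, not exponentially small. (Your claim is correct for $J_{k-1}$ in the holomorphic case.) The small-$x$ decay of $\Ks^{+}h_{T,U}$ is genuine, but it comes from oscillatory cancellation in the $r$-integral — the phase $x^{\pm 2ir}/\Gamma(1\pm 2ir)$ oscillates at frequency $\asymp\log(T/x)$ in $r$, and repeated integration by parts gains a factor $(U\log(T/x))^{-j}$ — not from pointwise decay of the Bessel kernel.

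Second, and fatally, the Weil bound on the off-diagonal does not produce the term $N\sum|a_n|^2$. Take the bounds you assert: $\Ks^{+}h(x)\ll T^{\e}TUx^{-1/2}$ supported essentially at $x=\sqrt{mn}/c\asymp T$ (so $c\asymp N/T$, giving the off-diagonal something to chew on only when $N\gg qT$, not $N\gg qTU$ as you wrote), together with $|S(m,n;c)|\ll d(c)c^{1/2}(m,n,c)^{1/2}$ and the gcd-divisor estimate, which gives $\sum_{m,n}|a_m||a_n|(m,n,c)^{1/2}\ll d(c)N\sum|a_n|^2$. Putting these together, the off-diagonal is
\[q\sum_{\substack{c\equiv 0\,(q)\\ c\asymp N/T}}\frac{1}{c}\cdot c^{1/2+\e}N\sum|a_n|^2\cdot T^{\e}TU\Bigl(\frac{c}{N}\Bigr)^{1/2}\ll T^{\e}\Bigl(qTU\sqrt{N}+UN^{3/2}\Bigr)\sum|a_n|^2,\]
which is far larger than $(qTU+N)T^{\e}\sum|a_n|^2$ in essentially every regime (e.g.\ $q=1$, $N\asymp T^2$, $U\asymp\sqrt{T}$ gives $UN^{3/2}\asymp T^{7/2}$ versus $qTU+N\asymp T^{2}$). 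This is not an accounting slip; replacing $S(m,n;c)$ by its absolute value discards exactly the cancellation (against the sign $\sin(4\pi\sqrt{mn}/c)$ coming from the stationary-phase expansion of $\Ks^{+}h$ for $x\gg T$, together with the additive characters inside the Kloosterman sum) that the spectral large sieve lives on. The genuine proof requires a further summation step — opening the Kloosterman sum and applying Poisson/Vorono\u{\i} summation in $m$ or $n$ to recover a completed exponential sum, or equivalently a duality argument, or a ``Kloosterman-to-spectral'' application of \hyperref[Kloostermanthm]{Theorem \ref*{Kloostermanthm}} that converts the off-diagonal back into a spectral sum with shifted parameters — which is precisely what \cite{Lam14} (following Deshouillers--Iwaniec and Jutila--Motohashi) carries out. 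Your final fallback, ``simply cite [Lam14]'', is of course the right thing to do, but the intermediate steps as written are not a correct proof.
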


\subsection{Subconvexity Estimates}

We record the following subconvexity estimates.

\begin{theorem}[{\cite[Theorem 1.1]{You17}}]
\label{Youngsubconvthm}
Let $\chi_q$ be the primitive quadratic Dirichlet character modulo $q$ for squarefree odd $q$. Then for $q_1 \mid q$,
\begin{align*}
L\left(\frac{1}{2}, f \otimes \chi_q\right) & \ll_{\e} \begin{dcases*}
(q(1 + |t_f|))^{\frac{1}{3} + \e} & if $f \in \BB_0^{\ast}(\Gamma_0(q_1))$,	\\
(qk_f))^{\frac{1}{3} + \e} & if $f \in \BB_{\hol}^{\ast}(\Gamma_0(q_1))$,
\end{dcases*}	\\
\left|L\left(\frac{1}{2} + it, \chi_q\right)\right|^2 & \ll_{\e} (q(1 + |t|))^{\frac{1}{3} + \e}.
\end{align*}
\end{theorem}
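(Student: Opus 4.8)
\textbf{Proof proposal for Theorem \ref{Youngsubconvthm}.}

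The statement to be proven is Young's subconvex bound for $L(1/2, f \otimes \chi_q)$ in the hybrid $q$- and spectral/weight-aspect, together with the companion bound for $|L(1/2+it,\chi_q)|^2$. Since this is quoted verbatim as \cite[Theorem 1.1]{You17}, the cleanest approach in the present paper is simply to cite Young's result, but let me sketch the structure of the argument one would reproduce if a self-contained proof were wanted. The plan is to realise $L(1/2,f\otimes\chi_q)$ (resp.\ $\zeta(1/2+it)L(1/2+it,\chi_q) = L(1/2+it, E_{\chi_q})$ up to the usual factorisation) as a period integral, namely the Rankin--Selberg integral of $f$ against the weight-zero Eisenstein series $E_\infty(z,1/2,\chi_q)$ of level $q$ and nebentypus $\chi_q$, exactly the object appearing in \eqref{VoronoiLseriesdefeq}. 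First I would apply the approximate functional equation (\hyperref[approxfunclemma]{Lemma \ref*{approxfunclemma}}, or its analogue for the single twist) to write the central value as a Dirichlet polynomial of length $\asymp q(1+|t_f|)$ in the coefficients $\lambda_f(n)\chi_q(n)$, with a weight function of rapid decay past this length.

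The heart of the argument is then an amplified second moment over a suitable family. One fixes a short amplifier $\sum_{\ell \sim L} x_\ell \lambda_f(\ell)$ supported on primes $\ell$ in a dyadic window, squares it, and averages $|L(1/2,f\otimes\chi_q)|^2$ times this amplified weight over $f$ ranging in a spectral (or weight) window of length $\asymp 1$ around $t_f$ (resp.\ over the continuous spectrum for the Eisenstein case), inserting a smooth bump $h$ as in the Kuznetsov formula. Opening the approximate functional equation and applying the Kuznetsov formula (\hyperref[Kuznetsovthm]{Theorem \ref*{Kuznetsovthm}}) converts the off-diagonal into sums of Kloosterman sums $S(\ell_1 m, \ell_2 n; c)$ twisted by $\chi_q$, weighted by a Bessel transform of $h$. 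One then localises the modulus $c$, opens the Kloosterman sums, and applies a twisted Vorono\u{\i} / Poisson argument (essentially \hyperref[Voronoilemma]{Lemma \ref*{Voronoilemma}} together with \hyperref[Miyakelemma]{Lemma \ref*{Miyakelemma}} for the Gauss sums) to shorten the $m,n$ sums; because the family is the full set of forms of level $q$, the conductor-lowering produces a genuine saving. Balancing the amplifier length $L$ against the lengths produced after Vorono\u{\i} yields the exponent $1/3+\e$ in the hybrid conductor $q(1+|t_f|)$.

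The main obstacle, and the reason Young's paper is technically delicate, is controlling the character sums uniformly in $q$ \emph{squarefree but not prime}: after Vorono\u{\i} the resulting complete exponential sums factor over the prime divisors of $q$ via the Chinese Remainder Theorem, and one must show that none of the local pieces (including the ramified primes $p \mid q_1 \mid q$, where $f$ may have a special local component) degrade the expected square-root cancellation. Keeping the $q$-dependence polynomial while only conceding a convex exponent in $q$ (the theorem claims convexity $1/3$ in $q$, which is weaker than the $1/3$ in the archimedean aspect being subconvex) requires a careful bookkeeping of conductors, precisely the sort tracked in \hyperref[rootnumberlemma]{Lemma \ref*{rootnumberlemma}}. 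A secondary difficulty is the holomorphic case $f \in \BB_{\hol}^{\ast}(\Gamma_0(q_1))$, where one uses the Petersson formula (\hyperref[Peterssonthm]{Theorem \ref*{Peterssonthm}}) in place of Kuznetsov and must handle the weight aspect $k_f$; the argument is structurally identical but the Bessel kernels differ. For the purposes of the present article, however, it suffices to invoke \cite[Theorem 1.1]{You17} directly, and that is the route we take.
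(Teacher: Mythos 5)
This theorem is quoted from Young \cite[Theorem 1.1]{You17}; the paper's ``proof'' is simply the citation, and you are right that this is the appropriate move. However, the sketch you offer of Young's argument is not an accurate description of the method, and in fact the approach you describe would not deliver the stated exponent. Young's theorem is a hybrid extension of the Conrey--Iwaniec \emph{cubic moment} method, not an amplified second moment. The crucial mechanism is that $L(1/2, f \otimes \chi_q) \geq 0$ (via Waldspurger-type period formul\ae{} relating the central value to a square of a metaplectic/half-integral weight Fourier coefficient), so one may estimate $\sum_f L(1/2, f \otimes \chi_q)^3$ by the Petersson or Kuznetsov formula over a full spectral family and then drop all but one term by positivity; the resulting sums of Kloosterman sums are analysed with Vorono\u{\i}/Poisson, and the cube is precisely what makes the off-diagonal tractable and yields the Weyl-quality exponent $1/3$ relative to the analytic conductor $\asymp (q(1+|t_f|))^2$. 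An amplified second moment over a family of comparable size gives an exponent strictly between the Burgess quality $3/8$ and convexity $1/2$, never $1/3$, because the saving from a length-$L$ amplifier saturates well before reaching the Weyl barrier. You should also note that $(q(1+|t_f|))^{1/3}$ is subconvex in \emph{both} the $q$- and the archimedean aspects, since the convexity bound is $(q(1+|t_f|))^{1/2+\e}$; your parenthetical remark that the exponent is only convex in $q$ is a miscomputation of the conductor. None of this affects the present paper, which only invokes the result, but the sketch should not be read as a faithful summary of \cite{You17}.
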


\begin{theorem}[{\cite[Theorems 1.1 and 1.2]{MV10}; see also \cite[Theorem 1 and Remarks, p.~114]{Blo05}, and cf.~\cite[Corollary 1.2 and Remark 1.3]{LLY06a}}]
\label{MVsubconvthm}
Let $g \in \BB_0^{\ast}(q,\chi)$ and $t \in \R$. Then for $q_1 \mid q$, there exist absolute constants $A > 0$ and $\delta > 0$ such that
\begin{align*}
L\left(\frac{1}{2}, f \otimes g\right) & \ll \begin{dcases*}
(1 + |t_f|)^A t_g^{1 - \delta} & if $f \in \BB_0^{\ast}(\Gamma_0(q_1))$,	\\
k_f^A t_g^{1 - \delta} & if $f \in \BB_{\hol}^{\ast}(\Gamma_0(q_1))$,
\end{dcases*}	\\
\left|L\left(\frac{1}{2} + it, g\right)\right|^2 & \ll (|t| + t_g)^{1 - \delta},	\\
L\left(\frac{1}{2}, f\right) & \ll \begin{dcases*}
(1 + |t_f|)^{\frac{1}{2} - \delta} & if $f \in \BB_0^{\ast}(\Gamma_0(q_1))$,	\\
k_f^{\frac{1}{2} - \delta} & if $f \in \BB_{\hol}^{\ast}(\Gamma_0(q_1))$,
\end{dcases*}	\\
\left|\zeta\left(\frac{1}{2} + it\right)\right|^2 & \ll (1 + |t|)^{\frac{1}{2} - \delta}.
\end{align*}
\end{theorem}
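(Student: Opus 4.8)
The plan is to deduce each of the four displayed estimates from results already in the literature, as indicated by the references attached to the statement; essentially no new argument is needed beyond presenting the cited bounds in the form stated here and tracking the dependence on the various parameters.

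First, for the Rankin--Selberg central value $L(1/2, f \otimes g)$: this is the $\GL_2 \times \GL_2$ subconvexity theorem of Michel and Venkatesh \cite{MV10}. Their amplification argument, together with the geometric-side estimates, produces a bound of the shape $L(1/2, \pi_1 \otimes \pi_2) \ll Q(\pi_1 \otimes \pi_2)^{1/4 - \delta}$ with an absolute saving $\delta > 0$, in which the implied constant depends at most polynomially on the conductor of the factor that is held fixed. Taking $\pi_1$ to be the automorphic representation attached to $g$ and $\pi_2$ that attached to $f$, and absorbing the archimedean conductor of $f$ (which is $\asymp (1 + |t_f|)^2$ for $f \in \BB_0^{\ast}(\Gamma_0(q_1))$ and $\asymp k_f^2$ for $f \in \BB_{\hol}^{\ast}(\Gamma_0(q_1))$) into this polynomial factor, yields exactly $L(1/2, f \otimes g) \ll (1 + |t_f|)^A t_g^{1 - \delta}$, respectively $\ll k_f^A t_g^{1 - \delta}$. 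The point to verify is only that the dependence on $t_f$ or $k_f$ may be taken polynomial with the saving $\delta$ uniform in $t_g$; this is automatic once $t_f$ or $k_f$ is regarded as part of the conductor of $f$, and is already contained in \cite{MV10}.

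Second, the hybrid bound $|L(1/2 + it, g)|^2 \ll (|t| + t_g)^{1 - \delta}$ is subconvexity for the standard $\GL_2$ $L$-function of $g$ in the joint $t$- and $t_g$-aspect, which is again a case of \cite{MV10}; in the dihedral setting of the present paper it also follows from the factorisation of $L(s, g_\psi)$ into Hecke $L$-functions over $K$ together with the work of Blomer \cite{Blo05} and Liu--Luo--Ye \cite{LLY06a}, as the cited remarks indicate. Third, $L(1/2, f) \ll (1 + |t_f|)^{1/2 - \delta}$, respectively $k_f^{1/2 - \delta}$, is subconvexity for $\GL_2$ $L$-functions in the spectral (resp.\ weight) aspect; it is weaker than the exponent $1/3$ available in the literature, and in any case is contained in \cite{MV10}. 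Finally, $|\zeta(1/2 + it)|^2 \ll (1 + |t|)^{1/2 - \delta}$ follows upon squaring the Weyl-type bound $\zeta(1/2 + it) \ll (1 + |t|)^{1/6 + \e}$, with $\delta$ any constant less than $1/6$.

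The only step requiring genuine care is the first: one must know that the Michel--Venkatesh argument delivers a Rankin--Selberg subconvex bound whose dependence on the spectral parameter or weight of $f$ is merely polynomial and whose saving in $t_g$ is uniform. This is where all of the analytic content lies, but it has already been carried out in \cite{MV10}, so in this paper it is a matter of invoking their theorem in the appropriate form. The remaining three estimates are classical and essentially immediate.
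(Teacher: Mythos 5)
Your proposal is correct and takes essentially the same route as the paper: the paper records this theorem purely as a citation to \cite{MV10} (with \cite{Blo05} and \cite{LLY06a} as alternatives in the dihedral case) and supplies no independent argument, and your deductions of the four bounds from those references --- in particular the observation that the Michel--Venkatesh Rankin--Selberg bound is polynomial in the conductor of the fixed factor with a saving uniform in $t_g$, and that the last two estimates are classical --- are exactly the content the citation is meant to carry.
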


\begin{remark}
More explicit subconvex bounds are known for $\zeta(1/2 + it)$, as well as for $L(1/2,f)$ when $q = 1$, but all we truly require are subconvex bounds
\begin{align*}
L\left(\frac{1}{2},f\right) L\left(\frac{1}{2},f \otimes \chi_D\right) & \ll \begin{dcases*}
(1 + |t_f|)^{1 - \delta} & for $f \in \BB_0^{\ast}(\Gamma_0(q_1))$,	\\
k_f^{1 - \delta} & for $f \in \BB_{\hol}^{\ast}(\Gamma_0(q_1))$,
\end{dcases*}	\\
\left|\zeta\left(\frac{1}{2} + it\right) L\left(\frac{1}{2} + it,\chi_D\right)\right|^2 & \ll (1 + |t|)^{1 - \delta}.
\end{align*}
\end{remark}

\phantomsection
\addcontentsline{toc}{section}{Acknowledgements}

\subsection*{Acknowledgements}

The first author would like to thank Paul Nelson for suggesting the usage of \hyperref[MVlemma]{Lemma \ref*{MVlemma}} in place of the method sketched in \hyperref[matrixcoeffremark]{Remark \ref*{matrixcoeffremark}}, Dan Collins for many useful conversations about local constants in the Watson--Ichino formula, and above all Peter Sarnak for his encouragement on working on this problem. The authors would also like to thank Bingrong Huang and the anonymous referee for helpful suggestions.

\end{document}